\documentclass[11pt,reqno]{amsart}

\usepackage[a4paper,margin=30mm]{geometry}
\usepackage[T1]{fontenc}
\usepackage[utf8]{inputenc}
\usepackage{lmodern}
\usepackage{amsmath,amssymb,amsthm,mathtools,mathrsfs,bm}
\usepackage{esint}
\usepackage{microtype}
\microtypesetup{protrusion=true,expansion=true,tracking=true}
\usepackage{enumitem}
\usepackage{booktabs,array,longtable,tabularx}
\usepackage{xspace}
\usepackage{graphicx}
\usepackage{float}
\usepackage{placeins}
\usepackage{needspace}
\usepackage[dvipsnames,svgnames,table]{xcolor}
\usepackage{tikz}
\usepackage{pgfplots}
\usepgfplotslibrary{fillbetween,groupplots}
\pgfplotsset{compat=1.18}

\usetikzlibrary{matrix,patterns,arrows,arrows.meta,calc,positioning,fit,backgrounds,decorations.pathreplacing,decorations.pathmorphing,shapes.geometric}
\usepackage[numbers,sort&compress]{natbib}
\usepackage[hidelinks]{hyperref}
\usepackage{aliascnt}
\usepackage[nameinlink,noabbrev]{cleveref}
\input{glyphtounicode}
\providecommand{\doi}[1]{\href{https://doi.org/#1}{doi: #1}}
\hypersetup{%
  pdftitle={Schwartz-class Gabor windows and the Balian--Low classification},%
  pdfauthor={Andrei Caragea and Goetz Pfander},%
  pdfsubject={Schwartz-class Gabor windows and the Balian--Low classification},%
  pdfkeywords={Schwartz Gabor window, Balian--Low theorem, modulation space, Zak transform, Zibulski--Zeevi matrix, VMO, symplectically rational lattice}%
}

\setlist{itemsep=.2em,topsep=.4em}
\allowdisplaybreaks
\numberwithin{equation}{section}

\makeatletter
\AtBeginDocument{\let\uppercasenonmath\@gobble}
\def\@settitle{\begin{center}%
  \normalfont\fontsize{14.5}{18}\selectfont\bfseries
  \@title\par
  \end{center}%
}

\def\@captionheadfont{\normalfont\bfseries}
\renewcommand\section{\@startsection{section}{1}%
  \z@{.7\linespacing\@plus\linespacing}{.5\linespacing}%
  {\normalfont\bfseries\centering}}
\renewcommand\part{\@startsection{part}{0}{\z@}%
  {1.65\baselineskip \@plus 0.35\baselineskip \@minus 0.15\baselineskip}%
  {0.72\baselineskip}%
  {\centering\normalfont\large\bfseries}}
\renewcommand\subsection{\@startsection{subsection}{2}{\z@}%
  {0.85\baselineskip \@plus 0.25\baselineskip \@minus 0.10\baselineskip}%
  {0.42\baselineskip}%
  {\normalfont\bfseries}}
\renewcommand\subsubsection{\@startsection{subsubsection}{3}{\z@}%
  {0.70\baselineskip \@plus 0.20\baselineskip \@minus 0.10\baselineskip}%
  {0.30\baselineskip}%
  {\normalfont\bfseries}}
\makeatother

\newtheorem{theorem}{Theorem}[section]
\newaliascnt{proposition}{theorem}
\newtheorem{proposition}[proposition]{Proposition}
\aliascntresetthe{proposition}
\newaliascnt{lemma}{theorem}
\newtheorem{lemma}[lemma]{Lemma}
\aliascntresetthe{lemma}

\newtheorem*{unnumberedtheorem}{Theorem}
\newaliascnt{corollary}{theorem}
\newtheorem{corollary}[corollary]{Corollary}
\aliascntresetthe{corollary}
\newaliascnt{consequence}{theorem}

\aliascntresetthe{consequence}
\newaliascnt{claim}{theorem}

\aliascntresetthe{claim}
\newaliascnt{conjecture}{theorem}

\aliascntresetthe{conjecture}
\theoremstyle{definition}
\newaliascnt{definition}{theorem}

\aliascntresetthe{definition}

\newaliascnt{question}{theorem}
\newtheorem{question}[question]{Question}
\aliascntresetthe{question}
\newaliascnt{example}{theorem}

\aliascntresetthe{example}
\newaliascnt{examples}{theorem}
\newtheorem{examples}[examples]{Examples}
\aliascntresetthe{examples}
\theoremstyle{remark}
\newaliascnt{remark}{theorem}
\newtheorem{remark}[remark]{Remark}
\aliascntresetthe{remark}

\crefname{theorem}{Theorem}{Theorems}
\Crefname{theorem}{Theorem}{Theorems}
\crefname{proposition}{Proposition}{Propositions}
\Crefname{proposition}{Proposition}{Propositions}
\crefname{lemma}{Lemma}{Lemmas}
\Crefname{lemma}{Lemma}{Lemmas}
\crefname{corollary}{Corollary}{Corollaries}
\Crefname{corollary}{Corollary}{Corollaries}
\crefname{consequence}{Consequence}{Consequences}
\Crefname{consequence}{Consequence}{Consequences}
\crefname{claim}{Claim}{Claims}
\Crefname{claim}{Claim}{Claims}
\crefname{conjecture}{Conjecture}{Conjectures}
\Crefname{conjecture}{Conjecture}{Conjectures}
\crefname{definition}{Definition}{Definitions}
\Crefname{definition}{Definition}{Definitions}
\crefname{example}{Example}{Examples}
\Crefname{example}{Example}{Examples}
\crefname{examples}{Examples}{Examples}
\Crefname{examples}{Examples}{Examples}
\crefname{remark}{Remark}{Remarks}
\Crefname{remark}{Remark}{Remarks}
\crefname{question}{Question}{Questions}
\Crefname{question}{Question}{Questions}
\crefname{figure}{Figure}{Figures}
\Crefname{figure}{Figure}{Figures}
\crefname{table}{Table}{Tables}
\Crefname{table}{Table}{Tables}
\crefname{section}{Section}{Sections}
\Crefname{section}{Section}{Sections}
\crefname{subsection}{Subsection}{Subsections}
\Crefname{subsection}{Subsection}{Subsections}
\crefname{equation}{Equation}{Equations}
\Crefname{equation}{Equation}{Equations}

\newcommand{\R}{\mathbb R}
\newcommand{\Z}{\mathbb Z}
\newcommand{\N}{\mathbb N}
\newcommand{\Q}{\mathbb Q}
\newcommand{\C}{\mathbb C}

\newcommand{\covol}{\operatorname{covol}}

\newcommand{\dist}{\operatorname{dist}}
\newcommand{\supp}{\operatorname{supp}}

\newcommand{\rank}{\operatorname{rank}}
\newcommand{\Pf}{\operatorname{Pf}}

\newcommand{\G}{\mathcal G}
\newcommand{\Sclass}{\mathcal S}

\newcommand{\ind}{\mathbf 1}
\newcommand{\ip}[2]{\langle #1,#2\rangle}
\newcommand{\abs}[1]{\lvert #1\rvert}
\newcommand{\norm}[1]{\lVert #1\rVert}

\newcommand{\TeP}{\ensuremath{(\mathrm{T}\epsilon\mathrm{P})}\xspace}
\newcommand{\SG}{\ensuremath{(\mathrm{SG})}\xspace}
\newcommand{\mSG}{\mathrm{mSG}}
\newcommand{\Sp}{\operatorname{Sp}}

\newcommand{\Id}{\mathrm I}

\newcommand{\Szero}{S_0}
\newcommand{\Ana}{\operatorname{Ana}}

\newcommand{\Fell}[2]{\mathcal F\ell_{#2}^{#1}}
\newcommand{\FL}[2]{\mathcal FL_{#2}^{#1}}
\newcommand{\BMO}{\operatorname{BMO}}
\newcommand{\VMO}{\operatorname{VMO}}
\newcommand{\VMOloc}{\operatorname{VMO}_{\mathrm{loc}}}
\newcommand{\essinf}{\operatorname*{ess\,inf}}
\newcommand{\diag}{\operatorname{diag}}
\newcommand{\T}{\mathbb T}
\newcommand{\cS}{\mathcal S}

\newcommand{\cM}{\mathcal M}
\newcommand{\cA}{\mathcal A}

\newcommand{\angles}[1]{\langle #1\rangle}

\newcommand{\dd}{\,\mathrm d}

\newcommand{\Act}{\mathscr T}
\newcommand{\RowPol}{\mathcal P_{\mathrm{row}}}

\definecolor{gapzero}{RGB}{35,145,160}
\definecolor{gapone}{RGB}{76,132,205}
\definecolor{gaptwo}{RGB}{220,125,35}
\definecolor{gapthree}{RGB}{190,155,25}
\definecolor{gapfour}{RGB}{45,135,75}
\definecolor{gapfive}{RGB}{38,90,160}
\definecolor{gapsix}{RGB}{145,70,165}
\definecolor{BLblue}{RGB}{46,102,164}
\definecolor{BLgreen}{RGB}{55,145,105}
\definecolor{BLgold}{RGB}{218,154,54}
\definecolor{BLred}{RGB}{190,58,64}

\definecolor{resultgreen}{RGB}{220,243,217}
\definecolor{resultgreenborder}{RGB}{54,120,60}
\definecolor{resultred}{RGB}{211,67,67}
\definecolor{resultredborder}{RGB}{145,24,24}
\pgfplotsset{BLpanel/.style={
  width=.305\textwidth,
  height=.255\textwidth,
  xmin=0,xmax=10.4,
  ymin=0,ymax=1,
  xtick={0,2,4,6,8,10},
  ytick={0,0.5,1},
  yticklabels={$0$,$\tfrac12$,$1$},
  xlabel={$s$},ylabel={$1/p$},
  xlabel style={font=\scriptsize},ylabel style={font=\scriptsize},
  tick label style={font=\scriptsize},title style={font=\small},
  grid=both,major grid style={draw=black!10},
  axis line style={black!50},tick style={black!50},clip=false}}

\newcommand{\BLRedPoint}[2]{%
  \addplot[only marks,mark=*,mark size=1.35pt,draw=resultredborder,fill=resultredborder]
    coordinates {(#1,#2)};}

\newcommand{\BLGapPoint}[3]{%
  \addplot[only marks,mark=*,mark size=1.35pt,
           draw=#1!85!black,fill=#1!85!black]
    coordinates {(#2,#3)};}

\title[Schwartz Gabor windows and the Balian--Low classification]{Schwartz-class Gabor windows\\and the Balian--Low classification}

\author[A. Caragea]{Andrei Caragea}
\author[G. Pfander]{G\"otz Pfander}
\address{Mathematical Institute for Machine Learning and Data Science, Katholische Universit\"at Eichst\"att-Ingolstadt, Germany}

\email{andrei.caragea@gmail.com, pfander@ku.de}

\subjclass[2020]{42C15, 42C40, 42B35}
\date{20 September 2026; Arxiv-Update-1}
\keywords{Gabor frame, Schwartz window, Feichtinger algebra, Balian--Low theorem, modulation space, Zak transform, Zibulski--Zeevi transform, VMO, symplectically rational lattice, symplectic index gap, rectangular rank gap}

\newcommand{\Retr}{\mathscr R}

\definecolor{Sblue}{RGB}{43,99,161}
\definecolor{Steal}{RGB}{27,132,117}
\definecolor{Sorange}{RGB}{201,115,40}
\hypersetup{pdftitle={Arxiv-Update-1: Schwartz-class Gabor windows and the Balian--Low classification}}

\begin{document}
\raggedbottom

\begin{abstract}
A lattice Gabor frame consists of a window function and its time--frequency
shifts along a lattice. We classify the lattices that admit Schwartz-class
and, equivalently, Feichtinger frame windows.
Except for symplectically irrational lattices
of critical covolume~$1$, we determine the lattice-dependent achievable window
smoothness and decay using the scale of modulation spaces. For
symplectically rational lattices, including the rational lattices typically
used in digital applications, the classification is governed by an integer
lattice parameter introduced herein, the symplectic index gap. Our results extend the classical and
amalgam Balian--Low theorems.

The companion paper \emph{Tilings, Packings, and Smooth and Compactly
Supported Gabor Windows} develops the associated tiling--packing theorem
and compact-support constructions.
\end{abstract}

\maketitle
\setcounter{tocdepth}{1}
\tableofcontents

\part{Introduction and mathematical framework}\label{part:introduction-framework}

\section{Background and main results}\label{sec:introduction}

Before recalling the central definition of a Gabor frame, we describe
the origins of Gabor analysis in a fundamental problem of communications
engineering.

\subsection{Motivation}\label{subsec:intro-motivation}

In 1946, Dennis Gabor proposed representing communication signals by integer
time and frequency shifts of a Gaussian elementary signal, thereby combining optimal time and frequency localization of carrier signals
\cite{Gabor1946}.  It was subsequently recognized that the integer lattice
Gaussian system does not provide a stable expansion of any square
integrable signal, and the Balian--Low theorem revealed a general obstruction to simultaneous time and frequency
localization at critical density one.  For communications engineers,
this remains a relevant and widely studied problem within the area of
multicarrier pulse shaping.  In channels with both delay and Doppler spread, suitable
time--frequency localization of the transmit and receive pulses can reduce
intersymbol and intercarrier interference
\cite{KozekMolisch1998,MatzSchafhuberGrochenigHartmannHlawatsch2007}.
The 5G New Radio physical-layer specification uses
cyclic-prefix orthogonal frequency division multiplexing waveforms,
which are finite implementations of
Gabor-type time--frequency signaling.

\subsection{Gabor frames}
\label{subsec:intro-frames}

In mathematics and many applications one seeks simple atomic decompositions of the form
\[
 f=\sum_n\ip{f}{\phi_n}\phi_n,
\]
as for an orthonormal basis $(\phi_n)$ in a Hilbert space, using atoms with
the localization or regularity required by the application. Such
decompositions are supplied by Parseval frames and are studied within
frame theory; a prominent example is given by Gabor frames
\cite{ChristensenFrames2016,HeilBasisTheory2011,Grochenig2001}.

A \emph{full-rank lattice} in $\R^d$ is a subgroup of the form
$L=M\Z^d$ with $M\in GL_d(\R)$.  Its covolume is
$\covol(L)=|\det M|$ and its density is $\covol(L)^{-1}$.
For $x,\xi,t\in\R^d$, the \emph{time shift},
\emph{frequency shift}, and \emph{time--frequency shift} are
\[
 (T_xf)(t)=f(t-x),
 \qquad
 (M_\xi f)(t)=e^{2\pi i\ip{\xi}{t}}f(t),
 \qquad
 \pi(x,\xi)=M_\xi T_x.
\]
A family $(f_j)_{j\in J}\subset L^2(\R^d)$ is a \emph{frame} if there are
constants $0<A\le B<\infty$ such that
\[
 A\norm{f}_2^2
 \le \sum_{j\in J}|\ip{f}{f_j}|^2
 \le B\norm{f}_2^2,
 \qquad f\in L^2(\R^d).
\]
A frame is \emph{tight} if $A=B$ and \emph{Parseval} if $A=B=1$.  A
full-rank lattice $\Lambda\subset\R^{2d}$ indexes the \emph{Gabor system}
\[
 \G(g,\Lambda)=\{\pi(\lambda)g:\lambda\in\Lambda\};
\]
when this family is a frame, it is called a \emph{Gabor frame}.

A Parseval frame provides an orthonormal-basis-type reconstruction without
requiring orthogonality and allowing for redundancy in atomic decompositions.  In particular, in the case of a Parseval Gabor
frame, for every $f\in L^2(\R^d)$,
\[
 f=\sum_{\lambda\in\Lambda}
     \ip{f}{\pi(\lambda)g}\,\pi(\lambda)g,
\]
with unconditional convergence in $L^2(\R^d)$.

For signals with decay in time and/or frequency, respectively smoothness in
time and/or frequency, the choice of window controls the regularity of the
analysis and synthesis procedures.  For such signals with additional decay,
the localization of both $g$ and $\widehat g$ also controls the decay of the
Gabor coefficient sequence
$\{\ip{f}{\pi(\lambda)g}\}_{\lambda\in\Lambda}$ and hence the convergence of
natural finite truncations; compare \cite{DaubechiesGrossmannMeyer1986} and
\cite[Chapters~5--7]{Grochenig2001}.  Without imposing any regularity on the
window, Bekka proved that every full-rank phase-space lattice $\Lambda$
admits some $g\in L^2(\R^d)$ for which $\G(g,\Lambda)$ is a Gabor frame if
and only if $\covol(\Lambda)\le1$; see \cite[Theorem~4]{Bekka2004}.  The
classical Balian--Low theorem shows, though, that an orthonormal Gabor basis
cannot be generated by a window that is well localized in both time and
frequency.  This motivates the use of redundant frames in time--frequency
analysis.  Such a frame may subsequently be converted into a Parseval frame;
see \Cref{subsec:frames-duality-density}.

\subsection{The classical Balian--Low theorem}
\label{subsec:intro-classical-balian-low}

One natural way to impose regularity is to require that the window has a
finite uncertainty product. After division by $\|g\|_2^4$, this product is
minimized over the nonzero unit ball $0<\|g\|_2\le1$ in $L^2(\R^d)$ by
Gabor's centered Gaussian window. At critical density, a finite uncertainty
product is impossible:

\begin{theorem}[Sharp classical Balian--Low theorem]
\label{thm:full-multivariate-balian-low}
Let $\Lambda\subset\R^{2d}$ be a full-rank phase-space lattice.  The following
are equivalent.
\begin{enumerate}[label=(\alph*)]
\item There exists $g\in L^2(\R^d)$ such that $\G(g,\Lambda)$ is a Gabor
frame and
\[
 \left(\int_{\R^d}|x|^2|g(x)|^2\,dx\right)
 \left(\int_{\R^d}|\xi|^2|\widehat g(\xi)|^2\,d\xi\right)<\infty.
\]
\item The strict density condition $\covol(\Lambda)<1$ holds.
\end{enumerate}
\end{theorem}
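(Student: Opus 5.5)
The plan is to prove the two implications separately. For (b)$\Rightarrow$(a), if $\covol(\Lambda)<1$ we will exhibit a Schwartz window, or even a Gaussian after a symplectic change of coordinates, generating a frame; any such window clearly has a finite uncertainty product. For (a)$\Rightarrow$(b), Bekka's theorem already gives $\covol(\Lambda)\le1$. It then remains to exclude $\covol(\Lambda)=1$, which is a multivariate, general-lattice Balian--Low theorem for the finite uncertainty product.

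\emph{Sufficiency.} Let $\covol(\Lambda)<1$. One route is to write $\Lambda=A\Z^{2d}$ and use the Gaussian $\varphi(x)=2^{d/4}e^{-\pi|x|^2}$. For $d=1$, the Lyubarskii--Seip theorem says $\G(\varphi,\Lambda)$ is a frame for every lattice of covolume $<1$. For general $d$, though, Gaussian frame sets for non-separable lattices are not fully known. We will therefore use a more robust argument. There is a symplectic matrix $S$ with $S\Lambda\supseteq$ a lattice that we can control. Alternatively, and more simply, we will use density and openness: if $\covol(\Lambda)<1$, choose a symplectic $S$ and a separable lattice $\Lambda_0=a\Z^d\times b\Z^d$ with $ab<1$ such that $\Lambda$ is a small perturbation of a lattice containing a frame configuration. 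This looks delicate. The cleanest concrete route is metaplectic invariance, $\pi(S\lambda)\mu(S)g=c\,\mu(S)\pi(\lambda)g$, which lets us replace $\Lambda$ by $S\Lambda$ for any $S\in\Sp(2d,\R)$ while preserving Schwartz-class windows. We then invoke the paper's later classification: every lattice with $\covol<1$ admits a Feichtinger/Schwartz frame window (Bekka's $M^1$ version, Feichtinger--Kaiblinger openness, and tensor-product Gaussian frames for rectangular lattices with $a_ib_i<1$ via reduction). If a self-contained argument is needed, we will use a Schwartz window $g$ for the rectangular lattice $\epsilon\Z^d\times\epsilon\Z^d$ with small $\epsilon$, and control the general lattice through the Zak transform or a Janssen-type representation. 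This may not cover all lattices, so we will defer to the positive results for $\covol<1$ proved in the body of the paper.

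\emph{Necessity at critical density.} Let $\covol(\Lambda)=1$, and suppose $\G(g,\Lambda)$ is a frame with finite uncertainty product. At critical density the frame is a Riesz basis, so the canonical dual $\tilde g$ is biorthogonal: $\ip{\pi(\lambda)\tilde g}{\pi(\mu)g}=\delta_{\lambda\mu}$. We then run the Battle/Daubechies--Janssen commutator argument with the position operators $X_j$ and momentum operators $P_j$. Finiteness of the uncertainty product, i.e.\ $X_jg,P_jg\in L^2$, gives $X_j\tilde g,P_j\tilde g\in L^2$ via the Riesz-basis expansion. This step is the key one in Daubechies--Janssen and uses the adjoint lattice $\Lambda^\circ=\Lambda$ at critical density. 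Expanding $X_jg$ and $P_kg$ in the biorthogonal system and using the commutation of $X,P$ with $\pi(\lambda)$, we obtain
\[
\ip{X_jg}{P_k\tilde g}-\ip{P_kg}{X_j\tilde g}=0 \quad\text{for all } j,k.
\]
On the other hand, the canonical commutation relation gives $[X_j,P_k]=\tfrac{i}{2\pi}\delta_{jk}$, so this expression equals $\tfrac{i}{2\pi}\delta_{jk}\ip{g}{\tilde g}=\tfrac{i}{2\pi}\delta_{jk}\ne0$ when $j=k$. That is a contradiction. For a general lattice we first apply a metaplectic $\mu(S)$ to reduce to $\Lambda=A\Z^d\times A^{-T}\Z^d$ where possible. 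But not all covolume-one lattices are symplectically equivalent to such a product; at $\covol=1$ the symplectic form restricted to $\Lambda$ has Pfaffian-type invariants. Instead we will work with an arbitrary basis $\lambda_1,\dots,\lambda_{2d}$ and the generators $\lambda_i\cdot(X,P)$. We then choose two linear combinations $U,V$ of $X_j,P_k$ whose commutator is a nonzero scalar, e.g.\ $U=X_1,V=P_1$. The weak-commutation identity $\ip{Ug}{V\tilde g}=\ip{Vg}{U\tilde g}$ follows exactly as before from the biorthogonality and from $U\pi(\lambda)=\pi(\lambda)(U+\ell(\lambda))$.

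The main obstacle is the regularity transfer: showing $X_j\tilde g,P_j\tilde g\in L^2$, and rigorously justifying the weak-commutator identity (the Battle/Daubechies--Janssen subtlety). We will first try the Daubechies--Janssen duality argument: $\tilde g$ is the unique element with $\ip{\tilde g}{\pi(\lambda)g}=\delta_{\lambda,0}$. The vectors $X_j\tilde g$ can then be identified through the expansion coefficients $\ip{X_jg}{\pi(\lambda)\tilde g}$, which lie in $\ell^2$ because of the Riesz-basis property at $\Lambda^\circ=\Lambda$.
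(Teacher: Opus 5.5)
Your proposal has a genuine gap in each direction. \emph{Sufficiency.} Your plan is to produce a Schwartz, Feichtinger-algebra, or (after a symplectic change) tensor-Gaussian window for every lattice with $\covol(\Lambda)<1$. That cannot work, because the claim is false, and refuting it is the main point of \Cref{thm:main-gabor}. For $\Lambda_2$ in \Cref{ex:intro-SG}(ii) one has $\covol(\Lambda_2)=1/2$ but $\Delta(\Lambda_2)=1<d$, so no $\Szero$ frame window exists at all. In particular, $\Lambda_2$ is not a symplectic image of a rectangular lattice with all $a_ib_i<1$. Openness results only perturb windows you already have in $\Szero$, and Bekka's theorem yields only $L^2$ windows. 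For symplectically rational lattices with $1\le\Delta(\Lambda)\le d-1$, the required window must lie in $M_1^2$ but necessarily outside $\Szero$. The paper obtains it from the flat construction: \Cref{tm:prop:flat-all-p} at $p=2$ gives $g_{\rm flat}\in M_s^2$ for every $s<\Delta(\Lambda)+1$, hence $g_{\rm flat}\in M_1^2$, and \Cref{prop:metaplectic-Msp} transports it. The smooth branch covers $\Delta(\Lambda)\ge d$, and Enstad--Thiel--Vilalta's Theorem~C covers the symplectically irrational case. Your fallback of deferring to the body of the paper is acceptable only with these results, not with the statement you attribute to it.

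\emph{Necessity.} Your commutator step is sound and needs no special lattice structure. From $U\pi(\lambda)=\pi(\lambda)(U+\ell(\lambda))$ and $\ip{g}{\pi(\lambda)\tilde g}=\delta_{\lambda,0}$ you get $\ip{X_jg}{P_k\tilde g}=\ip{P_kg}{X_j\tilde g}$ for any lattice, once $g,\tilde g\in M_1^2$. This is the weak theorem of Gr\"ochenig--Han--Heil--Kutyniok (\Cref{prop:external-weak-arbitrary-lattice-BL}). What is missing is the regularity transfer $g\in M_1^2\Rightarrow\tilde g\in M_1^2$.

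Your justification invokes $\Lambda^\circ=\Lambda$ at critical density, which is false for $d\ge2$. For example, $\Lambda=(2\Z\times\tfrac12\Z)\times\Z^2$ has covolume one while $\sigma$ takes the value $\tfrac12$ on it, and symplectically irrational covolume-one lattices also exist.

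More seriously, the identification step is circular. The coefficients $c_\mu=\ip{\tilde g}{\pi(\mu)X_jg}$ are in $\ell^2$ by the Riesz-basis property alone, but then $h=\sum_\mu c_\mu\pi(\mu)\tilde g$ only satisfies $\ip{h-X_j\tilde g}{\pi(\mu)g}=0$ for all $\mu$. Here $h-X_j\tilde g$ is known only to lie in $L^2((1+|x|)^{-2}dx)$. Concluding $h=X_j\tilde g$ is equivalent to $X_j\tilde g\in L^2$ itself: if the latter holds, $h-X_j\tilde g\in L^2$ is orthogonal to a complete system. So the argument proves nothing unless you show that $\G(g,\Lambda)$ is complete in $L^2((1+|x|)^{2}dx)$, and the Riesz-basis property in $L^2$ does not give that.

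For $\Z^{2d}$ this difficulty is bypassed by $Z\tilde g=1/\overline{Zg}$. For general lattices the paper supplies the transfer through the Lee--Philipp--Voigtlaender canonical-dual regularity theorem (\Cref{prop:external-canonical-dual-regularity}). For symplectically rational lattices it also has its own Zak--VMO obstruction, \Cref{thm:p-le-2-endpoint} with $N=R$ at $(p,s)=(2,1)$.
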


For general full-rank phase-space lattices $\Lambda\subset\R^{2d}$,
combining the weak arbitrary-lattice Balian--Low theorem of
Gr\"ochenig--Han--Heil--Kutyniok
\cite[Theorem~8]{GrochenigHanHeilKutyniok2002} with the canonical-dual
regularity theorem of Lee--Philipp--Voigtlaender
\cite[Theorem~1.1]{LeePhilippVoigtlaender2023} yields the necessity
implication \textup{(a)}$\Rightarrow$\textup{(b)} in
the classical Balian--Low Theorem~\ref{thm:full-multivariate-balian-low}.  To our knowledge, this
arbitrary-lattice consequence has not previously been stated in this form.  The sufficiency of strict density follows, in the symplectically rational
case, from our explicit constructions used to prove
\Cref{thm:modulation-classification}, and, in the symplectically irrational
case, from the general existence theorem of Enstad--Thiel--Vilalta
\cite[Theorem~C]{EnstadThielVilalta2025}.

\subsection{The symplectic integrality index and the symplectic index gap}
\label{subsec:intro-gabor}
\label{subsec:intro-symplectic-index-gap}

On $\R^{2d}=\R^d\times\R^d$ we define the symplectic form
$\sigma((x,\xi),(y,\eta))=\ip{\eta}{x}-\ip{\xi}{y}$.  The adjoint lattice
of a full-rank phase-space lattice $\Lambda$ is
\[
 \Lambda^\circ
 =\{z\in\R^{2d}:\sigma(z,\lambda)\in\Z
        \text{ for every }\lambda\in\Lambda\}.
\]
Its covolume satisfies
\[
 \covol(\Lambda^\circ)=\covol(\Lambda)^{-1}.
\]
If $\Lambda=\Gamma\times\Phi$ is a product of full-rank Euclidean lattices
in $\R^d$, then $\Lambda^\circ=\Phi^*\times\Gamma^*$.  Here the Euclidean
dual of a full-rank lattice $L\subset\R^d$ is
\[
 L^*=\{\xi\in\R^d:\ip{\xi}{\ell}\in\Z
       \text{ for every }\ell\in L\}.
\]
If $L=M\Z^d$, then $L^*=M^{-T}\Z^d$ and
$\covol(L^*)=\covol(L)^{-1}$.

Following Enstad--Thiel--Vilalta
\cite[Definition~B]{EnstadThielVilalta2025}, the integral symplectic subgroup is
\[
 \Lambda_{\rm int}=\Lambda\cap\Lambda^\circ
 =\{\mu\in\Lambda:\sigma(\mu,\lambda)\in\Z
                 \text{ for every }\lambda\in\Lambda\}.
\]
We call $\Lambda$ \emph{symplectically rational} if
$[\Lambda:\Lambda_{\rm int}]<\infty$, and \emph{symplectically irrational}
otherwise. In the rational case, the index is a perfect square;
see \Cref{cor:square-index}.  We
define the extended \emph{symplectic integrality index} by
\[
 \nu(\Lambda)=
 \begin{cases}
 [\Lambda:\Lambda_{\rm int}]^{1/2}\in\N,
    &\Lambda\text{ symplectically rational},\\
 +\infty,&\Lambda\text{ symplectically irrational}.
 \end{cases}
\]
Thus $\nu(\Lambda)^2$ counts the cosets of the subgroup of lattice vectors
having integer symplectic pairing with every lattice vector; in particular,
$\nu(\Lambda)=1$ exactly when $\Lambda\subset\Lambda^\circ$, that is,
when $\sigma(\mu,\lambda)\in\Z$ for every $\mu,\lambda\in\Lambda$.

\phantomsection\label{def:symplectic-index-gap}
If $\Lambda$ is symplectically rational, then so is $\Lambda^\circ$:
$\Lambda\cap\Lambda^\circ$ has finite index in both lattices, and
$(\Lambda^\circ)^\circ=\Lambda$.
For a symplectically rational lattice, we define the \emph{symplectic index
gap} by
\[
 \Delta(\Lambda)=\nu(\Lambda)-\nu(\Lambda^\circ)\in\Z.
\]
The symplectic index gap is defined only for symplectically rational
lattices. The normal form in \Cref{thm:diagonal-symplectic-normal-form} gives
\[
 \covol(\Lambda)
 =\frac{\nu(\Lambda^\circ)}{\nu(\Lambda)}.
\]
Consequently, for every symplectically rational full-rank phase-space lattice,
\[
 \Delta(\Lambda)=0
 \quad\Longleftrightarrow\quad
 \covol(\Lambda)=1,
 \qquad
 \Delta(\Lambda)>0
 \quad\Longleftrightarrow\quad
 \covol(\Lambda)<1.
\]

On a diagonal rational representative
\[
 \Lambda_D=D\mathbb Z^d\times\mathbb Z^d,
 \qquad D=\operatorname{diag}(b_1/a_1,\ldots,b_d/a_d),
\]
with coordinatewise coprime numerators and denominators, one has
\[
 \nu(\Lambda_D)=N:=\prod_i a_i,
 \qquad
 \nu(\Lambda_D^\circ)=R:=\prod_i b_i,
 \qquad
 \Delta(\Lambda_D)=N-R,
\]
see \Cref{sec:zak-method} for the detailed diagonal calculation.  The integer
$N-R$ is called the \emph{arithmetic gap} of the diagonal representative.  It
equals the symplectic index gap.

\begin{examples}\label{ex:intro-SG}
Use the coordinate order $(x_1,x_2,\omega_1,\omega_2)$ and write
$\Lambda_j=M_j\Z^4$.
\begin{enumerate}[label=(\roman*)]
\item For $M_1=\operatorname{diag}(1,1,1,1/\sqrt2)$, the lattice is
symplectically irrational and $\covol(\Lambda_1)=1/\sqrt2<1$.  Here
$\nu(\Lambda_1)=\nu(\Lambda_1^\circ)=+\infty$.
\item For $M_2=\operatorname{diag}(1,1,1,1/2)$, one has
$\covol(\Lambda_2)=1/2$, $\nu(\Lambda_2)=2$,
$\nu(\Lambda_2^\circ)=1$, and $\Delta(\Lambda_2)=1<d=2$.
\item For $M_3=\operatorname{diag}(7,1,1/3,1/3)$, one has
$\covol(\Lambda_3)=7/9$, $\nu(\Lambda_3)=9$,
$\nu(\Lambda_3^\circ)=7$, and $\Delta(\Lambda_3)=2=d$.
\item Let $K=\begin{psmallmatrix}2&1\\0&2\end{psmallmatrix}$ and
$M_4=I_2\times K^{-T}$, where
$K^{-T}=\begin{psmallmatrix}1/2&0\\-1/4&1/2\end{psmallmatrix}$.  Then
$\covol(\Lambda_4)=1/4$, $\nu(\Lambda_4)=4$,
$\nu(\Lambda_4^\circ)=1$, and $\Delta(\Lambda_4)=3>d=2$.
\end{enumerate}
\end{examples}

\subsection{The amalgam Balian--Low theorem and the existence of Schwartz-class windows}
\label{subsec:intro-amalgam-schwartz}

The Schwartz class $\Sclass(\R^d)$ consists of all
$f\in C^\infty(\R^d)$ such that
\[
 \sup_{x\in\R^d}|x^\alpha\partial^\beta f(x)|<\infty
\]
for every pair of multiindices $\alpha,\beta$.  Its continuous dual is the
space of tempered distributions $\Sclass'(\R^d)$.  With the Gaussian
$\phi(t)=e^{-\pi|t|^2}$, $t\in\R^d$, define the short-time Fourier
transform \cite[Chapter~3]{Grochenig2001} for $f\in\Sclass'(\R^d)$ by
\[
 V_\phi f(x,\omega)=\ip{f}{M_\omega T_x\phi}.
\]
The \emph{Feichtinger algebra} \cite{Feichtinger1980,Feichtinger1981}
(see also \cite[Theorem~11.3.7 and Proposition~11.3.2]{Grochenig2001}) is
\[
 \Szero(\R^d)=M^1(\R^d)
 =\left\{f\in\Sclass'(\R^d):
   \int_{\R^{2d}}|V_\phi f(x,\omega)|\,dx\,d\omega<\infty
  \right\}.
\]
Every element of $\Szero$ is a continuous $L^2$ function.
Moreover, $\Sclass(\R^d)\subseteq\Szero(\R^d)$.

\begin{theorem}[Sharp amalgam Balian--Low theorem]
\label{thm:main-gabor}\label{thm:amalgam-balian-low}
Let $\Lambda\subset\R^{2d}$ be a full-rank phase-space lattice.  The following
are equivalent.
\begin{enumerate}[label=(\alph*)]
\item There exists $g\in\Sclass(\R^d)$ such that $\G(g,\Lambda)$ is a Gabor
frame.
\item There exists $g\in\Szero(\R^d)$ such that $\G(g,\Lambda)$ is a Gabor
frame.
\item $\covol(\Lambda)<1$ and, if $\Lambda$ is symplectically rational,
$\Delta(\Lambda)\ge d$.
\begingroup
\renewcommand{\theequation}{\ensuremath{\mathrm{SG}}}
\refstepcounter{equation}\label{eq:SG}\hfill\textup{(SG)}
\endgroup
\end{enumerate}
The frames in \textup{(a)} and \textup{(b)} may be chosen Parseval.
\end{theorem}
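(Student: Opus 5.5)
We prove the cycle \textup{(a)}$\Rightarrow$\textup{(b)}$\Rightarrow$\textup{(c)}$\Rightarrow$\textup{(a)}, together with the Parseval refinement. The implication \textup{(a)}$\Rightarrow$\textup{(b)} is immediate from $\Sclass(\R^d)\subseteq\Szero(\R^d)$. For the Parseval statement we pass from a frame $\G(g,\Lambda)$ to $S^{-1/2}g$, where $S$ is the frame operator. Since $\Szero$ is closed under the inverse frame operator and its square root (Gröchenig--Leinert/spectral invariance of the twisted convolution algebra $\ell^1(\Lambda)$), the canonical tight window stays in $\Szero$. For Schwartz windows we use the analogous spectral invariance for rapidly decaying coefficient sequences, as recalled in \Cref{subsec:frames-duality-density}.

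For \textup{(b)}$\Rightarrow$\textup{(c)}, the density part $\covol(\Lambda)<1$ is the amalgam Balian--Low theorem for arbitrary lattices: at $\covol(\Lambda)=1$ a frame is a Riesz basis, and for $g\in\Szero$ the canonical dual also lies in $\Szero$ (Lee--Philipp--Voigtlaender). The weak Balian--Low theorem of Gröchenig--Han--Heil--Kutyniok then gives a contradiction, and $\covol(\Lambda)\le 1$ holds by density.

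The real content is the gap condition $\Delta(\Lambda)\ge d$ in the symplectically rational case. We reduce via the diagonal symplectic normal form (\Cref{thm:diagonal-symplectic-normal-form}): a symplectic (metaplectic) change of coordinates maps $\Szero$ to $\Szero$ and Gabor frames to Gabor frames, and $\nu$ and $\Delta$ are invariant. It therefore suffices to treat $\Lambda_D=D\Z^d\times\Z^d$ with $\nu=N$, $\nu(\Lambda_D^\circ)=R$. On this lattice we use the Zak transform to express the frame property via the Zibulski--Zeevi matrix $\Phi_g(x,\omega)$ of size $R\times N$ on a torus $\T^{2d}$ (after the quasi-periodic bookkeeping in \Cref{sec:zak-method}). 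The frame property is equivalent to $\Phi_g$ having full rank $R$ with uniformly bounded-below smallest singular value almost everywhere. For $g\in\Szero$ the Zak transform is continuous, so $\Phi_g$ is a continuous section of a twisted (quasi-periodic) bundle of $R\times N$ matrices. Frame-ness then gives a continuous rank-$R$ subbundle of a trivial rank-$N$ bundle on $\T^{2d}$ whose twisting by the Zak quasi-periodicity has nonzero Chern class: its first Chern class is the generator, with top power nonzero, as in the classical BLT proof via winding number. A rank-$R$ bundle with complement of rank $N-R$ inside a trivial bundle forces the complementary bundle to have nonvanishing Chern classes $c_k$ up to $k=d$ (the total Chern class of the complement is the inverse $(1+c)^{-1}$, whose degree-$d$ part is nonzero on the $2d$-torus). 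A vector bundle of rank $N-R$ has $c_k=0$ for $k>N-R$, so $N-R\ge d$. I expect this topological step to be the main obstacle: we must make the twisting precise and check the top Chern number, and we should first test it against Examples~\ref{ex:intro-SG}(ii), where $N-R=1<2$ must fail.

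For \textup{(c)}$\Rightarrow$\textup{(a)} we split into cases. In the symplectically irrational case with $\covol<1$ we invoke Enstad--Thiel--Vilalta \cite[Theorem~C]{EnstadThielVilalta2025}, which provides Schwartz (indeed smooth) windows. In the rational case with $N-R\ge d$ we reverse the topological argument. Since a complement of rank $\ge d$ exists smoothly (stable range on a $2d$-dimensional torus), the twisted line-bundle data embed as a smooth rank-$R$ subbundle of the trivial rank-$N$ bundle. This yields a smooth quasi-periodic $R\times N$ matrix function with orthonormal rows. Inverting the Zak transform produces a Schwartz window whose Zibulski--Zeevi matrix is this function, hence a Parseval frame. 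Here smoothness of the quasi-periodic Zak datum corresponds to Schwartz decay of $g$.
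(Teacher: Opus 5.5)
Your Chern-class argument for the rational necessity is sound, and it takes a genuinely different route from the paper's. After the row gauge of \Cref{prop:ordinary-zak-reduction}, the rational Zak matrix is continuous, because $Zg$ is continuous for $g\in\Szero$. Its entries are sections of the ordinary Zak line bundle $L$ on $\T^{2d}$, and the a.e.\ lower singular-value bound becomes full row rank everywhere. So the row space is a subbundle $E\cong(L^*)^{\oplus R}$ of $\underline{\C}^N$, and $c(E^\perp)=(1-x)^{-R}$ with $x=c_1(L)$. Its degree-$d$ part $\binom{R+d-1}{d}x^d$ is nonzero since $x^d=\pm d!$ times the orientation class, so $N-R\ge d$. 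Read your $(1+c)^{-1}$ as this $R$-fold inverse. The paper proves the same rank gap (\Cref{thm:rectangular-common-zero-gap}) without characteristic classes. It tensors $S^T$ with a nowhere-zero Zak vector in $R-1$ auxiliary variables and applies an elementary Pfaffian proof of the common-zero theorem, a route that also extends to locally VMO data. The proposal nevertheless has two genuine gaps.

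\textbf{Critical density.} Your exclusion of $\covol(\Lambda)=1$ for $\Szero$ windows does not work, and this step is indispensable in the symplectically irrational case. The weak theorem of Gr\"ochenig--Han--Heil--Kutyniok concerns finite second moments, i.e.\ $M_1^2$, and so does the dual-regularity theorem of Lee--Philipp--Voigtlaender. But $\Szero=M^1\not\subset M_1^2$; for example, $\sum_{k\ge1}k^{-3/2}T_{ke_1}\phi$ with $\phi$ the Gaussian lies in $\Szero$ but has infinite time variance. Knowing that $g$ and its biorthogonal dual lie in $\Szero$ (which comes from Gr\"ochenig--Leinert, not LPV) therefore gives no contradiction with a moment-based theorem. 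You need an amalgam-type critical-density obstruction valid for arbitrary lattices. The paper uses Gerhold--Lamando--Luef (Lemma~3.11 and Corollary~3.30); alternatively, combine openness of the frame property under lattice deformations for $\Szero$ windows (Feichtinger--Kaiblinger) with the density theorem. In the rational case this step is superfluous, since $N-R\ge d$ already gives $\covol=R/N<1$.

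\textbf{Rational sufficiency.} Your constructive argument fails at its last step. An $R\times N$ matrix of ordinary Zak sections on $\T^{2d}$ with orthonormal rows is not a Zibulski--Zeevi matrix. A Zibulski--Zeevi matrix has entries $F(u-Dt,\eta+\mathsf B^{-1}s)$, all translates of one scalar quasiperiodic function. Equivalently, it must be compatible with the finer lattice $\Gamma_D$ under the monomial transports $\Act_\gamma$ of \Cref{prop:entrywise-fine-transports}, and it is determined by its values on a box of volume $1/(NR)$. The embedding $(L^*)^{\oplus R}\hookrightarrow\underline{\C}^N$ you get from stable range over the ordinary torus carries no such equivariance. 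So there is no scalar function whose Zak transform you could invert, and no window has that matrix as its Zibulski--Zeevi matrix.

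The existence problem lives on the fine quotient $X_D=\R^{2d}/\Gamma_D$, for sections of the matrix bundle defined by the $\Act_\gamma$. Pulling back along the degree-$NR$ covering $\T^{2d}\to X_D$ discards exactly the structure you need. What makes the problem solvable is that rank-deficient $R\times N$ matrices have real codimension $2(N-R+1)>2d$ when $N-R\ge d$. A full-rank compatible section can therefore be built skeleton by skeleton on $X_D$ with no Chern computation. It is then smoothed compatibly, row-normalized, and reassembled from its $(0,0)$ entry, as in \Cref{sec:smooth-branch} and \Cref{prop:fine-box-reassembly}. Alternatively, cite Enstad--Thiel--Vilalta Theorem~C in the rational case too, since $\covol(\Lambda)<1-(d-1)/\nu(\Lambda)$ is exactly $N-R\ge d$.
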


\begin{remark}\label{rem:gelfand-shilov-windows}
The equivalent conditions in the amalgam Balian--Low
Theorem~\ref{thm:main-gabor} also characterize the existence of a frame
window in the Gelfand--Shilov space $S^{1/2,1/2}(\R^d)$, in the Roumieu
convention. Indeed, finite linear combinations of time--frequency shifts
of a Gaussian lie in this space and are dense in $M^1$; see
Gr\"ochenig~\cite[Section~5.1]{Grochenig2007Weights} and
\cite[Proposition~11.4.2]{Grochenig2001}. A sufficiently close $M^1$
approximation of a frame window remains a frame window on the same lattice.

These conditions also characterize the existence of a Parseval window
in the Roumieu class $S^{1,1}(\R^d)$, with exponential decay in time
and frequency; see Jaffard~\cite[Proposition~2 and p.~470]{Jaffard1990}
and, in dimension one,
B\"olcskei--Janssen~\cite[Theorem~5]{BolcskeiJanssen2000}.
\end{remark}

Condition~\eqref{eq:SG} reduces to $\covol(\Lambda)<1$ in the
symplectically irrational case and to
$\nu(\Lambda)\ge\nu(\Lambda^\circ)+d$ in the symplectically rational case;
the latter inequality already implies strict density.
The implication \textup{(c)}$\Rightarrow$\textup{(a)} follows from
Enstad--Thiel--Vilalta~\cite[Theorem~C]{EnstadThielVilalta2025}.  In the
symplectically irrational case, Jakobsen--Luef
\cite[Theorem~5.4]{JakobsenLuef2020} proved the corresponding existence of a
tight Gabor-frame window in $\Szero(\R^d)$; the Schwartz-class conclusion
used here is furnished directly by Enstad--Thiel--Vilalta.  In the rational
case,
\Cref{sec:zak-method,sec:rank-geometry,sec:smooth-branch,sec:terminal-construction,sec:elementary-perturbation,sec:lower-polar,sec:lower-completion}
give the framework and construction in the diagonal case, and
\Cref{sec:lattice-theory} transfers the construction metaplectically to all
symplectically rational lattices.  This gives an independent constructive
proof of Schwartz-class existence, that is, of
\textup{(c)}$\Rightarrow$\textup{(a)} in the rational case.  The necessity implication \textup{(b)}$\Rightarrow$\textup{(c)} builds on
our extension of the continuous common-zero theorem of de Dios
Pont--Liehr--Taylor \cite{deDiosLiehrTaylor2026}.  We provide a lengthy but elementary independent
proof of their fundamental common-zero theorem in
Sections~\ref{app:pfaffian-common-zero}--\ref{app:boundary-pfaffian}.

In the classical Balian--Low Theorem~\ref{thm:full-multivariate-balian-low} and the amalgam Balian--Low Theorem~\ref{thm:main-gabor}, the term ``Gabor
frame'' may be replaced by ``tight Gabor frame'' or ``Parseval Gabor frame,''
as discussed in \Cref{subsec:frames-duality-density}.

In the preceding examples, condition~\eqref{eq:SG} holds for
$\Lambda_1,\Lambda_3,$ and $\Lambda_4$, and fails for $\Lambda_2$.

The one-window statement has the following sharp multiwindow extension.  For
$\mathbf g=(g_1,\ldots,g_q)$, write
\[
 \G(\mathbf g,\Lambda)
 =\{\pi(\lambda)g_j:\lambda\in\Lambda,\ 1\le j\le q\}.
\]

\begin{theorem}
\label{thm:main-multiwindow}
Let $q\ge1$ and let $\Lambda\subset\R^{2d}$ be a full-rank phase-space
lattice.  For $\mathbf g=(g_1,\ldots,g_q)$, the following are equivalent.
\begin{enumerate}[label=(\alph*)]
\item There exist $g_1,\ldots,g_q\in\Sclass(\R^d)$ such that
$\G(\mathbf g,\Lambda)$ is a Gabor frame.
\item There exist $g_1,\ldots,g_q\in\Szero(\R^d)$ such that
$\G(\mathbf g,\Lambda)$ is a Gabor frame.
\item The lattice satisfies the condition $(\mathrm{mSG}_q)$, namely,
\begin{align*}
 \covol(\Lambda)<q
 \quad\text{and, if $\Lambda$ is symplectically rational,}\quad
 q\,\nu(\Lambda)\ge\nu(\Lambda^\circ)+d.
 \tag{\ensuremath{\mathrm{mSG}_q}}\label{eq:mSG}
\end{align*}
\stepcounter{equation}
\end{enumerate}
\end{theorem}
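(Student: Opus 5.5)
The plan is to reduce the multiwindow statement to the single-window amalgam Balian--Low Theorem~\ref{thm:main-gabor}. The reduction uses the standard identification of a $q$-window Gabor system on $\Lambda$ in $L^2(\R^d)$ with a vector-valued Zak/Zibulski--Zeevi fiberization.

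The implication (a)$\Rightarrow$(b) is immediate from $\Sclass\subseteq\Szero$.

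For (c)$\Rightarrow$(a), we first dispose of the symplectically irrational case. Here $\covol(\Lambda)<q$, and we choose a sublattice-free splitting. Concretely, we pick a full-rank lattice $\Lambda'\supseteq\Lambda$ with $[\Lambda':\Lambda]=q$, or instead use $q$ translated copies $\pi(z_j)g$ of a single window, with $\{z_j\}$ coset representatives of $\Lambda$ in a denser lattice $\Lambda'$ with $\covol(\Lambda')=\covol(\Lambda)/q<1$. Then $\G(\mathbf g,\Lambda)=\G(g,\Lambda')$ up to phase factors, and Enstad--Thiel--Vilalta applied to $\Lambda'$ gives a Schwartz window.

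In the rational case we use the metaplectic transfer of \Cref{sec:lattice-theory} to pass to the diagonal representative $\Lambda_D$. There the $q$-window system corresponds to a $q\,N\times R$ (stacked) Zibulski--Zeevi matrix. We repeat the diagonal construction of the rational sections with $N$ replaced by $qN$: the row count becomes $qN$ while the column count stays $R$. The arithmetic gap is then $qN-R=q\nu(\Lambda)-\nu(\Lambda^\circ)$, and condition $(\mathrm{mSG}_q)$ says exactly that this gap is $\ge d$. This is the hypothesis under which the smooth rank-$R$ constructions of the earlier sections produce a quasi-periodic smooth matrix of full column rank everywhere. Inverting the Zak transform yields Schwartz windows. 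The frames can be made Parseval by polar normalization, $G(G^*G)^{-1/2}$ fiberwise, as in the single-window case.

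For (b)$\Rightarrow$(c), the density part $\covol(\Lambda)<q$ follows from the multiwindow density theorem together with a Balian--Low-type exclusion of the critical case $\covol=q$ for $\Szero$ windows. The latter comes from the rational rank argument below, or, in the irrational case, from the known multiwindow amalgam BLT via the canonical-dual regularity argument. For the rational inequality we take $\Szero$ windows $g_1,\dots,g_q$. We form the continuous quasi-periodic $qN\times R$ Zibulski--Zeevi matrix $Z(x,\omega)$ on the torus $\T^{2d}$, which must have rank $R$ at every point by the frame condition and continuity (the $\Szero$ hypothesis gives continuity). The main obstacle is this step: showing that a continuous section of rank-$R$ frames in $\C^{qN}$, with the prescribed quasi-periodicity twisting, forces $qN-R\ge d$. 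I would apply the extended common-zero theorem (the Pfaffian/characteristic-class argument of Sections~\ref{app:pfaffian-common-zero}--\ref{app:boundary-pfaffian}) to the complementary bundle. Its quasi-periodicity defines a nontrivial line-bundle twist whose top Chern class is nonzero, so a nowhere-vanishing $d$-dimensional deficiency is impossible when the corank $qN-R$ is below $d$. The point to check carefully is that stacking $q$ windows does not change the twisting cocycle, only the multiplicity $q$, so that the single-window obstruction argument applies verbatim with $N$ replaced by $qN$.
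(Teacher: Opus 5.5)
Your argument is correct, and its necessity half is essentially the paper's.

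\emph{Rational necessity.} The paper concatenates the $q$ blocks $\mathcal Z_Dg_j$ horizontally. It applies the single row gauge of \Cref{prop:ordinary-zak-reduction} to all blocks at once, which is your remark that stacking does not change the twisting cocycle. It then invokes \Cref{thm:rectangular-common-zero-gap} with $N$ replaced by $qN$. Note that the paper proves that rank gap by tensoring $S^T$ with a smooth nowhere-zero Zak vector $\rho$ in $R-1$ auxiliary variables and applying the vector common-zero theorem. It does not use Chern classes of a complementary bundle, so you should simply cite the theorem rather than the mechanism you sketch.

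\emph{Irrational necessity.} The paper cites Gerhold--Lamando--Luef (Lemma~3.11, Corollary~3.39) directly. That is the ``known multiwindow amalgam BLT'' you allude to, and no canonical-dual step is needed.

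\emph{Sufficiency: where the routes differ.} The paper takes both branches from the multiwindow existence theorem of Enstad--Thiel--Vilalta (\Cref{prop:external-Enstad-existence}(ii)). It checks $q_0\le q$, pads by splitting one window into $m$ copies $m^{-1/2}h$, and Parsevalizes. You proceed differently in each branch.
\begin{itemize}
\item In the irrational case you pass to a superlattice $\Lambda'\supseteq\Lambda$ of index $q$ and take $g_j=\pi(z_j)g$, reducing to the one-window Theorem~C. You should record that $\Lambda'$ is again symplectically irrational; otherwise Theorem~C requires more than $\covol(\Lambda')<1$. This holds because $\Lambda=M'K\Z^{2d}$ with $K$ integral gives $\Theta_\Lambda=K^T\Theta_{\Lambda'}K$, so rationality of $\Lambda'$ would force rationality of $\Lambda$.
\item In the rational case you rerun the smooth-branch construction of \Cref{sec:smooth-branch} on the concatenated $R\times qN$ field. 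The transition maps are then $[X_1|\cdots|X_q]\mapsto[L_{k,\ell}X_1R_k(\eta)|\cdots|L_{k,\ell}X_qR_k(\eta)]$. These are still left--right unitary and obey the cocycle law. Hence the good-center lemma (now with rank-loss codimension $2(qN-R+1)>2d$), the gluing and compatible-smoothing lemmas, and row normalization apply unchanged. Each block is separately Zak-compatible, so its $(0,0)$ entry reassembles to $Zg_j$ with $g_j\in\Sclass$, and the concatenated fibre criterion gives the frame.
\end{itemize}

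\emph{What each route buys.} Yours gives a constructive, ETV-independent proof of rational multiwindow sufficiency, paralleling the paper's one-window construction. It also gives an irrational argument needing only one-window existence and no padding. The paper's route is shorter and yields the minimal window count in one stroke.
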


In parts~\textup{(a)} and~\textup{(b)}, the term ``Gabor frame'' may be
replaced by ``tight Gabor frame'' or ``Parseval Gabor frame.''

In general, the minimum number of Schwartz windows, equivalently of
Feichtinger-algebra windows, is
$q_{\min}^{\Sclass}(\Lambda)=
\left\lceil(\nu(\Lambda^\circ)+d)/\nu(\Lambda)\right\rceil$ if
$\Lambda$ is symplectically rational, and
$q_{\min}^{\Sclass}(\Lambda)=\lfloor\covol(\Lambda)\rfloor+1$
otherwise.  For symplectically rational lattices, the one-window
criterion depends only on the threshold $\Delta(\Lambda)\ge d$, whereas the
multiwindow criterion uses the two symplectic integrality indices separately,
as illustrated in
\Cref{ex:intro-multiwindow-redundancy}.

\begin{examples}
\label{ex:intro-multiwindow-redundancy}
\begin{enumerate}[label=\textup{(\alph*)},leftmargin=2.5em]
\item Let
\[
 \Lambda=\bigl((\tfrac12\mathbb Z)\times\mathbb Z^9\bigr)
          \times\mathbb Z^{10}\subset\mathbb R^{20}.
\]
This separable time--frequency lattice has covolume $1/2$ and density $2$.
Thus any one-window Gabor frame on it is already overcomplete by a factor
of $2$; it has the same total density as the union of two
orthonormal bases.  Its symplectic integrality indices are
\[
 \nu(\Lambda)=2,
 \qquad
 \nu(\Lambda^\circ)=1.
\]
If all windows are required to lie in the Schwartz class, or even only in the
Feichtinger algebra, \Cref{thm:main-multiwindow} requires
\[
 2q=q\,\nu(\Lambda)
 \ge d+\nu(\Lambda^\circ)=10+1=11.
\]
Consequently, we require at least $6$ Schwartz-class windows for this
lattice, and the total density, or total redundancy, of such a regular
multiwindow Gabor frame is at least
\[
 \frac{q}{\covol(\Lambda)}=2q\ge12.
\]

\item Consider instead
\[
 \Lambda=\bigl((\tfrac{91}{101}\mathbb Z)\times\mathbb Z^9\bigr)
          \times\mathbb Z^{10}\subset\mathbb R^{20}.
\]
Here $d=10$, $\nu(\Lambda)=101$, $\nu(\Lambda^\circ)=91$, and hence
$\Delta(\Lambda)=10=d$.  Therefore one Schwartz-class window already
suffices.  The covolume is $91/101$, so the density, and thus the
one-window redundancy measure, is
\[
 \covol(\Lambda)^{-1}=\frac{101}{91}\approx1.11.
\]
This contrasts with part~\textup{(a)}: two lattices may have comparable
ordinary density while their integrality data impose very different minimum
numbers of regular windows.
\end{enumerate}
\end{examples}

\subsection{The modulation-space classification}
\label{subsec:intro-modulation-classification}

The scale of modulation spaces gives a quantitative measure of simultaneous
time--frequency localization
\cite{FeichtingerGrochenig1997,Grochenig2001}.  With the Gaussian
$\phi$ and the short-time Fourier transform $V_\phi$, we use modulation
spaces here only for the $L^2$ functions that occur as Gabor-frame windows.
For $g\in L^2(\mathbb R^d)$, we write $g\in M_s^p(\mathbb R^d)$ when
\[
 \langle z\rangle^sV_\phi g(z)\in L^p(\mathbb R^{2d}),
 \qquad 1\le p\le\infty,
\]
where $\langle z\rangle=(1+|z|^2)^{1/2}$; see
\cite[Chapter~11]{Grochenig2001}.

The following sharp classification contains the rational forms of both
the classical and amalgam Balian--Low obstructions.

\begin{theorem}
\label{thm:symplectic-index-balian-low}
Let $\Lambda\subset\R^{2d}$ be symplectically rational, let
$1\le p\le\infty$, and let $s\in\R$. The existence of
$g\in L^2(\R^d)\cap M_s^p(\R^d)$ such that $\G(g,\Lambda)$ is a Gabor
frame is classified as follows.
\begin{enumerate}[label=\textup{(\roman*)},leftmargin=2.5em]
\item If $\Delta(\Lambda)<0$, no such window exists.
\item If $0\le\Delta(\Lambda)\le d-1$ and $p<\infty$, the condition is
\begin{align}
 s&<2\bigl(\Delta(\Lambda)+1\bigr)(1-1/p),
 &&\text{for }1\le p\le2,\label{eq:symplectic-index-gap-necessary-low-p}\\
 s&<d+\Delta(\Lambda)+1-2d/p,
 &&\text{for }2<p<\infty.\label{eq:symplectic-index-gap-necessary-high-p}
\end{align}
\item If $0\le\Delta(\Lambda)\le d-1$ and $p=\infty$, the condition is
\begin{equation}\label{eq:symplectic-index-gap-necessary-infty}
 s\le d+\Delta(\Lambda)+1.
\end{equation}
\item If $\Delta(\Lambda)\ge d$, every $p,s$ is admissible, and a
Schwartz-class window may be chosen.
\end{enumerate}
Whenever a window exists in this statement, it may be chosen Parseval.
\end{theorem}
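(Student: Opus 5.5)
The plan is to reduce to a diagonal representative and then work on the Zak/Zibulski--Zeevi side. By the metaplectic transfer of \Cref{sec:lattice-theory}, every symplectically rational lattice is mapped by a symplectic matrix to some $\Lambda_D=D\Z^d\times\Z^d$ with $D=\diag(b_i/a_i)$. The metaplectic operator preserves every modulation space $M^p_s$ up to equivalent norms (since $\langle Sz\rangle\asymp\langle z\rangle$), and it preserves Parseval Gabor frames. It also preserves $\nu(\Lambda)$, $\nu(\Lambda^\circ)$ and hence $\Delta(\Lambda)=N-R$. So it suffices to treat $\Lambda_D$.

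Case (i) is immediate. $\Delta<0$ means $\covol(\Lambda)>1$, and then there is no $L^2$ frame window at all by the density theorem (Bekka). Case (iv) is the sufficiency part of \Cref{thm:main-gabor}: when $\Delta\ge d$ the rational construction gives a Parseval Schwartz window, and $\Sclass\subset M^p_s$ for every $p,s$.

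For (ii)--(iii) we pass to the $N\times R$ Zibulski--Zeevi matrix $Z_g(x,\omega)$ on $\T^{2d}$, whose entries are Zak transforms. The frame property is equivalent to a uniform lower bound on the smallest singular value of $Z_g$, and the Parseval property to $Z_g$ being a co-isometry.

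\emph{Necessity.} Let $s$ lie beyond the threshold. We aim to show that the relevant minors of $Z_g$ then lie in $\VMO$ on $\T^{2d}$, or are continuous. The rank-deficiency problem is $R$ columns against $N$ rows, with a gap of $N-R=\Delta\le d-1$. The extended common-zero theorem (de Dios Pont--Liehr--Taylor, in our Pfaffian form) says that a continuous, or $\VMO$, quasi-periodic family of this size must have a common degeneracy point. This contradicts the lower frame bound. The quantitative step is an embedding. Near the thresholds, $M^p_s$ windows give Zak entries in a fractional Sobolev/Besov space on $\T^{2d}$. Only $\Delta+1$ directions need to be controlled, because the obstruction lives on a $(\Delta+1)$-dimensional family of cycles, which explains the $\Delta+1$ in the exponents. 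We then want that space to embed into $\VMO$ along those cycles. We would derive the exponents $2(\Delta+1)(1-1/p)$ for $p\le2$ and $d+\Delta+1-2d/p$ for $p>2$ by interpolating between the $p=1$ and $p=2$ endpoints for $p\le 2$, and between $p=2$ and $p=\infty$ for $p>2$. At $p=\infty$ the strict inequality $s>d+\Delta+1$ gives enough decay to make the restricted Zak data continuous, while $s=d+\Delta+1$ is exactly the borderline, and it is admissible. This asymmetry is the reason for the non-strict inequality \eqref{eq:symplectic-index-gap-necessary-infty}.

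\emph{Sufficiency.} We would build explicit Parseval windows with $Z_g$ co-isometric, smooth away from a codimension-$(\Delta+1)$ singular set. Near that set $Z_g$ should have homogeneous-degree-zero singularities, of the form $z/|z|$ in $\Delta+1$ variables, completed by the smooth branch and terminal constructions of \Cref{sec:smooth-branch,sec:terminal-construction,sec:lower-polar,sec:lower-completion}. Such a window has $V_\phi g$ decaying like $|z|^{-(d+\Delta+1)}$ in a conic sense, lying on the sharp side of each threshold, and $M^\infty_{d+\Delta+1}$ exactly at the endpoint.

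The main obstacle I expect is the sharp necessity embedding for $2<p<\infty$ and at the endpoint $p=\infty$. There one must convert $L^p$ decay of the STFT into $\VMO$ control of the Zibulski--Zeevi minors restricted to $(\Delta+1)$-dimensional slices, without losing the $d-\Delta-1$ transverse dimensions. I would first try a trace-type argument: average over the transverse torus variables, and apply the Besov embedding $B^{(\Delta+1)/q}_{q,q}\hookrightarrow\VMO$ in $\Delta+1$ dimensions.
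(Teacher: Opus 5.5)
Your reduction to $\Lambda_D$, cases (i) and (iv), and the exclusions for $1\le p\le2$ and for $p=\infty$, $s>d+\Delta+1$ are fine in outline. For the last one, almost every slice does have absolutely summable Fourier series. This requires the slices to consist of $\Delta+1$ coordinate \emph{pairs}, i.e.\ $2(\Delta+1)$ real variables. The first rank-deficient stratum has real codimension $2(N-R+1)$, and a Besov embedding in $\Delta+1$ real dimensions would give only half the stated exponents. There are, however, two genuine gaps, both occurring for $\Delta(\Lambda)\le d-2$ and $p>2$.

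\textbf{Necessity on the boundary for $2<p<\infty$.} Part (ii) asserts nonexistence \emph{at} $s=d+\Delta+1-2d/p$, and neither of your tools reaches that line.

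Interpolation cannot reach it. At $p=\infty$ the value $s=d+\Delta+1$ is admissible by (iii), so the only available $p=\infty$ obstruction has $s_1>d+\Delta+1$. Interpolating it with the $p=2$ obstruction excludes only points strictly above the line.

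The trace/averaging idea also fails. Put $q=\Delta+1$, $m=2q$, $t=2d-m$, and let $b_\nu(y)$ be the $z$-Fourier coefficients of a slice of a localized cross-Zak entry. At the critical weight $m(1-1/p)+t(\tfrac12-\tfrac1p)$, H\"older in the transverse frequencies gives exactly $\bigl(\langle\nu\rangle^{m(1-1/p)}\|b_\nu\|_{L^2_y}\bigr)_\nu\in\ell^p$. That is control in $\ell^p_\nu L^2_y$. For $p>2$ Minkowski's inequality runs the wrong way to convert this into $\ell^p_{m(1-1/p)}$-control for almost every fixed $y$, so you obtain no fixed slice in the critical space or in $\VMO$. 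Averaging the matrix itself over $y$ would destroy the pointwise lower singular-value bound that the common-zero contradiction needs.

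The missing idea is to abandon fixed-slice $\VMO$ and argue one scale at a time:
\begin{itemize}
\item \Cref{lem:one-scale-anisotropic} shows that the scale-$2^{-J}$ oscillation in the selected variables tends to zero in $L^2_y$. So for each $J$ you can choose a slice $y_J$, depending on $J$, on which it is small.
\item \Cref{lem:one-scale-partial-smoothing} shows that smallness at a single scale suffices. After the row gauge and the amplification $S^T\rho$ by a nowhere-zero ordinary Zak vector in $R-1$ auxiliary pairs, the Zak-compatible mollification at scale $2^{-J}$ is a smooth nowhere-zero ordinary Zak map. It has $N$ components in $q+R-1=N$ dimensions, contradicting the common-zero theorem.
\end{itemize}

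\textbf{Sufficiency above the flat line.} Consider a co-isometric Zak matrix with a homogeneous degree-zero singularity along a flat set of real codimension $m=2(\Delta+1)$. This is essentially the paper's $g_{\rm flat}$. Its localized cross-Zak coefficients decay like $\langle k\rangle^{-m}$ and no better along the normal frequency directions. Hence it lies in $M^p_s$ only for $s<m(1-1/p)$, and in $M^\infty_m$. When $\Delta\le d-2$ this is strictly below $d+\Delta+1-2d/p$ and $M^\infty_{d+\Delta+1}$; it does not decay like $|z|^{-(d+\Delta+1)}$.

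Moreover, no single window can lie on the sharp side of every threshold, as you claim. Suppose $g\in M^1_{-\varepsilon}$ for all $\varepsilon>0$ and $g\in M^p_s$ for some $p>2$ with $s>m(1-1/p)$; membership in $M^\infty_{d+\Delta+1}$ forces the latter when $\Delta\le d-2$. Log-convexity then puts $g$ in $M^2_{s'}$ with $s'>\Delta+1$, contradicting \Cref{thm:p-le-2-endpoint}; see \Cref{tm:rem:flat-limit}.

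So two windows with different coefficient geometry are needed: the concentrated flat window for $p\le2$, and for $p>2$ a window whose Fourier mass is spread isotropically. The paper builds the latter by multiplying $Zg_{\rm flat}$ by a $\Gamma_D$-periodic unimodular phase $U$, which preserves Zak compatibility and Parsevality. $U$ is built from Rudin--Shapiro signs on dyadic grids of mesh comparable to $\dist(\cdot,S)$. Their square-root cancellation turns the shell bound $2^{-\ell m}$ into $2^{-\ell(n+m)/2}$, giving $|\widehat{UH}(\xi)|\lesssim\langle\xi\rangle^{-(d+\Delta+1)}$. Citing \Cref{sec:lower-polar,sec:lower-completion} by label does not close this gap, because the mechanism you attribute to them is different and does not produce the claimed decay.
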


For a symplectically irrational lattice with
$\covol(\Lambda)<1$, a Schwartz-class Gabor-frame window exists.  At
critical density $\covol(\Lambda)=1$, the classical Balian--Low Theorem~\ref{thm:full-multivariate-balian-low}
excludes a Gabor-frame window in $M_1^2(\mathbb R^d)$.  Beyond this
exclusion, we do not pursue a full modulation-space classification in the
symplectically irrational critical-density case.

In the symplectically rational case, at $\Delta(\Lambda)=0$, the point
$(p,s)=(2,1)$ recovers the classical
Balian--Low obstruction.  At $(p,s)=(1,0)$, the theorem excludes a
Feichtinger-algebra window whenever $\Delta(\Lambda)\le d-1$, which is the
rational necessity statement in the sharp amalgam theorem.

Figure~\ref{fig:current-results-d5} illustrates the classification in
Theorem~\ref{thm:symplectic-index-balian-low}. The following constructive
statement specifies the stronger simultaneous-window quantifiers behind
the pointwise existence classification.

\begin{theorem}
\label{thm:modulation-classification}
Let $d\ge1$ and let $\Lambda\subset\mathbb R^{2d}$ be symplectically
rational.
\begin{enumerate}[label=\textup{(\roman*)},leftmargin=2.5em]
\item Suppose $0\le\Delta(\Lambda)\le d-2$.  There are Parseval
Gabor-frame windows $g_{\rm flat}$ and $g_{\rm phase}$ such that
\[
 g_{\rm flat}\in M_s^p(\mathbb R^d),
 \quad\text{for every }1\le p<\infty,
 \qquad s<2\bigl(\Delta(\Lambda)+1\bigr)(1-1/p),
\]
and
\[
 g_{\rm flat}\in M_s^\infty(\mathbb R^d),
 \qquad\text{for every }s\le2\bigl(\Delta(\Lambda)+1\bigr).
\]
The phase-adjusted window satisfies
\[
 g_{\rm phase}\in M_s^p(\mathbb R^d),
 \quad\text{for}\quad
 \begin{cases}
  1\le p<\infty,
   &s<d+\Delta(\Lambda)+1-2d/p,\\[1mm]
  p=\infty,
   &s\le d+\Delta(\Lambda)+1.
 \end{cases}
\]
Each of these two windows works throughout its stated ranges.

\item Suppose $\Delta(\Lambda)=d-1$.  There is a Parseval Gabor-frame
window $g_{d-1}$ satisfying
\[
 g_{d-1}\in M_s^p(\mathbb R^d),
 \quad\text{for}\quad
 1\le p<\infty,
 \qquad
 s<2d\left(1-1/p\right),
\]
and $g_{d-1}\in M_s^\infty(\mathbb R^d)$ for every $s\le2d$.

\item If $\Delta(\Lambda)\ge d$, there is a Schwartz Parseval
Gabor-frame window; in particular it belongs to $M_s^p$ for every
$1\le p\le\infty$ and every $s\in\mathbb R$.
\end{enumerate}
\end{theorem}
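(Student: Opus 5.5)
We would first reduce to the diagonal representative $\Lambda_D=D\Z^d\times\Z^d$ with $D=\diag(b_1/a_1,\ldots,b_d/a_d)$. The normal form in \Cref{thm:diagonal-symplectic-normal-form} says every symplectically rational lattice is the image of some $\Lambda_D$ under a symplectic matrix $S$. The metaplectic operator $\mu(S)$ intertwines $\pi(\lambda)$ with $\pi(S\lambda)$ up to phases, and it preserves Parseval frames. It also preserves $\Sclass(\R^d)$ and every $M^p_s$, because $|V_\phi(\mu(S)g)|$ is controlled by $|V_{\mu(S)^{-1}\phi}\,g|\circ S^{-1}$. We would then change windows between Gaussian-type windows and use the polynomial weight equivalence $\angles{Sz}\asymp\angles{z}$. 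Since $\nu$, $\nu(\cdot^\circ)$ and $\Delta$ are invariant, it suffices to construct windows for $\Lambda_D$, where $\Delta=N-R$ with $N=\prod a_i$ and $R=\prod b_i$.

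On $\Lambda_D$ we would use the Zibulski--Zeevi representation. Via a Zak transform adapted to $\Z^d\times\Z^d$, $\G(g,\Lambda_D)$ is Parseval exactly when an $R\times N$ Zibulski--Zeevi matrix $Z(x,\omega)$ built from $N\cdot R$ quasi-periodic Zak components has orthonormal columns a.e. More precisely, $Z^*Z=I_N$ after normalization, equivalently the $N$ columns give an isometry $\C^N\to\C^R$. We would double-check against the convention that requires $N\ge R$ and a rank-$R$ co-isometry. The design problem thus becomes choosing quasi-periodic matrix fields on $\R^{2d}$, with twisted boundary conditions on the torus, that are pointwise partial isometries. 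Smoothness and decay of $g$ correspond to regularity of $Z$: $g\in\Sclass$ iff $Z$ is $C^\infty$, and $M^p_s$ membership corresponds to Fourier--Lebesgue type regularity $\FL{p}{s}$ of the components on the torus. We would take this dictionary from \Cref{sec:zak-method}.

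Part (iii) would come from the smooth branch in \Cref{sec:smooth-branch,sec:terminal-construction}. When $N-R\ge d$, the quasi-periodicity twist defines a vector bundle over $\T^{2d}$ whose Chern-type obstruction lives in rank at most $d$. The rank-geometry results of \Cref{sec:rank-geometry} then give enough room, since $N-R\ge d$ exceeds half the real dimension, to choose a global smooth frame of the required partial isometry. Orthonormalizing it by smooth polar decomposition yields a $C^\infty$ $Z$ and hence a Schwartz Parseval window.

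For (i) and (ii), where $\Delta\le d-1$, a global smooth choice is obstructed, and the obstruction is concentrated on a set of codimension $2(\Delta+1)$. We would build $Z$ smooth off this singular set and, near it, modelled on the homogeneous map $z\mapsto z/|z|$ in $\R^{2(\Delta+1)}$ directions, extended trivially in the remaining $2d-2(\Delta+1)$ directions (\Cref{sec:elementary-perturbation,sec:lower-polar,sec:lower-completion}).

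\begin{itemize}
\item For $g_{\rm flat}$, the singularity is a degree-$0$ homogeneous function of $2(\Delta+1)$ variables times smooth factors. Its Fourier transform decays like $|k|^{-2(\Delta+1)}$ along those directions only. Summing $\angles{k}^{sp}|k|^{-2(\Delta+1)p}$ over the $2(\Delta+1)$-dimensional lattice gives exactly $s<2(\Delta+1)(1-1/p)$, together with the $p=\infty$ endpoint.
\item For $g_{\rm phase}$, we would instead spread the singularity over all $2d$ directions with a phase twist. This makes the Fourier decay isotropic, $|k|^{-(d+\Delta+1)}$ in $\R^{2d}$, so the lattice sum gives $s<d+\Delta+1-2d/p$.
\item At $\Delta=d-1$ the two windows coincide, giving $g_{d-1}$.
\end{itemize}

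The main obstacle is the exact Fourier-decay computation for the singular model together with the polar-completion step. Normalizing to exact orthonormality must not worsen the singularity. We would first try to show that the completion is a smooth function of the model singular field away from the singular set and is homogeneous of degree $0$ near it, so that standard homogeneous-symbol Fourier estimates apply.
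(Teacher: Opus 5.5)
Your outline matches the paper at the top level. The steps are: metaplectic reduction to $\Lambda_D$, which your short-time Fourier transform argument handles correctly; then Zibulski--Zeevi fibres; then a dichotomy governed by the real codimension $2(N-R+1)$ of the rank-deficient stratum. Part~(iii) is right in spirit. The operative fact is $2(N-R+1)>2d$, so a generic simplex-by-simplex extension over a $\Gamma_D$-periodic triangulation avoids rank loss, and compatible smoothing plus row normalization finish the job.

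Two corrections first. With $N\ge R$ the Parseval condition is $\mathcal Z_Dg\,(\mathcal Z_Dg)^*=RI_R$, i.e.\ orthogonal rows. The condition $Z^*Z=I_N$ is impossible for $N>R$. The normalization is $\sqrt R(AA^*)^{-1/2}A$, and it is full \emph{row} rank that must be protected. Also, the polar completion you flag as the main obstacle is harmless: the paper normalizes the field on a smooth neighborhood of a protected skeleton before any singularity appears.

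The real difficulty for the flat window is a global construction, which you do not give. Write $n=2d$ and $m=2(\Delta+1)$. A local $z/|z|$ model does not produce a $\Gamma_D$-Zak-compatible, full-row-rank field that is smooth off a flat codimension-$m$ set. The paper builds it in three steps:
\begin{itemize}
\item a smooth compatible $B$ with $BB^*=RI_R$ near the cubical $(m-1)$-skeleton, which exists because all cells there have dimension $<m$;
\item composition with a smooth $\Gamma_D$-equivariant map $\Retr$ from the complement of a flat exceptional set $S$ into that skeleton;
\item insertion of the right diagonal unitary $P(w,\Retr(w))$, because composition with $\Retr$ alone breaks the frequency-dependent Zak transports.
\end{itemize}
The exceptional set $S$ is the union of all flats on which $m$ normalized coordinates are integers. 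These flats intersect, so ``extended trivially in the remaining directions'' fails near the intersections. The paper proves the range $s<m(1-1/p)$ using the anisotropic scales $\lambda_i=\max\{\rho,d_i\}$, a decomposition of each dyadic shell into at most $C(1+\ell)^{n-m}$ rectangles, and shell-wise interpolation between $\Fell{1}{s}$, $\Fell{2}{s}$ and $\Fell{\infty}{s}$.

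The decisive missing idea is the construction of $g_{\rm phase}$. ``Spreading the singularity over all $2d$ directions with a phase twist'' restates the goal rather than giving a mechanism. The singular set does not change, and isotropy is not the point, since the flat window already satisfies $|\widehat H(k)|\lesssim\langle k\rangle^{-m}$. What must change is how the energy of each shell $H_\ell$ (where $\rho\asymp2^{-\ell}$) is distributed. One has $\|H_\ell\|_2^2\lesssim2^{-\ell m}$, with frequencies essentially in $|\xi|\lesssim2^\ell$. The exponent $(n+m)/2$ is exactly what Plancherel allows when this energy is spread evenly over $\asymp2^{\ell n}$ frequencies.

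By \Cref{tm:rem:flat-limit}, no sharper analysis of the flat window crosses $m(1-1/p)$ for $p>2$, so the window itself must change. The multiplier must be:
\begin{itemize}
\item scalar and unimodular, to keep $MM^*=RI_R$;
\item $\Gamma_D$-periodic, to keep Zak compatibility;
\item smooth off $S$;
\item oscillating at scale $\rho$ in every coordinate, including along the flats, with cancellation uniform in frequency.
\end{itemize}
A phase depending only on the distance to $S$ cannot do this.

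The paper's construction uses five ingredients:
\begin{itemize}
\item Rudin--Shapiro signs on dyadic grids of mesh $\asymp\rho$;
\item real phase lifts that agree at shared nodes, so consecutive scales can be blended smoothly according to $\rho$;
\item the uniform bound $\sup_\omega|\sum_{\ell\in I}r_\ell e^{i\omega\ell}|\lesssim|I|^{1/2}$;
\item summation by parts against slowly varying amplitudes on each anisotropic rectangle;
\item the weighted count $\sum_q(\prod_is_i)^{1/2}\lesssim2^{-\ell m/2}$.
\end{itemize}
Together these give $|\widehat{UH_\ell}(\xi)|\lesssim2^{-\ell(n+m)/2}(1+2^{-\ell}|\xi|)^{-K}$. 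Without some square-root-cancellation device of this kind, the $g_{\rm phase}$ half of part~(i) remains unproved.
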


In the separable rational model, $g_{\rm flat}$ is constructed via the
Zak transform, and $g_{\rm phase}$ is obtained by multiplying
$Zg_{\rm flat}$ by a periodic scalar function of modulus one. Thus only
the phase of the Zak transform changes, motivating the names. The
windows for general symplectically rational lattices are then obtained
by metaplectic transport.

The proved existence and nonexistence regions for symplectically rational lattices are shown in the next figure.

\begin{figure}[htbp]
\centering
% Hatching angles use the physical axis aspect ratio, not just data slopes.
% The perpendicular spacing is (2/3)*(5/sqrt(2)) pt.
\pgfdeclarepatternformonly{BLphasehatchV11}
  {\pgfqpoint{-0.50000000pt}{-0.50000000pt}}{\pgfqpoint{4.18849759pt}{3.56429031pt}}{\pgfqpoint{3.68849759pt}{3.06429031pt}}{%
  \pgfsetlinewidth{0.35pt}
  \pgfpathmoveto{\pgfqpoint{0.00000000pt}{3.06429031pt}}
  \pgfpathlineto{\pgfqpoint{3.68849759pt}{0.00000000pt}}
  \pgfusepath{stroke}}
\pgfdeclarepatternformonly{BLflathatchV11gap0}
  {\pgfqpoint{-0.50000000pt}{-0.50000000pt}}{\pgfqpoint{10.57042917pt}{2.92436258pt}}{\pgfqpoint{10.07042917pt}{2.42436258pt}}{%
  \pgfsetlinewidth{0.35pt}
  \pgfpathmoveto{\pgfqpoint{0.00000000pt}{0.00000000pt}}
  \pgfpathlineto{\pgfqpoint{10.07042917pt}{2.42436258pt}}
  \pgfusepath{stroke}}
\pgfdeclarepatternformonly{BLflathatchV11gap1}
  {\pgfqpoint{-0.50000000pt}{-0.50000000pt}}{\pgfqpoint{5.93323592pt}{3.11600248pt}}{\pgfqpoint{5.43323592pt}{2.61600248pt}}{%
  \pgfsetlinewidth{0.35pt}
  \pgfpathmoveto{\pgfqpoint{0.00000000pt}{0.00000000pt}}
  \pgfpathlineto{\pgfqpoint{5.43323592pt}{2.61600248pt}}
  \pgfusepath{stroke}}
\pgfdeclarepatternformonly{BLflathatchV11gap2}
  {\pgfqpoint{-0.50000000pt}{-0.50000000pt}}{\pgfqpoint{4.52572268pt}{3.40746638pt}}{\pgfqpoint{4.02572268pt}{2.90746638pt}}{%
  \pgfsetlinewidth{0.35pt}
  \pgfpathmoveto{\pgfqpoint{0.00000000pt}{0.00000000pt}}
  \pgfpathlineto{\pgfqpoint{4.02572268pt}{2.90746638pt}}
  \pgfusepath{stroke}}
\pgfdeclarepatternformonly{BLflathatchV11gap3}
  {\pgfqpoint{-0.50000000pt}{-0.50000000pt}}{\pgfqpoint{3.89804058pt}{3.77218723pt}}{\pgfqpoint{3.39804058pt}{3.27218723pt}}{%
  \pgfsetlinewidth{0.35pt}
  \pgfpathmoveto{\pgfqpoint{0.00000000pt}{0.00000000pt}}
  \pgfpathlineto{\pgfqpoint{3.39804058pt}{3.27218723pt}}
  \pgfusepath{stroke}}
\pgfdeclarepatternformonly{BLflathatchV11gap4}
  {\pgfqpoint{-0.50000000pt}{-0.50000000pt}}{\pgfqpoint{3.56429031pt}{4.18849759pt}}{\pgfqpoint{3.06429031pt}{3.68849759pt}}{%
  \pgfsetlinewidth{0.35pt}
  \pgfpathmoveto{\pgfqpoint{0.00000000pt}{0.00000000pt}}
  \pgfpathlineto{\pgfqpoint{3.06429031pt}{3.68849759pt}}
  \pgfusepath{stroke}}
\setlength{\tabcolsep}{2pt}
\pgfplotsset{BLpanel/.append style={scale only axis,
  width=.26\textwidth,height=.18\textwidth,
  xmin=-1,xmax=11,xtick={0,2,4,6,8,10}}}
\renewcommand{\arraystretch}{1.0}
\begin{tabular}{ccc}
\begin{tikzpicture}
\begin{axis}[BLpanel,xlabel={},title={$\Delta(\Lambda)=0$}]
  \path[fill=gapzero!58,draw=none]
    (axis cs:-1,0) -- (axis cs:-1,1) -- (axis cs:0,1) --
    (axis cs:1,0.5) -- (axis cs:6,0) -- cycle;
  \path[fill=resultred,draw=none]
    (axis cs:0,1) -- (axis cs:11,1) -- (axis cs:11,0) --
    (axis cs:6,0) -- (axis cs:1,0.5) -- cycle;
  \path[pattern=BLphasehatchV11,pattern color=gapzero!80!black,draw=none]
    (axis cs:-1,0) -- (axis cs:6,0) -- (axis cs:-1,0.7) -- cycle;
  \path[pattern=BLflathatchV11gap0,pattern color=gapzero!80!black,draw=none]
    (axis cs:-1,0) -- (axis cs:2,0) -- (axis cs:0,1) -- (axis cs:-1,1) -- cycle;
  \addplot[resultredborder,very thick]
    coordinates {(0,1) (1,0.5) (6,0)};
  \addplot[gapzero!85!black,densely dotted,line width=.8pt]
    coordinates {(-1,0.7) (1,0.5)};
  \addplot[gapzero!85!black,densely dotted,line width=.8pt]
    coordinates {(1,0.5) (2,0)};
  \BLGapPoint{gapzero}{6}{0}
  \BLGapPoint{gapzero}{0}{0.5}
  \BLRedPoint{1}{0.5}
  \BLRedPoint{0}{1}
\end{axis}
\end{tikzpicture} &
\begin{tikzpicture}
\begin{axis}[BLpanel,xlabel={},ylabel={},title={$\Delta(\Lambda)=1$}]
  \path[fill=gapone!58,draw=none]
    (axis cs:-1,0) -- (axis cs:-1,1) -- (axis cs:0,1) --
    (axis cs:2,0.5) -- (axis cs:7,0) -- cycle;
  \path[fill=resultred,draw=none]
    (axis cs:0,1) -- (axis cs:11,1) -- (axis cs:11,0) --
    (axis cs:7,0) -- (axis cs:2,0.5) -- cycle;
  \path[pattern=BLphasehatchV11,pattern color=gapone!80!black,draw=none]
    (axis cs:-1,0) -- (axis cs:7,0) -- (axis cs:-1,0.8) -- cycle;
  \path[pattern=BLflathatchV11gap1,pattern color=gapone!80!black,draw=none]
    (axis cs:-1,0) -- (axis cs:4,0) -- (axis cs:0,1) -- (axis cs:-1,1) -- cycle;
  \addplot[resultredborder,very thick]
    coordinates {(0,1) (2,0.5) (7,0)};
  \addplot[gapone!85!black,densely dotted,line width=.8pt]
    coordinates {(-1,0.8) (2,0.5)};
  \addplot[gapone!85!black,densely dotted,line width=.8pt]
    coordinates {(2,0.5) (4,0)};
  \BLGapPoint{gapone}{7}{0}
  \BLGapPoint{gapone}{0}{0.5}
  \BLGapPoint{gapone}{1}{0.5}
  \BLRedPoint{0}{1}
\end{axis}
\end{tikzpicture} &
\begin{tikzpicture}
\begin{axis}[BLpanel,xlabel={},ylabel={},title={$\Delta(\Lambda)=2$}]
  \path[fill=gaptwo!58,draw=none]
    (axis cs:-1,0) -- (axis cs:-1,1) -- (axis cs:0,1) --
    (axis cs:3,0.5) -- (axis cs:8,0) -- cycle;
  \path[fill=resultred,draw=none]
    (axis cs:0,1) -- (axis cs:11,1) -- (axis cs:11,0) --
    (axis cs:8,0) -- (axis cs:3,0.5) -- cycle;
  \path[pattern=BLphasehatchV11,pattern color=gaptwo!80!black,draw=none]
    (axis cs:-1,0) -- (axis cs:8,0) -- (axis cs:-1,0.9) -- cycle;
  \path[pattern=BLflathatchV11gap2,pattern color=gaptwo!80!black,draw=none]
    (axis cs:-1,0) -- (axis cs:6,0) -- (axis cs:0,1) -- (axis cs:-1,1) -- cycle;
  \addplot[resultredborder,very thick]
    coordinates {(0,1) (3,0.5) (8,0)};
  \addplot[gaptwo!85!black,densely dotted,line width=.8pt]
    coordinates {(-1,0.9) (3,0.5)};
  \addplot[gaptwo!85!black,densely dotted,line width=.8pt]
    coordinates {(3,0.5) (6,0)};
  \BLGapPoint{gaptwo}{8}{0}
  \BLGapPoint{gaptwo}{0}{0.5}
  \BLGapPoint{gaptwo}{1}{0.5}
  \BLRedPoint{0}{1}
\end{axis}
\end{tikzpicture}\\[.6em]
\begin{tikzpicture}
\begin{axis}[BLpanel,title={$\Delta(\Lambda)=3$}]
  \path[fill=gapthree!58,draw=none]
    (axis cs:-1,0) -- (axis cs:-1,1) -- (axis cs:0,1) --
    (axis cs:4,0.5) -- (axis cs:9,0) -- cycle;
  \path[fill=resultred,draw=none]
    (axis cs:0,1) -- (axis cs:11,1) -- (axis cs:11,0) --
    (axis cs:9,0) -- (axis cs:4,0.5) -- cycle;
  \path[pattern=BLphasehatchV11,pattern color=gapthree!80!black,draw=none]
    (axis cs:-1,0) -- (axis cs:9,0) -- (axis cs:-1,1) -- cycle;
  \path[pattern=BLflathatchV11gap3,pattern color=gapthree!80!black,draw=none]
    (axis cs:-1,0) -- (axis cs:8,0) -- (axis cs:0,1) -- (axis cs:-1,1) -- cycle;
  \addplot[resultredborder,very thick]
    coordinates {(0,1) (4,0.5) (9,0)};
  \addplot[gapthree!85!black,densely dotted,line width=.8pt]
    coordinates {(-1,1) (4,0.5)};
  \addplot[gapthree!85!black,densely dotted,line width=.8pt]
    coordinates {(4,0.5) (8,0)};
  \BLGapPoint{gapthree}{9}{0}
  \BLGapPoint{gapthree}{0}{0.5}
  \BLGapPoint{gapthree}{1}{0.5}
  \BLRedPoint{0}{1}
\end{axis}
\end{tikzpicture} &
\begin{tikzpicture}
\begin{axis}[BLpanel,ylabel={},title={$\Delta(\Lambda)=4=d-1$}]
  \path[fill=gapfour!58,draw=none]
    (axis cs:-1,0) -- (axis cs:-1,1) -- (axis cs:0,1) --
    (axis cs:5,0.5) -- (axis cs:10,0) -- cycle;
  \path[fill=resultred,draw=none]
    (axis cs:0,1) -- (axis cs:11,1) -- (axis cs:11,0) --
    (axis cs:10,0) -- (axis cs:5,0.5) -- cycle;
  \path[pattern=BLphasehatchV11,pattern color=gapfour!80!black,draw=none]
    (axis cs:-1,0) -- (axis cs:10,0) -- (axis cs:0,1) -- (axis cs:-1,1) -- cycle;
  \path[pattern=BLflathatchV11gap4,pattern color=gapfour!80!black,draw=none]
    (axis cs:-1,0) -- (axis cs:10,0) -- (axis cs:0,1) -- (axis cs:-1,1) -- cycle;
  \addplot[resultredborder,very thick]
    coordinates {(0,1) (5,0.5) (10,0)};
  \BLGapPoint{gapfour}{10}{0}
  \BLGapPoint{gapfour}{0}{0.5}
  \BLGapPoint{gapfour}{1}{0.5}
  \BLRedPoint{0}{1}
\end{axis}
\end{tikzpicture} &
\begin{tikzpicture}
\begin{axis}[BLpanel,ylabel={},title={$\Delta(\Lambda)=5=d$}]
  \path[fill=gapfive!58,draw=none]
    (axis cs:-1,0) -- (axis cs:-1,1) -- (axis cs:11,1) --
    (axis cs:11,0) -- cycle;
  \node[font=\scriptsize,fill=white,fill opacity=.88,text opacity=1,
        rounded corners,inner sep=1.4pt]
    at (axis cs:5,0.83) {Schwartz window};
  \BLGapPoint{gapfive}{0}{0.5}
  \BLGapPoint{gapfive}{1}{0.5}
  \BLGapPoint{gapfive}{0}{1}
\end{axis}
\end{tikzpicture}
\end{tabular}

\par\smallskip
{\footnotesize
\tikz[baseline=-.5ex]{\path[fill=gapone!58] (0,0) rectangle (.38,.18);
  \path[pattern=BLphasehatchV11,pattern color=gapone!80!black] (0,0) rectangle (.38,.18);}
\,$g_{\rm phase}$\qquad
\tikz[baseline=-.5ex]{\path[fill=gapone!58] (0,0) rectangle (.38,.18);
  \path[pattern=BLflathatchV11gap1,pattern color=gapone!80!black] (0,0) rectangle (.38,.18);}
\,$g_{\rm flat}$\qquad
\tikz[baseline=-.5ex]{\path[fill=gapone!58] (0,0) rectangle (.38,.18);
  \path[pattern=BLphasehatchV11,pattern color=gapone!80!black] (0,0) rectangle (.38,.18);
  \path[pattern=BLflathatchV11gap1,pattern color=gapone!80!black] (0,0) rectangle (.38,.18);}
\,both bounds}

\caption{Proved existence and nonexistence regions in
Theorem~\ref{thm:symplectic-index-balian-low} and \Cref{thm:modulation-classification} for
symplectically rational lattices in dimension $d=5$, displaying $-1\le s\le11$.
For these lattices, $\Delta(\Lambda)=0$ is equivalent to
$\operatorname{covol}(\Lambda)=1$.  The red regions and their finite-$p$ boundary
curves give nonexistence, the differently colored regions give existence for
the respective symplectic index gap. For $0\le\Delta(\Lambda)\le3$, hatching parallel to the lower boundary
segment shows the guaranteed range of $g_{\rm phase}$, while hatching
perpendicular to the upper boundary segment shows that of $g_{\rm flat}$;
crosshatching shows their overlap. Dotted lines extend the separating
boundary segments into the existence region and mark the bounds of the
individual guaranteed ranges there. The stated bounds are strict for finite
$p$; each range includes its own endpoint at $p=\infty$.
For $\Delta(\Lambda)=4=d-1$, the two bounds coincide and $g_{d-1}$ attains
the common region; the Schwartz region in the last panel is left unhatched.
For $0\le\Delta(\Lambda)\le4$, every point on $1/p=0$ with
$-1\le s\le6+\Delta(\Lambda)$ is attained.
For $\Delta(\Lambda)=5=d$, a Schwartz Parseval window exists.  The marked spaces are Bekka's
$L^2=M_0^2$ at $(s,1/p)=(0,1/2)$, the classical threshold $M_1^2$ at
$(1,1/2)$, and the amalgam threshold $M_0^1=S_0$ at $(0,1)$.
Solid gap-colored and red markers indicate existence and nonexistence,
respectively.}
\label{fig:current-results-d5}
\end{figure}

For symplectically rational lattices, and for symplectically irrational
lattices with covolume different from~$1$, the preceding results give a
complete modulation-space classification; the remaining question is the
symplectically irrational critical-density case.

\begin{question}\label{qu:irrational-critical-density}
Let $\Lambda\subset\R^{2d}$ be a symplectically irrational lattice with
$\covol(\Lambda)=1$. For which $1\le p\le\infty$ and $s\in\R$ does
there exist $g\in L^2(\R^d)\cap M_s^p(\R^d)$ such that
$\mathcal G(g,\Lambda)$ is a frame?
\end{question}

This symplectically irrational critical-density case can occur only for
$d\ge2$: in dimension $d=1$, every covolume-one lattice is
symplectically integral.

Bekka's theorem~\cite[Theorem~4]{Bekka2004} gives an $L^2$ frame window,
which by the embeddings in Proposition~\ref{prop:modulation-embeddings}
belongs to $M_s^p$ for $2\le p\le\infty$, $s\le0$, and for
$1\le p<2$, $s<d-2d/p$.
Figure~\ref{fig:irrational-results-d5} illustrates these existence ranges
and the exclusions supplied by the classical and amalgam Balian--Low
theorems through the same embeddings.

For critical-density lattices symplectically equivalent to
\[
 \begin{pmatrix}A&B\\0&D\end{pmatrix}\Z^{2d},
 \qquad A,D\in GL_d(\R),\quad |\det A\det D|=1,
\]
bounded common fundamental domains for $D\Z^d$ and $A^{-\mathsf T}\Z^d$
give frame windows in $M_s^p$ for
$1\le p<2$, $s<d(1/2-1/p)$, and for $2\le p\le\infty$, $s\le0$
\cite[Theorem~1]{GrepstadKolountzakis2026}.
This improves the guaranteed range for $p<2$, as shown in the middle
panel of Figure~\ref{fig:irrational-results-d5}. No regularity of the
boundary of the common domain is assumed.
The classical Balian--Low Theorem~\ref{thm:full-multivariate-balian-low}
and the amalgam Balian--Low Theorem~\ref{thm:main-gabor}, together with the
same embeddings, give the nonexistence region shown for this subclass.

\begin{figure}[htbp]
\centering
{\small $\Lambda$ symplectically irrational}
\par\medskip
\setlength{\tabcolsep}{2pt}
\pgfplotsset{BLirrpanel/.style={BLpanel,scale only axis,
  width=.26\textwidth,height=.18\textwidth,
  xmin=-6,xmax=7,xtick={-5,0,5},
  title style={font=\small,align=center},grid=none}}
\begin{tabular}{ccc}
\begin{tikzpicture}
\begin{axis}[BLirrpanel,
  title={$\covol(\Lambda)=1$\\general case}]
  % Bekka's L^2 window and the embeddings of Proposition 3.3.
  \path[fill=gapzero!58,draw=none]
    (axis cs:-6,0) -- (axis cs:-6,1) -- (axis cs:-5,1) --
    (axis cs:0,.5) -- (axis cs:0,0) -- cycle;
  % Union of the classical and amalgam obstructions transported by embeddings.
  \path[fill=resultred,draw=none]
    (axis cs:0,1) -- (axis cs:7,1) -- (axis cs:7,0) --
    (axis cs:6,0) -- (axis cs:1,.5) -- (axis cs:1,.9) -- cycle;
  \addplot[gapzero!85!black,very thick] coordinates {(0,0) (0,.5)};
  \addplot[resultredborder,very thick] coordinates {(1,.9) (1,.5)};
  % Black dashed reference: the sharp rational critical-density boundary.
  \addplot[black,dash pattern=on 3pt off 2pt,line width=.9pt]
    coordinates {(0,1) (1,.5) (6,0)};
  \BLGapPoint{gapzero}{0}{.5}
  \BLRedPoint{0}{1}
  \BLRedPoint{1}{.9}
  \BLRedPoint{1}{.5}
  \addplot[only marks,mark=*,mark size=1.7pt,thick,
    draw=gapzero!85!black,fill=white] coordinates {(-5,1)};
  \addplot[only marks,mark=*,mark size=1.7pt,thick,
    draw=resultredborder,fill=white] coordinates {(6,0)};
  \node[font=\scriptsize,align=center,fill=white,fill opacity=.88,
    text opacity=1,rounded corners,inner sep=1.4pt]
    at (axis cs:-3.5,.27) {Bekka};
  \node[font=\scriptsize,fill=white,fill opacity=.88,
    text opacity=1,rounded corners,inner sep=1.4pt]
    at (axis cs:4.1,.77) {Excluded};
  \node[font=\scriptsize] at (axis cs:-1.6,.82) {Open};
\end{axis}
\end{tikzpicture} &
\begin{tikzpicture}
\begin{axis}[BLirrpanel,ylabel={},
  title={$\covol(\Lambda)=1$\\block triangular case}]
  % Bounded common domains: s<d(1/2-1/p) below p=2; s<=0 above p=2.
  \path[fill=gapzero!58,draw=none]
    (axis cs:-6,0) -- (axis cs:-6,1) -- (axis cs:-2.5,1) --
    (axis cs:0,.5) -- (axis cs:0,0) -- cycle;
  % The same universal classical and amalgam obstructions apply.
  \path[fill=resultred,draw=none]
    (axis cs:0,1) -- (axis cs:7,1) -- (axis cs:7,0) --
    (axis cs:6,0) -- (axis cs:1,.5) -- (axis cs:1,.9) -- cycle;
  \addplot[gapzero!85!black,very thick] coordinates {(0,0) (0,.5)};
  \addplot[resultredborder,very thick] coordinates {(1,.9) (1,.5)};
  % Black dashed reference: the sharp rational critical-density boundary.
  \addplot[black,dash pattern=on 3pt off 2pt,line width=.9pt]
    coordinates {(0,1) (1,.5) (6,0)};
  \BLGapPoint{gapzero}{0}{.5}
  \BLRedPoint{0}{1}
  \BLRedPoint{1}{.9}
  \BLRedPoint{1}{.5}
  \addplot[only marks,mark=*,mark size=1.7pt,thick,
    draw=gapzero!85!black,fill=white] coordinates {(-2.5,1)};
  \addplot[only marks,mark=*,mark size=1.7pt,thick,
    draw=resultredborder,fill=white] coordinates {(6,0)};
  \node[font=\scriptsize,align=center,fill=white,fill opacity=.88,
    text opacity=1,rounded corners,inner sep=1.4pt]
    at (axis cs:-3,.32) {Bounded\\domains};
  \node[font=\scriptsize,fill=white,fill opacity=.88,
    text opacity=1,rounded corners,inner sep=1.4pt]
    at (axis cs:4.1,.77) {Excluded};
  \node[font=\scriptsize] at (axis cs:2,.20) {Open};
\end{axis}
\end{tikzpicture} &
\begin{tikzpicture}
\begin{axis}[BLirrpanel,ylabel={},
  title={$\covol(\Lambda)<1$\\{}}]
  \path[fill=gapfive!58,draw=none]
    (axis cs:-6,0) -- (axis cs:-6,1) -- (axis cs:7,1) --
    (axis cs:7,0) -- cycle;
  \node[font=\scriptsize,fill=white,fill opacity=.88,text opacity=1,
    rounded corners,inner sep=1.4pt]
    at (axis cs:.5,.5) {Schwartz window};
\end{axis}
\end{tikzpicture}
\end{tabular}
\caption{Symplectically irrational lattices in dimension $d=5$, illustrating
\Cref{qu:irrational-critical-density}. Green shows existence in the first
two panels, red nonexistence, and white parameters not decided by the
cited results.
Left: Bekka's theorem~\cite[Theorem~4]{Bekka2004} and
embedding Proposition~\ref{prop:modulation-embeddings} give the existence region.
The black dashed line in each critical-density panel, through $(0,1)$,
$(1,1/2)$, and $(6,0)$, shows the sharp boundary for symplectically
rational lattices with $\Delta(\Lambda)=0$, for comparison.
Middle: for block triangular lattices and their symplectic images,
bounded measurable common domains~\cite[Theorem~1]{GrepstadKolountzakis2026}
give the larger range $s<d(1/2-1/p)$, for $1\le p<2$, together with
$s\le0$, for $2\le p\le\infty$.
In both critical-density panels, the classical Balian--Low
Theorem~\ref{thm:full-multivariate-balian-low} and the amalgam Balian--Low
Theorem~\ref{thm:main-gabor}, together with the same embeddings, give
the red region to the right of the broken line through $(0,1)$,
$(1,9/10)$, $(1,1/2)$, and $(6,0)$ in $(s,1/p)$ coordinates.
Right: a Schwartz window exists for every symplectically irrational
lattice of covolume less than one
\cite[Theorem~C]{EnstadThielVilalta2025}, so every $p,s$ is admissible.
}
\label{fig:irrational-results-d5}
\end{figure}

\subsection{The arithmetic gap for Zak transforms}
\label{subsec:intro-zak-gap}

The arithmetic gap in condition~\eqref{eq:SG} is reflected by a corresponding
\emph{arithmetic gap for Zak transforms}.  This gap is a key necessity mechanism in the proof
of Theorem~\ref{thm:symplectic-index-balian-low}.  We record here the two
results that isolate it; their proofs are given in
\Cref{sec:rational-zak-gap}.

For $g\in L^2(\mathbb R^q)$, the ordinary Zak transform is
\[
 Zg(u,v)=\sum_{k\in\mathbb Z^q}g(u-k)e^{2\pi i k\cdot v},
\]
initially interpreted in the standard $L^2$ sense.  It satisfies
\[
 Zg(u+n,v+m)=e^{2\pi i n\cdot v}Zg(u,v),
 \qquad n,m\in\mathbb Z^q.
\]
More generally, a measurable function
$F:\mathbb R^q\times\mathbb R^q\to\mathbb C^M$ is called
\emph{ordinary Zak-quasiperiodic} if
\begin{equation*}
 F(u+n,v+m)=e^{2\pi i n\cdot v}F(u,v),
 \qquad n,m\in\mathbb Z^q,
\end{equation*}
almost everywhere.

For a locally integrable scalar-, vector-, or matrix-valued function $H$ on a
cube $Q\subset\mathbb R^n$, put
\[
 H_Q=\fint_Q H(x)\,dx:=\frac1{|Q|}\int_Q H(x)\,dx, \quad \operatorname{osc}_Q(H)=\fint_Q\|H(x)-H_Q\|\,dx
\]
If $U\subset\mathbb R^n$ is open, then $H\in\VMOloc(U)$ means that for every
compact $K\Subset U$,
\[
 \sup_{\substack{Q\subset K\\ \ell(Q)\le r}}
 \operatorname{osc}_Q(H)\longrightarrow0
 \qquad\text{as }r\searrow0.
\]
Here $K\Subset U$ means that $K$ is compactly contained in $U$, equivalently
$K$ is compact and has positive distance from $\mathbb R^n\setminus U$.
The definition is applied entrywise to vector- and matrix-valued functions.
Every continuous function and every VMO function is locally VMO.

Write
\[
 \mathcal Q_q^{\rm Zak}=[0,1]^{2q},
 \qquad
 (\mathcal Q_q^{\rm Zak})^*=[-2,3]^{2q}.
\]
We say that $F=(F_1,\ldots,F_M)$ has an \emph{essential common zero} if
\[
 \operatorname*{ess\,inf}_{z\in\mathcal Q_q^{\rm Zak}}|F(z)|=0.
\]
The continuous common-zero theorem of de Dios Pont, Liehr, and Taylor
\cite[Theorem~1.3]{deDiosLiehrTaylor2026} asserts that a continuous ordinary
Zak-quasiperiodic map $\R^{2q}\to \C^M$ has a zero whenever $M\le q$.  We give an
elementary proof of this result in Sections~\ref{app:pfaffian-common-zero}--\ref{app:boundary-pfaffian},
using ordinary Zak identities, coordinate derivatives, and a Pfaffian
calculation.
The following locally VMO extension is the measurable form needed in this paper.

\begin{theorem}
\label{thm:vmo-essential-common-zero}
Let $F:\mathbb R^q\times\mathbb R^q\to\mathbb C^M$ be essentially bounded and ordinary Zak-quasiperiodic, and have locally VMO
components on a neighborhood of $(\mathcal Q_q^{\rm Zak})^*$.  If $M\le q$,
then $F$ has an essential common zero:
\[
 \operatorname*{ess\,inf}_{z\in\mathcal Q_q^{\rm Zak}}\|F(z)\|=0.
\]
Conversely, if $M>q$, there are smooth ordinary Zak-quasiperiodic maps with
no common zero.
\end{theorem}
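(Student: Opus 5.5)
We argue by contradiction and reduce to the continuous common-zero theorem of de Dios Pont--Liehr--Taylor \cite[Theorem~1.3]{deDiosLiehrTaylor2026}, whose elementary proof is given in Sections~\ref{app:pfaffian-common-zero}--\ref{app:boundary-pfaffian}. Suppose $M\le q$ and $\essinf_{\mathcal Q_q^{\rm Zak}}\|F\|=c>0$. By quasiperiodicity $\|F\|$ is $\Z^{2q}$-periodic, so $\|F\|\ge c$ almost everywhere on all of $\R^{2q}$, in particular on a neighborhood of $(\mathcal Q_q^{\rm Zak})^*$.

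\emph{Mollification.} Let $\rho_\epsilon$ be a standard mollifier supported in the $\epsilon$-ball, and set $F_\epsilon=F*\rho_\epsilon$. Quasiperiodicity is not preserved exactly, because the factor $e^{2\pi i n\cdot v}$ depends on $v$. Instead of convolving $F$ directly, we therefore fix a smooth nonvanishing ordinary Zak-quasiperiodic reference in the ``phase'' variable. A natural choice is to work on $[0,1]^{2q}$ and correct the defect afterwards, as follows. Since $F(u+n,v+m)=e^{2\pi i n\cdot v}F(u,v)$, we have
\[
 F_\epsilon(u+n,v+m)-e^{2\pi i n\cdot v}F_\epsilon(u,v)
 =\int\bigl(e^{2\pi i n\cdot(v-w)}-e^{2\pi i n\cdot v}\bigr)F(u-t,v-w)\rho_\epsilon(t,w)\,dt\,dw,
\]
which is $O(\epsilon\,|n|\,\|F\|_\infty)$. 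Only $|n|\le 3$ matters for gluing on the enlarged cube, so this defect is uniformly small. We then build an exactly quasiperiodic continuous map $G_\epsilon$ by a partition of unity on $(\mathcal Q_q^{\rm Zak})^*$. On $[0,1]^{2q}$ we set $G_\epsilon=F_\epsilon$, corrected near the faces by convex combinations of $F_\epsilon$ and the transported values $e^{-2\pi i n\cdot v}F_\epsilon(u+n,v+m)$. The result is a continuous map that is exactly quasiperiodic (define it on the fundamental cube with matching boundary data and extend) and satisfies $\sup_{[0,1]^{2q}}\|G_\epsilon-F_\epsilon\|\le C\epsilon\|F\|_\infty$. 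The enlarged cube $(\mathcal Q_q^{\rm Zak})^*=[-2,3]^{2q}$ exists precisely so that all mollifications and translates used here stay inside the region where the VMO hypothesis holds.

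\emph{Lower bound via VMO.} The key step, and the main obstacle, is to show that $\|F_\epsilon(z)\|\ge c/2$ for every $z$ in the cube once $\epsilon$ is small. Let $Q$ be the cube of side comparable to $\epsilon$ centered at $z$. Then
\[
 \|F_\epsilon(z)-F_Q\|\lesssim\operatorname{osc}_Q(F),
\]
and this is uniformly small by local VMO on the compact set $[-1,2]^{2q}$. It remains to control $\|F_Q\|$. Here we use $\|F(x)\|\ge c$ a.e. together with
\[
 \fint_Q\|F-F_Q\|\le\eta,
\]
which give
\[
 \|F_Q\|\ge\fint_Q\|F\|-\eta\ge c-\eta.
\]
Combining the two estimates yields $\|F_\epsilon(z)\|\ge c-2C\eta$, uniformly in $z$, for small $\epsilon$. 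This is the step where VMO, rather than mere boundedness, is essential; a BMO function with jumps would fail here. After the gluing correction, $\|G_\epsilon\|\ge c/4>0$ everywhere. This contradicts the continuous common-zero theorem, since $M\le q$. The care needed is to make the gluing genuinely exact; if convex-combination gluing proves awkward, the alternative is to define $G_\epsilon$ on the fundamental domain and extend it by the quasiperiodicity formula, using the smallness of the defect to patch boundary mismatches via a collar interpolation.

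\emph{Converse.} For $M>q$, where it suffices to take $M=q+1$ by padding with zero components, we take $F_j(u,v)=Z\varphi_j(u,v)$ for suitable Schwartz functions $\varphi_j$ on $\R^q$. These are smooth and ordinary Zak-quasiperiodic. Take tensor products: for $j\le q$, let the $j$th component in coordinate pair $(u_j,v_j)$ be $Z\phi$ for a 1D Gaussian $\phi$, tensored with Zak transforms of Gaussians in the other coordinates, and build additional components from $Z(\phi')$-type factors. A cleaner route is through the amalgam theorem's sufficiency side: a Schwartz $(q+1)$-window Gabor frame on $\Z^q\times\Z^q$ exists by \Cref{thm:main-multiwindow}, since $(\mathrm{mSG}_{q+1})$ reads $q+1\ge1+q$ and $\covol=1<q+1$. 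By the Zibulski--Zeevi/Zak characterization, the frame property is equivalent to the vector $(Zg_1,\dots,Zg_{q+1})$ having no common zero. This gives the required smooth maps.
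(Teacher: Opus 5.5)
Your proof is correct in substance, but it obtains exact quasiperiodicity by a different mechanism than the paper.

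\textbf{How the paper smooths.} The paper never forms the plain mollification. Its kernel \eqref{eq:ordinary-zak-compatible-smoothing} carries the twist $e^{2\pi i u\cdot y}$. After the shift $u\mapsto u+n$, this twist supplies exactly the factor $e^{2\pi i n\cdot y}$ that turns $e^{2\pi i n\cdot(v-y)}$ into $e^{2\pi i n\cdot v}$. So $F_\varepsilon$ is smooth and exactly Zak-quasiperiodic in one step (\Cref{lem:ordinary-zak-smoothing-rule}). The twist costs only an additive $O(\varepsilon\|F\|_\infty)$. The same averaged reverse triangle inequality you use then gives $\|F_\varepsilon\|\ge c/2$ (\Cref{prop:ordinary-zak-lower-bound-survives}).

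\textbf{How you smooth, and the one vague step.} You prove the lower bound for the untwisted mollifier and repair covariance afterwards. That repair is the one step you leave vague, but it is settled by averaging transported copies. Take $\psi\ge0$ continuous and compactly supported with $\sum_{\gamma\in\Z^{2q}}\psi(z+\gamma)=1$, and put $\chi_{(n,m)}(u,v)=e^{2\pi i n\cdot v}$. Set
\[
 G_\epsilon(z)=\sum_{\gamma\in\Z^{2q}}\psi(z+\gamma)\,\overline{\chi_\gamma(z)}\,F_\epsilon(z+\gamma).
\]
The cocycle law \eqref{eq:scalar-zak-cocycle-law} gives $G_\epsilon(z+\delta)=\chi_\delta(z)G_\epsilon(z)$ exactly, with no separate bookkeeping at edges and corners. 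Your defect bound gives $\|G_\epsilon-F_\epsilon\|\le C\epsilon\|F\|_\infty$ on the cube. Choosing $\psi=1$ away from the faces is literally your ``$F_\epsilon$ corrected near the faces.''

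\textbf{Comparison.} Your route separates the VMO estimate from covariance and uses a gluing device that works for any continuous cocycle. The cost is an extra construction that the twisted kernel makes unnecessary. Your $G_\epsilon$ is only continuous, but that is all \Cref{thm:ddlt-common-zero} needs.

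\textbf{The converse.} The paper simply invokes the existence half of \Cref{thm:ddlt-common-zero}. It realizes this in \Cref{app:pfaffian-common-zero} by products of one-zero factors with $q+1$ distinct zero shifts.
\begin{itemize}
\item Your first route is not yet a construction, since the ``$Z(\phi')$-type factors'' are unspecified. It becomes one if you apply the same shift trick to $e^{2\pi i\beta u}Z\phi(u,v-\beta)$. This factor is quasiperiodic and its only zero is at $(1/2,1/2+\beta)$.
\item Your second route is valid and not circular: the sufficiency half of \Cref{thm:main-multiwindow} rests on Enstad--Thiel--Vilalta, not on this theorem. But it derives an elementary fact from heavy operator-algebraic machinery.
\end{itemize}
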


The converse assertion in \Cref{thm:vmo-essential-common-zero} was established
by de Dios Pont--Liehr--Taylor
\cite[Theorem~1.3]{deDiosLiehrTaylor2026}.  An explicit smooth construction
is also given in \Cref{app:pfaffian-common-zero}.

The matrix form of the arithmetic gap for Zak transforms is the following sharp rectangular
rank condition.  Here $\sigma_{\min}$ denotes the smallest singular value.

\begin{theorem}
\label{thm:rectangular-common-zero-gap}
Let
\[
 S:\mathbb R^d\times\mathbb R^d\longrightarrow\mathbb C^{R\times N},
 \qquad N\ge R,
\]
be essentially bounded, be ordinary Zak-quasiperiodic entrywise, and have
entries in $\mathrm{VMO}_{\mathrm{loc}}(\mathbb R^{2d})$.  If
\begin{equation}
 \sigma_{\min}(S(u,v))\ge a>0,
 \quad\text{for almost every }(u,v),
 \label{eq:vmo-matrix-lower-bound}
\end{equation}
then
\[
 N\ge R+d.
\]
Conversely, whenever $N\ge R+d$, there is a smooth ordinary
Zak-quasiperiodic map in $\mathbb C^{R\times N}$ with full row rank
everywhere.
\end{theorem}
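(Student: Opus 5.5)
The plan is to reduce the matrix statement to the vector common-zero result, \Cref{thm:vmo-essential-common-zero}, by passing to maximal minors, or more conveniently to the orthogonal complement of the row space.

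\textbf{Necessity.} Suppose $N<R+d$, so $N-R\le d-1$. If $N=R$, I take the scalar function $\det S$. Each entry is quasiperiodic with the same multiplier $e^{2\pi i n\cdot v}$, so $\det S$ is quasiperiodic with multiplier $e^{2\pi i R\,n\cdot v}$. This is not of ordinary type, so I instead work with the Hodge dual to stay in the ordinary class. Let $s_1,\dots,s_R$ be the rows. Form the vector of all $R\times R$ minors obtained by deleting columns, namely the Plücker vector $P=s_1\wedge\cdots\wedge s_R\in\Lambda^R\C^N$.

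The lower bound \eqref{eq:vmo-matrix-lower-bound} gives $|P|\ge a^R$ almost everywhere. By Cauchy--Binet, $|P|^2=\det(SS^*)=\prod\sigma_j^2$. Boundedness together with the fact that VMO$\cap L^\infty$ is an algebra gives $P\in\VMOloc\cap L^\infty$.

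The complication is the multiplier $e^{2\pi iR\,n\cdot v}$. To fix it, I pass to the orthogonal projection $\Pi=S^*(SS^*)^{-1}S$ onto the row space. $\Pi$ is genuinely periodic, bounded, and VMO, because inversion of $SS^*\ge a^2$ preserves $\VMOloc\cap L^\infty$. The complementary projection $I-\Pi$ has rank $N-R\le d-1$ everywhere.

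The key step is to produce from $S$ an ordinary quasiperiodic vector with at most $d$ components and no essential zero. My choice is to pick a fixed column subset and consider the $N-R+1\le d$ vectors obtained from the rows of $S$ on complementary coordinates. More robustly, I would take the composite
\[
F(z)=S(z)\,w(z)\in\C^R,
\]
with $w$ a periodic VMO unit field chosen in the range of $\Pi$. Since $\sigma_{\min}\ge a$, this $F$ never vanishes. Iterating this, I would reduce $R$ rows to a quasiperiodic map whose effective number of free components is $N-R+1\le d$. Applying \Cref{thm:vmo-essential-common-zero} with $q=d$ and $M=N-R+1\le d$ then yields an essential zero, which contradicts the lower bound.

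Concretely, I plan to construct a periodic VMO matrix $U(z)\in\C^{N\times(N-R+1)}$ with orthonormal columns that spans $\ker S(z)$ plus one direction in the row space. Then $S U$ is an $R\times(N-R+1)$ quasiperiodic matrix of rank exactly $1$ that is bounded below. Any one of its normalized nonvanishing rows contradicts \Cref{thm:vmo-essential-common-zero}.

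\textbf{Main obstacle.} The hardest step is the measurable, periodic VMO selection of such frames $U$. Measurable line-bundle selections can be topologically obstructed. To handle this, I would use the VMO degree theory of Brezis--Nirenberg: VMO maps into a Grassmannian have well-defined homotopy classes. I would argue that the kernel bundle, being periodic, can be trivialised after passing to finite covers, and then extend the scalar theorem to this setting. If that fails, the fallback is to mollify $S$ at small scale $\varepsilon$. VMO ensures that the mollified $S_\varepsilon$ stays uniformly close in measure, and hence keeps $\sigma_{\min}\ge a/2$ on most of the cube; I would then apply a continuous rank-deficiency count, namely the Pfaffian/Chern argument of Sections~\ref{app:pfaffian-common-zero}--\ref{app:boundary-pfaffian} applied to the complementary bundle.

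\textbf{Sufficiency.} For $N\ge R+d$, I take the smooth quasiperiodic $G:\R^{2d}\to\C^{d+1}$ with no zero from the converse of \Cref{thm:vmo-essential-common-zero}. I place it in the block
\[
S=\begin{pmatrix}G^{T}&0&0\\0&E&0\end{pmatrix},
\]
where $E$ consists of $R-1$ further disjoint diagonal blocks of the scalar smooth Zak function $Z\phi$, padded with zeros. Since $Z\phi$ has zeros, that alone fails. I would instead use $R$ disjoint copies of the nonvanishing $(d+1)$-vector when $N\ge R(d+1)$. In general I would follow the paper's explicit construction: row-reduce so that the rows share the $d$ extra columns, and check full rank via a triangular structure.
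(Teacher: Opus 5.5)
\textbf{Necessity has a genuine gap.} The reduction to an ordinary Zak map with at most $d$ components and a positive lower bound never actually takes place.

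Take your concrete frame $U(z)$, spanning $\ker S(z)$ together with one unit direction $w(z)\in\operatorname{ran}S(z)^*$.
\begin{itemize}
\item The kernel columns of $SU$ vanish identically. So $SU$ carries only the single column $Sw\in\mathbb C^R$.
\item Each row of $SU$ has the single nonzero entry $s_kw$, which is not bounded below. So no nonvanishing map with $N-R+1$ components appears.
\item What a periodic VMO unit field $w$ in the row space actually yields is the $R$-component ordinary Zak map $Sw$, with $|Sw|\ge a$. For $R>d$ this contradicts nothing. For $R\le d$, \Cref{thm:vmo-essential-common-zero} shows that no such $w$ exists for any admissible $S$. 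The selection step is therefore impossible, not merely delicate.
\end{itemize}
This is genuine topology. $\operatorname{ran}S^*$ is isomorphic to $R$ copies of the Zak line bundle, whose Chern class does not die on finite covers. Brezis--Nirenberg degree theory classifies such obstructions; it does not remove them.

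Your fallback is compatible smoothing followed by a Chern computation for the complementary bundle. That would need a rank-$(N-R)$ characteristic-class calculation. The Pfaffian appendix does not supply it, since it treats only one nonvanishing section.

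\textbf{The missing idea is to amplify rather than select.}
\begin{itemize}
\item For $R=1$, the row of $S$ is itself an $N$-component ordinary Zak map bounded below, so $N\ge d+1$ directly.
\item For $R\ge2$, the converse part of \Cref{thm:ddlt-common-zero} gives a smooth nowhere-zero ordinary Zak vector $\rho:\mathbb R^{R-1}\times\mathbb R^{R-1}\to\mathbb C^R$. Set $\mathcal F((u,x),(v,y))=S(u,v)^T\rho(x,y)\in\mathbb C^N$.
\item The two Zak phases multiply, so $\mathcal F$ is ordinary Zak-quasiperiodic in $d+R-1$ coordinate pairs. It is locally VMO by \Cref{lem:vmo-smooth-product}. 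Also $|\mathcal F|\ge a\min|\rho|>0$, because $S^T$ has the singular values of $S$.
\item \Cref{thm:vmo-essential-common-zero} with $M=N$ and $q=d+R-1$ then forces $N\ge R+d$.
\end{itemize}
No bundle selection, Pl\"ucker vectors, or projections are needed.

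\textbf{Sufficiency is also incomplete.} Disjoint copies require $N\ge R(d+1)$. A triangular argument with a fixed set of columns fails, because every scalar Zak function $s_j$ has zeros.

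The paper uses the banded Toeplitz matrix $(S_0)_{k\ell}=s_{\ell-k}$ for $0\le\ell-k\le d$ (and $0$ otherwise), built from a nonvanishing $(s_0,\dots,s_d)$. Then $S_0^Tc$ lists the coefficients of $a(z)c(z)$ with $a(z)=\sum_js_jz^j\ne0$. Since $\mathbb C[z]$ has no zero divisors, $c=0$. Zero columns handle $N>R+d$.

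Equivalently, at each point let $j_0$ be the least index with $s_{j_0}\ne0$. The columns $j_0,\dots,j_0+R-1$ then form a triangular block with diagonal $s_{j_0}$. So your triangular idea works, but only with a pointwise choice of columns.
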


The result above gives the most restrictive Gabor obstruction in the diagonal
case.

\begin{corollary}
\label{thm:intro-diagonal-zak-obstruction}
Let
\[
 D=\operatorname{diag}(b_1/a_1,\ldots,b_d/a_d),
 \qquad (a_i,b_i)=1,
\]
where the $a_i,b_i$ are positive integers, let
$\Lambda_D=D\mathbb Z^d\times\mathbb Z^d$, and put
\[
 N=\prod_{i=1}^d a_i,
 \qquad
 R=\prod_{i=1}^d b_i.
\]
If $N-R<d$, then there is no
$g\in L^2(\mathbb R^d)$ such that $Zg\in\VMOloc(\mathbb R^{2d})$ and
$\mathcal G(g,\Lambda_D)$ is a frame.  In particular, the conclusion applies
whenever $Zg$ is continuous.
\end{corollary}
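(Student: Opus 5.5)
The plan is to convert the frame property of $\mathcal G(g,\Lambda_D)$ into a pointwise singular-value bound on a Zibulski--Zeevi matrix. That matrix is built from $Zg$ and, after normalization, is an $R\times N$ ordinary Zak-quasiperiodic matrix. \Cref{thm:rectangular-common-zero-gap} then gives $N\ge R+d$, contradicting $N-R<d$. The construction and the Zibulski--Zeevi characterization are carried out coordinatewise: $\Lambda_D$ is a product of the one-dimensional lattices $(b_i/a_i)\mathbb Z\times\mathbb Z$, so the $d$-dimensional matrix is the Kronecker product of one-dimensional blocks of size $b_i\times a_i$. Its rows are indexed by residues in $\prod_i\mathbb Z/b_i$ and its columns by residues in $\prod_i\mathbb Z/a_i$, giving total size $R\times N$.

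\textbf{Step 1 (Zibulski--Zeevi criterion).} Recall the classical fact, in product form: with suitable rescaled variables $(u,v)$, $\mathcal G(g,\Lambda_D)$ is a frame with bounds $A,B$ if and only if the matrix $\Phi_g(u,v)$ satisfies $A\,\Id\le c\,\Phi_g\Phi_g^*\le B\,\Id$ almost everywhere. Here $\Phi_g(u,v)$ has entries $Zg$ evaluated at points obtained from $(u,v)$ by fixed affine shifts indexed by the residues, and $c>0$ is a normalizing constant. Two consequences follow.
\begin{itemize}
\item If $R>N$, the lower bound already fails, because $\Phi_g\Phi_g^*$ has rank at most $N$. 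We may therefore assume $N\ge R$.
\item The upper bound gives $\|\Phi_g\|\le (B/c)^{1/2}$, so the entries are essentially bounded. The lower bound gives $\sigma_{\min}(\Phi_g)\ge (A/c)^{1/2}>0$ almost everywhere.
\end{itemize}
Since $Zg\in\VMOloc(\mathbb R^{2d})$ and the entries are compositions of $Zg$ with invertible affine maps, the entries lie in $\VMOloc$; this class is invariant under such maps because cubes map to parallelepipeds of comparable size.

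\textbf{Step 2 (normalizing to ordinary quasiperiodicity).} This is the step I expect to be the main obstacle. Translating $(u,v)$ by a period of the rescaled variables does not multiply $\Phi_g$ by the scalar $e^{2\pi i n\cdot v}$. Instead it yields
\[
\Phi_g(u+n,v+m)=P_n(u,v)\,\Phi_g(u,v)\,Q_m,
\]
where $P_n$ is a diagonal matrix of unimodular phases, possibly with a row permutation, and $Q_m$ is a column permutation.

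I would first pass to the sublattice of periods $(a\mathbb Z)^d\times(b\mathbb Z)^d$, with $a=\operatorname{lcm}(a_i)$ and $b=\operatorname{lcm}(b_i)$ or a suitable multiple, on which all permutations become trivial. Rescaling $(u,v)$ linearly then turns this sublattice into $\mathbb Z^{2d}$.

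The remaining defect is that row $j$ acquires a phase $e^{2\pi i(n\cdot v+\theta_j(n))}$, where $\theta_j(n)$ is linear in $n$. I would remove it by multiplying row $j$ by a smooth unimodular factor $e^{-2\pi i\,\theta_j\cdot v'}$, with $v'$ chosen so that the resulting $S$ obeys exactly $S(u+n,v+m)=e^{2\pi i n\cdot v}S(u,v)$. If a residual factor $e^{2\pi i k\,n\cdot v}$ with $k\neq1$ survives, I would absorb it by a further linear change of variables in $v$ alone, which rescales the dimension-independent constants but not $d$, $R$ or $N$.

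Multiplying rows by unimodular smooth functions does not change singular values. It also preserves essential boundedness and $\VMOloc$, since bounded Lipschitz functions multiply $\VMOloc$ into itself.

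\textbf{Step 3 (conclusion).} Apply \Cref{thm:rectangular-common-zero-gap} to the normalized $S$, which is $R\times N$ with $N\ge R$, essentially bounded, ordinary Zak-quasiperiodic and $\VMOloc$, and has $\sigma_{\min}(S)\ge(A/c)^{1/2}$ almost everywhere. It gives $N\ge R+d$, contradicting $N-R<d$.

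The final sentence of the statement, about continuous $Zg$, is immediate because continuous functions are locally VMO.
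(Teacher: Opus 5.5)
Your overall architecture matches the paper's. The Zibulski--Zeevi fibre criterion gives essential boundedness and $\sigma_{\min}\ge a>0$. A unitary row gauge turns the rational matrix into an ordinary Zak-quasiperiodic one. Then \Cref{thm:rectangular-common-zero-gap} forces $N\ge R+d$. The gap is in your Step~2, where you expected it.

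Read your rescaled variables as those in which the fine lattice becomes $\Z^{2d}$. Passing to $(a\Z)^d\times(b\Z)^d$ with $a=\operatorname{lcm}(a_i)$, $b=\operatorname{lcm}(b_i)$, and rescaling that sublattice to $\Z^{2d}$, gives the $i$th coordinate pair the factor $e^{2\pi i k_i n_i v_i}$ with $k_i=(a/a_i)(b/b_i)$. For instance, $D=\diag(1/2,2)$ satisfies $N-R=0<d$ and gives $k_1=k_2=2$.

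Your remedy, a linear change of variables in $v$ alone, fails. If $T(u,v)=S(u,v/k)$, then indeed $T(u+n,v)=e^{2\pi i n\cdot v}T(u,v)$. But $T$ is only $k\Z^d$-periodic in $v$, so it is not ordinary Zak-quasiperiodic, and \Cref{thm:rectangular-common-zero-gap} does not apply. This is not cosmetic: $k_i$ is a winding number in the $i$th pair. In the appendix's Pfaffian computation, $\int\Pf F$ would equal $\prod_i k_i$ instead of $1$. Neither restricting to a sublattice of periods nor an integral change of variables can reduce it to $1$.

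There is a second, smaller problem. The row phases $e^{2\pi i\theta_j(n)}$ come from translations in the position variable. So the gauge that removes them must depend on position, $e^{-2\pi i\theta_j\cdot u}$. A factor depending only on the frequency variable is unchanged under $u\mapsto u+n$ and cannot cancel them.

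The repair is not to pass to any sublattice at all. In the unscaled Zak variables, integer translations already act without permutations. With $A_{s,t}(u,\eta)=Zg(u-Dt,\eta+\mathsf B^{-1}s)$ one has $A_{s,t}(u+n,\eta+m)=e^{2\pi i n\cdot(\eta+\mathsf B^{-1}s)}A_{s,t}(u,\eta)$ for $n,m\in\Z^d$. Permutations arise only for the fine translations in $\mathsf A^{-1}\Z^d\times\mathsf B^{-1}\Z^d$, which you never need. In your fine-rescaled variables, the permutation-free sublattice is $\mathsf A\Z^d\times\mathsf B\Z^d$, not the lcm one.

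Left multiplication by $\diag_s\bigl(e^{-2\pi i u\cdot\mathsf B^{-1}s}\bigr)$ then gives exactly $S(u+n,\eta+m)=e^{2\pi i n\cdot\eta}S(u,\eta)$. The singular values are unchanged, and the entries stay bounded and locally VMO by \Cref{lem:smooth-factor-local-vmo}. This is \Cref{prop:ordinary-zak-reduction}, and with it your Steps~1 and~3 complete the proof.
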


For $q$ windows, the lower fibre bound first forces $qN\ge R$.
The same argument then excludes a frame whenever $qN-R<d$,
assuming $g_j\in L^2(\mathbb R^d)$ and
$Zg_j\in\VMOloc(\mathbb R^{2d})$ for $1\le j\le q$; this is the form
used in the proof of \Cref{thm:main-multiwindow}.

\subsection{A critical Fourier--Lebesgue--VMO embedding}
\label{subsec:intro-critical-FL-VMO}

For a periodic function $f$ on $\mathbb T^m$ with Fourier coefficients
$\widehat f(k)$, the Fourier--Lebesgue space $\Fell{p}{s}(\mathbb T^m)$ is
defined by the condition
\[
 \bigl(\langle k\rangle^s\widehat f(k)\bigr)_{k\in\mathbb Z^m}
 \in\ell^p(\mathbb Z^m).
\]
The following theorem gives the critical embedding needed in the Zak
obstruction.

\begin{theorem}
\label{lem:critical-vmo}
Let $m\ge1$ and $1\le p<\infty$.  Then
\[
 \Fell{p}{m(1-1/p)}(\mathbb T^m)\subset\VMO(\mathbb T^m).
\]
More precisely, with $s=m(1-1/p)$,
\begin{equation}
 [f]_{\BMO(\mathbb T^m)}
 \le C_{m,p}\norm{f}_{\Fell{p}{s}(\mathbb T^m)}.
 \label{eq:FL-critical-BMO-estimate}
\end{equation}
\end{theorem}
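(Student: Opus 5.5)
The argument splits each cube $Q$ at the scale of its side length and controls low and high frequencies separately. Fix $f\in\Fell{p}{s}(\mathbb T^m)$ with $s=m(1-1/p)$ and a cube $Q\subset\mathbb R^m$ of side $\ell=\ell(Q)\le 1$; for larger cubes we simply use $\operatorname{osc}_Q(f)\le 2\|f\|_{L^1(\mathbb T^m)}$ up to constants. Set $K=\ell^{-1}$ and write $f=f_{\rm low}+f_{\rm high}$, where $f_{\rm low}=\sum_{|k|\le K}\widehat f(k)e^{2\pi i k\cdot x}$. We then bound $\operatorname{osc}_Q(f)\le \operatorname{osc}_Q(f_{\rm low})+2\fint_Q|f_{\rm high}|$.

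\textbf{Low frequencies.} By the mean value theorem,
\[
\operatorname{osc}_Q(f_{\rm low})\le C\ell\,\|\nabla f_{\rm low}\|_\infty\le C\ell\sum_{|k|\le K}|k|\,|\widehat f(k)|.
\]
Writing $|k||\widehat f(k)|=|k|^{1-s}\cdot|k|^s|\widehat f(k)|$, Hölder's inequality with exponents $p',p$ gives
\[
\Bigl(\sum_{0<|k|\le K}|k|^{(1-s)p'}\Bigr)^{1/p'}\|f\|_{\Fell{p}{s}}.
\]
Since $(1-s)p'+m=p'+m-mp'/p\cdot(p-1)/\ldots$, it is cleaner to compute directly: $sp'=m$, so the sum is $\sum_{0<|k|\le K}|k|^{p'-m}\approx K^{p'}$. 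Its $1/p'$ power is $\approx K$. Hence this term is at most $C\ell K\|f\|=C\|f\|$. This step is only bounded, not small. When $p=1$ we have $s=0$, and the bound $\sum|k||\widehat f(k)|\le K\sum|\widehat f(k)|$ works directly.

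\textbf{High frequencies.} By Cauchy--Schwarz and the fact that $Q$ meets at most $2^m$ translates of the torus,
\[
\fint_Q|f_{\rm high}|\le\Bigl(\fint_Q|f_{\rm high}|^2\Bigr)^{1/2}\le C\ell^{-m/2}\Bigl(\sum_{|k|>K}|\widehat f(k)|^2\Bigr)^{1/2}.
\]
This estimate is too crude when $p>2$. The \textbf{main obstacle} is to get the correct scaling for all $p$. My first attempt would be a dyadic decomposition $|k|\sim 2^jK$ combined with a localized estimate. For $p\le2$, I would use $\ell^p\subset\ell^2$ on each block with the weight: $\|\widehat f\|_{\ell^2(|k|\sim 2^jK)}\le(2^jK)^{-s}\|\cdot\|_{\ell^p}$, which gives $\ell^{-m/2}(2^jK)^{-m(1-1/p)}$. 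This does not sum correctly, so I would instead replace the $L^2$ average by an $L^{p'}$ average via Hausdorff--Young. For $1\le p\le2$ and each dyadic block, $\fint_Q|f_j|\le(\fint_Q|f_j|^{p'})^{1/p'}\le C\ell^{-m/p'}\|\widehat f\|_{\ell^p(|k|\sim2^jK)}\le C\ell^{-m/p'}(2^jK)^{-s}\varepsilon_j$. Since $s=m/p'$, this equals $C2^{-jm/p'}\varepsilon_j$, which is summable in $j$. For $p>2$, Hausdorff--Young fails, so I would use a smooth bump $\psi_Q$ adapted to $Q$ and bound $\int|f_{\rm high}|\psi_Q$ by duality, using the $\ell^1$ decay of $\widehat{\psi_Q}$ at scale $K$ together with Hölder in $k$ on each dyadic block. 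The expected bound is again $C\sum_j 2^{-j\delta}\varepsilon_j$. This is the step I expect to need the most care.

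\textbf{Conclusion.} The two steps together give \eqref{eq:FL-critical-BMO-estimate}. For VMO, the low-frequency bound is not small, but smallness follows by density. Trigonometric polynomials are dense in $\Fell{p}{s}$ because $p<\infty$, and they are smooth, so their oscillation over cubes of side $\ell$ tends to $0$ as $\ell\to0$. Since BMO seminorms control oscillations uniformly, $f$ is a BMO-limit of smooth functions. It therefore lies in the BMO closure of continuous functions, which is $\VMO(\mathbb T^m)$.
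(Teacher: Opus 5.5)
Your low-frequency step (mean-value theorem plus H\"older with the critical annulus count $\sum_{0<|k|\le K}|k|^{p'-m}\lesssim K^{p'}$) is the paper's Step~3. Your high-frequency argument for $1\le p\le2$ (Hausdorff--Young, then H\"older on $Q$) is its Step~4; the dyadic blocks are harmless but unnecessary, since you can apply Hausdorff--Young to the whole tail and use $\langle k\rangle\ge K$ there, so that $|Q|^{-1/p'}K^{-s}=1$. Your passage to VMO, via density of trigonometric polynomials and the BMO bound (Sarason), is its Step~7.

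The gap is the high-frequency estimate for $2<p<\infty$, which is never actually proved. You dismiss the Cauchy--Schwarz/Plancherel bound as too crude and replace it with a duality argument against a smooth bump $\psi_Q$ that is only sketched. As stated, that argument does not apply: $\int|f_{\rm high}|\psi_Q$ is not a linear functional of the coefficients, so you cannot pair it with $\widehat{\psi_Q}$. Consequently the proposal does not establish \eqref{eq:FL-critical-BMO-estimate} for $p>2$.

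The fix is that the estimate you discarded is exactly right for $p>2$. What fails is only the inclusion $\ell^p\subset\ell^2$, which must be replaced by H\"older on the tail. With $1/r=1/2-1/p$,
\[
 \Bigl(\sum_{|k|>K}|\widehat f(k)|^2\Bigr)^{1/2}
 \le\bigl\|(\langle k\rangle^s\widehat f(k))_{|k|>K}\bigr\|_{\ell^p}
 \Bigl(\sum_{|k|>K}\langle k\rangle^{-sr}\Bigr)^{1/r}.
\]
Since $sr=2m(p-1)/(p-2)>m$, the lattice sum is $O(K^{m-sr})$. The last factor is therefore $O(K^{m/r-s})=O(K^{-m/2})$, because $m/r-s=m(\tfrac12-\tfrac1p)-m(1-\tfrac1p)=-\tfrac m2$. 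This cancels exactly the factor $\ell^{-m/2}=K^{m/2}$ from Cauchy--Schwarz on $Q$, giving $\fint_Q|f_{\rm high}|\le C_{m,p}\|f\|_{\Fell{p}{s}}$; this is Step~5 of the paper's proof.

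So your dichotomy is reversed. Hausdorff--Young is needed for $p<2$, where $\ell^p\subset\ell^2$ loses the growing factor $K^{m(1/p-1/2)}$. For $p>2$, plain $L^2$ together with weighted H\"older already suffices.
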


The critical VMO embeddings used in earlier Balian--Low arguments appear in
the work of Gautam~\cite{Gautam2008} and Tinaztepe--Heil~\cite{TinaztepeHeil2012}.
The theorem above extends these earlier formulations to the single periodic,
all-dimensional statement needed here, valid for every $1\le p<\infty$.
This is the form used in the matrix-valued slicing argument below.

\subsection{Consequences in algebraic topology}
\label{subsec:intro-topological-consequences}

The symplectic index gap of a lattice-generating matrix may also have
consequences in other areas of mathematics, including algebraic topology
and differential geometry.  A first example is the following.

Let $A_\Lambda=C_\pi^*(\Lambda)$ be the noncommutative torus associated
with $\Lambda$, and let $p_\Lambda$ be its associated projection
\cite[Notation~5.1]{EnstadThielVilalta2025}.  Enstad--Thiel--Vilalta show
that $p_\Lambda\precsim1_{A_\Lambda}^{\oplus q}$ is equivalent to the
existence of a Schwartz-class $q$-window Gabor frame
\cite[Proposition~5.2]{EnstadThielVilalta2025}; here $\precsim$ denotes
Murray--von Neumann subequivalence.  Combining this with our sharp
multiwindow criterion in \Cref{thm:main-multiwindow} gives the following
projection-comparison criterion.  The necessity of the numerical condition
is new.

\begin{corollary}
\label{thm:heisenberg-projection-comparison}
Let $\Lambda\subset\mathbb R^{2d}$ be symplectically rational and let
$q\ge1$ be an integer.  Then
\[
 p_\Lambda\precsim1_{A_\Lambda}^{\oplus q}
 \quad\Longleftrightarrow\quad
 q\,\nu(\Lambda)\ge\nu(\Lambda^\circ)+d.
\]
\end{corollary}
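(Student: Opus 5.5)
The corollary is obtained by chaining the Enstad--Thiel--Vilalta projection criterion with \Cref{thm:main-multiwindow}; the only work is matching the hypotheses of the two results.

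\emph{Step 1 (operator-algebraic side).} By \cite[Proposition~5.2]{EnstadThielVilalta2025}, $p_\Lambda\precsim1_{A_\Lambda}^{\oplus q}$ holds if and only if there exist $g_1,\ldots,g_q\in\Sclass(\R^d)$ such that $\G(\mathbf g,\Lambda)$ is a Gabor frame. This is condition~\textup{(a)} of \Cref{thm:main-multiwindow}. I will check that their Heisenberg module and projection $p_\Lambda$ are built from the same lattice $\Lambda$ and the same time--frequency shifts $\pi(\lambda)=M_\xi T_x$ used here. A different convention for $\pi$ or for $\sigma$ differs by a cocycle or a sign, and this leaves the multiwindow frame property unchanged.

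\emph{Step 2 (analytic side).} By \Cref{thm:main-multiwindow}, condition~\textup{(a)} is equivalent to $(\mathrm{mSG}_q)$. Since $\Lambda$ is symplectically rational, this condition reads
\[
 \covol(\Lambda)<q
 \quad\text{and}\quad
 q\,\nu(\Lambda)\ge\nu(\Lambda^\circ)+d.
\]

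\emph{Step 3 (removing the density condition).} It remains to show that the first inequality is implied by the second. The normal form of \Cref{thm:diagonal-symplectic-normal-form} gives
\[
 \covol(\Lambda)=\frac{\nu(\Lambda^\circ)}{\nu(\Lambda)}.
\]
If $q\nu(\Lambda)\ge\nu(\Lambda^\circ)+d$, then $q\nu(\Lambda)>\nu(\Lambda^\circ)$ because $d\ge1$. Dividing by $\nu(\Lambda)>0$ gives $\covol(\Lambda)<q$. Hence $(\mathrm{mSG}_q)$ reduces to the single inequality in the statement, and chaining Steps 1--3 proves the equivalence.

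\emph{Main obstacle.} The only delicate point is the convention check in Step 1, namely that the Enstad--Thiel--Vilalta equivalence is stated for tuples of windows in $\Sclass$ generating a frame on exactly $\Lambda$, and not on $\Lambda^\circ$ or a rescaled lattice. If their statement is phrased with the adjoint lattice or an opposite symplectic form, I would use $(\Lambda^\circ)^\circ=\Lambda$ and the invariance of $\nu$ under $\sigma\mapsto-\sigma$ to align it. Everything else is a direct combination of earlier results.
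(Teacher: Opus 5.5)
Your proposal is correct and follows the paper's route exactly. The corollary is \cite[Proposition~5.2]{EnstadThielVilalta2025} chained with \Cref{thm:main-multiwindow}, and your Step~3 is the same observation the paper makes when proving that theorem, namely $\covol(\Lambda)=R/N\le q-d/N<q$ once $qN\ge R+d$. One caveat on your closing contingency: it would not repair an adjoint-lattice mismatch, because the criterion $q\,\nu(\Lambda)\ge\nu(\Lambda^\circ)+d$ is not symmetric in $\Lambda$ and $\Lambda^\circ$; no such repair is needed, since the paper uses the Enstad--Thiel--Vilalta equivalence for frames on $\Lambda$ itself.
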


\subsection{The companion paper: tilings, packings, and smooth, compactly supported windows}
\label{subsec:intro-companion-paper}

The present paper and the companion paper \emph{Tilings, Packings, and
Smooth and Compactly Supported Gabor Windows} \cite{CompanionPaper1} grew
out of the earlier joint preprint \emph{Tilings, Packings, and the Existence
of Schwartz-Class Gabor Windows} by Caragea, Kolountzakis, and Pfander
\cite{CarageaKolountzakisPfander2026v2}.  The material has been reorganized
and further developed into two papers: the present work treats the
Balian--Low classification, while the revised companion paper focuses on
tilings, packings, and the construction of smooth, compactly supported
Gabor windows.

Let $L\subset\mathbb R^d$ be a full-rank lattice and let
$\Omega\subset\mathbb R^d$ be measurable.  We say that $\Omega$ \emph{tiles}
with $L$ if
\[
 \sum_{\ell\in L}\mathbf 1_\Omega(x-\ell)=1,
 \quad\text{for almost every }x\in\mathbb R^d;
\]
replacing the equality by $\le1$ defines what it means for $\Omega$ to
\emph{pack} with $L$.

The companion paper \cite{CompanionPaper1} studies the
geometric counterpart of condition~\eqref{eq:SG}.  Let $T,P\subset\mathbb
R^d$ be full-rank lattices.  A single set may tile with both lattices, or it
may tile with $T$ and pack with $P$.  These measurable problems and their
connections with exponential bases and Gabor systems belong to a broad
Fourier-analytic tiling literature; see
\cite{HanWang2001,Kolountzakis2000,KolountzakisPapageorgiou2022,
GrepstadKolountzakis2026,PfanderRashkovWang2012}.  In particular, two
lattices of equal covolume have a bounded measurable common fundamental
domain \cite[Theorem~1]{GrepstadKolountzakis2026}, while
Han--Wang give measurable tiling--packing constructions adapted to
Weyl--Heisenberg systems \cite{HanWang2001}.  Kolountzakis relates packing,
tiling, orthogonality, and completeness in
\cite{Kolountzakis2000}, and Kolountzakis--Papageorgiou study simultaneous
tiling with several lattices in \cite{KolountzakisPapageorgiou2022}.  The
companion paper studies the stronger buffered problem introduced next; it is
more delicate because it requires a uniform positive separation, not merely
disjointness up to null sets.

For $\varepsilon>0$, put
\[
 B_\varepsilon=\{x\in\mathbb R^d:|x|<\varepsilon\}.
\]
We say that $\Omega$ \emph{$\varepsilon$-packs} with $P$ if
$\Omega+B_\varepsilon$ packs with $P$.  The main geometric result of the
companion paper is the following.

\begin{theorem}[Caragea--Kolountzakis--Pfander~\protect\cite{CompanionPaper1}]
\label{thm:companion-tiling-epsilon-packing}
Let $T,P\subset\mathbb R^d$ be full-rank lattices.  There exist a bounded
Borel $T$-fundamental domain $\Omega$ and $\varepsilon>0$ such that
$\Omega$ $\varepsilon$-packs with $P$ if and only if
\begin{equation*}
 \covol(T)<\covol(P)
 \quad\text{and, if $T+P$ is discrete,}\quad
 [T+P:P]\ge[T+P:T]+d.
 \tag{\ensuremath{\mathrm{T}\epsilon\mathrm{P}}}\label{eq:TeP}
\end{equation*}
\stepcounter{equation}
When this condition holds, $\Omega$ may be chosen as a finite union of
bounded half-open polyhedra.
\end{theorem}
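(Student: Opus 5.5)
The plan is to prove necessity by turning a buffered tiling--packing domain into a smooth, compactly supported Parseval Gabor window. The sharp amalgam Balian--Low Theorem~\ref{thm:main-gabor} then forces \eqref{eq:TeP}. Sufficiency is a direct geometric construction.

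\textbf{Necessity.} Let $\Omega$ be a bounded $T$-fundamental domain such that $\Omega+B_\varepsilon$ packs with $P$.
\begin{enumerate}[label=(\roman*)]
\item Since $\Omega$ tiles with $T$, we have $|\Omega|=\covol(T)$. Packing gives $|\Omega+B_\varepsilon|\le\covol(P)$. By Brunn--Minkowski, $|\Omega+B_\varepsilon|>|\Omega|$, so $\covol(T)<\covol(P)$.
\item Mollify the indicator: put $h=\ind_\Omega*\rho$, where $\rho\ge0$ is smooth, has integral $1$ and is supported in $B_{\varepsilon/2}$. Then $\sum_{t\in T}h(x-t)=1$, and $g=\covol(P)^{-1/2}\sqrt h$ is smooth with compact support in $\Omega+B_{\varepsilon/2}$. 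To make $\sqrt h$ smooth, use $h=\psi^2$-type mollifiers, or replace $\sqrt h$ by a smooth $T$-partition of unity of squares $\sum_t|g(x-t)|^2=\mathrm{const}$ built the same way.
\item Consider $\Lambda=T\times P^*$. Walnut's representation of the frame operator involves the products $g(x-t)\overline{g(x-t-p)}$ with $p\in P\setminus\{0\}$. These vanish because $\supp g+P$ is a packing. Hence the frame operator is multiplication by $\covol(P)\sum_t|g(x-t)|^2=1$, and $\G(g,T\times P^*)$ is a Parseval frame with $g\in\Sclass$.
\item Theorem~\ref{thm:main-gabor} now gives $\covol(\Lambda)=\covol(T)/\covol(P)<1$, and, if $\Lambda$ is symplectically rational, $\Delta(\Lambda)\ge d$.
\end{enumerate}

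\textbf{Translating the Gabor condition into lattice data.} For $(t,\xi),(t',\xi')\in\Lambda$ we have $\sigma((t,\xi),(t',\xi'))=\ip{\xi'}{t}-\ip{\xi}{t'}$. This is integral for all $(t',\xi')\in\Lambda$ exactly when $t\in P$ and $\xi\in T^*$. Hence
\[
\Lambda_{\rm int}=(T\cap P)\times(T+P)^*,\qquad \Lambda^\circ=P\times T^*.
\]
It follows that $[\Lambda:\Lambda_{\rm int}]=[T:T\cap P]\,[P^*:(T+P)^*]=[T+P:P]^2$. These indices are finite if and only if $T+P$ is discrete, so $\nu(\Lambda)=[T+P:P]$. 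By symmetry $\nu(\Lambda^\circ)=[T+P:T]$. Thus $\Delta(\Lambda)\ge d$ is precisely the discrete-case inequality in \eqref{eq:TeP}, and necessity follows.

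\textbf{Sufficiency.} The construction splits into two cases.
\begin{enumerate}[label=(\roman*)]
\item \emph{Non-discrete $T+P$.} I would use a Kronecker-type density argument. Pick $\Omega$ as a polyhedral $T$-fundamental domain that is slightly thin in a direction where $T+P$ accumulates. The slack $\covol(P)-\covol(T)>0$ then absorbs an $\varepsilon$-buffer, after reassembling pieces of a fundamental parallelepiped by $T$-translates so that they avoid $P\setminus\{0\}$ shifts.
\item \emph{Discrete $L=T+P$.} Apply a linear change of variables and Smith normal form so that $L=\Z^d$. Then $N=[L:P]$ and $R=[L:T]$. Tile a fundamental cell of $T\cap P$ by $L$-cells, and choose $\Omega$ as a union of polyhedral pieces, each carried to a $T$-coset representative.
\end{enumerate}

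\textbf{Main obstacle.} The hard step is arranging, in the discrete case, that the piece assignment varies across the cell so as to leave an $\varepsilon$-gap between all $P$-translates. This is the geometric shadow of the full-rank Zak matrix in Theorem~\ref{thm:rectangular-common-zero-gap}: the $d$ extra slots ($N\ge R+d$) provide the room needed to move pieces continuously around the $d$ independent periods. I would try to mimic that smooth construction coordinatewise, using one spare slot per coordinate direction, and then take products across coordinates.
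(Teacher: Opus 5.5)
Your necessity direction is correct and uses this paper's results legitimately. Brunn--Minkowski gives $\covol(T)<\covol(P)$, and the window built from $\Omega$ is a painless Parseval window for $T\times P^*$ by Walnut's formula. Your computation $\Lambda_{\rm int}=(T\cap P)\times(T+P)^*$, $\nu(\Lambda)=[T+P:P]$, $\nu(\Lambda^\circ)=[T+P:T]$ is exactly \Cref{prop:TeP-SG-relation}, so \Cref{thm:main-gabor} forces \eqref{eq:TeP}. The one step to repair is the square root, which need not be smooth. Take $\phi=\ind_\Omega*\rho$. Then $\Phi=\sum_{t\in T}\phi(\cdot-t)^2$ is smooth, $T$-periodic, and strictly positive, because $\sum_{t}\phi(\cdot-t)\equiv1$. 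The function $g=\phi/\sqrt{\covol(P)\,\Phi}$ is smooth, has the support of $\phi$, and satisfies $\sum_{t}|g(\cdot-t)|^2=\covol(P)^{-1}$.

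The genuine gap is sufficiency, which is the real content of the theorem. This paper only quotes it; the companion paper proves it with ``soft configurations'', which turn a finite row--column matching problem into a continuous extension problem over products of simplices.

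\begin{itemize}
\item Your non-discrete case is only a heuristic. Nothing in ``thin in an accumulating direction'' and ``reassembling pieces'' bounds the finitely many relevant quantities $|\omega-\omega'-p|$, with $\omega,\omega'\in\Omega$ and $p\in P\setminus\{0\}$, below by $2\varepsilon$.
\item In the discrete case you name the obstacle but do not overcome it, and the proposed remedy (one spare slot per coordinate, then products) cannot work. A product domain requires every one-dimensional factor pair $\bigl((b_i/a_i)\Z,\Z\bigr)$ to satisfy its own condition $a_i\ge b_i+1$. That is far stronger than $\prod_ia_i\ge\prod_ib_i+d$.
\end{itemize}

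Here is a counterexample from the paper itself. Take $T=7\Z\times\Z$ and $P=3\Z\times3\Z$, which is the lattice $\Lambda_3$ of \Cref{ex:intro-SG}. Then $[T+P:P]=9=[T+P:T]+2$, so \eqref{eq:TeP} holds with equality. An elementary-divisor computation nonetheless shows that every simultaneous product decomposition of $(T,P)$ contains, up to scaling, the factor pair $(7\Z,3\Z)$. That factor cannot even pack, so no product domain exists.

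The surplus of $d$ is a global resource, not a per-axis one. Already the matching obstruction, \Cref{thm:rectangular-common-zero-gap}, involves all $d$ coordinate pairs at once. So what is missing is a genuinely $d$-dimensional continuous extension argument producing uniformly separated polyhedral pieces, together with a separate real argument for the non-discrete case.
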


The proof introduces \emph{soft configurations}, which turn a finite
row--column matching problem into a continuous extension problem over
products of simplices.  For separable phase-space lattices, the resulting
geometric construction produces the following explicit Gabor windows.

\begin{theorem}
\label{thm:companion-separable-compact}
Let $\Lambda=\Gamma\times\Phi$, where $\Gamma,\Phi\subset\mathbb R^d$ are
full-rank lattices satisfying \eqref{eq:SG}.  Then there exists a nonnegative
$g\in C_c^\infty(\mathbb R^d)$ such that
$\mathcal G(g,\Gamma\times\Phi)$ is a Parseval frame and
\begin{align*}
 \operatorname{supp}g\cap(\operatorname{supp}g+\xi^*)&=\varnothing,
 &&\xi^*\in\Phi^*\setminus\{0\},\\
 \sum_{\gamma\in\Gamma}|g(x+\gamma)|^2&=\covol(\Phi),
 &&x\in\mathbb R^d.
\end{align*}
\end{theorem}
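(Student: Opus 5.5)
The statement to prove is \Cref{thm:companion-separable-compact}: for a separable lattice $\Gamma\times\Phi$ satisfying \eqref{eq:SG}, construct a nonnegative $g\in C_c^\infty$ giving a Parseval frame with the two displayed properties. The plan is to reduce to the tiling--packing \Cref{thm:companion-tiling-epsilon-packing} with $T=\Gamma$ and $P=\Phi^*$, and then smooth the resulting buffered domain.

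\emph{Step 1: translate \eqref{eq:SG} into \eqref{eq:TeP}.} For $\Lambda=\Gamma\times\Phi$ we have $\Lambda^\circ=\Phi^*\times\Gamma^*$, and $\covol(\Lambda)=\covol(\Gamma)\covol(\Phi)<1$ is exactly $\covol(\Gamma)<\covol(\Phi^*)$. For the rational part, I would check two things: that symplectic rationality of $\Lambda$ is equivalent to discreteness of $\Gamma+\Phi^*$, and that $\nu(\Lambda)=[\Gamma+\Phi^*:\Phi^*]$ and $\nu(\Lambda^\circ)=[\Gamma+\Phi^*:\Gamma]$. Indeed, $\Lambda_{\rm int}=(\Gamma\cap\Phi^*)\times(\Phi\cap\Gamma^*)$, so
\[
[\Lambda:\Lambda_{\rm int}]=[\Gamma:\Gamma\cap\Phi^*]\,[\Phi:\Phi\cap\Gamma^*].
\]
Both factors equal $[\Gamma+\Phi^*:\Phi^*]$: the first by the second isomorphism theorem, and the second by Pontryagin duality, since $(\Phi\cap\Gamma^*)^*=\Phi^*+\Gamma$. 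The same argument applied to $\Lambda^\circ$ gives the other index. With these identifications, \eqref{eq:SG} becomes \eqref{eq:TeP}. I would cross-check the identification against the diagonal normal form ($N=\prod a_i$, $R=\prod b_i$).

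\emph{Step 2: the buffered domain.} \Cref{thm:companion-tiling-epsilon-packing} supplies a bounded $\Gamma$-fundamental domain $\Omega$ and $\varepsilon>0$ such that $\Omega+B_\varepsilon$ packs with $\Phi^*$.

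\emph{Step 3: smoothing.} This is the main obstacle, because convolving $\mathbf 1_\Omega$ does not preserve the $\Gamma$-periodization of $|g|^2$. I would fix this with a smooth partition of unity. Choose a nonnegative mollifier $\rho\in C_c^\infty(B_{\varepsilon/2})$ with $\int\rho=1$, and put $h=\mathbf 1_\Omega*\rho$. Then $h\ge0$, $h\in C_c^\infty$, $\supp h\subset\overline{\Omega+B_{\varepsilon/2}}$, and
\[
\sum_{\gamma\in\Gamma}h(x+\gamma)=\Bigl(\sum_{\gamma}\mathbf 1_\Omega(\cdot+\gamma)\Bigr)*\rho\,(x)=1
\]
for every $x$, since $\Omega$ tiles with $\Gamma$. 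Now set $g=\covol(\Phi)^{1/2}\sqrt h$. The difficulty is that $\sqrt h$ need not be smooth where $h$ vanishes. I would instead take $g=\covol(\Phi)^{1/2}\psi$ with $\psi^2=h$, where $h$ is built as $h=\tilde h^2\big/\sum_\gamma\tilde h(\cdot+\gamma)^2$. Here $\tilde h=\mathbf 1_\Omega*\rho\ge0$ is smooth, and its periodization is bounded below because of the tiling. Then $\psi=\tilde h\big/\bigl(\sum_\gamma\tilde h(\cdot+\gamma)^2\bigr)^{1/2}$ is smooth and nonnegative with support in $\overline{\Omega+B_{\varepsilon/2}}$, and $\sum_\gamma\psi(\cdot+\gamma)^2=1$. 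This gives the second displayed identity.

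\emph{Step 4: disjointness and the Parseval property.} Since $\Omega+B_\varepsilon$ packs with $\Phi^*$, the closure of $\Omega+B_{\varepsilon/2}$ and its translates by nonzero $\xi^*\in\Phi^*$ are disjoint, which is the first displayed property. For the frame property I would use the Walnut/Ron--Shen criterion for separable lattices: $\mathcal G(g,\Gamma\times\Phi)$ is Parseval if and only if
\[
\covol(\Phi)^{-1}\sum_{\gamma\in\Gamma}g(x-\gamma)\,\overline{g(x-\gamma-\xi^*)}=\delta_{\xi^*,0}
\qquad\text{for all }\xi^*\in\Phi^*.
\]
For $\xi^*\ne0$ this sum vanishes termwise by the support disjointness. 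For $\xi^*=0$ it equals $1$ by the normalization, which completes the argument.
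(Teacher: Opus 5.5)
Your proposal is correct and follows the route the paper indicates. Proposition~\ref{prop:TeP-SG-relation} turns \eqref{eq:SG} into \eqref{eq:TeP} for $(T,P)=(\Gamma,\Phi^*)$, \Cref{thm:companion-tiling-epsilon-packing} supplies the buffered $\Gamma$-fundamental domain, and a mollified, $\Gamma$-periodically normalized indicator combined with the Walnut/Ron--Shen criterion gives the window. Two details you leave implicit are harmless. First, the a.e.\ packing of the open set $\Omega+B_\varepsilon$ upgrades to genuine disjointness of its $\Phi^*$-translates, since an open null set is empty. Second, the denominator $\sum_\gamma\tilde h(\cdot+\gamma)^2$ is continuous, $\Gamma$-periodic and positive (because $\sum_\gamma\tilde h(\cdot+\gamma)=1$), hence bounded below.
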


The companion paper also gives a continuous piecewise-affine alternative,
a Fourier-side compact-support version, and metaplectic support-transfer
results for specified block-zero forms.

For a symplectically rational lattice, the normal-form calculation in
\Cref{sec:lattice-theory} gives
\begin{equation*}
 \covol(\Lambda)=\frac{\nu(\Lambda^\circ)}{\nu(\Lambda)}.
\end{equation*}
Consequently, in the rational case the inequality $\Delta(\Lambda)\ge d$
already implies strict density. For separable lattices, conditions
\eqref{eq:SG} and \eqref{eq:TeP} agree exactly.

\begin{proposition}
\label{prop:TeP-SG-relation}
Let $\Lambda=\Gamma\times\Phi$, where $\Gamma,\Phi\subset\R^d$ are full-rank
lattices.  Then
\[
 \Lambda\text{ satisfies \eqref{eq:SG}}
 \quad\Longleftrightarrow\quad
 (\Gamma,\Phi^*)\text{ satisfies \eqref{eq:TeP}}.
\]
If $\Gamma+\Phi^*$ is not discrete, the common condition is
$\covol(\Gamma)<\covol(\Phi^*)$, equivalently
$\covol(\Lambda)<1$.  If $\Gamma+\Phi^*$ is a lattice, then
\[
 \nu(\Lambda)=[\Gamma+\Phi^*:\Phi^*],
 \qquad
 \nu(\Lambda^\circ)=[\Gamma+\Phi^*:\Gamma],
\]
and the common condition is
\[
 [\Gamma+\Phi^*:\Phi^*]\ge[\Gamma+\Phi^*:\Gamma]+d.
\]
\end{proposition}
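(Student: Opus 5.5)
The plan is to compute everything explicitly for the separable lattice $\Lambda=\Gamma\times\Phi$ and compare the two conditions term by term.

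First, the adjoint. From the stated formula, $\Lambda^\circ=\Phi^*\times\Gamma^*$. Hence
\[
 \Lambda_{\rm int}=\Lambda\cap\Lambda^\circ=(\Gamma\cap\Phi^*)\times(\Phi\cap\Gamma^*).
\]
So $\Lambda$ is symplectically rational exactly when $\Gamma\cap\Phi^*$ has finite index in $\Gamma$ and $\Phi\cap\Gamma^*$ has finite index in $\Phi$.

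Next, I will show that symplectic rationality is equivalent to discreteness of $\Gamma+\Phi^*$.
\begin{itemize}
\item If $\Gamma+\Phi^*$ is discrete, it is a full-rank lattice containing both $\Gamma$ and $\Phi^*$ with finite index. Then $\Gamma\cap\Phi^*$ has finite index in $\Gamma$ and in $\Phi^*$.
\item Conversely, if $[\Gamma:\Gamma\cap\Phi^*]<\infty$, then $\Gamma\cap\Phi^*$ is a full-rank sublattice of $\Phi^*$ as well, since it lies in $\Phi^*$ and has full rank. Therefore $\Gamma+\Phi^*$ is contained in $\frac1k(\Gamma\cap\Phi^*)$ for a suitable $k$, and is discrete.
\end{itemize}

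It remains to relate the second factor to the first. Euclidean duality reverses inclusions and preserves indices: $(\Gamma+\Phi^*)^*=\Gamma^*\cap\Phi$. Also, $[\Phi:\Phi\cap\Gamma^*]=[(\Phi\cap\Gamma^*)^*:\Phi^*]=[\Gamma+\Phi^*:\Phi^*]$. So finiteness of the second factor's index is automatic once $\Gamma+\Phi^*$ is discrete.

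Now compute the indices, writing $L=\Gamma+\Phi^*$. By the second isomorphism theorem,
\[
 [\Gamma:\Gamma\cap\Phi^*]=[L:\Phi^*].
\]
By the duality identity above, $[\Phi:\Phi\cap\Gamma^*]=[L:\Phi^*]$ as well. Therefore
\[
 [\Lambda:\Lambda_{\rm int}]=[L:\Phi^*]^2,
 \qquad
 \nu(\Lambda)=[L:\Phi^*].
\]
Symmetrically, $\Lambda^\circ=\Phi^*\times\Gamma^*$ plays the role of $\Gamma'\times\Phi'$ with $\Gamma'=\Phi^*$ and $\Phi'=\Gamma^*$. Then $\Gamma'+\Phi'^*=\Phi^*+\Gamma=L$, and the same computation gives $\nu(\Lambda^\circ)=[L:\Gamma]$.

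Assembling the two cases:
\begin{itemize}
\item If $L$ is a lattice, \eqref{eq:SG} becomes $\covol(\Lambda)<1$ together with $\nu(\Lambda)-\nu(\Lambda^\circ)\ge d$, which is $[L:\Phi^*]\ge[L:\Gamma]+d$. The inequality already forces $[L:\Phi^*]>[L:\Gamma]$. Since $\covol(\Gamma)=[L:\Gamma]\covol(L)$ and $\covol(\Phi^*)=[L:\Phi^*]\covol(L)$, this is $\covol(\Gamma)<\covol(\Phi^*)$, so the density part of both conditions is redundant. This is exactly \eqref{eq:TeP} for the pair $(T,P)=(\Gamma,\Phi^*)$.
\item If $L$ is not discrete, $\Lambda$ is symplectically irrational and both conditions reduce to $\covol(\Gamma)<\covol(\Phi^*)=\covol(\Phi)^{-1}$, i.e.\ $\covol(\Lambda)=\covol(\Gamma)\covol(\Phi)<1$.
\end{itemize}

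The main obstacle is the duality bookkeeping: the identity $(\Gamma+\Phi^*)^*=\Gamma^*\cap\Phi$ and the index-preservation $[A:B]=[B^*:A^*]$ for lattices $B\subset A$. I would prove the latter via $\covol(B^*)/\covol(A^*)=\covol(A)/\covol(B)$. Everything else is routine isomorphism-theorem work.
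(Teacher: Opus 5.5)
Your proof is correct, and it takes a genuinely different route from the paper. The paper gets the rationality criterion from commensurability of $\Gamma$ and $\Phi^*$ (Lemma~\ref{lem:symplectic-rationality-characterization}). It gets the two index formulas $\nu(\Lambda)=[\Gamma+\Phi^*:\Phi^*]$ and $\nu(\Lambda^\circ)=[\Gamma+\Phi^*:\Gamma]$ from the separable part of Theorem~\ref{thm:diagonal-symplectic-normal-form}. There, symplectic dilations move $\Gamma$ and $\Phi^*$ by one common linear map, so the indices in the sum are unchanged. This carries $\Lambda$ to the Smith-normal-form representative $D\Z^d\times\Z^d$, where the two indices are visibly $N$ and $R$.

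You instead argue intrinsically, without choosing a basis. You use the product formula $\Lambda_{\rm int}=(\Gamma\cap\Phi^*)\times(\Phi\cap\Gamma^*)$ and the second isomorphism theorem for the first factor. For the second factor you use the duality facts $(\Gamma+\Phi^*)^*=\Gamma^*\cap\Phi$ and $[A:B]=[B^*:A^*]$. The one step you leave implicit, $(\Phi\cap\Gamma^*)^*=\Gamma+\Phi^*$, follows by dualizing once more, since $\Gamma+\Phi^*$ is a lattice in the discrete case.

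Your route needs no Smith normal form and gives two things for free.
\begin{itemize}
\item It shows directly that the two factors of $[\Lambda:\Lambda_{\rm int}]$ coincide, so the index is a perfect square (compare Corollary~\ref{cor:square-index}).
\item The identity $\covol(\Gamma)=[L:\Gamma]\covol(L)$ shows why the covolume clauses of \eqref{eq:SG} and \eqref{eq:TeP} agree and are redundant in the discrete branch. The paper's proof handles this only implicitly, through $\covol(\Lambda)=\nu(\Lambda^\circ)/\nu(\Lambda)$.
\end{itemize}
The paper's route costs nothing extra, because the normal form is needed anyway for the metaplectic reduction. Its invariance argument also applies to non-separable lattices.
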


\subsection{Summary of main contributions}
\label{subsec:intro-main-contributions}

\begin{enumerate}[label=\textup{(\arabic*)},leftmargin=2.7em,itemsep=.5em]
\item We extend the classical Balian--Low theorem to arbitrary
full-rank time--frequency lattices; see
the classical Balian--Low Theorem~\ref{thm:full-multivariate-balian-low}.

\item We use the symplectic integrality index to define the symplectic
index gap and identify its role in the smoothness and decay of Gabor
windows; see
\Cref{subsec:intro-symplectic-index-gap}.

\item For symplectically rational lattices, we sharpen the amalgam
Balian--Low condition from strict density, equivalently a positive symplectic index
gap, to the exact threshold $\Delta(\Lambda)\ge d$; see
the amalgam Balian--Low Theorem~\ref{thm:main-gabor}.

\item We prove new Balian--Low obstructions throughout the modulation-space
scale, with thresholds determined by the symplectic index gap; see
Theorem~\ref{thm:symplectic-index-balian-low}.

\item We prove that both the classical and amalgam Balian--Low conditions
are sharp by constructing windows on every lattice allowed by the respective
necessary condition; see
the classical Balian--Low Theorem~\ref{thm:full-multivariate-balian-low} and the amalgam Balian--Low Theorem~\ref{thm:main-gabor}.

\item We match the rational necessity thresholds at every finite $p$,
attain every infinity endpoint, and obtain the Schwartz range; see
\Cref{thm:modulation-classification}.

\item We establish a sharp amalgam Balian--Low theorem for multiwindow Gabor
frames, determine the minimum number of regular windows, and exhibit the
resulting redundancy obstruction; see \Cref{thm:main-multiwindow}.

\item We prove the all-dimensional critical embedding
$\Fell{p}{m(1-1/p)}(\mathbb T^m)\subset\VMO(\mathbb T^m)$ used in the Zak
obstruction; see \Cref{lem:critical-vmo}.

\item The mind-boggling Example~\ref{ex:intro-multiwindow-redundancy}
illustrates the surprisingly restrictive nature of the symplectic index gap.
\end{enumerate}

\subsection{Readers guide and outline of the paper}
\label{subsec:outline-of-paper}
\phantomsection\label{subsec:readers-guide}
The paper contains several technically challenging derivations. To help
readers avoid getting caught in the web of technical details, we suggest
the following route for a first reading.
\begin{enumerate}[label=\textup{(\roman*)},leftmargin=2.8em,itemsep=.45em]
\item First, we recommend spending time on
Sections~\ref{subsec:intro-classical-balian-low}--\ref{subsec:intro-modulation-classification}
(beginning on pp.~\pageref{subsec:intro-classical-balian-low}--\pageref{subsec:intro-modulation-classification}).
These sections introduce the problem, its lattice invariant, and the
classification statements.
\item The definition of the symplectic index gap is on
p.~\pageref{def:symplectic-index-gap}; the examples following it illustrate
why covolume alone does not determine the attainable window regularity.
\item The diagonal normal form used in that definition is proved in
Theorem~\ref{thm:diagonal-symplectic-normal-form}
(p.~\pageref{thm:diagonal-symplectic-normal-form}). Together with
Proposition~\ref{prop:metaplectic-Msp}
(p.~\pageref{prop:metaplectic-Msp}), it reduces the rational classification
to $\Lambda=D\Z^d\times\Z^d$ with $D$ diagonal rational, without changing
modulation-space membership.
\item Example~\ref{ex:intro-multiwindow-redundancy}
(p.~\pageref{ex:intro-multiwindow-redundancy}) illustrates how restrictive
the symplectic index gap can be, even for a substantially overcomplete
lattice, and how additional windows even struggle to overcome this obstruction.
\item For the sufficiency results in the modulation space classification,
start with the principles in Section~\ref{sec:construction-strategy}
(p.~\pageref{sec:construction-strategy}).
\item Lemma~\ref{lem:rank-codim} (p.~\pageref{lem:rank-codim}) supplies
the geometric dimension count: the first rank-deficient stratum of
$R\times N$ complex matrices has real codimension $2(N-R+1)$.

\item Read the opening of Section~\ref{sec:bits}
(p.~\pageref{sec:bits}) through the first paragraphs of
Section~\ref{subsec:phase-signs}
(p.~\pageref{subsec:phase-signs}) for the idea of phase shaping and its
effect on coefficient summability. Figure~\ref{rs:fig:global-phase}
(p.~\pageref{rs:fig:global-phase}) illustrates the phase shaping multiplier.
\item For the obstruction underlying necessity results in this paper,
read the proof of Theorem~\ref{thm:rectangular-common-zero-gap}, beginning
on p.~\pageref{proof:rectangular-common-zero-gap}. It contains the key idea
extending the common-zero theorem of de Dios Pont, Liehr, and Taylor to
rectangular Zak matrices. On a first reading, assume continuity in place
of $\VMOloc$ membership: the central construction is then visible without
the smoothing argument.
\end{enumerate}

When needed, consult the largely self-contained background in
Sections~\ref{sec:gabor-background} and~\ref{sec:zak-method}; in particular,
the latter develops the Zak transform and the coefficient framework
used throughout the proofs.

\medskip
The manuscript is organized into five parts.

\emph{Part~I, Introduction and mathematical framework,} contains the main
one-window and multiwindow results, phase diagrams, topological consequences,
and the summary of contributions in the present section, the literature
discussion in \Cref{sec:literature}, the time--frequency background in
\Cref{sec:gabor-background}, and the combined rational Zak and cross-Zak
coefficient framework in \Cref{sec:zak-method}. The background needed to
formulate the obstruction and construction arguments is collected here;
further external all-lattice inputs are recorded in
\Cref{sec:nonrational-balian-low}.

\emph{Part~II, Zak obstructions and critical nonexistence,} starts with the
ordinary-Zak common-zero mechanism and arithmetic rank gap in
\Cref{sec:rational-zak-gap}.  The critical Fourier--Lebesgue, VMO, slicing,
and one-scale estimates are developed together in \Cref{sec:vmo}, and the
two finite-$p$ necessity arguments are assembled in
\Cref{sec:critical-nonexistence}.

\emph{Part~III, Constructive rank geometry and regular Gabor windows,}
begins with the construction roadmap and simplicial preliminaries in
\Cref{sec:construction-strategy}, followed by the common rank-geometric and
extension tools in \Cref{sec:rank-geometry}.  The smooth branch is
treated in \Cref{sec:smooth-branch}; the flat cubical construction handles
both critical and lower gaps in \Cref{sec:terminal-construction}.
Its Fourier estimates appear in \Cref{sec:elementary-perturbation},
and the deterministic scalar phase and high-$p$ conclusion appear in
\Cref{sec:bits,sec:lower-completion}.

\emph{Part~IV, Symplectic transfer and global classification,} develops the
symplectic normal form and metaplectic transfer in
\Cref{sec:lattice-theory}, assembles the rational modulation-space regions
in \Cref{sec:assembly}, and then records the critical-density synthesis in
\Cref{sec:critical-density}.  The symplectically irrational input,
completion of the one-window theorems, and the multiwindow extension appear
in \Cref{sec:nonrational-balian-low,sec:completion-all-lattices,subsec:multiwindow-adjustment}.

\emph{Part~V, Back matter,} contains the acknowledgements, the lettered appendix sections with
the first-principles Pfaffian proof of the ordinary-Zak common-zero theorem,
and the bibliography.  The companion paper \cite{CompanionPaper1} contains
the tiling--packing theorem and compact-support constructions summarized in
\Cref{subsec:intro-companion-paper}.

\section{Earlier results and relation to the literature}\label{sec:literature}

Gabor's communication-theoretic proposal initiated the systematic use of
discrete time--frequency shifts of an integer lattice Gaussian
\cite{Gabor1946}.  The one-dimensional critical-density statements of
Balian~\cite{Balian1981} and Low~\cite{Low1985} were followed by a broad
theory of exact systems, frames, regularity, endpoint phenomena, Schauder
bases, and quantitative estimates.  Daubechies--Janssen report that the first
rigorous proof was due to Coifman and Semmes and was published in Daubechies's
1990 article.  Battle gave an independent, shorter proof based on the
Heisenberg uncertainty principle, and Daubechies--Janssen subsequently
established strengthened lattice-expansion variants
\cite{Battle1988,Daubechies1990,DaubechiesJanssen1993,Heil2007History}.
Further developments include
\cite{BenedettoHeilWalnut1995,
BenedettoCzajaGadzinskiPowell2003,HeilPowell2006,
BenedettoCzajaPowellSterbenz2006,Gautam2008,NitzanOlsen2011,
NitzanOlsen2013,LeshenPowell2020,BenedettoCzajaPowell2006,JamingPowell2011}.
Heil's historical survey~\cite{Heil2007History} and the chapter of
Czaja--Powell~\cite{CzajaPowell2006} describe the development and its
connections with density, duality, and Zak-transform methods.

The 5G New Radio physical layer is surveyed by Parkvall et~al.
\cite{ParkvallDahlmanFuruskarFrenne2017}.  The relation between
pulse localization, Weyl--Heisenberg or Gabor structure, and multicarrier
transmission is studied, for example, by B\"olcskei
\cite{Bolcskei1999OFDM} and Jung--Wunder \cite{JungWunder2007}.

For higher-dimensional and symplectic lattice versions, see
\cite{GrochenigHanHeilKutyniok2002,BenedettoCzajaMaltsev2003,
Temur2020,NorthingtonPark2021}.  The weak arbitrary-lattice theorem of
Gr\"ochenig--Han--Heil--Kutyniok is one of the two critical-density inputs in
our proof of the sharp classical result; the second is the canonical-dual
regularity theorem of Lee--Philipp--Voigtlaender
\cite{LeePhilippVoigtlaender2023}.  Related formulations through Weyl
symbols and regularity of Gabor systems occur in
\cite{AscensiFeichtingerKaiblinger2014,
BenedettoCzajaGadzinskiPowell2003}.  Balian--Low phenomena for exact systems,
principal shift-invariant spaces, finite-dimensional systems, subspaces, and
Gabor Riesz sequences of arbitrary density are treated in
\cite{NitzanOlsen2011,AldroubiSunWang2011,NitzanOlsen2019,
CarageaLeePfanderPhilipp2019,CarageaLeePhilippVoigtlaender2021,
CarageaLeePhilippVoigtlaender2023,LiuMaZheng2026}.
In shift-invariant spaces, additional translation invariance yields
Balian--Low restrictions on the generators
\cite{AldroubiSunWang2011,HardinNorthingtonPowell2018}, with related
approximation constructions in \cite{AldroubiKrishtalTesseraWang2012}.

The modulation-space, BMO, VMO, and amalgam forms closest to the present
phase diagram appear in
\cite{Gautam2008,TinaztepeHeil2012,CabrelliMolterPfander2015,
CabrelliMolterPfander2016}.  We use the full modulation-space scale to
separate three arithmetic regimes of the finite rational Zak matrix.  The
critical Fourier--Lebesgue-to-VMO embedding and the anisotropic one-scale
argument provide the necessity boundaries, while explicit rank-defect and
polar-factor constructions provide the complementary existence regions.
Quaternionic, Clifford, and other nonstandard variants are developed in
\cite{FuKahlerCerejeiras2012,FuKahlerCerejeiras2013,
FuKahlerCerejeirasChapter2013}.

The density theory for unrestricted Gabor generators goes back to the
classical density theorems and includes Bekka's representation-theoretic
result~\cite{Bekka2004} and Enstad's projective-representation density
theorem~\cite{Enstad2022Density}.  These results are used only for the
unrestricted existence statement or the weak density inequalities; the
regularity-sensitive exclusions at equality are taken instead from the
Heisenberg-module Balian--Low results cited below.  For noncommutative or flat tori, locally compact abelian
groups, homogeneous groups, and adelic Gabor systems, see
\cite{AbreuBalazsHolighausLuefSpeckbacher2024,Luef2018,Enstad2020,
GrochenigRomeroRottensteinerVanVelthoven2020,
EnstadJakobsenLuefOmland2022}.  Jakobsen--Luef
\cite{JakobsenLuef2020} identify the Feichtinger algebra with the regular
part of the relevant Heisenberg-module picture and obtain non-rational
regular-window existence.  The projective-module and multiwindow framework
was developed earlier by Luef~\cite{Luef2009Projections}.  Gerhold--Lamando--Luef
\cite{GerholdLamandoLuef2026} develop linear deformation theory for these
modules and prove the critical one-generator and multi-generator
regularity obstructions used in our non-rational endpoint arguments.

Enstad--Thiel--Vilalta~\cite[Theorems~C and~D]{EnstadThielVilalta2025} give the modern
Schwartz-window existence criterion over rational lattices and the strict
density Schwartz theorem in the symplectically irrational case.  Their
proof uses operator-algebraic and topological methods.  Our rational
sufficiency argument is different: it constructs the finite Zak matrix
using determinantal codimensions, a smooth field near a protected
skeleton, and an explicit flat cubical map.  Deterministic scalar phases
provide the improved high-$p$ range, including the infinity endpoint.  The common-zero obstruction of de
Dios Pont--Liehr--Taylor~\cite{deDiosLiehrTaylor2026} supplies the continuous
model for the necessity argument; Sections~\ref{app:pfaffian-common-zero}--\ref{app:boundary-pfaffian}
give an independent coordinate/Pfaffian proof of the required continuous
statement.

A standard reference for frames, the Zak transform, modulation spaces,
duality, and density is Gr\"ochenig's monograph~\cite{Grochenig2001}.  The
finite rational Zak matrix is the classical Zibulski--Zeevi representation
\cite{ZibulskiZeevi1997}; explicit frame-operator factorizations are given by
B\"olcskei--Janssen~\cite{BolcskeiJanssen2000}, while multivariate and
locally compact abelian formulations appear in
Jakobsen--Lemvig~\cite{JakobsenLemvig2016} and
Gr\"ochenig--Koppensteiner~\cite{GrochenigKoppensteiner2019}.  What is
specific here is the closed fine-box realization with explicit left and right
transports, the converse scalar-reassembly theorem, and its use in the
simplicial and determinantal constructions.

A complementary problem fixes the window $g$ and asks which lattices
$\Lambda$, or more general discrete sets of time--frequency shifts,
make $\mathcal G(g,\Lambda)$ a frame.  The collection of admissible
lattices is the \emph{lattice frame set} of $g$.  For $d=1$, its
rectangular part is usually written
$\mathcal F(g)=\{(a,b)\in(0,\infty)^2:
\mathcal G(g,a\mathbb Z\times b\mathbb Z)\text{ is a frame}\}$.
The classical results of Lyubarskii and Seip--Wallst\'en
\cite{Lyubarskii1992,Seip1992,SeipWallsten1992} imply that the
Gaussian $g(x)=e^{-\pi x^2}$ generates a frame over a lattice
$\Lambda\subset\mathbb R^2$ exactly when $\covol(\Lambda)<1$;
see also \cite{Grochenig2001,Heil2007History}.

Total positivity gives further exact rectangular frame sets;
\cite{GrochenigRomeroStoeckler2018} treats Gaussian-type totally
positive windows.  Recent results of de Dios Pont, Gr\"ochenig, Liehr,
Shafkulovska, and Taylor show that
$\mathcal F(g)=\{(a,b)\in(0,\infty)^2:ab<1\}$ for every nonzero continuous
integrable totally positive function $g$
\cite[Theorem~1.1]{deDiosGrochenigLiehrShafkulovskaTaylor2026}.
Faulhuber and Petersen recently determined the rectangular
frame set of the first Hermite function, equivalently of
$g(x)=xe^{-\pi x^2}$: it consists precisely of the pairs with $ab<1$
and $ab\ne(q-1)/q$ for every integer $q\ge2$
\cite[Theorem~1.2]{FaulhuberPetersen2026}.
Another recent contribution establishes new rational-parameter frame
regions for the window $g(x)=x/((x^2+1)(x^2+4))$
\cite[Theorem~1.1]{Ghosh2026}.
These prescribed-window questions complement our classification,
which fixes the lattice and determines the achievable window regularity
and decay.

\section{Time-frequency analysis background}
\label{sec:quoted-inputs}\label{sec:gabor-background}

\subsection{Fourier--Lebesgue spaces and smooth localization}
\label{subsec:fourier-vmo-background}

Let $E$ denote either $\mathbb C$ or a finite-dimensional matrix space
$\mathbb C^{r\times s}$, equipped with one fixed norm $\|\cdot\|_E$.
For $F\in L^1(\mathbb R^n;E)$, define the Euclidean Fourier transform by
\begin{equation*}
 \widehat F(\xi)
 =\int_{\mathbb R^n}F(x)e^{-2\pi i x\cdot\xi}\,dx,
 \qquad \xi\in\mathbb R^n,
\end{equation*}
where the integral is taken entrywise in the matrix-valued case.  For
$F\in L^2(\mathbb R^n;E)$ we use the Plancherel extension.

For $1\le p\le\infty$ and $s\in\mathbb R$, the Euclidean
\emph{Fourier--Lebesgue space} $\FL{p}{s}(\mathbb R^n;E)$ consists of the
functions for which the following norm is finite.  When $1\le p<\infty$,
\begin{equation*}
 \|F\|_{\FL{p}{s}(\mathbb R^n)}
 =\left(
   \int_{\mathbb R^n}
     \langle\xi\rangle^{sp}\|\widehat F(\xi)\|_E^p\,d\xi
  \right)^{1/p},
 \qquad
 \langle\xi\rangle=(1+|\xi|^2)^{1/2},
\end{equation*}
and for $p=\infty$ one takes the essential supremum of
$\langle\xi\rangle^s\|\widehat F(\xi)\|_E$.  All norms on a fixed
finite-dimensional matrix space are equivalent, so the resulting space is
independent of the chosen pointwise norm.

For a periodic $E$-valued integrable function on
$\mathbb T^n=\mathbb R^n/\mathbb Z^n$, the Fourier integral is restricted to
one torus cell and evaluated at integer frequencies:
\begin{equation}
 \widehat F(k)
 =\int_{[0,1)^n}F(x)e^{-2\pi i k\cdot x}\,dx,
 \qquad k\in\mathbb Z^n.
 \label{eq:torus-fourier-coeff}
\end{equation}
The periodic Fourier--Lebesgue norm is
\begin{equation*}
 \|F\|_{\Fell{p}{s}(\mathbb T^n)}
 =\left\|
   \bigl(\langle k\rangle^s\|\widehat F(k)\|_E\bigr)_{k\in\mathbb Z^n}
  \right\|_{\ell^p},
\end{equation*}
with the usual supremum when $p=\infty$.

If $U\subset\mathbb R^n$ is open and $\nu\in\mathbb N\cup\{0\}$, then
$C^\nu(U;E)$ denotes the space of $E$-valued functions whose partial
derivatives of order at most $\nu$ exist and are continuous on $U$.  For a
compact set $K\Subset U$, we use the seminorm
\begin{equation}
 |F|_{C^\nu(K)}
 :=\max_{|\alpha|\le\nu}\sup_{x\in K}
       \|\partial^\alpha F(x)\|_E.
 \label{eq:Ck-seminorm}
\end{equation}
Uniform bounds for these seminorms give quantitative Fourier decay by
integration by parts and control the errors produced by smoothing and by the
smooth Zak side transports.

We shall repeatedly use the standard higher-dimensional integration-by-parts
consequence: if $F\in C_c^\nu(\mathbb R^n;E)$, then
\[
 \|\widehat F(\xi)\|_E
 \le C_{n,\nu}\langle\xi\rangle^{-\nu}
      \sum_{|\alpha|\le\nu}\|\partial^\alpha F\|_{L^1(\mathbb R^n;E)}.
\]
More generally, uniform $L^1$ bounds for all derivatives through order $\nu$
yield uniform Fourier decay of order $\nu$.  This follows by integrating by
parts in a coordinate for which $|\xi_j|\ge |\xi|/\sqrt n$, with the
low-frequency range controlled by the $L^1$ norm.

A \emph{finite smooth bounded uniform partition of unity}, abbreviated
\emph{smooth BUPU}, on a compact torus is a family
$\{\chi_\alpha\}_{\alpha=1}^A\subset C^\infty(\mathbb T^n)$ such that
\[
 0\le\chi_\alpha\le1,
 \qquad \sum_{\alpha=1}^A\chi_\alpha=1,
\]
and every support is contained in a translate of one fixed coordinate
neighborhood.  Finiteness gives uniform bounds for all seminorms
\eqref{eq:Ck-seminorm}.  If $a\in C^\infty(\mathbb T^n)$, multiplication by
$a$ is bounded on every $\Fell{p}{s}(\mathbb T^n)$.  Indeed,
\[
 \widehat{aH}(k)=\sum_{j\in\mathbb Z^n}\widehat a(k-j)\widehat H(j),
 \qquad
 \langle k\rangle^s
 \lesssim_s\langle k-j\rangle^{|s|}\langle j\rangle^s,
\]
and the weighted Fourier coefficients of $a$ are in $\ell^1$; weighted
Young convolution applies.  Consequently, for every fixed finite smooth
partition of unity,
\begin{equation}
 \|H\|_{\Fell{p}{s}(\mathbb T^n)}
 \asymp
 \left\|
   \bigl(\langle k\rangle^s
          \|\widehat{\chi_\alpha H}(k)\|_E\bigr)_
       {\substack{1\le\alpha\le A\\ k\in\mathbb Z^n}}
 \right\|_{\ell^p(\{1,\ldots,A\}\times\mathbb Z^n)}.
 \label{eq:localized-FL-equivalence}
\end{equation}
For $p=\infty$, the right-hand side is the supremum over
$1\le\alpha\le A$ and $k\in\mathbb Z^n$.  The reverse estimate follows from
$H=\sum_\alpha\chi_\alpha H$.

\subsection{Modulation spaces, Gabor frames, and coefficient characterizations}
\label{subsec:frames-duality-density}

Let $\Gamma\subset\mathbb R^{2d}$ be a lattice and assume that
$\mathcal G(g,\Gamma)$ is a Gabor frame.  Its analysis, synthesis, and frame
operators are
\[
 \Ana_{g,\Gamma}f
 =\bigl(\langle f,\pi(\gamma)g\rangle\bigr)_{\gamma\in\Gamma},
 \qquad
 \operatorname{Syn}_{g,\Gamma}c
 =\sum_{\gamma\in\Gamma}c_\gamma\pi(\gamma)g,
 \qquad
 S_{g,\Gamma}=\operatorname{Syn}_{g,\Gamma}\Ana_{g,\Gamma}.
\]
The function $\widetilde g=S_{g,\Gamma}^{-1}g$ is the canonical dual window,
and
\[
 f=\sum_{\gamma\in\Gamma}
      \langle f,\pi(\gamma)\widetilde g\rangle\pi(\gamma)g
  =\sum_{\gamma\in\Gamma}
      \langle f,\pi(\gamma)g\rangle\pi(\gamma)\widetilde g,
 \qquad f\in L^2(\mathbb R^d),
\]
with unconditional convergence in $L^2$.  The window
$S_{g,\Gamma}^{-1/2}g$ generates the associated Parseval Gabor frame.

As seen in \Cref{subsec:intro-modulation-classification}, modulation spaces
are defined through the short-time Fourier transform.  Fix a nonzero Schwartz
window $\phi$ and put
\[
 V_\phi f(x,\omega)
 =\int_{\mathbb R^d}f(t)\overline{\phi(t-x)}e^{-2\pi i\omega\cdot t}\,dt,
\]
and
\[
 \|f\|_{M_s^p}
 =\|\langle(x,\omega)\rangle^sV_\phi f(x,\omega)\|_{L^p(\mathbb R^{2d})}.
\]
Independence of the nonzero Schwartz analysis window is
\cite[Proposition~11.3.2(c)]{Grochenig2001}.  If
$\mathcal G(\phi,\Gamma)$ is a sufficiently dense Schwartz Gabor frame, then
\begin{equation}
 \|f\|_{M_s^p}
 \asymp
 \left\|
   \bigl(\langle\gamma\rangle^s
          \langle f,\pi(\gamma)\phi\rangle\bigr)_{\gamma\in\Gamma}
 \right\|_{\ell^p(\Gamma)}.
 \label{eq:coefficient-characterization}
\end{equation}
The bounded analysis and synthesis maps are
\cite[Propositions~12.2.3 and~12.2.4]{Grochenig2001}, and reconstruction
and the norm equivalence are
\cite[Proposition~12.2.6]{Grochenig2001}.

For $s\in\mathbb R$, put $v_s(z)=\langle z\rangle^s$.  The polynomial weight
is moderate with respect to $v_{|s|}$,
\[
 v_s(z+y)\le C_s v_s(z)v_{|s|}(y);
\]
see \cite[Section~11.1]{Grochenig2001}.
Let
\[
 \Psi=(\psi_1,\ldots,\psi_L),\qquad
 \widetilde\Psi=(\widetilde\psi_1,\ldots,\widetilde\psi_L)
\]
be dual finite multiwindow Gabor frames for the integer lattice, with atom
convention $M_nT_{-m}$, and suppose that all analysis and dual windows are
Schwartz functions.  Write
\[
 \Ana_\Psi(f)_{\ell,n,m}
 :=\langle f,M_nT_{-m}\psi_\ell\rangle.
\]
For $1\le p<\infty$, set
\[
 \|\Ana_\Psi(f)\|_{\ell_s^p}
 =\left(
   \sum_{\ell=1}^L\sum_{n,m\in\mathbb Z^d}
    \langle(n,m)\rangle^{sp}
    |\Ana_\Psi(f)_{\ell,n,m}|^p
  \right)^{1/p},
\]
and for $p=\infty$ set
\begin{equation}
 \|\Ana_\Psi(f)\|_{\ell_s^\infty}
 =\sup_{\ell,n,m}\langle(n,m)\rangle^s
   |\Ana_\Psi(f)_{\ell,n,m}|.
 \label{eq:weighted-coefficient-supremum}
\end{equation}

\begin{theorem}[Gr\"ochenig~\protect\cite{Grochenig2001}, Propositions~12.2.3, 12.2.4, and~12.2.6]
\label{thm:weighted-Gabor-coeff}
For every $f\in L^2(\mathbb R^d)$, every $1\le p\le\infty$, and every
$s\in\mathbb R$,
\[
 f\in M_s^p(\mathbb R^d)\quad\Longleftrightarrow\quad
 \Ana_\Psi(f)\in\ell_s^p,
 \qquad
 \|f\|_{M_s^p}\asymp\|\Ana_\Psi(f)\|_{\ell_s^p}.
\]
In particular, for $p=\infty$, membership in $M_s^\infty$ is exactly the
boundedness condition in \eqref{eq:weighted-coefficient-supremum}.
\end{theorem}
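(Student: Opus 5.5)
This is the standard discrete characterization of modulation spaces by Gabor coefficients. I would derive it from Gröchenig's Propositions 12.2.3, 12.2.4 and 12.2.6, applied to the multiwindow system $\Psi,\widetilde\Psi$ over $\mathbb Z^{2d}$ with atoms $M_nT_{-m}$. Note that $M_nT_{-m}=\pi(-m,n)$. Since $\langle(n,m)\rangle=\langle(-m,n)\rangle$, the weight $\langle(n,m)\rangle^s$ is exactly $v_s$ evaluated at the phase-space point $(-m,n)$. So the statement concerns $\ell^p_{v_s}$ over the lattice $\{(-m,n)\}=\mathbb Z^{2d}$, and the atom-convention relabeling is harmless.

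\textbf{Analysis bound.} For $f\in M^p_s$, write $\langle f,\pi(z)\psi_\ell\rangle=V_{\psi_\ell}f(z)$. Change windows via the pointwise inequality $|V_{\psi}f|\le \|\phi\|_2^{-2}\,|V_\phi f|*|V_\psi\phi|$. Since $\psi_\ell,\phi\in\Sclass$, the function $V_{\psi_\ell}\phi$ decays faster than any polynomial and so lies in $W(L^\infty,\ell^1_{v_{|s|}})$. The $v_{|s|}$-moderateness of $v_s$ then gives weighted Young and amalgam estimates. Sampling on $\mathbb Z^{2d}$ yields
\[
\|(V_{\psi_\ell}f(\lambda))_\lambda\|_{\ell^p_{v_s}}\lesssim \|V_\phi f\|_{L^p_{v_s}},
\]
and summing over the finitely many $\ell$ gives $\|\Ana_\Psi f\|_{\ell^p_s}\lesssim\|f\|_{M^p_s}$.

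\textbf{Synthesis bound.} For $c\in\ell^p_{v_s}$, the series $\sum c_{\ell,\lambda}\pi(\lambda)\widetilde\psi_\ell$ has STFT bounded by $\sum_\lambda|c_{\ell,\lambda}|\,|V_\phi\widetilde\psi_\ell(z-\lambda)|$. Using the same decay and moderateness, this lies in $L^p_{v_s}$ with norm $\lesssim\|c\|_{\ell^p_{v_s}}$. For $p=\infty$, convergence is understood weak-$*$.

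\textbf{Converse.} The main obstacle is that $f$ is only assumed to lie in $L^2$, so one must check that the $L^2$ reconstruction identity coincides with the synthesis of the coefficient sequence. Duality of the frames gives $f=\sum_{\ell,\lambda}\langle f,\pi(\lambda)\psi_\ell\rangle\pi(\lambda)\widetilde\psi_\ell$ unconditionally in $L^2$. Suppose $\Ana_\Psi f\in\ell^p_s$. The synthesis bound then shows the same series defines an element of $M^p_s$, with weak-$*$ convergence in $\Sclass'$ when $p=\infty$. Both limits agree as tempered distributions, which one checks by pairing with Schwartz test functions, since finite partial sums converge in $\Sclass'$ in both senses. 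Hence $f\in M^p_s$ and $\|f\|_{M^p_s}\lesssim\|\Ana_\Psi f\|_{\ell^p_s}$.

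Combining with the analysis bound gives the equivalence and norm equivalence for all $1\le p\le\infty$ and $s\in\R$. For $p=\infty$ the $\ell^\infty_s$ norm is literally the supremum in \eqref{eq:weighted-coefficient-supremum}.
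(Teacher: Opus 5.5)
Your proposal is correct and is essentially the argument the paper relies on. The paper gives no separate proof: it cites Gr\"ochenig's boundedness of the analysis and synthesis operators and his reconstruction and norm-equivalence result, and remarks that the finite multiwindow version is a routine adjustment of the single-window proof. Your sketch carries out exactly that adjustment, including the check, left implicit in the paper, that for $f\in L^2$ the $L^2$ reconstruction series and the $\mathcal S'$-synthesis of $\Ana_\Psi f$ define the same distribution.
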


The cited results give the standard single-window statement.  The finite-
multiwindow formulation is a simple adjustment of the single-window proof;
compare also Feichtinger--Gr\"ochenig~\cite{FeichtingerGrochenig1997}.

As always, we shall use the following fundamental result of
Gr\"ochenig--Leinert~\cite[Section~2 and Proposition~2.13]{GrochenigLeinert2004}.
\begin{proposition}
\label{prop:Schwartz-Parsevalization}
Let $\Lambda$ be a full-rank lattice and let
$g_1,\ldots,g_m\in\mathcal S(\mathbb R^d)$, $m\ge1$, generate a multiwindow
Gabor frame with frame operator $S$.  Then
$S^{-1/2}g_1,\ldots,S^{-1/2}g_m$ are Schwartz functions and generate a
Parseval multiwindow Gabor frame.
\end{proposition}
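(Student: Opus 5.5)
The plan is to place the frame operator $S$ in a weighted Banach algebra of operators that is inverse-closed in $B(L^2(\R^d))$. Holomorphic functional calculus in that algebra then yields $S^{-1/2}$, and the resulting operators preserve every weighted space whose intersection is $\Sclass(\R^d)$. Writing
\[
S=\sum_{j=1}^m\sum_{\lambda\in\Lambda}\ip{\cdot}{\pi(\lambda)g_j}\,\pi(\lambda)g_j ,
\]
the first step is the Janssen (Fundamental Identity) representation:
\[
S=\covol(\Lambda)^{-1}\sum_{\mu\in\Lambda^\circ}a_\mu\,\pi(\mu),
\qquad
a_\mu=\sum_{j=1}^m\ip{g_j}{\pi(\mu)g_j}.
\]
Since $g_j\in\Sclass(\R^d)$, the short-time Fourier transform $V_{g_j}g_j$ is a Schwartz function on $\R^{2d}$. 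Hence $(a_\mu)_\mu\in\ell^1_{v_s}(\Lambda^\circ)$ for every $s\ge0$, where $v_s(z)=\angles{z}^s$, and the series converges absolutely in operator norm. I would also note that $S$ commutes with every $\pi(\lambda)$, $\lambda\in\Lambda$, because $\Lambda$ indexes the frame.

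The second step uses the algebras
\[
\mathcal A_s=\Bigl\{\textstyle\sum_{\mu\in\Lambda^\circ}c_\mu\pi(\mu): c\in\ell^1_{v_s}(\Lambda^\circ)\Bigr\},
\]
that is, the twisted convolution algebras $\ell^1_{v_s}(\Lambda^\circ,\theta)$ represented on $L^2$. Each $\mathcal A_s$ is a Banach $*$-algebra, by submultiplicativity of $v_s$ and unimodularity of the cocycle. The key input is the Gr\"ochenig--Leinert noncommutative Wiener lemma: since the polynomial weight $v_s$ satisfies the GRS condition, $\mathcal A_s$ is inverse-closed in $B(L^2(\R^d))$. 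Consequently, for every $s$, the $\mathcal A_s$-spectrum of $S$ equals its $L^2$-spectrum. Because $S$ is a positive frame operator, that spectrum lies in $[A,B]\subset(0,\infty)$.

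The third step applies holomorphic functional calculus in $\mathcal A_s$ to $z\mapsto z^{-1/2}$, which is holomorphic on a neighborhood of $[A,B]$ avoiding $(-\infty,0]$. This gives $S^{-1/2}\in\mathcal A_s$, and the element agrees with the $L^2$ spectral-calculus square root because the Cauchy integrals coincide. Every operator in $\mathcal A_s$ is bounded on $M^1_{v_s}(\R^d)$, since $\norm{\pi(\mu)}_{M^1_{v_s}\to M^1_{v_s}}\le C\,v_s(\mu)$. We have $\Sclass(\R^d)=\bigcap_{s\ge0}M^1_{v_s}(\R^d)$, and this holds for every $s$, so $S^{-1/2}g_j\in\Sclass(\R^d)$.

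The Parseval claim is then formal. $S^{-1/2}$ is a limit of polynomials in $S$, so it commutes with each $\pi(\lambda)$. Hence the frame operator of the windows $S^{-1/2}g_j$ is $S^{-1/2}SS^{-1/2}=\Id$.

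The main obstacle is the inverse-closedness of $\mathcal A_s$ (the Wiener lemma for twisted convolution), which is exactly the content of the cited Gr\"ochenig--Leinert result. The rest, namely the Janssen representation with rapidly decaying coefficients and the action on $M^1_{v_s}$, is routine.
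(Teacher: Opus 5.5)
Your proposal is correct and follows essentially the Gr\"ochenig--Leinert route; the paper does not prove this proposition but only quotes that result. The steps are: Janssen's representation of $S$ with coefficients in $\ell^1_{v_s}(\Lambda^\circ)$, inverse-closedness of $\mathcal A_s$ in $B(L^2(\R^d))$, holomorphic calculus for $S^{-1/2}$, $\Sclass(\R^d)=\bigcap_{s\ge0}M^1_{v_s}(\R^d)$, and the commutation argument for Parsevality. One point deserves to be stated explicitly: with $m\ge2$ windows one can have $\covol(\Lambda)>1$, so $\Lambda^\circ$ may be denser than critical, and you must invoke the inverse-closedness in the form valid for every lattice, not only for adjoint lattices of covolume at least one; that general form does hold.
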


We shall also use the following fundamental embedding result.

\begin{proposition}[Gr\"ochenig~\protect\cite{Grochenig2001}, Theorem~12.2.2 and Propositions~12.2.3--12.2.6]
\label{prop:modulation-embeddings}
Let $n=2d$.  If $1\le p_1<p_0\le\infty$ and
$\displaystyle s_0-s_1>n(1/p_1-1/p_0)$, then
$\displaystyle M_{s_0}^{p_0}(\mathbb R^d)\hookrightarrow
M_{s_1}^{p_1}(\mathbb R^d)$; in particular,
$\displaystyle M_s^\infty\hookrightarrow M_q^2$ if
$\displaystyle s>q+d$.  If $1\le p_0\le p_1\le\infty$ and
$s_0\ge s_1$, then
$\displaystyle M_{s_0}^{p_0}(\mathbb R^d)\hookrightarrow
M_{s_1}^{p_1}(\mathbb R^d)$.
\end{proposition}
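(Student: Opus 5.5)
The plan is to transfer both embeddings to weighted sequence spaces on the integer lattice $\Z^{2d}$ by means of \Cref{thm:weighted-Gabor-coeff}, and then prove the corresponding sequence-space inclusions by elementary estimates. Fix once and for all a finite Schwartz multiwindow $\Psi$ with Schwartz dual $\widetilde\Psi$ for the integer lattice; a single Gaussian on a sufficiently fine lattice, rescaled, would also do. For every $f\in L^2(\R^d)$ and all $(p,s)$, \Cref{thm:weighted-Gabor-coeff} gives
\[
 f\in M_s^p \iff \Ana_\Psi(f)\in\ell_s^p,
 \qquad
 \|f\|_{M_s^p}\asymp\|\Ana_\Psi(f)\|_{\ell_s^p}.
\]
The same coefficient map $\Ana_\Psi$ serves all exponents simultaneously. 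It therefore suffices to prove continuous inclusions $\ell_{s_0}^{p_0}\hookrightarrow\ell_{s_1}^{p_1}$ on the index set $\{1,\dots,L\}\times\Z^{2d}$, where $n=2d$ is the dimension of the index lattice.

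\emph{Second assertion} ($p_0\le p_1$, $s_0\ge s_1$). Put $c_k=\langle k\rangle^{s_1}|a_k|$. Since $\langle k\rangle\ge1$, we have $c_k\le\langle k\rangle^{s_0}|a_k|$. The standard nesting $\|c\|_{\ell^{p_1}}\le\|c\|_{\ell^{p_0}}$ for $p_0\le p_1$ then finishes the argument; this nesting holds because $\|c\|_\infty\le\|c\|_{p_0}$ and $|c_k|^{p_1}\le\|c\|_\infty^{p_1-p_0}|c_k|^{p_0}$.

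\emph{First assertion} ($p_1<p_0$, $s_0-s_1>n(1/p_1-1/p_0)$). Define $r\in[1,\infty)$ by $1/p_1=1/p_0+1/r$. Write
\[
 \langle k\rangle^{s_1}|a_k|=\langle k\rangle^{-(s_0-s_1)}\cdot\langle k\rangle^{s_0}|a_k|
\]
and apply Hölder's inequality with exponents $r$ and $p_0$. This gives
\[
 \|a\|_{\ell^{p_1}_{s_1}}\le\bigl\|\langle k\rangle^{-(s_0-s_1)}\bigr\|_{\ell^r}\,\|a\|_{\ell^{p_0}_{s_0}}.
\]
The first factor is finite if and only if $(s_0-s_1)r>n$, that is, $s_0-s_1>n/r=n(1/p_1-1/p_0)$, by comparing $\sum_{k\in\Z^n}\langle k\rangle^{-t}$ with $\int_{\R^n}\langle x\rangle^{-t}\,dx$. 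The finite multiwindow index $\ell$ contributes only a constant factor.

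The special case follows by taking $p_0=\infty$, $p_1=2$, $s_0=s$, $s_1=q$. The condition then reads $s-q>2d\cdot\tfrac12=d$.

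The only step that needs care is not the sequence estimates, which are routine, but the legitimacy of using one fixed frame whose coefficient characterization is valid for all weights $v_s$ simultaneously, including negative $s$ and $p=\infty$. This is exactly what \Cref{thm:weighted-Gabor-coeff} provides, since $\Psi,\widetilde\Psi$ are Schwartz and hence admissible for every polynomial weight. Because the statement restricts to $f\in L^2$, the coefficients are always defined and no distributional extension is needed.
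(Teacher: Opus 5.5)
Your argument is correct and follows the route the paper relies on. The paper gives no proof of its own; it cites Gr\"ochenig's discrete Gabor-coefficient characterization (recorded here as \Cref{thm:weighted-Gabor-coeff}) together with the inclusion theorem, which reduces both embeddings to the weighted sequence-space inclusions $\ell^{p_0}_{s_0}\hookrightarrow\ell^{p_1}_{s_1}$ on $\{1,\ldots,L\}\times\Z^{2d}$. Your monotonicity argument for $p_0\le p_1$, $s_0\ge s_1$ and your H\"older argument with $\langle k\rangle^{-(s_0-s_1)}\in\ell^r(\Z^{2d})$, where $1/r=1/p_1-1/p_0$, supply exactly those inclusions, and the special case $s>q+d$ follows correctly.
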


\subsection{BMO and VMO spaces}
\label{subsec:bmo-vmo-background}

Let $E$ be a fixed finite-dimensional normed space.  For a locally
integrable function $H:Q\to E$ on a cube $Q$, write
\[
 H_Q=\fint_Q H(x)\,dx,
 \qquad
 \operatorname{osc}_Q(H)=\fint_Q\|H(x)-H_Q\|_E\,dx.
\]
Throughout this paper,
\[
 [H]_{\operatorname{BMO}(\mathbb T^n)}
 :=\sup_Q\operatorname{osc}_Q(H)
\]
denotes the BMO seminorm; the supremum is over cubes in the periodic domain.
Writing $H_{\mathbb T^n}=\int_{\mathbb T^n}H$, we use
\[
 \|H\|_{\operatorname{BMO},*}
 :=\|H_{\mathbb T^n}\|_E+[H]_{\operatorname{BMO}(\mathbb T^n)}
\]
for a full norm.  The space $\VMO(\mathbb T^n;E)$ consists of the BMO functions for which
\[
 \sup_{\ell(Q)\le r}\operatorname{osc}_Q(H)\longrightarrow0
 \qquad\text{as }r\searrow0.
\]
Equivalently, on the compact periodic domain, it is the closure of the
continuous periodic $E$-valued functions in the BMO seminorm.  All choices of
norm on the target space $E$ give equivalent BMO and VMO conditions.  If
$T:E\to F$ is linear, then
\[
 \operatorname{osc}_Q(TH)\le\|T\|\operatorname{osc}_Q(H).
\]
Consequently, finite block amplification, vectorization, transposition,
duplication of components, and fixed finite-dimensional linear combinations
preserve local VMO.

Recall from \Cref{subsec:intro-zak-gap} the local version used on ordinary
Zak cells.  Let $K$ be a compact subset of an open set
$U\subset\mathbb R^n$, and let $H\in L^1_{\rm loc}(U)$.  Define
\begin{equation*}
 \eta_{H,K}(r)
 =\sup\bigl\{\operatorname{osc}_Q(H):
       Q\subset K,\ \ell(Q)\le r\bigr\}.
\end{equation*}
We say that $H$ is \emph{locally VMO} on $U$ if
$\eta_{H,K}(r)\to0$ as $r\searrow0$ for every compact $K\Subset U$.
For an ordinary Zak-quasiperiodic function in $q$ dimensions, the arguments
below use only the bounded collection of Zak cells
\begin{equation*}
 \mathcal Q_q^{\rm Zak}=[0,1]^{2q},
 \qquad
 (\mathcal Q_q^{\rm Zak})^*=[-2,3]^{2q}.
\end{equation*}
Thus ``locally VMO on the ordinary Zak domain'' means local VMO on a
neighborhood of $(\mathcal Q_q^{\rm Zak})^*$.  Every continuous function and
every VMO function is locally VMO. Continuity alone does not imply global
Euclidean VMO; on a compact torus, every continuous periodic function
belongs to VMO.

\begin{lemma}
\label{lem:smooth-factor-local-vmo}
Let $E$ be a finite-dimensional normed space, let
$H\in L^\infty(U;E)$ be locally VMO, and let $a\in C^1(U)$.  Then $aH$ is
locally VMO.
\end{lemma}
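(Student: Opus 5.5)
Fix a compact set $K\Subset U$ and a cube $Q\subset K$ with side $\ell(Q)\le r$. The plan is to compare $aH$ on $Q$ with the constant $a_QH_Q$, or more simply with $a(x_Q)H_Q$, where $x_Q$ is the center of $Q$. Oscillation is controlled up to a factor $2$ by the mean distance to any fixed constant:
\[
 \operatorname{osc}_Q(G)\le 2\fint_Q\|G(x)-c\|_E\,dx
 \qquad\text{for every }c\in E .
\]
So it suffices to bound $\fint_Q\|a(x)H(x)-a(x_Q)H_Q\|_E\,dx$.

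Next, split the difference pointwise as
\[
 a(x)H(x)-a(x_Q)H_Q=a(x)\bigl(H(x)-H_Q\bigr)+\bigl(a(x)-a(x_Q)\bigr)H_Q .
\]
Averaging over $Q$, the first term is at most $\sup_K|a|\cdot\operatorname{osc}_Q(H)$. For the second term, use that $a\in C^1(U)$ is Lipschitz on $K$, but only along segments lying inside $U$. To avoid this, replace $K$ by a compact convex-enough neighborhood. Concretely, note that every cube $Q\subset K$ is convex, so the segment from $x$ to $x_Q$ lies in $Q\subset K$. This gives
\[
 |a(x)-a(x_Q)|\le \sqrt n\,\ell(Q)\sup_K|\nabla a| .
\]
Together with $\|H_Q\|_E\le\|H\|_{L^\infty(U;E)}$, the second term is bounded by $C\,r\,\sup_K|\nabla a|\,\|H\|_\infty$.

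Combining the two estimates yields
\[
 \eta_{aH,K}(r)\le 2\sup_K|a|\,\eta_{H,K}(r)+2\sqrt n\,r\sup_K|\nabla a|\,\|H\|_{L^\infty}.
\]
Both terms tend to $0$ as $r\searrow0$: the first because $H$ is locally VMO on $U$, and the second trivially. The suprema are finite since $a$ and $\nabla a$ are continuous on the compact set $K$. Also, $aH\in L^1_{\rm loc}(U)$ since it is bounded on compacts.

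There is no real obstacle here. The only points needing care are that the boundedness of $H$ is used to control $H_Q$ uniformly, and that convexity of cubes justifies applying the mean value theorem inside $K$.
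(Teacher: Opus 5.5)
Your proof is correct and is essentially the paper's argument: compare $aH$ on $Q$ with the constant $a(z_Q)H_Q$, bound the oscillation by twice the mean distance to that constant, and split into $\|a\|_{L^\infty(K)}\operatorname{osc}_Q(H)$ plus $\|H\|_{L^\infty}\sup_{z\in Q}|a(z)-a(z_Q)|$. The only difference is that you bound the second term by the mean value theorem on the convex cube, which gives an explicit $O(r)$ rate. The paper instead invokes uniform continuity of $a$ on $K$, so only continuity of $a$ is actually needed.
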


\begin{proof}
Fix $K\Subset U$, let $Q\subset K$, and choose $z_Q\in Q$.  Since the mean
oscillation is at most twice the mean distance from any constant,
\begin{align*}
 \operatorname{osc}_Q(aH)
 &\le 2\fint_Q\|a(z)H(z)-a(z_Q)H_Q\|\,dz\\
 &\le 2\|a\|_{L^\infty(K)}\operatorname{osc}_Q(H)
      +2\|H\|_{L^\infty(K)}
          \sup_{z\in Q}|a(z)-a(z_Q)|.
\end{align*}
The first term tends to zero uniformly with $\ell(Q)$ by local VMO, and the
second does so by uniform continuity of $a$ on $K$.
\end{proof}

\section{The rational Zak transform and the cross-Zak coefficient framework}
\label{sec:zak-method}\label{sec:cross-zak}

In this section, throughout Part~II, and in the diagonal construction
sections of Part~III, we use the following diagonal rational model.  The
reduction of an arbitrary symplectically rational lattice to this model, and
the metaplectic transfer of the resulting statements, are carried out only in
\Cref{sec:lattice-theory}.
\label{subsec:diagonal-rational-model}\label{sec:diagonal-model}

Let
\begin{equation}
 \Lambda_D=D\mathbb Z^d\times\mathbb Z^d,
 \qquad
 D=\diag(b_1/a_1,\ldots,b_d/a_d),
 \qquad (a_i,b_i)=1,
 \label{eq:diagonal-model}
\end{equation}
where all $a_i,b_i$ are positive integers.  Put
\begin{equation*}
 \mathsf A=\diag(a_1,\ldots,a_d),\qquad
 \mathsf B=\diag(b_1,\ldots,b_d),
 \qquad D=\mathsf B\mathsf A^{-1},
\end{equation*}
and
\begin{equation}
 N=\det\mathsf A=\prod_{i=1}^d a_i,
 \qquad
 R=\det\mathsf B=\prod_{i=1}^d b_i.
 \label{eq:NR}
\end{equation}
With the symplectic form fixed in \Cref{subsec:intro-gabor},
\begin{equation*}
 \Lambda_D^\circ
 =\mathbb Z^d\times D^{-1}\mathbb Z^d,
\end{equation*}
and coordinatewise coprimality gives
\begin{equation*}
 \Lambda_D\cap\Lambda_D^\circ
 =\mathsf B\mathbb Z^d\times\mathsf A\mathbb Z^d.
\end{equation*}
Consequently,
\begin{align*}
 [\Lambda_D:\Lambda_D\cap\Lambda_D^\circ]
 &=[D\mathbb Z^d:\mathsf B\mathbb Z^d]\,
   [\mathbb Z^d:\mathsf A\mathbb Z^d] \\
 &=\left(\prod_{i=1}^d a_i\right)
   \left(\prod_{i=1}^d a_i\right)
 =N^2.
\end{align*}
Consequently, $\nu(\Lambda_D)=N$, $\nu(\Lambda_D^\circ)=R$,
$\covol(\Lambda_D)=\det D=R/N$, and $\Delta(\Lambda_D)=N-R$.
Thus $N-R$ is both the arithmetic gap of the diagonal model and its symplectic
index gap. When $N\ge R$, we use
\begin{equation*}
 q=N-R+1,
 \qquad m_0=2q=2(N-R+1),
 \qquad n=2d.
\end{equation*}

\subsection{Scalar, vector, and matrix-valued Zak transforms}
\label{sec:rational-zak-matrix-model}

The ordinary Zak transform is densely defined on $L^2(\mathbb R^d)$ by
\begin{equation*}
 Zf(x,\omega)
 =\sum_{k\in\mathbb Z^d}f(x-k)e^{2\pi i k\cdot\omega},
 \qquad (x,\omega)\in\mathbb R^{2d}.
\end{equation*}
It satisfies
\begin{equation}
 Zf(x+n,\omega+m)
 =e^{2\pi i n\cdot\omega}Zf(x,\omega),
 \qquad (n,m)\in\mathbb Z^d\times\mathbb Z^d,
 \label{eq:zak-quasiperiodicity}
\end{equation}
almost everywhere, and extends uniquely to a unitary map from
$L^2(\mathbb R^d)$ onto $L^2([0,1)^{2d})$ with
\begin{equation*}
 \|f\|_2^2
 =\int_{[0,1)^{2d}}|Zf(x,\omega)|^2\,dx\,d\omega.
\end{equation*}  We call a measurable bivariate function
$F:\mathbb R^d\times\mathbb R^d\to\mathbb C$ satisfying
\eqref{eq:zak-quasiperiodicity} an \emph{scalar quasiperiodic function}.

The inverse transform is explicit.  If the restriction of $F$ to
$[0,1)^{2d}$ belongs to $L^2$, then the unique $f\in L^2(\mathbb R^d)$ with
$Zf=F$ is determined, for $x\in[0,1)^d$ and $k\in\mathbb Z^d$, by
\begin{equation}
 f(x-k)
 =\int_{[0,1)^d}F(x,\omega)e^{-2\pi i k\cdot\omega}\,d\omega.
 \label{eq:inverse-scalar-zak}
\end{equation}
The unitary bijection and the norm identity are standard; see
\cite[Proposition~1.19]{Toft2021}.  The later used corresponding bijection between
$\mathcal S(\mathbb R^d)$ and the smooth quasiperiodic functions follows
directly from the defining formula and the inversion formula
\eqref{eq:inverse-scalar-zak}; see also
\cite[Lemma~2.2]{deDiosLiehrTaylor2026}.

Let
\begin{equation*}
 \mathcal I_{\mathsf A}
 =\prod_{i=1}^d\{0,\ldots,a_i-1\},
 \qquad
 \mathcal I_{\mathsf B}
 =\prod_{i=1}^d\{0,\ldots,b_i-1\}.
\end{equation*}
Thus $|\mathcal I_{\mathsf A}|=N$ and
$|\mathcal I_{\mathsf B}|=R$.  Every $k\in\mathbb Z^d$ has a unique
representation
\begin{equation*}
 k=\mathsf A\ell+t,
 \qquad \ell\in\mathbb Z^d,
 \quad t\in\mathcal I_{\mathsf A}.
\end{equation*}

For a scalar quasiperiodic function $F$, define its rational Zak matrix by
\begin{equation}
 \mathcal M_F(u,\eta)_{s,t}
 =F(u-Dt,\eta+\mathsf B^{-1}s),
 \qquad
 (u,\eta)\in\mathbb R^d\times\mathbb R^d,
 \quad s\in\mathcal I_{\mathsf B},\quad
 t\in\mathcal I_{\mathsf A}.
 \label{eq:fine-zak-matrix}
\end{equation}
Thus $\mathcal M_F$ is a bivariate matrix-valued function: its first vector
variable $u\in\mathbb R^d$ is the position variable and its second vector
variable $\eta\in\mathbb R^d$ is the frequency variable.

For $f\in L^2(\mathbb R^d)$, define the vector- and matrix-valued transforms by
\begin{align}
 \mathcal Zf(u,\eta)_s
 &=Zf(u,\eta+\mathsf B^{-1}s),
 \qquad s\in\mathcal I_{\mathsf B},\notag\\
 (\mathcal Z_Df)(u,\eta)_{s,t}
 &=Zf(u-Dt,\eta+\mathsf B^{-1}s),
 \qquad
 s\in\mathcal I_{\mathsf B},\quad t\in\mathcal I_{\mathsf A}.
 \label{eq:zz-matrix-explicit}
\end{align}
Both transforms are defined on all of
$\mathbb R^d\times\mathbb R^d$.  We have
\[
 \mathcal Zf(u,\eta)_0=Zf(u,\eta)
 =(\mathcal Z_Df)(u,\eta)_{0,0}.
\]
Thus the scalar Zak transform is literally an entry of both the vector- and
matrix-valued transforms, while every other matrix entry is obtained by
translating the two vector variables by $-Dt$ and $\mathsf B^{-1}s$,
respectively.  The
fine translation lattice used below is
\[
 \Gamma_D=\mathsf A^{-1}\mathbb Z^d\times\mathsf B^{-1}\mathbb Z^d.
\]

The scalar quasiperiodicity relation gives, entrywise,
\begin{equation*}
 \mathcal M_F(u+n,\eta+m)_{s,t}
 =e^{2\pi i n\cdot(\eta+\mathsf B^{-1}s)}
  \mathcal M_F(u,\eta)_{s,t},
 \qquad (n,m)\in\mathbb Z^d\times\mathbb Z^d.
\end{equation*}
The additional relations produced by the smaller translations in the fine
lattice will be described group-theoretically in
Subsection~\ref{subsec:finite-quotient-action}.

Define the coarse and fine half-open rectangles by
\begin{equation*}
 \mathcal Q_{\mathsf B}
 =[0,1)^d\times\prod_{i=1}^d[0,b_i^{-1}),
 \qquad
 Q_D
 =\prod_{i=1}^d[0,a_i^{-1})
  \times\prod_{i=1}^d[0,b_i^{-1}),
 \qquad \mathcal Q_D=\overline{Q_D}.
\end{equation*}

The restrictions of these transforms satisfy
\begin{equation}
 \|f\|_2^2
 =\int_{\mathcal Q_{\mathsf B}}
     \|\mathcal Zf(u,\eta)\|_2^2\,du\,d\eta
 =\int_{Q_D}
     \|\mathcal Z_Df(u,\eta)\|_{\mathrm F}^2\,du\,d\eta.
 \label{eq:vector-matrix-zak-unitarity}
\end{equation}
More precisely,
\begin{equation*}
 \mathcal Z:L^2(\mathbb R^d)
 \xrightarrow{\;\cong\;}
 L^2(\mathcal Q_{\mathsf B};\mathbb C^R),
 \qquad
 \mathcal Z_D:L^2(\mathbb R^d)
 \xrightarrow{\;\cong\;}
 L^2(Q_D;\mathbb C^{R\times N})
\end{equation*}
are onto unitary maps.

The following result is the standard rational Zibulski--Zeevi fibre
criterion after finite reindexing, possible transposition, and the present
normalization; see Zibulski--Zeevi~\cite{ZibulskiZeevi1997},
Gr\"ochenig--Koppensteiner~\cite[Theorem~7.2]{GrochenigKoppensteiner2019},
and Jakobsen--Lemvig~\cite[Corollary~4.4]{JakobsenLemvig2016}.

\begin{proposition}[Zibulski--Zeevi~\protect\cite{ZibulskiZeevi1997}]
\label{prop:zz-criterion-full}\label{prop:fibre-identity}
Let $g\in L^2(\mathbb R^d)$ and let $\mathcal Z_Dg$ be defined by
\eqref{eq:zz-matrix-explicit}.  Then every $f\in L^2(\mathbb R^d)$ satisfies
\begin{equation*}
 \sum_{n,k\in\mathbb Z^d}|\langle f,M_nT_{Dk}g\rangle|^2
 =\frac1R\int_{\mathcal Q_{\mathsf B}}
   \|\mathcal Z_Dg(u,\eta)^*\mathcal Zf(u,\eta)\|_{\mathbb C^N}^2
   \,du\,d\eta,
\end{equation*}
with the extended value $+\infty$ allowed.  Consequently
$\mathcal G(g,D\mathbb Z^d\times\mathbb Z^d)$ has frame bounds $A_0,B_0$
if and only if
\begin{equation*}
 RA_0I_R
 \le \mathcal Z_Dg(u,\eta)\mathcal Z_Dg(u,\eta)^*
 \le RB_0I_R,
 \quad\text{for almost every }(u,\eta)\in\mathcal Q_{\mathsf B}.
\end{equation*}
In particular, the upper frame bound implies the essential boundedness of
$\mathcal Z_Dg$, and the frame is Parseval if and only if
\begin{equation*}
 \mathcal Z_Dg(u,\eta)\mathcal Z_Dg(u,\eta)^*=RI_R,
 \quad\text{almost everywhere}.
\end{equation*}
\end{proposition}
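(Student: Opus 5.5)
The plan is to derive the fibre identity by expanding the analysis coefficients on the fine lattice through the ordinary Zak transform. The frame-bound characterization then follows by a standard density argument. Write $\langle f,M_nT_{Dk}g\rangle$ and split $k=\mathsf A\ell+t$ with $t\in\mathcal I_{\mathsf A}$, so that $Dk=\mathsf B\ell+Dt$. Unitarity of $Z$ gives
\[
 \langle f,M_nT_{Dk}g\rangle=\int_{[0,1)^{2d}}Zf\,\overline{Z(M_nT_{Dk}g)}.
\]
By the standard covariance, $Z(M_nT_{\mathsf B\ell+Dt}g)(x,\omega)$ equals $e^{2\pi i n\cdot x}\,Zg(x-\mathsf B\ell-Dt,\omega)$. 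Quasiperiodicity \eqref{eq:zak-quasiperiodicity} turns the integer shift $\mathsf B\ell$ into the character $e^{-2\pi i \mathsf B\ell\cdot\omega}$. Hence, for fixed $t$, the coefficients over $(n,\ell)\in\mathbb Z^d\times\mathbb Z^d$ are the Fourier coefficients, on the torus $\mathbb R^d/\mathbb Z^d\times\mathbb R^d/\mathsf B^{-1}\mathbb Z^d$, of the function $(x,\omega)\mapsto Zf(x,\omega)\overline{Zg(x-Dt,\omega)}$. The rescaling factor $\det\mathsf B=R$ must be tracked here.

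Next, I periodize in $\omega$ over the coset representatives $\mathsf B^{-1}s$, $s\in\mathcal I_{\mathsf B}$, which folds $[0,1)^d$ in $\omega$ onto $\prod_i[0,b_i^{-1})$. Note that $Zf(x,\cdot)$ is $\mathbb Z^d$-periodic in $\omega$ but not $\mathsf B^{-1}\mathbb Z^d$-periodic. The folded function is therefore
\[
 H_t(u,\eta)=\sum_{s\in\mathcal I_{\mathsf B}}Zf(u,\eta+\mathsf B^{-1}s)\overline{Zg(u-Dt,\eta+\mathsf B^{-1}s)}
 =\bigl(\mathcal Z_Dg(u,\eta)^*\mathcal Zf(u,\eta)\bigr)_t .
\]
Parseval on the torus $\mathcal Q_{\mathsf B}$, whose dual lattice is $\mathbb Z^d\times\mathsf B\mathbb Z^d$, gives
\[
 \sum_{n,\ell}|\langle f,M_nT_{D(\mathsf A\ell+t)}g\rangle|^2=c\int_{\mathcal Q_{\mathsf B}}|H_t|^2 .
\]
The constant $c$ should come out as $R$ times the squared torus-volume normalization, that is, $1/R$ overall. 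Summing over $t$ yields the identity.

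To allow the value $+\infty$, I first prove the identity for $f,g$ with bounded, compactly supported Zak transforms, where all sums converge absolutely. Then I extend by monotone convergence, approximating $g$ through truncations $Zg\cdot\mathbf 1_{|Zg|\le M}$, which remain quasiperiodic, or by a lower semicontinuity argument on both sides. Checking that Parseval holds in the extended sense for $H_t\in L^1$ requires only that an $L^1$ function with square-summable Fourier coefficients lies in $L^2$.

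For the consequence, $\mathcal Z:L^2(\mathbb R^d)\to L^2(\mathcal Q_{\mathsf B};\mathbb C^R)$ is onto and unitary by \eqref{eq:vector-matrix-zak-unitarity}. The frame inequality is therefore equivalent to
\[
 A_0\int\|\phi\|^2\le\frac1R\int\langle \mathcal Z_Dg\,\mathcal Z_Dg^*\phi,\phi\rangle\le B_0\int\|\phi\|^2
\]
for all $\phi\in L^2(\mathcal Q_{\mathsf B};\mathbb C^R)$. I then test against $\phi=\mathbf 1_E v$, with $E$ ranging over measurable sets and $v$ over a countable dense set of vectors, and apply the Lebesgue differentiation theorem. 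This gives the pointwise a.e. matrix inequalities; the converse direction is immediate by integration. The upper bound gives $\|\mathcal Z_Dg\|^2_{\mathrm{op}}\le RB_0$, which yields essential boundedness on $\mathcal Q_{\mathsf B}$. The bound extends to all of $\mathbb R^{2d}$ because the entries are translates of $Zg$ and the quasiperiodicity factors are unimodular. The Parseval case is $A_0=B_0=1$.

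The main obstacle is bookkeeping: the exact normalization constant $1/R$ and the correct identification of the folded product with the $t$-th entry of $\mathcal Z_Dg^*\mathcal Zf$, including the sign conventions in $Dt$ and the conjugations. The analytic content is routine.
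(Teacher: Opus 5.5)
Your proposal is correct and follows essentially the same route as the paper. You split $k=\mathsf A\ell+t$ and use Zak unitarity and quasiperiodicity to exhibit the coefficients as Fourier coefficients on $\mathcal Q_{\mathsf B}$ of the folded product $(\mathcal Z_Dg^*\mathcal Zf)_t$, then apply Parseval with the factor $1/R$, and read off the fibre inequalities from the surjectivity of $\mathcal Z$.

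The truncation and monotone-convergence detour is unnecessary. It is also not monotone as stated, because of cancellations in the sum over $s$. Your closing observation, that an $L^1$ function with square-summable Fourier coefficients lies in $L^2$, is exactly the paper's treatment of the value $+\infty$ and suffices on its own.
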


\begin{proof}
Write $k=\mathsf A\ell+t$, with $t\in\mathcal I_{\mathsf A}$,
and put
\[
 h_t(u,\eta)
 =\sum_{s\in\mathcal I_{\mathsf B}}
   \mathcal Zf(u,\eta)_s\,
   \overline{(\mathcal Z_Dg)(u,\eta)_{s,t}}.
\]
Zak unitarity, integer modulation, and quasiperiodicity give
\begin{align*}
 \langle f,M_nT_{D(\mathsf A\ell+t)}g\rangle
 &=\int_{[0,1)^{2d}}Zf(u,\omega)\,
   \overline{Zg(u-Dt,\omega)}
   e^{-2\pi i n\cdot u}e^{2\pi i(\mathsf B\ell)\cdot\omega}
   \,du\,d\omega\\
 &=\int_{\mathcal Q_{\mathsf B}}h_t(u,\eta)
   e^{-2\pi i n\cdot u}e^{2\pi i(\mathsf B\ell)\cdot\eta}
   \,du\,d\eta.
\end{align*}
In the second line we split $\omega=\eta+\mathsf B^{-1}s$;
the additional phase is $e^{2\pi i\ell\cdot s}=1$.
The characters
$e^{2\pi i n\cdot u}e^{-2\pi i(\mathsf B\ell)\cdot\eta}$
on $\mathcal Q_{\mathsf B}$ are orthogonal and have squared norm
$|\mathcal Q_{\mathsf B}|=1/R$.  Fourier-series Parseval therefore yields
\[
 \sum_{n,\ell\in\mathbb Z^d}
   |\langle f,M_nT_{D(\mathsf A\ell+t)}g\rangle|^2
 =\frac1R\int_{\mathcal Q_{\mathsf B}}|h_t(u,\eta)|^2\,du\,d\eta.
\]
This identity also holds with value $+\infty$: each $h_t$ belongs to
$L^1$, and square-summability of its Fourier coefficients would, by
Fourier uniqueness and the $L^2$ Fourier-series theorem, force
$h_t\in L^2$.  Summing over $t$ proves the asserted identity.
The frame inequalities are then equivalent to the stated pointwise matrix
inequalities because $\mathcal Z$ maps onto
$L^2(\mathcal Q_{\mathsf B};\mathbb C^R)$.
\end{proof}

For a Parseval window, the matrix norm identity
\eqref{eq:vector-matrix-zak-unitarity} consequently gives
\[
 \|g\|_2^2
 =\int_{Q_D}\operatorname{tr}
   \bigl(\mathcal Z_Dg\,(\mathcal Z_Dg)^*\bigr)
 =R^2|Q_D|
 =\frac RN
 =\covol(\Lambda_D),
\]
since $|Q_D|=1/(NR)$.

\subsection{The finite quotient action and the scalar--matrix correspondence}
\label{subsec:finite-quotient-action}

For $(n,m)\in\mathbb Z^d\times\mathbb Z^d$, define
\begin{equation*}
 \chi_{(n,m)}(u,\eta)=e^{2\pi i n\cdot\eta}.
\end{equation*}
Then scalar quasiperiodicity is
\begin{equation*}
 F(u+n,\eta+m)=\chi_{(n,m)}(u,\eta)F(u,\eta),
\end{equation*}
and, for $(n,m),(n',m')\in\mathbb Z^d\times\mathbb Z^d$,
\begin{equation}
 \chi_{(n+n',m+m')}(u,\eta)
 =\chi_{(n',m')}(u+n,\eta+m)\chi_{(n,m)}(u,\eta).
 \label{eq:scalar-zak-cocycle-law}
\end{equation}

The quotient torus associated with this fine lattice and the quotient map are
\[
 X_D=(\mathbb R^d\times\mathbb R^d)/\Gamma_D,
 \qquad
 \pi_\Gamma(u,\eta)=[(u,\eta)].
\]
Since $\mathbb Z^d\times\mathbb Z^d\subset\Gamma_D$, the quotient
\begin{equation*}
 G_D:=\Gamma_D/(\mathbb Z^d\times\mathbb Z^d)
\end{equation*}
is a finite group of order $NR$.  For
$s\in\mathcal I_{\mathsf B}$ and $t\in\mathcal I_{\mathsf A}$ put
\begin{equation*}
 \rho_{(s,t)}=(-Dt,\mathsf B^{-1}s)\in\Gamma_D.
\end{equation*}
These $NR$ pairs form a complete system of representatives of $G_D$.
The frequency components run once through
$\mathsf B^{-1}\mathbb Z^d/\mathbb Z^d$, and coordinatewise multiplication
by $-b_i$ permutes the residue classes modulo $a_i$, so the position
components run once through
$\mathsf A^{-1}\mathbb Z^d/\mathbb Z^d$.

\begin{proposition}
\label{prop:entrywise-fine-transports}
For $k,\ell\in\mathbb Z^d$, write
\begin{equation*}
 \gamma_{(k,\ell)}:=(\mathsf A^{-1}k,\mathsf B^{-1}\ell)\in\Gamma_D.
\end{equation*}
For $s\in\mathcal I_{\mathsf B}$, let
$\mathfrak r_\ell(s)\in\mathcal I_{\mathsf B}$ be the standard residue
characterized by
\begin{equation}
 \mathfrak r_\ell(s)\equiv s+\ell\pmod{\mathsf B},
 \label{eq:row-residue-action}
\end{equation}
and put
\begin{equation*}
 m_\ell(s)
 =\mathsf B^{-1}\bigl(s+\ell-\mathfrak r_\ell(s)\bigr)
 \in\mathbb Z^d.
\end{equation*}
For $t\in\mathcal I_{\mathsf A}$, let
$\mathfrak c_k(t)\in\mathcal I_{\mathsf A}$ be the unique standard residue
such that
\begin{equation}
 k-\mathsf Bt+\mathsf B\mathfrak c_k(t)
 \in\mathsf A\mathbb Z^d,
 \label{eq:column-residue-action}
\end{equation}
and put
\begin{equation}
 n_k(t)
 =\mathsf A^{-1}
   \bigl(k-\mathsf Bt+\mathsf B\mathfrak c_k(t)\bigr)
 \in\mathbb Z^d.
 \label{eq:column-integer-correction}
\end{equation}
Then
\begin{equation}
 \rho_{(s,t)}+\gamma_{(k,\ell)}
 =\rho_{(\mathfrak r_\ell(s),\mathfrak c_k(t))}
   +\bigl(n_k(t),m_\ell(s)\bigr).
 \label{eq:coset-action-reduction}
\end{equation}
For $X\in\mathbb C^{R\times N}$ define
\begin{equation}
 \bigl(\Act_{\gamma_{(k,\ell)}}(u,\eta)[X]\bigr)_{s,t}
 =e^{2\pi i n_k(t)\cdot
       (\eta+\mathsf B^{-1}\mathfrak r_\ell(s))}
  X_{\mathfrak r_\ell(s),\mathfrak c_k(t)}.
 \label{eq:fine-action-definition}
\end{equation}
For each $\gamma_{(k,\ell)}$ this is a smooth monomial unitary action on the
matrix entries.  It has a left--right factorization
\begin{equation}
 \Act_{\gamma_{(k,\ell)}}(u,\eta)[X]
 =L_{k,\ell}X R_k(\eta),
 \qquad L_{k,\ell}\in U(R),\quad R_k(\eta)\in U(N),
 \label{eq:left-right-action-representation}
\end{equation}
where both factors are monomial unitaries and $R_k(\eta)$ depends smoothly
on the frequency variable $\eta$.  Moreover,
\begin{equation}
 \Act_{\gamma_{(k+k',\ell+\ell')}}(u,\eta)
 =\Act_{\gamma_{(k',\ell')}}
    (u+\mathsf A^{-1}k,\eta+\mathsf B^{-1}\ell)
   \circ\Act_{\gamma_{(k,\ell)}}(u,\eta),
 \label{eq:cocycle-action}
\end{equation}
and every scalar quasiperiodic function $F$ satisfies
\begin{equation}
 \mathcal M_F(u+\mathsf A^{-1}k,\eta+\mathsf B^{-1}\ell)
 =\Act_{\gamma_{(k,\ell)}}(u,\eta)[\mathcal M_F(u,\eta)].
 \label{eq:general-side-transport}
\end{equation}
The induced left--right action is canonical; the individual factors in
\eqref{eq:left-right-action-representation} are determined only up to
reciprocal scalar phases.
\end{proposition}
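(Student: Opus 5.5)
The plan is to verify the statement by direct bookkeeping, treating the coset identity \eqref{eq:coset-action-reduction} as the foundation.

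\emph{Step 1: the coset identity.} I will first check that the residues are well defined. Since $\mathsf A,\mathsf B$ are diagonal with coprime diagonal entries, coordinatewise $b_i$ is invertible modulo $a_i$. Hence for each $t$ there is a unique $\mathfrak c_k(t)\in\mathcal I_{\mathsf A}$ with $b_i\mathfrak c_k(t)_i\equiv b_it_i-k_i \pmod{a_i}$. With $n_k(t),m_\ell(s)\in\Z^d$ as defined, a coordinatewise computation gives
\[
 -Dt+\mathsf A^{-1}k=-D\mathfrak c_k(t)+n_k(t),
 \qquad
 \mathsf B^{-1}s+\mathsf B^{-1}\ell=\mathsf B^{-1}\mathfrak r_\ell(s)+m_\ell(s).
\]
Here I use $D=\mathsf B\mathsf A^{-1}$ and the fact that diagonal matrices commute. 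These two identities together are exactly \eqref{eq:coset-action-reduction}.

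\emph{Step 2: the transport identity \eqref{eq:general-side-transport}.} I expand the $(s,t)$ entry of $\mathcal M_F(u+\mathsf A^{-1}k,\eta+\mathsf B^{-1}\ell)$, which equals $F\bigl((u,\eta)+\rho_{(s,t)}+\gamma_{(k,\ell)}\bigr)$. By Step 1 this is $F\bigl((u,\eta)+\rho_{(\mathfrak r,\mathfrak c)}+(n,m)\bigr)$. Scalar quasiperiodicity then yields the phase $e^{2\pi i n_k(t)\cdot(\eta+\mathsf B^{-1}\mathfrak r_\ell(s))}$ times $\mathcal M_F(u,\eta)_{\mathfrak r_\ell(s),\mathfrak c_k(t)}$, which is the definition \eqref{eq:fine-action-definition}.

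\emph{Step 3: unitarity and the left--right factorization.} The maps $s\mapsto\mathfrak r_\ell(s)$ and $t\mapsto\mathfrak c_k(t)$ are bijections of $\mathcal I_{\mathsf B}$ and $\mathcal I_{\mathsf A}$, being translations of residue classes, so $\Act_{\gamma_{(k,\ell)}}$ is a monomial unitary map on entries. For the factorization I split the phase as
\[
 e^{2\pi i n_k(t)\cdot\eta}\,e^{2\pi i n_k(t)\cdot\mathsf B^{-1}\mathfrak r_\ell(s)}.
\]
The first factor depends only on $t$ and is smooth in $\eta$, so it goes into $R_k(\eta)$ together with the column permutation. The second factor mixes $s$ and $t$, and this is the main obstacle. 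I expect to handle it by a congruence argument. Write $n_k(t)=\mathsf A^{-1}(k-\mathsf Bt+\mathsf B\mathfrak c_k(t))$. Then
\[
 n_k(t)\cdot\mathsf B^{-1}r
 =(\mathsf A\mathsf B)^{-1}(k-\mathsf Bt+\mathsf B\mathfrak c_k(t))\cdot r .
\]
Coordinatewise, $k_i-b_it_i+b_i\mathfrak c_i=a_in_i$, so the $i$-th term equals $n_i r_i/b_i$. Its class modulo $1$ must be shown to separate into a function of $s$ times a function of $t$. Using $r\equiv s+\ell\pmod{\mathsf B}$ and the integrality of $n$, the term reduces modulo $1$ to $n_i(s_i+\ell_i)/b_i$. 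This still couples $n_i$ and $s_i$.

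If this coupling does not cancel, I would switch strategy. The paper's statement says the factors are determined only up to phases, so I would instead build the factorization abstractly. The transport $X\mapsto\Act[X]$ is monomial, with permutation of product form $(\mathfrak r_\ell,\mathfrak c_k)$. A monomial map with phases $\theta(s,t)$ factors as a left action times a right action exactly when $\theta(s,t)\overline{\theta(s',t)}\,\theta(s',t')\overline{\theta(s,t')}=1$. I would verify this rank-one condition via the explicit form of $n_k(t)$. My expectation is that $n_k(t)\cdot\mathsf B^{-1}\mathfrak r$ reduces modulo $1$ to a sum of one term in $t$ and one term in $s$, plus a constant depending on $(k,\ell)$ only. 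This reduction is where the coprimality assumption enters, and it is the point I would check most carefully.

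\emph{Step 4: the cocycle law \eqref{eq:cocycle-action}.} I will not compute this entrywise. Instead I use the transport identity for enough $F$: apply \eqref{eq:general-side-transport} twice and compare with the identity for $(k+k',\ell+\ell')$. This gives equality of the two linear maps on the span of the matrices $\mathcal M_F(u,\eta)$. Genericity then suffices. Choose $F$ as Zak transforms of Schwartz functions localized near a single orbit point; the values $F((u,\eta)+\rho)$ at the $NR$ distinct coset points can then be prescribed independently, so these matrices span $\C^{R\times N}$. Continuity in $(u,\eta)$ extends the equality everywhere.

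Canonicity follows from the same factorization. If $LXR=L'XR'$ for all $X$, then $L'^{-1}L\otimes R R'^{-1}$ acts as the identity on matrices. This forces $L'^{-1}L=cI$ and $R'R^{-1}=\bar c^{-1}I$, so the factors agree up to reciprocal scalar phases.
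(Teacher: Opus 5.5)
Your Steps~1 and~2 match the paper, but Step~3 has a genuine gap exactly where you stop. You never show that the phase $e^{2\pi i n_k(t)\cdot\mathsf B^{-1}\mathfrak r_\ell(s)}$ splits. Your fallback, the rank-one criterion on the phases, rests on the same unverified congruence, so it does not help.

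The missing idea is that $n_k(t)$ does not depend on $t$ modulo $\mathsf B$. Reduce the defining identity $\mathsf A n_k(t)=k-\mathsf Bt+\mathsf B\mathfrak c_k(t)$ modulo $\mathsf B$ to get $a_i\,n_k(t)_i\equiv k_i\pmod{b_i}$. Since $(a_i,b_i)=1$, each $a_i$ is invertible modulo $b_i$. Hence $n_k(t)\equiv r_k\pmod{\mathsf B}$ for a residue vector $r_k$ depending only on $k$. Because $\mathfrak r_\ell(s)\in\Z^d$, this gives
\[
 e^{2\pi i n_k(t)\cdot(\eta+\mathsf B^{-1}\mathfrak r_\ell(s))}
 =e^{2\pi i r_k\cdot\mathsf B^{-1}\mathfrak r_\ell(s)}\,e^{2\pi i n_k(t)\cdot\eta}.
\]
So your reduced term $n_i(s_i+\ell_i)/b_i$ does not couple $s$ and $t$ at all; the ``term in $t$'' you anticipate is identically zero. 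The first factor, together with the row permutation $s\mapsto\mathfrak r_\ell(s)$, defines $L_{k,\ell}$. The second factor, together with the column permutation $t\mapsto\mathfrak c_k(t)$, defines $R_k(\eta)$. This is where coprimality is used in the direction ``$a_i$ invertible mod $b_i$''; Step~1 only used the other direction. Without this step, \eqref{eq:left-right-action-representation} is unproved. Your uniqueness-up-to-reciprocal-scalars argument for the canonicity remark is correct, but it needs existence of the factorization first.

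Your Step~4 is a valid alternative to the paper's route. The paper proves \eqref{eq:cocycle-action} by composing residue maps and integer corrections via the group law, and using the scalar cocycle identity for the phases. You instead compute $\mathcal M_F(w+\gamma+\gamma')$ through \eqref{eq:general-side-transport} once directly and once in two steps, then use that the matrices $\mathcal M_F(w)$ span $\C^{R\times N}$. The spanning holds pointwise at every $w$, so no continuity step is needed. The $NR$ points $w+\rho_{(s,t)}$ are distinct modulo $\Z^{2d}$. For $\phi\in C_c^\infty$ supported in a small ball around one of them, the function $F(u,v)=\sum_{n,m\in\Z^d}e^{2\pi i n\cdot(v-m)}\phi(u-n,v-m)$ is smooth and quasiperiodic, nonzero at that point and zero at the others. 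Your route trades the paper's index bookkeeping for this small genericity lemma; the paper's route is more direct given the explicit formulas.
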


\begin{proof}
The standard row residue in \eqref{eq:row-residue-action} is unique.  In
each coordinate, \eqref{eq:column-residue-action} also has a unique standard
solution because $a_i$ and $b_i$ are relatively prime.  The definitions of
$m_\ell(s)$ and $n_k(t)$ then give
\eqref{eq:coset-action-reduction}.

Using \eqref{eq:fine-zak-matrix},
\eqref{eq:coset-action-reduction}, and scalar quasiperiodicity, one obtains
\begin{align*}
 \mathcal M_F(u+\mathsf A^{-1}k,\eta+\mathsf B^{-1}\ell)_{s,t}
 &=F\bigl((u,\eta)+\rho_{(s,t)}+\gamma_{(k,\ell)}\bigr)\\
 &=e^{2\pi i n_k(t)\cdot
       (\eta+\mathsf B^{-1}\mathfrak r_\ell(s))}
   \mathcal M_F(u,\eta)_{\mathfrak r_\ell(s),\mathfrak c_k(t)}.
\end{align*}
This is \eqref{eq:general-side-transport} and shows directly that
\eqref{eq:fine-action-definition} is a smooth monomial unitary.

To obtain the left--right form, reduce
\eqref{eq:column-integer-correction} modulo $\mathsf B$:
\begin{equation*}
 \mathsf A n_k(t)\equiv k\pmod{\mathsf B}.
\end{equation*}
Coordinatewise coprimality makes $\mathsf A$ invertible modulo
$\mathsf B$.  Hence there is a residue vector
$r_k\in\mathcal I_{\mathsf B}$, independent of $t$, such that
\begin{equation*}
 n_k(t)\equiv r_k\pmod{\mathsf B}.
\end{equation*}
Therefore
\begin{equation*}
 e^{2\pi i n_k(t)\cdot
       (\eta+\mathsf B^{-1}\mathfrak r_\ell(s))}
 =e^{2\pi i r_k\cdot\mathsf B^{-1}\mathfrak r_\ell(s)}
  e^{2\pi i n_k(t)\cdot\eta}.
\end{equation*}
The first factor together with the row permutation
$s\mapsto\mathfrak r_\ell(s)$ defines $L_{k,\ell}$, while the second factor
together with the column permutation $t\mapsto\mathfrak c_k(t)$ defines
$R_k(\eta)$.  This proves \eqref{eq:left-right-action-representation}.

Finally, adding $\gamma_{(k,\ell)}$ and then
$\gamma_{(k',\ell')}$ to a coset representative is the same as adding
$\gamma_{(k+k',\ell+\ell')}$ once.  The residue maps and integer corrections
therefore compose according to the group law, while
\eqref{eq:scalar-zak-cocycle-law} gives the same composition for the phases.
This proves \eqref{eq:cocycle-action}.  Thus the cycle and corner relations
are consequences of the quotient-group action and scalar quasiperiodicity.
\end{proof}

\par\medskip\noindent\phantomsection\label{par:row-normalization}\textit{Row normalization.}\ 

For a full-row-rank matrix $A\in\mathbb C^{R\times N}$ define
\begin{equation}
 \RowPol(A):=\mathcal P(A)=\sqrt R\,(AA^*)^{-1/2}A.
 \label{eq:row-normalization}
\end{equation}
Then
\begin{equation*}
 \mathcal P(A)\mathcal P(A)^*=RI_R.
\end{equation*}
If $U\in U(R)$ and $V\in U(N)$, functional calculus gives
\begin{equation*}
 \mathcal P(UAV)=U\mathcal P(A)V.
\end{equation*}
Consequently row normalization preserves every fine-lattice identification.
On a set on which the smallest singular value is bounded below and the
matrix norm is bounded above, the map $\mathcal P$ is smooth with uniformly
bounded derivatives of every fixed order.  The chain rule therefore
preserves the controlled derivative bounds used in the construction sections.
The Fourier estimates are subsequently deduced from those bounds by the
arguments specific to each construction.

\begin{corollary}
\label{lem:fine-transport-cocycle}
Every $\Act_{\gamma_{(k,\ell)}}(u,\eta)$ preserves matrix rank, all singular
values, and the Frobenius and operator norms.  On full-row-rank matrices it
commutes with row normalization:
$\RowPol(\Act_{\gamma_{(k,\ell)}}[X])
 =\Act_{\gamma_{(k,\ell)}}[\RowPol(X)]$.
\end{corollary}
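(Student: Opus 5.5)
The plan is to read everything off the explicit monomial structure of $\Act_{\gamma_{(k,\ell)}}(u,\eta)$ from \Cref{prop:entrywise-fine-transports}. The basis is the left--right factorization \eqref{eq:left-right-action-representation},
\[
 \Act_{\gamma_{(k,\ell)}}(u,\eta)[X]=L_{k,\ell}\,X\,R_k(\eta),
 \qquad L_{k,\ell}\in U(R),\quad R_k(\eta)\in U(N).
\]
Everything in the corollary is a statement about invariance under a two-sided unitary multiplication, so the proof is essentially formal once this factorization is in hand.

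First, I would observe that left and right multiplication by unitaries are rank-preserving bijections of $\mathbb C^{R\times N}$. This is immediate because $L_{k,\ell}$ and $R_k(\eta)$ are invertible. Next, for singular values, I would compute
\[
 (LXR)(LXR)^*=L\,XX^*\,L^*,
\]
which is unitarily similar to $XX^*$. Hence it has the same eigenvalues, so $LXR$ and $X$ have the same singular values. The Frobenius norm and the operator norm are functions of the singular values only: the former is the $\ell^2$ norm of the singular values, the latter the largest singular value. So both norms are preserved. Alternatively, the Frobenius norm is preserved directly, since a monomial unitary only permutes entries and multiplies them by unimodular phases, as \eqref{eq:fine-action-definition} shows.

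For the commutation with row normalization, I would take $X$ of full row rank. By the rank statement, $\Act_{\gamma_{(k,\ell)}}[X]$ also has full row rank, so $\RowPol$ is defined on it. I would then apply the identity $\mathcal P(UAV)=U\mathcal P(A)V$ recorded after \eqref{eq:row-normalization}, with $U=L_{k,\ell}$ and $V=R_k(\eta)$, to obtain
\[
 \RowPol\bigl(\Act_{\gamma_{(k,\ell)}}[X]\bigr)=L_{k,\ell}\,\RowPol(X)\,R_k(\eta)=\Act_{\gamma_{(k,\ell)}}\bigl[\RowPol(X)\bigr].
\]
If one wants to verify that identity rather than cite it, the check is short. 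Using $(UAV)(UAV)^*=UAA^*U^*$, functional calculus gives
\[
 (UAA^*U^*)^{-1/2}=U(AA^*)^{-1/2}U^*,
\]
and multiplying on the right by $UAV$ gives $U(AA^*)^{-1/2}AV$, which after the factor $\sqrt R$ is $U\mathcal P(A)V$.

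There is no real obstacle here. The only point needing a remark is the ambiguity noted at the end of \Cref{prop:entrywise-fine-transports}: $L_{k,\ell}$ and $R_k(\eta)$ are determined only up to reciprocal scalar phases $c$ and $\bar c$. These phases cancel in the product $LXR$, so all of the above statements concern the canonical action itself and not the chosen factors.
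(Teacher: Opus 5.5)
Your proof is correct and follows the paper's argument: the paper also obtains this corollary in one line from the left--right unitary factorization \eqref{eq:left-right-action-representation} together with unitary functional calculus, i.e.\ the covariance $\mathcal P(UAV)=U\mathcal P(A)V$ recorded after \eqref{eq:row-normalization}. Your explicit check of that covariance and your remark that the reciprocal phase ambiguity in the factors cancels in $L_{k,\ell}XR_k(\eta)$ add detail the paper leaves implicit.
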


This follows immediately from
\eqref{eq:left-right-action-representation} and unitary functional calculus.

A matrix field
$A:\mathbb R^d\times\mathbb R^d\to\mathbb C^{R\times N}$ is called
\emph{Zak-compatible} if
\begin{equation*}
 A(u+\mathsf A^{-1}k,\eta+\mathsf B^{-1}\ell)
 =\Act_{\gamma_{(k,\ell)}}(u,\eta)[A(u,\eta)]
\end{equation*}
for every $(u,\eta)\in\mathbb R^d\times\mathbb R^d$ and every
$k,\ell\in\mathbb Z^d$.  We arrive at the following.

\begin{proposition}
\label{prop:fine-box-reassembly}
The assignment
\begin{equation*}
 F\longmapsto\mathcal M_F,
 \qquad
 \mathcal M_F(u,\eta)_{s,t}
 =F(u-Dt,\eta+\mathsf B^{-1}s),
\end{equation*}
is a bijection between scalar quasiperiodic functions and Zak-compatible
matrix fields.  Its inverse is
\begin{equation*}
 A\longmapsto F_A,
 \qquad F_A(u,\eta):=A(u,\eta)_{0,0}.
\end{equation*}
Every compatible field satisfies
\begin{equation}
 A(u,\eta)_{s,t}
 =F_A(u-Dt,\eta+\mathsf B^{-1}s),
 \qquad s\in\mathcal I_{\mathsf B},\quad
 t\in\mathcal I_{\mathsf A}.
 \label{eq:entry-recovery-from-zero-entry}
\end{equation}
The correspondence preserves measurability, continuity, $C^k$ regularity,
and smoothness.  If the restriction of $A$ to $Q_D$ is square integrable,
then $F_A|_{[0,1)^{2d}}\in L^2$, and
\eqref{eq:inverse-scalar-zak} gives the unique underlying function in
$L^2(\mathbb R^d)$.
\end{proposition}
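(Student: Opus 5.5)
The plan is to verify the two directions of the correspondence separately and then transport the regularity and integrability statements through the explicit formulas.

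\emph{Forward direction.} For a scalar quasiperiodic $F$, the field $\mathcal M_F$ is Zak-compatible. This is exactly identity \eqref{eq:general-side-transport} of Proposition~\ref{prop:entrywise-fine-transports}, which holds for every $(u,\eta)$ at which $F$ satisfies quasiperiodicity. Moreover $\mathcal M_F(u,\eta)_{0,0}=F(u,\eta)$, because $\rho_{(0,0)}=0$. Hence $F_{\mathcal M_F}=F$, so $A\mapsto F_A$ is a left inverse of the assignment.

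\emph{Reverse direction.} Let $A$ be compatible and put $F_A=A_{0,0}$. I first prove the entry-recovery formula \eqref{eq:entry-recovery-from-zero-entry}. Fix $(s,t)$ and write $\rho_{(s,t)}=(-Dt,\mathsf B^{-1}s)$ as $\gamma_{(k,\ell)}$ with $k=-\mathsf A D t=-\mathsf B t$ and $\ell=s$. Evaluate compatibility at the point $(u,\eta)-\rho_{(s,t)}$ and read off the $(s',t')$ entry with $\mathfrak r_\ell(s')=0$, $\mathfrak c_k(t')=0$. This is cleanest in the inverted form: apply compatibility with $\gamma=-\rho_{(s,t)}$, i.e.\ $(k,\ell)=(\mathsf Bt,-s)$, at the point $(u,\eta)$.
\begin{itemize}
\item Then $\mathfrak r_{-s}(s)=0$ and $m_{-s}(s)=0$.
\item Condition \eqref{eq:column-residue-action} gives $\mathsf Bt-\mathsf Bt+\mathsf B\mathfrak c=\mathsf B\mathfrak c\in\mathsf A\mathbb Z^d$, so $\mathfrak c_{\mathsf Bt}(t)=0$ by coprimality, and then $n_{\mathsf Bt}(t)=0$.
\end{itemize}
By \eqref{eq:fine-action-definition} the $(s,t)$ entry of $A((u,\eta)-\rho_{(s,t)})$ therefore equals $A(u,\eta)_{0,0}$ with trivial phase. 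Replacing $(u,\eta)$ by $(u,\eta)+\rho_{(s,t)}$ yields $A(u,\eta)_{s,t}=F_A((u,\eta)+\rho_{(s,t)})$, which is \eqref{eq:entry-recovery-from-zero-entry}. I expect this index bookkeeping with the residue maps and the phase to be the only genuinely delicate step. The safer route, which I will use, is the direct substitution into \eqref{eq:fine-action-definition} just described, rather than relying on the cocycle law.

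\emph{Quasiperiodicity of $F_A$ and bijectivity.} Next I check that $F_A$ is scalar quasiperiodic. Take $(k,\ell)=(\mathsf An,\mathsf Bm)$. Then
\begin{itemize}
\item $\mathfrak r_\ell(0)=0$ and $m_\ell(0)=m$;
\item $\mathfrak c_k(0)=0$ and $n_k(0)=n$.
\end{itemize}
So the $(0,0)$ entry of compatibility reads $F_A(u+n,\eta+m)=e^{2\pi i n\cdot\eta}F_A(u,\eta)$. Combined with \eqref{eq:entry-recovery-from-zero-entry}, which says $A=\mathcal M_{F_A}$, this shows the two maps are mutually inverse. The assignment is therefore a bijection.

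\emph{Regularity and integrability.} Each entry of $\mathcal M_F$ is $F$ composed with a translation, and $F_A$ is a single entry of $A$. Hence measurability, continuity, $C^k$ regularity and smoothness pass in both directions. For the $L^2$ claim, note that the translates $Q_D+\rho_{(s,t)}$ over all $(s,t)$ are, up to integer translations, a tiling of $[0,1)^{2d}$, since the $\rho_{(s,t)}$ represent $G_D$. Under integer translations $|F_A|$ is invariant by quasiperiodicity, because the phase has modulus one. It follows that
\[
\int_{[0,1)^{2d}}|F_A|^2=\int_{Q_D}\|A\|_{\mathrm F}^2<\infty .
\]
Finally, \eqref{eq:inverse-scalar-zak} and Zak unitarity give the unique $f\in L^2(\mathbb R^d)$ with $Zf=F_A$.
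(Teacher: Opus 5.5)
Your proof is correct and follows the paper's argument. Compatibility of $\mathcal M_F$ comes from the side-transport identity, quasiperiodicity of $F_A$ comes from $\gamma_{(\mathsf An,\mathsf Bm)}$, and entry recovery is a single direct substitution into the action formula. You use $\gamma=-\rho_{(s,t)}$ and read the $(s,t)$ entry, whereas the paper uses $\gamma=\rho_{(s,t)}=\gamma_{(-\mathsf Bt,s)}$ and reads the $(0,0)$ entry, which is the same computation. Your tiling argument for the $L^2$ claim simply spells out what the paper cites from the unitarity identity.
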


\begin{proof}
By \Cref{prop:entrywise-fine-transports}, every scalar quasiperiodic function
gives a compatible matrix field.  Conversely, let $A$ be compatible and put
$F_A(u,\eta)=A(u,\eta)_{0,0}$.

For an integer translation $(n,m)$, use
$\gamma_{(\mathsf A n,\mathsf B m)}=(n,m)$.  In the output entry $(0,0)$,
the residue indices remain zero and the integer correction is exactly
$(n,m)$.  Compatibility therefore gives
\[
 F_A(u+n,\eta+m)=e^{2\pi i n\cdot\eta}F_A(u,\eta),
\]
so $F_A$ is scalar quasiperiodic.

Next fix $s\in\mathcal I_{\mathsf B}$ and
$t\in\mathcal I_{\mathsf A}$, and use
$\rho_{(s,t)}=\gamma_{(-\mathsf Bt,s)}$.  For the output entry $(0,0)$, the
new row and column residues are $s$ and $t$, and both integer corrections
vanish.  Hence
\[
 F_A(u-Dt,\eta+\mathsf B^{-1}s)
 =A(u-Dt,\eta+\mathsf B^{-1}s)_{0,0}
 =A(u,\eta)_{s,t},
\]
which proves \eqref{eq:entry-recovery-from-zero-entry}.  Thus
$A=\mathcal M_{F_A}$.

The formulas use only fixed translations and smooth unimodular factors, so
the stated regularity classes are preserved.  The $L^2$ assertion follows
from \eqref{eq:vector-matrix-zak-unitarity} and
\eqref{eq:inverse-scalar-zak}.
\end{proof}

The smooth-to-Schwartz consequence, including Parseval normalization,
is used in \Cref{sec:smooth-branch}.

\subsection{Zak characterization of modulation spaces}

The construction and obstruction arguments below use weighted Fourier
coefficients of finitely many ordinary periodic cross-Zak products.

\begin{lemma}\label{lem:finite-Zak-family}
There exist finitely many functions $\psi_1,\ldots,\psi_L\in\cS(\R^d)$ such that
\begin{equation*}
   \Theta_\Psi(u,\eta)
   =\sum_{\ell=1}^L\abs{Z\psi_\ell(u,\eta)}^2
\end{equation*}
satisfies
\begin{equation}\label{eq:Theta-bounds}
   0<c_\Psi\leq\Theta_\Psi(u,\eta)\leq C_\Psi<\infty
   \qquad\text{on }\T^{2d}.
\end{equation}
\end{lemma}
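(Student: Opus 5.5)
The plan is to start from a single Schwartz function and remove the unavoidable zero of its Zak transform by adding finitely many time--frequency shifts. Take the Gaussian $\phi(t)=e^{-\pi|t|^2}$. Its Zak transform factors coordinatewise into one-dimensional Jacobi theta-type functions. Each factor is continuous and quasiperiodic, and vanishes exactly at the half-integer point $(1/2,1/2)$ modulo $\mathbb Z^2$. Hence $Z\phi$ vanishes exactly on the set of points of $\mathbb R^{2d}$ having at least one coordinate pair $(u_i,\eta_i)\equiv(1/2,1/2)$. Some zero is in any case unavoidable, by the continuous common-zero theorem behind \Cref{thm:vmo-essential-common-zero} with $M=1\le q=d$.

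The key observation is that $|ZF|$ is $\mathbb Z^{2d}$-periodic for every scalar quasiperiodic $F$. The quasiperiodicity factor $e^{2\pi i n\cdot\eta}$ is unimodular, so $\Theta_\Psi$ is a well-defined function on $\mathbb T^{2d}$. In addition, the Zak transform intertwines time--frequency shifts with translations up to a unimodular factor:
\[
 |Z(M_\beta T_\alpha\psi)(u,\eta)|=|Z\psi(u-\alpha,\eta-\beta)|,
\]
which one checks directly from the defining series. I will take the family
\[
 \psi_{(\alpha,\beta)}=M_\beta T_\alpha\phi,
 \qquad (\alpha,\beta)\in\{0,1/4\}^d\times\{0,1/4\}^d.
\]
Each of these is Schwartz, and $L=4^d$.

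With this choice, $\Theta_\Psi(u,\eta)=\sum_{(\alpha,\beta)}|Z\phi(u-\alpha,\eta-\beta)|^2$. This is continuous and periodic, hence bounded above on the compact torus; that gives $C_\Psi$. For the lower bound, compactness and continuity reduce the claim to showing that $\Theta_\Psi$ has no zero. Fix $(u,\eta)$. By the product structure it suffices to choose, for each coordinate $i$, a shift $(\alpha_i,\beta_i)\in\{0,1/4\}^2$ with $(u_i-\alpha_i,\eta_i-\beta_i)\not\equiv(1/2,1/2)$. This is possible because the two points $(1/2,1/2)$ and $(3/4,3/4)$ are distinct modulo $\mathbb Z^2$, so at most one of the candidate shifts $(0,0)$ and $(1/4,1/4)$ can fail. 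The corresponding term of the sum is then nonzero, so $\Theta_\Psi>0$ everywhere and $c_\Psi>0$ by compactness.

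The only step requiring care is the zero set of the one-dimensional Gaussian Zak transform. Here I will cite the classical fact, found in \cite[Chapter~8]{Grochenig2001}, that $Z\phi$ in dimension one has exactly one zero per unit cell, located at $(1/2,1/2)$. I expect this to be the main obstacle. If one prefers to avoid the explicit zero set, there is a softer alternative. The functions $Z\psi$ with $\psi\in\mathcal S$ have no common zero: a common zero at $z_0$ would give a nonzero tempered functional $f\mapsto Zf(z_0)$ vanishing on the dense subspace $\mathcal S$, which is impossible since that functional is nonzero on Schwartz functions by the inversion formula \eqref{eq:inverse-scalar-zak}. Continuity then gives a neighborhood of $z_0$ on which one fixed $Z\psi$ stays away from zero. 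Compactness of $\mathbb T^{2d}$ extracts a finite subcover, and the corresponding finitely many $\psi_\ell$ again yield the bounds \eqref{eq:Theta-bounds}.
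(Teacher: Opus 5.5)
Your proof is correct, and your main route differs from the paper's. The paper dispatches this lemma in one sentence: it follows from a simple covering argument on the ordinary Zak torus. That is essentially your fallback, and it can be made more direct. For $z_0=(u_0,\eta_0)$, take $\psi\in\mathcal S(\mathbb R^d)$ supported in a ball of radius less than $1/2$ about $u_0$ with $\psi(u_0)\ne0$; then $Z\psi(u_0,\eta_0)=\psi(u_0)\neq0$, so your detour through tempered functionals and a ``dense subspace'' is unnecessary. Periodicity and continuity of $|Z\psi|$ then give a neighborhood on which $Z\psi$ stays away from zero, and compactness of $\T^{2d}$ leaves finitely many windows.

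Your main argument is an explicit construction instead. It rests on three ingredients:
\begin{itemize}
\item the product structure of the Gaussian's Zak transform;
\item the intertwining identity $|Z(M_\beta T_\alpha\psi)(u,\eta)|=|Z\psi(u-\alpha,\eta-\beta)|$;
\item the classical theta-function fact that the one-dimensional Gaussian Zak transform vanishes only at $(1/2,1/2)$ modulo $\Z^2$.
\end{itemize}
Only the per-coordinate shifts $(0,0)$ and $(1/4,1/4)$ are actually used, so the $2^d$ windows $M_\alpha T_\alpha\phi$ with $\alpha\in\{0,1/4\}^d$ already suffice; the full $4^d$ family is harmless overkill.

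Each route buys something different. Yours gives a concrete family with an explicit count of Gaussian-type windows, which are far more regular than merely Schwartz, at the price of importing the zero set of the theta function. The covering argument needs no special-function input, but gives no control over $L$ or over the particular windows.
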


The lemma follows from a simple covering argument on the ordinary Zak torus.

For $f\in L^2(\mathbb R^d)$ and $\psi\in\mathcal S(\mathbb R^d)$, put
\begin{equation*}
 G_{\psi,f}(u,\eta)
 =Zf(u,\eta)\,\overline{Z\psi(u,\eta)}.
\end{equation*}
The two quasiperiodic factors have reciprocal phases, so this bivariate
function is $\mathbb Z^d\times\mathbb Z^d$-periodic.  Direct insertion of the
two Zak series into the Fourier integral gives the standard cross-Zak
coefficient identity; compare Janssen's Zak-transform characterization and
Tinaztepe--Heil~\cite{JanssenS0,TinaztepeHeil2012}:
\begin{equation*}
 \widehat{G_{\psi,f}}(n,m)
 =\langle f,M_nT_{-m}\psi\rangle,
 \qquad (n,m)\in\mathbb Z^d\times\mathbb Z^d.
\end{equation*}
As noted above,
\begin{equation*}
 G_{\psi,f}(u+n,\eta+m)=G_{\psi,f}(u,\eta),
 \qquad (n,m)\in\mathbb Z^d\times\mathbb Z^d.
\end{equation*}
The computation is first justified for Schwartz functions and then extended
to $f\in L^2$ by density, since $Z\psi$ is smooth and bounded.

\begin{proposition}\label{prop:cross-Zak-characterization}
Let \(\Psi=(\psi_1,\ldots,\psi_L)\) be the family from
\Cref{lem:finite-Zak-family}. For \(f\in L^2(\R^d)\), define
\[
  G_{\psi_\ell,f}(u,\eta)
  =Zf(u,\eta)\,\overline{Z\psi_\ell(u,\eta)},
  \qquad \ell=1,\ldots,L.
\]
Then, for every \(1\le p\le\infty\) and every \(s\in\R\),
\begin{equation*}
  \norm{f}_{M_s^p(\R^d)}
  \asymp
  \left\|\bigl(\angles{k}^s\widehat{G_{\psi_\ell,f}}(k)\bigr)_{\ell,k}\right\|_{\ell^p(\{1,\ldots,L\}\times\Z^{2d})},
\end{equation*}
where both sides may be infinite. Thus, for an $L^2$ window, finiteness of
this cross-Zak coefficient expression is equivalent to membership in
$M_s^p$. The same equivalence holds after a fixed finite smooth partition
of unity on $\T^{2d}$.
\end{proposition}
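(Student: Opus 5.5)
The plan is to reduce the statement to the Gabor coefficient characterization of \Cref{thm:weighted-Gabor-coeff}, applied with a suitable finite multiwindow integer-lattice frame built from $\Psi$. The cross-Zak identity $\widehat{G_{\psi_\ell,f}}(n,m)=\langle f,M_nT_{-m}\psi_\ell\rangle$ already identifies the right-hand side with the weighted $\ell^p$ norm of $\Ana_\Psi(f)$. It then remains to check that $\Psi$, together with some Schwartz dual family $\widetilde\Psi$, forms dual multiwindow Gabor frames for $\Z^d\times\Z^d$ with atoms $M_nT_{-m}$.

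\emph{Frame property.} For the frame property I will use the critical-density Zak criterion: for integer lattices one has
\[
 \sum_\ell\sum_{n,m}|\langle f,M_nT_{-m}\psi_\ell\rangle|^2
 =\sum_\ell\|G_{\psi_\ell,f}\|_{L^2(\T^{2d})}^2
 =\int_{\T^{2d}}|Zf|^2\,\Theta_\Psi .
\]
This follows from Parseval on the torus applied to each periodic $G_{\psi_\ell,f}$. By \eqref{eq:Theta-bounds} and Zak unitarity, the frame bounds are $c_\Psi$ and $C_\Psi$.

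\emph{Schwartz dual windows.} I take $\widetilde\psi_\ell:=Z^{-1}\bigl(Z\psi_\ell/\Theta_\Psi\bigr)$. Since $\Theta_\Psi$ is smooth, periodic and bounded below, $Z\psi_\ell/\Theta_\Psi$ is a smooth quasiperiodic function. By the Schwartz--smooth quasiperiodic bijection noted after \eqref{eq:inverse-scalar-zak}, $\widetilde\psi_\ell\in\cS$. The reconstruction identity $\sum_\ell\overline{Z\psi_\ell}\,Z\widetilde\psi_\ell=1$ yields
\[
 f=\sum_{\ell,n,m}\langle f,M_nT_{-m}\psi_\ell\rangle M_nT_{-m}\widetilde\psi_\ell .
\]
To verify this, note that in Zak coordinates the synthesis of coefficients $\widehat{G}(n,m)$ multiplies the periodic function $G$ by $Z\widetilde\psi_\ell$. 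Equivalently, the frame operator is multiplication by $\Theta_\Psi$ on the Zak side, and $\widetilde\psi_\ell$ is the canonical dual (Gröchenig--Leinert, \Cref{prop:Schwartz-Parsevalization}, would also suffice). \Cref{thm:weighted-Gabor-coeff} then gives the equivalence for all $p$ and $s$, with both sides allowed to be infinite. The weight $\langle(n,m)\rangle$ is the same as $\langle k\rangle$ for $k=(n,m)$, and the sign convention $T_{-m}$ versus $T_m$ is absorbed because $\langle(n,-m)\rangle=\langle(n,m)\rangle$.

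\emph{Partition of unity.} The final sentence of the statement follows from \eqref{eq:localized-FL-equivalence} applied to the vector-valued periodic function $(G_{\psi_\ell,f})_\ell$. Its $\Fell{p}{s}(\T^{2d})$ norm is exactly the right-hand side.

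The only step needing care is the justification for general $f\in L^2$ when the coefficients are not a priori in $\ell^p_s$. Here I will use that the theorem is stated for all $f\in L^2$ with infinite values allowed, so no density argument on the $M^p_s$ side is needed. The cross-Zak identity holds for $L^2$ data since $G_{\psi,f}\in L^2(\T^{2d})$.
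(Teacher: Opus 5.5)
Your proposal is correct and follows the paper's own route. The paper also obtains the proposition from the cross-Zak coefficient identity $\widehat{G_{\psi_\ell,f}}(n,m)=\langle f,M_nT_{-m}\psi_\ell\rangle$ together with \Cref{thm:weighted-Gabor-coeff}, using the lower bound on $\Theta_\Psi$ for the frame property and the Schwartz duals $Z\widetilde\psi_\ell=\Theta_\Psi^{-1}Z\psi_\ell$ (it additionally cites this as a special case of the Tinaztepe--Heil Zak characterization), so your write-up just supplies the routine verifications the paper leaves implicit.
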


The positive lower bound for $\Theta_\Psi$ makes the corresponding
integer-lattice multiwindow system a Gabor frame, and its dual windows satisfy
\[
 Z\widetilde\psi_\ell=\Theta_\Psi^{-1}Z\psi_\ell;
\]
hence they are Schwartz. Proposition~\ref{prop:cross-Zak-characterization}
is the isotropic polynomial-weight specialization of the Zak-transform
characterization in Tinaztepe--Heil~\cite[Theorem~2.6]{TinaztepeHeil2012},
which is there attributed to Janssen; it also follows immediately from the
coefficient identity above and Theorem~\ref{thm:weighted-Gabor-coeff}. For
$p=\infty$, the right-hand side is the weighted supremum of the Gabor
coefficients.

Writing $\mathcal C_\psi f:=G_{\psi,f}$, finiteness of the family gives
equivalently
\begin{equation*}
 \|f\|_{M_{s_0}^p}
 \asymp
 \sum_{\ell=1}^L
 \|\mathcal C_{\psi_\ell}f\|_{\Fell{p}{s_0}(\mathbb T^{2d})},
\end{equation*}
with the finite sum replaced by a maximum when $p=\infty$.

\begin{corollary}
\label{prop:local-zak}
Let $1\le p\le\infty$, $s_0\in\mathbb R$, and let $\Psi$ be the family
from \Cref{lem:finite-Zak-family}. For
$r\in\mathcal I_{\mathsf B}$ and $t\in\mathcal I_{\mathsf A}$ define
\begin{equation*}
 \widetilde{\mathcal C}_{\ell,r,t}(u,\eta)
 :=\mathcal C_{\psi_\ell}f(u-Dt,\eta+\mathsf B^{-1}r),
 \qquad (u,\eta)\in\R^{2d}.
\end{equation*}
These functions are ordinary periodic. If $\{\chi_\alpha\}$ is a finite
smooth partition of unity on $\R^{2d}/\Gamma_D$, lifted periodically to
$\R^{2d}$, then, with all Fourier--Lebesgue norms taken on $\mathbb T^{2d}$,
\[
 \|f\|_{M_{s_0}^p}
 \asymp\sum_{\ell,r,t}\|\widetilde{\mathcal C}_{\ell,r,t}\|_{\Fell{p}{s_0}}
 \asymp\sum_{\alpha,\ell,r,t}
   \|\chi_\alpha\widetilde{\mathcal C}_{\ell,r,t}\|_{\Fell{p}{s_0}}.
\]
Here $\ell$, $r$, and $t$ range over their finite index sets displayed
above.  For $p=\infty$, the finite sums may be replaced by maxima.
\end{corollary}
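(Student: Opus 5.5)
The plan is to combine the cross-Zak characterization of \Cref{prop:cross-Zak-characterization} with the observation that the translated functions $\widetilde{\mathcal C}_{\ell,r,t}$ are ordinary periodic and differ from $\mathcal C_{\psi_\ell}f$ only by a fixed translation on $\mathbb T^{2d}$.

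First I check periodicity. Both $Zf$ and $Z\psi_\ell$ satisfy \eqref{eq:zak-quasiperiodicity} with the same multiplier. Their conjugate product $\mathcal C_{\psi_\ell}f$ is therefore $\mathbb Z^{2d}$-periodic, and a translate of a periodic function by $(-Dt,\mathsf B^{-1}r)$ remains periodic.

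Second, translation by a fixed vector $\tau$ multiplies the Fourier coefficients by the unimodular factor $e^{2\pi i k\cdot\tau}$. Hence
\[
 \|\widetilde{\mathcal C}_{\ell,r,t}\|_{\Fell{p}{s_0}}
 =\|\mathcal C_{\psi_\ell}f\|_{\Fell{p}{s_0}},
\]
exactly, for every $p$ and $s_0$. Summing over the finite index set $\mathcal I_{\mathsf B}\times\mathcal I_{\mathsf A}$ multiplies by $NR$. The first equivalence then follows from the displayed consequence of \Cref{prop:cross-Zak-characterization}: the upper bound from the identity above, and the lower bound by keeping the term $r=0$, $t=0$. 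For $p=\infty$, replacing sums by maxima changes nothing, since finite sums and maxima are comparable.

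Third, I handle the partition of unity. The functions $\chi_\alpha$ are $\Gamma_D$-periodic, hence in particular smooth and $\mathbb Z^{2d}$-periodic, so they are a finite smooth partition of unity on $\mathbb T^{2d}$. The support condition is immaterial for the estimate. I then apply \eqref{eq:localized-FL-equivalence} to each $\widetilde{\mathcal C}_{\ell,r,t}$.
\begin{itemize}
\item The upper bound $\|\chi_\alpha H\|_{\Fell{p}{s_0}}\lesssim\|H\|_{\Fell{p}{s_0}}$ comes from the boundedness of multiplication by smooth periodic functions, via weighted Young.
\item The reverse bound follows from $H=\sum_\alpha\chi_\alpha H$ and the triangle inequality.
\end{itemize}
Since $\ell^p$ norms over a finite family are comparable to sums, the second equivalence follows.

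None of these steps is a genuine obstacle. The only point needing care is justifying that both sides may be infinite simultaneously. This is handled because every estimate above is a two-sided inequality between extended nonnegative quantities: translation is an isometry, and multiplication by $\chi_\alpha$ is bounded with a finite constant, so infinite values are preserved in both directions.
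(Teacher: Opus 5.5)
Your proposal is correct and follows the paper's proof. Both rest on three facts: $\Z^{2d}$-periodicity of the conjugate product, exact invariance of the $\Fell{p}{s_0}$ norm under the fixed translations $(-Dt,\mathsf B^{-1}r)$ combined with the cross-Zak characterization, and the smooth-multiplier estimate \eqref{eq:localized-FL-equivalence} together with $H=\sum_\alpha\chi_\alpha H$ for the localized equivalence. Your added remarks make explicit what the paper leaves implicit: $\Z^{2d}\subset\Gamma_D$ makes the lifted $\chi_\alpha$ a smooth partition of unity on $\mathbb T^{2d}$, and all the estimates hold between extended nonnegative values.
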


\begin{proof}
The finite family and its positive lower bound are supplied by
\Cref{lem:finite-Zak-family}, and
\Cref{prop:cross-Zak-characterization} gives the scalar cross-Zak norm
characterization.  Each translated function is a fixed translation on the
ordinary Zak torus, hence has the same weighted Fourier--Lebesgue norm.
The second equivalence follows from the smooth-multiplier estimate
\eqref{eq:localized-FL-equivalence}, the identity
$H=\sum_\alpha\chi_\alpha H$, and finiteness of all index sets.
\end{proof}

The same finite family gives the pointwise reconstruction
\begin{equation}
 Zf=\Theta_\Psi^{-1}
     \sum_{\ell=1}^L
       \bigl(Zf\,\overline{Z\psi_\ell}\bigr)Z\psi_\ell.
 \label{eq:cross-zak-reconstruction}
\end{equation}
The cross-products $G_{\psi_\ell,f}$ and $\Theta_\Psi$ are ordinary
periodic, whereas $b_\ell:=\Theta_\Psi^{-1}Z\psi_\ell$ is smooth and
has ordinary Zak quasiperiodicity.  On each buffered coordinate chart,
its Euclidean representative has bounded derivatives of every order.
Choose a smooth cutoff $\chi$ compactly supported in that chart and a
second such cutoff $\widetilde\chi$ equal to one near $\supp\chi$.
The compactly supported lifts of $\widetilde\chi b_\ell$ and
$\chi G_{\psi_\ell,f}$ can be periodized on the ordinary torus.  The
first is then a smooth periodic multiplier, and their product represents
$\chi b_\ell G_{\psi_\ell,f}$.  Applying the smooth-multiplier estimate
to these localized representatives justifies Fourier--Lebesgue transfer
through \eqref{eq:cross-zak-reconstruction}.  A fixed finite cover gives
uniform constants.  For local VMO and oscillation, use the same chart
representatives.  In particular, if $b\in C^1$, $h\in L^\infty$, and
$Q$ is a cube of side at most $\varepsilon$, then
\begin{equation}
 \operatorname{osc}_Q(bh)
 \le2\|b\|_{L^\infty(Q)}\operatorname{osc}_Q(h)
    +2\varepsilon\|\nabla b\|_{L^\infty(Q)}\|h\|_{L^\infty(Q)}.
 \label{eq:smooth-multiplier-oscillation}
\end{equation}
Thus weighted Fourier--Lebesgue bounds, local VMO, and the one-scale
oscillation estimates used below pass from the localized cross-Zak entries
to the localized Zak entries through \eqref{eq:cross-zak-reconstruction}.
The converse uses the same cutoff procedure with the smooth local factors
$\overline{Z\psi_\ell}$.

\part{Zak obstructions and critical nonexistence}\label{part:zak-obstructions}

\section{Ordinary-Zak common zeros and the arithmetic rank gap}
\label{sec:rectangular-common-zero}\label{sec:rational-zak-gap}

This section proves the arithmetic gap for Zak transforms announced in \Cref{subsec:intro-zak-gap}.  It begins with ordinary Zak quasiperiodicity and the continuous common-zero theorem, passes to Zak-compatible smoothing and locally VMO data, and ends with the rectangular rank gap and its rational-matrix reduction.

Throughout this section, we will often write $C$ for a generic constant and reuse this notation even when its precise value changes. 

Recall that, for a finite-dimensional complex vector space $V$, a measurable
function $F:\mathbb R^q\times\mathbb R^q\to V$ is ordinary
Zak-quasiperiodic if
\begin{equation}
 F(u+n,v+m)=e^{2\pi i\ip{n}{v}}F(u,v),
 \qquad n,m\in\mathbb Z^q,
 \label{eq:ordinary-zak-rule-vmo}
\end{equation}
almost everywhere.  Values outside the ordinary Zak fundamental cube
$\mathcal Q_q^{\rm Zak}$ are recovered from values inside that cube by
\eqref{eq:ordinary-zak-rule-vmo}.

We use the following continuous theorem as the only topological input in the
VMO argument.  It is Theorem~1.3 of de Dios Pont--Liehr--Taylor,
arXiv:2606.26052v1, dated 24 June 2026
\cite{deDiosLiehrTaylor2026}.

\begin{theorem}[de Dios Pont--Liehr--Taylor~\protect\cite{deDiosLiehrTaylor2026}]
\label{thm:ddlt-common-zero}
Let $G:\mathbb R^q\times\mathbb R^q\to\mathbb C^M$ be continuous and satisfy
\eqref{eq:ordinary-zak-rule-vmo}.  If $M\le q$, then $G$ has a common zero.
If $M>q$, there are smooth such maps with no common zero.
\end{theorem}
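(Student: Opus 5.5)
The plan is to read Zak quasiperiodicity as a statement about sections of a line bundle and to obtain the zero from a degree (Euler class) count. Define the complex line bundle $\mathcal L_q\to\T^{2q}$ as the quotient of $\R^{2q}\times\C$ by
\[
 (u,v,z)\sim\bigl(u+n,\,v+m,\,e^{2\pi i\ip{n}{v}}z\bigr),\qquad n,m\in\Z^q .
\]
The cocycle law \eqref{eq:scalar-zak-cocycle-law} makes this well defined. A continuous map $G$ satisfying \eqref{eq:ordinary-zak-rule-vmo} is then the same thing as a continuous section of $E=\mathcal L_q^{\oplus M}$, and a common zero of $G$ is a zero of that section. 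A transition-function computation should give $c_1(\mathcal L_q)=\sum_j du_j\wedge dv_j$, up to a fixed sign.

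\emph{Step 1: reduce to $M=q$.} If $M<q$, restrict $G$ to the subtorus $\{u_j=v_j=0 \text{ for } j>M\}$. On it the phase $e^{2\pi i\ip{n}{v}}$ involves only the first $M$ coordinates, so the restriction is ordinary Zak-quasiperiodic in dimension $M$ with $M$ components. Any zero of the restriction is a zero of $G$.

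\emph{Step 2: the case $M=q$.} The top Chern class is
\[
 c_q(E)=c_1(\mathcal L_q)^q=\pm\, q!\,du_1\wedge dv_1\wedge\cdots\wedge du_q\wedge dv_q ,
\]
which is nonzero in $H^{2q}(\T^{2q};\Z)$. A nowhere-vanishing section of a rank-$q$ complex bundle over a $2q$-dimensional closed oriented manifold forces its Euler class to vanish, which is a contradiction. To keep the argument elementary, as the paper intends, I would instead proceed directly.
\begin{itemize}
\item Approximate $G$ uniformly by a smooth quasiperiodic map, for instance by convolution in $(u,v)$ combined with the Zak-compatible smoothing, which preserves \eqref{eq:ordinary-zak-rule-vmo} and keeps $|G|$ bounded below if $G$ had no zero.
\item Perturb the smooth map to be transverse to $0$.
\item Count its zeros with orientation signs via the degree of $G/|G|:\partial[0,1]^{2q}\to S^{2q-1}$.
\item Compute that degree using the quasiperiodic gluing on opposite faces. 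This is where coordinate derivatives and a Pfaffian of the matrix of phase derivatives $\partial_{v}\ip{n}{v}$ enter, producing $\pm q!$.
\end{itemize}
For $q=1$ this is just the classical fact that the winding of a Zak transform around $\partial[0,1]^2$ equals $1$. In general I would check the count on the model section $\prod_j Z\phi(u_j,v_j)$ built from the one-dimensional Gaussian, placed in each component with shifted arguments. The degree is a homotopy invariant among nonvanishing quasiperiodic maps on the boundary, since the gluing rule is linear, so it suffices to compute it for one model. \textbf{I expect this boundary-degree computation to be the main obstacle}, specifically tracking the orientation signs from the $2q$ pairs of glued faces.

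\emph{Converse for $M>q$.} In one dimension, $Z\phi$ vanishes only at $(\tfrac12,\tfrac12)$ modulo $\Z^2$, and a time--frequency shifted Gaussian gives a second smooth section of $\mathcal L_1$ whose only zero is elsewhere. Tensor products of these, one factor per coordinate pair, give $2^q$ smooth sections of $\mathcal L_q$ with no common zero, so they span every fibre. Now take $q+1$ generic linear combinations of these sections. The resulting map lands in a space of real dimension $2(q+1)>2q$. By a parametric transversality (Sard-type) dimension count, the combinations that have a common zero on the compact $\T^{2q}$ form a null set of coefficient choices. Hence some smooth quasiperiodic map $\R^{2q}\to\C^{q+1}$ has no common zero, and padding with zero components handles every $M>q$.
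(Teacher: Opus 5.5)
\paragraph{The Euler-class argument.} Your Euler-class argument in Step~2 is already a complete proof of the case $M\le q$, granted standard facts:
\begin{itemize}
\item the Whitney sum formula;
\item the top Chern class equals the Euler class and vanishes when a nowhere-zero continuous section exists;
\item $c_1(\mathcal L_q)=\pm\sum_j du_j\wedge dv_j$. This holds because $\mathcal L_q$ is the external tensor product of copies of $\mathcal L_1$, and $c_1(\mathcal L_1)$ is read off from the single simple zero of $Z\phi$.
\end{itemize}
It even applies to continuous $G$ without smoothing.

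This is a genuinely different route from the paper, which deliberately avoids bundles, Chern classes, Sard's theorem and orientations. The paper first applies covariant smoothing and normalizes $U=S/\|S\|_2$. It then forms $a_r=\frac1{2\pi i}U^*\partial_rU$ and $F_{rs}=\partial_ra_s-\partial_sa_r$. It shows pointwise that $F=W^T\Omega W$ with $W$ having only $2N-2$ rows, so $\Pf F\equiv0$. Separately, it proves $\int_{[0,1]^{2N}}\Pf F=1$. This uses an explicit divergence formula $\sum_r\partial_rV_r=\Pf F$, induction on $N$, and the jumps $a_{y_k}(x+e_j,y)-a_{y_k}(x,y)=\delta_{jk}$.

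This is a coordinate Chern--Weil version of your argument:
\begin{itemize}
\item $a$ is a connection on $\mathcal L$ induced by $U$;
\item the rank bound says its curvature is pulled back along $[U]:\T^{2N}\to\mathbb{C}\mathrm{P}^{N-1}$;
\item $\int\Pf F=1$ is $c_1(\mathcal L)^N[\T^{2N}]/N!$, which is your $q!$ after the Pfaffian normalization.
\end{itemize}
Your route is short and conceptual; the paper's is self-contained.

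\paragraph{The elementary route.} The \emph{elementary} route you say you would actually follow has a real gap. The step is ``the degree is a homotopy invariant among nonvanishing quasiperiodic maps on the boundary, since the gluing rule is linear, so it suffices to compute it for one model.'' Linearity only shows that $(1-t)G+tG_{\rm model}$ still obeys the gluing rule. It can vanish on $\partial[0,1]^{2q}$. In fact the admissible boundary maps do not form one homotopy class: the identified boundary is the $(2q-1)$-skeleton of $\T^{2q}$. Already for $q=1$, $G$ and $e^{2\pi iu}G$ are both admissible but wind differently along the closed edge $\{v=0\}$, so no admissible homotopy joins them.

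The boundary degree is nevertheless the same for all such maps. Proving this needs one of the following:
\begin{itemize}
\item the obstruction-theoretic fact that the top cell of the torus has zero cellular boundary (the Euler-class statement again);
\item a transversality/cobordism argument on $\T^{2q}\times[0,1]$.
\end{itemize}
Either is exactly the machinery you meant to avoid; your ``perturb to be transverse'' step is Sard as well. The model count, that all $q!$ zeros carry the same local index, is also not carried out. A Pfaffian of the phase derivatives $\partial_v\ip{n}{v}$ does not by itself supply it. The paper sidesteps both issues. The jumps of the $a_r$ are dictated by the Zak rule alone, identically for every $U$, so $\int\Pf F=1$ is computed once, with no model, no homotopy and no zero counting.

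\paragraph{The converse.} Your converse is correct but differs from the paper's. The paper shifts one smooth quasiperiodic $h$ with unique zero $(\tfrac12,\tfrac12)$ to $h_{\beta}$ and sets $s_r=\prod_{j=1}^q h_{\beta_r}(u_j,v_j)$, with $\beta_r=r/(q+1)$ and $r=0,\ldots,q$. Each coordinate pair can annihilate at most one $s_r$, so no genericity is needed. Your generic combination of the $2^q$ tensor products also works. The incidence set $\{(z,C):C\sigma(z)=0\}$ has real dimension $2(q+1)2^q-2$, so its projection to coefficient space is null. However, applying the same product trick to $q+1$ shifted Gaussians with distinct zeros gives the map explicitly.
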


Sections~\ref{app:pfaffian-common-zero}--\ref{app:boundary-pfaffian} give an independent proof from first principles.  It uses ordinary Zak identities, coordinate derivatives, and a Pfaffian calculation rather than the algebraic-topological machinery used in the original proof.

For measurable functions the natural conclusion is an essential common zero.
We say that $F=(F_1,\ldots,F_M)$ has an \emph{essential common zero} if
\begin{equation*}
 \essinf_{z\in\mathcal Q_q^{\rm Zak}}|F(z)|=0.
\end{equation*}
Equivalently, for every $\delta>0$,
\begin{equation}
 \left|\left\{z\in\mathcal Q_q^{\rm Zak}:
   |F_1(z)|^2+\cdots+|F_M(z)|^2<\delta^2\right\}\right|>0.
 \label{eq:essential-common-zero-positive-measure}
\end{equation}

We prove \Cref{thm:vmo-essential-common-zero} by an explicit ordinary-Zak
smoothing operator.  Choose $\kappa\in C_c^\infty(\mathbb R^q)$ with
\[
 \kappa\ge0,
 \qquad
 \operatorname{supp}\kappa\subset B(0,1/4),
 \qquad
 \int_{\mathbb R^q}\kappa=1,
\]
and put $\kappa_\varepsilon(x)=\varepsilon^{-q}\kappa(x/\varepsilon)$.
For $F\in L^\infty_{\rm loc}(\mathbb R^{2q};\mathbb C^M)$ define
\begin{equation}
 F_\varepsilon(u,v)
 =\iint
   \kappa_\varepsilon(x)\kappa_\varepsilon(y)
   e^{2\pi i u\cdot y}F(u-x,v-y)\,dx\,dy.
 \label{eq:ordinary-zak-compatible-smoothing}
\end{equation}
The factor $e^{2\pi i u\cdot y}$ is the ordinary Zak phase correction.  It
compensates for the spatial Zak phase when the function is averaged in the
frequency variable.

\begin{lemma}
\label{lem:ordinary-zak-smoothing-rule}
The function $F_\varepsilon$ is smooth.  If $F$ satisfies
\eqref{eq:ordinary-zak-rule-vmo}, then
\[
 F_\varepsilon(u+n,v+m)
 =e^{2\pi i n\cdot v}F_\varepsilon(u,v),
 \qquad n,m\in\mathbb Z^q.
\]
\end{lemma}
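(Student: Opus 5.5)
Smoothness is settled first. Since $\kappa_\varepsilon\otimes\kappa_\varepsilon$ is a compactly supported smooth kernel, we substitute $x'=u-x$, $y'=v-y$ and rewrite
\[
 F_\varepsilon(u,v)=\iint \kappa_\varepsilon(u-x')\kappa_\varepsilon(v-y')e^{2\pi i u\cdot(v-y')}F(x',y')\,dx'\,dy'.
\]
The integrand is smooth in $(u,v)$, and every $(u,v)$-derivative is bounded by a fixed polynomial in $(u,v)$ times $\varepsilon$-dependent constants. On compact sets the integration runs over a compact region on which $F\in L^\infty$. Differentiation under the integral sign is therefore justified by dominated convergence, and $F_\varepsilon\in C^\infty$.

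For the quasiperiodicity rule, we insert the translated point directly into \eqref{eq:ordinary-zak-compatible-smoothing}:
\[
 F_\varepsilon(u+n,v+m)=\iint\kappa_\varepsilon(x)\kappa_\varepsilon(y)e^{2\pi i(u+n)\cdot y}F(u-x+n,v-y+m)\,dx\,dy.
\]
Applying \eqref{eq:ordinary-zak-rule-vmo} at the point $(u-x,v-y)$ gives
\[
 F(u-x+n,v-y+m)=e^{2\pi i n\cdot(v-y)}F(u-x,v-y).
\]
This holds almost everywhere in $(x,y)$ for fixed $(u,v)$, because it is an a.e. identity on $\R^{2q}$ composed with a measure-preserving affine map. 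The phases then combine as
\[
 e^{2\pi i(u+n)\cdot y}\,e^{2\pi i n\cdot(v-y)}=e^{2\pi i u\cdot y}\,e^{2\pi i n\cdot v},
\]
since the $n\cdot y$ terms cancel. The constant factor $e^{2\pi i n\cdot v}$ comes out of the integral, and what remains is exactly $F_\varepsilon(u,v)$. Because both sides are continuous, the identity holds everywhere, not merely almost everywhere.

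The only point needing care is this phase cancellation: the correction factor $e^{2\pi i u\cdot y}$ is chosen precisely so that the $y$-dependent part of the Zak phase cancels. I expect no further obstacle. The remaining care is the a.e. justification when $F$ is only measurable, and that is handled by Fubini together with the local boundedness of $F$.
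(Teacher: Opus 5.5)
Your proposal is correct and follows essentially the same route as the paper. Smoothness comes from the change of variables $u'=u-x$, $v'=v-y$, which puts all $(u,v)$-dependence on the smooth compactly supported kernel and phase, with local boundedness of $F$ justifying differentiation under the integral; the Zak rule comes from direct substitution of the translated point, where the $n\cdot y$ phases cancel. Your a.e.\ justification via the measure-preserving affine map is a detail the paper leaves implicit, and it already gives the identity for every $(u,v)$, so the closing appeal to continuity is unnecessary.
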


\begin{proof}
With $u'=u-x$ and $v'=v-y$, formula
\eqref{eq:ordinary-zak-compatible-smoothing} becomes
\begin{equation}
 F_\varepsilon(u,v)
 =\iint
   \kappa_\varepsilon(u-u')\kappa_\varepsilon(v-v')
   e^{2\pi i u\cdot(v-v')}F(u',v')\,du'\,dv'.
 \label{eq:ordinary-zak-kernel-form}
\end{equation}
Every derivative in $(u,v)$ falls on the smooth compactly supported kernel
or on the smooth phase.  Since $F$ is bounded on the region of integration,
differentiation under the integral is valid to every order.

For the Zak rule, use \eqref{eq:ordinary-zak-rule-vmo}:
\begin{align*}
 F_\varepsilon(u+n,v+m)
 &=\iint \kappa_\varepsilon(x)\kappa_\varepsilon(y)
   e^{2\pi i(u+n)\cdot y}
   F(u+n-x,v+m-y)\,dx\,dy\\
 &=\iint \kappa_\varepsilon(x)\kappa_\varepsilon(y)
   e^{2\pi i u\cdot y}e^{2\pi i n\cdot y}
   e^{2\pi i n\cdot(v-y)}F(u-x,v-y)\,dx\,dy\\
 &=e^{2\pi i n\cdot v}F_\varepsilon(u,v).
\end{align*}
\end{proof}

We next isolate the mean-oscillation estimate used in the proof.

\begin{lemma}
\label{lem:ordinary-vmo-positive-averages}
Let $H\in L^\infty(K^*;\mathbb C^M)$, where $K\Subset K^*\subset\mathbb R^n$,
and assume that $H$ is locally VMO on a neighborhood of $K^*$.  For each sufficiently small $\varepsilon>0$ and $z\in K$, let
$k_{\varepsilon,z}\ge0$ satisfy
\[
 \operatorname{supp}k_{\varepsilon,z}\subset Q(z,C_0\varepsilon),\qquad
 \int k_{\varepsilon,z}(w)\,dw=1,\qquad
 0\le k_{\varepsilon,z}(w)\le C_1\varepsilon^{-n},
\]
for some constants $C_0,C_1$. If
\[
 H_{\varepsilon,z}=\int k_{\varepsilon,z}(w)H(w)\,dw,
\]
then
\begin{equation*}
 \int k_{\varepsilon,z}(w)
       \norm{H(w)-H_{\varepsilon,z}}\,dw
 \le C\eta_{H,K^*}(C\varepsilon),
\end{equation*}
uniformly for $z\in K$ for some constant $C>0$.
\end{lemma}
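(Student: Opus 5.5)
The plan is to compare the kernel average with the plain average over a single cube and absorb everything into one mean oscillation. Fix $z\in K$ and let $Q_z:=Q(z,C_0\varepsilon)$ be the cube containing $\operatorname{supp}k_{\varepsilon,z}$, of side comparable to $C_0\varepsilon$. Since $K\Subset K^*$ and $H$ is locally VMO on a neighborhood of $K^*$, for $\varepsilon$ small enough (depending only on $\dist(K,\partial K^*)$ and $C_0$) every such cube satisfies $Q_z\subset K^*$. We then write $H_{Q_z}=\fint_{Q_z}H$ for the plain cube average.

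The first step is the triangle inequality
\[
 \int k_{\varepsilon,z}(w)\norm{H(w)-H_{\varepsilon,z}}\,dw
 \le \int k_{\varepsilon,z}(w)\norm{H(w)-H_{Q_z}}\,dw
   +\norm{H_{Q_z}-H_{\varepsilon,z}} .
\]
For the first term we use the pointwise bound $k_{\varepsilon,z}\le C_1\varepsilon^{-n}$ and $|Q_z|\asymp (C_0\varepsilon)^n$. This gives
\[
 \int k_{\varepsilon,z}\norm{H-H_{Q_z}}
 \le C_1\varepsilon^{-n}|Q_z|\,\operatorname{osc}_{Q_z}(H)
 \le C\,\operatorname{osc}_{Q_z}(H).
\]
For the second term, $\int k_{\varepsilon,z}=1$ gives
\[
 H_{\varepsilon,z}-H_{Q_z}=\int k_{\varepsilon,z}(w)\bigl(H(w)-H_{Q_z}\bigr)\,dw,
\]
so this term is bounded by the same quantity. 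Altogether the left side is at most $2C\operatorname{osc}_{Q_z}(H)$.

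Finally, since $Q_z\subset K^*$ and $\ell(Q_z)\le C\varepsilon$, the definition of $\eta_{H,K^*}$ gives $\operatorname{osc}_{Q_z}(H)\le\eta_{H,K^*}(C\varepsilon)$. The constants $C_0,C_1,n$ do not depend on $z$, so the bound holds uniformly in $z\in K$. Essential boundedness of $H$ is used only to ensure that all the integrals are finite.

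The only delicate point is bookkeeping: $\eta_{H,K^*}$ is defined with $K^*$ as the compact set, so local VMO on a neighborhood of $K^*$ ensures $\eta_{H,K^*}(r)\to0$. We must also check that the cubes stay inside $K^*$, which is exactly what the restriction to sufficiently small $\varepsilon$ provides. If cubes near the edge of $K$ could leave $K^*$, we would instead enlarge to a compact $K^{**}$ inside the VMO neighborhood. No other obstacle is expected.
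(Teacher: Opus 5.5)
Your proof is correct and matches the paper's argument essentially step for step. You compare with the cube average $H_{Q_z}$ and bound $\|H_{\varepsilon,z}-H_{Q_z}\|$ by the same kernel-weighted integral, then use $k_{\varepsilon,z}\le C_1\varepsilon^{-n}$ to cancel $|Q_z|$ and reduce to $\operatorname{osc}_{Q_z}(H)$. Your explicit check that $Q_z\subset K^*$ for small $\varepsilon$, using $\dist(K,\R^n\setminus K^*)>0$, fills in a point the paper leaves implicit.
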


\begin{proof}
Let $Q_z\subset K^*$ be a cube of side comparable to $\varepsilon$ containing
the support of $k_{\varepsilon,z}$, and put $H_{Q_z}=\fint_{Q_z}H$.  Since
\[
 \norm{H_{\varepsilon,z}-H_{Q_z}}
 \le\int k_{\varepsilon,z}(w)\norm{H(w)-H_{Q_z}}\,dw,
\]
the triangle inequality gives
\[
 \int k_{\varepsilon,z}\norm{H-H_{\varepsilon,z}}
 \le 2\int k_{\varepsilon,z}\norm{H-H_{Q_z}}
 \le C\fint_{Q_z}\norm{H-H_{Q_z}}
 \le C\eta_{H,K^*}(C\varepsilon).
\]
Note that in the second inequality the assumption that $k_{\varepsilon,z}\leq C_1\varepsilon^{-n}$ is used to cancel the volume of the cube $Q_z$, which is up to a constant equal to $\varepsilon^n$. 
\end{proof}

For $z=(u,v)\in\mathcal Q_q^{\rm Zak}$ and $w=(u',v')$, set
\[
 K_\varepsilon(z,w)
 =\kappa_\varepsilon(u-u')\kappa_\varepsilon(v-v'),
 \qquad
 c(z,w)=e^{2\pi i u\cdot(v-v')}.
\]
Then \eqref{eq:ordinary-zak-kernel-form} is
\[
 F_\varepsilon(z)
 =\int K_\varepsilon(z,w)c(z,w)F(w)\,dw.
\]
The kernel is nonnegative, has total mass one, and is supported where
$|z-w|\le C\varepsilon$. By construction of the mollifier $\kappa$, we have $K_{\varepsilon}(z,w)\leq \|\kappa\|_\infty^2\varepsilon^{-2q}$, meaning that $K_\varepsilon$ satisfies the conditions of \Cref{lem:ordinary-vmo-positive-averages} with $n=2q$.

Moreover, since $u\in[0,1]^q$,
\begin{equation}
 |c(z,w)-1|\le C_q\varepsilon
 \label{eq:ordinary-zak-phase-small}
\end{equation}
whenever $K_\varepsilon(z,w)\ne0$. 

Define the uncorrected local average
\[
 \overline F_{\varepsilon,z}
 =\int K_\varepsilon(z,w)F(w)\,dw.
\]
By \eqref{eq:ordinary-zak-phase-small},
\begin{equation}
 \norm{F_\varepsilon(z)-\overline F_{\varepsilon,z}}
 \le C_q\varepsilon
      \norm{F}_{L^\infty((\mathcal Q_q^{\rm Zak})^*)}.
 \label{eq:ordinary-zak-phase-remainder}
\end{equation}

\begin{lemma}
\label{lem:ordinary-zak-cancellation}
Under the hypotheses of \Cref{thm:vmo-essential-common-zero}, define
\[
 D_\varepsilon(z)
 =\int K_\varepsilon(z,w)
       \norm{c(z,w)F(w)-F_\varepsilon(z)}\,dw.
\]
Then
\begin{equation}
 \sup_{z\in\mathcal Q_q^{\rm Zak}}D_\varepsilon(z)
 \le C\eta_{F,(\mathcal Q_q^{\rm Zak})^*}(C\varepsilon)
      +C\varepsilon
       \norm{F}_{L^\infty((\mathcal Q_q^{\rm Zak})^*)}.
 \label{eq:ordinary-zak-cancellation-bound}
\end{equation}
In particular, the left-hand side tends to zero as $\varepsilon\searrow0$.
\end{lemma}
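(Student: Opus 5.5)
We split the integrand by inserting the uncorrected local average $\overline F_{\varepsilon,z}$.
For every $z\in\mathcal Q_q^{\rm Zak}$ and every $w$ with $K_\varepsilon(z,w)\neq0$, the triangle inequality gives
\[
 \norm{c(z,w)F(w)-F_\varepsilon(z)}
 \le \abs{c(z,w)-1}\,\norm{F(w)}
   +\norm{F(w)-\overline F_{\varepsilon,z}}
   +\norm{\overline F_{\varepsilon,z}-F_\varepsilon(z)}.
\]
We integrate this against the nonnegative kernel $K_\varepsilon(z,\cdot)$, which has total mass one, and estimate the three terms separately.

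The first term is handled by the phase estimate \eqref{eq:ordinary-zak-phase-small}. It is at most $C_q\varepsilon\norm{F}_{L^\infty((\mathcal Q_q^{\rm Zak})^*)}$. This uses the fact that for $\varepsilon$ small the support of $K_\varepsilon(z,\cdot)$ lies in a $C\varepsilon$-neighborhood of $\mathcal Q_q^{\rm Zak}$, and that neighborhood is contained in $(\mathcal Q_q^{\rm Zak})^*=[-2,3]^{2q}$. The third term is a constant in $w$. By \eqref{eq:ordinary-zak-phase-remainder} it is bounded by the same quantity $C_q\varepsilon\norm{F}_{L^\infty}$.

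The middle term is the only one that uses the VMO hypothesis. We apply \Cref{lem:ordinary-vmo-positive-averages} with $n=2q$, $H=F$, $K=\mathcal Q_q^{\rm Zak}$, $K^*=(\mathcal Q_q^{\rm Zak})^*$ and $k_{\varepsilon,z}=K_\varepsilon(z,\cdot)$. The conditions of that lemma were verified just before \eqref{eq:ordinary-zak-phase-small}: the support of $k_{\varepsilon,z}$ lies in a cube $Q(z,C_0\varepsilon)$, its mass is one, and its pointwise bound is $\norm{\kappa}_\infty^2\varepsilon^{-2q}$. The hypotheses of \Cref{thm:vmo-essential-common-zero} say that $F$ is bounded and locally VMO on a neighborhood of $K^*$. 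The lemma then gives
\[
 \int K_\varepsilon(z,w)\norm{F(w)-\overline F_{\varepsilon,z}}\,dw\le C\eta_{F,K^*}(C\varepsilon),
\]
uniformly in $z$. Adding the three bounds and taking the supremum over $z$ proves \eqref{eq:ordinary-zak-cancellation-bound}.

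The final assertion follows directly from this bound. The local VMO definition gives $\eta_{F,K^*}(C\varepsilon)\to0$ as $\varepsilon\searrow0$. The second term tends to zero because $F$ is essentially bounded.

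The only delicate step is the bookkeeping of domains. The cubes used in \Cref{lem:ordinary-vmo-positive-averages} must lie inside a compact set on which local VMO holds. This is where the buffer $[-2,3]^{2q}$ around $[0,1]^{2q}$ is needed, together with the standing restriction to small $\varepsilon$.
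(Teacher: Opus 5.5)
Your proposal is correct and follows the paper's proof essentially verbatim. You use the same three-term splitting through the uncorrected average $\overline F_{\varepsilon,z}$: the two phase terms are controlled by \eqref{eq:ordinary-zak-phase-small} and \eqref{eq:ordinary-zak-phase-remainder}, and the oscillation term by \Cref{lem:ordinary-vmo-positive-averages} with $K=\mathcal Q_q^{\rm Zak}$ and $K^*=(\mathcal Q_q^{\rm Zak})^*$. Your remarks that the support of $K_\varepsilon(z,\cdot)$ stays inside the buffer $[-2,3]^{2q}$ make explicit what the paper leaves implicit.
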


\begin{proof}
Note that
\[
 c(z,w)F(w)-F_\varepsilon(z)
 =\bigl(F(w)-\overline F_{\varepsilon,z}\bigr)
  +\bigl(c(z,w)-1\bigr)F(w)
  +\bigl(\overline F_{\varepsilon,z}-F_\varepsilon(z)\bigr).
\]
Using the triangle inequality, \eqref{eq:ordinary-zak-phase-small}, and
\eqref{eq:ordinary-zak-phase-remainder},
\[
 D_\varepsilon(z)
 \le \int K_\varepsilon(z,w)\norm{F(w)-\overline F_{\varepsilon,z}}\,dw
      +C\varepsilon\norm{F}_{L^\infty((\mathcal Q_q^{\rm Zak})^*)}.
\]
The first term is controlled by
\Cref{lem:ordinary-vmo-positive-averages}, with
$K=\mathcal Q_q^{\rm Zak}$ and $K^*=(\mathcal Q_q^{\rm Zak})^*$.
\end{proof}

\begin{proposition}
\label{prop:ordinary-zak-lower-bound-survives}
Assume, in addition, that
\[
 |F(z)|\ge c>0,
 \quad\text{for almost every }z\in(\mathcal Q_q^{\rm Zak})^*.
\]
Then, for every sufficiently small $\varepsilon>0$,
\[
 |F_\varepsilon(z)|\ge\frac c2,
 \quad\text{for every }z\in\mathcal Q_q^{\rm Zak}.
\]
\end{proposition}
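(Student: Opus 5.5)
The plan is to compare $F_\varepsilon(z)$ with the values $c(z,w)F(w)$ that it averages, and to use the cancellation estimate of \Cref{lem:ordinary-zak-cancellation}. Fix $z\in\mathcal Q_q^{\rm Zak}$. For $\varepsilon$ small, the support of $K_\varepsilon(z,\cdot)$ is contained in the ball of radius $C\varepsilon$ about $z$, and hence in $(\mathcal Q_q^{\rm Zak})^*=[-2,3]^{2q}$. On that support the lower bound $|F(w)|\ge c$ holds for almost every $w$. Since $|c(z,w)|=1$, we also get $|c(z,w)F(w)|\ge c$ almost everywhere on the support.

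The key pointwise step is the reverse triangle inequality:
\[
 c\le|c(z,w)F(w)|\le|F_\varepsilon(z)|+\norm{c(z,w)F(w)-F_\varepsilon(z)}
\]
for almost every $w$ in the support. Multiply by $K_\varepsilon(z,w)\ge0$ and integrate in $w$. Because the kernel has total mass one, this gives
\[
 c\le|F_\varepsilon(z)|+D_\varepsilon(z).
\]
By \Cref{lem:ordinary-zak-cancellation},
\[
 \sup_{z}D_\varepsilon(z)\le C\eta_{F,(\mathcal Q_q^{\rm Zak})^*}(C\varepsilon)+C\varepsilon\norm{F}_{L^\infty}.
\]
The right-hand side tends to $0$ by local VMO and essential boundedness. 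Choose $\varepsilon_0$ so that this bound is at most $c/2$ for all $\varepsilon<\varepsilon_0$. Then $|F_\varepsilon(z)|\ge c/2$ for every $z$, uniformly.

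The only point needing care is the bookkeeping of supports. The lower bound is only assumed on $(\mathcal Q_q^{\rm Zak})^*$, so we need the $C\varepsilon$-neighborhood of $\mathcal Q_q^{\rm Zak}$ to lie inside it; this holds once $\varepsilon$ is small, since $\supp\kappa\subset B(0,1/4)$. The same containment is what makes \Cref{lem:ordinary-zak-cancellation} applicable, and the resulting conclusion is pointwise for every $z$ because $F_\varepsilon$ is smooth. I expect no genuine obstacle: the substance is entirely in \Cref{lem:ordinary-zak-cancellation}.
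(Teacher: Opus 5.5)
Your argument is correct and is the paper's proof in direct rather than contrapositive form: the paper assumes $|F_\varepsilon(z_0)|<c/2$ and uses the reverse triangle inequality with the mass-one kernel to get $D_\varepsilon(z_0)\ge c/2$, contradicting \Cref{lem:ordinary-zak-cancellation}, whereas you integrate the same pointwise inequality to obtain $c\le|F_\varepsilon(z)|+D_\varepsilon(z)$. One small remark: the bound holds for every $z$ simply because your estimate is proved at each fixed $z$, so the appeal to smoothness of $F_\varepsilon$ is unnecessary.
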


\begin{proof}
Suppose that $|F_\varepsilon(z_0)|<c/2$.  Since $|c(z_0,w)|=1$, every vector
entering the average $F_\varepsilon(z_0)$ satisfies
\[
 |c(z_0,w)F(w)|=|F(w)|\ge c
\]
for almost every contributing $w$.  The reverse triangle inequality gives
\[
 \norm{c(z_0,w)F(w)-F_\varepsilon(z_0)}
 \ge c-|F_\varepsilon(z_0)|>\frac c2.
\]
Because $K_\varepsilon(z_0,\cdot)$ is nonnegative and has total mass one,
$D_\varepsilon(z_0)\ge c/2$, contradicting
\Cref{lem:ordinary-zak-cancellation} for sufficiently small $\varepsilon$.
\end{proof}

\begin{proof}[Proof of \Cref{thm:vmo-essential-common-zero}]
Assume toward a contradiction that $F$ has no essential common zero.  Then
there is $c>0$ such that
\[
 |F(z)|\ge c,
 \quad\text{for almost every }z\in\mathcal Q_q^{\rm Zak}.
\]
The ordinary Zak relation preserves the norm, so the same lower bound holds
on $(\mathcal Q_q^{\rm Zak})^*$.  By
\Cref{lem:ordinary-zak-smoothing-rule}, $F_\varepsilon$ is smooth and
satisfies the ordinary Zak relation.  By
\Cref{prop:ordinary-zak-lower-bound-survives},
\[
 |F_\varepsilon(z)|\ge c/2,
 \qquad\text{for }z\in\mathcal Q_q^{\rm Zak}.
\]
The Zak relation therefore makes $F_\varepsilon$ nowhere zero on all of
$\mathbb R^{2q}$.  This contradicts \Cref{thm:ddlt-common-zero}, because
$M\le q$.  Hence
\[
 \essinf_{z\in\mathcal Q_q^{\rm Zak}}|F(z)|=0.
\]
Equivalently, \eqref{eq:essential-common-zero-positive-measure} holds for
every $\delta>0$.
If $M>q$, the smooth nowhere-zero maps in \Cref{thm:ddlt-common-zero} are
locally VMO and give the converse.
\end{proof}

The next result is the matrix form needed in the Gabor-frame obstruction.

\begin{lemma}
\label{lem:vmo-smooth-product}
Let $U\subset\mathbb R^n$ and $V\subset\mathbb R^r$ be bounded open
sets, let $h\in L^\infty(U)\cap\mathrm{VMO}_{\mathrm{loc}}(U)$, and let
$\psi\in C^1(V)$.  Then $H(x,y)=h(x)\psi(y)$ is locally VMO on $U\times V$.
The same assertion holds for finite sums and for vector- or matrix-valued
functions.
\end{lemma}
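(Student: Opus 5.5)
The plan is to imitate the proof of \Cref{lem:smooth-factor-local-vmo}, comparing $H$ on a small cube with the constant $h_{Q'}\psi(y_0)$. First I localize. Fix a compact $K\Subset U\times V$. Its projections $K_1\subset U$ and $K_2\subset V$ are compact, and $K\subset K_1\times K_2$. Every cube $Q\subset K$ of side $r$ is a product $Q=Q'\times Q''$, where $Q'\subset K_1$ and $Q''\subset K_2$ are cubes of side $r$ in $\mathbb R^n$ and $\mathbb R^r$ respectively.

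Next I run the oscillation estimate. Fix $y_0\in Q''$. Since the mean oscillation is at most twice the mean distance to any constant,
\[
 \operatorname{osc}_Q(H)\le 2\fint_{Q'}\fint_{Q''}\bigl|h(x)\psi(y)-h_{Q'}\psi(y_0)\bigr|\,dy\,dx .
\]
I then split the integrand as
\[
 h(x)\psi(y)-h_{Q'}\psi(y_0)=\bigl(h(x)-h_{Q'}\bigr)\psi(y)+h_{Q'}\bigl(\psi(y)-\psi(y_0)\bigr).
\]
Averaging the first term gives at most $\|\psi\|_{L^\infty(K_2)}\operatorname{osc}_{Q'}(h)\le\|\psi\|_{L^\infty(K_2)}\,\eta_{h,K_1}(r)$. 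The second term is at most $\|h\|_{L^\infty}\sup_{y\in Q''}|\psi(y)-\psi(y_0)|$. This tends to $0$ uniformly as $r\to0$, since $\psi$ is continuous and hence uniformly continuous on the compact set $K_2$; no $C^1$ bound is needed. Both bounds are uniform over cubes $Q\subset K$ with $\ell(Q)\le r$, so $\eta_{H,K}(r)\to0$.

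For the extensions: mean oscillation is subadditive, so finite sums of such products remain locally VMO. The vector- and matrix-valued cases follow entrywise, using the remark in \Cref{subsec:bmo-vmo-background} that norms on the finite-dimensional target are equivalent and that linear maps control oscillation.

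There is no real obstacle. The only points to check are that cubes in the product are products of cubes of the same side length, and that $h$ is bounded. Boundedness is what makes the $\psi$-variation term small, and it is part of the hypothesis.
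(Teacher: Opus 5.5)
Your proposal is correct and follows essentially the same route as the paper. Both compare $H$ on a product cube $Q'\times Q''$ with the constant $h_{Q'}\psi(y_0)$, then split the difference into an oscillation term for $h$ and a variation term for $\psi$ weighted by $\|h\|_\infty$, and finally pass to sums and vector- or matrix-valued functions by subadditivity and entrywise reduction. The one difference is minor: you observe that uniform continuity of $\psi$ on the compact projection suffices, where the paper invokes bounded first derivatives, which is a slightly sharper version of the same step.
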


\begin{proof}
Work on fixed compact neighborhoods contained in $U$ and $V$, and
take all sup norms below on these neighborhoods.  Let $Q=Q_x\times Q_y$
be a sufficiently small product cube in their interiors, choose
$y_Q\in Q_y$, and use the constant $h_{Q_x}\psi(y_Q)$.  Note that 
\begin{align*}
\|h(x)\psi(y)-h_{Q_x}\psi(y_Q)\| & \le |h(x)-h_{Q_x}|\,\|\psi\|_\infty + |h_{Q_x}|\,|\psi(y)-\psi(y_Q)| \\
& \le |h(x)-h_{Q_x}|\,\|\psi\|_\infty + \|h\|_\infty\,|\psi(y)-\psi(y_Q)|.
\end{align*}
Then the triangle inequality gives
\[
 \operatorname{osc}_Q(H)\le2\norm{\psi}_\infty\operatorname{osc}_{Q_x}(h)
   +2\norm{h}_\infty\sup_{y\in Q_y}|\psi(y)-\psi(y_Q)|.
\]
Both terms tend to zero uniformly with the side length of $Q$ on each
fixed compact neighborhood: the first by local VMO, and the second because
the first derivatives of $\psi$ are bounded there.
\end{proof}

\begin{proof}[Proof of \Cref{thm:rectangular-common-zero-gap}]
\phantomsection\label{proof:rectangular-common-zero-gap}
If $R=1$, the single row of $S$ is an $N$-component ordinary Zak map.  If
$N\le d$, the lower bound in \eqref{eq:vmo-matrix-lower-bound} contradicts
\Cref{thm:vmo-essential-common-zero}.  Hence $N\ge d+1=R+d$.

Assume $R\ge2$ and put $r=R-1$.  By the existence part of
\Cref{thm:ddlt-common-zero}, there is a smooth nowhere-zero ordinary Zak
vector
\[
 \rho:\mathbb R^r\times\mathbb R^r\longrightarrow\mathbb C^R.
\]
Its norm is periodic and continuous, so
\[
 c_\rho=\min_{(x,y)\in[0,1]^{2r}}|\rho(x,y)|>0.
\]
Define
\[
 \mathcal F((u,x),(v,y))=S(u,v)^T\rho(x,y)\in\mathbb C^N.
\]
The two ordinary Zak phases multiply, so $\mathcal F$ satisfies the ordinary
Zak relation in $d+r=d+R-1$ variables.  Its components are finite sums of
products of locally VMO entries of $S$ with smooth entries of $\rho$; hence
$\mathcal F$ is locally VMO by \Cref{lem:vmo-smooth-product}.  The matrices
$S$ and $S^T$ have the same singular values, and therefore
\[
 |\mathcal F((u,x),(v,y))|
 \ge\sigma_{\min}(S(u,v))|\rho(x,y)|
 \ge ac_\rho
\]
almost everywhere.  If $N\le d+R-1$, this contradicts
\Cref{thm:vmo-essential-common-zero}, applied in ordinary Zak dimension
$d+R-1$ with $N$ components.  Thus $N\ge d+R$.

For the converse, take smooth ordinary Zak quasiperiodic functions
$s_0,\ldots,s_d$ with no common zero from the existence part of
\Cref{thm:ddlt-common-zero}.  First suppose $N=R+d$.  Index the rows of
$S_0$ by $0,\ldots,R-1$ and the columns by $0,\ldots,R+d-1$, and define
\[
 (S_0(u,v))_{k\ell}
 =\begin{cases}
   s_{\ell-k}(u,v),&0\le\ell-k\le d,\\
   0,&\text{otherwise}.
  \end{cases}
\]
If $S_0(u,v)^Tc=0$ for $c=(c_0,\ldots,c_{R-1})$, form
\[
 a(z)=\sum_{j=0}^d s_j(u,v)z^j,
 \qquad
 c(z)=\sum_{k=0}^{R-1}c_kz^k.
\]
The entries of $S_0(u,v)^Tc$ are the coefficients of $a(z)c(z)$.  Since the
$s_j$ have no common zero, $a$ is not the zero polynomial.  As
$\mathbb C[z]$ has no zero divisors, $a(z)c(z)=0$ implies $c=0$.  Hence
$S_0^T$ has full column rank and $S_0$ has full row rank.  If $N>R+d$,
append $N-R-d$ zero columns.
\end{proof}

The following reduction connects the rational Zak matrix to the ordinary
Zak rule and records the two regularity facts needed later.

\begin{proposition}
\label{prop:ordinary-zak-reduction}
Let
\[
 A_{s,t}(x,\eta)=F(x-Dt,\eta+\mathsf B^{-1}s),
 \qquad s\in\mathcal I_{\mathsf B},\quad t\in\mathcal I_{\mathsf A},
\]
be an $R\times N$ rational Zak matrix.  Choose $q$ coordinate pairs and fix
the unused coordinates at a parameter $\zeta$.  The assertions below hold
for every $\zeta$ for which the sliced entries have the stated boundedness
and regularity.  In \Cref{sec:necessity-p-le-two}, \Cref{lem:slicing} supplies
the Fourier--Lebesgue regularity for almost every slice, and
\Cref{lem:critical-vmo} gives local VMO.  The high-$p$ argument in
\Cref{sec:necessity-p-greater-two} instead uses the scale-dependent slice
selection of \Cref{lem:one-scale-anisotropic}, together with
\Cref{lem:one-scale-partial-smoothing}.

Write $x',\eta'\in\R^q$ for the selected variables and $s'$ for the selected
components of $s$.  Define
\begin{equation*}
 D_{\mathsf B}^{\zeta}(x')
 =\diag_{s\in\mathcal I_{\mathsf B}}
    \left(e^{-2\pi i x'\cdot(\mathsf B^{-1})'s'}\right),
 \qquad
 \widetilde A^{\zeta}=D_{\mathsf B}^{\zeta}(x')A^{\zeta},
 \qquad
 S^{\zeta}=\widetilde A^{\zeta}.
\end{equation*}
Then, entrywise,
\begin{equation}
 S^{\zeta}(x'+n,\eta'+m)
 =e^{2\pi i n\cdot\eta'}S^{\zeta}(x',\eta'),
 \qquad n,m\in\Z^q.
 \label{eq:ordinary-zak-explicit-rule}
\end{equation}
The matrices $S^{\zeta}$ and $A^{\zeta}$ have the same nonzero singular
values.  Boundedness and local VMO pass from $A^{\zeta}$ to $S^{\zeta}$.
More quantitatively, if the localized entries of $A^{\zeta}$ have
oscillation at most $\delta$ on cubes of side at most $C\varepsilon$, then
the entries of $S^{\zeta}$ have oscillation at most
\begin{equation}
 C_1\delta+C_2\varepsilon\|A^{\zeta}\|_\infty.
 \label{eq:ordinary-zak-explicit-oscillation}
\end{equation}
\end{proposition}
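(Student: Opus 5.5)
The plan is a direct verification of the ordinary Zak rule for the diagonally phase-corrected slice, followed by unitary invariance and smooth-multiplier estimates. First fix the unused coordinates at $\zeta$ and regard $A^{\zeta}$ as a function of the selected variables $(x',\eta')\in\R^q\times\R^q$. For an integer step $(n,m)\in\Z^q\times\Z^q$ in the selected coordinates, the scalar quasiperiodicity \eqref{eq:zak-quasiperiodicity} of $F$ gives, entrywise,
\[
 A^{\zeta}_{s,t}(x'+n,\eta'+m)
 =e^{2\pi i n\cdot(\eta'+(\mathsf B^{-1})'s')}A^{\zeta}_{s,t}(x',\eta').
\]
Only the selected components enter, since the shift vanishes in the unused coordinates. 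The row-dependent factor $e^{2\pi i n\cdot(\mathsf B^{-1})'s'}$ is exactly the spurious phase. The diagonal multiplier satisfies
\[
 D_{\mathsf B}^{\zeta}(x'+n)_{ss}
 =e^{-2\pi i n\cdot(\mathsf B^{-1})'s'}D_{\mathsf B}^{\zeta}(x')_{ss},
\]
so in the product $S^{\zeta}=D_{\mathsf B}^{\zeta}A^{\zeta}$ the two row phases cancel. Only $e^{2\pi i n\cdot\eta'}$ survives, which is \eqref{eq:ordinary-zak-explicit-rule}. Note that the shift in $\eta'$ produces no phase from $D_{\mathsf B}^{\zeta}$, because it depends on $x'$ alone.

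Second, $D_{\mathsf B}^{\zeta}(x')$ is a unitary diagonal matrix. Left multiplication by it preserves all singular values, and a fortiori the nonzero ones. In particular, the lower bound on $\sigma_{\min}$ transfers from $A^{\zeta}$ to $S^{\zeta}$, and so does essential boundedness, with the same norm.

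Third, for local VMO and the quantitative bound, each entry of $S^{\zeta}$ is a single entry of $A^{\zeta}$ multiplied by the smooth unimodular function $b(x')=e^{-2\pi i x'\cdot(\mathsf B^{-1})'s'}$. This function has $\|b\|_\infty=1$ and $\|\nabla b\|_\infty\le 2\pi|(\mathsf B^{-1})'s'|\le C$, uniformly over the finitely many $s$. Applying \eqref{eq:smooth-multiplier-oscillation}, or equivalently \Cref{lem:smooth-factor-local-vmo}, on cubes of side at most $C\varepsilon$ gives
\[
 \operatorname{osc}_Q\bigl(S^{\zeta}_{s,t}\bigr)
 \le 2\delta+2C\varepsilon\|A^{\zeta}\|_\infty.
\]
This is \eqref{eq:ordinary-zak-explicit-oscillation}, and letting $\varepsilon\to0$ yields local VMO.

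The only delicate point is bookkeeping. We must ensure that the phase of $F$ under an integer shift in the selected coordinates involves only the selected components $n\cdot(\eta'+(\mathsf B^{-1})'s')$, and that the unused coordinates contribute nothing: they are held fixed and the corresponding $n$-components vanish. Once this is checked, the remaining steps are routine.
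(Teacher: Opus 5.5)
Your proposal is correct and follows essentially the same route as the paper: you cancel the row-dependent rational phase $e^{2\pi i n\cdot(\mathsf B^{-1})'s'}$ against the diagonal gauge evaluated at $x'+n$, you use unitarity of the gauge for the singular values and boundedness, and you apply the smooth-multiplier oscillation estimate \eqref{eq:smooth-multiplier-oscillation} to obtain \eqref{eq:ordinary-zak-explicit-oscillation} and local VMO. The paper also runs that multiplier estimate through the fixed localization functions (and notes that fixed translations preserve local VMO); you pass over this bookkeeping, but it changes only the constants $C_1,C_2$.
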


\begin{proof}
The scalar Zak identity gives
\begin{equation*}
 A_{s,t}^{\zeta}(x'+n,\eta'+m)
 =e^{2\pi i n\cdot(\eta'+(\mathsf B^{-1})'s')}
  A_{s,t}^{\zeta}(x',\eta').
\end{equation*}
Multiplying by the row gauge at $x'+n$ cancels the additional rational
phase, since
\begin{equation*}
 e^{-2\pi i(x'+n)\cdot(\mathsf B^{-1})'s'}
 e^{2\pi i n\cdot(\eta'+(\mathsf B^{-1})'s')}
 =e^{2\pi i n\cdot\eta'}
  e^{-2\pi i x'\cdot(\mathsf B^{-1})'s'}.
\end{equation*}
This proves \eqref{eq:ordinary-zak-explicit-rule}.

The diagonal gauge is unitary and therefore preserves all singular values.
Fixed translations preserve local VMO.  Finally, for a smooth
coefficient $b$ and bounded $h$,
\begin{equation*}
 \operatorname{osc}_Q(bh)
 \le2\|b\|_{L^\infty(Q)}\operatorname{osc}_Q(h)
    +2\ell(Q)\|\nabla b\|_{L^\infty(Q)}\|h\|_{L^\infty(Q)}.
\end{equation*}
Applying this estimate to the finite smooth gauges and localization
functions proves local VMO preservation and
\eqref{eq:ordinary-zak-explicit-oscillation}.
\end{proof}

\begin{corollary}
\label{thm:vmo-rational-obstruction}\label{thm:continuous-obstruction}
Let the diagonal data satisfy $N\ge R$ and $N-R<d$.  Then no essentially
bounded, locally VMO $R\times N$ Zibulski--Zeevi matrix can obey the rational
Zak side relations and have a uniform positive lower singular-value bound
almost everywhere.  More generally, put $q=N-R+1\le d$ and fix the
unused Zak variables at a parameter $\zeta$.  The same nonexistence
conclusion holds on this slice provided the sliced matrix $A^\zeta$ obeys
the rational Zak side relations, its entries are essentially bounded and
locally VMO in the retained $q$ coordinate pairs, and
$\sigma_{\min}(A^\zeta(x',\eta'))\ge a>0$ for almost every
$(x',\eta')\in\mathbb R^{2q}$.  Continuous rational Zak matrices, or
continuous sliced matrices satisfying this lower bound, are included as
special cases.
\end{corollary}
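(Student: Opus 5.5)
The plan is to reduce both assertions of the corollary to \Cref{thm:rectangular-common-zero-gap}, applied in the retained Zak dimension, by way of the gauge of \Cref{prop:ordinary-zak-reduction}. The full (unsliced) statement is the special case in which all $d$ coordinate pairs are retained, so I would treat the sliced version and read off the full one.

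First, fix $q=N-R+1\le d$ coordinate pairs (any choice, for instance the first $q$) and a parameter $\zeta$ for which the hypotheses hold. Let $A^\zeta$ be the sliced $R\times N$ matrix. Form the row-gauged matrix $S^\zeta=D_{\mathsf B}^\zeta(x')A^\zeta$ of \Cref{prop:ordinary-zak-reduction}. That proposition gives three facts:
\begin{enumerate}[label=\textup{(\roman*)}]
\item $S^\zeta$ is ordinary Zak-quasiperiodic entrywise in the $q$ retained pairs, by \eqref{eq:ordinary-zak-explicit-rule};
\item $S^\zeta$ has the same singular values as $A^\zeta$, because the gauge is a diagonal unitary;
\item $S^\zeta$ is essentially bounded and its entries are locally VMO, since multiplication by the smooth unimodular gauge preserves local VMO by \Cref{lem:smooth-factor-local-vmo}.
\end{enumerate}
Local VMO is needed on a neighborhood of $(\mathcal Q_q^{\rm Zak})^*$. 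Quasiperiodicity and boundedness let me take it on all of $\mathbb R^{2q}$, or at least on the required bounded region.

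Second, the hypothesis $\sigma_{\min}(A^\zeta)\ge a$ almost everywhere transfers to $\sigma_{\min}(S^\zeta)\ge a$ almost everywhere. Since $N\ge R$, \Cref{thm:rectangular-common-zero-gap} applies with $d$ replaced by $q$ and forces $N\ge R+q=R+N-R+1=N+1$. This is a contradiction, which proves the sliced claim. Note that only $q\le d$ is used, namely to have $q$ pairs available to retain; the contradiction itself is purely arithmetic.

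For the unsliced statement, retain all $d$ pairs (no parameter $\zeta$); equivalently, apply the theorem directly in dimension $d$ to obtain $N\ge R+d$, contradicting $N-R<d$. One point needs checking here: the restriction of a full locally VMO matrix to a subset of variables need not be locally VMO. It is therefore cleaner not to slice at all in this case, but to gauge in all $d$ variables, which \Cref{prop:ordinary-zak-reduction} permits with $q=d$. The continuous case follows because continuous functions are locally VMO, and bounded on compacts, hence bounded everywhere by quasiperiodicity.

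The main obstacle I expect is bookkeeping rather than substance. I must verify that the rational side relations restricted to the slice yield exactly the form required by \Cref{prop:ordinary-zak-reduction}, namely integer translations in the retained variables with the phase $e^{2\pi i n\cdot(\eta'+(\mathsf B^{-1})'s')}$. I must also check that the local VMO domain suffices for \Cref{thm:rectangular-common-zero-gap}. Both points follow from scalar quasiperiodicity applied entrywise.
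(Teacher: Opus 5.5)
Your proposal is correct and follows the paper's proof: you apply the row gauge of \Cref{prop:ordinary-zak-reduction} to obtain an essentially bounded, locally VMO, ordinary Zak-quasiperiodic matrix with the same lower singular-value bound, and then invoke \Cref{thm:rectangular-common-zero-gap}. In dimension $d$ (full case) this contradicts $N-R<d$, and in dimension $q$ (sliced case) it gives the impossible $N\ge N+1$. Your opening remark that the full statement is a special case of the sliced one is slightly off, since retaining all $d$ pairs is not the $q=N-R+1$ slice unless $q=d$. You correct this yourself by treating the two cases separately, exactly as the paper does.
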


\begin{proof}
Apply \Cref{prop:ordinary-zak-reduction}.  In the full case,
\Cref{thm:rectangular-common-zero-gap} would give $N\ge R+d$, contrary to
$N-R<d$.  In the restricted case it would give
\[
 N\ge R+q=R+(N-R+1)=N+1,
\]
which is impossible.
\end{proof}

\section{Critical Fourier--Lebesgue regularity, VMO, and slicing}
\label{sec:vmo}

This section proves the critical Fourier--Lebesgue--VMO embedding stated in
\Cref{lem:critical-vmo} and develops the remaining regularity tools used in
the necessity proofs.  The slicing lemma transfers the critical regularity
to selected Zak variables.  For $1\le p\le2$, these two inputs combine
directly with the local-VMO rank gap of
\Cref{thm:rectangular-common-zero-gap}.  For $p>2$, the final two lemmas give
the corresponding one-scale oscillation and ordinary-Zak smoothing
statements.

\begin{proof}[Proof of \Cref{lem:critical-vmo}]
Write
\[
 f(x)=\sum_{k\in\Z^m}a_ke^{2\pi i k\cdot x},
 \qquad
 \norm{f}_{\Fell{p}{s}}=\norm{(\langle k\rangle^sa_k)_k}_{\ell^p},
 \qquad
 s=m\left(1-1/p\right).
\]
We separate the proof into the endpoint $p=1$, the BMO estimate for
$1<p<\infty$, and the final passage from BMO to VMO.

\medskip
\noindent\emph{Step 1: the case $p=1$.}
Here $s=0$ and
\[
 \sum_{k\in\Z^m}|a_k|<\infty.
\]
The Weierstrass $M$-test, applied to the continuous functions
$a_ke^{2\pi i k\cdot x}$ and the majorants $|a_k|$, shows that the Fourier
series converges uniformly on $\T^m$.  Hence $f$ is continuous.  Moreover,
$\norm{f}_\infty\le\sum_k|a_k|$, so
$[f]_{\BMO}\le 2\norm{f}_\infty\le 2\sum_k|a_k|$, which gives
\eqref{eq:FL-critical-BMO-estimate} at $p=1$. If $Q$ is a cube, then
\begin{align*}
 \fint_Q|f-(f)_Q|
 &\le \fint_Q\fint_Q|f(x)-f(y)|\,dy\,dx\\
 &\le \sup_{x,y\in Q}|f(x)-f(y)|.
\end{align*}
The last quantity tends to zero uniformly as the side length of $Q$ tends
to zero since $f$ is continuous on the torus and therefore uniformly continuous.  Thus $f\in\VMO(\T^m)$.  So summability of the Fourier coefficients is immediately sufficient for the $p=1$ case. 

\medskip
\noindent\emph{Step 2: reduction to one cube when $1<p<\infty$.}
Let $Q\subset\T^m$ be a cube of side length $0<\ell\le1$, and put
\[
 R=\ell^{-1}\ge1.
\]
Define
\[
 f_{<R}(x)=\sum_{|k|<R}a_ke^{2\pi i k\cdot x},
 \qquad
 f_{\ge R}(x)=\sum_{|k|\ge R}a_ke^{2\pi i k\cdot x}
\]
as a splitting into the ``low'' and ``high'' frequency content of $f$.  The first sum is finite.  The estimates below show that the second
sum is an $L^{p'}$ function when $1<p\le2$, and an $L^2$ function when
$p>2$.  Consequently the decomposition represents $f$ as a locally
integrable function and all averages below are meaningful.

Since $(f)_Q=(f_{<R})_Q+(f_{\ge R})_Q$, the triangle inequality gives
\begin{equation}
 \fint_Q|f-(f)_Q|
 \le
 \fint_Q|f_{<R}-(f_{<R})_Q|
 +\fint_Q|f_{\ge R}-(f_{\ge R})_Q|.
 \label{eq:BMO-low-high-split}
\end{equation}
For the high-frequency term we shall use the elementary estimate
\begin{equation}
 \fint_Q|g-(g)_Q|
 \le 2\fint_Q|g|,
 \label{eq:mean-oscillation-by-size}
\end{equation}
because $|(g)_Q|\le\fint_Q|g|$.

\medskip
\noindent\emph{Step 3: the low-frequency part and the mean-value theorem.}
For any integrable $g$,
\[
 \fint_Q|g-(g)_Q|
 \le\fint_Q\fint_Q|g(x)-g(y)|\,dy\,dx.
\]
Apply this to $g=f_{<R}$.  For one Fourier character, the fundamental
theorem of calculus along the line segment from $y$ to $x$ gives
\begin{align*}
 |e^{2\pi i k\cdot x}-e^{2\pi i k\cdot y}|
 &=\left|\int_0^1 2\pi i\,k\cdot(x-y)
       e^{2\pi i k\cdot(y+t(x-y))}\,dt\right|\\
 &\le 2\pi |k|\,|x-y|.
\end{align*}
Since $x,y\in Q$ imply
$|x-y|\le\sqrt m\,\ell$, we obtain
\begin{equation}
 \fint_Q|f_{<R}-(f_{<R})_Q|
 \le C_m\ell\sum_{|k|<R}|k|\,|a_k|.
 \label{eq:low-frequency-before-holder}
\end{equation}

We now estimate the coefficient sum by H\"older's inequality for sequences.
Insert the
weight $\langle k\rangle^s$ and its reciprocal:
\[
 |k|\,|a_k|
 =\bigl(\langle k\rangle^s|a_k|\bigr)
  \bigl(|k|\langle k\rangle^{-s}\bigr).
\]
Define
\[
 u_k=\langle k\rangle^s|a_k|,
 \qquad
 v_k=|k|\langle k\rangle^{-s}\ind_{\{|k|<R\}}.
\]
Since $1/p+1/p'=1$, H\"older's inequality in $\ell^p$--$\ell^{p'}$
says
\[
 \sum_{k\in\Z^m}|u_kv_k|
 \le \norm{u}_{\ell^p}\norm{v}_{\ell^{p'}}.
\]
Consequently,
\begin{equation}
 \sum_{|k|<R}|k|\,|a_k|
 \le
 \norm{(\langle k\rangle^sa_k)_k}_{\ell^p}
 \left(\sum_{|k|<R}|k|^{p'}\langle k\rangle^{-sp'}\right)^{1/p'}.
 \label{eq:low-frequency-holder}
\end{equation}
The first H\"older factor is exactly the Fourier--Lebesgue norm:
\[
 \norm{(\langle k\rangle^sa_k)_k}_{\ell^p}
 =\norm{f}_{\Fell{p}{s}(\T^m)}.
\]
The annulus argument below estimates the second H\"older factor.

Since $s=m/p'$, one has $sp'=m$.  We now prove that
\[
 S_R:=\sum_{|k|<R}|k|^{p'}\langle k\rangle^{-m}
 \le C_{m,p}R^{p'}.
\]
For $j\ge0$, let
\[
 \mathcal A_j=
 \{k\in\Z^m:2^j\le |k|<2^{j+1}\},
 \qquad
 L=\lceil\log_2R\rceil.
\]
Every nonzero lattice point with $|k|<R$ lies in one of
$\mathcal A_0,\ldots,\mathcal A_L$.  The point $k=0$ contributes zero
because of the factor $|k|^{p'}$; a harmless constant may absorb any other
bounded-frequency convention.

We first count the points in $\mathcal A_j$.  Around each
$k\in\mathcal A_j$ place the unit cube
$k+[-1/2,1/2]^m$.  These cubes have disjoint interiors and total volume
$\#\mathcal A_j$.  They all lie in a Euclidean ball of radius $C_m2^j$,
because for $x$ in the cube around $k$,
\[
 |x|\le |k|+|x-k|
 <2^{j+1}+\frac{\sqrt m}{2}\le C_m2^j.
\]
Comparison with the volume of this ball gives the classical lattice-point
bound
\[
 \#\mathcal A_j\le C_m2^{jm}.
\]
Next, if $k\in\mathcal A_j$, then
\[
 |k|^{p'}\le 2^{(j+1)p'}\le C_p2^{jp'},
 \qquad
 \langle k\rangle^{-m}\le |k|^{-m}\le 2^{-jm}.
\]
Thus every summand on the $j$th annulus is at most
$C_p2^{j(p'-m)}$.  Multiplying the number of terms by the largest possible
term gives
\begin{align*}
 \sum_{k\in\mathcal A_j}|k|^{p'}\langle k\rangle^{-m}
 &\le
 C_m2^{jm}\,C_p2^{j(p'-m)}\\
 &\le C_{m,p}2^{jp'}.
\end{align*}
The cancellation of $2^{jm}$ against $2^{-jm}$ is the critical feature: the
number of frequencies at radius $2^j$ is cancelled exactly by the critical
weight $\langle k\rangle^{-m}$, leaving only the factor
$2^{jp'}$.  Summing the geometric series,
\begin{align*}
 S_R
 &\le C+C_{m,p}\sum_{j=0}^{L}2^{jp'}
 \le C_{m,p}2^{Lp'}.
\end{align*}
Because $L=\lceil\log_2R\rceil$, one has $2^L<2R$, and therefore
\[
 S_R\le C_{m,p}R^{p'}.
\]
Taking the $p'$-th root yields
\[
 \left(\sum_{|k|<R}|k|^{p'}\langle k\rangle^{-sp'}\right)^{1/p'}
 \le C_{m,p}R.
\]
Substitution into \eqref{eq:low-frequency-holder} gives the explicit
intermediate estimate
\[
 \sum_{|k|<R}|k|\,|a_k|
 \le C_{m,p}R\norm{f}_{\Fell{p}{s}}.
\]
Finally insert this into \eqref{eq:low-frequency-before-holder}.  Since the
frequency cutoff was chosen as $R=\ell^{-1}$, the spatial factor from the
mean-value theorem and the frequency factor cancel:
\[
 \ell R=1.
\]
We obtain
\begin{equation}
 \fint_Q|f_{<R}-(f_{<R})_Q|
 \le C_{m,p}\norm{f}_{\Fell{p}{s}}.
 \label{eq:low-frequency-final}
\end{equation}

\medskip
\noindent\emph{Step 4: the high-frequency part when $1<p\le2$.}
The classical periodic Hausdorff--Young theorem states that, for
$1\le p\le2$, the Fourier synthesis map sends $\ell^p(\Z^m)$ boundedly into
$L^{p'}(\T^m)$:
\[
 \left\|\sum_k c_ke^{2\pi i k\cdot x}\right\|_{L^{p'}(\T^m)}
 \le \norm{(c_k)}_{\ell^p}.
\]
This follows from Riesz--Thorin interpolation
\cite[Theorem~1.3.4]{Grafakos2014}, applied between
$\ell^1\to L^\infty$ and $\ell^2\to L^2$.  We apply it to the tail sequence
$c_k=a_k\ind_{\{|k|\ge R\}}$.  Thus
\[
 \norm{f_{\ge R}}_{L^{p'}(\T^m)}
 \le\norm{(a_k)_{|k|\ge R}}_{\ell^p}.
\]

H\"older with conjugate exponents $p'$ and $p$ gives
\begin{align*}
 \int_Q|f_{\ge R}|
 &\le
 \norm{f_{\ge R}}_{L^{p'}(\T^m)}
 \norm{\ind_Q}_{L^p(\T^m)}\\
 &=|Q|^{1/p}\norm{f_{\ge R}}_{L^{p'}(\T^m)}.
\end{align*}
After division by $|Q|$ and the identity
$-1+1/p=-1/p'$, this becomes
\[
 \fint_Q|f_{\ge R}|
 \le |Q|^{-1/p'}\norm{f_{\ge R}}_{L^{p'}(\T^m)}.
\]
Combining this estimate with Hausdorff--Young gives
\[
 \fint_Q|f_{\ge R}|
 \le |Q|^{-1/p'}
 \norm{(a_k)_{|k|\ge R}}_{\ell^p}.
\]
On the tail $|k|\ge R$ we have $\langle k\rangle\ge R$.  Since $s>0$ in
the present range,
\[
 |a_k|
 =\langle k\rangle^{-s}
  \bigl(\langle k\rangle^s|a_k|\bigr)
 \le R^{-s}\langle k\rangle^s|a_k|.
\]
Taking the $\ell^p$ norm yields
\[
 \norm{(a_k)_{|k|\ge R}}_{\ell^p}
 \le R^{-s}
 \norm{(\langle k\rangle^sa_k)_{|k|\ge R}}_{\ell^p}.
\]
Because $|Q|=\ell^m=R^{-m}$ and $s=m/p'$, the scale factors cancel:
\[
 |Q|^{-1/p'}R^{-s}
 =R^{m/p'}R^{-s}=1.
\]
Consequently,
\[
 \fint_Q|f_{\ge R}|
 \le
 \norm{(\langle k\rangle^sa_k)_{|k|\ge R}}_{\ell^p}.
\]
Together with \eqref{eq:mean-oscillation-by-size}, this gives
\[
 \fint_Q|f_{\ge R}-(f_{\ge R})_Q|
 \le
 2\norm{(\langle k\rangle^sa_k)_{|k|\ge R}}_{\ell^p}.
\]
As $\ell\searrow0$, one has $R=\ell^{-1}\to\infty$, and the weighted
$\ell^p$ tail tends to zero.  This last fact will be used again in Step~7.

\medskip
\noindent\emph{Step 5: the high-frequency part when $p>2$.}
Let $r$ be determined by
\[
 \frac12=1/p+\frac1r,
 \qquad\text{equivalently}\qquad
 \frac1r=\frac12-1/p.
\]
The classical Plancherel theorem for Fourier series on $\T^m$ gives
\[
 \norm{f_{\ge R}}_{L^2}
 =\norm{(a_k)_{|k|\ge R}}_{\ell^2}.
\]
We then apply H\"older's inequality for sequences with exponents $p/2$ and
$r/2$; equivalently, we apply the relation $1/2=1/p+1/r$ to the product
\[
 |a_k|=(\langle k\rangle^s|a_k|)\langle k\rangle^{-s}.
\]
This yields
\begin{equation}
 \norm{f_{\ge R}}_2
 \le
 \norm{(\langle k\rangle^sa_k)_{|k|\ge R}}_{\ell^p}
 \left(\sum_{|k|\ge R}\langle k\rangle^{-sr}\right)^{1/r}.
 \label{eq:high-pgreater2-holder}
\end{equation}
Here
\[
 sr=m\left(1-1/p\right)\frac{2p}{p-2}
   =\frac{2m(p-1)}{p-2}>m.
\]
The same lattice shell count as above, now applied to the tail, gives
\begin{align*}
 \sum_{|k|\ge R}\langle k\rangle^{-sr}
 &\le C\sum_{j\ge0}(2^jR)^m(2^jR)^{-sr}\\
 &=CR^{m-sr}\sum_{j\ge0}2^{j(m-sr)}
 \le C_{m,p}R^{m-sr}.
\end{align*}
After taking the $r$-th root, the last factor in
\eqref{eq:high-pgreater2-holder} is $O(R^{m/r-s})$.  Since
\[
 \frac mr-s
 =m\left(\frac12-1/p\right)
  -m\left(1-1/p\right)
 =-\frac m2,
\]
we have
\[
 \norm{f_{\ge R}}_2
 \le CR^{-m/2}
 \norm{(\langle k\rangle^sa_k)_{|k|\ge R}}_{\ell^p}.
\]
Finally, H\"older's inequality on $Q$ with exponents $2$ and $2$ gives
\[
 \fint_Q|f_{\ge R}|
 \le |Q|^{-1/2}\norm{f_{\ge R}}_2
 =R^{m/2}\norm{f_{\ge R}}_2,
\]
so
\begin{equation*}
 \fint_Q|f_{\ge R}|
 \le C
 \norm{(\langle k\rangle^sa_k)_{|k|\ge R}}_{\ell^p}.
\end{equation*}

\medskip
\noindent\emph{Step 6: conclusion of the BMO estimate.}
Combining \eqref{eq:BMO-low-high-split},
\eqref{eq:mean-oscillation-by-size}, \eqref{eq:low-frequency-final}, and the
appropriate high-frequency estimate proves
\[
 [f]_{\BMO}\le C_{m,p}\norm{f}_{\Fell{p}{s}}.
\]
This proves \eqref{eq:FL-critical-BMO-estimate} for the BMO seminorm.  Since the mean of $f$ is $a_0$ and $|a_0|\le\norm{f}_{\Fell{p}{s}}$, the same argument also controls the full norm $\|f\|_{\BMO,*}$.

\medskip
\noindent\emph{Step 7: passage from BMO to VMO.}
For the remaining range $1<p<\infty$, let
\[
 P_Kf(x)=\sum_{|k|\le K}a_ke^{2\pi i k\cdot x}.
\]
Each $P_Kf$ is a continuous trigonometric polynomial.  Put
\[
 b_k=\langle k\rangle^sa_k.
\]
The assumption $f\in\Fell{p}{s}$ says exactly that $b=(b_k)_k\in\ell^p$.
The weighted Fourier coefficient sequence of the remainder $f-P_Kf$ is
\[
 b_k\ind_{\{|k|>K\}}.
\]
Applying the BMO estimate already proved to this remainder gives
\[
 [f-P_Kf]_{\BMO}
 \le C_{m,p}
 \norm{(b_k)_{|k|>K}}_{\ell^p}.
\]
Since the series $\sum_k|b_k|^p$ converges, its tails tend to zero.
Therefore
\[
 \norm{(b_k)_{|k|>K}}_{\ell^p}\longrightarrow0,
 \qquad
 [f-P_Kf]_{\BMO}\longrightarrow0.
\]

Sarason's compact-torus characterization of VMO~\cite{Sarason1975} says that
\[
 \VMO(\T^m)=\overline{C(\T^m)}^{\,\BMO}.
\]
It applies here because $P_Kf\in C(\T^m)$ and $P_Kf\to f$ in the BMO seminorm.
Thus $f\in\VMO(\T^m)$.
\end{proof}

\subsection{Slicing at critical regularity}
\label{subsec:slicing-detailed}

\begin{lemma}
\label{lem:slicing}
Let $n=t+m$, let $1\le p\le2$, and let $s\ge0$.  If
$f\in\Fell{p}{s}(\T^t\times\T^m)$, then for almost every $y\in\T^t$ the slice
$f_y(z)=f(y,z)$ belongs to $\Fell{p}{s}(\T^m)$, and
\begin{equation}
 \int_{\T^t}\norm{f_y}_{\Fell{p}{s}(\T^m)}^p\,dy
 \le \norm{f}_{\Fell{p}{s}(\T^{t+m})}^p.
 \label{eq:slicing-estimate-detailed}
\end{equation}
\end{lemma}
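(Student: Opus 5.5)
The plan is to compute the Fourier coefficients of a slice through a partial Fourier transform. Then we compare the mixed-norm quantity in \eqref{eq:slicing-estimate-detailed} with the full $\ell^p$ norm by Hausdorff--Young in the $y$ variable, and we handle the weight through the trivial monotonicity $\langle k\rangle\le\langle(j,k)\rangle$ for $(j,k)\in\Z^t\times\Z^m$.

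Write $f(y,z)=\sum_{(j,k)}a_{j,k}e^{2\pi i(j\cdot y+k\cdot z)}$ with $(\langle(j,k)\rangle^s a_{j,k})\in\ell^p$. Since $p\le2$ and $s\ge0$, we have $(a_{j,k})\in\ell^2$, so $f\in L^2(\T^{t+m})$. By Fubini--Plancherel, for almost every $y$ the slice $f_y$ lies in $L^2(\T^m)$, with coefficients
\[
 \widehat{f_y}(k)=\sum_{j\in\Z^t}a_{j,k}e^{2\pi i j\cdot y}=:F_k(y),
\]
the series converging in $L^2(\T^t)$ for each fixed $k$. The left side of \eqref{eq:slicing-estimate-detailed} is then
\[
 \int_{\T^t}\sum_k\langle k\rangle^{sp}|F_k(y)|^p\,dy
 =\sum_k\langle k\rangle^{sp}\norm{F_k}_{L^p(\T^t)}^p,
\]
where the interchange is justified by Tonelli.

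For fixed $k$, since $\T^t$ has measure one and $p\le2\le p'$, we have $\norm{F_k}_{L^p}\le\norm{F_k}_{L^{p'}}$. Hausdorff--Young on $\T^t$, in the synthesis form already used in Step~4 of the proof of \Cref{lem:critical-vmo}, then gives $\norm{F_k}_{L^{p'}}\le\norm{(a_{j,k})_j}_{\ell^p}$. Therefore
\[
 \sum_k\langle k\rangle^{sp}\norm{F_k}_{L^p}^p
 \le\sum_{k}\sum_j\langle k\rangle^{sp}|a_{j,k}|^p
 \le\sum_{j,k}\langle(j,k)\rangle^{sp}|a_{j,k}|^p
 =\norm{f}_{\Fell{p}{s}}^p.
\]
The second inequality uses $s\ge0$. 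Finiteness of the integral forces $\norm{f_y}_{\Fell{p}{s}}<\infty$ for almost every $y$, which is the first claim.

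The only delicate point is the measure-theoretic identification of $\widehat{f_y}(k)$ with $F_k(y)$ simultaneously for all $k$ on a common full-measure set of $y$. I would handle it by noting that $y\mapsto\widehat{f_y}(k)=\int f(y,z)e^{-2\pi ik\cdot z}\,dz$ is defined for almost every $y$ by Fubini. This function has $y$-Fourier coefficients $a_{j,k}$, so it equals $F_k$ almost everywhere. Taking the countable union of the exceptional null sets over $k\in\Z^m$ gives a single exceptional set. Everything else is routine.
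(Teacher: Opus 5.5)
Your proof is correct. The core estimate coincides with the paper's Steps~1--2: Hausdorff--Young in the $y$ variables for each fixed $z$-frequency, the embedding $L^{p'}(\T^t)\hookrightarrow L^p(\T^t)$, Tonelli, and $\langle k\rangle\le\langle(j,k)\rangle$ using $s\ge0$.

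Where you genuinely differ is the step that identifies the formal coefficients $F_k(y)$ with the Fourier coefficients of the actual slice $f_y$.

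\emph{The paper's route.} It argues by approximation. It takes rectangular truncations $f^{(M)}$ and shows their slices form a Cauchy sequence in $L^p(\T^t;\Fell{p}{s}(\T^m))$. It extracts an a.e.\ convergent subsequence with limit $G_y$. Separately, it uses Hausdorff--Young on the full torus $\T^{t+m}$, together with Fubini, to get $f^{(M_j)}_y\to f_y$ in $L^{p'}(\T^m)$ along a further subsequence, and then matches Fourier coefficients. The case $p=1$ is treated in its own step via absolute convergence.

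\emph{Your route.} You define $\widehat{f_y}(k)$ directly as a partial Fourier integral in $z$. Fubini on $f\in L^1(\T^{t+m})$ shows this function of $y$ has $y$-Fourier coefficients $(a_{j,k})_j$. Hence it agrees almost everywhere with the $L^2(\T^t)$ synthesis $F_k$, and a countable union of null sets over $k\in\Z^m$ gives one common exceptional set.

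\emph{Comparison.} Your argument is shorter, handles all $1\le p\le2$ uniformly, and needs nothing beyond $f\in L^1$ and $(a_{j,k})\in\ell^2$. The paper's longer argument additionally gives convergence of the sliced truncations in the vector-valued space. For $p=1$ it also gives the conclusion for every $y$, with a continuous representative. Neither extra is required by the lemma as stated.
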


\begin{proof}
Write the Fourier series as
\[
 f(y,z)=\sum_{\mu\in\Z^t}\sum_{\nu\in\Z^m}
 a_{\mu,\nu}e^{2\pi i(\mu\cdot y+\nu\cdot z)}.
\]
For each fixed $\nu$, define the $y$-Fourier series
\[
 b_\nu(y)=\sum_{\mu\in\Z^t}a_{\mu,\nu}e^{2\pi i\mu\cdot y}.
\]
We first estimate these functions and then explain why they are the Fourier
coefficients of the actual slices.

\medskip
\noindent\emph{Step 1: Hausdorff--Young in the $y$ variables.}
The periodic Hausdorff--Young theorem says that, for $1\le p\le2$,
\[
 \norm{b_\nu}_{L^{p'}(\T^t)}
 \le\norm{(a_{\mu,\nu})_{\mu\in\Z^t}}_{\ell^p(\Z^t)}.
\]
It is applied separately for each fixed $\nu$ to the Fourier series in the
$y$ variable only.  Since $\T^t$ has measure one and $p\le p'$, the
embedding $L^{p'}(\T^t)\hookrightarrow L^p(\T^t)$,
which is H\"older's inequality applied to $|b_\nu|^p\cdot1$, gives
\begin{equation*}
 \norm{b_\nu}_{L^p(\T^t)}
 \le\norm{b_\nu}_{L^{p'}(\T^t)}
 \le\norm{(a_{\mu,\nu})_\mu}_{\ell^p}.
\end{equation*}

\medskip
\noindent\emph{Step 2: Tonelli's theorem.}
For a fixed $y$, the formal $z$-Fourier coefficients of the slice are
$b_\nu(y)$.  Since all summands below are nonnegative, Tonelli's theorem
permits interchange of the integral in $y$ and the sum in $\nu$:
\begin{align*}
 \int_{\T^t}\sum_{\nu\in\Z^m}
  \langle\nu\rangle^{sp}|b_\nu(y)|^p\,dy
 &=\sum_{\nu\in\Z^m}\langle\nu\rangle^{sp}
   \norm{b_\nu}_{L^p(\T^t)}^p\\
 &\le\sum_{\mu\in\Z^t}\sum_{\nu\in\Z^m}
   \langle\nu\rangle^{sp}|a_{\mu,\nu}|^p\\
 &\le\sum_{\mu,\nu}
   \langle(\mu,\nu)\rangle^{sp}|a_{\mu,\nu}|^p.
\end{align*}
The last inequality uses $s\ge0$ and
$\langle\nu\rangle\le\langle(\mu,\nu)\rangle$.  The final expression is
$\norm{f}_{\Fell{p}{s}(\T^{t+m})}^p$.  We have therefore proved
\eqref{eq:slicing-estimate-detailed} at the coefficient level.  In
particular, for almost every $y$,
\[
 (\langle\nu\rangle^sb_\nu(y))_{\nu\in\Z^m}\in\ell^p(\Z^m).
\]

\medskip
\noindent\emph{Step 3: identification with the actual slice.}
Steps~1 and~2 show that, for almost every $y$, the formally defined sequence
\[
 b_\nu(y)=\sum_{\mu\in\Z^t}
 a_{\mu,\nu}e^{2\pi i\mu\cdot y}
\]
has the required weighted $\ell^p$ summability.  Such a sequence determines an element of $\Fell{p}{s}(\T^m)$.  Since
$p\le2$ and $s\ge0$, it also belongs to $\ell^2(\Z^m)$ and therefore
determines an ordinary $L^2(\T^m)$ function by Fourier synthesis.  What
remains to be proved is that this function is the actual measurable slice
\[
 f_y(z)=f(y,z).
\]
This is not automatic: the double Fourier series need not be absolutely
convergent, so one may not simply regroup the terms and substitute a fixed
value of $y$.  We identify the two objects by approximation with Fourier
polynomials.

\smallskip
\noindent\emph{Step 3.1: rectangular Fourier truncations.}
For $M\ge1$, define
\[
 f^{(M)}(y,z)
 =\sum_{\substack{|\mu|_\infty\le M\\|\nu|_\infty\le M}}
 a_{\mu,\nu}e^{2\pi i(\mu\cdot y+\nu\cdot z)}.
\]
This is a trigonometric polynomial, and hence for every $y$ its slice is
literally
\[
 f^{(M)}_y(z)
 =\sum_{|\nu|_\infty\le M}
 b_\nu^{(M)}(y)e^{2\pi i\nu\cdot z},
\]
where
\[
 b_\nu^{(M)}(y)
 =\ind_{\{|\nu|_\infty\le M\}}
  \sum_{|\mu|_\infty\le M}
  a_{\mu,\nu}e^{2\pi i\mu\cdot y}.
\]
Because the weighted coefficient sequence of $f$ belongs to $\ell^p$ and
$p<\infty$, its rectangular tails tend to zero.  Therefore
\[
 \norm{f^{(M)}-f}_{\Fell{p}{s}(\T^{t+m})}\longrightarrow0.
\]

\smallskip
\noindent\emph{Step 3.2: convergence in the sliced Fourier--Lebesgue space.}
Apply the estimate proved in Step~2 to the difference
$f^{(M)}-f^{(N)}$.  We obtain
\[
 \int_{\T^t}
 \norm{f^{(M)}_y-f^{(N)}_y}_{\Fell{p}{s}(\T^m)}^p\,dy
 \le
 \norm{f^{(M)}-f^{(N)}}_{\Fell{p}{s}(\T^{t+m})}^p.
\]
The right-hand side tends to zero as $M,N\to\infty$.  Hence the sequence of
sliced polynomials is Cauchy in the Banach-valued space
\[
 L^p\bigl(\T^t;\Fell{p}{s}(\T^m)\bigr).
\]
To obtain convergence for individual $y$, choose a subsequence $M_j$ for which
\[
 \left(\int_{\T^t}
 \norm{f^{(M_{j+1})}_y-f^{(M_j)}_y}_{\Fell{p}{s}}^p\,dy\right)^{1/p}
 \le2^{-j}.
\]
Since $|\T^t|=1$, H\"older's inequality gives
\[
 \int_{\T^t}
 \norm{f^{(M_{j+1})}_y-f^{(M_j)}_y}_{\Fell{p}{s}}\,dy
 \le2^{-j}.
\]
Tonelli's theorem may therefore be applied to the nonnegative series, and
it gives
\[
 \int_{\T^t}\sum_{j=1}^\infty
 \norm{f^{(M_{j+1})}_y-f^{(M_j)}_y}_{\Fell{p}{s}}\,dy<\infty.
\]
Consequently, for almost every $y$,
\[
 \sum_{j=1}^\infty
 \norm{f^{(M_{j+1})}_y-f^{(M_j)}_y}_{\Fell{p}{s}}<\infty.
\]
For each such $y$, the sequence $f^{(M_j)}_y$ is Cauchy in the complete
space $\Fell{p}{s}(\T^m)$.  Denote its limit by
\[
 G_y\in\Fell{p}{s}(\T^m).
\]
Because $p\le2$ and $s\ge0$, the Fourier coefficients of $G_y$ belong to
$\ell^2$, so $G_y$ is an ordinary $L^2(\T^m)$ function.  We must still show
that $G_y=f(y,\cdot)$ almost everywhere.

\smallskip
\noindent\emph{Step 3.3: convergence to the ordinary slice when
$1<p\le2$.}
Since $s\ge0$, the unweighted coefficients satisfy
\[
 \sum_{\mu,\nu}|a_{\mu,\nu}|^p
 \le
 \sum_{\mu,\nu}
 \langle(\mu,\nu)\rangle^{sp}|a_{\mu,\nu}|^p<\infty.
\]
The synthesis form of the periodic Hausdorff--Young theorem on the full
$(t+m)$-torus therefore gives
\[
 \norm{f^{(M)}-f}_{L^{p'}(\T^{t+m})}
 \le
 \norm{(a_{\mu,\nu})_{(\mu,\nu)\text{ omitted by }M}}_{\ell^p}
 \longrightarrow0.
\]
Here the $L^{p'}$ limit has Fourier coefficients $a_{\mu,\nu}$.  Since the
assumption $p\le2$, $s\ge0$ also implies
$(a_{\mu,\nu})\in\ell^2$, the rectangular polynomials converge in $L^2$ to
the original function $f$.  Uniqueness of $L^2$ Fourier coefficients shows
that the $L^{p'}$ limit is the same almost-everywhere-defined function $f$.

Fubini's theorem rewrites this convergence as
\[
 \norm{f^{(M)}-f}_{L^{p'}(\T^{t+m})}^{p'}
 =\int_{\T^t}
 \norm{f^{(M)}_y-f_y}_{L^{p'}(\T^m)}^{p'}\,dy
 \longrightarrow0.
\]
To obtain convergence for almost every $y$, pass to a further subsequence,
still denoted $M_j$, such that
\[
 \sum_j
 \norm{f^{(M_j)}-f}_{L^{p'}(\T^{t+m})}^{p'}<\infty.
\]
Tonelli's theorem and the preceding Fubini identity then imply that, for
almost every $y$,
\[
 \norm{f^{(M_j)}_y-f_y}_{L^{p'}(\T^m)}\longrightarrow0.
\]
The same subsequence still converges to $G_y$ in
$\Fell{p}{s}(\T^m)$, because passing to a further subsequence does not change a
previously established limit.

\smallskip
\noindent\emph{Step 3.4: uniqueness by Fourier coefficients.}
For almost every $y$, the selected subsequence converges to $G_y$ in
$\Fell{p}{s}(\T^m)$ and to $f_y$ in $L^{p'}(\T^m)$.  Fix
$\nu\in\Z^m$.  Convergence in $\Fell{p}{s}$ implies convergence of the
$\nu$th Fourier coefficient because that coefficient is one coordinate of
the weighted $\ell^p$ sequence.  Convergence in $L^{p'}$ also implies
convergence of that coefficient, since
\[
 \left|\int_{\T^m}(h_1-h_2)(z)e^{-2\pi i\nu\cdot z}\,dz\right|
 \le \norm{h_1-h_2}_{L^{p'}}.
\]
Therefore
\[
 \widehat G_y(\nu)
 =\lim_j\widehat{f_y^{(M_j)}}(\nu)
 =\widehat{f_y}(\nu),
 \qquad\text{for }\nu\in\Z^m.
\]
Both functions belong to $L^2(\T^m)$: this was noted for $G_y$, and it
holds for $f_y$ for almost every $y$ by Fubini because $f\in L^2$.  The
uniqueness theorem for $L^2$ Fourier series now gives
\[
 G_y=f_y
 \quad\text{almost everywhere on }\T^m.
\]
In particular, the $z$-Fourier coefficients of the actual slice are
precisely $b_\nu(y)$, and
\[
 f_y\in\Fell{p}{s}(\T^m)
\]
for almost every $y$.

\smallskip
\noindent\emph{Step 3.5: the case $p=1$.}
When $p=1$, the assumption gives
\[
 \sum_{\mu,\nu}
 \langle(\mu,\nu)\rangle^s|a_{\mu,\nu}|<\infty,
\]
and hence $\sum_{\mu,\nu}|a_{\mu,\nu}|<\infty$.  By the Weierstrass
$M$-test, the full double Fourier series converges absolutely and uniformly
on $\T^t\times\T^m$.  It therefore defines a continuous representative of
$f$, and absolute convergence permits regrouping the terms:
\begin{align*}
 f(y,z)
 &=\sum_{\mu,\nu}
 a_{\mu,\nu}e^{2\pi i(\mu\cdot y+\nu\cdot z)}\\
 &=\sum_\nu
 \left(\sum_\mu a_{\mu,\nu}e^{2\pi i\mu\cdot y}\right)
 e^{2\pi i\nu\cdot z}\\
 &=\sum_\nu b_\nu(y)e^{2\pi i\nu\cdot z}.
\end{align*}
Moreover, for every $y$,
\begin{align*}
 \sum_\nu\langle\nu\rangle^s|b_\nu(y)|
 &\le\sum_{\mu,\nu}\langle\nu\rangle^s|a_{\mu,\nu}|\\
 &\le\sum_{\mu,\nu}
 \langle(\mu,\nu)\rangle^s|a_{\mu,\nu}|\\
 &=\norm{f}_{\Fell{1}{s}}.
\end{align*}
Thus every slice belongs to $\Fell{1}{s}$, not merely almost every slice.
This completes the identification and the proof of the lemma.
\end{proof}

\subsection{One-scale anisotropic oscillation for \texorpdfstring{$p>2$}{p greater than 2}}
\label{subsec:one-scale-anisotropic}

The endpoint argument for $p>2$ does not require one fixed slice in the
remaining Zak variables to belong to VMO in the selected variables.  It is
enough that, at every sufficiently small selected-variable scale, at least
one slice has small mean oscillation at that scale.  The chosen slice may depend on the scale because, for every fixed value of the remaining variables,
the ordinary-Zak reduction and the continuous ordinary-Zak common-zero theorem
apply to the selected-variable slice.

For each $J\ge1$, let $\mathcal Q_J$ be the collection of cubes of side
comparable to $2^{-J}$ drawn from the finite family of shifted dyadic grids
in Mei's covering lemma~\cite[Proposition~2.5]{Mei2003}.  More precisely,
a ball of radius $O(2^{-J})$ is first enclosed in an axis-parallel cube of
comparable side length; Mei's lemma then places that cube inside a cube
$Q\in\mathcal Q_J$ with
\[
 |Q|\asymp 2^{-Jm}.
\]
Only the dimension-dependent comparability constants and the finite number
of shifted grids enter the estimates below.

For $f\in L^2(\T^t\times\T^m)$, write as before $f_y(z)=f(y,z)$ and define
\[
 \omega_J^z f(y)
 =\sup_{Q\in\mathcal Q_J}
   \fint_Q\abs{f_y(z)-(f_y)_Q}\,dz.
\]
The supremum is over a finite family for each $J$, so
$y\mapsto\omega_J^zf(y)$ is measurable.

\begin{lemma}
\label{lem:one-scale-anisotropic}
Let $2<p<\infty$, let $m\ge1$ and $t\ge0$, and put
\[
 s_p(m,t)
 =m\left(1-1/p\right)
  +t\left(\frac12-1/p\right).
\]
If
\[
 f\in\Fell{p}{s_p(m,t)}(\T^t\times\T^m),
\]
then
\[
 \norm{\omega_J^zf}_{L^2(\T^t)}\longrightarrow0,
 \qquad\text{as }J\to\infty.
\]
The same conclusion holds after summing the oscillation moduli of any fixed
finite family of scalar functions.
\end{lemma}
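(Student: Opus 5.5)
We adapt the low/high frequency splitting from the proof of \Cref{lem:critical-vmo}, now with an $L^2$ average over the slice variable $y$. Write
\[
 f(y,z)=\sum_{\mu\in\Z^t}\sum_{\nu\in\Z^m}a_{\mu,\nu}e^{2\pi i(\mu\cdot y+\nu\cdot z)},
 \qquad s=s_p(m,t).
\]
Fix $J$ and put $R=2^{J}$. The key steps are as follows.

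\emph{Step 1: splitting.} Split $f=f_{<}+f_{\ge}$ according to whether $|(\mu,\nu)|<\varepsilon_JR$ or not, where $\varepsilon_J\searrow0$ slowly is chosen later. Using the full frequency rather than only $\nu$ lets the $y$-frequencies be controlled through Plancherel in $y$.

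\emph{Step 2: the high part.} For every $y$ and every $Q\in\mathcal Q_J$ we use $\fint_Q|g-g_Q|\le2|Q|^{-1/2}\|g\|_{L^2(\T^m)}$. This gives
\[
 \omega_J^zf_{\ge}(y)\le C2^{Jm/2}\|(f_{\ge})_y\|_{L^2(\T^m)}.
\]
The right-hand side does not depend on $Q$, so the supremum over the finite family costs nothing. Taking the $L^2(\T^t)$ norm and applying Plancherel on $\T^{t+m}$ yields
\[
 \|\omega_J^zf_{\ge}\|_{L^2(\T^t)}\le C2^{Jm/2}\|(a_{\mu,\nu})_{|(\mu,\nu)|\ge\varepsilon_JR}\|_{\ell^2}.
\]
Hölder with $1/2=1/p+1/r$ and the shell count in dimension $t+m$ bound this by
\[
 C2^{Jm/2}(\varepsilon_JR)^{(t+m)/r-s}\,\|(\langle k\rangle^sa_k)_{|k|\ge\varepsilon_JR}\|_{\ell^p}.
\]
Here $(t+m)/r-s=-m/2$ by the definition of $s_p(m,t)$; this is exactly where the extra exponent $t(1/2-1/p)$ enters. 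The high part is therefore at most $C\varepsilon_J^{-m/2}\,\mathrm{tail}_p(\varepsilon_JR)$.

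\emph{Step 3: the low part.} The mean value theorem on $Q$ gives
\[
 \omega_J^zf_<(y)\le C2^{-J}\,\bigl\|\,|\nabla_z(f_<)_y|\,\bigr\|_{L^\infty(\T^m)}.
\]
Bounding the sup pointwise by an absolute sum would not be $L^2$-friendly in $y$. Instead we bound $\sum_\nu|\nu|\,|b^<_\nu(y)|$ by Cauchy--Schwarz in $\nu$ over $|\nu|<\varepsilon_JR$, then take the $L^2$ norm in $y$ via Plancherel. This yields
\[
 \|\omega_J^zf_<\|_{L^2(\T^t)}\le C2^{-J}(\varepsilon_JR)^{m/2}\Bigl(\sum_{|k|<\varepsilon_JR}|k|^2|a_k|^2\Bigr)^{1/2}.
\]
Hölder with exponents $p/2$ and $r/2$ bounds the last factor by $\|(\langle k\rangle^sa_k)\|_{\ell^p}$ times $\bigl(\sum_{|k|<\varepsilon_JR}\langle k\rangle^{(1-s)r}\bigr)^{1/r}$. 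The shell count gives $(\varepsilon_JR)^{1-s+(t+m)/r}=(\varepsilon_JR)^{1-m/2}$ when the exponent is positive; otherwise it gives a power of $\log$ or a constant, which is even better. Altogether the low part is at most
\[
 C2^{-J}(\varepsilon_JR)^{m/2}(\varepsilon_JR)^{1-m/2}\|f\|=C\varepsilon_J\|f\|.
\]

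\emph{Step 4: conclusion.} Choose $\varepsilon_J\to0$ so slowly that $\varepsilon_J^{-m/2}\,\mathrm{tail}_p(\varepsilon_J2^J)\to0$. This is possible because the $\ell^p$ tail $\mathrm{tail}_p(T)\to0$ as $T\to\infty$ (for instance, let $\varepsilon_J$ be the smallest value at which the tail is below $\varepsilon_J^{m/2+1}$). Then both contributions tend to zero. The statement for finite families of scalar functions follows from the triangle inequality.

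The main obstacle is this balancing of the two pieces, which is dimensionally sharp. Without the slowly varying cutoff $\varepsilon_J$, the low part is only bounded, not small. I would first check whether the alternative decomposition, obtained by splitting off a finite trigonometric polynomial $P_Kf$ as in Step~7 of \Cref{lem:critical-vmo}, is simpler. For that polynomial the oscillation is $O(2^{-J}K)\to0$ trivially, and the remainder is handled by the scale-invariant bound above with $\varepsilon_J=1$. I expect the $P_Kf$ route to be cleaner, so it would be my first choice.
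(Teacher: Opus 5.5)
The gap is in Step~3. It occurs for every $m\ge2$, which includes every case used later ($m=2q$); your Step~2 is correct.

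A single Cauchy--Schwarz over the whole ball $\{|\nu|<T\}$, $T=\varepsilon_J2^J$, costs $T^{m/2}$ wherever the coefficient energy sits, and the H\"older step does not repay it. The exponent is $(1-s)r+(t+m)=r(1-m/2)$, which is positive only for $m=1$. For $m=2$ the truncated sum $\sum_{|k|<T}\langle k\rangle^{(1-s)r}$ grows like $\log T$. For $m\ge3$ it is bounded below by its $k=0$ term. Both are therefore worse than $T^{1-m/2}\le1$, not better. What your argument actually gives is
\[
 \|\omega_J^zf_<\|_{L^2(\T^t)}\le C\varepsilon_J^{m/2}2^{J(m/2-1)}\|f\|_{\Fell{p}{s}}\quad(m\ge3),
 \qquad
 \|\omega_J^zf_<\|_{L^2(\T^t)}\le C\varepsilon_J(\log T)^{1/r}\|f\|_{\Fell{p}{s}}\quad(m=2).
\]
Multiply this by your high-part bound $C\varepsilon_J^{-m/2}\mathrm{tail}_p(T)$ and use $T\le2^J$. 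The product is at least $c\,T^{m/2-1}\mathrm{tail}_p(T)$ when $m\ge3$, and $c(\log T)^{1/r}\mathrm{tail}_p(T)$ when $m=2$. So both bounds can tend to zero only if the weighted $\ell^p$ tail decays at a polynomial, respectively logarithmic, rate. A general element of $\Fell{p}{s}$ has no such rate, so no choice of $\varepsilon_J$ works.

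The $P_Kf$ route has the same defect, because its ``scale-invariant bound with $\varepsilon_J=1$'' is this Step~3 estimate. For $m\ge3$ it reads $C2^{J(m/2-1)}K^{1-m/2}\|f-P_Kf\|_{\Fell{p}{s}}$, which grows with $J$.

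The missing idea is to apply Bernstein and Cauchy--Schwarz separately on each dyadic shell. Let $E_j=\sum_{2^j\le|k|<2^{j+1}}\langle k\rangle^{sp}|a_k|^p$; the paper uses $z$-frequency shells $|\nu|\sim2^j$ instead. Cauchy--Schwarz over one shell costs only $2^{jm/2}$. H\"older on the same shell bounds its $\ell^2$ norm by $CE_j^{1/p}2^{-jm/2}$, by the same cancellation $(t+m)/r-s=-m/2$ as in your Step~2. Hence that shell contributes at most $C2^{j-J}E_j^{1/p}$ to $\|\omega_J^zf\|_{L^2(\T^t)}$. Keep your Step~2 for $|k|\ge2^J$, that is, take $\varepsilon_J=1$. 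Then
\[
 \|\omega_J^zf\|_{L^2(\T^t)}\le C\sum_{0\le j<J}2^{j-J}E_j^{1/p}+C\,\mathrm{tail}_p(2^J)\longrightarrow0,
\]
because $(E_j^{1/p})_j\in\ell^p\subset c_0$ is convolved with the $\ell^1$ kernel $(2^{-i})_{i\ge0}$. No balancing parameter and no $P_Kf$ splitting are needed. Apart from the choice of shells, this is the paper's argument.
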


\begin{proof}
Set $s=s_p(m,t)$.  We first explain the selected-variable dyadic
decomposition, then derive an $L^2_y$ estimate for each shell, then estimate
its mean oscillation on cubes of side $2^{-J}$, and finally sum the shells.
All constants below may depend on $m,t,p$ and on the fixed dyadic cutoffs,
but not on $j$ or $J$.

\medskip
\noindent\emph{Step 1: $f$ is an $L^2$ function.}
Let $r=2p/(p-2)$, so $1/2=1/p+1/r$.  Since
\[
 rs=\frac{2p}{p-2}s
 =t+\frac{2m(p-1)}{p-2}>t+m,
\]
the polynomial lattice sum
$\sum_{(\mu,\nu)\in\Z^{t+m}}\langle(\mu,\nu)\rangle^{-rs}$ converges.
H\"older's inequality for sequences, applied to
$|a_{\mu,\nu}|=(\langle(\mu,\nu)\rangle^s|a_{\mu,\nu}|)
\langle(\mu,\nu)\rangle^{-s}$, therefore gives
$(a_{\mu,\nu})\in\ell^2$.  Plancherel on $\T^{t+m}$ then gives
$f\in L^2$.  Thus the slice oscillation in the statement is defined for
almost every $y$ by Fubini.

\medskip
\noindent\emph{Step 2: a dyadic decomposition only in the selected
$z$-frequency.}
Choose bounded functions $\varphi_{-1},\varphi_1,\varphi_2,\ldots$ on
$\Z^m$ such that
\[
 \sum_{j\in\{-1,1,2,\ldots\}}\varphi_j(\nu)=1,
\]
$\varphi_{-1}$ is supported in $|\nu|\le4$, and, for $j\ge1$,
$\varphi_j$ is supported where $2^{j-1}\le|\nu|\le2^{j+1}$.  The supports
have uniformly finite overlap.  Define
\[
 \Delta_j^zf(y,z)
 =\sum_{\mu,\nu}\varphi_j(\nu)a_{\mu,\nu}
  e^{2\pi i(\mu\cdot y+\nu\cdot z)}.
\]
Clearly $f=\sum_j\Delta_j^zf$ in $L^2$.

For $j\ge1$, put $a^{(j)}_{\mu,\nu}=\varphi_j(\nu)a_{\mu,\nu}$ and
\begin{equation*}
 E_j
 =\sum_{\mu,\nu}
   \langle(\mu,\nu)\rangle^{ps}|a^{(j)}_{\mu,\nu}|^p.
\end{equation*}
Finite overlap gives
\begin{equation}
 \sum_{j\ge1}E_j
 \le C\norm{f}_{\Fell{p}{s}}^p.
 \label{eq:anisotropic-Ej-sum}
\end{equation}

For fixed $y$, define
\begin{equation*}
 C_j(y)=2^{jm/2}\norm{\Delta_j^zf(y,\cdot)}_{L^2(\T^m)}.
\end{equation*}
The factor $2^{jm/2}$ is the square root of the number of selected
$z$-frequencies in one shell.  It is chosen so that the later Bernstein and
small-cube estimates have scale-free coefficients.

\medskip
\noindent\emph{Step 3: Parseval and weighted H\"older for one shell.}
Parseval's theorem in the $z$ variables, followed by Tonelli and Parseval in
the $y$ variables, gives
\begin{align}
 \norm{C_j}_{L^2(\T^t)}^2
 &=2^{jm}\sum_{\mu,\nu}|a^{(j)}_{\mu,\nu}|^2.
 \label{eq:anisotropic-parseval}
\end{align}
We estimate the coefficient sum by H\"older.  Write
\[
 |a^{(j)}_{\mu,\nu}|^2
 =\bigl(\langle(\mu,\nu)\rangle^{ps}
        |a^{(j)}_{\mu,\nu}|^p\bigr)^{2/p}
  \langle(\mu,\nu)\rangle^{-2s}.
\]
Apply H\"older with conjugate exponents $p/2$ and $p/(p-2)$.  This gives
\begin{equation}
 \sum_{\mu,\nu}|a^{(j)}_{\mu,\nu}|^2
 \le E_j^{2/p}
 \left(
  \sum_{\substack{\mu\in\Z^t\\2^{j-1}\le|\nu|\le2^{j+1}}}
  \langle(\mu,\nu)\rangle^{-\beta}
 \right)^{(p-2)/p},
 \label{eq:anisotropic-holder-shell}
\end{equation}
where
\begin{equation}
 \beta=\frac{2p}{p-2}s
 =t+\frac{2m(p-1)}{p-2}>t.
 \label{eq:anisotropic-beta}
\end{equation}

We now estimate the last term in
\eqref{eq:anisotropic-holder-shell}.  If $r\ge1$ and $\beta>t$, then
\begin{equation}
 \sum_{\mu\in\Z^t}(r^2+|\mu|^2)^{-\beta/2}
 \le C_{t,\beta}r^{t-\beta}.
 \label{eq:lattice-integral-comparison}
\end{equation}
To establish \eqref{eq:lattice-integral-comparison}, compare the radially decreasing summand on each unit cube centered at $\mu$ with the integral over a fixed enlargement of that cube.  The comparison constant is independent of $r$ and $\mu$.  Summing the enlarged cubes with bounded overlap reduces the lattice sum to
\[
 C\int_{\R^t}(r^2+|x|^2)^{-\beta/2}\,dx
 =Cr^{t-\beta}\int_{\R^t}(1+|u|^2)^{-\beta/2}\,du.
\]
 The last integral is finite precisely because $\beta>t$, as follows from polar coordinates.  For each
$|\nu|\asymp2^j$, apply \eqref{eq:lattice-integral-comparison} with
$r\asymp2^j$.  There are $O(2^{jm})$ such $\nu$.  Therefore
\begin{equation}
 \sum_{\substack{\mu\in\Z^t\\|\nu|\asymp2^j}}
 \langle(\mu,\nu)\rangle^{-\beta}
 \le C2^{j(m+t-\beta)}.
 \label{eq:anisotropic-lattice-sum}
\end{equation}

Insert \eqref{eq:anisotropic-lattice-sum} into
\eqref{eq:anisotropic-holder-shell} and then into
\eqref{eq:anisotropic-parseval}.  The total power of $2^j$ in the square
of $\norm{C_j}_2$ is
\[
 m+\frac{p-2}{p}(m+t-\beta).
\]
Using \eqref{eq:anisotropic-beta},
\begin{align*}
 m+t-\beta
 &=m-\frac{2m(p-1)}{p-2}
   =-\frac{mp}{p-2},\\
 m+\frac{p-2}{p}(m+t-\beta)
 &=m-m=0.
\end{align*}
This exact cancellation is the reason for the definition of the critical
weight $s_p(m,t)$.  We obtain
\begin{equation}
 \norm{C_j}_{L^2(\T^t)}
 \le C E_j^{1/p}.
 \label{eq:anisotropic-Cj}
\end{equation}

For the fixed low-frequency block $\Delta_{-1}^zf$, applying the Bernstein
inequality for trigonometric polynomials yields a function 
$C_{-1}\in L^2(\T^t)$ such that
\begin{equation}
 \norm{\nabla_z\Delta_{-1}^zf(y,\cdot)}_{L^\infty(\T^m)}
 \le C C_{-1}(y).
 \label{eq:low-block-Cminus1}
\end{equation}
Indeed, one may take $C_{-1}(y)=\|\Delta_{-1}^z f(y,\cdot)\|_{L^2(\T^m)}$.

\medskip
\noindent\emph{Step 4: oscillation of a shell below the cube frequency,
$j\le J$.}
Fix $y$ for which the displayed quantities are finite, let
$g(z)=\Delta_j^zf(y,z)$, and let $Q\in\mathcal Q_J$.  For every continuously
differentiable $g$, the mean-value theorem gives the cube Poincar\'e estimate
\begin{align}
 \fint_Q|g-(g)_Q|
 &\le\fint_Q\fint_Q|g(z)-g(z')|\,dz'\,dz\notag\\
 &\le C2^{-J}\norm{\nabla g}_{L^\infty(Q)}.
 \label{eq:cube-poincare-MVT}
\end{align}

We now apply the classical Bernstein inequality for trigonometric
polynomials to $g$ followed by Cauchy--Schwarz:
\begin{align*}
 \norm{\nabla_z g}_{L^\infty}
 &\le C2^j
 \sum_{|\nu|\asymp2^j}|b_\nu(y)|\\
 &\le C2^j\bigl(\#\{\nu:|\nu|\asymp2^j\}\bigr)^{1/2}
     \left(\sum_{|\nu|\asymp2^j}|b_\nu(y)|^2\right)^{1/2}\\
 &\le C2^j2^{jm/2}\norm{g}_{L^2(\T^m)}
 =C2^j C_j(y).
\end{align*}
Here Parseval in $z$ identifies the last coefficient square sum with
$\norm{g}_2^2$.  Combining this with \eqref{eq:cube-poincare-MVT} gives,
for $j\le J$,
\begin{equation}
 \fint_Q|\Delta_j^zf-(\Delta_j^zf)_Q|
 \le C2^{j-J}C_j(y).
 \label{eq:anisotropic-low-shell-osc}
\end{equation}
The same argument and \eqref{eq:low-block-Cminus1} give the contribution
$C2^{-J}C_{-1}(y)$ for the fixed low block.

\medskip
\noindent\emph{Step 5: oscillation of a shell above the cube frequency,
$j>J$.}
For every integrable $g$,
\[
 \fint_Q|g-(g)_Q|\le2\fint_Q|g|.
\]
Apply Cauchy--Schwarz on $Q$:
\begin{align*}
 \fint_Q|g|
 &\le |Q|^{-1/2}\norm{g}_{L^2(Q)}
 \le |Q|^{-1/2}\norm{g}_{L^2(\T^m)}.
\end{align*}
Since $|Q|\asymp2^{-Jm}$ and
$\norm{g}_2=2^{-jm/2}C_j(y)$, we obtain
\begin{equation}
 \fint_Q|\Delta_j^zf-(\Delta_j^zf)_Q|
 \le C2^{-m(j-J)/2}C_j(y),
 \qquad j>J.
 \label{eq:anisotropic-high-shell-osc}
\end{equation}

\medskip
\noindent\emph{Step 6: summing the shells.}
Mean oscillation is
subadditive and averaging is linear, so
\eqref{eq:anisotropic-low-shell-osc}--\eqref{eq:anisotropic-high-shell-osc}
give
\begin{equation}
 \omega_J^zf(y)
 \le C2^{-J}C_{-1}(y)
 +C\sum_{1\le j\le J}2^{j-J}C_j(y)
 +C\sum_{j>J}2^{-m(j-J)/2}C_j(y).
 \label{eq:anisotropic-oscillation-bound}
\end{equation}
For a general $f$, let $f^{(M)}$ be the rectangular Fourier truncation to
$|\mu|,|\nu|\le M$.  The estimate above applies to $f^{(M)}$.  The
truncations converge to $f$ in the weighted $\ell^p$ coefficient norm and,
by Step~1 and Plancherel, in $L^2(\T^{t+m})$.  After passage to a subsequence,
Fubini gives
$\norm{f_y^{(M)}-f_y}_{L^2(\T^m)}\to0$ for almost every $y$.
For a fixed cube $Q$, the mean-oscillation functional is Lipschitz in
$L^2(\T^m)$:
\[
 \left|
  \fint_Q|g-(g)_Q|-\fint_Q|h-(h)_Q|
 \right|
 \le 2|Q|^{-1/2}\norm{g-h}_{L^2(\T^m)}.
\]
Indeed, insert and subtract $h-(h)_Q$, and use
$|(g-h)_Q|\le |Q|^{-1/2}\norm{g-h}_{L^2(Q)}$.  Since $\mathcal Q_J$ is finite, it follows that
$\omega_J^z f^{(M)}(y)\to\omega_J^z f(y)$ for almost every such $y$.
Fatou's lemma passes the left-hand side to the limit.  On the right-hand
side, the shell energies of the truncations are bounded by the original
$E_j$, and the low-block quantity converges in $L^2_y$.  Thus
\eqref{eq:anisotropic-oscillation-bound} and the ensuing $L^2_y$ estimate
hold for $f$.

Take the $L^2(\T^t)$ norm and use Minkowski's inequality for the sums,
followed by \eqref{eq:anisotropic-Cj}:
\begin{align}
 \norm{\omega_J^zf}_{L^2_y}
 \le{}&C2^{-J}\norm{C_{-1}}_2
 +C\sum_{1\le j\le J}2^{j-J}E_j^{1/p}
 +C\sum_{j>J}2^{-m(j-J)/2}E_j^{1/p}.
 \label{eq:anisotropic-L2-convolution}
\end{align}
Put $x_j=E_j^{1/p}$ for $j\ge1$ and extend $x$ by zero to all
other integers.  By \eqref{eq:anisotropic-Ej-sum}, $x\in\ell^p$, hence
$x\in c_0$.  The two sums in
\eqref{eq:anisotropic-L2-convolution} are discrete convolutions of $x$
with the $\ell^1$ kernels
\[
 h_r^+=2^{-r}\ind_{\{r\ge0\}},
 \qquad
 h_r^-=2^{mr/2}\ind_{\{r<0\}}.
\]
Since $\ell^1*c_0\subset c_0$, both convolution terms tend to zero as
$J\to\infty$; the low-frequency term $2^{-J}\norm{C_{-1}}_2$ does as
well.  Therefore
\[
 \norm{\omega_J^zf}_{L^2(\T^t)}\to0.
\]

For a fixed finite family $f^{(1)},\ldots,f^{(K)}$, the triangle
inequality gives
\[
 \left\|\sum_{\nu=1}^K\omega_J^zf^{(\nu)}\right\|_{L^2_y}
 \le\sum_{\nu=1}^K\|\omega_J^zf^{(\nu)}\|_{L^2_y}
 \longrightarrow0.
\]
For an $R\times N$ matrix, apply this to its finitely many scalar entries;
equivalence of the entrywise norm and the operator norm changes only a
constant depending on $R,N$, independently of the scale and the slice.
\end{proof}

\begin{lemma}
\label{lem:one-scale-partial-smoothing}
Let
\[
 S:\mathbb R^q\times\mathbb R^q\longrightarrow\mathbb C^{R\times N}
\]
be essentially bounded and satisfy the ordinary Zak relation entrywise:
\begin{equation*}
 S(u+n,v+m)=e^{2\pi i n\cdot v}S(u,v),
 \qquad n,m\in\mathbb Z^q,
\end{equation*}
almost everywhere.  Assume that $N\ge R$ and that
\begin{equation}
 \sigma_{\min}(S(u,v))\ge a>0
 \label{eq:one-scale-S-lower-bound}
\end{equation}
for almost every $(u,v)$.  Put $r=R-1$.  If $R=1$, let
$\rho\equiv1$ and $c_\rho=1$.  If $R\ge2$, fix a smooth nowhere-zero
ordinary Zak vector
\[
 \rho:\mathbb R^r\times\mathbb R^r\longrightarrow\mathbb C^R,
 \qquad
 \rho(x+n',y+m')=e^{2\pi i n'\cdot y}\rho(x,y),
\]
and put
\[
 c_\rho=\min_{(x,y)\in[0,1]^{2r}}|\rho(x,y)|>0.
\]
Define the $N$-component vector
\begin{equation}
 \mathcal F((u,x),(v,y))=S(u,v)^T\rho(x,y).
 \label{eq:one-scale-amplified-vector}
\end{equation}

Fix $J\ge1$.  Mean oscillation of an entry of $S$ on a torus cube is
computed in one of the fixed local lifts to the buffered ordinary Zak box;
for a cube crossing a face, we use the corresponding lifted cube.  Suppose
that
\begin{equation*}
 \max_{1\le k\le R,\,1\le i\le N}
 \sup_{Q\in\mathcal Q_J}
 \fint_Q\left|S_{ki}-(S_{ki})_Q\right|
 \le\delta.
\end{equation*}
Choose the smoothing radius $\varepsilon$ with
\[
 c_0 2^{-J}\le\varepsilon\le C_0 2^{-J},
\]
where $c_0,C_0>0$ are fixed and the upper comparison constant is small
enough that the $(u,v)$-projection of every smoothing support is contained in
one of the cubes from the fixed shifted-grid family at scale $J$.  Apply
\eqref{eq:ordinary-zak-compatible-smoothing} to $\mathcal F$ in $q+r$ Zak
dimensions, and denote the result by $\mathcal F_J$.  Then $\mathcal F_J$ is
a smooth ordinary Zak map and
\begin{equation}
 \inf_{Z\in\mathcal Q_{q+r}^{\rm Zak}}|\mathcal F_J(Z)|
 \ge ac_\rho-C\delta-C2^{-J}\norm{S}_\infty.
 \label{eq:one-scale-ordinary-zak-lower-bound}
\end{equation}
The constant $C$ is independent of $J,\delta,a$, and $S$.  It may depend on
the dimensions, the fixed shifted grids, the fixed mollifier, the comparison
constants $c_0,C_0$, and finitely many $C^1$ bounds for the fixed vector
$\rho$ on the buffered Zak cells.  In particular, the same constant
works for a family of matrices $S$ depending on an additional parameter,
provided the buffered lifts, grids, mollifier, and vector $\rho$ are fixed.
\end{lemma}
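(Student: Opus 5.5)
The proof follows the template of \Cref{prop:ordinary-zak-lower-bound-survives}, with a single scale $J$ in place of local VMO. The two inputs are the oscillation hypothesis on $S$ at scale $2^{-J}$ and the smoothness of $\rho$.

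\emph{Step 1: structure of $\mathcal F$ and $\mathcal F_J$.} The two ordinary Zak phases multiply, so $\mathcal F$ is an ordinary Zak map in $q+r$ variables. It is essentially bounded by $\|S\|_\infty\max|\rho|$. Since $S^T$ has the same singular values as $S$,
\[
 |\mathcal F|\ge a\,|\rho|\ge ac_\rho
\]
almost everywhere. By \Cref{lem:ordinary-zak-smoothing-rule}, $\mathcal F_J$ is smooth and ordinary Zak-quasiperiodic. Because $|\mathcal F_J|$ is then periodic, it suffices to bound $|\mathcal F_J(Z)|$ from below for $Z\in\mathcal Q_{q+r}^{\rm Zak}$.

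\emph{Step 2: oscillation of $\mathcal F$ under the smoothing kernel.} Fix $Z=((u,x),(v,y))$. Write $K_\varepsilon(Z,\cdot)$ and $c(Z,\cdot)$ for the product kernel and phase from \eqref{eq:ordinary-zak-kernel-form} in $q+r$ dimensions. The kernel factors as a $(u,v)$-part times an $(x,y)$-part, each with mass one and height $\lesssim\varepsilon^{-2q}$, respectively $\lesssim\varepsilon^{-2r}$. By the choice of $C_0$, the $(u,v)$-support lies in some cube $Q\in\mathcal Q_J$. Take the comparison constant $\Phi=S_Q^T\rho(x,y)$ and split, for $W=((u',x'),(v',y'))$ in the support,
\[
 S(u',v')^T\rho(x',y')-\Phi=\bigl(S(u',v')-S_Q\bigr)^T\rho(x',y')+S_Q^T\bigl(\rho(x',y')-\rho(x,y)\bigr).
\]
We bound the kernel-weighted integral of each term.
\begin{enumerate}
 \item For the first term, integrate out $(x',y')$ and use $|\rho|\le C$. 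Then apply the height bound $\lesssim\varepsilon^{-2q}\asymp|Q|^{-1}$, exactly as in \Cref{lem:ordinary-vmo-positive-averages}. This gives at most $C\delta$, summing over the finitely many entries.
 \item For the second term, use the $C^1$ bound for $\rho$ on the buffered cells. This gives at most $C\varepsilon\|S\|_\infty\le C2^{-J}\|S\|_\infty$.
 \item The phase factor satisfies $|c-1|\le C\varepsilon$ on the support, as in \eqref{eq:ordinary-zak-phase-small}. It therefore contributes a further $C2^{-J}\|S\|_\infty$.
\end{enumerate}
As in \Cref{lem:ordinary-zak-cancellation}, replacing the constant $\Phi$ by $\mathcal F_J(Z)$ costs at most a factor $2$. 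Hence
\[
 D_J(Z):=\int K_\varepsilon(Z,W)\,\bigl|c(Z,W)\mathcal F(W)-\mathcal F_J(Z)\bigr|\,dW\le C\delta+C2^{-J}\|S\|_\infty .
\]

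\emph{Step 3: conclusion.} The reverse triangle inequality gives $|c(Z,W)\mathcal F(W)-\mathcal F_J(Z)|\ge ac_\rho-|\mathcal F_J(Z)|$ almost everywhere, and the kernel has mass one. Therefore $ac_\rho-|\mathcal F_J(Z)|\le D_J(Z)$, which is \eqref{eq:one-scale-ordinary-zak-lower-bound}. Every constant depends only on the dimensions, the grids, the mollifier, $c_0,C_0$, and the $C^1$ bounds of $\rho$, so it is uniform in any auxiliary parameter.

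The main obstacle is the bookkeeping in Step 2 for cubes crossing a face of the Zak cell. There the oscillation must be read in the fixed lifted cube, and the Zak relation introduces a smooth unimodular phase. That phase is absorbed by the same $C\varepsilon\|S\|_\infty$ estimate \eqref{eq:smooth-multiplier-oscillation}. One must also check that the $(u,v)$-projection of the support really lies in a single $Q\in\mathcal Q_J$, which is exactly what the smallness of $C_0$ provides.
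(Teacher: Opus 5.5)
Your proposal is correct and follows essentially the same route as the paper: the pointwise bound $|\mathcal F|\ge ac_\rho$, the kernel-weighted estimate $\int K_\varepsilon\,|c\,\mathcal F-\mathcal F_J(Z)|\le C\delta+C\varepsilon\|S\|_\infty$ obtained by comparing with a constant, the phase remainder $|c-1|\le C\varepsilon$, and the reverse triangle inequality against the probability kernel. The paper first passes to a small cube $B_z$ and the uncorrected barycenter, whereas you compare directly with $S_Q^T\rho(x,y)$; this is a harmless streamlining. One minor correction to Step~2(1): the kernel $\kappa_\varepsilon(U-U')\kappa_\varepsilon(V-V')$, built from a mollifier on $\mathbb R^{q+r}$, need not factor into $(u,v)$- and $(x,y)$-parts, but its $(u',v')$-marginal still has mass one and height $\lesssim\varepsilon^{-2q}$, and that is all you use.
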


\begin{proof}
We separate the proof into the ordinary Zak covariance, the lower bound
before regularization, the one-scale oscillation estimate, and the positive
average argument.

\medskip
\noindent\emph{Step 1: Zak quasiperiodicity.}
Write
\[
 U=(u,x)\in\mathbb R^{q+r},
 \qquad
 V=(v,y)\in\mathbb R^{q+r}.
\]
Let $(n,n'),(m,m')\in\mathbb Z^{q+r}$, with $n,m\in\mathbb Z^q$ and
$n',m'\in\mathbb Z^r$.  Using the two ordinary Zak rules gives
\begin{align*}
 \mathcal F((u+n,x+n'),(v+m,y+m'))
 &=S(u+n,v+m)^T\rho(x+n',y+m')\\
 &=e^{2\pi i n\cdot v}e^{2\pi i n'\cdot y}
   S(u,v)^T\rho(x,y)\\
 &=e^{2\pi i (n,n')\cdot(v,y)}\mathcal F((u,x),(v,y)).
\end{align*}
Thus $\mathcal F$ satisfies the ordinary Zak relation in $q+r$ dimensions.
This explains the phrase that the two ordinary Zak phases multiply.

\medskip
\noindent\emph{Step 2: the lower bound before regularization.}
For every matrix $B\in\mathbb C^{R\times N}$ and every
$\xi\in\mathbb C^R$,
\[
 |B^T\xi|\ge\sigma_{\min}(B)|\xi|,
\]
because $B$ and $B^T$ have the same singular values.  Applying this with
$B=S(u,v)$ and $\xi=\rho(x,y)$, and using
\eqref{eq:one-scale-S-lower-bound}, gives
\begin{equation}
 |\mathcal F((u,x),(v,y))|
 \ge\sigma_{\min}(S(u,v))|\rho(x,y)|
 \ge ac_\rho
 \label{eq:one-scale-amplified-lower-bound}
\end{equation}
for almost every point.  The norm of $\rho$ is periodic because its Zak
multiplier is unimodular, so the same constant $c_\rho$ works on every
translated Zak cell.

\medskip
\noindent\emph{Step 3: transfer of the scale-$J$ oscillation to the
smoothing supports.}
Let $B_z$ be a cube in the $(u,v)$ variables of side comparable to
$\varepsilon$.  By the shifted-grid covering property used in the definition
of $\mathcal Q_J$, there is a cube $Q\in\mathcal Q_J$ such that
\begin{equation*}
 B_z\subset Q,
 \qquad
 |Q|\le C|B_z|.
\end{equation*}
Here and below all cubes are interpreted in the fixed local lifts described
in the statement.  If $h=S_{ki}$, then the elementary inequality
\[
 \fint_E|h-h_E|\le2\fint_E|h-c|
\]
with $E=B_z$ and $c=h_Q$ gives
\begin{align}
 \fint_{B_z}|h-h_{B_z}|
 &\le2\fint_{B_z}|h-h_Q|\notag\\
 &\le2\frac{|Q|}{|B_z|}\fint_Q|h-h_Q|\notag\\
 &\le C\delta.
 \label{eq:one-scale-small-cube-S-oscillation}
\end{align}
Thus the hypothesis on the fixed test family controls every cube of the size
that can occur in the convolution.

We now include the auxiliary variables.  Let
\[
 B=B_z\times B_w
\]
be a cube in all $2(q+r)$ variables, of side comparable to $\varepsilon$, and
choose one point $w_B=(x_B,y_B)\in B_w$.  For the $i$th component of
$\mathcal F$, use the constant
\[
 c_i=\sum_{k=1}^R(S_{ki})_{B_z}\rho_k(w_B).
\]
From \eqref{eq:one-scale-amplified-vector},
\begin{align*}
 \mathcal F_i(z,w)-c_i
 ={}&\sum_{k=1}^R
 \bigl(S_{ki}(z)-(S_{ki})_{B_z}\bigr)\rho_k(w)\\
 &+\sum_{k=1}^R
 (S_{ki})_{B_z}\bigl(\rho_k(w)-\rho_k(w_B)\bigr),
\end{align*}
where $z=(u,v)$ and $w=(x,y)$.  Averaging over $B$, using
\eqref{eq:one-scale-small-cube-S-oscillation}, and absorbing the fixed bound
for $\rho$ into the constant, the first sum contributes at most $C\delta$.
For the second sum,
\[
 |(S_{ki})_{B_z}|\le C\norm{S}_\infty,
\]
and the mean-value theorem applied to the fixed smooth vector $\rho$ on the
buffered cells gives
\[
 \sup_{w\in B_w}|\rho_k(w)-\rho_k(w_B)|
 \le C\varepsilon.
\]
Consequently,
\[
 \fint_B|\mathcal F_i-c_i|
 \le C\delta+C\varepsilon\norm{S}_\infty.
\]
Applying once more the inequality that compares mean oscillation with
distance from an arbitrary constant, and then using equivalence of norms in
the fixed finite-dimensional target $\mathbb C^N$, yields
\begin{equation}
 \operatorname{osc}_B(\mathcal F)
 \le C\delta+C\varepsilon\norm{S}_\infty.
 \label{eq:one-scale-F-oscillation}
\end{equation}
If a local lift is changed across an ordinary Zak face, the new representative
is obtained by multiplication by a smooth unimodular phase.  On a cube of
side $O(\varepsilon)$, its variation contributes at most
$C\varepsilon\norm{S}_\infty$, which is already included in
\eqref{eq:one-scale-F-oscillation}.

\medskip
\noindent\emph{Step 4: the positive smoothing kernel.}
This is analogous to the proof of \Cref{lem:ordinary-vmo-positive-averages}.
Put $n=q+r$ and write $Z=(U,V)$ and $W=(U',V')$ with
$U,V,U',V'\in\mathbb R^n$.  In these variables the ordinary-Zak-compatible
smoothing is
\begin{equation*}
 \mathcal F_J(Z)
 =\int K_\varepsilon(Z,W)c(Z,W)\mathcal F(W)\,dW,
\end{equation*}
where
\[
 K_\varepsilon(Z,W)
 =\kappa_\varepsilon(U-U')\kappa_\varepsilon(V-V'),
 \qquad
 c(Z,W)=e^{2\pi i U\cdot(V-V')}.
\]
For each fixed $Z$,
\begin{equation}
 K_\varepsilon(Z,W)\ge0,
 \qquad
 \int K_\varepsilon(Z,W)\,dW=1.
 \label{eq:one-scale-kernel-probability}
\end{equation}
Thus $K_\varepsilon(Z,W)\,dW$ is a probability measure on
$\mathbb R^{2n}$, supported in a cube $B_Z$ of side $C\varepsilon$ about
$Z$.  Also
\[
 K_\varepsilon(Z,W)\le C\varepsilon^{-2n},
 \qquad
 |B_Z|\asymp\varepsilon^{2n}.
\]
Define the uncorrected barycenter
\[
 \overline{\mathcal F}_{\varepsilon,Z}
 =\int K_\varepsilon(Z,W)\mathcal F(W)\,dW.
\]
The barycenter inequality used already in
\Cref{lem:ordinary-vmo-positive-averages} gives
\begin{align*}
 \int K_\varepsilon
 \left|\mathcal F-\overline{\mathcal F}_{\varepsilon,Z}\right|
 &\le2\int K_\varepsilon
       \left|\mathcal F-\mathcal F_{B_Z}\right|\\
 &\le C\fint_{B_Z}
       \left|\mathcal F-\mathcal F_{B_Z}\right|.
\end{align*}
The constant in the last line is independent of $\varepsilon$: the factor
$\varepsilon^{-2n}$ in the kernel is cancelled by
$|B_Z|\asymp\varepsilon^{2n}$.  By
\eqref{eq:one-scale-F-oscillation},
\begin{equation}
 \int K_\varepsilon(Z,W)
 \left|\mathcal F(W)-\overline{\mathcal F}_{\varepsilon,Z}\right|\,dW
 \le C\delta+C\varepsilon\norm{S}_\infty.
 \label{eq:one-scale-uncorrected-concentration}
\end{equation}

\medskip
\noindent\emph{Step 5: the ordinary Zak phase correction.}
For $Z$ in the ordinary Zak fundamental cube and $W$ in the support of the
kernel, $U$ remains in a fixed bounded set and $|V-V'|\le C\varepsilon$.
The elementary estimate $|e^{it}-1|\le|t|$ therefore gives
\begin{equation}
 |c(Z,W)-1|\le C\varepsilon.
 \label{eq:one-scale-phase-small}
\end{equation}
Moreover,
\[
 \norm{\mathcal F}_\infty
 \le C\norm{S}_\infty,
\]
because $\rho$ is fixed and bounded on the buffered cells.  Hence
\begin{equation}
 \left|\mathcal F_J(Z)-\overline{\mathcal F}_{\varepsilon,Z}\right|
 \le C\varepsilon\norm{S}_\infty.
 \label{eq:one-scale-phase-remainder}
\end{equation}
Define
\[
 D_\varepsilon(Z)
 =\int K_\varepsilon(Z,W)
 \left|c(Z,W)\mathcal F(W)-\mathcal F_J(Z)\right|\,dW.
\]
Insert $\overline{\mathcal F}_{\varepsilon,Z}$ and write
\begin{align*}
 c\mathcal F-\mathcal F_J
 ={}&\bigl(\mathcal F-\overline{\mathcal F}_{\varepsilon,Z}\bigr)
 +(c-1)\mathcal F\\
 &+\bigl(\overline{\mathcal F}_{\varepsilon,Z}-\mathcal F_J\bigr).
\end{align*}
Using \eqref{eq:one-scale-uncorrected-concentration},
\eqref{eq:one-scale-phase-small},
\eqref{eq:one-scale-phase-remainder}, and
\eqref{eq:one-scale-kernel-probability}, we obtain the uniform bound
\begin{equation}
 \sup_{Z\in\mathcal Q_n^{\rm Zak}}D_\varepsilon(Z)
 \le C\delta+C\varepsilon\norm{S}_\infty.
 \label{eq:one-scale-D-bound}
\end{equation}
This is the single-scale analogue of
\eqref{eq:ordinary-zak-cancellation-bound}; no limiting VMO assumption is
used here.

\medskip
\noindent\emph{Step 6: the lower bound for the regularized map.}
For almost every $W$ contributing to the average,
\[
 |c(Z,W)\mathcal F(W)|=|\mathcal F(W)|\ge ac_\rho
\]
by \eqref{eq:one-scale-amplified-lower-bound}.  Pointwise in $W$,
\[
 |c(Z,W)\mathcal F(W)|
 \le
 |c(Z,W)\mathcal F(W)-\mathcal F_J(Z)|+|\mathcal F_J(Z)|.
\]
Integrate this inequality against the probability measure
$K_\varepsilon(Z,W)\,dW$.  Using total mass one gives
\[
 ac_\rho\le D_\varepsilon(Z)+|\mathcal F_J(Z)|.
\]
Together with \eqref{eq:one-scale-D-bound}, this yields
\[
 |\mathcal F_J(Z)|
 \ge ac_\rho-C\delta-C\varepsilon\norm{S}_\infty.
\]
Since $\varepsilon\asymp2^{-J}$, this is
\eqref{eq:one-scale-ordinary-zak-lower-bound}.

Finally, \Cref{lem:ordinary-zak-smoothing-rule}, applied in $n=q+r$
dimensions, shows that $\mathcal F_J$ is smooth and satisfies
\[
 \mathcal F_J(U+N_0,V+M_0)
 =e^{2\pi iN_0\cdot V}\mathcal F_J(U,V),
 \qquad N_0,M_0\in\mathbb Z^n.
\]
Thus the regularization preserves the ordinary Zak relation exactly, and the
proof is complete.
\end{proof}

\begin{remark}
\label{rem:one-scale-dimension-count}
The choice $r=R-1$ is made for the later common-zero contradiction.  In the
Gabor application the selected ordinary Zak dimension is
$q=N-R+1$.  Hence
\[
 q+r=(N-R+1)+(R-1)=N.
\]
The map $\mathcal F_J$ has $N$ components and lives in ordinary Zak dimension
$N$.  If the right-hand side of
\eqref{eq:one-scale-ordinary-zak-lower-bound} is positive, it is a smooth
nowhere-zero ordinary Zak map exactly at the common-zero threshold, which is
forbidden by \Cref{thm:ddlt-common-zero}.
\end{remark}

\section{Critical nonexistence in the modulation-space scale}
\label{sec:gabor-necessity}\label{sec:critical-nonexistence}

Both finite-$p$ obstruction proofs use the same chain of implications:
modulation-space regularity gives localized cross-Zak
Fourier--Lebesgue regularity; the critical analytic estimates then give
local VMO or a one-scale oscillation bound; the rational matrix is reduced
to an ordinary-Zak map; and the common-zero theorem contradicts the uniform
lower singular-value bound supplied by the frame inequality.  The analytic
step differs on the two sides of $p=2$, so we retain the two proofs in
separate subsections.

\subsection{The range \texorpdfstring{$1\le p\le2$}{1 <= p <= 2}}
\label{sec:necessity-p-le-two}
This subsection excludes the critical curve for $1\le p\le2$.  Once the
critical value is excluded, weight monotonicity excludes every larger
weight.

\begin{theorem}
\label{thm:p-le-2-endpoint}
Let \eqref{eq:diagonal-model} satisfy
$0\le N-R\le d-1$, and let $1\le p\le2$.  If
\[
 s\ge2(N-R+1)\left(1-1/p\right),
\]
then no window in $M_s^p(\R^d)$ generates a Gabor frame on $\Lambda_D$.
\end{theorem}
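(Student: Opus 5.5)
Argue by contradiction: suppose $g\in M_s^p(\R^d)$ generates a frame on $\Lambda_D$ with $s\ge 2(N-R+1)(1-1/p)$. By weight monotonicity (\Cref{prop:modulation-embeddings} with $p_0=p_1$), we may lower $s$ to the critical value $s=m(1-1/p)$ with $m=2q$, where $q=N-R+1\le d$.

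\emph{Step 1 (Zak matrix properties).} \Cref{prop:zz-criterion-full} shows that $\mathcal Z_Dg$ is essentially bounded and $\sigma_{\min}(\mathcal Z_Dg)\ge\sqrt{RA_0}>0$ almost everywhere. By \Cref{prop:entrywise-fine-transports}, it obeys the rational side relations.

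\emph{Step 2 (regularity).} \Cref{prop:local-zak} turns $g\in M_s^p$ into $\chi_\alpha\widetilde{\mathcal C}_{\ell,r,t}\in\Fell{p}{s}(\T^{2d})$ for a fixed finite smooth partition of unity. Split the $2d$ Zak variables into $q$ selected coordinate pairs, $(x',\eta')\in\T^{2q}$, and the remaining $t=2d-2q$ variables $\zeta$. Since $p\le2$ and $s\ge0$, the slicing \Cref{lem:slicing} gives, for almost every $\zeta$ (one common null set for the finitely many functions), that every sliced localized cross-Zak function lies in $\Fell{p}{2q(1-1/p)}(\T^{2q})$. \Cref{lem:critical-vmo} with $m=2q$ then puts each such slice in $\VMO(\T^{2q})$.

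\emph{Step 3 (back to Zak entries).} Transfer this to the localized Zak entries $Zg(\cdot-Dt,\cdot+\mathsf B^{-1}r)$ through the reconstruction \eqref{eq:cross-zak-reconstruction}. Use the smooth quasiperiodic multipliers $b_\ell=\Theta_\Psi^{-1}Z\psi_\ell$, restricted to the slice, together with \eqref{eq:smooth-multiplier-oscillation}. Each $b_\ell$ restricted to the slice is $C^1$ with bounds uniform in $\zeta$, and each cross-Zak product is bounded because $Zg$ is essentially bounded. Hence the entries of $A^\zeta=\mathcal Z_Dg(\cdot,\zeta)$ are locally VMO in $(x',\eta')$ on a neighborhood of the buffered Zak box. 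By Fubini, for almost every $\zeta$ the slice also inherits essential boundedness and the lower bound $\sigma_{\min}\ge a$, and the side relations in the selected variables are inherited directly.

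\emph{Step 4 (contradiction).} Apply \Cref{thm:vmo-rational-obstruction} in its sliced form. \Cref{prop:ordinary-zak-reduction} converts $A^\zeta$ into an ordinary-Zak matrix $S^\zeta$ in $q$ dimensions, and \Cref{thm:rectangular-common-zero-gap} would then force $N\ge R+q=N+1$, which is impossible.

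The main obstacle is the bookkeeping in Steps 2--3. Almost-every slicing must be compatible with the fixed finite partition of unity and with local lifts across Zak faces. The transfer from periodic cross-Zak functions to quasiperiodic Zak entries also requires care, since the multipliers are only locally smooth in charts. I would handle this by cutting off chart by chart, as in the discussion after \eqref{eq:cross-zak-reconstruction}, and by using \Cref{lem:vmo-smooth-product} for the $\zeta$-independent smooth factors.
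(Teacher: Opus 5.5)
Your proposal is correct and follows essentially the same route as the paper. Both reduce to the critical exponent $2q(1-1/p)$, slice the localized cross-Zak functions via \Cref{lem:slicing}, obtain VMO from \Cref{lem:critical-vmo} on $\T^{2q}$, transfer to the Zak entries through \eqref{eq:cross-zak-reconstruction}, and conclude with \Cref{prop:ordinary-zak-reduction} and \Cref{thm:rectangular-common-zero-gap} in $q$ dimensions, which forces $N\ge N+1$.

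Two details are heavier than needed here. On a single fixed slice, the factor transfer only requires \Cref{lem:smooth-factor-local-vmo} (or \eqref{eq:smooth-multiplier-oscillation}), not \Cref{lem:vmo-smooth-product}. Likewise, no uniformity in $\zeta$ is required when $p\le2$.
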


\begin{proof}
We first exclude the critical exponent
\[
 s_*=2(N-R+1)\left(1-1/p\right).
\]
The conclusion for $s>s_*$ then follows from weight monotonicity, since
$M_s^p\hookrightarrow M_{s_*}^p$.

\medskip
\noindent\emph{Step 1: selected and remaining Zak variables.}
Set
\[
 q=N-R+1,
 \qquad
 m=2q,
 \qquad
 t=2d-m.
\]
The assumption $N-R\le d-1$ is exactly $q\le d$, so one may retain $q$
coordinate time--frequency pairs of Zak variables.  We write these selected
variables as $z\in\T^m$ and the remaining variables as $y\in\T^t$.  The
restricted common-zero obstruction from
\Cref{thm:vmo-rational-obstruction} is nontrivial on every such selected
$q$-pair torus.

Assume toward a contradiction that $g\in M_{s_*}^p(\R^d)$ generates a Gabor
frame on $\Lambda_D$, and let $A=\mathcal Z_Dg$ be its rational
Zibulski--Zeevi matrix.  The rational frame criterion supplies one global constant
$a>0$ such that
\begin{equation}
 \sigma_{\min}(A(y,z))\ge a
 \label{eq:section10-frame-lower-bound}
\end{equation}
for almost every $(y,z)$.  This number is fixed before any slicing; Fubini
therefore preserves the same lower bound $a$ on almost every selected slice,
rather than producing a slice-dependent constant.

\medskip
\noindent\emph{Step 2: Fourier--Lebesgue regularity of the localized
entries.}
By the localized Zak-coefficient characterization in \Cref{prop:local-zak},
every member of the fixed finite family of BUPU-localized periodic cross-Zak
entries of $A$ belongs to
\[
 \Fell{p}{m(1-1/p)}(\T^{t}\times\T^m),
\]
because $s_*=m(1-1/p)$.  Only finitely many scalar functions occur: finitely
many matrix entries, finitely many BUPU functions, and finitely many auxiliary
Schwartz windows.  Consequently all exceptional null sets arising below may
be combined into one null set.

\medskip
\noindent\emph{Step 3: choose one slice on which every assertion holds.}
Apply \Cref{lem:slicing} to each localized scalar entry.  For almost every
$y\in\T^t$, every restricted entry belongs to
\[
 \Fell{p}{m(1-1/p)}(\T^m).
\]
Fubini's theorem applied to
\eqref{eq:section10-frame-lower-bound} shows that, for almost every $y$, the
same global lower singular-value bound $a$ holds for almost every $z$.  The rational Zak
side relations are identities between measurable representatives and also
hold on a common full-measure set of slices.  Since the family is finite, we
may choose one $y_0$ for which all three properties hold simultaneously:

\begin{enumerate}[label=\textup{(\roman*)},leftmargin=2.4em]
\item every required localized entry of $A(y_0,\cdot)$ is in the critical
      Fourier--Lebesgue space on $\T^m$;
\item $\sigma_{\min}(A(y_0,z))\ge a$ for almost every $z$;
\item the restricted rational Zak relations hold almost everywhere in $z$.
\end{enumerate}

\medskip
\noindent\emph{Step 4: critical VMO and the ordinary-Zak reduction.}
By \Cref{lem:critical-vmo}, every localized cross-Zak entry on the selected
slice is VMO.  The reconstruction formula \eqref{eq:cross-zak-reconstruction}, together with \Cref{lem:smooth-factor-local-vmo}, therefore shows that the actual entries of the sliced Zak matrix are locally VMO.  Apply \Cref{prop:ordinary-zak-reduction} to the restricted
$R\times N$ rational Zak matrix $A(y_0,\cdot)$.  It produces an
$R\times N$ matrix
\[
 S:\mathbb R^q\times\mathbb R^q\longrightarrow\mathbb C^{R\times N}
\]
that satisfies the ordinary Zak relation entrywise.  Its entries are bounded
and locally VMO, and its nonzero singular values agree with those of the
restricted rational matrix.  In particular,
\[
 \sigma_{\min}(S(u,v))\ge a
\]
for almost every $(u,v)$.

\medskip
\noindent\emph{Step 5: the dimension contradiction.}
The rectangular ordinary-Zak gap result in
\Cref{thm:rectangular-common-zero-gap}, applied in ordinary Zak dimension
$q$, says that an essentially bounded locally VMO $R\times N$ ordinary Zak
matrix with a positive lower singular-value bound can exist only if
\[
 N\ge R+q.
\]
But $q=N-R+1$, so this necessary inequality becomes
\[
 N\ge R+(N-R+1)=N+1,
\]
which is impossible.  This contradiction excludes the critical exponent
$s_*$.

\end{proof}

\subsection{The range \texorpdfstring{$2<p<\infty$}{2 < p < infinity}}
\label{sec:necessity-p-greater-two}
This subsection excludes the critical curve for $2<p<\infty$.  It uses the
one-scale anisotropic oscillation and ordinary-Zak partial smoothing proved
in \Cref{sec:vmo}.

\begin{theorem}
\label{thm:p-gt-2-endpoint}
Let \eqref{eq:diagonal-model} satisfy $0\le N-R\le d-1$, let
$2<p<\infty$, and put
\[
 q=N-R+1,
 \qquad
 \beta=d+q.
\]
Then no Gabor-frame window belongs to
\[
 M_{\beta-2d/p}^{p}(\R^d).
\]
Consequently no frame window belongs to $M_s^p$ for
$s\ge\beta-2d/p$.
\end{theorem}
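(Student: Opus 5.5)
The plan follows the $p\le2$ proof, with the slice fixed by VMO replaced by a scale-dependent slice choice. Set $m=2q$ and $t=2d-m$. The hypothesis $N-R\le d-1$ gives $q\le d$, so $t\ge0$. Check the exponent first:
\[
 s_p(m,t)=m(1-1/p)+t(1/2-1/p)=2q-2q/p+d-q-2d/p+2q/p=d+q-2d/p=\beta-2d/p .
\]
So the weight in the statement is exactly the critical anisotropic weight of \Cref{lem:one-scale-anisotropic}. Larger $s$ then follows by weight monotonicity, $M_s^p\hookrightarrow M^p_{\beta-2d/p}$.

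Assume toward a contradiction that $g\in M^p_{\beta-2d/p}$ generates a frame on $\Lambda_D$, and let $A=\mathcal Z_Dg$. By \Cref{prop:zz-criterion-full}, $A$ is essentially bounded and $\sigma_{\min}(A)\ge a>0$ almost everywhere. By \Cref{prop:local-zak}, the finitely many BUPU-localized cross-Zak entries lie in $\Fell{p}{s_p(m,t)}(\T^t\times\T^m)$, with $z$ the $q$ retained coordinate pairs and $y$ the rest. \Cref{lem:one-scale-anisotropic}, summed over this finite family, gives $\|\omega_J^z\|_{L^2(\T^t)}\to0$.

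Next I transfer this to the actual Zak entries. Pass through the reconstruction formula \eqref{eq:cross-zak-reconstruction} and the smooth-multiplier oscillation bound \eqref{eq:smooth-multiplier-oscillation}. Then pass to the gauged ordinary Zak matrix $S^y$ of \Cref{prop:ordinary-zak-reduction}, using \eqref{eq:ordinary-zak-explicit-oscillation}. The result is that the scale-$J$ oscillation of the entries of $S^y$ is bounded by $C\,\omega_J(y)+C2^{-J}\|A\|_\infty$, where $\omega_J$ still tends to $0$ in $L^2_y$.

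By Chebyshev, for each large $J$ the set of $y$ with $\omega_J(y)\le\delta$ has positive measure. Fubini applied to the frame bound and to the Zak side relations gives a full-measure set of good slices. Pick $y_J$ in the intersection of the two sets.

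On that slice, apply \Cref{lem:one-scale-partial-smoothing} to $S^{y_J}$, with $r=R-1$ and the fixed vector $\rho$. This yields a smooth ordinary Zak map $\mathcal F_J$ with
\[
 |\mathcal F_J|\ge ac_\rho-C\delta-C2^{-J}\|A\|_\infty .
\]
The constant $C$ is uniform in the slice, as that lemma allows. Choose $\delta$ small and then $J$ large so the right side is positive. By \Cref{rem:one-scale-dimension-count}, $\mathcal F_J$ has $N$ components in Zak dimension $q+r=N$. It is therefore nowhere zero, contradicting \Cref{thm:ddlt-common-zero}.

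The main obstacle is the transfer step. I must make sure the oscillation control is uniform over slices and over the lifted cubes crossing Zak faces. The localization cutoffs, the smooth factors $\Theta_\Psi^{-1}Z\psi_\ell$, and the row gauge all contribute only terms of the form $C\delta+C2^{-J}\|A\|_\infty$, with constants independent of $y$. The slice must also be chosen after $J$, and this is legitimate because the contradiction needs only a single scale.
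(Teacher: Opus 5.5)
Your proposal is correct and follows the paper's proof essentially step for step. It uses the critical anisotropic weight of \Cref{lem:one-scale-anisotropic}, transfers the scale-$J$ oscillation through the reconstruction formula and the row gauge, makes a Chebyshev/Fubini choice of a scale-dependent slice $y_J$, and applies \Cref{lem:one-scale-partial-smoothing} to obtain a nowhere-zero ordinary Zak map in dimension $q+(R-1)=N$. The only difference is minor: the paper treats $q=d$ separately via critical VMO and \Cref{thm:rectangular-common-zero-gap}, whereas your uniform argument already covers that case, because \Cref{lem:one-scale-anisotropic} allows $t=0$.
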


\begin{proof}
First suppose $q=d$.  Then there are no remaining Zak variables and
\[
 \beta-2d/p=2d\left(1-1/p\right).
\]
By \Cref{prop:local-zak}, every localized cross-Zak entry belongs to
\[
 \Fell{p}{2d(1-1/p)}(\mathbb T^{2d}),
\]
and hence is VMO by \Cref{lem:critical-vmo}.  Formula \eqref{eq:cross-zak-reconstruction} and \Cref{lem:smooth-factor-local-vmo} pass this regularity to the actual Zak-matrix entries.
\Cref{prop:ordinary-zak-reduction} produces an ordinary Zak matrix
$S$ with locally VMO entries and the same uniform lower singular-value bound.
The rank-gap conclusion in \Cref{thm:rectangular-common-zero-gap} would imply
\[
 N\ge R+d=R+q=N+1,
\]
a contradiction.

Now suppose $q<d$ and put
\[
 m=2q,
 \qquad
 t=2d-m.
\]
The critical exponent is
\[
 \beta-2d/p
 =m\left(1-1/p\right)
  +t\left(\frac12-1/p\right).
\]
Assume that a frame window belongs to this endpoint space and let
$A=\mathcal Z_Dg$.  Retain $q$ coordinate pairs of Zak variables, write the
retained variables as $z\in\mathbb T^m$, and write the remaining variables as
$y\in\mathbb T^t$.  By \Cref{prop:local-zak}, a fixed finite family
$f^{(1)},\ldots,f^{(K)}$ of BUPU-localized periodic cross-Zak entries of
$A$ satisfies the hypothesis of \Cref{lem:one-scale-anisotropic}.  Thus
\[
 \Omega_J(y):=\sum_{\nu=1}^K\omega_J^zf^{(\nu)}(y),
 \qquad \|\Omega_J\|_{L^2(\mathbb T^t)}\longrightarrow0.
\]

The frame inequality gives global bounds
$\sigma_{\min}(A)\ge a>0$ and $\|A\|_\infty<\infty$.  By Fubini, on a
common full-measure set of $y$, the same bounds, all restricted Zak
relations, and the reconstruction identities hold almost everywhere in
$z$.  If $\|\Omega_J\|_2>0$, Chebyshev's inequality on the torus of
measure one gives
\[
 \bigl|\{y:\Omega_J(y)>2\|\Omega_J\|_2\}\bigr|\le\tfrac14.
\]
We may therefore choose $y_J$ in that common full-measure set with
$\Omega_J(y_J)\le2\|\Omega_J\|_2$.  If $\|\Omega_J\|_2=0$, choose
$y_J$ there with $\Omega_J(y_J)=0$, which holds almost everywhere.

Fix the buffered chart cover before choosing these slices: the smaller
chart interiors cover the torus and their closures lie inside the larger
charts on which the lifted coefficients are smooth.  The reconstruction
coefficients in \eqref{eq:cross-zak-reconstruction}, the localization
functions, and the gauges in \Cref{prop:ordinary-zak-reduction} have
uniformly bounded $C^1$ norms on these fixed buffers, uniformly over
all remaining-coordinate parameters for which each chart is used; also
$\|f^{(\nu)}\|_\infty\le C\|A\|_\infty$ for every $\nu$.
For all sufficiently large $J$, the cubes involved lie in such buffered
charts.  The smooth-multiplier estimate
\eqref{eq:smooth-multiplier-oscillation} and reconstruction therefore give
an ordinary Zak matrix $S_J$ from $A(y_J,\cdot)$, with the same lower
singular-value bound, $\|S_J\|_\infty\le C\|A\|_\infty$, and
\begin{align*}
 \operatorname{osc}_J(S_J)
 &:=\max_{1\le k\le R,\,1\le i\le N}
   \sup_{Q\in\mathcal Q_J}\operatorname{osc}_Q((S_J)_{ki})\\
 &\le C\Omega_J(y_J)+C2^{-J}\|A\|_\infty\\
 &\le 2C\|\Omega_J\|_2+C2^{-J}\|A\|_\infty\longrightarrow0.
\end{align*}
Here and in the buffered lifts used for the oscillation, $C$ is independent
of both $J$ and $y_J$.  With the auxiliary vector $\rho$ fixed once and
for all, \Cref{lem:one-scale-partial-smoothing} consequently produces, for
all large $J$, a smooth nowhere-zero ordinary Zak map with $N$ components in
\[
 q+(R-1)=N
\]
Zak dimensions.  This contradicts \Cref{thm:ddlt-common-zero}.

The final statement follows from weight monotonicity in
\Cref{prop:modulation-embeddings}.
\end{proof}

\part{Constructive rank geometry and regular Gabor windows}\label{part:constructive-rank-geometry}

\section{Construction strategy and simplicial preliminaries}
\label{sec:construction-strategy}

Throughout this part assume $1\le R\le N$.  The case $N<R$ is excluded
by the density theorem and requires no construction.  All matrix rank
strata below are considered under this standing assumption.

The constructive part of the paper is carried out on the fine quotient
$X_D$ associated with the diagonal rational model from \Cref{sec:zak-method}.
Its organization is dictated by the real codimension
\[
 m_0=2(N-R+1)
\]
of the first rank-deficient matrix stratum, compared with the dimension
$2d$ of the Zak domain.  The same basic scheme is used in all three
arithmetic regimes:
\begin{enumerate}[label=\textup{(C\arabic*)},leftmargin=3em]
\item construct a Zak-compatible $R\times N$ matrix field on
      $\mathbb R^d\times\mathbb R^d$, using finite simplicial and cubical
      constructions on $X_D$ and the action from
      \Cref{prop:entrywise-fine-transports};
\item exploit the codimension of rank loss to obtain full row rank either
      everywhere or away from a controlled flat exceptional set;
\item apply the row normalization \eqref{eq:row-normalization}, which changes
      the row Gramian to $RI_R$ without changing Zak compatibility;
\item take the $(0,0)$ entry of the normalized matrix and use
      \Cref{prop:fine-box-reassembly} and the inverse Zak transform to obtain
      the scalar window;
\item estimate the localized cross-Zak Fourier coefficients and then apply
      \Cref{prop:local-zak}; the passage from the diagonal lattice to a
      general symplectically rational lattice is made only in
      \Cref{sec:lattice-theory}.
\end{enumerate}
If $N-R\ge d$, every simplex lies below the first rank-loss codimension and
the construction is smooth everywhere.  For $0\le N-R\le d-1$, the cubical
construction of \Cref{sec:terminal-construction} gives a field smooth away
from a finite union of flat sets of codimension $m_0$ in $X_D$.  This
exceptional set consists of finitely many points when $N-R=d-1$ and has
positive dimension when $0\le N-R\le d-2$.  Its product geometry supplies
the anisotropic Fourier estimates of \Cref{sec:elementary-perturbation};
\Cref{sec:lower-polar,sec:lower-completion} multiply the scalar Zak
transform by a periodic unimodular phase, preserving Parsevality and
giving the sharp high-$p$ estimates, including the $p=\infty$ endpoint.

\begin{remark}
\label{rem:dimension-one-scope}
The lower-gap branch is empty in dimension one.  In that dimension the
critical-codimension case $N-R=0$ and the smooth cases $N-R\ge1$ are covered by the
critical-codimension and smooth constructions, respectively.
\end{remark}

\subsection{Simplicial vocabulary}
\label{subsec:simplicial-vocabulary}

A $k$-simplex
\[
 \sigma=[v_0,\ldots,v_k]
\]
is the convex hull of $k+1$ affinely independent points.  Every
$x\in\sigma$ has unique barycentric coordinates
$a_0,\ldots,a_k\ge0$ satisfying
\[
 x=\sum_{j=0}^k a_jv_j,
 \qquad \sum_{j=0}^k a_j=1.
\]
A face is the convex hull of a subset of the vertices, a proper face is a
face different from $\sigma$, and the boundary $\partial\sigma$ is the union
of all proper faces.  The relative interior consists of the points for which
all barycentric coordinates are positive.  The barycenter is
\[
 b_\sigma=\frac1{k+1}\sum_{j=0}^k v_j.
\]

A triangulation is a collection of simplices whose union is the underlying
space and such that two simplices meet, if at all, in a common face.  The
collection together with every face of every simplex is a simplicial
complex.  A subdivision replaces its simplices by smaller simplices without
changing their union.  The $j$-skeleton $K^{(j)}$ is the union of all
simplices of dimension at most $j$.  The star of a simplex is the union of
all simplices meeting it; in particular, the star of a vertex is the union
of all simplices containing that vertex. We use this convention also for
positive-dimensional simplices, where it is larger than the usual closed
star. A periodic triangulation of
$\mathbb R^{2d}$ is locally finite if every compact set meets only finitely
many simplices.  Under translation by $\Gamma_D$, the orbit of a simplex is
the family of all its translates, and an orbit representative is one chosen
simplex from that family.

For $x\in\sigma\setminus\{b_\sigma\}$ there is a unique representation
\begin{equation*}
 x=(1-r_\sigma(x))b_\sigma+r_\sigma(x)\theta_\sigma(x),
 \qquad 0<r_\sigma(x)\le1,
 \quad \theta_\sigma(x)\in\partial\sigma.
\end{equation*}
The point $\theta_\sigma(x)$ is the radial projection to the boundary.  A
boundary collar is a region $r_\sigma>1-\delta$.  A construction is relative
to the boundary if it leaves the previously prescribed field unchanged on a
smaller boundary collar.

\subsection{Adapted lifts and quotient gluing}

An adapted Zak coordinate box is an open subset
$\mathcal U\subset X_D$ together with an open set
$U\subset\mathbb R^d\times\mathbb R^d$ such that
\[
 \pi_\Gamma|_U:U\longrightarrow\mathcal U
\]
is one-to-one and onto.  We call $U$ the chosen lift of $\mathcal U$.  The
first $d$ coordinates in the lift are position coordinates and the second
$d$ are frequency coordinates.  All derivatives, normal Jacobians, and
local Fourier estimates are computed in these Euclidean coordinates.  The
triangulation used below is refined until the complete star of every
quotient simplex is contained in one adapted coordinate box; this is proved
in \Cref{lem:adapted-triangulation}.

\begin{lemma}
\label{lem:quotient-simplicial-gluing}
Let $K$ be a finite simplicial complex on $X_D$, and choose for every
simplex $\sigma\in K$ one lift $\widetilde\sigma$ contained in an adapted
Zak coordinate box.  Let $K_0\subset K$ be a subcomplex.  Suppose that for
every $\sigma\in K_0$ a matrix field $A_\sigma$ is given on
$\widetilde\sigma$ and has the following compatibility property.  Whenever
$\sigma,\tau\in K_0$ meet in a face and
$(\gamma_u,\gamma_\eta)\in\Gamma_D$ is the unique translation representing
that common quotient face as
\[
 \widetilde\sigma\cap
 \bigl(\widetilde\tau+(\gamma_u,\gamma_\eta)\bigr),
\]
one has, for every $(u,\eta)$ on the common face,
\begin{equation}
 A_\sigma(u,\eta)
 =\Act_{(\gamma_u,\gamma_\eta)}
   (u-\gamma_u,\eta-\gamma_\eta)
   [A_\tau(u-\gamma_u,\eta-\gamma_\eta)].
 \label{eq:quotient-face-compatibility}
\end{equation}
Then there is a unique Zak-compatible field on the full lifted subcomplex
$\pi_\Gamma^{-1}(|K_0|)$ whose restriction to every chosen lift is
$A_\sigma$.

If the $A_\sigma$ are continuous, the glued field is continuous.  If they
are smooth on ambient neighborhoods and
\eqref{eq:quotient-face-compatibility} holds on full ambient neighborhoods
of every common face, then the glued field is smooth on an ambient
neighborhood of the lifted subcomplex.  The same statement holds with
``smooth'' replaced by $C^k$.
\end{lemma}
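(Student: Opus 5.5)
The plan is to define the glued field orbit by orbit, using the transport action from \Cref{prop:entrywise-fine-transports}, and to read off both well-definedness and Zak compatibility from the cocycle identity \eqref{eq:cocycle-action}. Every point of $\pi_\Gamma^{-1}(|K_0|)$ lies in some translate $\widetilde\sigma+\gamma$ with $\sigma\in K_0$ and $\gamma\in\Gamma_D$. Writing $\gamma=\gamma_{(k,\ell)}$ and $z=z_0+\gamma$ with $z_0\in\widetilde\sigma$, I will set
\[
 A(z)=\Act_{\gamma}(z_0)[A_\sigma(z_0)].
\]
Zak compatibility of this definition is forced. Once $A$ is shown to be well defined, \eqref{eq:cocycle-action} gives
\[
 A(z+\gamma')=\Act_{\gamma+\gamma'}(z_0)[A_\sigma(z_0)]
 =\Act_{\gamma'}(z)\bigl[\Act_\gamma(z_0)[A_\sigma(z_0)]\bigr]
 =\Act_{\gamma'}(z)[A(z)].
\]
Uniqueness is equally immediate. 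Any compatible field that restricts to $A_\sigma$ on $\widetilde\sigma$ must equal $\Act_\gamma(z_0)[A_\sigma(z_0)]$ at $z_0+\gamma$.

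The main step is well-definedness. A point may be represented in two ways: once through the same simplex with different translations, and once through distinct simplices meeting along a common quotient face. For the first case, the chosen lift $\widetilde\sigma$ sits inside an adapted box on which $\pi_\Gamma$ is injective. So if $z_0,z_0'\in\widetilde\sigma$ and $z_0+\gamma=z_0'+\gamma'$, then $z_0=z_0'$ and hence $\gamma=\gamma'$.

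For the second case, suppose $z_0\in\widetilde\sigma$, $w_0\in\widetilde\tau$, and $z_0+\gamma=w_0+\gamma''$. Put $g=\gamma''-\gamma$. Then $z_0=w_0+g$ lies in $\widetilde\sigma\cap(\widetilde\tau+g)$, which is the lifted common face, and $g$ is the unique translation named in the hypothesis. Hypothesis \eqref{eq:quotient-face-compatibility} says $A_\sigma(z_0)=\Act_g(w_0)[A_\tau(w_0)]$. Applying $\Act_\gamma(z_0)$ and using \eqref{eq:cocycle-action} with $w_0+g=z_0$ gives
\[
 \Act_{g+\gamma}(w_0)[A_\tau(w_0)]=\Act_{\gamma''}(w_0)[A_\tau(w_0)],
\]
so the two definitions agree.

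The delicate point, and the one I expect to be the main obstacle, is justifying that the translation $g$ appearing here really is the one in the hypothesis. Because the lifts lie in adapted boxes, $\pi_\Gamma$ restricted to each lift is injective. The quotient simplices $\sigma$ and $\tau$ meet in a single common face, since $K$ is a simplicial complex. Hence the set of translations $g$ for which $\widetilde\sigma\cap(\widetilde\tau+g)$ is nonempty must be checked to consist of the single translation representing that face. I would argue this by noting that two distinct such translations would give two distinct lifted copies of the face inside the injective chart $\widetilde\sigma$. I will also note that the case $\sigma=\tau$ with $g\ne0$ is covered by the first case, which rules it out.

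Regularity is then a local matter. Near a point in the interior of a translated simplex, $A$ is a fixed smooth monomial unitary action, depending smoothly on the base point, applied to $A_\sigma$; it is therefore continuous, $C^k$, or smooth as $A_\sigma$ is. Near a common face, the pieces agree on the face, and the pasting lemma for finitely many closed sets (local finiteness holds because $K$ is finite) gives continuity. In the smooth case, compatibility on full ambient neighborhoods means each pair of transported extensions coincides on an open neighborhood of the face. I will define the field on a neighborhood of the lifted subcomplex by the transported extension from any simplex whose neighborhood contains the point. The same cocycle argument shows this is consistent, after shrinking the neighborhoods so that each stays inside its adapted box. Smoothness and $C^k$ regularity follow because the field locally equals a single transported smooth or $C^k$ extension.
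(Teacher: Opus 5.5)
\paragraph{Comparison with the paper.}
The algebraic part of your argument is the paper's proof, written out in more detail. You define the field by transport from the chosen lifts, obtain well-definedness and Zak compatibility from the face hypothesis together with \eqref{eq:cocycle-action}, get uniqueness by transport, and get continuity by pasting over the locally finite family of closed translated simplices.

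On the step you flagged as the main obstacle, uniqueness of the face translation, you need injectivity of $\pi_\Gamma$ on both lifts. Suppose $z\in\widetilde\sigma\cap(\widetilde\tau+g)$ and let $g_0$ be the translation from the hypothesis. Injectivity on $\widetilde\sigma$ forces $z\in\widetilde\tau+g_0$ as well. Injectivity on $\widetilde\tau$, applied to $z-g$ and $z-g_0$, then gives $g=g_0$.

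\paragraph{The gap: the smooth and $C^k$ case.}
The only shrinking you impose is that each neighborhood of $\widetilde\sigma$ stays inside its adapted box. That only stops a neighborhood from meeting its own nontrivial translates. It does not stop the neighborhood of $\widetilde\sigma$ from meeting the neighborhood of some $\widetilde\tau+g$ at points outside the ambient region $W$ on which \eqref{eq:quotient-face-compatibility} is assumed. This happens, for example, when $\widetilde\sigma$ and $\widetilde\tau+g$ are disjoint (whether or not $\sigma$ and $\tau$ meet in $X_D$), or near parts of the two simplices far from their common face. At such points the two transported extensions are unrelated, so your ``same cocycle argument'' has no input. The field you define ``by the transported extension from any simplex whose neighborhood contains the point'' then need not be well defined. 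It also need not agree with the already glued field on adjacent simplices.

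\paragraph{The fix.}
Use a quantitative shrinking. For compact $K_1,K_2\subset\R^{2d}$ and open $W\supset K_1\cap K_2$, there is $\varepsilon>0$ with $\{\dist(\cdot,K_1)<\varepsilon\}\cap\{\dist(\cdot,K_2)<\varepsilon\}\subset W$. Otherwise there are points $x_k\notin W$ within $1/k$ of both sets, and they accumulate at a point of $(K_1\cap K_2)\setminus W$. For disjoint pairs take $W=\varnothing$.

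Because $K$ is finite and the lifted complex is $\Gamma_D$-periodic and locally finite, only finitely many pairs $(\widetilde\sigma,\widetilde\tau+g)$ occur up to a common translation. The agreement region moves with that translation by \eqref{eq:cocycle-action}, so one $\varepsilon$ works for all pairs. With this choice, any two neighborhoods overlap only inside an agreement region, and your cocycle argument then goes through. This is exactly the paper's step of shrinking ``so that neighborhoods of disjoint simplices are disjoint and every remaining overlap lies in a prescribed neighborhood of agreement.''
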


\begin{proof}
For $(u,\eta)\in\widetilde\sigma$ and
$(\delta_u,\delta_\eta)\in\Gamma_D$, prescribe
\begin{equation*}
 A(u+\delta_u,\eta+\delta_\eta)
 =\Act_{(\delta_u,\delta_\eta)}(u,\eta)[A_\sigma(u,\eta)].
\end{equation*}
If the same point is represented using another chosen simplex lift, the face
compatibility and the action law \eqref{eq:cocycle-action} give the same
value.  Hence the field is well defined and Zak-compatible.  Uniqueness is
immediate because every point of the lifted subcomplex is a fine-lattice
translate of a point in one of the chosen lifts.

Continuity is the finite gluing lemma on the quotient.  Under the stronger
hypothesis, the local representatives are the same smooth germ, after the
prescribed Zak transport, on an open neighborhood of every overlap.
Shrink the finitely many representative neighborhoods on the quotient so
that neighborhoods of disjoint simplices are disjoint and every remaining
overlap lies in a prescribed neighborhood of agreement. The local fields
therefore glue smoothly; the $C^k$ case is identical.
\end{proof}

\section{Rank geometry, the arithmetic trichotomy, and common extension tools}\label{sec:rank-geometry}

\subsection{Exact-rank codimensions}

The first rank-deficient stratum of $R\times N$ complex matrices has
real codimension $2(N-R+1)$.
This is central because the extension argument can avoid rank loss on
cells whose dimension is below this codimension, and the resulting field
can be smoothed compatibly. If $2(N-R+1)>2d$, this covers the entire Zak
domain and yields a smooth construction. At smaller gaps, it still gives
a smooth field near a protected skeleton; the remaining exceptional set
then governs the attainable decay estimates.

\begin{lemma}\label{lem:rank-codim}
Let $1\le R\le N$ and $1\le j\le R$, and define
\begin{equation}\label{eq:rank-strata}
  \Sigma_j=\{X\in\C^{R\times N}:\rank X=R-j\}.
\end{equation}
Near every \(X_0\in\Sigma_j\), the set \(\Sigma_j\) is the graph of a
smooth map in ordinary Euclidean matrix coordinates. Its real codimension is
\begin{equation}\label{eq:cj}
   c_j=2j(N-R+j).
\end{equation}
In particular,
\begin{equation}\label{eq:c1}
   c_1=2(N-R+1)=2q=:m_0.
\end{equation}
The exact-rank sets are preserved by every left--right unitary Zak transition formula, so the same codimension and local graph description hold in every adapted Zak coordinate box.
\end{lemma}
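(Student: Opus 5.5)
The plan is to use the standard local normal form for fixed-rank matrices via a Schur complement. Fix $X_0\in\Sigma_j$ and set $r=R-j=\rank X_0$. After left and right multiplication by permutation matrices, which are linear isometries of $\C^{R\times N}$, we may assume that the upper-left $r\times r$ block of $X_0$ is invertible. Write a nearby matrix in block form
\[
 X=\begin{pmatrix}P&Q\\ U&V\end{pmatrix},
 \qquad P\in\C^{r\times r},\ Q\in\C^{r\times(N-r)},\ U\in\C^{(R-r)\times r},\ V\in\C^{(R-r)\times(N-r)}.
\]
On the open neighbourhood $\mathcal O=\{\det P\neq0\}$ of $X_0$, the identity
\[
 \begin{pmatrix}I&0\\-UP^{-1}&I\end{pmatrix}X
 \begin{pmatrix}I&-P^{-1}Q\\0&I\end{pmatrix}
 =\begin{pmatrix}P&0\\0&V-UP^{-1}Q\end{pmatrix}
\]
shows that $\rank X=r$ if and only if the Schur complement $V-UP^{-1}Q$ vanishes.

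Consequently, on $\mathcal O$ the stratum is exactly
\[
 \Sigma_j\cap\mathcal O=\{V=UP^{-1}Q\}.
\]
This is the graph of the smooth (indeed rational, with nonvanishing denominator) map $(P,Q,U)\mapsto UP^{-1}Q$ over an open set in the coordinates $(P,Q,U)$. The graph description holds in the original matrix coordinates up to the fixed coordinate permutation. The number of complex coordinates in the dependent block $V$ is
\[
 (R-r)(N-r)=j\,(N-R+j),
\]
so the real codimension is $c_j=2j(N-R+j)$. Setting $j=1$ gives $c_1=2(N-R+1)$, and by the definition $q=N-R+1$ from \Cref{sec:zak-method} this equals $2q=m_0$.

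For the final assertion, invoke \Cref{prop:entrywise-fine-transports}. By \eqref{eq:left-right-action-representation}, each transition $\Act_{\gamma_{(k,\ell)}}(u,\eta)$ acts as $X\mapsto L_{k,\ell}XR_k(\eta)$ with unitary factors. By \Cref{lem:fine-transport-cocycle}, these maps preserve rank. Each is a linear isometric automorphism of $\C^{R\times N}$, so it maps $\Sigma_j$ onto itself and carries local graph charts to local smooth submanifold charts of the same codimension. Since $R_k(\eta)$ depends smoothly on $\eta$, the combined map $(u,\eta,X)\mapsto(u,\eta,\Act[X])$ is a diffeomorphism that fibrewise preserves $\Sigma_j$. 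Hence the description transfers to every adapted Zak coordinate box.

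The only potential obstacle is the wording ``graph in ordinary Euclidean matrix coordinates''. The graph is taken over the coordinate subspace selected by the permutation, and we should state explicitly that this choice depends on $X_0$. With that clarification, nothing beyond the Schur complement computation is needed.
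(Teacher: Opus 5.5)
Your proposal is correct and follows essentially the same route as the paper: move an invertible $(R-j)\times(R-j)$ minor to the corner, reduce by block elimination to the Schur-complement condition $V=UP^{-1}Q$, count the dimension of the dependent block, and transfer everything through the rank-preserving unitary left--right factorization of the Zak transports. The differences are minor. The paper treats $j=R$ (where $\Sigma_R=\{0\}$ and there is no minor) as a separate case, whereas your block argument covers it only through empty-block conventions. The paper also records, after the proof, a covering of $\Sigma_j$ by countably many bounded Lipschitz graph pieces; this is reused later in the good-center argument but is not part of the statement.
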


\begin{proof}
Fix $X_0\in\Sigma_j$. If $R-j=0$, then $j=R$ and
$\Sigma_R=\{0\}$. Its real codimension is
$2RN=2R(N-R+R)=c_R$, so the assertion is immediate. Assume $R-j\ge1$.
Because $X_0$ has rank $R-j$, one of its $(R-j)\times(R-j)$ minors is invertible.
After fixed row and column permutations, assume that this is the upper-left
minor. Let
\[
  \mathcal U=\{X\in\C^{R\times N}:\det P(X)\ne0\},
\]
where $P(X)$ denotes this same upper-left $(R-j)\times(R-j)$ block. The set
$\mathcal U$ is open and contains $X_0$, because the determinant is
continuous and $\det P(X_0)\ne0$. For every $X\in\mathcal U$, write
\[
  X=\begin{pmatrix}P&Q\\Y&T\end{pmatrix}.
\]
Thus the phrase ``near $X_0$'' refers concretely to the open set on which
this fixed minor remains invertible. The rows are split as $(R-j)+j$ and the
columns as $(R-j)+(N-R+j)$. Since $P$ is invertible, block elimination gives
\[
   \begin{pmatrix}\Id&0\\-YP^{-1}&\Id\end{pmatrix}
   X
   \begin{pmatrix}\Id&-P^{-1}Q\\0&\Id\end{pmatrix}
   =\begin{pmatrix}P&0\\0&T-YP^{-1}Q\end{pmatrix}.
\]
Thus $\rank X=R-j$ exactly when
\[
   T=YP^{-1}Q.
\]
The entries of $P,Q,Y$ are free complex coordinates, while $T$ is determined smoothly.  The free complex dimension is
\[
   (R-j)^2+(R-j)(N-R+j)+j(R-j)
   =(R-j)(N+j)
   =RN-j(N-R+j).
\]
Hence the complex codimension is $j(N-R+j)$ and the real codimension is \eqref{eq:cj}.
For later measure estimates, restrict the free blocks to sets on which
$\norm P$, $\norm{P^{-1}}$, $\norm Q$, and $\norm Y$ are bounded by an
integer $L$. The identity
\[
 P^{-1}-{P'}^{-1}=P^{-1}(P'-P){P'}^{-1}
\]
shows that $(P,Q,Y)\mapsto YP^{-1}Q$ is Lipschitz on every such bounded set.
The finitely many minor choices and countably many integers $L$ therefore cover
$\Sigma_j$ by bounded Lipschitz graph pieces of real dimension $2RN-c_j$.
Since every Zak compatibility map is left--right multiplication by unitary matrices, it preserves rank and carries the same local graph description to every adapted Zak coordinate box.
\end{proof}

For later use, define the full rank-deficient set by
\begin{equation}\label{eq:total-rank-loss}
  \Sigma_{R,N}
  =\{X\in\C^{R\times N}:\rank X<R\}
  =\bigcup_{j=1}^R\Sigma_j.
\end{equation}

\begin{lemma}\label{lem:smallest-singular-value}
For $X\in\C^{R\times N}$ define
\begin{equation}\label{eq:sigma-min-def}
 \sigma_{\min}(X)
 =\inf_{\norm u=1}\norm{X^*u}.
\end{equation}
Then $X$ has full row rank exactly when $\sigma_{\min}(X)>0$, and for all
$X,Y\in\C^{R\times N}$,
\begin{equation}\label{eq:sigma-min-Lipschitz}
 |\sigma_{\min}(X)-\sigma_{\min}(Y)|
 \leq\norm{X-Y}_{\mathrm{op}}.
\end{equation}
\end{lemma}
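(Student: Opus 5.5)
The plan is to derive both claims from the variational description of singular values. The smallest singular value of an $R\times N$ matrix with $R\le N$ is the $R$-th singular value, so the characterization $\sigma_{\min}(X)=\inf_{\|u\|=1}\|X^*u\|$ in \eqref{eq:sigma-min-def} is the one to use directly.

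The rank equivalence comes first. The set $\{u\in\C^R:\|u\|=1\}$ is compact and $u\mapsto\|X^*u\|$ is continuous, so the infimum is attained. Hence $\sigma_{\min}(X)=0$ exactly when some unit vector $u$ satisfies $X^*u=0$, that is, when $\ker X^*\neq\{0\}$. Since $\ker X^*=(\operatorname{ran}X)^\perp$ in $\C^R$, this holds exactly when $\operatorname{ran}X\neq\C^R$, i.e.\ when $\rank X<R$. Contrapositively, $X$ has full row rank if and only if $\sigma_{\min}(X)>0$.

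For the Lipschitz estimate \eqref{eq:sigma-min-Lipschitz}, fix a unit vector $u$. The triangle inequality gives
\[
 \|X^*u\|\le\|Y^*u\|+\|(X-Y)^*u\|\le\|Y^*u\|+\|X-Y\|_{\mathrm{op}},
\]
using $\|(X-Y)^*\|_{\mathrm{op}}=\|X-Y\|_{\mathrm{op}}$. Taking the infimum over unit $u$ on both sides yields $\sigma_{\min}(X)\le\sigma_{\min}(Y)+\|X-Y\|_{\mathrm{op}}$. Exchanging the roles of $X$ and $Y$ gives the reverse inequality, and together these give the claimed bound.

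There is no real obstacle in either step. The only care needed is that an infimum of $1$-Lipschitz functions of $X$ (uniformly in $u$) is again $1$-Lipschitz, and that attainment of the infimum by compactness is what justifies the zero-versus-kernel equivalence.
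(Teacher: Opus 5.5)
Your proof is correct and follows essentially the same route as the paper. You derive the Lipschitz bound from the triangle inequality $\norm{X^*u}\le\norm{Y^*u}+\norm{X-Y}_{\mathrm{op}}$, an infimum over unit vectors, and symmetry, and you reduce the rank equivalence to injectivity of $X^*$ on the compact unit sphere. Your explicit appeal to $\ker X^*=(\operatorname{ran}X)^\perp$ and to attainment of the infimum simply spells out the step the paper states in one line.
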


\begin{proof}
For every unit vector $u$,
\[
 \norm{X^*u}\leq\norm{Y^*u}+\norm{X-Y}_{\mathrm{op}}.
\]
Taking the infimum gives
$\sigma_{\min}(X)\leq\sigma_{\min}(Y)+\norm{X-Y}_{\mathrm{op}}$.
Interchanging $X$ and $Y$ proves \eqref{eq:sigma-min-Lipschitz}. The rank
statement follows because $X$ has full row rank precisely when $X^*$ is
injective and bounded below on the finite-dimensional unit sphere.
\end{proof}

\subsection{The arithmetic trichotomy and the order of the proof}

\begin{proposition}\label{prop:gap-trichotomy}
Let $d\ge1$ and $N\ge R\ge1$, and put
\[
  m_0=2(N-R+1),
  \qquad n=2d.
\]
Then exactly one of the following occurs.
\begin{enumerate}[label=\textup{(\roman*)},leftmargin=2.5em]
\item If $N-R\ge d$, then $m_0\ge2d+2>n$. Every simplex dimension $0\le k\le n$ satisfies $k<m_0$.
\item If $N-R=d-1$, then $m_0=n$. Every proper face of an $n$-simplex has dimension $k<m_0$, while the top-dimensional simplices are exactly at the critical dimension.
\item If $0\le N-R\le d-2$, then $m_0\le n-2$. The triangulation of a $2d$-dimensional box will contain simplices of dimension strictly higher than $m_0$.
\end{enumerate}
\end{proposition}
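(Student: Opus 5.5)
The statement is elementary integer arithmetic, so the plan is to set $g=N-R\ge0$, rewrite $m_0=2(g+1)$ and $n=2d$, and compare $g$ with $d$. Since $g$ and $d$ are integers, the three cases $g\ge d$, $g=d-1$, and $0\le g\le d-2$ are mutually exclusive and exhaust all $g\ge0$. This gives the ``exactly one'' assertion at once. The only point to watch is the integrality step, which is where the strict inequalities come from. There is no real obstacle.

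In case (i), $g\ge d$ gives $m_0=2g+2\ge2d+2>2d=n$. Every simplex in a triangulation of a $2d$-dimensional domain has dimension $k\le n$, so $k<m_0$.

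In case (ii), $g=d-1$ gives $m_0=2d=n$. A top-dimensional $n$-simplex therefore sits exactly at dimension $m_0$. Any proper face has dimension at most $n-1$, hence strictly below $m_0$.

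In case (iii), $g\le d-2$ gives $m_0\le2(d-1)=n-2$. Any triangulation of a $2d$-dimensional box contains $n$-dimensional simplices, since a box with nonempty interior cannot be covered by finitely many simplices of lower dimension, all of which have measure zero. These simplices have dimension $n\ge m_0+2>m_0$, as claimed.
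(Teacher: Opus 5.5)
Your proposal is correct and follows the same route as the paper, which simply substitutes the three ranges of $N-R$ into $m_0=2(N-R+1)$. Your extra remarks — that integrality makes the three cases exhaustive and mutually exclusive, and the measure-zero argument that a triangulated $2d$-box must contain $2d$-simplices — only make explicit what the paper leaves implicit.
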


\begin{proof}
The three conclusions follow by substituting the corresponding inequalities for $N-R$ into $m_0=2(N-R+1)$. In the extension lemmas below the decisive hypothesis is the strict inequality $k<m_0$. Hence in case \textup{(i)} it holds for every simplex of the $2d$-dimensional triangulation; in case \textup{(ii)} it holds for all proper faces but not for a top simplex; and in case \textup{(iii)} it holds only up to the $(m_0-1)$-skeleton. This explains why \Cref{sec:smooth-branch} constructs a smooth field on the whole Zak domain, while \Cref{sec:terminal-construction} first protects the $(m_0-1)$-skeleton and treats the critical and lower gaps with one cubical construction.
\end{proof}

\subsection{Two elementary geometric lemmas}

\begin{lemma}\label{lem:lipschitz-null}
Let $E\subset\R^a$ be bounded, let $a<b$, and let $F:E\to\R^b$ be Lipschitz.  Then $F(E)$ has $b$-dimensional Lebesgue measure zero.
\end{lemma}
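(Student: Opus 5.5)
The plan is to cover $E$ by small cubes and use the Lipschitz bound to control the size of each image piece. Let $L$ denote the Lipschitz constant of $F$. Since $E$ is bounded, it lies in a cube of side $M$ in $\R^a$.

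For $k\in\N$, partition this cube into $k^a$ closed subcubes of side $M/k$. Each subcube has diameter $\sqrt a\,M/k$. Hence its intersection with $E$ is mapped by $F$ into a set of diameter at most $L\sqrt a\,M/k$. Such a set lies in a closed ball in $\R^b$ of that radius, and this ball has $b$-dimensional measure at most $C_b\,(L\sqrt a\,M/k)^b$.

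Summing over the $k^a$ subcubes gives
\[
 |F(E)|^*\le k^a\,C_b\,(L\sqrt a\,M)^b\,k^{-b}=C\,k^{a-b},
\]
where $|\cdot|^*$ denotes outer Lebesgue measure. Since $a<b$, the right-hand side tends to zero as $k\to\infty$. So $F(E)$ has outer measure zero, and is therefore Lebesgue measurable with measure zero.

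There is no real obstacle. The one point to note is that measurability of $F(E)$ need not be checked beforehand, because every set of outer measure zero is Lebesgue measurable. Subcubes that miss $E$ simply contribute nothing to the count.
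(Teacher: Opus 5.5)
Your proof is correct and matches the paper's argument: cover a bounded box containing $E$ by $O(\delta^{-a})$ cubes of side $\delta$, place each Lipschitz image piece in a ball of radius $C\delta$, and conclude that the outer measure is $O(\delta^{b-a})\to0$. Your remark that a set of outer measure zero is automatically Lebesgue measurable makes explicit a point the paper leaves implicit.
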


\begin{proof}
Cover a bounded box containing $E$ by $O(\delta^{-a})$ cubes of side length $\delta$.  A Lipschitz image of each cube is contained in a ball of radius $C\delta$.  The total $b$-dimensional outer volume is therefore $O(\delta^{-a}\delta^b)=O(\delta^{b-a})$, which tends to zero.
\end{proof}

\begin{lemma}\label{lem:adapted-triangulation}
There is a triangulation \(\widetilde K\) of \(\R^{2d}\) such that
\begin{enumerate}[label=\textup{(\roman*)},leftmargin=2.4em]
\item translating by any vector of \(\Gamma_D\) leaves the triangulation unchanged;
\item only finitely many simplices remain after identifying points that differ by a vector of \(\Gamma_D\);
\item for every simplex in the quotient, the union of that simplex with all simplices that meet it is contained in one adapted Zak coordinate box.
\end{enumerate}
For a vertex, the union in \textup{(iii)} is its star, as illustrated below.
\end{lemma}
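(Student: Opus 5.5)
The plan is to build the triangulation first on a single fundamental cell of the fine lattice $\Gamma_D=\mathsf A^{-1}\mathbb Z^d\times\mathsf B^{-1}\mathbb Z^d$ and then translate it. After the linear change of coordinates $(u,\eta)\mapsto(\mathsf A u,\mathsf B\eta)$, the lattice $\Gamma_D$ becomes $\mathbb Z^{2d}$. Simplices, affine independence and face relations are preserved by linear maps, so it suffices to treat $\Gamma_D=\mathbb Z^{2d}$ and then pull the result back.

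For $\mathbb Z^{2d}$ I would use the standard Freudenthal (Kuhn) triangulation. It is built on a fine cubical grid of mesh $1/M$, with $M\in\mathbb N$ to be chosen large. Each small cube $k/M+[0,1/M]^{2d}$ is split into the $(2d)!$ simplices
\[
 \{k/M+x/M:\ 1\ge x_{\pi(1)}\ge\cdots\ge x_{\pi(2d)}\ge0\},
\]
one for each permutation $\pi$. This is a genuine triangulation of $\mathbb R^{2d}$, in the sense that two simplices meet in a common face. The reason is that the simplices are the regions cut out by the hyperplane arrangement $\{x_i-x_j\in M^{-1}\mathbb Z\}\cup\{x_i\in M^{-1}\mathbb Z\}$, and the ordering description of each simplex is canonical, so faces agree across adjacent cubes.

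Invariance under $M^{-1}\mathbb Z^{2d}$, and in particular under $\mathbb Z^{2d}$, is immediate from the definition; this gives (i). Modulo $\mathbb Z^{2d}$ only the $M^{2d}$ cubes of one unit cell remain, each carrying $(2d)!$ top simplices and finitely many faces; this gives (ii).

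For (iii), I would bound the diameter of a simplex together with all simplices meeting it. Every simplex has diameter at most $\sqrt{2d}/M$, so this union lies in the Euclidean ball of radius $2\sqrt{2d}/M$ around any point of the simplex. An open Euclidean ball of radius $\rho<1/2$, measured in the normalized coordinates, injects into the quotient $\mathbb R^{2d}/\mathbb Z^{2d}$, since two of its points cannot differ by a nonzero integer vector. Its image $\mathcal U$ in $X_D$ is open because the quotient map is open. Thus $(\mathcal U,U)$, with $U$ that ball, is an adapted Zak coordinate box. Choosing $M>4\sqrt{2d}$, and pulling back through the diagonal scaling, gives (iii).

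The main obstacle I expect is the face-to-face property of the Kuhn subdivision: one must check that neighbouring cubes induce the same subdivision on shared faces. I would handle it through the arrangement description above rather than cube by cube. Everything else is diameter bookkeeping, apart from noting that the star convention used here for positive-dimensional simplices is still covered by the doubled-diameter ball.
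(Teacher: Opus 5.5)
Your proposal is correct and is essentially the paper's argument: the same $\Gamma_D$-compatible rectangular grid with its Kuhn subdivision, refined until every simplex together with all simplices meeting it is small. The only difference is in how you check (iii). You observe directly that a sufficiently small ball is itself an adapted Zak coordinate box, whereas the paper uses a Lebesgue number for a fixed finite cover of $X_D$ and then refines once more so that no simplex meets two distinct $\Gamma_D$-translates of another. Injectivity on the whole lifted star gives you that last property for free, and with it that the quotient is an honest finite simplicial complex. State this explicitly, since the later quotient-gluing arguments rely on it.
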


\begin{proof}
Cover the compact torus \(X_D\) by finitely many adapted Zak coordinate
boxes. Choose \(\lambda>0\) so that every subset of \(X_D\) with diameter
less than \(\lambda\) lies in one member of this cover (which is possible in view of the Lebesgue number lemma).

Start with a rectangular grid in \(\R^{2d}\) whose translations agree with
\(\Gamma_D\). Subdivide each rectangular box in the same way into Kuhn
simplices: after rescaling to \([0,1]^{2d}\), one simplex is assigned to each
ordering of the coordinates. Refine the grid periodically. The diameter of a
simplex, and also of its finite cluster of neighboring simplices, tends to
zero with the mesh size. Hence a sufficiently fine periodic refinement gives
\textup{(iii)}.

Now let
\[
 \ell_D=\min\{|\gamma|:\gamma\in\Gamma_D\setminus\{0\}\}>0
\]
and, if needed, refine once more so that every neighboring cluster has diameter less than
\(\ell_D/3\). Suppose a lifted simplex \(\sigma\) met both
\(\tau+\gamma_1\) and \(\tau+\gamma_2\). Choose
\(x_i\in\sigma\cap(\tau+\gamma_i)\). Then 
$x_1,\,x_2\in\sigma$ and $ x_1-\gamma_1,\,x_2-\gamma_2\in \tau$, so 
\[
 |\gamma_1-\gamma_2|
 \le |x_1-x_2|+|(x_1-\gamma_1)-(x_2-\gamma_2)|
 <\frac{2\ell_D}{3}.
\]
By the definition of \(\ell_D\), this forces \(\gamma_1=\gamma_2\).
Intersections in the quotient therefore come from unique common faces of
lifted simplices, and no simplex is identified with itself. Thus the quotient
has an ordinary finite simplicial triangulation. Finiteness follows because
one period cell contains only finitely many refined simplices.
\end{proof}

\begin{figure}[htbp]
\centering
\resizebox{0.99\textwidth}{!}{%
\begin{tikzpicture}[line cap=round,line join=round]
% Panel 1: Kuhn triangulation and star
\begin{scope}[scale=.82]
  \fill[BLblue!18] (0,0)--(1,0)--(1,1)--cycle;
  \fill[BLblue!18] (0,0)--(0,1)--(1,1)--cycle;
  \fill[BLblue!18] (1,0)--(1,1)--(2,1)--cycle;
  \fill[BLblue!18] (0,1)--(1,1)--(1,2)--cycle;
  \fill[BLblue!18] (1,1)--(2,1)--(2,2)--cycle;
  \fill[BLblue!18] (1,1)--(1,2)--(2,2)--cycle;
  \foreach \i in {-1,0,1,2,3}{
    \draw[black!55] (\i,-1)--(\i,3);
    \draw[black!55] (-1,\i)--(3,\i);
  }
  \foreach \i in {-1,0,1,2}\foreach \j in {-1,0,1,2}
    \draw[black!55] (\i,\j)--(\i+1,\j+1);
  \fill[BLblue] (1,1) circle (2.7pt);
  \node[above right,font=\small] at (1,1) {$v$};
  \node[font=\small,align=center] at (1,-1.55)
    {Kuhn triangulation\\the blue triangles form the star of $v$};
\end{scope}

\draw[-{Latex[length=3mm]},very thick] (3.55,1.15)--(4.55,1.15);

% Panel 2: coarse star does not fit
\begin{scope}[xshift=5.0cm,scale=.82]
  \fill[BLblue!18] (0,0)--(1,0)--(1,1)--cycle;
  \fill[BLblue!18] (0,0)--(0,1)--(1,1)--cycle;
  \fill[BLblue!18] (1,0)--(1,1)--(2,1)--cycle;
  \fill[BLblue!18] (0,1)--(1,1)--(1,2)--cycle;
  \fill[BLblue!18] (1,1)--(2,1)--(2,2)--cycle;
  \fill[BLblue!18] (1,1)--(1,2)--(2,2)--cycle;
  \foreach \i in {-1,0,1,2,3}{
    \draw[black!45] (\i,-1)--(\i,3);
    \draw[black!45] (-1,\i)--(3,\i);
  }
  \foreach \i in {-1,0,1,2}\foreach \j in {-1,0,1,2}
    \draw[black!45] (\i,\j)--(\i+1,\j+1);
  \draw[BLred!85,very thick,rounded corners]
    (.25,.25) rectangle (1.75,1.75);
  \fill[BLblue] (1,1) circle (2.7pt);
  \node[font=\small,BLred!85!black] at (1,2.55) {one adapted chart};
  \node[font=\small,align=center] at (1,-1.55)
    {before refinement\\the star may be too large};
\end{scope}

\draw[-{Latex[length=3mm]},very thick] (8.55,1.15)--(9.55,1.15);
\node[above,font=\small] at (9.05,1.25) {refine};

% Panel 3: refined star fits
\begin{scope}[xshift=10.0cm,scale=.82]
  % small star around (1.5,1.5)
  \fill[BLgreen!22] (1,1)--(1.5,1)--(1.5,1.5)--cycle;
  \fill[BLgreen!22] (1,1)--(1,1.5)--(1.5,1.5)--cycle;
  \fill[BLgreen!22] (1.5,1)--(1.5,1.5)--(2,1.5)--cycle;
  \fill[BLgreen!22] (1,1.5)--(1.5,1.5)--(1.5,2)--cycle;
  \fill[BLgreen!22] (1.5,1.5)--(2,1.5)--(2,2)--cycle;
  \fill[BLgreen!22] (1.5,1.5)--(1.5,2)--(2,2)--cycle;
  \foreach \i in {0,.5,...,3}{
    \draw[black!38] (\i,0)--(\i,3);
    \draw[black!38] (0,\i)--(3,\i);
  }
  \foreach \i in {0,.5,...,2.5}\foreach \j in {0,.5,...,2.5}
    \draw[black!38] (\i,\j)--(\i+.5,\j+.5);
  \draw[BLred!85,very thick,rounded corners]
    (.55,.55) rectangle (2.45,2.45);
  \fill[BLgreen!80!black] (1.5,1.5) circle (2.7pt);
  \node[font=\small,BLred!85!black] at (1.5,2.72) {the same adapted chart};
  \node[font=\small,align=center] at (1.5,-.75)
    {after refinement\\the complete star lies in the chart};
\end{scope}
\end{tikzpicture}%
}
\caption{A planar model for \Cref{lem:adapted-triangulation}. Each square is
split along the same diagonal, giving the Kuhn triangulation. The star of a
vertex is the union of the triangles that contain it. Refining the grid makes
every star small enough to fit in one adapted Zak chart. In higher dimension
the same construction is applied to rectangular boxes and their Kuhn
simplices.}
\label{fig:kuhn-star-refinement}
\end{figure}

The preceding picture concerns the size of simplex stars.  The next one
separates the quotient description from the Euclidean description used in
local calculations.  A chart that crosses a quotient seam appears in several
pieces inside one fundamental rectangle, but its selected lift is one ordinary
open subset of \(\mathbb R^2\).  The right panel of
\Cref{fig:adapted-chart-cover} displays four pairwise disjoint selected lifts.

\begin{figure}[htbp]
\centering
\resizebox{0.99\textwidth}{!}{%
\begin{tikzpicture}[line cap=round,line join=round]
% Left panel: quotient chart cover in one fundamental rectangle.
\begin{scope}[scale=.78]
  \begin{scope}
    \clip (0,0) rectangle (7.2,4.8);
    \fill[BLblue!16,rounded corners] (1.05,.85) rectangle (6.15,3.95);
    \draw[BLblue!82!black,thick,rounded corners] (1.05,.85) rectangle (6.15,3.95);
    \fill[BLgreen!18] (0,.45) rectangle (1.55,4.35);
    \fill[BLgreen!18] (5.65,.45) rectangle (7.2,4.35);
    \draw[BLgreen!78!black,thick] (0,.45) rectangle (1.55,4.35);
    \draw[BLgreen!78!black,thick] (5.65,.45) rectangle (7.2,4.35);
    \fill[BLgold!20] (.45,0) rectangle (6.75,1.35);
    \fill[BLgold!20] (.45,3.45) rectangle (6.75,4.8);
    \draw[BLgold!82!black,thick] (.45,0) rectangle (6.75,1.35);
    \draw[BLgold!82!black,thick] (.45,3.45) rectangle (6.75,4.8);
    \foreach \x in {0,7.2}\foreach \y in {0,4.8}
      \fill[BLred!18] (\x,\y) circle (1.05);
    \foreach \x in {0,7.2}\foreach \y in {0,4.8}
      \draw[BLred!82!black,thick] (\x,\y) circle (1.05);
  \end{scope}
  \draw[very thick] (0,0) rectangle (7.2,4.8);
  \draw[very thick] (3.50,-.14)--(3.70,.14);
  \draw[very thick] (3.50,4.66)--(3.70,4.94);
  \foreach \y in {2.18,2.58}{
    \draw[very thick] (-.14,\y-.10)--(.14,\y+.10);
    \draw[very thick] (7.06,\y-.10)--(7.34,\y+.10);
  }
  \node[BLblue!82!black,font=\small] at (3.6,2.4) {$U_1$};
  \node[BLgreen!78!black,font=\small] at (.78,2.4) {$U_2$};
  \node[BLgreen!78!black,font=\small] at (6.42,2.4) {$U_2$};
  \node[BLgold!82!black,font=\small] at (3.6,.55) {$U_3$};
  \node[BLgold!82!black,font=\small] at (3.6,4.25) {$U_3$};
  \node[BLred!82!black,font=\small] at (.40,.38) {$U_4$};
  \node[font=\small,align=center] at (3.6,-.72)
    {quotient view: one fundamental rectangle for $X_D$};
\end{scope}

% Divider.
\draw[black!28] (6.20,-.35)--(6.20,4.05);

% Right panel: four disjoint Euclidean lifts, at one third of the left scale.
% The quotient periods 7.2 and 4.8 become 3.6 and 2.4 in these coordinates.
\begin{scope}[xshift=7.05cm,scale=.52]
  \foreach \x in {0,3.6,7.2,10.8}{\draw[black!35] (\x,0)--(\x,7.2);}
  \foreach \y in {0,2.4,4.8,7.2}{\draw[black!35] (0,\y)--(10.8,\y);}
  % Larger coordinate boxes: every side is shorter than its quotient period.
  % Their dashed boundaries are distinct from the light period grid.
  \draw[black!65,thick,dashed] (.35,.25) rectangle (3.25,2.15);
  \draw[black!65,thick,dashed] (6.25,.10) rectangle (8.15,2.30);
  \draw[black!65,thick,dashed] (.10,3.95) rectangle (3.50,5.65);
  \draw[black!65,thick,dashed] (6.525,4.125) rectangle (7.875,5.475);
  % U1: an interior chart in the lower-left period rectangle.
  \fill[BLblue!16,rounded corners=1.33pt] (.525,.425) rectangle (3.075,1.975);
  \draw[BLblue!82!black,thick,rounded corners=1.33pt] (.525,.425) rectangle (3.075,1.975);
  \node[BLblue!82!black,font=\small] at (1.8,1.2) {$\widetilde U_1$};
  % U2: a single vertical chart across the seam x=2(3.6).
  \fill[BLgreen!18] (6.425,.225) rectangle (7.975,2.175);
  \draw[BLgreen!78!black,thick] (6.425,.225) rectangle (7.975,2.175);
  \node[BLgreen!78!black,font=\small] at (7.2,1.2) {$\widetilde U_2$};
  % U3: a single horizontal chart across the seam y=2(2.4).
  \fill[BLgold!20] (.225,4.125) rectangle (3.375,5.475);
  \draw[BLgold!82!black,thick] (.225,4.125) rectangle (3.375,5.475);
  \node[BLgold!82!black,font=\small] at (1.8,4.8) {$\widetilde U_3$};
  % U4: a full disk at a translated period vertex.
  \fill[BLred!18] (7.2,4.8) circle (.525);
  \draw[BLred!82!black,thick] (7.2,4.8) circle (.525);
  \node[BLred!82!black,font=\small,anchor=west] at (7.95,4.8) {$\widetilde U_4$};
  \node[font=\small,align=center] at (5.4,-1.08)
    {Euclidean view: disjoint lifts in $\mathbb R^2$};
\end{scope}
\end{tikzpicture}%
}
\caption{A schematic adapted chart cover of $X_D$, shown in two dimensions,
and four disjoint selected Euclidean lifts.  In the left panel, $U_2$
crosses the left--right seam, $U_3$ crosses the top--bottom seam, and the
four red corner pieces represent one quotient chart $U_4$.  The right
panel is drawn at a smaller scale and shows a three-by-three array of
period rectangles.  Lattice translates place the four selected lifts
$\widetilde U_1,\ldots,\widetilde U_4$ in pairwise disjoint positions;
each projects onto its corresponding quotient chart.  The dashed boxes
are larger Euclidean coordinate boxes containing the selected lifts;
their side lengths are smaller than the corresponding periods, so each
box projects injectively to $X_D$.  Matching ticks
in the quotient panel indicate the side identifications.}
\label{fig:adapted-chart-cover}
\end{figure}

For $0\le k\le n=2d$, write $K^{(k)}$ for the $k$-skeleton of this finite quotient triangulation.

\subsection{Radial extension and auxiliary point-defect fields}

The inequality $k<m_0$ from \Cref{prop:gap-trichotomy} permits the
radial construction below.  It supplies the smooth branch and an
auxiliary point-defect field at critical codimension.  The regularity
proof for both critical and lower gaps uses the flat cubical
construction in \Cref{sec:terminal-construction}; the auxiliary
construction is retained here.

\begin{lemma}\label{lem:simplex-boundary-paths}
Let $\sigma=[v_0,\ldots,v_k]$ be a nondegenerate $k$-simplex with $k\ge2$.
There is a constant $Q_\sigma$ such that any two points $x,y\in\partial\sigma$
can be joined by a piecewise-linear path $\Gamma\subset\partial\sigma$ with
\begin{equation}\label{eq:simplex-boundary-path-length}
 \operatorname{length}(\Gamma)\le Q_\sigma|x-y|.
\end{equation}
For a finite family of affine images of fixed model simplices, the constants
may be chosen uniformly.
\end{lemma}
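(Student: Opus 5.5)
The idea is to show that the boundary $\partial\sigma$ of a $k$-simplex with $k\ge2$ is uniformly quasiconvex. By compactness and affine invariance, it suffices to treat one model simplex. An affine map $T$ distorts lengths by at most $\|T\|$ and distances by at least $\|T^{-1}\|^{-1}$. So if $\partial\sigma_0$ has quasiconvexity constant $Q_0$, then $T\sigma_0$ has constant $Q_0\|T\|\|T^{-1}\|$. For a finite family of affine images of fixed models, the maximum of these finitely many constants is the uniform choice.

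For the model simplex $\sigma$, take two points $x,y\in\partial\sigma$.

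\emph{Case 1: $x$ and $y$ lie in a common facet $F$.} Facets are convex, so the segment $[x,y]\subset F\subset\partial\sigma$ works with constant $1$.

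\emph{Case 2: no facet contains both points.} Choose facets $F\ni x$ and $G\ni y$. Since $k\ge2$, any two distinct facets meet: $F\cap G$ is a $(k-2)$-face, which is nonempty because $k-2\ge0$. Let $p$ be a point of $F\cap G$ nearest to $x$ (or simply any point of it), and use the path $[x,p]\cup[p,y]$. This lies in $F\cup G\subset\partial\sigma$ and is piecewise linear.

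The main step is to bound $|x-p|+|p-y|$ by $C|x-y|$ when $x$ and $y$ do not share a facet. Here the constant must not blow up as $x$ and $y$ approach a common lower-dimensional face. The two points $x\in F$ and $y\in G$ can only be close when both are close to $F\cap G$. The quantitative form I would prove is
\[
 \dist(x,F\cap G)+\dist(y,F\cap G)\le C\,|x-y|,\qquad x\in F,\ y\in G.
\]

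I would derive this from barycentric coordinates. Let $F=\{a_i=0\}$ and $G=\{a_j=0\}$ with $i\ne j$. Then $x\in F$ has $a_i(x)=0$, and
\[
 a_j(x)=|a_j(x)-a_j(y)|\le C|x-y|,
\]
since $a_j(y)=0$ and $a_j$ is affine. Dropping the $j$th coordinate and renormalizing the remaining ones moves $x$ to a point $p\in F\cap G$ at distance $O(a_j(x))$. Symmetrically, $a_i(y)\le C|x-y|$ controls the distance from $y$ to $F\cap G$. I take $p$ to be this projected point of $x$. Then
\[
 |x-p|\le C|x-y|,\qquad |p-y|\le|p-x|+|x-y|\le C|x-y|.
\]

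The remaining subtlety is the choice of facets. We only need some facet containing $x$ and some containing $y$, and every boundary point lies in one. The estimate holds for any pair $i\ne j$, so the choice does not matter. The $Q_\sigma$ obtained depends only on the Lipschitz constants of the barycentric coordinate functions, which confirms the uniformity claim.
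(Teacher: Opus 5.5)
Your argument is essentially the paper's: you split on whether $x,y$ share a facet, and otherwise use $a_j(x)=|a_j(x)-a_j(y)|\le C|x-y|$ to move $x$ into the codimension-two face $F\cap G$ at cost $O(a_j(x))$, then join by the two segments $[x,p]\subset F$ and $[p,y]\subset G$. The one slip is that dropping the $j$th coordinate and renormalizing is undefined when $x=v_j$, which can occur in your Case~2 (for instance when $y$ lies in the relative interior of $G$); there $|x-y|\ge\|\nabla a_j\|^{-1}$, so any $p\in F\cap G$ works, or you can do as the paper does and move the mass $a_j(x)$ to a third vertex $v_r$ with $r\ne i,j$, which is always defined and gives $|x-p|=a_j(x)|v_r-v_j|$.
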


\begin{proof}
Let $\lambda_0,\ldots,\lambda_k$ be the barycentric coordinates, and write
$F_i=\{\lambda_i=0\}$ for the facets. If $x$ and $y$ lie in one facet, the
straight segment joins them inside the boundary.

Otherwise choose facets $F_i\ni x$ and $F_j\ni y$ with $i\ne j$, and choose
$r$ different from both $i$ and $j$. Put $\alpha=\lambda_j(x)$ and obtain
$z\in F_i\cap F_j$ by moving the barycentric mass $\alpha$ from the
$j$th coordinate to the $r$th. Since $\lambda_j(y)=0$,
\[
 |x-z|\le \|\nabla\lambda_j\|\,|v_r-v_j|\,|x-y|.
\]
Moreover,
\[
 |z-y|\le |z-x|+|x-y|.
\]
The two segments $[x,z]\subset F_i$ and $[z,y]\subset F_j$ therefore form a
boundary path of length
\[
 |x-z|+|z-y|
 \le 2|x-z|+|x-y|
 \le\bigl(1+2\|\nabla\lambda_j\|\,|v_r-v_j|\bigr)|x-y|.
\]
Taking the maximum over the finitely many index choices gives
\eqref{eq:simplex-boundary-path-length}. Uniformity under a finite family of
affine maps is immediate from bounds for the maps and their inverses.
\end{proof}

\begin{lemma}\label{lem:radial-good-center}
Let \(\sigma\) be a \(k\)-simplex with
\(1\le k<2(N-R+1)\), and let
$h:\partial\sigma\to\C^{R\times N}$ be continuous, full row rank, globally
Lipschitz, and piecewise $C^1$ with bounded derivatives on a fixed finite
subdivision. Call $C\in\C^{R\times N}$ \emph{good} when
\begin{equation}\label{eq:radial-safe-segments}
 (1-t)C+t h(\theta)\notin\Sigma_{R,N},
 \qquad\text{for }0\le t\le1,\ \theta\in\partial\sigma.
\end{equation}
The set of nongood centers has $2RN$-dimensional Lebesgue measure zero.

Choose $\vartheta\in C^\infty([0,1];[0,1])$ equal to zero near $0$ and equal
to one near $1$. For $x\ne b_\sigma$, where $b_\sigma$ is the barycenter of $\sigma$, write uniquely
\begin{equation}\label{eq:simplex-radial-data}
 x=(1-r_\sigma(x))b_\sigma+r_\sigma(x)\theta_\sigma(x),
 \qquad0<r_\sigma(x)\le1,
 \quad\theta_\sigma(x)\in\partial\sigma.
\end{equation}
For every good $C$, the formula
\begin{equation}\label{eq:radial-good-center-extension}
 E_Ch(x)=
 \begin{cases}
 C,&x=b_\sigma,\\
 (1-\vartheta(r_\sigma(x)))C
 +\vartheta(r_\sigma(x))h(\theta_\sigma(x)),&x\ne b_\sigma,
 \end{cases}
\end{equation}
defines a continuous full-row-rank extension of $h$ to $\sigma$. It is
constant near $b_\sigma$, equals the radial copy $h\circ\theta_\sigma$ on a
full boundary collar, is globally Lipschitz, and is piecewise $C^1$ with
bounded derivatives on the finite conical subdivision obtained by joining
$b_\sigma$ to the boundary subdivision.
\end{lemma}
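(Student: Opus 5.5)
The plan has two parts. The first is the measure-zero claim for nongood centers, which I would prove by a dimension count using \Cref{lem:rank-codim} and \Cref{lem:lipschitz-null}. The second is a direct verification of the properties of $E_Ch$.

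\emph{Measure zero.} A center $C$ is nongood exactly when there are $t\in[0,1]$, $\theta\in\partial\sigma$ and $X\in\Sigma_{R,N}$ with $(1-t)C+th(\theta)=X$. The value $t=1$ is impossible because $h$ has full row rank. For $t<1$ we can solve for the center: $C=(X-th(\theta))/(1-t)$.

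Cover $[0,1)$ by the intervals $[0,1-1/L]$ with $L\in\N$. Cover $\Sigma_{R,N}=\bigcup_j\Sigma_j$ by the countably many bounded Lipschitz graph pieces from the proof of \Cref{lem:rank-codim}. A piece of $\Sigma_j$ has real dimension $2RN-c_j\le 2RN-m_0$, since $c_j\ge c_1=m_0$.

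On each such product the map $(t,\theta,X)\mapsto (X-th(\theta))/(1-t)$ is Lipschitz, because $h$ is globally Lipschitz and bounded and $1/(1-t)\le L$. Its parameter set is a Lipschitz image of a bounded subset of $\R^{1+(k-1)+(2RN-m_0)}$, using a bi-Lipschitz parametrisation of the polyhedral boundary $\partial\sigma$ by finitely many $(k-1)$-dimensional faces. This parameter dimension is $k+2RN-m_0$, which is strictly less than $2RN$ because $k<m_0$. So \Cref{lem:lipschitz-null} shows each image is null, and a countable union of null sets is null.

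\emph{Properties of $E_Ch$.}
\begin{itemize}
\item \emph{Full row rank.} For $x\ne b_\sigma$, the value $E_Ch(x)$ lies on the segment between $C$ and $h(\theta_\sigma(x))$ with parameter $t=\vartheta(r_\sigma(x))$. Condition \eqref{eq:radial-safe-segments} for a good center then gives full row rank, and at $b_\sigma$ the value $C$ is the case $t=0$.
\item \emph{Constancy near $b_\sigma$ and the boundary collar.} Since $\vartheta$ vanishes near $0$, the field equals $C$ on a neighbourhood of $b_\sigma$. Since $\vartheta=1$ near $1$, it equals $h\circ\theta_\sigma$ on a collar $r_\sigma>1-\delta$, and in particular it restricts to $h$ on $\partial\sigma$.
\item \emph{Continuity.} This follows from the previous two points together with continuity of $r_\sigma$ and $\theta_\sigma$ away from $b_\sigma$.
\end{itemize}

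\emph{Regularity.} On each cone $\operatorname{conv}(b_\sigma,\tau)$ over a cell $\tau$ of the boundary subdivision, the radial data are explicit. The function $r_\sigma$ is the maximum of finitely many affine functions, namely $r_\sigma(x)=\max_i (1-(k+1)\lambda_i(x))$ in barycentric coordinates, and it is affine on each cone over a facet. On each such cone $\theta_\sigma=b_\sigma+(x-b_\sigma)/r_\sigma$, which is $C^\infty$ where $r_\sigma>0$.

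The possible blow-up of derivatives of $\theta_\sigma$ as $r_\sigma\to0$ is harmless. Wherever $r_\sigma$ is small, $\vartheta'(r_\sigma)=0$ and the factor $\vartheta(r_\sigma)$ vanishes, so there $E_Ch\equiv C$. Hence only the region $r_\sigma\ge r_0>0$ matters, and on it $\theta_\sigma$ has bounded derivatives on each cone piece. By the chain rule, $E_Ch$ is then piecewise $C^1$ with bounded derivatives on the conical subdivision.

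Global Lipschitz continuity needs an extra argument, because pieces are glued across cone boundaries. I would use that $r_\sigma$ is globally Lipschitz and that $\theta_\sigma$ is Lipschitz on $\{r_\sigma\ge r_0\}$. For $\theta_\sigma$, \Cref{lem:simplex-boundary-paths} can control distances along $\partial\sigma$ when $k\ge2$; for $k=1$ the boundary is two points and the region $r_\sigma\ge r_0$ consists of two separated segments.

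I expect the main obstacle to be the measure-zero count, specifically parametrising $\partial\sigma$ and the rank strata by bounded Lipschitz pieces so that \Cref{lem:lipschitz-null} applies with the strict dimension inequality $k+2RN-m_0<2RN$.
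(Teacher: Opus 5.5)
Your proposal is correct and matches the paper's proof. It uses the same solve-for-$C$ parametrization with $t\le 1-1/L$, the same dimension count $k+2RN-m_0<2RN$ fed into \Cref{lem:lipschitz-null}, and the same safe-segment and cutoff verification of the extension. The only minor deviation is the global Lipschitz step: your formula $\theta_\sigma=b_\sigma+(x-b_\sigma)/r_\sigma$, with $r_\sigma$ a maximum of affine functions, already gives Lipschitz control on $\{r_\sigma\ge r_0\}$ directly, so \Cref{lem:simplex-boundary-paths} is not needed because $h$ is assumed globally Lipschitz; the paper instead splits a straight segment at its finitely many cone crossings and applies the mean-value estimate on each piece.
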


\begin{proof}
By the proof of \Cref{lem:rank-codim}, the rank-deficient set is covered by
countably many bounded Lipschitz images
\[
 \Phi_\nu:E_\nu\subset\R^{q_\nu}\longrightarrow\Sigma_{R,N},
 \qquad q_\nu\le2RN-m_0.
\]
If $C$ is nongood, then for some rank-deficient matrix $Y$, boundary point
$\theta$, and $0\le t<1$,
\[
 C=\frac{Y-t h(\theta)}{1-t}.
\]
(The value $t=1$ cannot be bad because $h(\theta)$ has full row rank.) On one
closed boundary simplex $\tau$ and for an integer $L\ge1$, restrict
$0\le t\le1-L^{-1}$ and define
\[
 \Psi_{\nu,\tau,L}(u,\theta,t)
 =\frac{\Phi_\nu(u)-t h(\theta)}{1-t}.
\]
This map is Lipschitz on its bounded domain. Its parameter dimension is at
most
\[
 q_\nu+(k-1)+1\le2RN-m_0+k<2RN.
\]
Hence its image is null by \Cref{lem:lipschitz-null}. The countable union over
$\nu$ and $L$, and the finite union over the boundary subdivision, contains
all nongood centers and is still null.

Every value in \eqref{eq:radial-good-center-extension} lies on one of the safe
segments \eqref{eq:radial-safe-segments}; hence it has full row rank. The
cutoff makes the formula constant near the barycenter, and it is equal to
$h\circ\theta_\sigma$ near the boundary. This proves continuity at the
barycenter and agreement with $h$ on $\partial\sigma$.

On the cone over one boundary simplex, the functions $r_\sigma$ and
$\theta_\sigma$ are smooth away from $b_\sigma$. Where derivatives of
$\theta_\sigma$ could grow, $\vartheta$ is identically zero; where
$\vartheta'$ is nonzero, $r_\sigma$ stays in a compact subinterval of
$(0,1)$. Thus the derivatives of \eqref{eq:radial-good-center-extension} are
bounded on every closed conical piece. A straight segment in the convex
simplex crosses only finitely many such pieces; splitting it at the crossing
points and applying the mean-value estimate gives one global Lipschitz bound.
\end{proof}

\begin{figure}[htbp]
\centering
\resizebox{0.98\textwidth}{!}{%
\begin{tikzpicture}[>=Latex,line cap=round,line join=round]
  \begin{scope}
    \coordinate (A) at (0,.8);
    \coordinate (B) at (5.4,.8);
    \coordinate (C) at (1.1,4.8);
    \coordinate (G) at ({6.5/3},{6.4/3});
    \coordinate (T) at ($(B)!0.44!(C)$);
    \coordinate (X) at ($(G)!0.62!(T)$);
    \fill[BLblue!7] (A)--(B)--(C)--cycle;
    \draw[very thick] (A)--(B)--(C)--cycle;
    \draw[BLblue!70,very thick,-{Latex[length=3mm]}] (G)--(T);
    \draw[BLblue!45,dashed] (G) circle[radius=.43];
    \draw[BLred!70,very thick] ($(B)!0.35!(C)$)--($(B)!0.56!(C)$);
    \fill[BLblue] (G) circle (2.4pt);
    \fill[BLgreen!80!black] (T) circle (2.4pt);
    \fill[BLred!90!black] (X) circle (2.4pt);
    \node[below left=3pt] at (G) {$b_\sigma$};
    \node[above right=4pt,fill=white,inner sep=1.5pt] at (T)
      {$\theta_\sigma(x)$};
    \node[below right=3pt] at (X) {$x$};
    \node[font=\small,align=center] at (2.7,5.5)
      {$x=(1-r_\sigma(x))b_\sigma+r_\sigma(x)\theta_\sigma(x)$};
    \node[font=\small,align=center,anchor=north] at (1.15,.30)
      {$\vartheta=0$ near $b_\sigma$\\$E_Ch=C$};
    \node[font=\small,align=center,anchor=north] at (4.1,.30)
      {$\vartheta=1$ near $\partial\sigma$\\$E_Ch=h\circ\theta_\sigma$};
    \node[font=\small] at (2.7,-1.1) {the two-dimensional simplex $\sigma$};
  \end{scope}

  \draw[-{Latex[length=3mm]},very thick] (6.05,2.55)--(7.15,2.55);
  \node[font=\small,align=center] at (6.60,3.13) {matrix\\values};

  \begin{scope}[xshift=7.65cm]
    \coordinate (MC) at (.1,1.15);
    \coordinate (MH) at (5.0,3.60);
    \coordinate (ME) at ($(MC)!0.62!(MH)$);
    \fill[BLred!13] (2.5,.45) .. controls (3.45,.6) and (4.3,1.25) .. (4.15,1.95)
      .. controls (3.95,2.45) and (2.95,2.3) .. (2.35,1.75)
      .. controls (1.9,1.3) and (1.85,.75) .. cycle;
    \draw[BLred!80!black,thick] (2.5,.45) .. controls (3.45,.6) and (4.3,1.25) .. (4.15,1.95)
      .. controls (3.95,2.45) and (2.95,2.3) .. (2.35,1.75)
      .. controls (1.9,1.3) and (1.85,.75) .. cycle;
    \node[font=\small,BLred!80!black] at (3.2,1.25) {$\Sigma_{R,N}$};
    \draw[BLgreen!75!black,very thick] (MC)--(MH);
    \fill[BLblue] (MC) circle (2.5pt);
    \fill[BLgreen!80!black] (MH) circle (2.5pt);
    \fill[BLred!90!black] (ME) circle (2.5pt);
    \node[below=4pt] at (MC) {$C$};
    \node[above=6pt] at (MH) {$h(\theta_\sigma(x))$};
    \draw[black!45] (ME)--(2.60,3.52);
    \node[align=center,anchor=south] at (1.95,3.62)
      {$E_Ch(x)$\\[-1pt]\scriptsize parameter $\vartheta(r_\sigma(x))$};
    \node[font=\small,align=center] at (2.60,5.5)
      {the segment from $C$ to $h(\theta_\sigma(x))$\\avoids $\Sigma_{R,N}$ because $C$ is good};
    \node[font=\small] at (2.65,-1.1) {schematic matrix space $\C^{R\times N}$};
  \end{scope}
\end{tikzpicture}%
}
\caption{Radial good-center extension. The ray coordinate $r_\sigma(x)$ selects a point on the safe matrix segment from $C$ to $h(\theta_\sigma(x))$; the cutoff makes the field constant near the barycenter and equal to the boundary copy near $\partial\sigma$.}
\label{fig:good-center-triangle}
\end{figure}

\begin{proposition}\label{prop:radial-boundary-fields}
The radial safe-segment construction gives the following two fields.
\begin{enumerate}[label=\textup{(\roman*)},leftmargin=2.5em]
\item If $N-R\ge d$, there are constants $0<c\le C<\infty$ and a continuous globally defined Zak-compatible field
\[
 A_0^{\mathrm{sch}}:\R^{2d}\longrightarrow\C^{R\times N}
\]
with
\begin{equation}\label{eq:radial-Schwartz-bounds}
 c\le\sigma_{\min}(A_0^{\mathrm{sch}}(w))
 \le\|A_0^{\mathrm{sch}}(w)\|_{\mathrm{op}}\le C.
\end{equation}
On a finite family of quotient simplex representatives it is globally Lipschitz and piecewise $C^1$ with bounded derivatives on finite subdivisions.
\item If $N-R=d-1$, there are constants $0<a\le b<\infty$, a $\Gamma_D$-periodic set $\widetilde S$ consisting of top-simplex barycenters, and a continuous Zak-compatible field
\[
 A_0^{\mathrm{pt}}:\R^{2d}\setminus\widetilde S\longrightarrow
 \C^{R\times N}
\]
such that
\begin{equation}\label{eq:radial-terminal-bounds}
 a\le\sigma_{\min}(A_0^{\mathrm{pt}}(w))
 \le\|A_0^{\mathrm{pt}}(w)\|_{\mathrm{op}}\le b.
\end{equation}
The quotient $\widetilde S/\Gamma_D$ is finite. On each representative top simplex $\sigma$,
\begin{equation}\label{eq:terminal-top-boundary-copy}
 A_0^{\mathrm{pt}}(x)=h_\sigma(\theta_\sigma(x)),
 \qquad x\in\sigma\setminus\{b_\sigma\},
\end{equation}
where $h_\sigma$ is the already constructed boundary field.
\end{enumerate}
\end{proposition}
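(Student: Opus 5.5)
The plan is an induction over the skeleta of the finite quotient triangulation from \Cref{lem:adapted-triangulation}, working throughout with one chosen lift per quotient simplex and gluing with \Cref{lem:quotient-simplicial-gluing}.

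\emph{Vertices.} On the $0$-skeleton assign to each quotient vertex representative an arbitrary full-row-rank matrix, for example $[\Id_R\;0]$. Transport it to all lifts of that vertex by $\Act$. Since $\Act$ is a monomial unitary left--right action (\Cref{lem:fine-transport-cocycle}), rank is preserved. The cocycle law \eqref{eq:cocycle-action} makes the transported values consistent, and the face-compatibility condition \eqref{eq:quotient-face-compatibility} is vacuous at vertices.

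\emph{Inductive step.} Suppose a Zak-compatible field is defined on the lifted $(k-1)$-skeleton. Assume it is continuous, has full row rank, is globally Lipschitz, and is piecewise $C^1$ on a finite subdivision of each representative. For each representative $k$-simplex $\sigma$ with $k<m_0$, the boundary datum $h=A|_{\partial\sigma}$ is read off in the chosen lift. If $\partial\sigma$ meets other lifts, we first transport by $\Act$. This transport is smooth in $(u,\eta)$ via the factor $R_k(\eta)$, so Lipschitz and piecewise-$C^1$ bounds survive. \Cref{lem:radial-good-center} then gives a null set of bad centers. Choosing any good $C$ produces the extension $E_Ch$ with the same regularity classes, now on the conical subdivision.

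\emph{Compatibility.} Distinct quotient simplices meet only in lower faces, where the values are already fixed. Hence \eqref{eq:quotient-face-compatibility} holds automatically, and \Cref{lem:quotient-simplicial-gluing} glues the extensions into a continuous Zak-compatible field on the lifted $k$-skeleton.

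\emph{Case (i).} When $N-R\ge d$, \Cref{prop:gap-trichotomy}(i) gives $k\le 2d<m_0$ for every simplex. The induction therefore runs through $k=2d$ and produces $A_0^{\mathrm{sch}}$ on all of $\R^{2d}$. The quantity $\sigma_{\min}$ is continuous and positive, and the quotient $X_D$ is compact. Since $\Act$ preserves singular values, $\sigma_{\min}$ and $\|\cdot\|_{\mathrm{op}}$ are $\Gamma_D$-invariant functions of the quotient point. They therefore attain a positive minimum and a finite maximum, which gives \eqref{eq:radial-Schwartz-bounds}.

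\emph{Case (ii).} When $N-R=d-1$, \Cref{prop:gap-trichotomy}(ii) lets the induction run only through $k=2d-1<m_0=2d$. On each representative top simplex $\sigma$ we then lack a good center, since the measure count fails at $k=m_0$. Instead set $A_0^{\mathrm{pt}}(x)=h_\sigma(\theta_\sigma(x))$ for $x\ne b_\sigma$, which is exactly \eqref{eq:terminal-top-boundary-copy}. This map is continuous away from $b_\sigma$, and its values lie in the compact image $h_\sigma(\partial\sigma)$ of full-rank matrices. The bounds \eqref{eq:radial-terminal-bounds} then follow by taking the minimum of $\sigma_{\min}$ and the maximum of the operator norm over the finitely many boundary images. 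Let $\widetilde S$ be the $\Gamma_D$-orbit of these finitely many barycenters. Gluing works as before, because the radial copy agrees with $h_\sigma$ on $\partial\sigma$. Continuity across facets of adjacent top simplices holds since both sides equal the common boundary value there.

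\emph{Main obstacle.} I expect the main difficulty to be the careful regularity bookkeeping across charts. One must check that the transported boundary data are genuinely Lipschitz and piecewise $C^1$ in the chosen lift. One must also ensure that the hypothesis $k\ge1$ of \Cref{lem:radial-good-center} is met: for $k=1$, $\partial\sigma$ is two points and the lemma still applies. Finally, the Lipschitz constants must stay uniform, which holds because there are only finitely many representatives. The dimension count itself is supplied by \Cref{lem:radial-good-center}.
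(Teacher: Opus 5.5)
Your proposal is correct and follows the paper's own argument: an induction over skeleta with good-center radial fills from \Cref{lem:radial-good-center}, gluing by \Cref{lem:quotient-simplicial-gluing}, compactness of $X_D$ together with unitary invariance of singular values for the uniform bounds, and, when $m_0=2d$, the uncut radial copy on top simplices with the barycenter orbits omitted. The one step you leave implicit is that the facewise Lipschitz bounds on the facets of $\partial\sigma$ must be combined into a single global Lipschitz bound on $\partial\sigma$, which is a hypothesis of \Cref{lem:radial-good-center}; for $k\ge2$ the paper obtains this from the boundary-path estimate \Cref{lem:simplex-boundary-paths}.
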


\begin{proof}
Work on the finite quotient triangulation. At each quotient vertex choose a full-row-rank value in an adapted Zak coordinate box; on the lifted periodic triangulation, all translated values are then determined by the exact action \eqref{eq:cocycle-action}.

Assume the field has been constructed on the $(k-1)$-skeleton, with
continuous full-row-rank boundary values that are piecewise $C^1$ and
Lipschitz on every face.  For a quotient $k$-simplex $\sigma$, express its
boundary field $h_\sigma$ in the adapted coordinate box supplied by
\Cref{lem:adapted-triangulation}.  The facewise maps agree on common
subfaces.  When $k\ge2$, \Cref{lem:simplex-boundary-paths} turns the finitely
many facewise Lipschitz bounds into one global Lipschitz bound on
$\partial\sigma$; for $k=1$ the boundary is finite.  Whenever $k<m_0$,
\Cref{lem:radial-good-center} supplies a good matrix center $C_\sigma$, and
\eqref{eq:radial-good-center-extension} fills $\sigma$.  It agrees exactly
with $h_\sigma$ on the boundary.  The compatibility hypotheses of
\Cref{lem:quotient-simplicial-gluing} are therefore satisfied, so the
representative simplex fields extend uniquely to one continuous
Zak-compatible field on the lifted $k$-skeleton.  The same lemma and the
cocycle law show that the result is independent of the sequence of face
crossings.  Unitary transport preserves rank, singular values, and matrix
norms.

At each stage only finitely many quotient simplices occur. Every completed simplex has compact image contained in the full-row-rank set, so the minimum of $\sigma_{\min}$ is positive and the maximum matrix norm is finite. Taking the minimum and maximum over the finite quotient gives uniform constants. The piecewise $C^1$ and Lipschitz assertions follow from \Cref{lem:radial-good-center}; smooth unitary transition factors preserve them on the finite family of quotient representatives.

If $N-R\ge d$, then $m_0=2(N-R+1)\ge2d+2$. Every simplex dimension, including the top dimension $2d$, satisfies $k<m_0$. The induction therefore fills the whole quotient and introduces no omitted point. This proves part~\textup{(i)}.

If $N-R=d-1$, then $m_0=2d$. The induction fills every simplex of dimension at most $2d-1$. On a top simplex use the uncut radial copy \eqref{eq:terminal-top-boundary-copy} and omit the barycenter. It takes only already constructed full-rank boundary values, agrees with adjacent top simplices on common faces, and has the same uniform bounds. There are finitely many top simplices in the quotient, which proves part~\textup{(ii)}.
\end{proof}

\begin{remark}
It is important to emphasize that imposing the quasiperiodicity conditions
\eqref{eq:cocycle-action} preserves Lipschitz regularity only locally.
Although every phase factor has modulus one, the amplitude of the multiplier
in the exponent of the phase factor can grow arbitrarily large when a simplex
is translated farther and farther from the origin.  Consequently, derivatives
of the phase factor, and hence Euclidean Lipschitz constants of translated
representatives, need not be globally uniform.  Uniform Lipschitz bounds are
therefore asserted only on compact simplicial subcomplexes, equivalently on a
fixed finite family of quotient representatives.
\end{remark}

\subsection{Compatible smoothing}

The smooth branch ($N-R\geq d$) uses a global smooth approximation.
The following lemma also applies away from the omitted points of the
auxiliary critical-codimension field in \Cref{prop:radial-boundary-fields}.
It preserves the exact Zak side rules.

\begin{lemma}\label{lem:Zak-smoothing}
Let $K\subset\R^n$ be $\Gamma_D$-invariant and assume that $K$ meets one
closed fine period cell in a compact set. Suppose that
$A:\mathcal D\to\C^{R\times N}$ is continuous on a $\Gamma_D$-invariant open
set $\mathcal D\supset K$ and satisfies
\[
 A(w+\gamma)=\Act_\gamma(w)[A(w)],
 \qquad w,w+\gamma\in\mathcal D.
\]
For every $\varepsilon>0$ there are a $\Gamma_D$-invariant open neighborhood
$U_K$ of $K$ and a smooth Zak-compatible matrix field
$B:U_K\to\C^{R\times N}$ such that
\begin{equation}\label{eq:Zak-smoothing-error}
 \sup_{w\in K}\norm{B(w)-A(w)}_{\mathrm{op}}<\varepsilon.
\end{equation}
If $K=\R^n$ and $\mathcal D=\R^n$, one may take $U_K=\R^n$.
\end{lemma}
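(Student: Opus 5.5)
The plan is to smooth by a finite partition of unity on the quotient torus $X_D$. Each local representative is mollified in one adapted chart and then transported to all lattice translates by the exact action. Averaging in matrix space keeps Zak compatibility because the action $\Act_\gamma(w)$ is linear in the matrix argument.

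First I use \Cref{lem:adapted-triangulation}, or just the Lebesgue number argument from its proof. Cover the compact quotient $\pi_\Gamma(K)$ by finitely many adapted Zak coordinate boxes $\mathcal U_1,\ldots,\mathcal U_J$ with chosen lifts $U_j$. I take slightly larger boxes $\mathcal U_j'\Supset\mathcal U_j$ whose lifts $U_j'$ have closures inside $\mathcal D$. Then I choose smooth functions $\chi_j$ on $X_D$ with $\supp\chi_j\subset\mathcal U_j$ and $\sum_j\chi_j=1$ on an open neighbourhood $\mathcal V$ of $\pi_\Gamma(K)$. I lift them $\Gamma_D$-periodically.

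Next, on each lift $U_j'$ I mollify entrywise, $A_j=A*\phi_\delta$, in the Euclidean coordinates of the chart. Since $A$ is uniformly continuous on the compact set $\overline{U_j}$, for small $\delta$ the function $A_j$ is smooth on a neighbourhood of $\overline{U_j}$ and satisfies $\|A_j-A\|_{\rm op}<\varepsilon$ there. I then extend $A_j$ to $\pi_\Gamma^{-1}(\mathcal U_j)$ by
\[
 A_j(w+\gamma)=\Act_\gamma(w)[A_j(w)],\qquad w\in U_j,\ \gamma\in\Gamma_D.
\]
By the cocycle law \eqref{eq:cocycle-action}, this extension is well defined and Zak-compatible; injectivity of $\pi_\Gamma|_{U_j}$ makes the representation of each point unique. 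It is smooth because $\Act_\gamma$ depends smoothly on $w$ by \Cref{prop:entrywise-fine-transports}. On the translate $U_j+\gamma$, compatibility of $A$ and the unitarity of the action give
\[
 \|A_j-A\|_{\rm op}=\|\Act_\gamma[A_j-A]\|_{\rm op}<\varepsilon .
\]
This is the reason for mollifying only inside the chosen lift. A direct Euclidean mollification of $A$ on all of $\R^n$ would fail, because the phases of $\Act_\gamma$ have gradients growing in $\gamma$, as noted in the remark above.

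Finally I define $B=\sum_j\chi_jA_j$ on $U_K=\pi_\Gamma^{-1}(\mathcal V)$. Each term is smooth, since $\chi_j$ vanishes outside $\pi_\Gamma^{-1}(\mathcal U_j)$. Because $\chi_j$ is $\Gamma_D$-periodic and $\Act_\gamma(w)$ is linear, $B$ is Zak-compatible. Writing $A=\sum_j\chi_jA$ on $U_K$, with $\chi_j\ge0$, gives
\[
 \|B-A\|_{\rm op}\le\sum_j\chi_j\|A_j-A\|_{\rm op}<\varepsilon \quad\text{on } K,
\]
which is \eqref{eq:Zak-smoothing-error}. If $K=\mathcal D=\R^n$, I take $\mathcal V=X_D$, so $U_K=\R^n$.

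The main obstacle is the bookkeeping in the middle step. I need the extended $A_j$ to be genuinely smooth on a neighbourhood of every translate of $\supp\chi_j$, and to agree with the compatibility rule wherever two descriptions overlap. I plan to handle this by choosing the shrinking $\supp\chi_j\Subset\mathcal U_j$ before fixing $\delta$, and by using injectivity of the quotient map on the adapted lift.
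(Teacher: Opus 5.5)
Your proposal is correct and follows essentially the same route as the paper. Both arguments cover the compact quotient by finitely many adapted lifts and mollify in Euclidean coordinates on each lift. Each local piece is then transported exactly to all $\Gamma_D$-translates by the action and its cocycle law, and the pieces are summed with a $\Gamma_D$-periodic partition of unity, with the error bound coming from unitarity of the action and a convex-combination estimate. The only differences are cosmetic: you mollify $A$ directly on an enlarged lift, whereas the paper mollifies a cut-off $\rho_iA$ extended by zero and uses nested sets $\mathcal V_i\Subset\mathcal W_i\Subset\mathcal U_i$.
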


\begin{proof}
Let
\[
  \mathcal K=\pi_\Gamma(K)\subset X_D,
  \qquad
  \mathcal D_X=\pi_\Gamma(\mathcal D)\subset X_D.
\]
The set $\mathcal K$ is compact and $\mathcal D_X$ is an open neighborhood of
it. We now specify the finite cover used in the proof.

For each $y\in\mathcal K$, choose an adapted Zak coordinate box
$\mathcal U_y$ with
\[
  y\in\mathcal U_y,
  \qquad
  \overline{\mathcal U_y}\subset\mathcal D_X.
\]
Inside this box choose open neighborhoods $\mathcal V_y$ and $\mathcal W_y$
of $y$ such that
\[
  y\in\mathcal V_y,
  \qquad
  \overline{\mathcal V_y}\subset\mathcal W_y,
  \qquad
  \overline{\mathcal W_y}\subset\mathcal U_y.
\]
The sets $\mathcal V_y$, $y\in\mathcal K$, form an open cover of the compact
set $\mathcal K$.  Select finitely many points $y_1,\ldots,y_{I_0}$ whose
$\mathcal V_{y_i}$ still cover $\mathcal K$, and relabel
\[
  \mathcal V_i=\mathcal V_{y_i},\qquad
  \mathcal W_i=\mathcal W_{y_i},\qquad
  \mathcal U_i=\mathcal U_{y_i}.
\]
Thus
\begin{equation}\label{eq:nested-quotient-smoothing-cover}
  \mathcal K\subset\bigcup_{i=1}^{I_0}\mathcal V_i,
  \qquad
  \overline{\mathcal V_i}\subset\mathcal W_i,
  \qquad
  \overline{\mathcal W_i}\subset\mathcal U_i,
  \qquad i=1,\ldots,I_0.
\end{equation}
The three sizes have separate roles: approximation is required on
$\mathcal V_i$, $\mathcal W_i$ is a buffer region, and $\mathcal U_i$ is the
full adapted coordinate box on which the local construction is carried out.

Choose the fixed Euclidean lift $U_i\subset\R^n$ of each $\mathcal U_i$.
Because $\pi_\Gamma|_{U_i}$ is one-to-one, the smaller quotient sets have
unique lifts
\[
  V_i=(\pi_\Gamma|_{U_i})^{-1}(\mathcal V_i),
  \qquad
  W_i=(\pi_\Gamma|_{U_i})^{-1}(\mathcal W_i).
\]
They satisfy
\begin{equation}\label{eq:nested-smoothing-boxes}
  \overline V_i\subset W_i,
  \qquad
  \overline W_i\subset U_i,
  \qquad
  \overline U_i\subset\mathcal D,
\end{equation}
where $\mathcal D$ is the $\Gamma_D$-invariant open set on which $A$ is
defined.  The last inclusion follows from
$\overline{\mathcal U_i}\subset\pi_\Gamma(\mathcal D)$ and the invariance of
$\mathcal D$.  The lattice translates $V_i+\gamma$ cover $K$.

Choose $\rho_i\in C_c^\infty(U_i)$ with $0\le\rho_i\le1$ and
$\rho_i=1$ on a neighborhood of $\overline W_i$. We write
$B(a,r)=\{x\in\R^n:|x-a|<r\}$ for an ordinary Euclidean ball. Let
\[
  \varphi\in C_c^\infty(\R^n),
  \qquad \varphi\ge0,
  \qquad \supp\varphi\subset B(0,1),
  \qquad \int_{\R^n}\varphi(x)\,\dd x=1,
\]
and for $\delta>0$ set
\begin{equation}\label{eq:mollifier-definition}
  \varphi_\delta(x)=\delta^{-n}\varphi(x/\delta).
\end{equation}
Thus $\varphi_\delta$ is supported in $B(0,\delta)$ and has integral one.
On the chosen lift $U_i$, define a compactly supported matrix function
\[
  \widetilde A_i(y)=
  \begin{cases}
    \rho_i(y)A(y),&y\in U_i,\\
    0,&y\notin U_i.
  \end{cases}
\]
The zero extension is continuous because $\rho_i$ has compact support in
$U_i$.  Mollify every matrix entry by 
\begin{equation}\label{eq:local-mollification}
  B_i(w)=(\widetilde A_i*\varphi_\delta)(w)
  =\int_{\R^n}\widetilde A_i(w-y)\varphi_\delta(y)\,\dd y.
\end{equation}
and choose $\delta$ smaller than
\[
  \min_{1\le i\le I_0}
  \dist(\overline V_i,\R^n\setminus W_i)>0.
\]
Then, for $w\in V_i$, the convolution in
\eqref{eq:local-mollification} samples only points of $W_i$, where
$\rho_i=1$. Since $A$ is uniformly continuous on the finite collection of
compact sets $\overline W_i$, one sufficiently small common value of
$\delta$ gives
\begin{equation}\label{eq:local-smoothing-error}
  \sup_{w\in V_i}\norm{B_i(w)-A(w)}_{\mathrm{op}}<\varepsilon,
  \qquad\text{for }i=1,\ldots,I_0.
\end{equation}

For every $\gamma\in\Gamma_D$, transfer $B_i$ to the translated box
$V_i+\gamma$ by
\begin{equation}\label{eq:transported-smoothing}
  B_{i,\gamma}(w+\gamma)
  =\Act_\gamma(w)[B_i(w)],
  \qquad w\in V_i.
\end{equation}
The action law \eqref{eq:cocycle-action} gives
\begin{equation}\label{eq:transported-smoothing-cocycle}
  B_{i,\gamma+\delta}(w+\gamma+\delta)
  =\Act_\delta(w+\gamma)[B_{i,\gamma}(w+\gamma)].
\end{equation}
Every $\Act_\gamma(w)$ is an isometry and $A$ obeys the same compatibility
rule, so \eqref{eq:local-smoothing-error} holds with the same bound on every
translate $V_i+\gamma$.

Choose an open neighborhood $\mathcal O$ of $\mathcal K$ whose closure is
covered by $\mathcal V_1,\ldots,\mathcal V_{I_0}$. Let
$\overline\chi_1,\ldots,\overline\chi_{I_0}$ be a smooth partition of unity on
$\mathcal O$ with
\[
  \supp\overline\chi_i\Subset\mathcal V_i,
  \qquad
  \sum_{i=1}^{I_0}\overline\chi_i=1
  \quad\text{near }\mathcal K.
\]
For $\gamma\in\Gamma_D$, lift $\overline\chi_i$ to $V_i+\gamma$ by
\begin{equation}\label{eq:lifted-partition}
  \chi_{i,\gamma}(w+\gamma)
  =\overline\chi_i(\pi_\Gamma(w)),
  \qquad w\in V_i,
\end{equation}
and extend it by zero outside $V_i+\gamma$. The compact support inside
$\mathcal V_i$ makes this zero extension smooth. The lifted family is locally
finite and satisfies
\begin{align}
  \supp\chi_{i,\gamma}&\Subset V_i+\gamma,\label{eq:partition-support}\\
  \chi_{i,\gamma+\delta}(w+\delta)&=\chi_{i,\gamma}(w),
       \label{eq:partition-covariance}\\
  \sum_{i,\gamma}\chi_{i,\gamma}(w)&=1
       \label{eq:partition-sum}
\end{align}
on a $\Gamma_D$-invariant open neighborhood $U_K$ of $K$.

Define
\begin{equation}\label{eq:periodic-smoothing-sum}
  B(w)=\sum_{i,\gamma}
       \chi_{i,\gamma}(w)B_{i,\gamma}(w),
  \qquad w\in U_K.
\end{equation}
Only finitely many terms occur near each point, so $B$ is smooth. Reindexing
the sum and using \eqref{eq:transported-smoothing-cocycle} and
\eqref{eq:partition-covariance} gives
\[
  B(w+\delta)=\Act_\delta(w)[B(w)],
  \qquad \delta\in\Gamma_D.
\]
Finally, at every $w\in K$, the difference $B(w)-A(w)$ is a convex
combination of the local differences from
\eqref{eq:local-smoothing-error}; hence
\eqref{eq:Zak-smoothing-error} follows. If $K=\R^n$, take
$\mathcal O=X_D$, which gives $U_K=\R^n$.
\end{proof}

\begin{figure}[htbp]
\centering
\resizebox{0.98\textwidth}{!}{%
\begin{tikzpicture}[>=Latex,line cap=round,line join=round]
  % Left: quotient cover
  \begin{scope}
    \draw[very thick] (0,0) rectangle (6.0,4.2);
    \fill[BLblue!9,rounded corners] (.55,.55) rectangle (3.35,3.55);
    \draw[BLblue!75,thick,rounded corners] (.55,.55) rectangle (3.35,3.55);
    \fill[BLgreen!10,rounded corners] (2.55,.35) rectangle (5.45,2.75);
    \draw[BLgreen!70!black,thick,rounded corners] (2.55,.35) rectangle (5.45,2.75);
    \draw[BLblue!80,dashed,thick,rounded corners] (1.00,.95) rectangle (2.95,3.05);
    \draw[BLgreen!75!black,dashed,thick,rounded corners] (3.00,.75) rectangle (5.00,2.35);
    \draw[BLblue!80,densely dotted,very thick,rounded corners] (1.38,1.30) rectangle (2.58,2.70);
    \draw[BLgreen!75!black,densely dotted,very thick,rounded corners] (3.38,1.08) rectangle (4.62,2.02);
    \node[BLblue!85!black,font=\small] at (1.95,3.78) {$\mathcal U_1$};
    \node[BLgreen!70!black,font=\small] at (4.45,2.98) {$\mathcal U_2$};
    \node[font=\scriptsize] at (1.95,3.18) {$\mathcal W_1$};
    \node[font=\scriptsize] at (4.02,2.48) {$\mathcal W_2$};
    \node[font=\scriptsize] at (1.98,2.52) {$\mathcal V_1$};
    \node[font=\scriptsize] at (4.00,1.83) {$\mathcal V_2$};
    % matching side marks
    \foreach \x in {2.82,3.18}{
      \draw[very thick] (\x-.09,-.13)--(\x+.09,.13);
      \draw[very thick] (\x-.09,4.07)--(\x+.09,4.33);
    }
    \foreach \y in {1.92,2.28}{
      \draw[very thick] (-.13,\y-.09)--(.13,\y+.09);
      \draw[very thick] (5.87,\y-.09)--(6.13,\y+.09);
    }
    \fill[BLred!80!black] (2.25,1.82) circle (2.2pt);
    \fill[BLred!80!black] (3.72,1.52) circle (2.2pt);
    \node[font=\small,align=center] at (3.0,-.62)
      {left: nested covering sets on $X_D$\\the dotted sets $\mathcal V_i$ cover $\mathcal K$};
  \end{scope}
  \draw[-{Latex[length=3.2mm]},very thick] (6.45,2.10)--(7.55,2.10);
  \node[above,font=\small] at (7.0,2.22) {choose lifts};
  % Right: Euclidean lifts
  \begin{scope}[xshift=8.0cm]
    \draw[black!35] (-.5,-.4) grid[xstep=3.7,ystep=2.7] (7.9,4.9);
    \fill[BLblue!9,rounded corners] (.25,.40) rectangle (3.20,3.55);
    \draw[BLblue!75,thick,rounded corners] (.25,.40) rectangle (3.20,3.55);
    \draw[BLblue!80,dashed,thick,rounded corners] (.65,.82) rectangle (2.80,3.12);
    \draw[BLblue!80,densely dotted,very thick,rounded corners] (1.03,1.22) rectangle (2.42,2.72);
    \node[BLblue!85!black,font=\small] at (1.72,3.78) {$U_1$};
    \node[font=\scriptsize] at (1.72,3.02) {$W_1$};
    \node[font=\scriptsize] at (1.72,2.58) {$V_1$};
    \fill[BLgreen!10,rounded corners] (4.20,1.00) rectangle (7.20,3.52);
    \draw[BLgreen!70!black,thick,rounded corners] (4.20,1.00) rectangle (7.20,3.52);
    \draw[BLgreen!75!black,dashed,thick,rounded corners] (4.60,1.38) rectangle (6.80,3.12);
    \draw[BLgreen!75!black,densely dotted,very thick,rounded corners] (5.00,1.73) rectangle (6.40,2.75);
    \node[BLgreen!70!black,font=\small] at (5.70,3.75) {$U_2$};
    \node[font=\scriptsize] at (5.70,3.02) {$W_2$};
    \node[font=\scriptsize] at (5.70,2.62) {$V_2$};
    \node[font=\scriptsize] at (1.72,.10) {$\pi_\Gamma|_{U_1}:U_1\to\mathcal U_1$};
    \node[font=\scriptsize] at (5.70,.67) {$\pi_\Gamma|_{U_2}:U_2\to\mathcal U_2$};
    \node[font=\small,align=center] at (3.75,-.62)
      {right: the corresponding nested Euclidean lifts\\their lattice translates cover $K$};
  \end{scope}
\end{tikzpicture}%
}
\caption{The cover used in \Cref{lem:Zak-smoothing}, shown schematically in
two dimensions. On the quotient, the smallest sets $\mathcal V_i$ still
cover $\mathcal K=\pi_\Gamma(K)$ and satisfy
$\overline{\mathcal V_i}\subset\mathcal W_i$ and
$\overline{\mathcal W_i}\subset\mathcal U_i$.  Choosing the fixed lift of
each adapted box produces the nested Euclidean sets
$\overline V_i\subset W_i$ and $\overline W_i\subset U_i$.}
\label{fig:smoothing-cover-and-lifts}
\end{figure}

\section{\texorpdfstring{The smooth branch: $N-R\ge d$}{The smooth branch: N-R >= d}}\label{sec:smooth-branch}

Assume $N-R\ge d$. The equality $N-R=d$ is the first value for which
\[
  m_0=2(N-R+1)\ge2d+2,
\]
and larger gaps only strengthen this inequality. By \Cref{prop:gap-trichotomy}, every simplex in the $2d$-dimensional adapted triangulation lies below the rank-loss codimension, so the radial good-center induction fills the entire quotient.

Let $A_0^{\mathrm{sch}}$ be the continuous globally full-row-rank field from
\Cref{prop:radial-boundary-fields}\textup{(i)}. Its singular values are
$\Gamma_D$-periodic because every Zak transition is unitary, so
\[
 c_0=\min_{[w]\in X_D}\sigma_{\min}(A_0^{\mathrm{sch}}(w))>0.
\]
Apply \Cref{lem:Zak-smoothing} with $K=\R^{2d}$ and error $c_0/2$. This gives
a globally smooth Zak-compatible field $\cA_{\mathrm{sch}}$ such that
\[
 \sup_w\|\cA_{\mathrm{sch}}(w)-A_0^{\mathrm{sch}}(w)\|_{\mathrm{op}}
 <c_0/2.
\]
By \eqref{eq:sigma-min-Lipschitz},
$\sigma_{\min}(\cA_{\mathrm{sch}}(w))>c_0/2$ everywhere. Define
\begin{equation}\label{eq:smooth-normalized-section}
 \cM_{\mathrm{sch}}=\RowPol(\cA_{\mathrm{sch}}).
\end{equation}
Smooth matrix functional calculus and unitary Zak equivariance give a smooth
compatible field satisfying
\begin{equation}\label{eq:smooth-MMstar}
 \cM_{\mathrm{sch}}\cM_{\mathrm{sch}}^*=R\Id_R
 \quad\text{everywhere}.
\end{equation}

Put
\[
 F_{\mathrm{sch}}(u,\eta)
 =(\cM_{\mathrm{sch}}(u,\eta))_{0,0},
 \qquad
 g_{\mathrm{sch}}=Z^{-1}F_{\mathrm{sch}}.
\]
By \Cref{prop:fine-box-reassembly}, $F_{\mathrm{sch}}$ is a smooth scalar
quasiperiodic function.  The standard smooth Zak correspondence
\cite[Lemma~2.2]{deDiosLiehrTaylor2026} gives
$g_{\mathrm{sch}}\in\cS(\R^d)$.  The Gramian identity
\eqref{eq:smooth-MMstar} and \Cref{prop:fibre-identity} give Parsevality.
Finally,
\[
 \norm{g_{\mathrm{sch}}}_2^2
 =R^2|Q_D|=\frac RN.
\]
Since the Schwartz class is contained in every modulation space,
\[
 g_{\mathrm{sch}}\in M_s^p(\R^d),
 \qquad\text{for }1\le p\le\infty,\ s\in\R.
\]
This proves the diagonal smooth-branch assertion in
part~\textup{(iii)} of \Cref{thm:modulation-classification}.
\section{The critical and lower gaps: a flat cubical construction}
\label{sec:terminal-construction}
Throughout \Cref{sec:terminal-construction,sec:elementary-perturbation,sec:lower-polar,sec:lower-completion},
assume $0\le N-R\le d-1$ and put
\begin{equation}\label{eq:terminal-gap-assumption}
 n=2d,\qquad q=N-R+1,\qquad m=m_0=2q\le n,\qquad
 (n+m)/2=d+q.
\end{equation}
The case $m=n$ is the critical-codimension case; $m<n$ gives the lower
gaps.  We retain the fine quotient $X_D$, the unitary Zak transports,
and the scalar--matrix correspondence of \Cref{sec:zak-method}.
The construction below first produces a flat window $g_{\rm flat}$
for every $m\le n$.  The endpoint for $m=n$ follows from
\Cref{tm:cor:critical-flat}; the phase constructions are needed only
to improve the lower-gap high-$p$ range.

\subsection{A smooth compatible field on the protected skeleton}
\begin{lemma}\label{tm:lem:skeleton}
Choose a sufficiently fine $\Gamma_D$-periodic rectangular cubulation, and
triangulate its cells compatibly, so that every cubical face is a simplicial
subcomplex. Let $K_{\rm cub}^{(m-1)}$ be its cubical
$(m-1)$-skeleton in $\R^n$.  On a $\Gamma_D$-invariant open neighborhood of
this set there is a smooth compatible matrix field $B$ such that
\begin{equation}\label{tm:eq:skeleton-parseval}
 B(w)B(w)^*=RI_R.
\end{equation}
All derivatives of $B$ are bounded on the compact sets of representatives
of the skeleton needed below.
\end{lemma}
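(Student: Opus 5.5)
The plan is to run the radial good-center induction of \Cref{prop:radial-boundary-fields} only up to dimension $m-1$, then smooth with \Cref{lem:Zak-smoothing}, and finally row-normalize.

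\emph{Triangulation and continuous field.} Take a $\Gamma_D$-periodic rectangular grid fine enough that, after a compatible Kuhn subdivision of each cell (every cubical face becomes a simplicial subcomplex), the conclusions of \Cref{lem:adapted-triangulation} hold. This needs refinement and the Lebesgue number argument exactly as in that proof. The simplices contained in the cubical $(m-1)$-skeleton $K_{\rm cub}^{(m-1)}$ all have dimension at most $m-1<m_0=m$. They therefore form a finite quotient subcomplex on which the hypothesis $k<m_0$ of \Cref{lem:radial-good-center} holds for every simplex.

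\begin{enumerate}[label=\textup{(\roman*)},leftmargin=2.4em]
\item Choose full-row-rank values at the quotient vertices.
\item Fill each simplex by a good center. Null-ness of the nongood set leaves the choice free.
\item Glue by \Cref{lem:quotient-simplicial-gluing}.
\end{enumerate}

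This yields a continuous Zak-compatible field $A_0$ on $\pi_\Gamma^{-1}(|K_{\rm cub}^{(m-1)}|)$. Its values have full row rank, and by finiteness of the quotient and unitarity of the transports, $\sigma_{\min}(A_0)\ge c_0>0$ uniformly.

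\emph{Extension to a neighborhood and smoothing.} \Cref{lem:Zak-smoothing} needs $A_0$ continuous on a $\Gamma_D$-invariant open set $\mathcal D$ containing $K=\pi_\Gamma^{-1}(|K_{\rm cub}^{(m-1)}|)$. Take $\mathcal D$ to be a small open regular neighborhood of the skeleton, retracting onto it through a continuous $\Gamma_D$-equivariant retraction $r$. One way to get $r$ is radial projection from barycenters of higher simplices, restricted to collars; another is a Tietze extension done chartwise and then Zak-transported, which works because the action is by fixed unitaries per chart.

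I expect this extension step to be the main obstacle, since compatibility must be preserved. The cleanest fix is to avoid it: on each simplex of dimension $\ge m$ meeting the skeleton, extend $A_0$ by the radial copy $h\circ\theta_\sigma$ on a boundary collar. This is exactly the collar structure used in \Cref{lem:radial-good-center}. The values stay full rank with the same bounds, and compatibility holds by the gluing lemma. Shrinking $\mathcal D$ so that $\sigma_{\min}\ge c_0/2$ there, apply \Cref{lem:Zak-smoothing} with error $c_0/4$. This gives a smooth compatible $\widetilde B$ on a $\Gamma_D$-invariant open neighborhood $U_K$ of $K$, with $\sigma_{\min}(\widetilde B)>c_0/4$ on $K$.

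\emph{Normalization and bounds.} Shrink $U_K$ by continuity and periodicity of $\sigma_{\min}$ so that $\sigma_{\min}(\widetilde B)\ge c_0/8$ there. Set $B=\RowPol(\widetilde B)$. Then \eqref{tm:eq:skeleton-parseval} holds by the row-normalization identity. Zak compatibility survives by \Cref{lem:fine-transport-cocycle}, and smoothness follows from functional calculus on the set where $\sigma_{\min}$ is bounded below and the norm bounded above.

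Derivative bounds on the compact sets of representatives follow in two steps.

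\begin{enumerate}[label=\textup{(\alph*)},leftmargin=2.4em]
\item Restrict to a slightly smaller closed invariant neighborhood. Its intersection with a fixed finite family of lifted compact representatives is compact, and $B$ is smooth on a neighborhood of it.
\item As noted in the remark after \Cref{prop:radial-boundary-fields}, only bounds on these fixed representatives are claimed, not global ones. This is consistent with the unbounded growth of the phase derivatives under far translates.
\end{enumerate}
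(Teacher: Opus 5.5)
Your route differs from the paper's. The paper never passes through a merely continuous field: it keeps smooth values on ambient neighbourhoods throughout. At each simplex it extends the germ already fixed near $\partial\sigma$ to a smooth $E$ near $\sigma$ by a cutoff, then adds $\beta C$ with a generic constant $C$ (the same dimension count as in \Cref{lem:radial-good-center}). So it never has to extend a field off the skeleton. Your route can be made to work, but the step you flag as the main obstacle is not closed by either fix you propose.

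\emph{Radial copy.} The radial copy $h\circ\theta_\sigma$ on a collar of a simplex $\sigma$ of dimension $\ge m$ needs $h$ on all of $\partial\sigma$, while $A_0$ is defined only on $K\cap\sigma$. An $m$-dimensional Kuhn simplex $\{1\ge x_{\pi(1)}\ge\cdots\ge x_{\pi(m)}\ge0\}$ has diagonal facets $x_{\pi(i)}=x_{\pi(i+1)}$ that lie outside the cubical $(m-1)$-skeleton. Simplices of dimension $\ge m+1$ have facets of dimension $\ge m$, which cannot lie in $K$ at all. Near a point of $K$, the collar projects radially partly onto such facets, where $h$ is undefined.

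\emph{Retraction.} This variant fails for a different reason: $A_0\circ r$ is not Zak-compatible. With $r$ equivariant, $(A_0\circ r)(w+\gamma)=\Act_\gamma(r(w))[A_0(r(w))]$, whereas compatibility requires $\Act_\gamma(w)[A_0(r(w))]$. The two differ because $\Act_\gamma(w)$ depends on the base point through the phases in $R_k(\eta)$. This is exactly why the paper inserts the correction $P(w,\Retr(w))$ in \Cref{tm:lem:transport}. For the same reason, the transports are not ``fixed unitaries per chart''.

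\emph{Repair.} Your Tietze idea does work if you glue covariantly rather than through a retraction. In each chosen lift $U_i$, extend $A_0|_{K\cap U_i}$ entrywise by Tietze. Transport to $U_i+\gamma$ by $\Act_\gamma$, and sum against the lifted partition of unity \eqref{eq:lifted-partition}--\eqref{eq:partition-sum} as in \eqref{eq:periodic-smoothing-sum}. Linearity of the action and \eqref{eq:transported-smoothing-cocycle} give compatibility. Each summand equals $A_0$ on $K$ because $A_0$ is already compatible. Compactness of the quotient then gives $\sigma_{\min}\ge c_0/2$ on a $\Gamma_D$-invariant neighbourhood. From there, your application of \Cref{lem:Zak-smoothing}, the shrinking, and $\RowPol$ go through as you wrote them.
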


\begin{proof}
The cubical skeleton is contained in the simplicial $(m-1)$-skeleton of a
compatible triangulation.
Choose the cubulation sufficiently fine that each closed simplex has a
neighborhood on which the quotient map to $X_D$ is injective.  The smooth linear cocycle of \Cref{lem:fine-transport-cocycle} defines a matrix bundle on that quotient, so each
such neighborhood has a fixed trivialization.

Start at the vertices: prescribe any full-rank $R\times N$ matrix at one
representative of each vertex orbit, extend it constantly in that local
trivialization, and transport to the other representatives.  Shrink the
finitely many vertex neighborhoods so that different quotient vertices
have disjoint neighborhoods.

The rank-deficient matrices in $\C^{R\times N}$ have countable bounded
Lipschitz parametrizations of dimensions at most $2RN-m$.  This follows by
stratifying by the exact rank $0\le r\le R-1$, choosing an invertible
$r\times r$ minor, writing the complementary block by its Schur formula,
and bounding all free blocks and the inverse minor.  Each stratum has
real dimension $2r(R+N-r)$, at most
$2(R-1)(N+1)=2RN-m$; the rank-zero stratum is a point.  Suppose full-rank smooth values have
already been prescribed near the boundary of a $k$-simplex, $k<m$.
Extend them smoothly to a matrix field $E$ near the whole simplex, without
a rank requirement in the interior.  Explicitly, multiply the already
defined field by a smooth cutoff supported in its neighborhood and equal
to one on a smaller boundary collar, and extend the product by zero;
one may add any smooth field multiplied by the complementary cutoff.
All this is done in the fixed simplex trivialization, and equality with
the old field holds on an ambient collar.  Choose a smooth scalar function
$\beta$, zero on a boundary collar and positive on the remaining interior
where rank loss could occur.  For a constant matrix $C$ put
$H_C=E+\beta C$.

If $H_C(x)=Y$ is rank deficient and $\beta(x)>0$, then
\[
 C=\frac{Y-E(x)}{\beta(x)}.
\]
After restricting to $\beta\ge1/L$ and to a bounded rank parametrization,
this is a Lipschitz map of at most $k+2RN-m<2RN$ real parameters.
Its image is null: covering the parameter set by cubes of side
$\varepsilon$ gives total image volume $O(\varepsilon^{m-k})$.
A countable union therefore excludes only a null set of $C$'s.
Select a remaining $C$.  Compactness of the simplex supplies a positive
smallest singular value, so full rank holds on a smaller ambient
neighborhood.  The previously prescribed field is unchanged on the collar.

Fill one representative of each simplex orbit and use the exact Zak
transports on its translates.  At each stage, disjoint compact interior
cores can be given disjoint neighborhoods, while all old--new overlaps lie
inside the collars of exact equality.  This yields a smooth compatible
full-rank field through dimension $m-1$.  Compactness in the quotient permits a final shrinking of the
neighborhood to keep full rank everywhere.  Finally apply
$\RowPol(A)=\sqrt R(AA^*)^{-1/2}A$ on this neighborhood.  Unitary covariance
preserves compatibility and gives \eqref{tm:eq:skeleton-parseval}.
\end{proof}

\subsection{A smooth cubical map with a flat exceptional set}
Rescale the physical coordinates diagonally so that the small rectangular
cells become unit cubes centered at $\Z^n$.  We construct the map in these
normalized coordinates, writing $K_{\rm cub}^{(m-1)}$ for the cubical
$(m-1)$-skeleton, and then undo the rescaling.
For $x\in\R^n$ define
\[
 d_i(x)=\dist(x_i,\Z),\qquad f_i(x)=\sin^2(\pi x_i).
\]
The exceptional set is
\begin{equation}\label{tm:eq:S-flat}
 S=\bigcup_{\substack{I\subset\{1,\ldots,n\}\\|I|=m}}
       \{x:x_i\in\Z\text{ for every }i\in I\}.
\end{equation}
In a compact quotient this is a finite union of flat sets of codimension
$m$.  In particular, it is null, and
\begin{equation}\label{tm:eq:flat-tube-volume}
 |\{x:\dist(x,S)<t\}|\le C t^m,\qquad\text{for }0<t\le1
\end{equation}
on every fixed bounded collection of cells.

Although the Gabor application has $n=2d$ and even $m$, the definition
of $S$ makes sense for every pair of integers $1\le m\le n$.
Figure~\ref{tm:fig:S-three-dimensional} illustrates it for $n=3$.
In one centered cube the only possible integer coordinate is zero:
for $m=1$ at least one coordinate vanishes, for $m=2$ at least two
vanish, and for $m=3$ all three vanish.
\begin{figure}[tbp]
\centering
\begin{minipage}[t]{0.32\linewidth}\centering
\textbf{$m=1$}\\[-2pt]
\begin{tikzpicture}[x={(1.85cm,-0.62cm)},y={(1.1cm,0.95cm)},z={(0cm,2.0cm)}]

 \foreach \a in {-0.5,0.5}{
   \foreach \b in {-0.5,0.5}{
     \draw[gray!55,line width=0.35pt] (-0.5,\a,\b)--(0.5,\a,\b);
     \draw[gray!55,line width=0.35pt] (\a,-0.5,\b)--(\a,0.5,\b);
     \draw[gray!55,line width=0.35pt] (\a,\b,-0.5)--(\a,\b,0.5);
   }
 }

 \filldraw[fill=Sblue,fill opacity=0.22,draw=Sblue,line width=0.65pt]
  (0,-0.5,-0.5)--(0,0.5,-0.5)--(0,0.5,0.5)--(0,-0.5,0.5)--cycle;
 \filldraw[fill=Steal,fill opacity=0.22,draw=Steal,line width=0.65pt]
  (-0.5,0,-0.5)--(0.5,0,-0.5)--(0.5,0,0.5)--(-0.5,0,0.5)--cycle;
 \filldraw[fill=Sorange,fill opacity=0.22,draw=Sorange,line width=0.65pt]
  (-0.5,-0.5,0)--(0.5,-0.5,0)--(0.5,0.5,0)--(-0.5,0.5,0)--cycle;

 \draw[gray!80,-{Stealth[length=1.3mm]},line width=0.4pt]
      (0,0,0)--(0.74,0,0) node[right,font=\scriptsize] {$x_1$};
 \draw[gray!80,-{Stealth[length=1.3mm]},line width=0.4pt]
      (0,0,0)--(0,0.72,0) node[right,font=\scriptsize] {$x_2$};
 \draw[gray!80,-{Stealth[length=1.3mm]},line width=0.4pt]
      (0,0,0)--(0,0,0.69) node[above,font=\scriptsize] {$x_3$};
\end{tikzpicture}\\[-3pt]
{\small three midplanes}
\end{minipage}
\hfill
\begin{minipage}[t]{0.32\linewidth}\centering
\textbf{$m=2$}\\[-2pt]
\begin{tikzpicture}[x={(1.85cm,-0.62cm)},y={(1.1cm,0.95cm)},z={(0cm,2.0cm)}]

 \foreach \a in {-0.5,0.5}{
   \foreach \b in {-0.5,0.5}{
     \draw[gray!55,line width=0.35pt] (-0.5,\a,\b)--(0.5,\a,\b);
     \draw[gray!55,line width=0.35pt] (\a,-0.5,\b)--(\a,0.5,\b);
     \draw[gray!55,line width=0.35pt] (\a,\b,-0.5)--(\a,\b,0.5);
   }
 }

 \draw[Sblue,line width=1.6pt] (-0.5,0,0)--(0.5,0,0);
 \draw[Steal,line width=1.6pt] (0,-0.5,0)--(0,0.5,0);
 \draw[Sorange,line width=1.6pt] (0,0,-0.5)--(0,0,0.5);

 \draw[gray!80,-{Stealth[length=1.3mm]},line width=0.4pt]
      (0,0,0)--(0.74,0,0) node[right,font=\scriptsize] {$x_1$};
 \draw[gray!80,-{Stealth[length=1.3mm]},line width=0.4pt]
      (0,0,0)--(0,0.72,0) node[right,font=\scriptsize] {$x_2$};
 \draw[gray!80,-{Stealth[length=1.3mm]},line width=0.4pt]
      (0,0,0)--(0,0,0.69) node[above,font=\scriptsize] {$x_3$};
\end{tikzpicture}\\[-3pt]
{\small three axes}
\end{minipage}
\hfill
\begin{minipage}[t]{0.32\linewidth}\centering
\textbf{$m=3$}\\[-2pt]
\begin{tikzpicture}[x={(1.85cm,-0.62cm)},y={(1.1cm,0.95cm)},z={(0cm,2.0cm)}]

 \foreach \a in {-0.5,0.5}{
   \foreach \b in {-0.5,0.5}{
     \draw[gray!55,line width=0.35pt] (-0.5,\a,\b)--(0.5,\a,\b);
     \draw[gray!55,line width=0.35pt] (\a,-0.5,\b)--(\a,0.5,\b);
     \draw[gray!55,line width=0.35pt] (\a,\b,-0.5)--(\a,\b,0.5);
   }
 }
 \fill[Steal] (0,0,0) circle[radius=2.5pt];

 \draw[gray!80,-{Stealth[length=1.3mm]},line width=0.4pt]
      (0,0,0)--(0.74,0,0) node[right,font=\scriptsize] {$x_1$};
 \draw[gray!80,-{Stealth[length=1.3mm]},line width=0.4pt]
      (0,0,0)--(0,0.72,0) node[right,font=\scriptsize] {$x_2$};
 \draw[gray!80,-{Stealth[length=1.3mm]},line width=0.4pt]
      (0,0,0)--(0,0,0.69) node[above,font=\scriptsize] {$x_3$};
\end{tikzpicture}\\[-3pt]
{\small one point}
\end{minipage}\par\medskip
\begin{minipage}[t]{0.32\linewidth}\centering
{\small $K_{\rm cub}^{(0)}$ neighborhood}\\[-2pt]
\begin{tikzpicture}[x={(1.85cm,-0.62cm)},y={(1.1cm,0.95cm)},z={(0cm,2.0cm)}]
\filldraw[fill=Steal,fill opacity=0.10,draw=Steal!70,draw opacity=0.52,line width=0.24pt] (-0.5,-0.5,-0.5)--(-0.415,-0.5,-0.5)--(-0.415,-0.415,-0.5)--(-0.5,-0.415,-0.5)--cycle;
\filldraw[fill=Steal,fill opacity=0.10,draw=Steal!70,draw opacity=0.52,line width=0.24pt] (-0.5,-0.5,-0.415)--(-0.415,-0.5,-0.415)--(-0.415,-0.415,-0.415)--(-0.5,-0.415,-0.415)--cycle;
\filldraw[fill=Steal,fill opacity=0.10,draw=Steal!70,draw opacity=0.52,line width=0.24pt] (-0.5,-0.5,-0.5)--(-0.415,-0.5,-0.5)--(-0.415,-0.5,-0.415)--(-0.5,-0.5,-0.415)--cycle;
\filldraw[fill=Steal,fill opacity=0.10,draw=Steal!70,draw opacity=0.52,line width=0.24pt] (-0.5,-0.415,-0.5)--(-0.415,-0.415,-0.5)--(-0.415,-0.415,-0.415)--(-0.5,-0.415,-0.415)--cycle;
\filldraw[fill=Steal,fill opacity=0.10,draw=Steal!70,draw opacity=0.52,line width=0.24pt] (-0.5,-0.5,-0.5)--(-0.5,-0.415,-0.5)--(-0.5,-0.415,-0.415)--(-0.5,-0.5,-0.415)--cycle;
\filldraw[fill=Steal,fill opacity=0.10,draw=Steal!70,draw opacity=0.52,line width=0.24pt] (-0.415,-0.5,-0.5)--(-0.415,-0.415,-0.5)--(-0.415,-0.415,-0.415)--(-0.415,-0.5,-0.415)--cycle;
\filldraw[fill=Steal,fill opacity=0.10,draw=Steal!70,draw opacity=0.52,line width=0.24pt] (-0.5,-0.5,0.415)--(-0.415,-0.5,0.415)--(-0.415,-0.415,0.415)--(-0.5,-0.415,0.415)--cycle;
\filldraw[fill=Steal,fill opacity=0.10,draw=Steal!70,draw opacity=0.52,line width=0.24pt] (-0.5,-0.5,0.5)--(-0.415,-0.5,0.5)--(-0.415,-0.415,0.5)--(-0.5,-0.415,0.5)--cycle;
\filldraw[fill=Steal,fill opacity=0.10,draw=Steal!70,draw opacity=0.52,line width=0.24pt] (-0.5,-0.5,0.415)--(-0.415,-0.5,0.415)--(-0.415,-0.5,0.5)--(-0.5,-0.5,0.5)--cycle;
\filldraw[fill=Steal,fill opacity=0.10,draw=Steal!70,draw opacity=0.52,line width=0.24pt] (-0.5,-0.415,0.415)--(-0.415,-0.415,0.415)--(-0.415,-0.415,0.5)--(-0.5,-0.415,0.5)--cycle;
\filldraw[fill=Steal,fill opacity=0.10,draw=Steal!70,draw opacity=0.52,line width=0.24pt] (-0.5,-0.5,0.415)--(-0.5,-0.415,0.415)--(-0.5,-0.415,0.5)--(-0.5,-0.5,0.5)--cycle;
\filldraw[fill=Steal,fill opacity=0.10,draw=Steal!70,draw opacity=0.52,line width=0.24pt] (-0.415,-0.5,0.415)--(-0.415,-0.415,0.415)--(-0.415,-0.415,0.5)--(-0.415,-0.5,0.5)--cycle;
\filldraw[fill=Steal,fill opacity=0.10,draw=Steal!70,draw opacity=0.52,line width=0.24pt] (-0.5,0.415,-0.5)--(-0.415,0.415,-0.5)--(-0.415,0.5,-0.5)--(-0.5,0.5,-0.5)--cycle;
\filldraw[fill=Steal,fill opacity=0.10,draw=Steal!70,draw opacity=0.52,line width=0.24pt] (-0.5,0.415,-0.415)--(-0.415,0.415,-0.415)--(-0.415,0.5,-0.415)--(-0.5,0.5,-0.415)--cycle;
\filldraw[fill=Steal,fill opacity=0.10,draw=Steal!70,draw opacity=0.52,line width=0.24pt] (-0.5,0.415,-0.5)--(-0.415,0.415,-0.5)--(-0.415,0.415,-0.415)--(-0.5,0.415,-0.415)--cycle;
\filldraw[fill=Steal,fill opacity=0.10,draw=Steal!70,draw opacity=0.52,line width=0.24pt] (-0.5,0.5,-0.5)--(-0.415,0.5,-0.5)--(-0.415,0.5,-0.415)--(-0.5,0.5,-0.415)--cycle;
\filldraw[fill=Steal,fill opacity=0.10,draw=Steal!70,draw opacity=0.52,line width=0.24pt] (-0.5,0.415,-0.5)--(-0.5,0.5,-0.5)--(-0.5,0.5,-0.415)--(-0.5,0.415,-0.415)--cycle;
\filldraw[fill=Steal,fill opacity=0.10,draw=Steal!70,draw opacity=0.52,line width=0.24pt] (-0.415,0.415,-0.5)--(-0.415,0.5,-0.5)--(-0.415,0.5,-0.415)--(-0.415,0.415,-0.415)--cycle;
\filldraw[fill=Steal,fill opacity=0.10,draw=Steal!70,draw opacity=0.52,line width=0.24pt] (-0.5,0.415,0.415)--(-0.415,0.415,0.415)--(-0.415,0.5,0.415)--(-0.5,0.5,0.415)--cycle;
\filldraw[fill=Steal,fill opacity=0.10,draw=Steal!70,draw opacity=0.52,line width=0.24pt] (-0.5,0.415,0.5)--(-0.415,0.415,0.5)--(-0.415,0.5,0.5)--(-0.5,0.5,0.5)--cycle;
\filldraw[fill=Steal,fill opacity=0.10,draw=Steal!70,draw opacity=0.52,line width=0.24pt] (-0.5,0.415,0.415)--(-0.415,0.415,0.415)--(-0.415,0.415,0.5)--(-0.5,0.415,0.5)--cycle;
\filldraw[fill=Steal,fill opacity=0.10,draw=Steal!70,draw opacity=0.52,line width=0.24pt] (-0.5,0.5,0.415)--(-0.415,0.5,0.415)--(-0.415,0.5,0.5)--(-0.5,0.5,0.5)--cycle;
\filldraw[fill=Steal,fill opacity=0.10,draw=Steal!70,draw opacity=0.52,line width=0.24pt] (-0.5,0.415,0.415)--(-0.5,0.5,0.415)--(-0.5,0.5,0.5)--(-0.5,0.415,0.5)--cycle;
\filldraw[fill=Steal,fill opacity=0.10,draw=Steal!70,draw opacity=0.52,line width=0.24pt] (-0.415,0.415,0.415)--(-0.415,0.5,0.415)--(-0.415,0.5,0.5)--(-0.415,0.415,0.5)--cycle;
\filldraw[fill=Steal,fill opacity=0.10,draw=Steal!70,draw opacity=0.52,line width=0.24pt] (0.415,-0.5,-0.5)--(0.5,-0.5,-0.5)--(0.5,-0.415,-0.5)--(0.415,-0.415,-0.5)--cycle;
\filldraw[fill=Steal,fill opacity=0.10,draw=Steal!70,draw opacity=0.52,line width=0.24pt] (0.415,-0.5,-0.415)--(0.5,-0.5,-0.415)--(0.5,-0.415,-0.415)--(0.415,-0.415,-0.415)--cycle;
\filldraw[fill=Steal,fill opacity=0.10,draw=Steal!70,draw opacity=0.52,line width=0.24pt] (0.415,-0.5,-0.5)--(0.5,-0.5,-0.5)--(0.5,-0.5,-0.415)--(0.415,-0.5,-0.415)--cycle;
\filldraw[fill=Steal,fill opacity=0.10,draw=Steal!70,draw opacity=0.52,line width=0.24pt] (0.415,-0.415,-0.5)--(0.5,-0.415,-0.5)--(0.5,-0.415,-0.415)--(0.415,-0.415,-0.415)--cycle;
\filldraw[fill=Steal,fill opacity=0.10,draw=Steal!70,draw opacity=0.52,line width=0.24pt] (0.415,-0.5,-0.5)--(0.415,-0.415,-0.5)--(0.415,-0.415,-0.415)--(0.415,-0.5,-0.415)--cycle;
\filldraw[fill=Steal,fill opacity=0.10,draw=Steal!70,draw opacity=0.52,line width=0.24pt] (0.5,-0.5,-0.5)--(0.5,-0.415,-0.5)--(0.5,-0.415,-0.415)--(0.5,-0.5,-0.415)--cycle;
\filldraw[fill=Steal,fill opacity=0.10,draw=Steal!70,draw opacity=0.52,line width=0.24pt] (0.415,-0.5,0.415)--(0.5,-0.5,0.415)--(0.5,-0.415,0.415)--(0.415,-0.415,0.415)--cycle;
\filldraw[fill=Steal,fill opacity=0.10,draw=Steal!70,draw opacity=0.52,line width=0.24pt] (0.415,-0.5,0.5)--(0.5,-0.5,0.5)--(0.5,-0.415,0.5)--(0.415,-0.415,0.5)--cycle;
\filldraw[fill=Steal,fill opacity=0.10,draw=Steal!70,draw opacity=0.52,line width=0.24pt] (0.415,-0.5,0.415)--(0.5,-0.5,0.415)--(0.5,-0.5,0.5)--(0.415,-0.5,0.5)--cycle;
\filldraw[fill=Steal,fill opacity=0.10,draw=Steal!70,draw opacity=0.52,line width=0.24pt] (0.415,-0.415,0.415)--(0.5,-0.415,0.415)--(0.5,-0.415,0.5)--(0.415,-0.415,0.5)--cycle;
\filldraw[fill=Steal,fill opacity=0.10,draw=Steal!70,draw opacity=0.52,line width=0.24pt] (0.415,-0.5,0.415)--(0.415,-0.415,0.415)--(0.415,-0.415,0.5)--(0.415,-0.5,0.5)--cycle;
\filldraw[fill=Steal,fill opacity=0.10,draw=Steal!70,draw opacity=0.52,line width=0.24pt] (0.5,-0.5,0.415)--(0.5,-0.415,0.415)--(0.5,-0.415,0.5)--(0.5,-0.5,0.5)--cycle;
\filldraw[fill=Steal,fill opacity=0.10,draw=Steal!70,draw opacity=0.52,line width=0.24pt] (0.415,0.415,-0.5)--(0.5,0.415,-0.5)--(0.5,0.5,-0.5)--(0.415,0.5,-0.5)--cycle;
\filldraw[fill=Steal,fill opacity=0.10,draw=Steal!70,draw opacity=0.52,line width=0.24pt] (0.415,0.415,-0.415)--(0.5,0.415,-0.415)--(0.5,0.5,-0.415)--(0.415,0.5,-0.415)--cycle;
\filldraw[fill=Steal,fill opacity=0.10,draw=Steal!70,draw opacity=0.52,line width=0.24pt] (0.415,0.415,-0.5)--(0.5,0.415,-0.5)--(0.5,0.415,-0.415)--(0.415,0.415,-0.415)--cycle;
\filldraw[fill=Steal,fill opacity=0.10,draw=Steal!70,draw opacity=0.52,line width=0.24pt] (0.415,0.5,-0.5)--(0.5,0.5,-0.5)--(0.5,0.5,-0.415)--(0.415,0.5,-0.415)--cycle;
\filldraw[fill=Steal,fill opacity=0.10,draw=Steal!70,draw opacity=0.52,line width=0.24pt] (0.415,0.415,-0.5)--(0.415,0.5,-0.5)--(0.415,0.5,-0.415)--(0.415,0.415,-0.415)--cycle;
\filldraw[fill=Steal,fill opacity=0.10,draw=Steal!70,draw opacity=0.52,line width=0.24pt] (0.5,0.415,-0.5)--(0.5,0.5,-0.5)--(0.5,0.5,-0.415)--(0.5,0.415,-0.415)--cycle;
\filldraw[fill=Steal,fill opacity=0.10,draw=Steal!70,draw opacity=0.52,line width=0.24pt] (0.415,0.415,0.415)--(0.5,0.415,0.415)--(0.5,0.5,0.415)--(0.415,0.5,0.415)--cycle;
\filldraw[fill=Steal,fill opacity=0.10,draw=Steal!70,draw opacity=0.52,line width=0.24pt] (0.415,0.415,0.5)--(0.5,0.415,0.5)--(0.5,0.5,0.5)--(0.415,0.5,0.5)--cycle;
\filldraw[fill=Steal,fill opacity=0.10,draw=Steal!70,draw opacity=0.52,line width=0.24pt] (0.415,0.415,0.415)--(0.5,0.415,0.415)--(0.5,0.415,0.5)--(0.415,0.415,0.5)--cycle;
\filldraw[fill=Steal,fill opacity=0.10,draw=Steal!70,draw opacity=0.52,line width=0.24pt] (0.415,0.5,0.415)--(0.5,0.5,0.415)--(0.5,0.5,0.5)--(0.415,0.5,0.5)--cycle;
\filldraw[fill=Steal,fill opacity=0.10,draw=Steal!70,draw opacity=0.52,line width=0.24pt] (0.415,0.415,0.415)--(0.415,0.5,0.415)--(0.415,0.5,0.5)--(0.415,0.415,0.5)--cycle;
\filldraw[fill=Steal,fill opacity=0.10,draw=Steal!70,draw opacity=0.52,line width=0.24pt] (0.5,0.415,0.415)--(0.5,0.5,0.415)--(0.5,0.5,0.5)--(0.5,0.415,0.5)--cycle;

 \foreach \a in {-0.5,0.5}{
   \foreach \b in {-0.5,0.5}{
     \draw[gray!55,line width=0.35pt] (-0.5,\a,\b)--(0.5,\a,\b);
     \draw[gray!55,line width=0.35pt] (\a,-0.5,\b)--(\a,0.5,\b);
     \draw[gray!55,line width=0.35pt] (\a,\b,-0.5)--(\a,\b,0.5);
   }
 }

 \draw[gray!80,-{Stealth[length=1.3mm]},line width=0.4pt]
      (0,0,0)--(0.74,0,0) node[right,font=\scriptsize] {$x_1$};
 \draw[gray!80,-{Stealth[length=1.3mm]},line width=0.4pt]
      (0,0,0)--(0,0.72,0) node[right,font=\scriptsize] {$x_2$};
 \draw[gray!80,-{Stealth[length=1.3mm]},line width=0.4pt]
      (0,0,0)--(0,0,0.69) node[above,font=\scriptsize] {$x_3$};
\end{tikzpicture}\\[-3pt]
{\small fattened vertices}
\end{minipage}
\hfill
\begin{minipage}[t]{0.32\linewidth}\centering
{\small $K_{\rm cub}^{(1)}$ neighborhood}\\[-2pt]
\begin{tikzpicture}[x={(1.85cm,-0.62cm)},y={(1.1cm,0.95cm)},z={(0cm,2.0cm)}]
\filldraw[fill=Steal,fill opacity=0.10,draw=Steal!70,draw opacity=0.52,line width=0.24pt] (-0.5,-0.5,-0.5)--(-0.415,-0.5,-0.5)--(-0.415,-0.415,-0.5)--(-0.5,-0.415,-0.5)--cycle;
\filldraw[fill=Steal,fill opacity=0.10,draw=Steal!70,draw opacity=0.52,line width=0.24pt] (-0.5,-0.5,0.5)--(-0.415,-0.5,0.5)--(-0.415,-0.415,0.5)--(-0.5,-0.415,0.5)--cycle;
\filldraw[fill=Steal,fill opacity=0.10,draw=Steal!70,draw opacity=0.52,line width=0.24pt] (-0.5,-0.5,-0.5)--(-0.415,-0.5,-0.5)--(-0.415,-0.5,0.5)--(-0.5,-0.5,0.5)--cycle;
\filldraw[fill=Steal,fill opacity=0.10,draw=Steal!70,draw opacity=0.52,line width=0.24pt] (-0.5,-0.415,-0.5)--(-0.415,-0.415,-0.5)--(-0.415,-0.415,0.5)--(-0.5,-0.415,0.5)--cycle;
\filldraw[fill=Steal,fill opacity=0.10,draw=Steal!70,draw opacity=0.52,line width=0.24pt] (-0.5,-0.5,-0.5)--(-0.5,-0.415,-0.5)--(-0.5,-0.415,0.5)--(-0.5,-0.5,0.5)--cycle;
\filldraw[fill=Steal,fill opacity=0.10,draw=Steal!70,draw opacity=0.52,line width=0.24pt] (-0.415,-0.5,-0.5)--(-0.415,-0.415,-0.5)--(-0.415,-0.415,0.5)--(-0.415,-0.5,0.5)--cycle;
\filldraw[fill=Steal,fill opacity=0.10,draw=Steal!70,draw opacity=0.52,line width=0.24pt] (-0.5,0.415,-0.5)--(-0.415,0.415,-0.5)--(-0.415,0.5,-0.5)--(-0.5,0.5,-0.5)--cycle;
\filldraw[fill=Steal,fill opacity=0.10,draw=Steal!70,draw opacity=0.52,line width=0.24pt] (-0.5,0.415,0.5)--(-0.415,0.415,0.5)--(-0.415,0.5,0.5)--(-0.5,0.5,0.5)--cycle;
\filldraw[fill=Steal,fill opacity=0.10,draw=Steal!70,draw opacity=0.52,line width=0.24pt] (-0.5,0.415,-0.5)--(-0.415,0.415,-0.5)--(-0.415,0.415,0.5)--(-0.5,0.415,0.5)--cycle;
\filldraw[fill=Steal,fill opacity=0.10,draw=Steal!70,draw opacity=0.52,line width=0.24pt] (-0.5,0.5,-0.5)--(-0.415,0.5,-0.5)--(-0.415,0.5,0.5)--(-0.5,0.5,0.5)--cycle;
\filldraw[fill=Steal,fill opacity=0.10,draw=Steal!70,draw opacity=0.52,line width=0.24pt] (-0.5,0.415,-0.5)--(-0.5,0.5,-0.5)--(-0.5,0.5,0.5)--(-0.5,0.415,0.5)--cycle;
\filldraw[fill=Steal,fill opacity=0.10,draw=Steal!70,draw opacity=0.52,line width=0.24pt] (-0.415,0.415,-0.5)--(-0.415,0.5,-0.5)--(-0.415,0.5,0.5)--(-0.415,0.415,0.5)--cycle;
\filldraw[fill=Steal,fill opacity=0.10,draw=Steal!70,draw opacity=0.52,line width=0.24pt] (0.415,-0.5,-0.5)--(0.5,-0.5,-0.5)--(0.5,-0.415,-0.5)--(0.415,-0.415,-0.5)--cycle;
\filldraw[fill=Steal,fill opacity=0.10,draw=Steal!70,draw opacity=0.52,line width=0.24pt] (0.415,-0.5,0.5)--(0.5,-0.5,0.5)--(0.5,-0.415,0.5)--(0.415,-0.415,0.5)--cycle;
\filldraw[fill=Steal,fill opacity=0.10,draw=Steal!70,draw opacity=0.52,line width=0.24pt] (0.415,-0.5,-0.5)--(0.5,-0.5,-0.5)--(0.5,-0.5,0.5)--(0.415,-0.5,0.5)--cycle;
\filldraw[fill=Steal,fill opacity=0.10,draw=Steal!70,draw opacity=0.52,line width=0.24pt] (0.415,-0.415,-0.5)--(0.5,-0.415,-0.5)--(0.5,-0.415,0.5)--(0.415,-0.415,0.5)--cycle;
\filldraw[fill=Steal,fill opacity=0.10,draw=Steal!70,draw opacity=0.52,line width=0.24pt] (0.415,-0.5,-0.5)--(0.415,-0.415,-0.5)--(0.415,-0.415,0.5)--(0.415,-0.5,0.5)--cycle;
\filldraw[fill=Steal,fill opacity=0.10,draw=Steal!70,draw opacity=0.52,line width=0.24pt] (0.5,-0.5,-0.5)--(0.5,-0.415,-0.5)--(0.5,-0.415,0.5)--(0.5,-0.5,0.5)--cycle;
\filldraw[fill=Steal,fill opacity=0.10,draw=Steal!70,draw opacity=0.52,line width=0.24pt] (0.415,0.415,-0.5)--(0.5,0.415,-0.5)--(0.5,0.5,-0.5)--(0.415,0.5,-0.5)--cycle;
\filldraw[fill=Steal,fill opacity=0.10,draw=Steal!70,draw opacity=0.52,line width=0.24pt] (0.415,0.415,0.5)--(0.5,0.415,0.5)--(0.5,0.5,0.5)--(0.415,0.5,0.5)--cycle;
\filldraw[fill=Steal,fill opacity=0.10,draw=Steal!70,draw opacity=0.52,line width=0.24pt] (0.415,0.415,-0.5)--(0.5,0.415,-0.5)--(0.5,0.415,0.5)--(0.415,0.415,0.5)--cycle;
\filldraw[fill=Steal,fill opacity=0.10,draw=Steal!70,draw opacity=0.52,line width=0.24pt] (0.415,0.5,-0.5)--(0.5,0.5,-0.5)--(0.5,0.5,0.5)--(0.415,0.5,0.5)--cycle;
\filldraw[fill=Steal,fill opacity=0.10,draw=Steal!70,draw opacity=0.52,line width=0.24pt] (0.415,0.415,-0.5)--(0.415,0.5,-0.5)--(0.415,0.5,0.5)--(0.415,0.415,0.5)--cycle;
\filldraw[fill=Steal,fill opacity=0.10,draw=Steal!70,draw opacity=0.52,line width=0.24pt] (0.5,0.415,-0.5)--(0.5,0.5,-0.5)--(0.5,0.5,0.5)--(0.5,0.415,0.5)--cycle;
\filldraw[fill=Steal,fill opacity=0.10,draw=Steal!70,draw opacity=0.52,line width=0.24pt] (-0.5,-0.5,-0.5)--(-0.415,-0.5,-0.5)--(-0.415,0.5,-0.5)--(-0.5,0.5,-0.5)--cycle;
\filldraw[fill=Steal,fill opacity=0.10,draw=Steal!70,draw opacity=0.52,line width=0.24pt] (-0.5,-0.5,-0.415)--(-0.415,-0.5,-0.415)--(-0.415,0.5,-0.415)--(-0.5,0.5,-0.415)--cycle;
\filldraw[fill=Steal,fill opacity=0.10,draw=Steal!70,draw opacity=0.52,line width=0.24pt] (-0.5,-0.5,-0.5)--(-0.415,-0.5,-0.5)--(-0.415,-0.5,-0.415)--(-0.5,-0.5,-0.415)--cycle;
\filldraw[fill=Steal,fill opacity=0.10,draw=Steal!70,draw opacity=0.52,line width=0.24pt] (-0.5,0.5,-0.5)--(-0.415,0.5,-0.5)--(-0.415,0.5,-0.415)--(-0.5,0.5,-0.415)--cycle;
\filldraw[fill=Steal,fill opacity=0.10,draw=Steal!70,draw opacity=0.52,line width=0.24pt] (-0.5,-0.5,-0.5)--(-0.5,0.5,-0.5)--(-0.5,0.5,-0.415)--(-0.5,-0.5,-0.415)--cycle;
\filldraw[fill=Steal,fill opacity=0.10,draw=Steal!70,draw opacity=0.52,line width=0.24pt] (-0.415,-0.5,-0.5)--(-0.415,0.5,-0.5)--(-0.415,0.5,-0.415)--(-0.415,-0.5,-0.415)--cycle;
\filldraw[fill=Steal,fill opacity=0.10,draw=Steal!70,draw opacity=0.52,line width=0.24pt] (-0.5,-0.5,0.415)--(-0.415,-0.5,0.415)--(-0.415,0.5,0.415)--(-0.5,0.5,0.415)--cycle;
\filldraw[fill=Steal,fill opacity=0.10,draw=Steal!70,draw opacity=0.52,line width=0.24pt] (-0.5,-0.5,0.5)--(-0.415,-0.5,0.5)--(-0.415,0.5,0.5)--(-0.5,0.5,0.5)--cycle;
\filldraw[fill=Steal,fill opacity=0.10,draw=Steal!70,draw opacity=0.52,line width=0.24pt] (-0.5,-0.5,0.415)--(-0.415,-0.5,0.415)--(-0.415,-0.5,0.5)--(-0.5,-0.5,0.5)--cycle;
\filldraw[fill=Steal,fill opacity=0.10,draw=Steal!70,draw opacity=0.52,line width=0.24pt] (-0.5,0.5,0.415)--(-0.415,0.5,0.415)--(-0.415,0.5,0.5)--(-0.5,0.5,0.5)--cycle;
\filldraw[fill=Steal,fill opacity=0.10,draw=Steal!70,draw opacity=0.52,line width=0.24pt] (-0.5,-0.5,0.415)--(-0.5,0.5,0.415)--(-0.5,0.5,0.5)--(-0.5,-0.5,0.5)--cycle;
\filldraw[fill=Steal,fill opacity=0.10,draw=Steal!70,draw opacity=0.52,line width=0.24pt] (-0.415,-0.5,0.415)--(-0.415,0.5,0.415)--(-0.415,0.5,0.5)--(-0.415,-0.5,0.5)--cycle;
\filldraw[fill=Steal,fill opacity=0.10,draw=Steal!70,draw opacity=0.52,line width=0.24pt] (0.415,-0.5,-0.5)--(0.5,-0.5,-0.5)--(0.5,0.5,-0.5)--(0.415,0.5,-0.5)--cycle;
\filldraw[fill=Steal,fill opacity=0.10,draw=Steal!70,draw opacity=0.52,line width=0.24pt] (0.415,-0.5,-0.415)--(0.5,-0.5,-0.415)--(0.5,0.5,-0.415)--(0.415,0.5,-0.415)--cycle;
\filldraw[fill=Steal,fill opacity=0.10,draw=Steal!70,draw opacity=0.52,line width=0.24pt] (0.415,-0.5,-0.5)--(0.5,-0.5,-0.5)--(0.5,-0.5,-0.415)--(0.415,-0.5,-0.415)--cycle;
\filldraw[fill=Steal,fill opacity=0.10,draw=Steal!70,draw opacity=0.52,line width=0.24pt] (0.415,0.5,-0.5)--(0.5,0.5,-0.5)--(0.5,0.5,-0.415)--(0.415,0.5,-0.415)--cycle;
\filldraw[fill=Steal,fill opacity=0.10,draw=Steal!70,draw opacity=0.52,line width=0.24pt] (0.415,-0.5,-0.5)--(0.415,0.5,-0.5)--(0.415,0.5,-0.415)--(0.415,-0.5,-0.415)--cycle;
\filldraw[fill=Steal,fill opacity=0.10,draw=Steal!70,draw opacity=0.52,line width=0.24pt] (0.5,-0.5,-0.5)--(0.5,0.5,-0.5)--(0.5,0.5,-0.415)--(0.5,-0.5,-0.415)--cycle;
\filldraw[fill=Steal,fill opacity=0.10,draw=Steal!70,draw opacity=0.52,line width=0.24pt] (0.415,-0.5,0.415)--(0.5,-0.5,0.415)--(0.5,0.5,0.415)--(0.415,0.5,0.415)--cycle;
\filldraw[fill=Steal,fill opacity=0.10,draw=Steal!70,draw opacity=0.52,line width=0.24pt] (0.415,-0.5,0.5)--(0.5,-0.5,0.5)--(0.5,0.5,0.5)--(0.415,0.5,0.5)--cycle;
\filldraw[fill=Steal,fill opacity=0.10,draw=Steal!70,draw opacity=0.52,line width=0.24pt] (0.415,-0.5,0.415)--(0.5,-0.5,0.415)--(0.5,-0.5,0.5)--(0.415,-0.5,0.5)--cycle;
\filldraw[fill=Steal,fill opacity=0.10,draw=Steal!70,draw opacity=0.52,line width=0.24pt] (0.415,0.5,0.415)--(0.5,0.5,0.415)--(0.5,0.5,0.5)--(0.415,0.5,0.5)--cycle;
\filldraw[fill=Steal,fill opacity=0.10,draw=Steal!70,draw opacity=0.52,line width=0.24pt] (0.415,-0.5,0.415)--(0.415,0.5,0.415)--(0.415,0.5,0.5)--(0.415,-0.5,0.5)--cycle;
\filldraw[fill=Steal,fill opacity=0.10,draw=Steal!70,draw opacity=0.52,line width=0.24pt] (0.5,-0.5,0.415)--(0.5,0.5,0.415)--(0.5,0.5,0.5)--(0.5,-0.5,0.5)--cycle;
\filldraw[fill=Steal,fill opacity=0.10,draw=Steal!70,draw opacity=0.52,line width=0.24pt] (-0.5,-0.5,-0.5)--(0.5,-0.5,-0.5)--(0.5,-0.415,-0.5)--(-0.5,-0.415,-0.5)--cycle;
\filldraw[fill=Steal,fill opacity=0.10,draw=Steal!70,draw opacity=0.52,line width=0.24pt] (-0.5,-0.5,-0.415)--(0.5,-0.5,-0.415)--(0.5,-0.415,-0.415)--(-0.5,-0.415,-0.415)--cycle;
\filldraw[fill=Steal,fill opacity=0.10,draw=Steal!70,draw opacity=0.52,line width=0.24pt] (-0.5,-0.5,-0.5)--(0.5,-0.5,-0.5)--(0.5,-0.5,-0.415)--(-0.5,-0.5,-0.415)--cycle;
\filldraw[fill=Steal,fill opacity=0.10,draw=Steal!70,draw opacity=0.52,line width=0.24pt] (-0.5,-0.415,-0.5)--(0.5,-0.415,-0.5)--(0.5,-0.415,-0.415)--(-0.5,-0.415,-0.415)--cycle;
\filldraw[fill=Steal,fill opacity=0.10,draw=Steal!70,draw opacity=0.52,line width=0.24pt] (-0.5,-0.5,-0.5)--(-0.5,-0.415,-0.5)--(-0.5,-0.415,-0.415)--(-0.5,-0.5,-0.415)--cycle;
\filldraw[fill=Steal,fill opacity=0.10,draw=Steal!70,draw opacity=0.52,line width=0.24pt] (0.5,-0.5,-0.5)--(0.5,-0.415,-0.5)--(0.5,-0.415,-0.415)--(0.5,-0.5,-0.415)--cycle;
\filldraw[fill=Steal,fill opacity=0.10,draw=Steal!70,draw opacity=0.52,line width=0.24pt] (-0.5,-0.5,0.415)--(0.5,-0.5,0.415)--(0.5,-0.415,0.415)--(-0.5,-0.415,0.415)--cycle;
\filldraw[fill=Steal,fill opacity=0.10,draw=Steal!70,draw opacity=0.52,line width=0.24pt] (-0.5,-0.5,0.5)--(0.5,-0.5,0.5)--(0.5,-0.415,0.5)--(-0.5,-0.415,0.5)--cycle;
\filldraw[fill=Steal,fill opacity=0.10,draw=Steal!70,draw opacity=0.52,line width=0.24pt] (-0.5,-0.5,0.415)--(0.5,-0.5,0.415)--(0.5,-0.5,0.5)--(-0.5,-0.5,0.5)--cycle;
\filldraw[fill=Steal,fill opacity=0.10,draw=Steal!70,draw opacity=0.52,line width=0.24pt] (-0.5,-0.415,0.415)--(0.5,-0.415,0.415)--(0.5,-0.415,0.5)--(-0.5,-0.415,0.5)--cycle;
\filldraw[fill=Steal,fill opacity=0.10,draw=Steal!70,draw opacity=0.52,line width=0.24pt] (-0.5,-0.5,0.415)--(-0.5,-0.415,0.415)--(-0.5,-0.415,0.5)--(-0.5,-0.5,0.5)--cycle;
\filldraw[fill=Steal,fill opacity=0.10,draw=Steal!70,draw opacity=0.52,line width=0.24pt] (0.5,-0.5,0.415)--(0.5,-0.415,0.415)--(0.5,-0.415,0.5)--(0.5,-0.5,0.5)--cycle;
\filldraw[fill=Steal,fill opacity=0.10,draw=Steal!70,draw opacity=0.52,line width=0.24pt] (-0.5,0.415,-0.5)--(0.5,0.415,-0.5)--(0.5,0.5,-0.5)--(-0.5,0.5,-0.5)--cycle;
\filldraw[fill=Steal,fill opacity=0.10,draw=Steal!70,draw opacity=0.52,line width=0.24pt] (-0.5,0.415,-0.415)--(0.5,0.415,-0.415)--(0.5,0.5,-0.415)--(-0.5,0.5,-0.415)--cycle;
\filldraw[fill=Steal,fill opacity=0.10,draw=Steal!70,draw opacity=0.52,line width=0.24pt] (-0.5,0.415,-0.5)--(0.5,0.415,-0.5)--(0.5,0.415,-0.415)--(-0.5,0.415,-0.415)--cycle;
\filldraw[fill=Steal,fill opacity=0.10,draw=Steal!70,draw opacity=0.52,line width=0.24pt] (-0.5,0.5,-0.5)--(0.5,0.5,-0.5)--(0.5,0.5,-0.415)--(-0.5,0.5,-0.415)--cycle;
\filldraw[fill=Steal,fill opacity=0.10,draw=Steal!70,draw opacity=0.52,line width=0.24pt] (-0.5,0.415,-0.5)--(-0.5,0.5,-0.5)--(-0.5,0.5,-0.415)--(-0.5,0.415,-0.415)--cycle;
\filldraw[fill=Steal,fill opacity=0.10,draw=Steal!70,draw opacity=0.52,line width=0.24pt] (0.5,0.415,-0.5)--(0.5,0.5,-0.5)--(0.5,0.5,-0.415)--(0.5,0.415,-0.415)--cycle;
\filldraw[fill=Steal,fill opacity=0.10,draw=Steal!70,draw opacity=0.52,line width=0.24pt] (-0.5,0.415,0.415)--(0.5,0.415,0.415)--(0.5,0.5,0.415)--(-0.5,0.5,0.415)--cycle;
\filldraw[fill=Steal,fill opacity=0.10,draw=Steal!70,draw opacity=0.52,line width=0.24pt] (-0.5,0.415,0.5)--(0.5,0.415,0.5)--(0.5,0.5,0.5)--(-0.5,0.5,0.5)--cycle;
\filldraw[fill=Steal,fill opacity=0.10,draw=Steal!70,draw opacity=0.52,line width=0.24pt] (-0.5,0.415,0.415)--(0.5,0.415,0.415)--(0.5,0.415,0.5)--(-0.5,0.415,0.5)--cycle;
\filldraw[fill=Steal,fill opacity=0.10,draw=Steal!70,draw opacity=0.52,line width=0.24pt] (-0.5,0.5,0.415)--(0.5,0.5,0.415)--(0.5,0.5,0.5)--(-0.5,0.5,0.5)--cycle;
\filldraw[fill=Steal,fill opacity=0.10,draw=Steal!70,draw opacity=0.52,line width=0.24pt] (-0.5,0.415,0.415)--(-0.5,0.5,0.415)--(-0.5,0.5,0.5)--(-0.5,0.415,0.5)--cycle;
\filldraw[fill=Steal,fill opacity=0.10,draw=Steal!70,draw opacity=0.52,line width=0.24pt] (0.5,0.415,0.415)--(0.5,0.5,0.415)--(0.5,0.5,0.5)--(0.5,0.415,0.5)--cycle;

 \foreach \a in {-0.5,0.5}{
   \foreach \b in {-0.5,0.5}{
     \draw[gray!55,line width=0.35pt] (-0.5,\a,\b)--(0.5,\a,\b);
     \draw[gray!55,line width=0.35pt] (\a,-0.5,\b)--(\a,0.5,\b);
     \draw[gray!55,line width=0.35pt] (\a,\b,-0.5)--(\a,\b,0.5);
   }
 }

 \draw[gray!80,-{Stealth[length=1.3mm]},line width=0.4pt]
      (0,0,0)--(0.74,0,0) node[right,font=\scriptsize] {$x_1$};
 \draw[gray!80,-{Stealth[length=1.3mm]},line width=0.4pt]
      (0,0,0)--(0,0.72,0) node[right,font=\scriptsize] {$x_2$};
 \draw[gray!80,-{Stealth[length=1.3mm]},line width=0.4pt]
      (0,0,0)--(0,0,0.69) node[above,font=\scriptsize] {$x_3$};
\end{tikzpicture}\\[-3pt]
{\small fattened edges}
\end{minipage}
\hfill
\begin{minipage}[t]{0.32\linewidth}\centering
{\small $K_{\rm cub}^{(2)}$ neighborhood}\\[-2pt]
\begin{tikzpicture}[x={(1.85cm,-0.62cm)},y={(1.1cm,0.95cm)},z={(0cm,2.0cm)}]
\filldraw[fill=Steal,fill opacity=0.10,draw=Steal!70,draw opacity=0.52,line width=0.24pt] (-0.5,-0.5,-0.5)--(-0.415,-0.5,-0.5)--(-0.415,0.5,-0.5)--(-0.5,0.5,-0.5)--cycle;
\filldraw[fill=Steal,fill opacity=0.10,draw=Steal!70,draw opacity=0.52,line width=0.24pt] (-0.5,-0.5,0.5)--(-0.415,-0.5,0.5)--(-0.415,0.5,0.5)--(-0.5,0.5,0.5)--cycle;
\filldraw[fill=Steal,fill opacity=0.10,draw=Steal!70,draw opacity=0.52,line width=0.24pt] (-0.5,-0.5,-0.5)--(-0.415,-0.5,-0.5)--(-0.415,-0.5,0.5)--(-0.5,-0.5,0.5)--cycle;
\filldraw[fill=Steal,fill opacity=0.10,draw=Steal!70,draw opacity=0.52,line width=0.24pt] (-0.5,0.5,-0.5)--(-0.415,0.5,-0.5)--(-0.415,0.5,0.5)--(-0.5,0.5,0.5)--cycle;
\filldraw[fill=Steal,fill opacity=0.10,draw=Steal!70,draw opacity=0.52,line width=0.24pt] (-0.5,-0.5,-0.5)--(-0.5,0.5,-0.5)--(-0.5,0.5,0.5)--(-0.5,-0.5,0.5)--cycle;
\filldraw[fill=Steal,fill opacity=0.10,draw=Steal!70,draw opacity=0.52,line width=0.24pt] (-0.415,-0.5,-0.5)--(-0.415,0.5,-0.5)--(-0.415,0.5,0.5)--(-0.415,-0.5,0.5)--cycle;
\filldraw[fill=Steal,fill opacity=0.10,draw=Steal!70,draw opacity=0.52,line width=0.24pt] (0.415,-0.5,-0.5)--(0.5,-0.5,-0.5)--(0.5,0.5,-0.5)--(0.415,0.5,-0.5)--cycle;
\filldraw[fill=Steal,fill opacity=0.10,draw=Steal!70,draw opacity=0.52,line width=0.24pt] (0.415,-0.5,0.5)--(0.5,-0.5,0.5)--(0.5,0.5,0.5)--(0.415,0.5,0.5)--cycle;
\filldraw[fill=Steal,fill opacity=0.10,draw=Steal!70,draw opacity=0.52,line width=0.24pt] (0.415,-0.5,-0.5)--(0.5,-0.5,-0.5)--(0.5,-0.5,0.5)--(0.415,-0.5,0.5)--cycle;
\filldraw[fill=Steal,fill opacity=0.10,draw=Steal!70,draw opacity=0.52,line width=0.24pt] (0.415,0.5,-0.5)--(0.5,0.5,-0.5)--(0.5,0.5,0.5)--(0.415,0.5,0.5)--cycle;
\filldraw[fill=Steal,fill opacity=0.10,draw=Steal!70,draw opacity=0.52,line width=0.24pt] (0.415,-0.5,-0.5)--(0.415,0.5,-0.5)--(0.415,0.5,0.5)--(0.415,-0.5,0.5)--cycle;
\filldraw[fill=Steal,fill opacity=0.10,draw=Steal!70,draw opacity=0.52,line width=0.24pt] (0.5,-0.5,-0.5)--(0.5,0.5,-0.5)--(0.5,0.5,0.5)--(0.5,-0.5,0.5)--cycle;
\filldraw[fill=Steal,fill opacity=0.10,draw=Steal!70,draw opacity=0.52,line width=0.24pt] (-0.5,-0.5,-0.5)--(0.5,-0.5,-0.5)--(0.5,-0.415,-0.5)--(-0.5,-0.415,-0.5)--cycle;
\filldraw[fill=Steal,fill opacity=0.10,draw=Steal!70,draw opacity=0.52,line width=0.24pt] (-0.5,-0.5,0.5)--(0.5,-0.5,0.5)--(0.5,-0.415,0.5)--(-0.5,-0.415,0.5)--cycle;
\filldraw[fill=Steal,fill opacity=0.10,draw=Steal!70,draw opacity=0.52,line width=0.24pt] (-0.5,-0.5,-0.5)--(0.5,-0.5,-0.5)--(0.5,-0.5,0.5)--(-0.5,-0.5,0.5)--cycle;
\filldraw[fill=Steal,fill opacity=0.10,draw=Steal!70,draw opacity=0.52,line width=0.24pt] (-0.5,-0.415,-0.5)--(0.5,-0.415,-0.5)--(0.5,-0.415,0.5)--(-0.5,-0.415,0.5)--cycle;
\filldraw[fill=Steal,fill opacity=0.10,draw=Steal!70,draw opacity=0.52,line width=0.24pt] (-0.5,-0.5,-0.5)--(-0.5,-0.415,-0.5)--(-0.5,-0.415,0.5)--(-0.5,-0.5,0.5)--cycle;
\filldraw[fill=Steal,fill opacity=0.10,draw=Steal!70,draw opacity=0.52,line width=0.24pt] (0.5,-0.5,-0.5)--(0.5,-0.415,-0.5)--(0.5,-0.415,0.5)--(0.5,-0.5,0.5)--cycle;
\filldraw[fill=Steal,fill opacity=0.10,draw=Steal!70,draw opacity=0.52,line width=0.24pt] (-0.5,0.415,-0.5)--(0.5,0.415,-0.5)--(0.5,0.5,-0.5)--(-0.5,0.5,-0.5)--cycle;
\filldraw[fill=Steal,fill opacity=0.10,draw=Steal!70,draw opacity=0.52,line width=0.24pt] (-0.5,0.415,0.5)--(0.5,0.415,0.5)--(0.5,0.5,0.5)--(-0.5,0.5,0.5)--cycle;
\filldraw[fill=Steal,fill opacity=0.10,draw=Steal!70,draw opacity=0.52,line width=0.24pt] (-0.5,0.415,-0.5)--(0.5,0.415,-0.5)--(0.5,0.415,0.5)--(-0.5,0.415,0.5)--cycle;
\filldraw[fill=Steal,fill opacity=0.10,draw=Steal!70,draw opacity=0.52,line width=0.24pt] (-0.5,0.5,-0.5)--(0.5,0.5,-0.5)--(0.5,0.5,0.5)--(-0.5,0.5,0.5)--cycle;
\filldraw[fill=Steal,fill opacity=0.10,draw=Steal!70,draw opacity=0.52,line width=0.24pt] (-0.5,0.415,-0.5)--(-0.5,0.5,-0.5)--(-0.5,0.5,0.5)--(-0.5,0.415,0.5)--cycle;
\filldraw[fill=Steal,fill opacity=0.10,draw=Steal!70,draw opacity=0.52,line width=0.24pt] (0.5,0.415,-0.5)--(0.5,0.5,-0.5)--(0.5,0.5,0.5)--(0.5,0.415,0.5)--cycle;
\filldraw[fill=Steal,fill opacity=0.10,draw=Steal!70,draw opacity=0.52,line width=0.24pt] (-0.5,-0.5,-0.5)--(0.5,-0.5,-0.5)--(0.5,0.5,-0.5)--(-0.5,0.5,-0.5)--cycle;
\filldraw[fill=Steal,fill opacity=0.10,draw=Steal!70,draw opacity=0.52,line width=0.24pt] (-0.5,-0.5,-0.415)--(0.5,-0.5,-0.415)--(0.5,0.5,-0.415)--(-0.5,0.5,-0.415)--cycle;
\filldraw[fill=Steal,fill opacity=0.10,draw=Steal!70,draw opacity=0.52,line width=0.24pt] (-0.5,-0.5,-0.5)--(0.5,-0.5,-0.5)--(0.5,-0.5,-0.415)--(-0.5,-0.5,-0.415)--cycle;
\filldraw[fill=Steal,fill opacity=0.10,draw=Steal!70,draw opacity=0.52,line width=0.24pt] (-0.5,0.5,-0.5)--(0.5,0.5,-0.5)--(0.5,0.5,-0.415)--(-0.5,0.5,-0.415)--cycle;
\filldraw[fill=Steal,fill opacity=0.10,draw=Steal!70,draw opacity=0.52,line width=0.24pt] (-0.5,-0.5,-0.5)--(-0.5,0.5,-0.5)--(-0.5,0.5,-0.415)--(-0.5,-0.5,-0.415)--cycle;
\filldraw[fill=Steal,fill opacity=0.10,draw=Steal!70,draw opacity=0.52,line width=0.24pt] (0.5,-0.5,-0.5)--(0.5,0.5,-0.5)--(0.5,0.5,-0.415)--(0.5,-0.5,-0.415)--cycle;
\filldraw[fill=Steal,fill opacity=0.10,draw=Steal!70,draw opacity=0.52,line width=0.24pt] (-0.5,-0.5,0.415)--(0.5,-0.5,0.415)--(0.5,0.5,0.415)--(-0.5,0.5,0.415)--cycle;
\filldraw[fill=Steal,fill opacity=0.10,draw=Steal!70,draw opacity=0.52,line width=0.24pt] (-0.5,-0.5,0.5)--(0.5,-0.5,0.5)--(0.5,0.5,0.5)--(-0.5,0.5,0.5)--cycle;
\filldraw[fill=Steal,fill opacity=0.10,draw=Steal!70,draw opacity=0.52,line width=0.24pt] (-0.5,-0.5,0.415)--(0.5,-0.5,0.415)--(0.5,-0.5,0.5)--(-0.5,-0.5,0.5)--cycle;
\filldraw[fill=Steal,fill opacity=0.10,draw=Steal!70,draw opacity=0.52,line width=0.24pt] (-0.5,0.5,0.415)--(0.5,0.5,0.415)--(0.5,0.5,0.5)--(-0.5,0.5,0.5)--cycle;
\filldraw[fill=Steal,fill opacity=0.10,draw=Steal!70,draw opacity=0.52,line width=0.24pt] (-0.5,-0.5,0.415)--(-0.5,0.5,0.415)--(-0.5,0.5,0.5)--(-0.5,-0.5,0.5)--cycle;
\filldraw[fill=Steal,fill opacity=0.10,draw=Steal!70,draw opacity=0.52,line width=0.24pt] (0.5,-0.5,0.415)--(0.5,0.5,0.415)--(0.5,0.5,0.5)--(0.5,-0.5,0.5)--cycle;

 \foreach \a in {-0.5,0.5}{
   \foreach \b in {-0.5,0.5}{
     \draw[gray!55,line width=0.35pt] (-0.5,\a,\b)--(0.5,\a,\b);
     \draw[gray!55,line width=0.35pt] (\a,-0.5,\b)--(\a,0.5,\b);
     \draw[gray!55,line width=0.35pt] (\a,\b,-0.5)--(\a,\b,0.5);
   }
 }

 \draw[gray!80,-{Stealth[length=1.3mm]},line width=0.4pt]
      (0,0,0)--(0.74,0,0) node[right,font=\scriptsize] {$x_1$};
 \draw[gray!80,-{Stealth[length=1.3mm]},line width=0.4pt]
      (0,0,0)--(0,0.72,0) node[right,font=\scriptsize] {$x_2$};
 \draw[gray!80,-{Stealth[length=1.3mm]},line width=0.4pt]
      (0,0,0)--(0,0,0.69) node[above,font=\scriptsize] {$x_3$};
\end{tikzpicture}\\[-3pt]
{\small fattened faces}
\end{minipage}
\caption{Top: $S$ in $[-\tfrac12,\tfrac12]^3$, for $m=1,2,3$:
three midplanes, three axes, and the origin (dimension $3-m$).
Bottom: the corresponding fattened $(m-1)$-skeleta---vertex boxes,
edge tubes, and face slabs---shown as translucent pieces inside the cell.
These are the closures of the $\ell^\infty$ neighborhoods of sufficiently 
small width $\varepsilon$, clipped to the cube.  Integer translates give
the periodic sets.  The lower row illustrates the protected-neighborhood
geometry: in the application of Lemma~\ref{tm:lem:skeleton}, after choosing
a sufficiently fine physical cubulation, the field $B$ is smooth on a
sufficiently small such neighborhood.  Gray edges and arrows mark the cube and directions.}
\label{tm:fig:S-three-dimensional}
\end{figure}

For $I$ of cardinality $m$ put $f_I=\sum_{i\in I}f_i$.  On $\R^n\setminus S$
all these functions are positive.  Define the smooth periodic scale
\begin{equation}\label{tm:eq:rho}
 \rho(x)=c_m\left(\sum_{|I|=m}\frac1{f_I(x)}\right)^{-1/2},
 \qquad c_m=\frac1{4\pi\sqrt m}.
\end{equation}
For each $x\in\R^n$, write
$d_{(1)}(x)\le\cdots\le d_{(n)}(x)$ for the increasing rearrangement of
$d_1(x),\ldots,d_n(x)$; thus $d_{(m)}(x)$ is their $m$th smallest value.

\begin{lemma}\label{tm:lem:rho-comparable}
For every $x\in\R^n\setminus S$,
\begin{equation}\label{tm:eq:rho-comparability}
 c_{mn}\,d_{(m)}(x)\le\rho(x)\le\tfrac14d_{(m)}(x),
 \qquad \rho(x)\asymp\dist(x,S).
\end{equation}
In particular,
$\rho(x)\le1/8$ for every $x\in\R^n\setminus S$.
\end{lemma}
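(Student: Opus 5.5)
The plan is to first compare each $f_i$ with $d_i$, then control the harmonic-type sum defining $\rho$ by its largest term, and finally compare $d_{(m)}$ with $\dist(x,S)$.

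\emph{Step 1: comparing $f_i$ with $d_i$.} Since $\sin^2(\pi t)$ is $1$-periodic and even, we have $f_i(x)=\sin^2(\pi d_i(x))$ with $d_i\in[0,1/2]$. On this interval $2t\le\sin(\pi t)\le\pi t$, so
\[
 4d_i^2\le f_i\le\pi^2d_i^2 .
\]

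\emph{Step 2: reducing the sum to its largest term.} For $|I|=m$ we have $f_I\asymp\sum_{i\in I}d_i^2\asymp\max_{i\in I}d_i^2$. The minimum over $I$ of $\max_{i\in I}d_i$ is attained by taking the $m$ smallest coordinates, so $\min_I\max_{i\in I}d_i=d_{(m)}$. Moreover $x\notin S$ means that at most $m-1$ coordinates are integers, so $d_{(m)}>0$.

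Write $\Sigma=\sum_I f_I^{-1}$, so that $\rho=c_m\Sigma^{-1/2}$.

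\emph{Upper bound.} Let $I_0$ be the index set of the $m$ smallest $d_i$. Then $f_{I_0}\le m\pi^2d_{(m)}^2$, hence $\Sigma\ge f_{I_0}^{-1}\ge (m\pi^2 d_{(m)}^2)^{-1}$. This gives
\[
 \rho\le c_m\sqrt m\,\pi\, d_{(m)}=\tfrac14 d_{(m)},
\]
which is exactly why the constant $c_m=1/(4\pi\sqrt m)$ was chosen.

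\emph{Lower bound.} Each $I$ contains some $i$ with $d_i\ge d_{(m)}$, so $f_I\ge 4d_{(m)}^2$. There are $\binom nm$ sets $I$, so
\[
 \Sigma\le\binom nm\,(4d_{(m)}^2)^{-1},
 \qquad
 \rho\ge c_m\cdot 2\binom nm^{-1/2}d_{(m)} .
\]
This is the lower bound with $c_{mn}=2c_m\binom nm^{-1/2}$. The bound $\rho\le1/8$ then follows from $d_{(m)}\le1/2$.

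\emph{Step 3: comparing $d_{(m)}$ with $\dist(x,S)$.} For the upper comparison, round the $m$ coordinates achieving $d_{(1)},\dots,d_{(m)}$ to their nearest integers. The resulting point lies in $S$ at Euclidean distance at most $\sqrt m\,d_{(m)}$, so $\dist(x,S)\le\sqrt m\,d_{(m)}$.

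For the lower comparison, take any $y\in S$. It has $m$ integer coordinates indexed by some $I$. Among these, some $i\in I$ satisfies $d_i(x)\ge d_{(m)}$, and then
\[
 |x-y|\ge|x_i-y_i|\ge d_i(x)\ge d_{(m)} .
\]
Hence $\dist(x,S)\asymp d_{(m)}\asymp\rho$.

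There is no real obstacle in this argument. The only point that needs care is the combinatorial fact that every $m$-subset of coordinates contains an index $i$ with $d_i\ge d_{(m)}$, and that fact is immediate from the definition of the increasing rearrangement.
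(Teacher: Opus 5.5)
Your proposal is correct and follows essentially the same route as the paper: the same bounds $4d_i^2\le f_i\le\pi^2d_i^2$, the same sandwiching of $\sum_I f_I^{-1}$ between $1/\min_I f_I$ and $\binom nm/\min_I f_I$, and hence the identical constant $c_{mn}=2c_m\binom nm^{-1/2}$. The only cosmetic difference is in the final step, where the paper uses the exact identity $\dist(x,S)^2=\sum_{j=1}^m d_{(j)}^2$ while you prove $d_{(m)}\le\dist(x,S)\le\sqrt m\,d_{(m)}$ directly; the two amount to the same argument.
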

\begin{proof}
Fix an arbitrary $x\in\R^n\setminus S$.  For every $a\in[0,1/2]$,
\[
 2a\le\sin(\pi a)\le\pi a.
\]
Applying this with $a=d_i(x)$, for every $i=1,\ldots,n$, gives
\[
 4d_i(x)^2\le f_i(x)\le\pi^2d_i(x)^2.
\]
Consequently, for this arbitrary $x\in\R^n\setminus S$,
\[
 4d_{(m)}(x)^2
 \le\min_{\substack{I\subset\{1,\ldots,n\}\\|I|=m}}f_I(x)
 \le\pi^2m\,d_{(m)}(x)^2.
\]
Indeed, every set of $m$ coordinates includes one with distance at least
$d_{(m)}(x)$, while choosing the $m$ smallest distances proves the
upper bound.  Put $J_0=\binom nm$.  Since each $f_I(x)>0$,
\[
 \frac1{\min_{|I|=m}f_I(x)}
 \le\sum_{|I|=m}\frac1{f_I(x)}
 \le\frac{J_0}{\min_{|I|=m}f_I(x)}.
\]
Substituting in \eqref{tm:eq:rho}, and using $c_m=1/(4\pi\sqrt m)$, yields
\[
 \frac{2c_m}{\sqrt{J_0}}\,d_{(m)}(x)
 \le\rho(x)\le c_m\pi\sqrt m\,d_{(m)}(x)
 =\frac14d_{(m)}(x).
\]
Thus one may take
$c_{mn}=1/(2\pi\sqrt{m\binom nm})$ in \eqref{tm:eq:rho-comparability}.
Finally, for every $x\in\R^n\setminus S$,
\[
 d_{(m)}(x)^2
 \le\dist(x,S)^2
 =\sum_{j=1}^m d_{(j)}(x)^2
 \le m\,d_{(m)}(x)^2.
\]
This proves $\rho(x)\asymp\dist(x,S)$.
Since $d_{(m)}(x)\le1/2$, it also gives $\rho(x)\le1/8$.
\end{proof}

For $n=2$, the scale \eqref{tm:eq:rho} becomes
\[
 \rho(x)=
 \begin{cases}
 \displaystyle\frac{|\sin(\pi x_1)\sin(\pi x_2)|}{4\pi\sqrt{\sin^2(\pi x_1)+\sin^2(\pi x_2)}},&m=1,\\[6pt]
 \displaystyle\frac{\sqrt{\sin^2(\pi x_1)+\sin^2(\pi x_2)}}{4\pi\sqrt2},&m=2,
 \end{cases}
 \qquad x\notin S.
\]
Figure~\ref{tm:fig:rho-two-dimensional} shows these functions with their
continuous extensions by zero on $S$. In the displayed cell
$[-1/2,1/2]^2$, the exceptional set is the union of the coordinate axes
for $m=1$ and just the origin for $m=2$.
\begin{figure}[tbp]
\centering
\includegraphics[width=.98\linewidth]{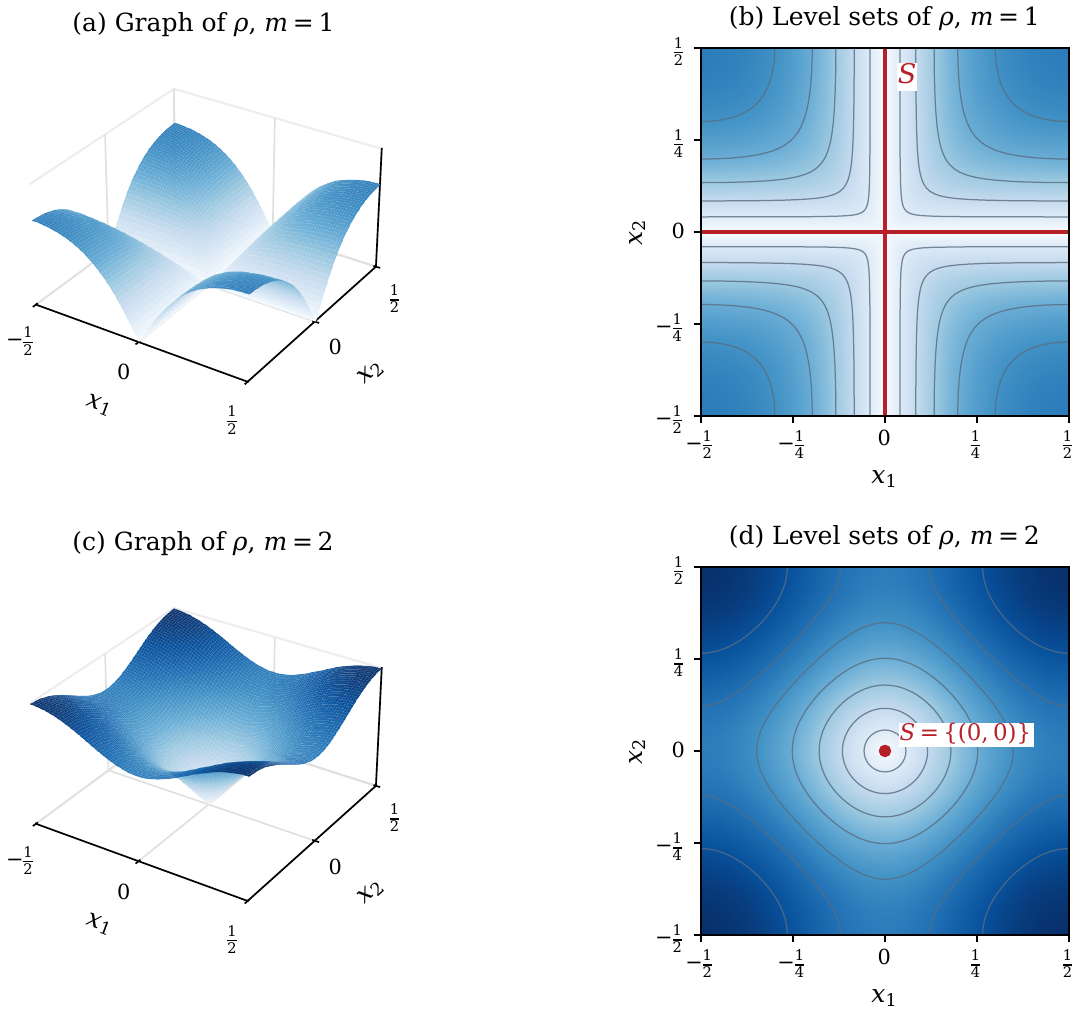}
\caption{The scale $\rho$ for $n=2$, extended by zero on $S$:
$m=1$ in the top row and $m=2$ in the bottom row. Graphs appear on the
left, level sets on the right, with the same height and color scales.
Red marks indicate the exceptional set: integer coordinate lines for
$m=1$, and integer lattice points for $m=2$, of which only the origin
is shown. In both cases $\rho$ is positive and smooth off $S$ and
comparable to the distance to $S$.}
\label{tm:fig:rho-two-dimensional}
\end{figure}

Choose an odd smooth function $\zeta:\R\to[-1,1]$ with
$\zeta(a)=1$ for $a\ge1$ and $\zeta(a)=-1$ for $a\le-1$.
For $x_i\in[k_i-1/2,k_i+1/2]$, $k_i\in\Z$, set
\begin{equation}\label{tm:eq:cubical-map}
 r_i(x)=k_i+\frac12\zeta\left(\frac{x_i-k_i}{\rho(x)}\right).
\end{equation}
For definiteness, the figures use the admissible choice
\[
 \zeta(a)=
 \begin{cases}
 -1,&a\le-1,\\
 \tanh\!\bigl(a/(1-a^2)\bigr),&|a|<1,\\
 1,&a\ge1.
 \end{cases}
\]
It is odd and $C^\infty$, with all positive-order derivatives zero
at $\pm1$: the difference from each limiting plateau decays
exponentially in the reciprocal distance to that endpoint.

Figure~\ref{tm:fig:zeta-ri} shows $\zeta$ and a concrete slice of $r_i$.
For that illustration only, set $n=3$, $m=2$, and
$x(q)=(q,\tfrac14,\tfrac14)$, $-\tfrac12\le q\le\tfrac12$.
This whole slice lies outside $S$ and has $k_1=0$.  Direct substitution
into \eqref{tm:eq:rho} gives
\[
 \rho(x(q))=\frac1{4\pi\sqrt2}
 \sqrt{\frac{\sin^2(\pi q)+\tfrac12}{\sin^2(\pi q)+\tfrac52}},
 \qquad
 r_1(x(q))=\frac12\zeta\!\left(\frac{q}{\rho(x(q))}\right).
\]
In particular, $\rho(x(0))=1/(4\pi\sqrt{10})\approx0.02516$,
so the transition is narrow on the scale of a unit cell.

\begin{figure}[tbp]
\centering
\begin{minipage}[t]{0.32\linewidth}\vspace{0pt}\centering
\begin{tikzpicture}
\begin{axis}[width=\linewidth,height=4.4cm,
 axis lines=left,grid=major,grid style={gray!18},
 tick label style={font=\scriptsize},label style={font=\small},
 tick align=outside,major tick length=2pt,
 clip=true,
 xmin=-1.5,xmax=1.5,ymin=-1.15,ymax=1.15,xtick={-1,0,1},ytick={-1,0,1},xlabel={$a$},title={$\zeta(a)$},title style={font=\small}]
\addplot[gray!55,densely dashed] coordinates {(-1,-1.15) (-1,1.15)};
\addplot[gray!55,densely dashed] coordinates {(1,-1.15) (1,1.15)};
\addplot[Sblue,line width=1.1pt,no marks] coordinates {
(-1.50000000,-1.00000000)
(-1.48800000,-1.00000000)
(-1.47600000,-1.00000000)
(-1.46400000,-1.00000000)
(-1.45200000,-1.00000000)
(-1.44000000,-1.00000000)
(-1.42800000,-1.00000000)
(-1.41600000,-1.00000000)
(-1.40400000,-1.00000000)
(-1.39200000,-1.00000000)
(-1.38000000,-1.00000000)
(-1.36800000,-1.00000000)
(-1.35600000,-1.00000000)
(-1.34400000,-1.00000000)
(-1.33200000,-1.00000000)
(-1.32000000,-1.00000000)
(-1.30800000,-1.00000000)
(-1.29600000,-1.00000000)
(-1.28400000,-1.00000000)
(-1.27200000,-1.00000000)
(-1.26000000,-1.00000000)
(-1.24800000,-1.00000000)
(-1.23600000,-1.00000000)
(-1.22400000,-1.00000000)
(-1.21200000,-1.00000000)
(-1.20000000,-1.00000000)
(-1.18800000,-1.00000000)
(-1.17600000,-1.00000000)
(-1.16400000,-1.00000000)
(-1.15200000,-1.00000000)
(-1.14000000,-1.00000000)
(-1.12800000,-1.00000000)
(-1.11600000,-1.00000000)
(-1.10400000,-1.00000000)
(-1.09200000,-1.00000000)
(-1.08000000,-1.00000000)
(-1.06800000,-1.00000000)
(-1.05600000,-1.00000000)
(-1.04400000,-1.00000000)
(-1.03200000,-1.00000000)
(-1.02000000,-1.00000000)
(-1.00800000,-1.00000000)
(-1.00000000,-1.00000000)
(-0.99600000,-1.00000000)
(-0.98400000,-1.00000000)
(-0.97200000,-1.00000000)
(-0.96000000,-1.00000000)
(-0.94800000,-0.99999999)
(-0.93600000,-0.99999945)
(-0.92400000,-0.99999351)
(-0.91200000,-0.99996083)
(-0.90000000,-0.99984632)
(-0.88800000,-0.99954988)
(-0.87600000,-0.99892867)
(-0.86400000,-0.99781121)
(-0.85200000,-0.99601671)
(-0.84000000,-0.99337360)
(-0.82800000,-0.98973389)
(-0.81600000,-0.98498209)
(-0.80400000,-0.97903929)
(-0.79200000,-0.97186322)
(-0.78000000,-0.96344568)
(-0.76800000,-0.95380823)
(-0.75600000,-0.94299709)
(-0.74400000,-0.93107782)
(-0.73200000,-0.91813021)
(-0.72000000,-0.90424365)
(-0.70800000,-0.88951309)
(-0.69600000,-0.87403571)
(-0.68400000,-0.85790814)
(-0.67200000,-0.84122443)
(-0.66000000,-0.82407441)
(-0.64800000,-0.80654263)
(-0.63600000,-0.78870763)
(-0.62400000,-0.77064150)
(-0.61200000,-0.75240980)
(-0.60000000,-0.73407152)
(-0.58800000,-0.71567930)
(-0.57600000,-0.69727970)
(-0.56400000,-0.67891356)
(-0.55200000,-0.66061636)
(-0.54000000,-0.64241867)
(-0.52800000,-0.62434654)
(-0.51600000,-0.60642192)
(-0.50400000,-0.58866306)
(-0.49200000,-0.57108489)
(-0.48000000,-0.55369936)
(-0.46800000,-0.53651579)
(-0.45600000,-0.51954115)
(-0.44400000,-0.50278036)
(-0.43200000,-0.48623651)
(-0.42000000,-0.46991114)
(-0.40800000,-0.45380438)
(-0.39600000,-0.43791517)
(-0.38400000,-0.42224142)
(-0.37200000,-0.40678015)
(-0.36000000,-0.39152761)
(-0.34800000,-0.37647941)
(-0.33600000,-0.36163059)
(-0.32400000,-0.34697576)
(-0.31200000,-0.33250911)
(-0.30000000,-0.31822452)
(-0.28800000,-0.30411561)
(-0.27600000,-0.29017579)
(-0.26400000,-0.27639828)
(-0.25200000,-0.26277618)
(-0.24000000,-0.24930248)
(-0.22800000,-0.23597011)
(-0.21600000,-0.22277192)
(-0.20400000,-0.20970075)
(-0.19200000,-0.19674940)
(-0.18000000,-0.18391068)
(-0.16800000,-0.17117739)
(-0.15600000,-0.15854235)
(-0.14400000,-0.14599840)
(-0.13200000,-0.13353838)
(-0.12000000,-0.12115518)
(-0.10800000,-0.10884170)
(-0.09600000,-0.09659088)
(-0.08400000,-0.08439568)
(-0.07200000,-0.07224909)
(-0.06000000,-0.06014410)
(-0.04800000,-0.04807376)
(-0.03600000,-0.03603111)
(-0.02400000,-0.02400922)
(-0.01200000,-0.01200115)
(0.00000000,0.00000000)
(0.01200000,0.01200115)
(0.02400000,0.02400922)
(0.03600000,0.03603111)
(0.04800000,0.04807376)
(0.06000000,0.06014410)
(0.07200000,0.07224909)
(0.08400000,0.08439568)
(0.09600000,0.09659088)
(0.10800000,0.10884170)
(0.12000000,0.12115518)
(0.13200000,0.13353838)
(0.14400000,0.14599840)
(0.15600000,0.15854235)
(0.16800000,0.17117739)
(0.18000000,0.18391068)
(0.19200000,0.19674940)
(0.20400000,0.20970075)
(0.21600000,0.22277192)
(0.22800000,0.23597011)
(0.24000000,0.24930248)
(0.25200000,0.26277618)
(0.26400000,0.27639828)
(0.27600000,0.29017579)
(0.28800000,0.30411561)
(0.30000000,0.31822452)
(0.31200000,0.33250911)
(0.32400000,0.34697576)
(0.33600000,0.36163059)
(0.34800000,0.37647941)
(0.36000000,0.39152761)
(0.37200000,0.40678015)
(0.38400000,0.42224142)
(0.39600000,0.43791517)
(0.40800000,0.45380438)
(0.42000000,0.46991114)
(0.43200000,0.48623651)
(0.44400000,0.50278036)
(0.45600000,0.51954115)
(0.46800000,0.53651579)
(0.48000000,0.55369936)
(0.49200000,0.57108489)
(0.50400000,0.58866306)
(0.51600000,0.60642192)
(0.52800000,0.62434654)
(0.54000000,0.64241867)
(0.55200000,0.66061636)
(0.56400000,0.67891356)
(0.57600000,0.69727970)
(0.58800000,0.71567930)
(0.60000000,0.73407152)
(0.61200000,0.75240980)
(0.62400000,0.77064150)
(0.63600000,0.78870763)
(0.64800000,0.80654263)
(0.66000000,0.82407441)
(0.67200000,0.84122443)
(0.68400000,0.85790814)
(0.69600000,0.87403571)
(0.70800000,0.88951309)
(0.72000000,0.90424365)
(0.73200000,0.91813021)
(0.74400000,0.93107782)
(0.75600000,0.94299709)
(0.76800000,0.95380823)
(0.78000000,0.96344568)
(0.79200000,0.97186322)
(0.80400000,0.97903929)
(0.81600000,0.98498209)
(0.82800000,0.98973389)
(0.84000000,0.99337360)
(0.85200000,0.99601671)
(0.86400000,0.99781121)
(0.87600000,0.99892867)
(0.88800000,0.99954988)
(0.90000000,0.99984632)
(0.91200000,0.99996083)
(0.92400000,0.99999351)
(0.93600000,0.99999945)
(0.94800000,0.99999999)
(0.96000000,1.00000000)
(0.97200000,1.00000000)
(0.98400000,1.00000000)
(0.99600000,1.00000000)
(1.00000000,1.00000000)
(1.00800000,1.00000000)
(1.02000000,1.00000000)
(1.03200000,1.00000000)
(1.04400000,1.00000000)
(1.05600000,1.00000000)
(1.06800000,1.00000000)
(1.08000000,1.00000000)
(1.09200000,1.00000000)
(1.10400000,1.00000000)
(1.11600000,1.00000000)
(1.12800000,1.00000000)
(1.14000000,1.00000000)
(1.15200000,1.00000000)
(1.16400000,1.00000000)
(1.17600000,1.00000000)
(1.18800000,1.00000000)
(1.20000000,1.00000000)
(1.21200000,1.00000000)
(1.22400000,1.00000000)
(1.23600000,1.00000000)
(1.24800000,1.00000000)
(1.26000000,1.00000000)
(1.27200000,1.00000000)
(1.28400000,1.00000000)
(1.29600000,1.00000000)
(1.30800000,1.00000000)
(1.32000000,1.00000000)
(1.33200000,1.00000000)
(1.34400000,1.00000000)
(1.35600000,1.00000000)
(1.36800000,1.00000000)
(1.38000000,1.00000000)
(1.39200000,1.00000000)
(1.40400000,1.00000000)
(1.41600000,1.00000000)
(1.42800000,1.00000000)
(1.44000000,1.00000000)
(1.45200000,1.00000000)
(1.46400000,1.00000000)
(1.47600000,1.00000000)
(1.48800000,1.00000000)
(1.50000000,1.00000000)
};
\end{axis}
\end{tikzpicture}
\end{minipage}
\hfill
\begin{minipage}[t]{0.32\linewidth}\vspace{0pt}\centering
\begin{tikzpicture}
\begin{axis}[width=\linewidth,height=4.4cm,
 axis lines=left,grid=major,grid style={gray!18},
 tick label style={font=\scriptsize},label style={font=\small},
 tick align=outside,major tick length=2pt,
 clip=true,
 xmin=-0.5,xmax=0.5,ymin=-0.58,ymax=0.58,xtick={-0.5,0,0.5},ytick={-0.5,0,0.5},xlabel={$q$},title={$r_1(q,\tfrac14,\tfrac14)$},title style={font=\small}]
\addplot[Steal,line width=1.1pt,no marks] coordinates {
(-0.50000000,-0.50000000)
(-0.48750000,-0.50000000)
(-0.47500000,-0.50000000)
(-0.46250000,-0.50000000)
(-0.45000000,-0.50000000)
(-0.43750000,-0.50000000)
(-0.42500000,-0.50000000)
(-0.41250000,-0.50000000)
(-0.40000000,-0.50000000)
(-0.38750000,-0.50000000)
(-0.37500000,-0.50000000)
(-0.36250000,-0.50000000)
(-0.35000000,-0.50000000)
(-0.33750000,-0.50000000)
(-0.32500000,-0.50000000)
(-0.31250000,-0.50000000)
(-0.30000000,-0.50000000)
(-0.28750000,-0.50000000)
(-0.27500000,-0.50000000)
(-0.26250000,-0.50000000)
(-0.25000000,-0.50000000)
(-0.23750000,-0.50000000)
(-0.22500000,-0.50000000)
(-0.21250000,-0.50000000)
(-0.20000000,-0.50000000)
(-0.18750000,-0.50000000)
(-0.17500000,-0.50000000)
(-0.16250000,-0.50000000)
(-0.15000000,-0.50000000)
(-0.13750000,-0.50000000)
(-0.12500000,-0.50000000)
(-0.11250000,-0.50000000)
(-0.10000000,-0.50000000)
(-0.08750000,-0.50000000)
(-0.07500000,-0.50000000)
(-0.06250000,-0.50000000)
(-0.05000000,-0.50000000)
(-0.05000000,-0.50000000)
(-0.04960000,-0.50000000)
(-0.04920000,-0.50000000)
(-0.04880000,-0.50000000)
(-0.04840000,-0.50000000)
(-0.04800000,-0.50000000)
(-0.04760000,-0.50000000)
(-0.04720000,-0.50000000)
(-0.04680000,-0.50000000)
(-0.04640000,-0.50000000)
(-0.04600000,-0.50000000)
(-0.04560000,-0.50000000)
(-0.04520000,-0.50000000)
(-0.04480000,-0.50000000)
(-0.04440000,-0.50000000)
(-0.04400000,-0.50000000)
(-0.04360000,-0.50000000)
(-0.04320000,-0.50000000)
(-0.04280000,-0.50000000)
(-0.04240000,-0.50000000)
(-0.04200000,-0.50000000)
(-0.04160000,-0.50000000)
(-0.04120000,-0.50000000)
(-0.04080000,-0.50000000)
(-0.04040000,-0.50000000)
(-0.04000000,-0.50000000)
(-0.03960000,-0.50000000)
(-0.03920000,-0.50000000)
(-0.03880000,-0.50000000)
(-0.03840000,-0.50000000)
(-0.03800000,-0.50000000)
(-0.03760000,-0.50000000)
(-0.03750000,-0.50000000)
(-0.03720000,-0.50000000)
(-0.03680000,-0.50000000)
(-0.03640000,-0.50000000)
(-0.03600000,-0.50000000)
(-0.03560000,-0.50000000)
(-0.03520000,-0.50000000)
(-0.03480000,-0.50000000)
(-0.03440000,-0.50000000)
(-0.03400000,-0.50000000)
(-0.03360000,-0.50000000)
(-0.03320000,-0.50000000)
(-0.03280000,-0.50000000)
(-0.03240000,-0.50000000)
(-0.03200000,-0.50000000)
(-0.03160000,-0.50000000)
(-0.03120000,-0.50000000)
(-0.03080000,-0.50000000)
(-0.03040000,-0.50000000)
(-0.03000000,-0.50000000)
(-0.02960000,-0.50000000)
(-0.02920000,-0.50000000)
(-0.02880000,-0.50000000)
(-0.02840000,-0.50000000)
(-0.02800000,-0.50000000)
(-0.02760000,-0.50000000)
(-0.02720000,-0.50000000)
(-0.02680000,-0.50000000)
(-0.02640000,-0.50000000)
(-0.02600000,-0.50000000)
(-0.02560000,-0.50000000)
(-0.02520000,-0.50000000)
(-0.02500000,-0.50000000)
(-0.02480000,-0.50000000)
(-0.02440000,-0.50000000)
(-0.02400000,-0.50000000)
(-0.02360000,-0.49999953)
(-0.02320000,-0.49999155)
(-0.02280000,-0.49993965)
(-0.02240000,-0.49974952)
(-0.02200000,-0.49926393)
(-0.02160000,-0.49828691)
(-0.02120000,-0.49661992)
(-0.02080000,-0.49409291)
(-0.02040000,-0.49058298)
(-0.02000000,-0.48602112)
(-0.01960000,-0.48038991)
(-0.01920000,-0.47371613)
(-0.01880000,-0.46606100)
(-0.01840000,-0.45751011)
(-0.01800000,-0.44816420)
(-0.01760000,-0.43813144)
(-0.01720000,-0.42752133)
(-0.01680000,-0.41644016)
(-0.01640000,-0.40498792)
(-0.01600000,-0.39325633)
(-0.01560000,-0.38132776)
(-0.01520000,-0.36927489)
(-0.01480000,-0.35716080)
(-0.01440000,-0.34503942)
(-0.01400000,-0.33295617)
(-0.01360000,-0.32094871)
(-0.01320000,-0.30904777)
(-0.01280000,-0.29727792)
(-0.01250000,-0.28854829)
(-0.01240000,-0.28565836)
(-0.01200000,-0.27420362)
(-0.01160000,-0.26292424)
(-0.01120000,-0.25182732)
(-0.01080000,-0.24091709)
(-0.01040000,-0.23019534)
(-0.01000000,-0.21966183)
(-0.00960000,-0.20931462)
(-0.00920000,-0.19915040)
(-0.00880000,-0.18916473)
(-0.00840000,-0.17935223)
(-0.00800000,-0.16970679)
(-0.00760000,-0.16022172)
(-0.00720000,-0.15088988)
(-0.00680000,-0.14170376)
(-0.00640000,-0.13265560)
(-0.00600000,-0.12373744)
(-0.00560000,-0.11494119)
(-0.00520000,-0.10625868)
(-0.00480000,-0.09768167)
(-0.00440000,-0.08920190)
(-0.00400000,-0.08081112)
(-0.00360000,-0.07250109)
(-0.00320000,-0.06426359)
(-0.00280000,-0.05609044)
(-0.00240000,-0.04797349)
(-0.00200000,-0.03990463)
(-0.00160000,-0.03187578)
(-0.00120000,-0.02387890)
(-0.00080000,-0.01590597)
(-0.00040000,-0.00794900)
(0.00000000,0.00000000)
(0.00040000,0.00794900)
(0.00080000,0.01590597)
(0.00120000,0.02387890)
(0.00160000,0.03187578)
(0.00200000,0.03990463)
(0.00240000,0.04797349)
(0.00280000,0.05609044)
(0.00320000,0.06426359)
(0.00360000,0.07250109)
(0.00400000,0.08081112)
(0.00440000,0.08920190)
(0.00480000,0.09768167)
(0.00520000,0.10625868)
(0.00560000,0.11494119)
(0.00600000,0.12373744)
(0.00640000,0.13265560)
(0.00680000,0.14170376)
(0.00720000,0.15088988)
(0.00760000,0.16022172)
(0.00800000,0.16970679)
(0.00840000,0.17935223)
(0.00880000,0.18916473)
(0.00920000,0.19915040)
(0.00960000,0.20931462)
(0.01000000,0.21966183)
(0.01040000,0.23019534)
(0.01080000,0.24091709)
(0.01120000,0.25182732)
(0.01160000,0.26292424)
(0.01200000,0.27420362)
(0.01240000,0.28565836)
(0.01250000,0.28854829)
(0.01280000,0.29727792)
(0.01320000,0.30904777)
(0.01360000,0.32094871)
(0.01400000,0.33295617)
(0.01440000,0.34503942)
(0.01480000,0.35716080)
(0.01520000,0.36927489)
(0.01560000,0.38132776)
(0.01600000,0.39325633)
(0.01640000,0.40498792)
(0.01680000,0.41644016)
(0.01720000,0.42752133)
(0.01760000,0.43813144)
(0.01800000,0.44816420)
(0.01840000,0.45751011)
(0.01880000,0.46606100)
(0.01920000,0.47371613)
(0.01960000,0.48038991)
(0.02000000,0.48602112)
(0.02040000,0.49058298)
(0.02080000,0.49409291)
(0.02120000,0.49661992)
(0.02160000,0.49828691)
(0.02200000,0.49926393)
(0.02240000,0.49974952)
(0.02280000,0.49993965)
(0.02320000,0.49999155)
(0.02360000,0.49999953)
(0.02400000,0.50000000)
(0.02440000,0.50000000)
(0.02480000,0.50000000)
(0.02500000,0.50000000)
(0.02520000,0.50000000)
(0.02560000,0.50000000)
(0.02600000,0.50000000)
(0.02640000,0.50000000)
(0.02680000,0.50000000)
(0.02720000,0.50000000)
(0.02760000,0.50000000)
(0.02800000,0.50000000)
(0.02840000,0.50000000)
(0.02880000,0.50000000)
(0.02920000,0.50000000)
(0.02960000,0.50000000)
(0.03000000,0.50000000)
(0.03040000,0.50000000)
(0.03080000,0.50000000)
(0.03120000,0.50000000)
(0.03160000,0.50000000)
(0.03200000,0.50000000)
(0.03240000,0.50000000)
(0.03280000,0.50000000)
(0.03320000,0.50000000)
(0.03360000,0.50000000)
(0.03400000,0.50000000)
(0.03440000,0.50000000)
(0.03480000,0.50000000)
(0.03520000,0.50000000)
(0.03560000,0.50000000)
(0.03600000,0.50000000)
(0.03640000,0.50000000)
(0.03680000,0.50000000)
(0.03720000,0.50000000)
(0.03750000,0.50000000)
(0.03760000,0.50000000)
(0.03800000,0.50000000)
(0.03840000,0.50000000)
(0.03880000,0.50000000)
(0.03920000,0.50000000)
(0.03960000,0.50000000)
(0.04000000,0.50000000)
(0.04040000,0.50000000)
(0.04080000,0.50000000)
(0.04120000,0.50000000)
(0.04160000,0.50000000)
(0.04200000,0.50000000)
(0.04240000,0.50000000)
(0.04280000,0.50000000)
(0.04320000,0.50000000)
(0.04360000,0.50000000)
(0.04400000,0.50000000)
(0.04440000,0.50000000)
(0.04480000,0.50000000)
(0.04520000,0.50000000)
(0.04560000,0.50000000)
(0.04600000,0.50000000)
(0.04640000,0.50000000)
(0.04680000,0.50000000)
(0.04720000,0.50000000)
(0.04760000,0.50000000)
(0.04800000,0.50000000)
(0.04840000,0.50000000)
(0.04880000,0.50000000)
(0.04920000,0.50000000)
(0.04960000,0.50000000)
(0.05000000,0.50000000)
(0.05000000,0.50000000)
(0.06250000,0.50000000)
(0.07500000,0.50000000)
(0.08750000,0.50000000)
(0.10000000,0.50000000)
(0.11250000,0.50000000)
(0.12500000,0.50000000)
(0.13750000,0.50000000)
(0.15000000,0.50000000)
(0.16250000,0.50000000)
(0.17500000,0.50000000)
(0.18750000,0.50000000)
(0.20000000,0.50000000)
(0.21250000,0.50000000)
(0.22500000,0.50000000)
(0.23750000,0.50000000)
(0.25000000,0.50000000)
(0.26250000,0.50000000)
(0.27500000,0.50000000)
(0.28750000,0.50000000)
(0.30000000,0.50000000)
(0.31250000,0.50000000)
(0.32500000,0.50000000)
(0.33750000,0.50000000)
(0.35000000,0.50000000)
(0.36250000,0.50000000)
(0.37500000,0.50000000)
(0.38750000,0.50000000)
(0.40000000,0.50000000)
(0.41250000,0.50000000)
(0.42500000,0.50000000)
(0.43750000,0.50000000)
(0.45000000,0.50000000)
(0.46250000,0.50000000)
(0.47500000,0.50000000)
(0.48750000,0.50000000)
(0.50000000,0.50000000)
};
\end{axis}
\end{tikzpicture}
\end{minipage}
\hfill
\begin{minipage}[t]{0.32\linewidth}\vspace{0pt}\centering
\begin{tikzpicture}
\begin{axis}[width=\linewidth,height=4.4cm,
 axis lines=left,grid=major,grid style={gray!18},
 tick label style={font=\scriptsize},label style={font=\small},
 tick align=outside,major tick length=2pt,
 clip=true,
 xmin=-0.04,xmax=0.04,ymin=-0.58,ymax=0.58,xtick={-0.04,0,0.04},xticklabels={$-0.04$,$0$,$0.04$},ytick={-0.5,0,0.5},xlabel={$q$},scaled x ticks=false,title={$r_1$: transition zoom},title style={font=\small}]
\addplot[Steal,line width=1.1pt,no marks] coordinates {
(-0.04000000,-0.50000000)
(-0.03977143,-0.50000000)
(-0.03954286,-0.50000000)
(-0.03931429,-0.50000000)
(-0.03908571,-0.50000000)
(-0.03885714,-0.50000000)
(-0.03862857,-0.50000000)
(-0.03840000,-0.50000000)
(-0.03817143,-0.50000000)
(-0.03794286,-0.50000000)
(-0.03771429,-0.50000000)
(-0.03748571,-0.50000000)
(-0.03725714,-0.50000000)
(-0.03702857,-0.50000000)
(-0.03680000,-0.50000000)
(-0.03657143,-0.50000000)
(-0.03634286,-0.50000000)
(-0.03611429,-0.50000000)
(-0.03588571,-0.50000000)
(-0.03565714,-0.50000000)
(-0.03542857,-0.50000000)
(-0.03520000,-0.50000000)
(-0.03497143,-0.50000000)
(-0.03474286,-0.50000000)
(-0.03451429,-0.50000000)
(-0.03428571,-0.50000000)
(-0.03405714,-0.50000000)
(-0.03382857,-0.50000000)
(-0.03360000,-0.50000000)
(-0.03337143,-0.50000000)
(-0.03314286,-0.50000000)
(-0.03291429,-0.50000000)
(-0.03268571,-0.50000000)
(-0.03245714,-0.50000000)
(-0.03222857,-0.50000000)
(-0.03200000,-0.50000000)
(-0.03177143,-0.50000000)
(-0.03154286,-0.50000000)
(-0.03131429,-0.50000000)
(-0.03108571,-0.50000000)
(-0.03085714,-0.50000000)
(-0.03062857,-0.50000000)
(-0.03040000,-0.50000000)
(-0.03017143,-0.50000000)
(-0.02994286,-0.50000000)
(-0.02971429,-0.50000000)
(-0.02948571,-0.50000000)
(-0.02925714,-0.50000000)
(-0.02902857,-0.50000000)
(-0.02880000,-0.50000000)
(-0.02857143,-0.50000000)
(-0.02834286,-0.50000000)
(-0.02811429,-0.50000000)
(-0.02788571,-0.50000000)
(-0.02765714,-0.50000000)
(-0.02742857,-0.50000000)
(-0.02720000,-0.50000000)
(-0.02697143,-0.50000000)
(-0.02674286,-0.50000000)
(-0.02651429,-0.50000000)
(-0.02628571,-0.50000000)
(-0.02605714,-0.50000000)
(-0.02582857,-0.50000000)
(-0.02560000,-0.50000000)
(-0.02537143,-0.50000000)
(-0.02514286,-0.50000000)
(-0.02491429,-0.50000000)
(-0.02468571,-0.50000000)
(-0.02445714,-0.50000000)
(-0.02422857,-0.50000000)
(-0.02400000,-0.50000000)
(-0.02377143,-0.49999992)
(-0.02354286,-0.49999923)
(-0.02331429,-0.49999584)
(-0.02308571,-0.49998407)
(-0.02285714,-0.49995259)
(-0.02262857,-0.49988298)
(-0.02240000,-0.49974952)
(-0.02217143,-0.49952032)
(-0.02194286,-0.49915921)
(-0.02171429,-0.49862809)
(-0.02148571,-0.49788921)
(-0.02125714,-0.49690721)
(-0.02102857,-0.49565060)
(-0.02080000,-0.49409291)
(-0.02057143,-0.49221326)
(-0.02034286,-0.48999663)
(-0.02011429,-0.48743377)
(-0.01988571,-0.48452089)
(-0.01965714,-0.48125924)
(-0.01942857,-0.47765448)
(-0.01920000,-0.47371613)
(-0.01897143,-0.46945689)
(-0.01874286,-0.46489207)
(-0.01851429,-0.46003895)
(-0.01828571,-0.45491633)
(-0.01805714,-0.44954395)
(-0.01782857,-0.44394214)
(-0.01760000,-0.43813144)
(-0.01737143,-0.43213228)
(-0.01714286,-0.42596473)
(-0.01691429,-0.41964830)
(-0.01668571,-0.41320178)
(-0.01645714,-0.40664310)
(-0.01622857,-0.39998928)
(-0.01600000,-0.39325633)
(-0.01577143,-0.38645924)
(-0.01554286,-0.37961196)
(-0.01531429,-0.37272740)
(-0.01508571,-0.36581744)
(-0.01485714,-0.35889299)
(-0.01462857,-0.35196398)
(-0.01440000,-0.34503942)
(-0.01417143,-0.33812745)
(-0.01394286,-0.33123539)
(-0.01371429,-0.32436976)
(-0.01348571,-0.31753639)
(-0.01325714,-0.31074039)
(-0.01302857,-0.30398626)
(-0.01280000,-0.29727792)
(-0.01257143,-0.29061874)
(-0.01234286,-0.28401159)
(-0.01211429,-0.27745890)
(-0.01188571,-0.27096267)
(-0.01165714,-0.26452454)
(-0.01142857,-0.25814576)
(-0.01120000,-0.25182732)
(-0.01097143,-0.24556988)
(-0.01074286,-0.23937386)
(-0.01051429,-0.23323944)
(-0.01028571,-0.22716661)
(-0.01005714,-0.22115515)
(-0.00982857,-0.21520466)
(-0.00960000,-0.20931462)
(-0.00937143,-0.20348435)
(-0.00914286,-0.19771307)
(-0.00891429,-0.19199986)
(-0.00868571,-0.18634374)
(-0.00845714,-0.18074364)
(-0.00822857,-0.17519839)
(-0.00800000,-0.16970679)
(-0.00777143,-0.16426756)
(-0.00754286,-0.15887939)
(-0.00731429,-0.15354091)
(-0.00708571,-0.14825074)
(-0.00685714,-0.14300743)
(-0.00662857,-0.13780954)
(-0.00640000,-0.13265560)
(-0.00617143,-0.12754411)
(-0.00594286,-0.12247356)
(-0.00571429,-0.11744244)
(-0.00548571,-0.11244923)
(-0.00525714,-0.10749239)
(-0.00502857,-0.10257038)
(-0.00480000,-0.09768167)
(-0.00457143,-0.09282471)
(-0.00434286,-0.08799796)
(-0.00411429,-0.08319988)
(-0.00388571,-0.07842894)
(-0.00365714,-0.07368360)
(-0.00342857,-0.06896233)
(-0.00320000,-0.06426359)
(-0.00297143,-0.05958586)
(-0.00274286,-0.05492762)
(-0.00251429,-0.05028736)
(-0.00228571,-0.04566355)
(-0.00205714,-0.04105469)
(-0.00182857,-0.03645926)
(-0.00160000,-0.03187578)
(-0.00137143,-0.02730274)
(-0.00114286,-0.02273863)
(-0.00091429,-0.01818197)
(-0.00068571,-0.01363127)
(-0.00045714,-0.00908504)
(-0.00022857,-0.00454177)
(0.00000000,0.00000000)
(0.00022857,0.00454177)
(0.00045714,0.00908504)
(0.00068571,0.01363127)
(0.00091429,0.01818197)
(0.00114286,0.02273863)
(0.00137143,0.02730274)
(0.00160000,0.03187578)
(0.00182857,0.03645926)
(0.00205714,0.04105469)
(0.00228571,0.04566355)
(0.00251429,0.05028736)
(0.00274286,0.05492762)
(0.00297143,0.05958586)
(0.00320000,0.06426359)
(0.00342857,0.06896233)
(0.00365714,0.07368360)
(0.00388571,0.07842894)
(0.00411429,0.08319988)
(0.00434286,0.08799796)
(0.00457143,0.09282471)
(0.00480000,0.09768167)
(0.00502857,0.10257038)
(0.00525714,0.10749239)
(0.00548571,0.11244923)
(0.00571429,0.11744244)
(0.00594286,0.12247356)
(0.00617143,0.12754411)
(0.00640000,0.13265560)
(0.00662857,0.13780954)
(0.00685714,0.14300743)
(0.00708571,0.14825074)
(0.00731429,0.15354091)
(0.00754286,0.15887939)
(0.00777143,0.16426756)
(0.00800000,0.16970679)
(0.00822857,0.17519839)
(0.00845714,0.18074364)
(0.00868571,0.18634374)
(0.00891429,0.19199986)
(0.00914286,0.19771307)
(0.00937143,0.20348435)
(0.00960000,0.20931462)
(0.00982857,0.21520466)
(0.01005714,0.22115515)
(0.01028571,0.22716661)
(0.01051429,0.23323944)
(0.01074286,0.23937386)
(0.01097143,0.24556988)
(0.01120000,0.25182732)
(0.01142857,0.25814576)
(0.01165714,0.26452454)
(0.01188571,0.27096267)
(0.01211429,0.27745890)
(0.01234286,0.28401159)
(0.01257143,0.29061874)
(0.01280000,0.29727792)
(0.01302857,0.30398626)
(0.01325714,0.31074039)
(0.01348571,0.31753639)
(0.01371429,0.32436976)
(0.01394286,0.33123539)
(0.01417143,0.33812745)
(0.01440000,0.34503942)
(0.01462857,0.35196398)
(0.01485714,0.35889299)
(0.01508571,0.36581744)
(0.01531429,0.37272740)
(0.01554286,0.37961196)
(0.01577143,0.38645924)
(0.01600000,0.39325633)
(0.01622857,0.39998928)
(0.01645714,0.40664310)
(0.01668571,0.41320178)
(0.01691429,0.41964830)
(0.01714286,0.42596473)
(0.01737143,0.43213228)
(0.01760000,0.43813144)
(0.01782857,0.44394214)
(0.01805714,0.44954395)
(0.01828571,0.45491633)
(0.01851429,0.46003895)
(0.01874286,0.46489207)
(0.01897143,0.46945689)
(0.01920000,0.47371613)
(0.01942857,0.47765448)
(0.01965714,0.48125924)
(0.01988571,0.48452089)
(0.02011429,0.48743377)
(0.02034286,0.48999663)
(0.02057143,0.49221326)
(0.02080000,0.49409291)
(0.02102857,0.49565060)
(0.02125714,0.49690721)
(0.02148571,0.49788921)
(0.02171429,0.49862809)
(0.02194286,0.49915921)
(0.02217143,0.49952032)
(0.02240000,0.49974952)
(0.02262857,0.49988298)
(0.02285714,0.49995259)
(0.02308571,0.49998407)
(0.02331429,0.49999584)
(0.02354286,0.49999923)
(0.02377143,0.49999992)
(0.02400000,0.50000000)
(0.02422857,0.50000000)
(0.02445714,0.50000000)
(0.02468571,0.50000000)
(0.02491429,0.50000000)
(0.02514286,0.50000000)
(0.02537143,0.50000000)
(0.02560000,0.50000000)
(0.02582857,0.50000000)
(0.02605714,0.50000000)
(0.02628571,0.50000000)
(0.02651429,0.50000000)
(0.02674286,0.50000000)
(0.02697143,0.50000000)
(0.02720000,0.50000000)
(0.02742857,0.50000000)
(0.02765714,0.50000000)
(0.02788571,0.50000000)
(0.02811429,0.50000000)
(0.02834286,0.50000000)
(0.02857143,0.50000000)
(0.02880000,0.50000000)
(0.02902857,0.50000000)
(0.02925714,0.50000000)
(0.02948571,0.50000000)
(0.02971429,0.50000000)
(0.02994286,0.50000000)
(0.03017143,0.50000000)
(0.03040000,0.50000000)
(0.03062857,0.50000000)
(0.03085714,0.50000000)
(0.03108571,0.50000000)
(0.03131429,0.50000000)
(0.03154286,0.50000000)
(0.03177143,0.50000000)
(0.03200000,0.50000000)
(0.03222857,0.50000000)
(0.03245714,0.50000000)
(0.03268571,0.50000000)
(0.03291429,0.50000000)
(0.03314286,0.50000000)
(0.03337143,0.50000000)
(0.03360000,0.50000000)
(0.03382857,0.50000000)
(0.03405714,0.50000000)
(0.03428571,0.50000000)
(0.03451429,0.50000000)
(0.03474286,0.50000000)
(0.03497143,0.50000000)
(0.03520000,0.50000000)
(0.03542857,0.50000000)
(0.03565714,0.50000000)
(0.03588571,0.50000000)
(0.03611429,0.50000000)
(0.03634286,0.50000000)
(0.03657143,0.50000000)
(0.03680000,0.50000000)
(0.03702857,0.50000000)
(0.03725714,0.50000000)
(0.03748571,0.50000000)
(0.03771429,0.50000000)
(0.03794286,0.50000000)
(0.03817143,0.50000000)
(0.03840000,0.50000000)
(0.03862857,0.50000000)
(0.03885714,0.50000000)
(0.03908571,0.50000000)
(0.03931429,0.50000000)
(0.03954286,0.50000000)
(0.03977143,0.50000000)
(0.04000000,0.50000000)
};
\end{axis}
\end{tikzpicture}
\end{minipage}
\caption{An explicit smooth cutoff $\zeta$ (left) and the actual
coordinate-map slice $q\mapsto r_1(q,\tfrac14,\tfrac14)$ for $n=3$,
$m=2$ (middle), with an enlargement of its smooth transition (right).
The scale $\rho$ is recomputed at each point of the slice using
the formula in the text.  On this slice $r_2=r_3=\tfrac12$.
The middle plot covers one full centered cell.}
\label{tm:fig:zeta-ri}
\end{figure}

\begin{lemma}\label{tm:lem:cubical-map}
The formulas \eqref{tm:eq:cubical-map} define a smooth map
$r:\R^n\setminus S\to\R^n$ such that
\[
 r(x+v)=r(x)+v,\quad\text{for }v\in\Z^n,
 \qquad r(x)\in K_{\rm cub}^{(m-1)}.
\]
No assertion that $r$ fixes the skeleton pointwise is needed or made.
\end{lemma}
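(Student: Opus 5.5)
The proof has three parts: well-definedness and smoothness of $r$ off $S$, the translation law, and the fact that $r(x)$ always has at least $n-m+1$ coordinates in $\Z+\tfrac12$, which places it in $K_{\rm cub}^{(m-1)}$. Here we use the normalized coordinates, in which cells are unit cubes centred at $\Z^n$ and the faces lie on the half-integer hyperplanes.

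\emph{Smoothness and the translation law.} The functions $f_i$ are smooth, nonnegative and $\Z^n$-periodic. On $\R^n\setminus S$ every $f_I$ is positive, so $\rho$ is smooth and positive there, and it is $\Z^n$-periodic. The formula \eqref{tm:eq:cubical-map} depends on the chosen integer $k_i$ only through the cell containing $x_i$, so there are two things to check.

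First, the formula is smooth inside a coordinate cell. On $x_i\in(k_i-\tfrac12,k_i+\tfrac12)$ it is a composition of smooth maps.

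Second, the definitions from adjacent cells agree near the interface $x_i=k_i+\tfrac12$. By \Cref{tm:lem:rho-comparable} we have $\rho\le\tfrac14 d_{(m)}\le\tfrac18$. Hence whenever $|x_i-k_i|\ge\tfrac14$ we get $|x_i-k_i|/\rho\ge2$, so $\zeta\equiv\pm1$ and $r_i$ is locally constant. Near the interface, the formula with $k_i$ gives $k_i+\tfrac12$. The formula with $k_i+1$ gives $(k_i+1)+\tfrac12\zeta(\text{negative}\le-2)=k_i+\tfrac12$. Both definitions are therefore constant, equal, and smooth on a neighbourhood of the interface.

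The translation law follows the same way. Translating by $v\in\Z^n$ shifts each $k_i$ by $v_i$ and leaves $x_i-k_i$ and $\rho$ unchanged, so $r(x+v)=r(x)+v$.

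\emph{The skeleton property.} This is the main step, and it rests on the upper bound $\rho\le\tfrac14 d_{(m)}$. Fix $x\notin S$ in the cell centred at $k$, and order the coordinates by $d_i(x)=|x_i-k_i|$. At least $n-m+1$ indices satisfy $d_i(x)\ge d_{(m)}(x)$: these are the indices in positions $m,\dots,n$ of the increasing rearrangement. For each such $i$,
\[
 \frac{|x_i-k_i|}{\rho(x)}\ge \frac{d_{(m)}(x)}{\tfrac14 d_{(m)}(x)}=4\ge1,
\]
so $\zeta=\pm1$ and $r_i(x)=k_i\pm\tfrac12$. The remaining coordinates satisfy $r_i(x)\in[k_i-\tfrac12,k_i+\tfrac12]$ because $|\zeta|\le1$.

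So $r(x)$ lies in the closed cube centred at $k$ and has at least $n-m+1$ coordinates on its boundary hyperplanes. This means $r(x)$ lies in a face of dimension at most $n-(n-m+1)=m-1$, that is, $r(x)\in K_{\rm cub}^{(m-1)}$.

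Finally, we undo the diagonal rescaling of the physical coordinates. The rescaling carries unit cubes to the small rectangular cells and the translation law to $\Gamma_D$-equivariance at the cell level. This gives the statement in the original coordinates.
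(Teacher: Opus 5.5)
Your proposal is correct and follows the paper's proof. Smoothness and the translation law come from the fact that, since $\rho\le 1/8$, both adjacent cell formulas are constant and equal to $k_i+\tfrac12$ near each face $x_i=k_i+\tfrac12$; the skeleton property comes from the at least $n-m+1$ coordinates with $|x_i-k_i|\ge d_{(m)}\ge 4\rho$, which are sent exactly to $k_i\pm\tfrac12$. Your closing paragraph on undoing the rescaling is harmless but not needed here, since the lemma is stated in normalized coordinates and the paper records the physical version separately in \eqref{tm:eq:physical-map}.
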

\begin{proof}
At a common cube face $x_i=k_i+1/2$, both adjacent formulas equal
$k_i+1/2$.  Since $\rho\le1/8$, they are constant with that value in a
full neighborhood in the $i$th coordinate.  Consequently all derivatives
match across that face.  The other coordinates depend on the periodic
smooth function $\rho$ and also agree smoothly across it.  This proves
smoothness off $S$ and the displayed translation rule.

At least $n-m+1$ coordinates satisfy
$|x_i-k_i|\ge d_{(m)}\ge4\rho$.  Each such coordinate is sent exactly to
one of the two faces $k_i\pm1/2$.  Thus at most $m-1$ coordinates of $r(x)$
remain interior coordinates, which proves the skeleton assertion.
\end{proof}

Choose the physical cubulation from Lemma~\ref{tm:lem:skeleton} with widths
$\ell_1,\ldots,\ell_n$ such that $\Gamma_D\subset L\Z^n$, where
$L=\diag(\ell_1,\ldots,\ell_n)$. Here $K_{\rm cub}^{(m-1)}$ denotes
the normalized unit-cube skeleton, so the physical skeleton of that lemma
is $L K_{\rm cub}^{(m-1)}$. Set
\[
 S_D=LS,\qquad \Retr(w)=Lr(L^{-1}w).
\]
Then
\begin{equation}\label{tm:eq:physical-map}
 \Retr(w+\gamma)=\Retr(w)+\gamma,\quad\text{for }\gamma\in\Gamma_D,
 \qquad \Retr(w)\in L K_{\rm cub}^{(m-1)}.
\end{equation}
All scale estimates below are unchanged up to fixed constants by this
fixed diagonal rescaling.

\subsection{A correction for the Zak covariance}
Composition with $\Retr$ alone does not in general preserve the Zak side
rules, because those rules depend on the frequency variable. To correct this,
we define the following linear action $P$ on $R\times N$ matrices.

Let $w=(u,\eta)$ and $y=(u',\eta')$ in $\R^{2d}$ be arbitrary. For every
$s\in\mathcal I_{\mathsf B}$ and $v\in\mathcal I_{\mathsf A}$, $P(w,y)$
acts on the value at $y$ of any matrix-valued field $X$ by
\begin{equation}\label{tm:eq:transport-P}
 \bigl(P(w,y)[X(y)]\bigr)_{s,v}
   =e^{2\pi i (u-Dv)\cdot(\eta-\eta')}X_{s,v}(y).
\end{equation}
Thus $P(w,y)$ acts on the matrix $X(y)$ by multiplication on the right
by a diagonal unitary matrix (note that for a fixed column $v$, the 
phase factor multiplier does not depend on the row index $s$);
in particular it preserves the row Gramian and matrix rank.

For $\gamma=(\mathsf A^{-1}k,\mathsf B^{-1}\ell)$, write
$\mathfrak r_\ell(s)$ and $\mathfrak c_k(v)$ for the row and column
permutations from \Cref{prop:entrywise-fine-transports}, and
\[
 n_k(v)=\mathsf A^{-1}k-Dv+D\mathfrak c_k(v).
\]
The fine Zak action is
\[
 \bigl(\Act_\gamma(w)X\bigr)_{s,v}
  =e^{2\pi i n_k(v)\cdot(\eta+\mathsf B^{-1}\mathfrak r_\ell(s))}
       X_{\mathfrak r_\ell(s),\mathfrak c_k(v)}.
\]

\begin{lemma}\label{tm:lem:transport}
For all $w,y\in\mathbb R^{2d}$ and $\gamma\in\Gamma_D$,
\begin{equation}\label{tm:eq:transport-covariance}
 P(w+\gamma,y+\gamma)\Act_\gamma(y)
   =\Act_\gamma(w)P(w,y).
\end{equation}
Consequently, using $B$ from Lemma~\ref{tm:lem:skeleton}, the field
\begin{equation}\label{tm:eq:M-flat}
 M_{\rm flat}(w)=P(w,\Retr(w))B(\Retr(w)),\qquad w\notin S_D,
\end{equation}
extended by zero on $S_D$, is bounded, measurable, Zak-compatible, smooth
off $S_D$, and satisfies $M_{\rm flat}M_{\rm flat}^*=RI_R$ almost everywhere.
\end{lemma}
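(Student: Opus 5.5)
The proof has two parts. First verify the intertwining identity \eqref{tm:eq:transport-covariance} entrywise. Then derive the listed properties of $M_{\rm flat}$ from it, together with \eqref{tm:eq:physical-map} and Lemma~\ref{tm:lem:skeleton}.

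\emph{Intertwining identity.} Write $\gamma=(\gamma_u,\gamma_\eta)=(\mathsf A^{-1}k,\mathsf B^{-1}\ell)$ and let $X\in\C^{R\times N}$ be arbitrary. Evaluate the $(s,v)$ entry of both sides.

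On the right, $P(w,y)$ multiplies column $v'$ by $e^{2\pi i(u-Dv')\cdot(\eta-\eta')}$. Applying $\Act_\gamma(w)$ then selects the entry $(\mathfrak r_\ell(s),\mathfrak c_k(v))$. The total phase is
\[
 e^{2\pi i n_k(v)\cdot(\eta+\mathsf B^{-1}\mathfrak r_\ell(s))}\,
 e^{2\pi i(u-D\mathfrak c_k(v))\cdot(\eta-\eta')}.
\]
On the left, $\Act_\gamma(y)$ selects the same entry with phase $e^{2\pi i n_k(v)\cdot(\eta'+\mathsf B^{-1}\mathfrak r_\ell(s))}$. Then $P(w+\gamma,y+\gamma)$ contributes $e^{2\pi i(u+\gamma_u-Dv)\cdot(\eta-\eta')}$, because the frequency difference is unchanged by the common shift.

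Compare exponents modulo integers. The terms $n_k(v)\cdot\mathsf B^{-1}\mathfrak r_\ell(s)$ cancel. What remains is
\[
 n_k(v)\cdot(\eta-\eta')
 \quad\text{versus}\quad
 (\gamma_u-Dv+D\mathfrak c_k(v))\cdot(\eta-\eta').
\]
These agree exactly by the definition $n_k(v)=\mathsf A^{-1}k-Dv+D\mathfrak c_k(v)$. This is where the column-dependent factor $u-Dv$ in $P$ is essential. Note also that $\eta-\eta'$ is arbitrary and real, so the identity must hold exactly rather than modulo integers. It does, because the two linear forms coincide identically.

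\emph{Compatibility of $M_{\rm flat}$.} For $w\notin S_D$, the set $S_D$ is $\Gamma_D$-invariant, since $\Gamma_D\subset L\Z^n$ and $S$ is $\Z^n$-invariant. By \eqref{tm:eq:physical-map}, $\Retr(w+\gamma)=\Retr(w)+\gamma$. Compatibility of $B$ gives $B(\Retr(w)+\gamma)=\Act_\gamma(\Retr(w))B(\Retr(w))$. Applying the intertwining identity with $y=\Retr(w)$ yields
\[
 M_{\rm flat}(w+\gamma)=\Act_\gamma(w)M_{\rm flat}(w).
\]
On $S_D$ the zero field is trivially compatible, since $\Act_\gamma$ is linear.

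\emph{Regularity and Gramian.} $\Retr$ is smooth off $S_D$ (Lemma~\ref{tm:lem:cubical-map}) and takes values in $LK^{(m-1)}_{\rm cub}$, where $B$ is smooth. The phase in $P$ is smooth in $(w,y)$. Hence $M_{\rm flat}$ is smooth off $S_D$.

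Because $P$ acts by right multiplication with a diagonal unitary, $M_{\rm flat}M_{\rm flat}^*=BB^*\circ\Retr=RI_R$ off $S_D$, by \eqref{tm:eq:skeleton-parseval}. The set $S_D$ is null, a finite union of flat sets of positive codimension in each cell, so the identity holds almost everywhere.

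Boundedness follows from $\|M_{\rm flat}\|_{\rm F}^2=\operatorname{tr}(RI_R)=R^2$ off $S_D$. Measurability follows because $S_D$ is closed and the field is continuous on its complement.

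\emph{Main obstacle.} The one subtle point is that $B$ is defined only on a neighborhood of the skeleton, while Lemma~\ref{tm:lem:cubical-map} places $\Retr(w)$ exactly in $LK^{(m-1)}_{\rm cub}$. The composition is therefore well defined and smooth, since $\Retr$ maps into the open domain of $B$. Beyond that, the main work is the careful phase bookkeeping in the entrywise comparison.
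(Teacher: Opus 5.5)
Your proposal is correct and follows the paper's proof. You compare the two sides entrywise, and the exponents differ by $\bigl(\gamma_u-Dv+D\mathfrak c_k(v)-n_k(v)\bigr)\cdot(\eta-\eta')=0$. You then combine this identity with $\Retr(w+\gamma)=\Retr(w)+\gamma$ and the compatibility of $B$, and get the Gramian from the right unitarity of $P$. Your added details on $\Gamma_D$-invariance of $S_D$, boundedness via $\|M_{\rm flat}\|_{\rm F}^2=R^2$, and measurability fill in points the paper leaves implicit.
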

\begin{proof}
Let $a=\mathsf A^{-1}k$ and put $v'=\mathfrak c_k(v)$.
The difference between the two exponents in
\eqref{tm:eq:transport-covariance}, after cancellation of the common row
phase, is
\[
 \bigl(a-Dv+Dv'-n_k(v)\bigr)\cdot(\eta-\eta')=0.
\]
This verifies the identity entry by entry.  Combine it with
\eqref{tm:eq:physical-map} and the compatibility of $B$ to prove compatibility
of \eqref{tm:eq:M-flat}.  The zero extension is compatible because $S_D$ is
invariant and all transports are linear.  The row Gramian follows from
\eqref{tm:eq:skeleton-parseval} and right unitarity of $P$.
\end{proof}

For completeness, scalar reassembly itself can be checked directly.
Let $F=M_{0,0}$.  Compatibility under $(p,q)\in\Z^{2d}$ gives the
ordinary scalar Zak side rules, so the inverse unitary Zak transform
produces $g\in L^2(\R^d)$.  For
$\gamma=(-Dv,\mathsf B^{-1}s)$ the action on entry $(0,0)$ has
$\mathfrak c_{-\mathsf Bv}(0)=v$,
$\mathfrak r_s(0)=s$, and $n_{-\mathsf Bv}(0)=0$.
Thus
\[
 F(u-Dv,\eta+\mathsf B^{-1}s)=M_{s,v}(w),
\]
which proves $\mathcal M_F=M$.
Proposition~\ref{prop:zz-criterion-full} identifies the resulting
$g_{\rm flat}$ as a Parseval window.

For the fundamental fine rectangle $Q_D$ of \Cref{sec:zak-method}, the translates
$Q_D+(-Dv,\mathsf B^{-1}s)$ tile the ordinary Zak torus up to null
boundaries: multiplication by $b_i$ permutes the residues modulo $a_i$.
Consequently, using the unnormalized Euclidean measure,
\[
 \|g_{\rm flat}\|_2^2
 =\int_{[0,1]^{2d}}|F|^2
 =\int_{Q_D}\sum_{s,v}|M_{s,v}(w)|^2\dd w
 =R^2|Q_D|=\frac RN.
\]

\section{Anisotropic Fourier estimates for the flat window}
\label{sec:elementary-perturbation}
\subsection{The anisotropic derivative estimate}
We now use the product structure of the exceptional set to estimate
Fourier coefficients. Recall from \Cref{sec:terminal-construction} that
$n=2d$, that $d_i(x)=\dist(x_i,\Z)$ in normalized cubical coordinates,
and that $S$ consists of the points with at least $m$ integer coordinates.
The scale $\rho$ defined in \eqref{tm:eq:rho} is smooth off $S$ and
comparable to the distance from $S$. Put
\[
 \lambda_i(x)=\max\{\rho(x),d_i(x)\}.
\]
These numbers are used only as pointwise scales, not as differentiable
functions. They record the possibly different distances to the individual
coordinate hyperplanes, truncated below at $\rho(x)$.

\begin{lemma}\label{tm:lem:anisotropic}
For every multi-index $\beta$,
\begin{equation}\label{tm:eq:rho-symbol}
 |\partial^\beta\rho(x)|
   \le C_\beta\rho(x)\prod_{i=1}^n \lambda_i(x)^{-\beta_i}.
\end{equation}
For every nonzero multi-index $\beta$, on any fixed bounded collection of
cells,
\begin{equation}\label{tm:eq:r-symbol}
 |\partial^\beta r_j(x)|
    \le C_\beta\prod_{i=1}^n \lambda_i(x)^{-\beta_i}.
\end{equation}
Every compactly localized scalar entry of $M_{\rm flat}$, with any fixed
smooth multiplier, satisfies the same estimate as \eqref{tm:eq:r-symbol},
including the boundedness assertion when $\beta=0$.
\end{lemma}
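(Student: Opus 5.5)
The plan is to prove \eqref{tm:eq:rho-symbol} first, working locally in one cell with $k=0$, and then push it through the composition defining $r_j$ and $M_{\rm flat}$.

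\emph{Symbol estimate for $\rho$.} Write $\rho=c_m\Phi^{-1/2}$ with $\Phi=\sum_{|I|=m}f_I^{-1}$. Each $f_i=\sin^2(\pi x_i)$ depends on one variable and satisfies $|\partial_i^kf_i|\le C_k d_i^{2-k}$ for $k\le2$ and $\le C_k$ beyond; since $f_i\asymp d_i^2$, this gives $|\partial_i^k f_i|\le C_k f_i\,d_i^{-k}$ for all $k$. Set $f_I=\sum_{i\in I}f_i$. The key observation is that $f_I\ge f_i\gtrsim d_i^2$ for $i\in I$. Hence $|\partial^\beta f_I|\le C f_I\prod_i d_i^{-\beta_i}$, and the same bound holds for $f_I^{-1}$ by the Faà di Bruno formula. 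The derivative $\partial_i$ only sees $f_i$, and $f_i\le f_I$.

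To get $\lambda_i$ instead of $d_i$, I would refine this with the bound $|\partial_i^k f_i|\le C f_I^{1/2}\cdot f_I^{(2-k)/2}$ for $k\ge1$, using $f_i\le f_I$ and $|\partial_i f_i|\lesssim d_i\le f_I^{1/2}$. This gives
\[
|\partial^\beta f_I^{-1}|\le C f_I^{-1}\prod_i\min\{d_i,f_I^{1/2}\}^{-\beta_i}.
\]
Summing over $I$ and weighting, note that each term satisfies $f_I^{-1}\le\Phi$ and $f_I\ge\min_J f_J\asymp\rho^2$, so $\min\{d_i,f_I^{1/2}\}\gtrsim\min\{d_i,\rho\}$. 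I expect this to be the main obstacle: the crude outcome is $\min\{d_i,\rho\}^{-1}$, not $\lambda_i^{-1}=\max\{d_i,\rho\}^{-1}$, so it is too weak. To reach $\max$, I would separate two cases. If $d_i\le\rho$, then $\lambda_i\asymp\rho$ and one needs $|\partial_i|\lesssim\rho^{-1}$. This follows from the dominant terms: the terms $f_I$ with $f_I\asymp\rho^2$ dominate $\Phi$ up to constants, and for those $\partial_i f_I^{-1}\sim d_i f_I^{-2}\lesssim\rho\cdot\rho^{-4}\cdot\rho^2$ relative to $\Phi$. More carefully, write $\partial_i\Phi=-\sum_I f_I^{-2}\partial_i f_I$ with $|\partial_i f_I|\lesssim d_i f_I^{1/2}\cdot f_I^{0}$. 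If $d_i\ge\rho$, one needs $\lesssim d_i^{-1}$: for $I\ni i$ use $|\partial_i f_I|\lesssim d_i\le f_i/d_i\le f_I/d_i$, and for $I\not\ni i$ the derivative vanishes. Then $f_I^{-2}|\partial_i f_I|\lesssim f_I^{-1}d_i^{-1}$, and summing gives $\Phi\, d_i^{-1}$. In the first case use $|\partial_i f_I|\lesssim d_i\le\rho\lesssim f_I^{1/2}$, which gives $f_I^{-3/2}\le f_I^{-1}\rho^{-1}$. Higher derivatives follow the same dichotomy coordinatewise: each $\partial_i^k f_i$ is bounded by $f_I\lambda_i^{-k}$ in both cases. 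I would record the clean lemma $|\partial^\beta f_I^{-1}|\le Cf_I^{-1}\prod\lambda_i^{-\beta_i}$, sum to get $\Phi$, and then apply the chain rule to $\Phi^{-1/2}$.

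\emph{The map $r$.} In the expression $r_j=\frac12\zeta((x_j-k_j)/\rho)$, derivatives of $\zeta$ are supported where $|x_j-k_j|\le\rho$, hence where $d_j\le\rho$ and $\lambda_j\asymp\rho$. The argument $a=x_j/\rho$ satisfies $|\partial^\beta a|\lesssim\prod\lambda_i^{-\beta_i}$ there. This uses \eqref{tm:eq:rho-symbol} for $\rho^{-1}$ together with $|x_j|\le\rho\asymp\lambda_j$, and the $\partial_j$ falling on $x_j$ costs $\rho^{-1}\asymp\lambda_j^{-1}$. Faà di Bruno then gives \eqref{tm:eq:r-symbol}.

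\emph{The entries of $M_{\rm flat}$.} Here $M_{\rm flat}=P(w,\Retr(w))B(\Retr(w))$. The factor $B$ is smooth with bounded derivatives on the skeleton neighborhood, $P$ is a smooth phase in $(w,\Retr(w))$ on bounded sets, and smooth multipliers are harmless. By the chain rule, each derivative $\partial^\beta$ of a composition $G(w,\Retr(w))$ is a sum of products of $\partial^{\gamma}\Retr$ over partitions of $\beta$, times bounded factors. Each such product is bounded by $\prod\lambda_i^{-\beta_i}$, using \eqref{tm:eq:r-symbol}, the fact that undifferentiated coordinates are bounded, and $\lambda_i\le1$ to absorb any trivial factors. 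Boundedness at $\beta=0$ holds since $B$ and $P$ are bounded.
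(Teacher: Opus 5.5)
Your proposal is correct and follows the paper's own route. For $i\in I$ one has $f_I\gtrsim\rho^2+d_i^2\gtrsim\lambda_i^2$, hence $|\partial_i^kf_I|\le C_kf_I\lambda_i^{-k}$ (mixed derivatives vanish). This bound passes through $f_I^{-1}$, the sum $\sum_I f_I^{-1}$, and its inverse square root. After that, the support of $\zeta'$ (where $\lambda_j=\rho$) and the chain rule for $P(w,\Retr(w))B(\Retr(w))$ finish the argument exactly as in the paper.

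The detour through $\min\{d_i,f_I^{1/2}\}$ and the case split $d_i\le\rho$ versus $d_i\ge\rho$ are unnecessary, since the two lower bounds $f_I\gtrsim\rho^2$ and $f_I\ge4d_i^2$ hold simultaneously at every point. A couple of the intermediate formulas there are also misstated, e.g.\ $|\partial_if_I|\lesssim d_if_I^{1/2}$.
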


\begin{proof}
If $i\in I$, then
\[
 f_I(x)\ge c\bigl(\rho(x)^2+d_i(x)^2\bigr)\ge c'\lambda_i(x)^2.
\]
Here $f_I\ge c\rho^2$ follows directly from \eqref{tm:eq:rho}, and
$f_I\ge4d_i^2$ follows from $i\in I$.
For every integer $j\ge1$,
\[
 |\partial_i^j f_i(x)|\le C_j \lambda_i(x)^{2-j}.
\]
For $j=1$, periodicity gives
$|\partial_i f_i(x)|=\pi|\sin(2\pi x_i)|
\le2\pi^2d_i(x)\le2\pi^2\lambda_i(x)$.
For $j\ge2$, all derivatives of $\sin^2$ are bounded and
$\lambda_i(x)^{2-j}\ge1$ because $\lambda_i\le1/2$.

Mixed derivatives
of $f_I$ involving different coordinates vanish.  Repeated differentiation
of a reciprocal now gives
\[
 |\partial^\beta(f_I^{-1})|
   \le C_\beta f_I^{-1}\prod_i \lambda_i^{-\beta_i}.
\]
For example, every differentiated term is a reciprocal power of $f_I$
times derivatives of $f_I$; the preceding estimate on each derivative
cancels its corresponding extra reciprocal power.  Summing over $I$
preserves this estimate relative to the positive sum
$\sum_I f_I^{-1}$.  Differentiating its inverse square root proves
\eqref{tm:eq:rho-symbol}.

In any term obtained by differentiating \eqref{tm:eq:cubical-map}, a positive
order derivative of $\zeta$ vanishes unless $|x_j-k_j|\le\rho$.
At such a point $\lambda_j=\rho$.  The product rule for
$(x_j-k_j)\rho^{-1}$ and \eqref{tm:eq:rho-symbol} therefore give, for a
nonzero $\beta$,
\[
 \left|\partial^\beta
    \left(\frac{x_j-k_j}{\rho}\right)\right|
 \le C_\beta\prod_i \lambda_i^{-\beta_i}.
\]
The chain rule proves \eqref{tm:eq:r-symbol}.  At cube seams the coordinate
formula is constant in the saturated coordinate, as already checked.

Finally, the values of $r$ lie in the protected skeleton, where $B$ is
smooth with bounded derivatives in finitely many representatives.  Every
term in the repeated chain rule for $B\circ\Retr$ is a bounded derivative
of $B$ multiplied by derivatives of the coordinate functions of $\Retr$.
The multi-indices in that product add to $\beta$, giving exactly the
product of scales in \eqref{tm:eq:r-symbol}.  The exponential factor
\eqref{tm:eq:transport-P} is a smooth function of $w$ and $\Retr(w)$ on the
bounded representatives.  Its derivatives have the same bounds.  Applying
the product rule, also to any fixed smooth multiplier, proves the last
assertion.
\end{proof}

A weaker immediate consequence, useful later, is
\begin{equation}\label{tm:eq:isotropic-derivatives}
 |\partial^\beta H(w)|\le C_\beta\dist(w,S_D)^{-|\beta|}
\end{equation}
for each localized entry or cross-Zak function $H$.

\subsection{Fourier estimates for the flat construction}
The modulation-space norm is tested by finitely many cross-Zak functions,
as in \Cref{prop:cross-Zak-characterization}. Each is the product of a
scalar Zak entry with a fixed smooth factor. A finite smooth partition of
unity lets us treat one compactly supported lift to $\R^n$ at a time.
For this paragraph, denote this lift by $H_{\rm phys}(w)$: its argument
$w=(u,\eta)$ is the original phase-space coordinate.

Recall the change of coordinates at \eqref{tm:eq:physical-map}.
The physical cubulation has fixed widths $\ell_1,\ldots,\ell_n$,
$L=\diag(\ell_1,\ldots,\ell_n)$, and writing $x=L^{-1}w$ transforms it to
the unit cubulation. Thus its exceptional set is $S_D=LS$, whereas in the
$x$ coordinates the exceptional set is $S$ and the scale is $\rho(x)$.
The same scale written in physical coordinates is $\rho(L^{-1}w)$.
We use the normalized function $H(x)=H_{\rm phys}(Lx)$ for the product
decompositions below. Change of variables gives
\[
 \widehat{H_{\rm phys}}(k)
 =\int_{\R^n}H_{\rm phys}(w)e^{-2\pi i k\cdot w}\,\dd w
 =|\det L|\widehat H(L^Tk),\qquad k\in\Z^n.
\]
The left side is also the Fourier coefficient of the physical
periodization. Every pointwise Fourier estimate below for a normalized
piece is valid at all real frequencies, so it can be evaluated at $L^Tk$. Since $L$ is
fixed and diagonal, the coordinate sums and polynomial weights change
only by fixed constants. We therefore suppress the subscript
``phys'' from now on: Fourier integrals use normalized coordinates,
while Fourier--Lebesgue norms refer to the corresponding physical
periodizations. In particular, Plancherel is used on the physical torus.

We first split $H$ according to its distance from $S$. Use a smooth dyadic
partition of the positive variable $\rho$, and write
\[
 H=H_{\rm out}+\sum_{\ell\ge\ell_0}H_\ell,\qquad
 H_\ell=\eta(2^\ell\rho)H,
\]
where $\eta\in C_c^\infty((1/2,2))$, and $H_{\rm out}$ vanishes on a
neighborhood of $S$.  Thus $H_{\rm out}$ is smooth.  Each $H_\ell$ is
smooth after zero extension.  On its support $\rho\asymp 2^{-\ell}$.

The tube estimate \eqref{tm:eq:flat-tube-volume}, the comparison
\eqref{tm:eq:rho-comparability}, and Lemma~\ref{tm:lem:anisotropic} give
\begin{equation}\label{tm:eq:flat-shell-basic}
 |\supp H_\ell|\le C2^{-\ell m},\qquad
 \norm{\partial^\beta H_\ell}_{\infty}\le C_\beta2^{\ell|\beta|}.
\end{equation}

\begin{lemma}\label{tm:lem:anisotropic-box-count}
For each $\ell\ge\ell_0$, there are smooth pieces
$f_{\ell,1},\ldots,f_{\ell,J_\ell}$ such that
\[
 H_\ell=\sum_{q=1}^{J_\ell}f_{\ell,q},\qquad
 J_\ell\le C(1+\ell)^{n-m}.
\]
The piece $f_{\ell,q}$ is supported in a rectangle $Q_{f_{\ell,q}}$
with side lengths $s_i(f_{\ell,q})\in[c2^{-\ell},C]$, and
\begin{equation}\label{tm:eq:scaled-box-derivatives}
 \norm{\partial^\beta f_{\ell,q}}_{\infty}
       \le C_\beta\prod_{i=1}^n s_i(f_{\ell,q})^{-\beta_i},
 \qquad 1\le q\le J_\ell.
\end{equation}
The constants are independent of $\ell$ and $q$.
\end{lemma}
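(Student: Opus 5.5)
The plan is to decompose $H_\ell$ by a partition of unity in each coordinate that is adapted to the distances $d_i$, truncated below at scale $2^{-\ell}$. Fix a smooth dyadic partition of unity on $[0,1/2]$ in the variable $d_i$:
\[
 1=\sum_{0\le k\le \ell}\psi_{k}(d_i),
\]
where, for $k<\ell$, $\psi_k$ is supported where $d_i\asymp 2^{-k}$, and $\psi_\ell$ collects all $d_i\lesssim 2^{-\ell}$. Since $\psi_k$ is constant near $d_i=1/2$ and near $d_i=0$ for $k<\ell$, each $\psi_k(d_i(x))$ is a smooth function of $x_i$ alone. It obeys $|\partial_i^j\psi_k(d_i)|\le C_j2^{kj}$, and the constants are uniform in $k,\ell$ because of the dyadic rescaling. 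Take the tensor product over all $n$ coordinates, indexed by $\mathbf k=(k_1,\dots,k_n)\in\{0,\dots,\ell\}^n$, and define $f_{\ell,\mathbf k}=H_\ell\prod_i\psi_{k_i}(d_i)$. On the support of this piece, $d_i\lesssim 2^{-k_i}$. After splitting further by the $O(1)$ choices of sign of $x_i-k_i$ and the finitely many cells in the fixed bounded collection, the piece lies in a rectangle with sides $s_i\asymp 2^{-k_i}\in[c2^{-\ell},C]$. One further split into boundedly many pieces makes it a genuine rectangle rather than a union of two slabs.

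Next I reduce the count. On $\supp H_\ell$ we have $\rho\asymp2^{-\ell}$, and by Lemma~\ref{tm:lem:rho-comparable} also $d_{(m)}\asymp 2^{-\ell}$. So at least $m$ coordinates satisfy $d_i\lesssim 2^{-\ell}$, and the $m$th smallest distance is not much smaller than $2^{-\ell}$. Hence a piece $f_{\ell,\mathbf k}$ vanishes unless at least $m$ indices satisfy $k_i\ge \ell-C$. Grouping all indices in $[\ell-C,\ell]$ into $O(1)$ values, the number of nonzero pieces is at most $\binom nm C^m(1+\ell)^{n-m}$, which gives $J_\ell\le C(1+\ell)^{n-m}$.

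It remains to prove the derivative bound \eqref{tm:eq:scaled-box-derivatives}. Apply the Leibniz rule to $H\cdot\eta(2^\ell\rho)\cdot\prod_i\psi_{k_i}(d_i)$.
\begin{itemize}
\item For the cutoffs $\psi_{k_i}$: each $\partial_i$ costs at most $2^{k_i}\asymp s_i^{-1}$, and there are no mixed derivatives.
\item For $H$: Lemma~\ref{tm:lem:anisotropic} gives $|\partial^\beta H|\le C\prod_i\lambda_i^{-\beta_i}$. On the support, $\lambda_i=\max(\rho,d_i)\gtrsim \max(2^{-\ell},2^{-k_i})\asymp s_i$, because $d_i\asymp 2^{-k_i}$ for $k_i<\ell$ and $\rho\asymp2^{-\ell}$. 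Hence $\lambda_i^{-1}\lesssim s_i^{-1}$.
\item For $\eta(2^\ell\rho)$: by the chain rule and \eqref{tm:eq:rho-symbol}, each term is a product of factors $2^\ell\partial^{\beta^{(j)}}\rho$, and each such factor is bounded by $C\prod_i\lambda_i^{-\beta^{(j)}_i}$ since $2^\ell\rho\asymp1$. The product is therefore again bounded by $\prod_i s_i^{-\beta_i}$.
\end{itemize}
Multiplying these bounds gives \eqref{tm:eq:scaled-box-derivatives} with constants independent of $\ell$ and $\mathbf k$.

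The step I expect to need the most care is the identification $\lambda_i\gtrsim s_i$ together with the support-rectangle claim in the lowest shell $k_i=\ell$. There the side $s_i\asymp2^{-\ell}$ relies on $\rho\asymp2^{-\ell}$ and not on $d_i$ itself, so I must make sure the pieces whose $\psi_\ell$ factor covers $d_i\approx0$ still have sides of order $2^{-\ell}$ rather than something smaller. This is exactly where truncating $\lambda_i$ below at $\rho$ is used. A secondary point is checking that seams where $d_i=1/2$ cause no loss; this holds because $\psi_0$ is constant there.
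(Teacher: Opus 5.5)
Your proposal is correct and follows essentially the same route as the paper. Both use a coordinatewise dyadic partition in $d_i$ with a central cutoff at scale $2^{-\ell}$, and count the pieces using the fact that at least $m$ coordinates satisfy $d_i\lesssim 2^{-\ell}$ on $\supp H_\ell$. The derivative bounds then come from Lemma~\ref{tm:lem:anisotropic} together with $\lambda_i\gtrsim s_i$, where $\lambda_i\ge\rho\asymp 2^{-\ell}$ on the central factors. The differences are cosmetic: the paper enlarges the central interval to $d_i\le C_0 2^{-\ell}$ instead of grouping the $O(1)$ shells with $k_i\ge\ell-C$. Also, your final splitting should be into the separated connected components of each coordinate support, not by a sharp sign relative to the nearest integer, since that would not be smooth at half-integers for the outermost shell.
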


\begin{proof}
Fixed finite smooth localizations handle the seams and the finitely many
centers in each coordinate.  In each resulting product chart, partition
a coordinate near a center into one central interval
$d_i\le C_0 2^{-\ell}$ and dyadic shells $d_i\asymp2^{-k}\gtrsim2^{-\ell}$.
All cutoffs are nonnegative.  The central cutoff equals one on
$d_i\le C_0 2^{-\ell}$ and
vanishes for $d_i\ge2C_0 2^{-\ell}$; the outer cutoffs sum with it to one.
They can be obtained by taking differences of smooth cutoffs of the
squared distance to the center at consecutive dyadic scales.
Coordinates in a chart away from all centers need only a fixed-size cutoff.
We take $\ell_0$ sufficiently large that the central intervals lie inside
the near-center charts and miss every chart separated from the centers;
the finitely many omitted smooth shells are absorbed into $H_{\rm out}$.

Split each outer shell into its two signed intervals; this changes
the count by at most a fixed factor $2^n$. For a fixed $\ell$, enumerate
all nonzero choices of a product-chart cutoff and one coordinate cutoff
in each direction by $q=1,\ldots,J_\ell$. Write $\chi_q$ for the chosen
chart cutoff and $\theta_{\ell,q,i}$ for the chosen cutoff in coordinate
$i$. The pieces are precisely
\begin{equation}\label{tm:eq:flat-piece-definition}
 \begin{gathered}
 f_{\ell,q}(x)=\chi_q(x)\eta(2^\ell\rho(x))H(x)
              \prod_{i=1}^n\theta_{\ell,q,i}(x_i),
              \qquad 1\le q\le J_\ell,\\
 Q_{f_{\ell,q}}=\prod_{i=1}^n I_i(f_{\ell,q}),\qquad
 s_i(f_{\ell,q})=|I_i(f_{\ell,q})|.
 \end{gathered}
\end{equation}
Here $I_i(f_{\ell,q})$ encloses the corresponding coordinate support
and has length comparable to that cutoff's scale. Thus $s_i(f_{\ell,q})$
is the $i$th side length of the supporting rectangle. The coordinate
cutoffs form partitions of unity within each chart, and the fixed chart
cutoffs also sum to one. Therefore these finitely many products sum
exactly to $H_\ell$.

By \eqref{tm:eq:rho-comparability}, on $\supp H_\ell$ at least $m$ of the
numbers $d_i$ are at most $C_0 2^{-\ell}$, after fixing $C_0$ sufficiently
large.  Their central cutoffs equal one, so every nonzero product has at
least $m$ central factors.  At most $n-m$ coordinates therefore have freely
chosen dyadic shell indices.  There are $O(1+\ell)$ choices for each index
and finitely many choices of these coordinates.  This proves the count.

For a fixed piece $f_{\ell,q}$, a central factor has
$s_i(f_{\ell,q})\asymp2^{-\ell}$ and
$\lambda_i\asymp2^{-\ell}$; on an outer factor
$\lambda_i\asymp s_i(f_{\ell,q})$. Use
Lemma~\ref{tm:lem:anisotropic}, the product rule, and the repeated chain
rule for $\eta(2^\ell\rho)$. A term with $j$ derivatives on $\eta$
contains $2^{\ell j}$ and $j$ differentiated factors of $\rho$.
Their factors $\rho\asymp2^{-\ell}$ cancel $2^{\ell j}$. The coordinate
cutoffs have derivatives bounded by the corresponding inverse powers
of $s_i(f_{\ell,q})$, so the product rule gives
\eqref{tm:eq:scaled-box-derivatives}.
\end{proof}

The same product partition gives the weighted sum of rectangle volumes
used in Section~\ref{sec:lower-polar}.

\begin{lemma}\label{rs:lem:shell-rectangles}
The pieces $f_{\ell,q}$ and supporting rectangles $Q_{f_{\ell,q}}$
from \eqref{tm:eq:flat-piece-definition} can be chosen so that
\begin{equation}\label{rs:eq:weighted-count}
 \sum_{q=1}^{J_\ell}
       \left(\prod_{i=1}^n s_i(f_{\ell,q})\right)^{1/2}
 \le C2^{-\ell m/2}.
\end{equation}
For every $q=1,\ldots,J_\ell$ and every multi-index $\gamma$,
\begin{equation}\label{rs:eq:scaled-rho-derivatives}
 |\partial^\gamma(2^\ell\rho)(x)|
 \le C_\gamma\prod_i s_i(f_{\ell,q})^{-\gamma_i},
 \qquad x\in\supp f_{\ell,q}.
\end{equation}
The constants are independent of $\ell$ and $q$.
\end{lemma}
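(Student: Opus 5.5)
The plan is to reuse the product partition built in the proof of \Cref{tm:lem:anisotropic-box-count}. The two assertions are then a bookkeeping count and a derivative estimate. Each nonzero piece $f_{\ell,q}$ corresponds to a choice of fixed chart cutoff together with one coordinate cutoff in every direction. By \eqref{tm:eq:rho-comparability}, at least $m$ of these coordinate cutoffs are central, with side $s_i\asymp2^{-\ell}$. Each of the remaining at most $n-m$ coordinates is either central, or an outer dyadic shell of side $s_i\asymp2^{-k_i}$ with $c\,2^{-\ell}\lesssim2^{-k_i}\le C$, or a fixed-size cutoff.

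For \eqref{rs:eq:weighted-count}, group the pieces by the set $I$ of central coordinates ($|I|\ge m$) and by the shell indices $k_i$, $i\notin I$. A single term is then bounded by
\[
 \Bigl(\prod_i s_i\Bigr)^{1/2}\le C\,2^{-\ell|I|/2}\prod_{i\notin I}2^{-k_i/2}.
\]
Summing over $k_i\ge0$, each free factor contributes the convergent geometric series $\sum_k2^{-k/2}\le C$. The bound for fixed $I$ is therefore $C2^{-\ell|I|/2}\le C2^{-\ell m/2}$. There are finitely many choices of $I$, of charts, and of signs, so \eqref{rs:eq:weighted-count} follows. This is sharper than the crude count $J_\ell\le C(1+\ell)^{n-m}$: the logarithmic factor disappears because the square-root weight is summable over shells. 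The same partition as before is used, so no new choice is needed.

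For \eqref{rs:eq:scaled-rho-derivatives}, combine \eqref{tm:eq:rho-symbol} with $\rho\asymp2^{-\ell}$ on $\supp H_\ell\supset\supp f_{\ell,q}$. This gives
\[
 |\partial^\gamma(2^\ell\rho)|\le C_\gamma\,2^\ell\rho\prod_i\lambda_i^{-\gamma_i}\le C_\gamma\prod_i\lambda_i^{-\gamma_i}.
\]
It remains to compare $\lambda_i$ with $s_i$ on the support, namely to show $\lambda_i\gtrsim s_i(f_{\ell,q})$.
\begin{itemize}
\item On a central factor, $\lambda_i\ge\rho\gtrsim2^{-\ell}\asymp s_i$.
\item On an outer shell, $d_i\asymp2^{-k_i}\asymp s_i$, so $\lambda_i\ge d_i\gtrsim s_i$.
\item On a fixed-size coordinate away from centers, $d_i\gtrsim1\ge s_i/C$.
\end{itemize}
Since the exponents $\gamma_i$ are nonnegative, $\lambda_i^{-\gamma_i}\le Cs_i^{-\gamma_i}$, which is the claim.

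I expect the main obstacle to be uniformity at the chart seams and at points where two centers are relevant. There the quantity $d_i=\dist(x_i,\Z)$ must match the local distance to the center used by the chart's cutoffs, and this has to be checked in the fixed finite localizations. I would handle it by noting that each chart contains at most one center per coordinate, so there $d_i$ equals the distance to that center up to fixed constants. The finitely many charts then contribute only fixed constants to both estimates.
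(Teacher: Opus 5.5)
Your proposal is correct and follows essentially the same route as the paper: you group the pieces from \eqref{tm:eq:flat-piece-definition} by their (at least $m$) central coordinates and sum the square-root side lengths as a geometric series over the remaining dyadic shells to get $C2^{-\ell m/2}$, and you obtain \eqref{rs:eq:scaled-rho-derivatives} by multiplying \eqref{tm:eq:rho-symbol} by $2^\ell$ and using $\rho\asymp2^{-\ell}$ on $\supp f_{\ell,q}$. The only difference is cosmetic: you prove just the one-sided comparison $\lambda_i\gtrsim s_i(f_{\ell,q})$, which suffices because the exponents $\gamma_i$ are nonnegative, whereas the paper records $\lambda_i\asymp s_i(f_{\ell,q})$.
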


\begin{proof}
Every nonzero product in \eqref{tm:eq:flat-piece-definition} has at least
$m$ central factors, each supported on an interval of length comparable
to $2^{-\ell}$.  For a fixed choice of $r\ge m$ central coordinates,
only boundedly many boxes have any prescribed tuple of remaining dyadic
side lengths.  The fixed chart localizations and the two signs of each
outer interval contribute only bounded multiplicities.  Consequently
\[
\begin{aligned}
 \sum_{q=1}^{J_\ell}\left(\prod_i s_i(f_{\ell,q})\right)^{1/2}
 &\le C\sum_{r=m}^n 2^{-\ell r/2}
       \left(\sum_{\substack{k\in\Z\\c2^{-\ell}\le2^{-k}\le C}}
                       2^{-k/2}\right)^{n-r}\\
 &\le C'2^{-\ell m/2}.
\end{aligned}
\]
The inner sum runs over the side-length scales $2^{-k}$, with integer
$k$; fixed factors comparing the actual lengths to these scales are
absorbed into the constants. This geometric sum is bounded independently
of $\ell$. On $\supp f_{\ell,q}$, $\rho\asymp2^{-\ell}$ and
$\lambda_i\asymp s_i(f_{\ell,q})$.  Multiplying \eqref{tm:eq:rho-symbol} by
$2^\ell$ therefore proves \eqref{rs:eq:scaled-rho-derivatives}.
\end{proof}

The next estimate applies separately to each $f_{\ell,q}$. In its
statement, $f$ denotes one such piece and $s_i=s_i(f)$ denotes its side
length in coordinate $i$; it also applies to any function satisfying
the same support and derivative bounds.

\begin{lemma}\label{tm:lem:box-fourier}
If $f$ is supported in a rectangle of side lengths $0<s_i\le C$ and
satisfies the derivative bounds of \eqref{tm:eq:scaled-box-derivatives},
then, for every integer $K$,
\[
 |\widehat f(\xi)|\le C_K\prod_i s_i\,
              \prod_i(1+s_i|\xi_i|)^{-K},\qquad \xi\in\R^n.
\]
If $s_i\ge c2^{-\ell}$, then, for every $s\ge0$,
\begin{equation}\label{tm:eq:box-FL1}
 \norm{f}_{\Fell{1}{s}}\le C_s2^{\ell s}.
\end{equation}
\end{lemma}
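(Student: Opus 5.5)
The plan is to prove the pointwise Fourier bound by integration by parts adapted to the anisotropic box, and then to sum it over the integer lattice to get the $\Fell{1}{s}$ estimate.

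\emph{Pointwise bound.} Fix $\xi\in\R^n$ and split the coordinates into $I_\xi=\{i:s_i|\xi_i|>1\}$ and its complement. For each $i\in I_\xi$, integrate by parts $K$ times in the variable $x_i$. Each integration produces a factor $(2\pi i\xi_i)^{-1}$ and one more $\partial_i$ on $f$. Since $f$ is smooth and compactly supported in the rectangle, no boundary terms appear. This gives
\[
 \widehat f(\xi)=\prod_{i\in I_\xi}(2\pi i\xi_i)^{-K}\int\partial^{\beta}f(x)e^{-2\pi i x\cdot\xi}\,\dd x,
\]
where $\beta_i=K$ for $i\in I_\xi$ and $\beta_i=0$ otherwise. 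Bound the integral by $\norm{\partial^\beta f}_\infty$ times the volume $\prod_i s_i$ of the supporting rectangle, and use \eqref{tm:eq:scaled-box-derivatives}. This yields
\[
 |\widehat f(\xi)|\le C_K\prod_is_i\prod_{i\in I_\xi}(s_i|\xi_i|)^{-K}.
\]
For $i\in I_\xi$ we have $(s_i|\xi_i|)^{-K}\le 2^K(1+s_i|\xi_i|)^{-K}$. For $i\notin I_\xi$ the factor $(1+s_i|\xi_i|)^{-K}\ge2^{-K}$ is harmless. Together these give the first claim. This step is routine; the only point to check is that the constants in \eqref{tm:eq:scaled-box-derivatives} are uniform, which holds by hypothesis.

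\emph{Lattice sum.} Periodize on the physical torus, so that the Fourier coefficients are $\widehat f(k)$ up to the fixed diagonal rescaling $L^T$, as explained before the lemma. Use $\langle k\rangle^s\le C_s\prod_i(1+|k_i|)^s$. The sum then factorizes coordinatewise:
\[
 \norm{f}_{\Fell{1}{s}}\le C\prod_i\Bigl(s_i\sum_{k_i\in\Z}(1+|k_i|)^s(1+s_i|k_i|)^{-K}\Bigr),
\]
with $K>s+1$. For each one-dimensional factor, split the sum at $|k_i|\approx s_i^{-1}$.
\begin{itemize}
\item The range $|k_i|\lesssim s_i^{-1}$ contributes $s_i\cdot s_i^{-1}\cdot s_i^{-s}$, up to constants. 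Here $s_i\le C$ ensures there are $\gtrsim1$ terms, so the count is $\lesssim s_i^{-1}$ rather than $1$.
\item The tail $|k_i|\gtrsim s_i^{-1}$ contributes $s_i\sum_{|k|>s_i^{-1}}|k|^s(s_i|k|)^{-K}\lesssim s_i^{-s}$, by the geometric/integral comparison using $K>s+1$.
\end{itemize}
Hence each factor is $\lesssim s_i^{-s}$, and the product is $\lesssim\prod_is_i^{-s}$.

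The main obstacle is that this crude product of weights loses too much. Using $s_i\ge c2^{-\ell}$, it gives $2^{\ell ns}$, whereas the claim is $2^{\ell s}$. To avoid this, use instead the sharper bound $\langle k\rangle^s\le C_s\bigl(1+\max_i|k_i|\bigr)^s\le C_s\sum_j(1+|k_j|)^s$. After this change, only one coordinate $j$ carries the weight in each summand.
\begin{itemize}
\item The weighted factor (coordinate $j$) is $\lesssim s_j^{-s}\le C2^{\ell s}$.
\item Each unweighted factor is $s_i\sum_{k_i}(1+s_i|k_i|)^{-K}\lesssim1$.
\end{itemize}
Summing over the $n$ choices of $j$ gives \eqref{tm:eq:box-FL1}.
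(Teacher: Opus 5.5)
Your proof is correct and follows the paper's route. You integrate by parts against the scaled derivative bounds to get the anisotropic product estimate, then sum that estimate over the lattice in coordinatewise-factorized form, using $s_i\ge c2^{-\ell}$ to control the weight. The one difference is how the weight is handled: the paper writes $\langle k\rangle^s\le C_s2^{\ell s}\prod_i(1+s_i|k_i|)^s$ and absorbs it into the decay factors, so that each factor is simply $\sum_{k_i}(1+s_i|k_i|)^{s-K}\lesssim s_i^{-1}$, whereas you put the weight on a single coordinate via $\langle k\rangle^s\lesssim\sum_j(1+|k_j|)^s$ and split that one-dimensional sum at $|k_j|\approx s_j^{-1}$. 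Both give the same bound $\lesssim(\min_j s_j)^{-s}$.
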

\begin{proof}
Rescale the rectangle to a fixed box.  The rescaled function has uniformly
bounded derivatives and compact support.  Integration by parts in each
coordinate proves the product Fourier bound.  For the discrete sum, use
\[
 \sum_{k_i\in\Z}(1+s_i|k_i|)^{-K}\le C_Ks_i^{-1}
\]
(with a fixed constant accommodating the bounded upper limit for $s_i$).
For the weight, $s_i\ge c2^{-\ell}$ gives
\[
 \angles{k}^s\le C_s2^{\ell s}
                 \prod_i(1+s_i|k_i|)^s.
\]
Choose $K>s+2$ and sum the product bound.  If a rectangle crosses a torus
seam, first use a fixed finite smooth localization and then periodize its
Euclidean lift; its Fourier coefficient is the same Euclidean Fourier
integral at $k$.
\end{proof}

Apply \eqref{tm:eq:box-FL1} to the finite decomposition
$H_\ell=\sum_{q=1}^{J_\ell}f_{\ell,q}$. It follows that
\begin{equation}\label{tm:eq:flat-shell-FL1}
 \begin{aligned}
 \norm{H_\ell}_{\Fell{1}{s}}
 &\le\sum_{q=1}^{J_\ell}\norm{f_{\ell,q}}_{\Fell{1}{s}}
 \le C_sJ_\ell2^{\ell s}\\
 &\le C_s(1+\ell)^{n-m}2^{\ell s},\qquad\text{for }s\ge0.
 \end{aligned}
\end{equation}
On the other hand, \eqref{tm:eq:flat-shell-basic} and Plancherel give
\begin{equation}\label{tm:eq:flat-shell-FL2}
 \norm{H_\ell}_{\Fell{2}{s}}
 \le C_s2^{-\ell(m/2-s)},\qquad\text{for }s\ge0.
\end{equation}
Indeed, for an integer $K\ge s$, expand
$(1+2^{-2\ell}|k|^2)^K$ and apply Plancherel to the derivatives through
order $K$.  Each term is $O(2^{-m\ell})$, while
$\angles{k}^{2s}\le C_s2^{2\ell s}
 (1+2^{-2\ell}|k|^2)^K$.

\begin{proposition}\label{tm:prop:flat-FL}
Every required localized cross-Zak function of $g_{\rm flat}$ belongs to
\[
 \Fell{p}{s}(\T^n),\qquad 1\le p\le2,\qquad s<m(1-1/p).
\]
\end{proposition}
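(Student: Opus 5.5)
The plan is to interpolate, shell by shell, between the two estimates \eqref{tm:eq:flat-shell-FL1} and \eqref{tm:eq:flat-shell-FL2}, and then sum the dyadic shells in $\ell$. Fix a localized cross-Zak function $H$ of $g_{\rm flat}$ and decompose it as $H=H_{\rm out}+\sum_{\ell\ge\ell_0}H_\ell$. The piece $H_{\rm out}$ is smooth with compact support in a chart. After periodization its Fourier coefficients decay faster than any polynomial, so it lies in every $\Fell{p}{s}$. By the triangle inequality it therefore suffices to show that $\sum_\ell\norm{H_\ell}_{\Fell{p}{s}}<\infty$ whenever $1\le p\le2$ and $s<m(1-1/p)$.

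For a single shell I would use H\"older's inequality on the weighted coefficient sequence rather than abstract interpolation. Write $1/p=(1-\theta)+\theta/2$, so that $\theta=2(1-1/p)\in[0,1]$. Split the weight as $\angles{k}^s=\angles{k}^{s_1(1-\theta)}\angles{k}^{s_2\theta}$ with $s=(1-\theta)s_1+\theta s_2$ and $s_1,s_2\ge0$. Pointwise,
\[
 \angles{k}^s|\widehat{H_\ell}(k)|
 =\bigl(\angles{k}^{s_1}|\widehat{H_\ell}(k)|\bigr)^{1-\theta}
  \bigl(\angles{k}^{s_2}|\widehat{H_\ell}(k)|\bigr)^{\theta}.
\]
H\"older with exponents $1/((1-\theta)p)$ and $2/(\theta p)$ then gives
\[
 \norm{H_\ell}_{\Fell{p}{s}}
 \le\norm{H_\ell}_{\Fell{1}{s_1}}^{1-\theta}
    \norm{H_\ell}_{\Fell{2}{s_2}}^{\theta}.
\]
Inserting the two shell bounds yields
\[
 \norm{H_\ell}_{\Fell{p}{s}}
 \le C(1+\ell)^{(n-m)(1-\theta)}
   2^{\ell[(1-\theta)s_1-\theta(m/2-s_2)]}
 =C(1+\ell)^{(n-m)(1-\theta)}2^{\ell(s-\theta m/2)}.
\]
Since $\theta m/2=m(1-1/p)$, the exponent is negative exactly when $s<m(1-1/p)$. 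The polynomial factor in $\ell$ is harmless, so the series in $\ell$ converges. For $s\ge0$ one can take $s_1=s_2=s$. For $s<0$, membership follows from the case $s=0$ by monotonicity of the weights.

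The endpoints need separate checks. When $p=1$ we have $\theta=0$, and the condition reads $s<0$. Here I would use \eqref{tm:eq:flat-shell-FL1} with $s=0$, which gives only $O((1+\ell)^{n-m})$ per shell and so does not sum. Instead I would use the sharper per-piece bound from \Cref{tm:lem:box-fourier} with a negative weight: for a box whose sides satisfy $s_i\ge c2^{-\ell}$, one has $\angles{k}^{s}\lesssim 2^{\ell|s|\cdot0}\dots$. More simply, for $s<0$ the weight $\angles{k}^s$ is at most $1$, and the $\Fell{1}{0}$ norm of $H$ is controlled by approximation. I expect this $p=1$, $s<0$ endpoint to be the main obstacle.

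To handle it, I would first try interpolating between $\Fell{1}{0}$ at the piece level and $\Fell{2}{s'}$, exploiting the room given by a negative $s$. Concretely, in the box sum I would separate frequencies with $|k|\le 2^{\ell}$ from those with $|k|>2^{\ell}$. For the low frequencies I would use the crude $L^\infty$ bound $|\widehat{H_\ell}(k)|\le|\supp H_\ell|\lesssim 2^{-\ell m}$; summed against $\angles{k}^{s}$ over $|k|\le2^\ell$, this gives $2^{-\ell m}2^{\ell(n+s)_+}$. If this is not summable, the fallback is to use the rectangle bound \eqref{rs:eq:weighted-count} together with Cauchy--Schwarz, which gives the $2^{-\ell m/2}$ gain per shell. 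Finally, the finitely many localized cross-Zak functions and the partition of unity are combined through \Cref{prop:local-zak}.
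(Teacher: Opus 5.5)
\emph{Overall.} For $1<p\le2$ your argument is the paper's. You use H\"older between \eqref{tm:eq:flat-shell-FL1} and \eqref{tm:eq:flat-shell-FL2} with $\theta=2(1-1/p)$ and $s_1=s_2=s\ge0$, which gives $\norm{H_\ell}_{\Fell{p}{s}}\le C(1+\ell)^{(n-m)(2/p-1)}2^{-\ell\{m(1-1/p)-s\}}$, and then weight monotonicity for $s<0$. The gap is the endpoint $p=1$, $s=-a<0$. You flag it but do not close it.

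\emph{Why your attempts at $p=1$ fail.} Cutting at $|k|=2^\ell$ does not work for the lower gaps $m<n$. The sup bound $|\widehat{H_\ell}(k)|\le\norm{H_\ell}_1\le C2^{-\ell m}$, summed against $\angles{k}^{-a}$ over $|k|\le2^\ell$, gives $2^{\ell(n-m-a)}$. This grows in $\ell$ whenever $0<a<n-m$; it is summable only in the critical case $m=n$.

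Your Cauchy--Schwarz fallback does not help. The gain $2^{-\ell m/2}$ is just $\norm{H_\ell}_2$, and
\[
 \sum_{|k|\le2^\ell}\angles{k}^{-a}|\widehat{H_\ell}(k)|
 \le\Bigl(\sum_{|k|\le2^\ell}\angles{k}^{-2a}\Bigr)^{1/2}\norm{H_\ell}_2
 \lesssim2^{\ell(n-m-2a)/2},
\]
which still grows for $0<a<(n-m)/2$.

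Your remark that the $\Fell{1}{0}$ norm is ``controlled by approximation'' cannot be right. If all required localized cross-Zak functions lay in $\Fell{1}{0}$, \Cref{prop:local-zak} would put $g_{\rm flat}$ in $M^1_0=\Szero$. \Cref{thm:p-le-2-endpoint} at $(p,s)=(1,0)$ excludes this, since $N-R\le d-1$. So the negative weight has to be used genuinely.

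\emph{The missing idea.} Cut lower, at $|k|=2^{\gamma\ell}$ with $0<\gamma<m/n$; the paper takes $\gamma=m/(2n)$. Then
\[
 \sum_k\angles{k}^{-a}|\widehat{H_\ell}(k)|
 \le C2^{-m\ell}2^{\gamma n\ell}
   +2^{-\gamma a\ell}\norm{H_\ell}_{\Fell{1}{0}}
 \le C2^{-(m-\gamma n)\ell}+C(1+\ell)^{n-m}2^{-\gamma a\ell},
\]
where the tail uses \eqref{tm:eq:flat-shell-FL1} at $s=0$. Both terms are summable in $\ell$.

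Alternatively, your abandoned per-piece idea also closes the gap. Each $f_{\ell,q}$ has at least one side $s_{i_0}\asymp2^{-\ell}$. Inserting $\angles{k}^{-a}\le\angles{k_{i_0}}^{-a}$ into the product bound of \Cref{tm:lem:box-fourier} gives $\norm{f_{\ell,q}}_{\Fell{1}{-a}}\le C2^{-a\ell}$ for $0<a<1$. Summing over the pieces gives $\norm{H_\ell}_{\Fell{1}{-a}}\le C(1+\ell)^{n-m}2^{-a\ell}$, and larger $a$ follow by monotonicity.
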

\begin{proof}
For $1<p\le2$ and $s\ge0$, put $\theta=2(1-1/p)$.
H\"older's inequality for sequences gives
\[
 \norm{H_\ell}_{\Fell{p}{s}}
 \le \norm{H_\ell}_{\Fell{1}{s}}^{1-\theta}
      \norm{H_\ell}_{\Fell{2}{s}}^\theta.
\]
Substitution of \eqref{tm:eq:flat-shell-FL1}--\eqref{tm:eq:flat-shell-FL2} yields
\begin{equation}\label{tm:eq:flat-shell-FLp}
\begin{aligned}
 \norm{H_\ell}_{\Fell{p}{s}}
 &\le C\bigl((1+\ell)^{n-m}2^{\ell s}\bigr)^{1-\theta}
           \bigl(2^{-\ell(m/2-s)}\bigr)^\theta\\
 &\le C(1+\ell)^{(n-m)(2/p-1)}
      2^{-\ell\{m(1-1/p)-s\}}.
\end{aligned}
\end{equation}
The sum over $\ell$ converges under the stated strict inequality.
For negative $s$ and $p>1$, use the already proved membership with $s=0$
and monotonicity of the weight.

For $p=1$, write $s=-a$ with $a>0$ and put $\gamma=m/(2n)$.
The support estimate gives
$|\widehat H_\ell(k)|\le C2^{-m\ell}$.  Splitting the Fourier sum at
$|k|=2^{\gamma\ell}$ gives
\begin{align*}
 \sum_k\angles{k}^{-a}|\widehat H_\ell(k)|
 &\le C2^{-m\ell}\sum_{|k|\le2^{\gamma\ell}}1
       +2^{-\gamma a\ell}
          \sum_{|k|>2^{\gamma\ell}}|\widehat H_\ell(k)|\\
 &\le C2^{-m\ell}2^{\gamma n\ell}
       +C2^{-\gamma a\ell}
          \norm{H_\ell}_{\Fell{1}{0}}\\
 &\le C2^{-m\ell/2}
       +C(1+\ell)^{n-m}2^{-\gamma a\ell}.
\end{align*}
Both series are summable.  The smooth outer term causes no restriction.
Proposition~\ref{prop:cross-Zak-characterization} transfers these estimates
to $g_{\rm flat}\in M_s^p$ in the stated range.
\end{proof}

\begin{proposition}\label{tm:prop:flat-all-p}
Every required localized cross-Zak function of $g_{\rm flat}$ satisfies
\[
 |\widehat H(k)|\le C\langle k\rangle^{-m}.
\]
Consequently the same flat window satisfies
\[
 \begin{gathered}
 g_{\rm flat}\in M_s^p,\quad\text{for }1\le p<\infty,\ s<m(1-1/p),\\
 g_{\rm flat}\in M_m^\infty.
 \end{gathered}
\]
\end{proposition}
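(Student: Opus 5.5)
The plan is to prove the pointwise bound $|\widehat H(k)|\le C\langle k\rangle^{-m}$ by summing the pointwise box estimates of \Cref{tm:lem:box-fourier} over the dyadic decomposition $H=H_{\rm out}+\sum_{\ell\ge\ell_0}\sum_q f_{\ell,q}$ of \Cref{tm:lem:anisotropic-box-count}. The modulation-space conclusions then follow from \Cref{prop:cross-Zak-characterization} and \Cref{prop:local-zak}.

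The smooth term $H_{\rm out}$ has rapidly decaying coefficients, so it causes no difficulty. For a piece $f=f_{\ell,q}$ with side lengths $s_i\in[c2^{-\ell},C]$, \Cref{tm:lem:box-fourier} gives
\[
 |\widehat f(\xi)|\le C_K\prod_i s_i\,(1+s_i|\xi_i|)^{-K}.
\]
Fix $\xi=L^Tk$ and let $i_*$ be a coordinate with $|\xi_{i_*}|\ge|\xi|/\sqrt n$. Using the decay only in coordinate $i_*$, the piece satisfies
\[
 |\widehat f(\xi)|\le C\prod_i s_i\,\min\{1,(s_{i_*}|\xi|)^{-K}\}.
\]
Recall that every piece has at least $m$ central sides of length $\asymp2^{-\ell}$, and the remaining sides are free dyadic lengths $2^{-k_i}\ge c2^{-\ell}$.

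The main obstacle is summing over $\ell$ and over the free dyadic sides without the logarithmic loss $(1+\ell)^{n-m}$ that appears in the $\Fell{1}{s}$ count. I would organize the sum by the central set $I$ with $|I|\ge m$ and the free dyadic indices. Summing the free factors $\prod_{i\notin I}2^{-k_i}$ over $k_i\le\ell$ gives a convergent geometric series, so no logarithm arises. The contribution of scale $\ell$ is then at most
\[
 C\,2^{-\ell m}\min\{1,(2^{-\ell}|\xi|)^{-K}\},
\]
provided $i_*$ is central. If $i_*$ is a free coordinate, I would use the decay factor $(2^{-k_{i_*}}|\xi|)^{-K}$ together with the weight $2^{-k_{i_*}}$. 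Summing over $k_{i_*}$ gives $\min\{1,|\xi|^{-1}\}$, which is bounded by the central-type bound after accounting for the remaining central volume $2^{-\ell m}$. The caveat is that when $i_*$ is free the largest-frequency coordinate might not be a central one. In that case I would apply the decay in a central coordinate $j$ whenever $|\xi_j|\gtrsim 2^{\ell}$, and otherwise fall back on the volume bound. More simply, I would take the minimum over all coordinates of the product bound.

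Summing over $\ell$, the terms with $2^\ell\le|\xi|$ are controlled by
\[
 \sum_{2^\ell\le|\xi|}2^{-\ell m}(2^{\ell}/|\xi|)^{K}\lesssim|\xi|^{-m}\quad(K>m),
\]
and the terms with $2^\ell>|\xi|$ by
\[
 \sum_{2^\ell>|\xi|}2^{-\ell m}\lesssim|\xi|^{-m}.
\]
This yields $|\widehat H(k)|\le C\langle k\rangle^{-m}$.

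For the consequences, the pointwise bound gives $\sup_k\langle k\rangle^m|\widehat H(k)|<\infty$, which is $g_{\rm flat}\in M_m^\infty$ by \Cref{prop:cross-Zak-characterization}. For finite $p$ we need
\[
 \sum_k\langle k\rangle^{sp-mp}<\infty,
\]
which holds when $p(m-s)>n$. The range $s<m(1-1/p)$ with $p\le2$ is already supplied by \Cref{tm:prop:flat-FL}. For $2<p<\infty$, I would instead interpolate between $p=2$, where $s<m/2$ comes from \Cref{tm:prop:flat-FL}, and $p=\infty$, where $s=m$. Hölder interpolation of the weighted sequences gives $s<m(1-1/p)$, since the map $1/p\mapsto m(1-1/p)$ is affine and the endpoints match. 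This interpolation step is the key point for $2<p<\infty$.
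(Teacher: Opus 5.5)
Your argument is correct, but both halves take a different route from the paper.

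\emph{Pointwise bound.} The paper never cuts $H_\ell$ into rectangles for this estimate. The isotropic shell bounds \eqref{tm:eq:flat-shell-basic}, namely $|\supp H_\ell|\le C2^{-\ell m}$ and $\|\partial^\beta H_\ell\|_\infty\le C_\beta 2^{\ell|\beta|}$, give $|\widehat H_\ell(k)|\le C_K2^{-\ell m}(1+2^{-\ell}|k|)^{-K}$ directly. One uses the $L^1$ bound, or $K$ integrations by parts in a coordinate with maximal $|k_i|$. The factor $(1+\ell)^{n-m}$ therefore never comes into play.

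Your box-by-box route reaches the same shell bound, because $\sum_q\prod_i s_i(f_{\ell,q})\le C2^{-\ell m}$ holds without a logarithm; it also follows by squaring \eqref{rs:eq:weighted-count}. Your central-versus-free case analysis, however, is unnecessary. Every side, central or free, satisfies $s_i\ge c2^{-\ell}$, so $(1+s_{i_*}|\xi_{i_*}|)^{-K}\le C_K(1+2^{-\ell}|\xi|)^{-K}$ for every piece. You should replace that discussion by this one-line observation, because two of your fallbacks do not work:
\begin{itemize}
\item Falling back on the bare volume bound when no central coordinate has $|\xi_j|\gtrsim2^\ell$ would lose all decay at the scales $2^\ell\ll|\xi|$.
\item The bound $2^{-\ell m}\min\{1,|\xi|^{-1}\}$ is not dominated by $2^{-\ell m}(2^{-\ell}|\xi|)^{-K}$ at those scales.
\end{itemize}

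\emph{The range $2<p<\infty$.} The paper interpolates each shell at a fixed weight, $\|H_\ell\|_{\Fell{p}{s}}\le\|H_\ell\|_{\Fell{2}{s}}^{2/p}\|H_\ell\|_{\Fell{\infty}{s}}^{1-2/p}\le C2^{-\ell\{m(1-1/p)-s\}}$, using \eqref{tm:eq:flat-shell-FL2} and the shell $\Fell{\infty}{s}$ bound, and then sums over $\ell$. You instead interpolate the whole coefficient sequence with different weights, using $\langle k\rangle^{sp}|a_k|^p\le\langle k\rangle^{2s_0}|a_k|^2\bigl(\langle k\rangle^{m}|a_k|\bigr)^{p-2}$ for $sp\le 2s_0+(p-2)m$. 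The endpoints are $M^2_{s_0}$ with $s_0<m/2$, supplied by \Cref{tm:prop:flat-FL}, and the $M_m^\infty$ endpoint you have just proved. This is valid and treats both earlier results purely as black boxes. The paper's version additionally records geometric decay of each shell in $\Fell{p}{s}$.
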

\begin{proof}
The support and derivative bounds in \eqref{tm:eq:flat-shell-basic}
give, for every integer $K$,
\begin{equation}\label{tm:eq:flat-shell-pointwise}
 |\widehat H_\ell(k)|\le C_K2^{-\ell m}(1+2^{-\ell}|k|)^{-K}.
\end{equation}
For $2^{-\ell}|k|\le1$ this is the $L^1$ bound; otherwise integrate by
parts $K$ times in a coordinate with maximal $|k_i|$.  Choose $K>m$ and
split the dyadic sum at $2^{-\ell}\asymp\langle k\rangle^{-1}$:
\[
\begin{aligned}
 \sum_{\ell\ge\ell_0}|\widehat H_\ell(k)|
 &\le C\sum_{2^{-\ell}\le\langle k\rangle^{-1}}2^{-m\ell}
   +C\langle k\rangle^{-K}
       \sum_{2^{-\ell}>\langle k\rangle^{-1}}2^{(K-m)\ell}\\
 &\le C\langle k\rangle^{-m}
   +C\langle k\rangle^{-K}\langle k\rangle^{K-m}
 \le C\langle k\rangle^{-m}.
\end{aligned}
\]
Both sums run over $\ell\ge\ell_0$; the first is dominated by its
initial term and the second by its final term.
The smooth outer term decays rapidly.  This proves the pointwise bound
and the included $M_m^\infty$ endpoint.

For $s\ge0$, choosing $K>s$ in \eqref{tm:eq:flat-shell-pointwise} also gives
$\|H_\ell\|_{\Fell{\infty}{s}}\le C_s2^{-\ell(m-s)}$.
Interpolate with \eqref{tm:eq:flat-shell-FL2} to obtain, for $2\le p<\infty$,
\[
 \|H_\ell\|_{\Fell{p}{s}}
 \le\|H_\ell\|_{\Fell{2}{s}}^{2/p}
     \|H_\ell\|_{\Fell{\infty}{s}}^{1-2/p}
 \le C_{p,s}2^{-\ell\{m(1-1/p)-s\}}.
\]
The shell sum converges under the asserted strict inequality.
Negative $s$ follow from weight monotonicity.
Proposition~\ref{tm:prop:flat-FL} already handles $p\le2$, including
$p=1,\ s<0$.
\end{proof}

\begin{remark}\label{tm:rem:flat-limit}
For $2<p\le\infty$, the difference between the sharper phase boundary
and the flat boundary is
\[
 \left(\frac{n+m}{2}-\frac np\right)-m(1-1/p)
 =(n-m)\left(\frac12-\frac1p\right).
\]
By the $p=2$ necessity result in \Cref{thm:p-le-2-endpoint}, no improved estimate for this same
flat window can cross its boundary: if a window lies in
$M^1_{-\varepsilon}$ for every $\varepsilon>0$ and in $M_s^p$ with
$p>2$, interpolation at
$\theta=p/[2(p-1)]$ gives
$M^2_{\theta s-(1-\theta)\varepsilon}$.
If $s>m(1-1/p)$, the last weight exceeds $m/2$ for sufficiently
small $\varepsilon$, contradicting necessity.  For $p=\infty$ use
$\theta=1/2$.  Thus attaining the sharper full high-$p$ band requires
changing the window; this observation does not exclude equality at the
flat finite-$p$ boundary.
\end{remark}

\begin{corollary}\label{tm:cor:critical-flat}
If $N-R=d-1$, so that $m=n=2d$, the flat window can be denoted by
$g_{d-1}$ and satisfies
\[
 \begin{gathered}
 g_{d-1}\in M_s^p(\R^d),\quad\text{for }1\le p<\infty,\ s<2d(1-1/p),\\
 g_{d-1}\in M_{2d}^\infty(\R^d).
 \end{gathered}
\]
\end{corollary}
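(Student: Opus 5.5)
This corollary is a specialization of Propositions~\ref{tm:prop:flat-FL} and~\ref{tm:prop:flat-all-p} to the critical-codimension case, so the plan is mostly bookkeeping. The gap assumption $N-R=d-1$ gives $q=N-R+1=d$, hence $m=m_0=2q=2d=n$ in the notation of \eqref{tm:eq:terminal-gap-assumption}. The flat construction of \Cref{sec:terminal-construction} applies for every $m\le n$. It produces the Zak-compatible field $M_{\rm flat}$ of Lemma~\ref{tm:lem:transport} with $M_{\rm flat}M_{\rm flat}^*=RI_R$ almost everywhere. Its $(0,0)$ entry reassembles, via \Cref{prop:fine-box-reassembly} and the inverse Zak transform, into a Parseval window $g_{\rm flat}$ by \Cref{prop:zz-criterion-full}. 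We simply rename this window $g_{d-1}$.

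For the finite-$p$ range, I would invoke Proposition~\ref{tm:prop:flat-all-p} with $m=2d$. It gives $g_{\rm flat}\in M_s^p$ for all $1\le p<\infty$ and $s<m(1-1/p)=2d(1-1/p)$; for $p\le2$ this comes from Proposition~\ref{tm:prop:flat-FL}. The membership transfers from the localized cross-Zak functions to $M_s^p$ by \Cref{prop:cross-Zak-characterization} and \Cref{prop:local-zak}.

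For the endpoint $p=\infty$, I would use the pointwise bound $|\widehat H(k)|\le C\langle k\rangle^{-m}=C\langle k\rangle^{-2d}$ from Proposition~\ref{tm:prop:flat-all-p}, applied to every localized cross-Zak function $H$. The $p=\infty$ case of \Cref{prop:local-zak} identifies this with $g_{d-1}\in M_{2d}^\infty$. Lower weights $s\le 2d$ then follow by monotonicity.

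The only point needing a check is that nothing in \Cref{sec:elementary-perturbation} degenerates when $m=n$. In that case $S$ is a lattice of points, the count $J_\ell\le C(1+\ell)^{n-m}$ becomes bounded, and every piece is a cube of side about $2^{-\ell}$. All of the estimates \eqref{tm:eq:flat-shell-basic}--\eqref{tm:eq:flat-shell-pointwise} remain valid, indeed in simpler form. I expect no genuine obstacle.
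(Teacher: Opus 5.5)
Your proposal is correct and follows the paper's own argument. The paper's proof likewise observes that the construction and all the estimates of \Cref{sec:elementary-perturbation} allow $m=n$, that $S$ then reduces to lattice points so the rectangle-count factor $(1+\ell)^{n-m}$ equals $1$, and then simply applies Proposition~\ref{tm:prop:flat-all-p} with $m=n=2d$.
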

\begin{proof}
The construction and every estimate above allow $m=n$.  The set $S$
then consists of lattice points, and the rectangle-count factor is
$(1+\ell)^{n-m}=1$.  Apply \Cref{tm:prop:flat-all-p} with $m=n$.
\end{proof}
\section{\texorpdfstring{Phase shaping: obtaining $g_{\rm phase}$ from $g_{\rm flat}$}{Phase shaping: obtaining g phase from g flat}}
\label{sec:lower-polar}\label{sec:bits}

We construct a unimodular multiplier $U$ in the Zak domain: multiplication
of the flat window's Zak transform by $U$ will preserve Parsevality and
improve its localized Fourier decay. Section~\ref{sec:lower-completion}
then transfers this estimate to the time--frequency decay of the window.
Fix integers $1\le m\le n$ and use the normalized cubical coordinates
of the flat construction. Recall that $S$ consists of points with at
least $m$ integer coordinates. The $\Z^n$-periodic scale $\rho$ is defined
in \eqref{tm:eq:rho} and illustrated in Figure~\ref{tm:fig:rho-two-dimensional}.
It is smooth off $S$; Lemma~\ref{tm:lem:rho-comparable} gives
$0<\rho\le1/8$ off $S$ and comparability with the $m$th smallest of the
coordinate distances $\dist(x_i,\Z)$.
For every compactly localized scalar entry or cross-Zak function $H$ of
the flat construction, the proof of Proposition~\ref{tm:prop:flat-all-p}
gives the unphased estimate
\[
 |\widehat H(\xi)|\le C_H\langle\xi\rangle^{-m},
 \qquad \xi\in\R^n.
\]
Although that proposition states the estimate at integer frequencies,
its integration-by-parts and dyadic-summation argument applies unchanged
to every real frequency.

Here localization means choosing a lifted coordinate neighborhood
$\mathcal O\subset\R^n$ and a smooth cutoff
$\chi\in C_c^\infty(\mathcal O)$. If $a$ is the scalar entry or cross-Zak
function in that neighborhood, then $H=\chi a$ on $\mathcal O$, extended
by zero outside $\mathcal O$. Only finitely many such neighborhoods and
cutoffs are needed; all lie in a fixed bounded collection of cells.
The product rule preserves the derivative bounds of
Lemma~\ref{tm:lem:anisotropic}. These bounds and the weighted rectangle
estimate in Lemma~\ref{rs:lem:shell-rectangles} are the inputs used below.
For spectral shaping, we construct a periodic unimodular multiplier $U$
such that every compactly localized entry or
cross-Zak function $H$ of the flat construction satisfies
\begin{equation}\label{rs:eq:goal}
 |\widehat{UH}(\xi)|\le C_H\langle\xi\rangle^{-(n+m)/2},
 \qquad \xi\in\R^n.
\end{equation}
Thus multiplication by $U$ improves the guaranteed decay exponent from
$m$ to $(n+m)/2$; the exponents
coincide when $m=n$. For $m<n$, this improves the attainable
modulation-space range only when $p>2$: compared with the flat bound
$s<m(1-1/p)$ in Proposition~\ref{tm:prop:flat-all-p}, the new bound
$s<(n+m)/2-n/p$ gains $(n-m)(1/2-1/p)$, which is positive precisely
for $p>2$. The flat construction therefore remains the relevant one
for $p\le2$. Here the hat denotes the Euclidean Fourier transform
of the compactly supported function $H$ or $UH$. In particular, the
estimate includes Fourier coefficients of its periodization.
The function $U$ will depend only on $\rho$ and the fixed one-dimensional
cutoff $\phi$, so the same phase works for every required localization.

\subsection{Signs, interval sums, and compatible real lifts}
\label{subsec:phase-signs}

The purpose of spectral shaping is to redistribute anisotropically
concentrated coefficients so that their combined weighted summability
improves. A simple bivariate model makes the trade-off explicit. For
$k\in\Z$ and $\ell\ge1$, compare
\[
 a_{k,\ell}=\delta_0(k)\ell^{-1},
 \qquad
 b_{k,\ell}=\begin{cases}
   \ell^{-3/2},&0\le k<\ell,\\
   0,&\text{otherwise}.
 \end{cases}
\]
The first sequence vanishes away from $k=0$ but decays only like
$\ell^{-1}$. The second spreads each coefficient over $\ell$ positions,
while preserving the energy of every row:
$\sum_k|a_{k,\ell}|^2=\ell^{-2}
=\ell\cdot\ell^{-3}=\sum_k|b_{k,\ell}|^2$.
Since $\langle(k,\ell)\rangle\asymp\ell$ on both supports, for finite
$p$ their weighted sums satisfy
\begin{align*}
 \|a\|_{\ell_s^p}^p
 &\asymp\sum_{\ell\ge1}\ell^{sp}\ell^{-p}
 =\sum_{\ell\ge1}\ell^{p(s-1)}<\infty
 &&\Longleftrightarrow\quad s<1-1/p,\\
 \|b\|_{\ell_s^p}^p
 &\asymp\sum_{\ell\ge1}\ell\,\ell^{sp}\ell^{-3p/2}
 =\sum_{\ell\ge1}\ell^{1+p(s-3/2)}<\infty
 &&\Longleftrightarrow\quad s<3/2-2/p.
\end{align*}
For $p>2$, the improved coefficient decay outweighs the loss of
transverse localization; for example, at $p=4$ the admissible weight
increases from $s<3/4$ to $s<1$. Both sequences are in $\ell^2$ and
therefore already in unweighted $\ell^p$ for $p\ge2$: the useful gain is
in the weight, which models improved time--frequency decay. These are
the thresholds of the analytic model with $n=2$, $m=1$. The comparison
illustrates the desired redistribution; it does not assert that these
particular sequences are related by a single spatial phase multiplier.

Rudin--Shapiro signs provide the cancellation needed to obtain the
corresponding estimates in our setting. Multiplying a constant finite signal by signs
preserves its energy while reducing its largest Fourier sum from its
length $M$ to order $\sqrt M$, uniformly in frequency. To implement
this across all scales in the Zak domain, we use finer grids near $S$,
where the derivative bounds for the flat function grow as the distance
to $S$ decreases. The multiplier must have
modulus one to preserve the frame identity and be smooth off $S$.
Linear interpolation between $+1$ and $-1$ would pass through zero;
instead, we interpolate real arguments and exponentiate. We first do
this on each dyadic grid, then blend consecutive grids at the resolution
selected by $\rho$. Compatible real arguments make the second
interpolation smooth. The estimates below show that cancellation survives
these interpolations and multiplication by the localized flat functions,
with sufficient control of the frequencies over which the energy spreads.

We use the following consequence of Balister's interval estimate
\cite[Theorem~3]{Balister2019}.

\begin{lemma}\label{rs:lem:interval}
Define the Rudin--Shapiro signs by
\begin{equation}\label{rs:eq:signs}
 r_0=1,\qquad r_{2\ell }=r_\ell ,\qquad r_{2\ell +1}=(-1)^\ell r_\ell ,
 \qquad \ell \ge0.
\end{equation}
The recursion for $r_{2\ell }$ is used for $\ell \ge1$.
For every finite interval $I$ of nonnegative integers,
\begin{equation}\label{rs:eq:interval}
 \sup_{\omega\in\R}
 \left|\sum_{\ell \in I}r_\ell  e^{i\omega \ell }\right|
 \le C\sqrt{|I|}.
\end{equation}
For every $\varepsilon\in\{0,1\}$ and every $c\in\{-1,1\}$,
the same bound, with an absolute constant, holds for the sequence
\[
 \ell\longmapsto
 \begin{cases}
 r_{\ell+\varepsilon},&(-1)^\ell=c,\\
 0,&(-1)^\ell\ne c,
 \end{cases}
\]
\end{lemma}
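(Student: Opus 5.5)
The first assertion, \eqref{rs:eq:interval}, is the cited interval estimate of Balister \cite[Theorem~3]{Balister2019}. I would only check that the recursion \eqref{rs:eq:signs} generates the standard Rudin--Shapiro sequence $r_\ell=(-1)^{e(\ell)}$, where $e(\ell)$ counts the occurrences of the block $11$ in the binary expansion of $\ell$. This holds because appending a $0$ creates no new block, while appending a $1$ creates one exactly when the previous last digit is $1$, that is, when $\ell$ is odd. Balister's theorem gives a uniform bound $C\sqrt{|I|}$ for partial sums over arbitrary intervals, not only initial segments. Since the bound is uniform in $\omega$, the supremum over $\omega$ is immediate.

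For the second assertion I would reduce the restricted sums to interval sums of the form \eqref{rs:eq:interval}, using parity filtering by characters. Write
\[
 \mathbf 1_{\{(-1)^\ell=c\}}=\tfrac12\bigl(1+c(-1)^\ell\bigr)
 =\tfrac12\bigl(1+c\,e^{i\pi\ell}\bigr).
\]
This gives
\[
 \sum_{\ell\in I,\ (-1)^\ell=c}r_{\ell+\varepsilon}e^{i\omega\ell}
 =\tfrac12\sum_{\ell\in I}r_{\ell+\varepsilon}e^{i\omega\ell}
 +\tfrac c2\sum_{\ell\in I}r_{\ell+\varepsilon}e^{i(\omega+\pi)\ell}.
\]
In each term, substitute $j=\ell+\varepsilon$, which runs over the shifted interval $I+\varepsilon$ of the same length. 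This pulls out the unimodular factor $e^{-i\omega\varepsilon}$ or $e^{-i(\omega+\pi)\varepsilon}$. Each term is then $\tfrac12$ times an interval sum at frequency $\omega$ or $\omega+\pi$. Since \eqref{rs:eq:interval} is uniform over all real frequencies, each term is at most $\tfrac C2\sqrt{|I|}$, so the total is bounded by $C\sqrt{|I|}$ with the same absolute constant.

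The only real input is Balister's theorem, which supplies uniformity over arbitrary intervals rather than over dyadic blocks. The one point to verify against the source is that its normalization of the sequence matches \eqref{rs:eq:signs}, possibly up to a global sign or the index shift $\varepsilon$, both of which the argument above absorbs. Everything after that is the elementary character decomposition.
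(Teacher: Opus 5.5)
Your proposal is correct. Your treatment of the parity-restricted, shifted sequence is exactly the paper's. The filter $\tfrac12\bigl(1+c(-1)^\ell\bigr)$ splits the sum into half an interval sum at frequency $\omega$ plus $c/2$ times one at $\omega+\pi$. The shift by $\varepsilon$ only moves the interval and contributes a unimodular factor.

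The difference lies in the first assertion \eqref{rs:eq:interval}. You take it directly from Balister, after correctly checking that \eqref{rs:eq:signs} produces $(-1)^{e(\ell)}$, with $e(\ell)$ the number of $11$ blocks. The paper also attributes the estimate to Balister and notes his constant $\sqrt{10}$, but it proves the estimate from scratch.

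\begin{itemize}
\item For an aligned dyadic block $R_{j,a}(z)=\sum_{\ell=a2^j}^{(a+1)2^j-1}r_\ell z^\ell$, the recursion gives $R_{j+1,a}(z)=R_{j,a}(z^2)+zR_{j,a}(-z^2)$.
\item The parallelogram identity then yields $|R_{j,a}(z)|^2+|R_{j,a}(-z)|^2=2^{j+1}$ on $|z|=1$, for every offset $a$. Hence every aligned block sum is at most $\sqrt{2|B|}$.
\item An arbitrary interval is a union of maximal aligned dyadic blocks, with at most two of each length. Summing gives a geometric series bounded by $C\sqrt{|I|}$.
\end{itemize}

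Each route buys something different. Yours gives the sharp constant, but it rests entirely on the source's statement for arbitrary rather than initial intervals. You rightly flag this point: differencing two initial-segment bounds would only give $C(\sqrt a+\sqrt b)$ for $[a,b)$. The paper's route is self-contained and handles every offset directly, since the block identity holds for each $a$. It also removes the need to match normalizations, because it uses only \eqref{rs:eq:signs}.
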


\begin{proof}
For completeness, we give an elementary proof with an absolute constant.
For $a,j\ge0$, put
$R_{j,a}(z)=\sum_{\ell=a2^j}^{(a+1)2^j-1}r_\ell z^\ell$.
The recursion \eqref{rs:eq:signs} and the parallelogram identity give,
for $|z|=1$,
\[
\begin{aligned}
 R_{j+1,a}(z)&=R_{j,a}(z^2)+zR_{j,a}(-z^2),\\
 |R_{j+1,a}(z)|^2+|R_{j+1,a}(-z)|^2
 &=2\bigl(|R_{j,a}(z^2)|^2+|R_{j,a}(-z^2)|^2\bigr)
 =2^{j+2}.
\end{aligned}
\]
The last equality follows by induction, starting with
$R_{0,a}(z)=r_a z^a$. Hence every aligned dyadic block $B$ satisfies
$\bigl|\sum_{\ell\in B}r_\ell z^\ell\bigr|\le\sqrt{2|B|}$.
Decompose a nonempty interval $I$ into the maximal aligned dyadic blocks
contained in it. There are at most two blocks of each length: the parent
of each maximal block crosses one of the two endpoints of $I$, and for
each endpoint there is at most one such block of a given length. Therefore
\[
 \left|\sum_{\ell\in I}r_\ell z^\ell\right|
 \le\sum_{B}\sqrt{2|B|}
 \le2\sqrt2\sum_{2^j\le|I|}2^{j/2}
 \le C\sqrt{|I|},\qquad |z|=1.
\]
Taking $z=e^{i\omega}$ proves \eqref{rs:eq:interval}; the empty interval
is immediate. Balister's theorem \cite[Theorem~3]{Balister2019} gives the sharper
constant $\sqrt{10}$.
For the shifted signs, change variables to obtain
$\sum_{\ell\in I}r_{\ell+\varepsilon}e^{i\omega\ell}
=e^{-i\varepsilon\omega}\sum_{k\in I+\varepsilon}r_ke^{i\omega k}$.
Restriction to the prescribed parity multiplies each summand by
$(1+c(-1)^\ell)/2$, so the restricted sum is half the sum at frequency
$\omega$ plus $c/2$ times the sum at frequency $\omega+\pi$.
The triangle inequality gives the same constant.
\end{proof}

We now choose compatible real arguments for the signs. Define integer
heights by
\begin{equation}\label{rs:eq:integer-lift}
 q_0=0,\qquad q_{\ell +1}-q_\ell 
 =(-1)^\ell \frac{1-r_\ell r_{\ell +1}}2,\qquad \ell \ge0.
\end{equation}
Equal neighboring signs give a zero increment; a sign change gives an
increment $+1$ or $-1$, according to the parity of $\ell $.
For example,
\[
\begin{aligned}
 (q_0,\ldots,q_8)&=(0,0,0,1,0,0,-1,0,0),\\
 (q_{48},\ldots,q_{54})&=(1,1,1,2,1,1,0).
\end{aligned}
\]
The first value outside $\{-1,0,1\}$ is $q_{51}=2$; the value $-2$
first occurs at $q_{102}$.

Define
\begin{equation}\label{rs:eq:cutoff}
 \phi(t)=\frac{\vartheta(4t-1)}
                  {\vartheta(4t-1)+\vartheta(3-4t)},\qquad
 \vartheta(s)=
 \begin{cases}
 e^{-1/s},&s>0,\\
 0,&s\le0.
 \end{cases}
\end{equation}
Thus $\phi(t)=0$ for $t\le1/4$ and $\phi(t)=1$ for $t\ge3/4$.
The lowercase $\phi$ denotes this fixed cutoff; the functions $\Phi_j$
below are real phases built from the integer heights.

\begin{lemma}\label{rs:lem:lifts}
For $\ell \ge0$ and $j\ge0$,
\begin{equation}\label{rs:eq:lift-identities}
 q_{2\ell }=-q_\ell ,\qquad
 q_{2\ell +1}=-q_\ell +\frac{1-(-1)^\ell }{2},\qquad
 q_{2^j}=0,\qquad e^{i\pi q_\ell }=r_\ell .
\end{equation}
For the cutoff $\phi$ in \eqref{rs:eq:cutoff} and
$x=2^{-j}(\ell +t)$, $0\le \ell <2^j$, $0\le t\le1$, set
\begin{equation}\label{rs:eq:theta}
 \Phi_j(x)=(-1)^j\pi
       \bigl[(1-\phi(t))q_\ell +\phi(t)q_{\ell +1}\bigr],
 \qquad 0\le x\le1,
\end{equation}
and extend periodically.  Then $\Phi_j$ is smooth and real-valued, and
\begin{equation}\label{rs:eq:adjacent-lifts}
 \Phi_{j+1}(\ell 2^{-j})=\Phi_j(\ell 2^{-j}),\qquad
 \|\Phi_{j+1}-\Phi_j\|_\infty\le2\pi.
\end{equation}
\end{lemma}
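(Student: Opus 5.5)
The first three identities in \eqref{rs:eq:lift-identities} follow by strong induction on $\ell$ from \eqref{rs:eq:integer-lift} and the recursion \eqref{rs:eq:signs}. I would treat the identity $e^{i\pi q_\ell}=r_\ell$ separately, since it is immediate.

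\emph{The sign identity.} The increment $q_{\ell+1}-q_\ell$ is $0$ when $r_{\ell+1}=r_\ell$ and is $\pm1$ when $r_{\ell+1}=-r_\ell$. Hence $e^{i\pi q_{\ell+1}}/e^{i\pi q_\ell}=r_{\ell+1}/r_\ell$, and since $q_0=0$ and $r_0=1$, induction gives $e^{i\pi q_\ell}=r_\ell$.

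\emph{The doubling identities.} I would compute the increments of the subsequences $q_{2\ell}$ and $q_{2\ell+1}$ from the recursion.
\begin{itemize}
\item The pair $(r_{2\ell},r_{2\ell+1})=(r_\ell,(-1)^\ell r_\ell)$ changes sign exactly when $\ell$ is odd. Since $2\ell$ is even, this gives $q_{2\ell+1}-q_{2\ell}=(1-(-1)^\ell)/2$.
\item The pair $(r_{2\ell+1},r_{2\ell+2})=((-1)^\ell r_\ell,r_{\ell+1})$ changes sign exactly when $(-1)^\ell r_\ell r_{\ell+1}=-1$. Since $2\ell+1$ is odd, the increment $q_{2\ell+2}-q_{2\ell+1}$ is $-(1-(-1)^\ell r_\ell r_{\ell+1})/2$.
\end{itemize}
Adding the two increments gives $q_{2\ell+2}-q_{2\ell}=-(q_{\ell+1}-q_\ell)$; this reduces to a finite check over the parity of $\ell$ and the sign of $r_\ell r_{\ell+1}$. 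With $q_0=0$, induction yields $q_{2\ell}=-q_\ell$, and then the first bullet gives the formula for $q_{2\ell+1}$. Finally $q_1=0$, because $r_1=r_0=1$, and $q_{2^j}=(-1)^jq_1=0$.

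\emph{Smoothness of $\Phi_j$.} On each interval $[\ell 2^{-j},(\ell+1)2^{-j}]$, the function $\Phi_j$ is a smooth function of $t$. Since $\phi$ is constant near $t=0$ and near $t=1$, it is locally constant near every grid point. Its value is $(-1)^j\pi q_\ell$ at $x=\ell 2^{-j}$ from both sides, so all derivatives match there. Periodic extension is consistent because $q_0=q_{2^j}=0$.

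\emph{The agreement identity in \eqref{rs:eq:adjacent-lifts}.} At $x=\ell 2^{-j}=(2\ell)2^{-(j+1)}$ we have
\[
\Phi_{j+1}(x)=(-1)^{j+1}\pi q_{2\ell}=(-1)^{j+1}\pi(-q_\ell)=(-1)^j\pi q_\ell=\Phi_j(x).
\]

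\emph{The uniform bound in \eqref{rs:eq:adjacent-lifts}.} This is the main point. Fix $x$ in $[\ell 2^{-j},(\ell+1)2^{-j}]$, which is the union of two fine cells, with indices $2\ell$ and $2\ell+1$.
\begin{itemize}
\item $\Phi_j(x)$ is a convex combination of $(-1)^j\pi q_\ell$ and $(-1)^j\pi q_{\ell+1}$. These differ by at most $\pi$.
\item $\Phi_{j+1}(x)$ is a convex combination of two consecutive values among $(-1)^{j+1}\pi q_{2\ell}$, $(-1)^{j+1}\pi q_{2\ell+1}$, $(-1)^{j+1}\pi q_{2\ell+2}$. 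By the doubling identities these equal $(-1)^j\pi q_\ell$, then $(-1)^j\pi q_\ell$ plus an error of modulus at most $\pi$, then $(-1)^j\pi q_{\ell+1}$.
\end{itemize}
So all relevant values lie within $\pi$ of $(-1)^j\pi q_\ell$. Each function therefore stays within $\pi$ of this common value, and the triangle inequality gives $\|\Phi_{j+1}-\Phi_j\|_\infty\le2\pi$.

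The only delicate step is the four-case parity check behind $q_{2\ell+2}-q_{2\ell}=-(q_{\ell+1}-q_\ell)$. Everything else is bookkeeping.
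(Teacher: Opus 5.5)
Your proposal is correct and follows the paper's proof essentially step for step. The two increments give $q_{2\ell+2}-q_{2\ell}=-(q_{\ell+1}-q_\ell)$, summation yields the doubling identities, the flat collars of $\phi$ give smoothness, and the $2\pi$ bound follows because every coarse and fine node height in the cell lies within $\pi$ of $(-1)^j\pi q_\ell$ and both interpolants are convex combinations of these heights. The only difference is cosmetic: the step you call a delicate four-case check is a one-line algebraic identity, since the sum of the two increments simplifies directly to $-(-1)^\ell(1-r_\ell r_{\ell+1})/2$.
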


\begin{proof}
Using the sign recursion in two successive increments gives
\[
 q_{2\ell +2}-q_{2\ell }
 =\frac{1-(-1)^\ell }{2}-\frac{1-(-1)^\ell r_\ell r_{\ell +1}}2
 =-(q_{\ell +1}-q_\ell ).
\]
Since $q_0=0$, summation proves $q_{2\ell }=-q_\ell $.
The increment at $2\ell $ is $(1-(-1)^\ell )/2$, which gives the formula
for $q_{2\ell +1}$.  We have $q_1=0$, so iteration gives $q_{2^j}=0$.
Moreover,
$e^{i\pi(q_{\ell +1}-q_\ell )}=r_\ell r_{\ell +1}$; induction from $q_0=0$
therefore gives $e^{i\pi q_\ell }=r_\ell $.

At level $j$, the cells are the intervals $2^{-j}(\ell+[0,1))$;
each is divided into two cells at level $j+1$. In $n$ dimensions the
cells are the Cartesian products $2^{-j}(\ell+[0,1)^n)$,
$\ell\in\Z^n$, and each is divided into $2^n$ cells at the next level.
Thus ``coarse'' refers to one grid relative to its next refinement.
Formula~\eqref{rs:eq:theta} joins the heights
$(-1)^j\pi q_\ell $ and $(-1)^j\pi q_{\ell +1}$, with constant collars at
both ends of each interval.  Adjacent pieces have the same endpoint value,
and all their positive-order derivatives vanish there.  The values at
zero and one both vanish, so the periodic extension is smooth.
At a coarse node,
$(-1)^{j+1}\pi q_{2\ell }=(-1)^j\pi q_\ell $, proving the first equality in
\eqref{rs:eq:adjacent-lifts}.

For the bound, fix a coarse cell and divide all heights by
$(-1)^j\pi$.  The coarse interpolant has endpoints $q_\ell ,q_{\ell +1}$;
the finer one has endpoints $q_\ell ,q_{\ell +1}$ and midpoint height
$q_\ell -(1-(-1)^\ell )/2$.  All three heights lie within distance one of
$q_\ell $, since $|q_{\ell +1}-q_\ell |\le1$.
Both interpolants are convex combinations of these heights because
$0\le\phi\le1$.  Their difference is therefore at most $2\pi$.
\end{proof}

Thus $e^{i\Phi_j(\ell 2^{-j})}=e^{i\pi q_\ell}=r_\ell$, while the \emph{real} phases at
shared nodes agree exactly between consecutive resolutions.
Figure~\ref{rs:fig:phases} illustrates this agreement.  It lets us
interpolate the phases as ordinary real-valued functions before taking
the exponential.

% Exact phase lifts; reproducible source: plot_phases.py.
\begin{figure}[!tbp]
\centering
\includegraphics[width=.94\linewidth]{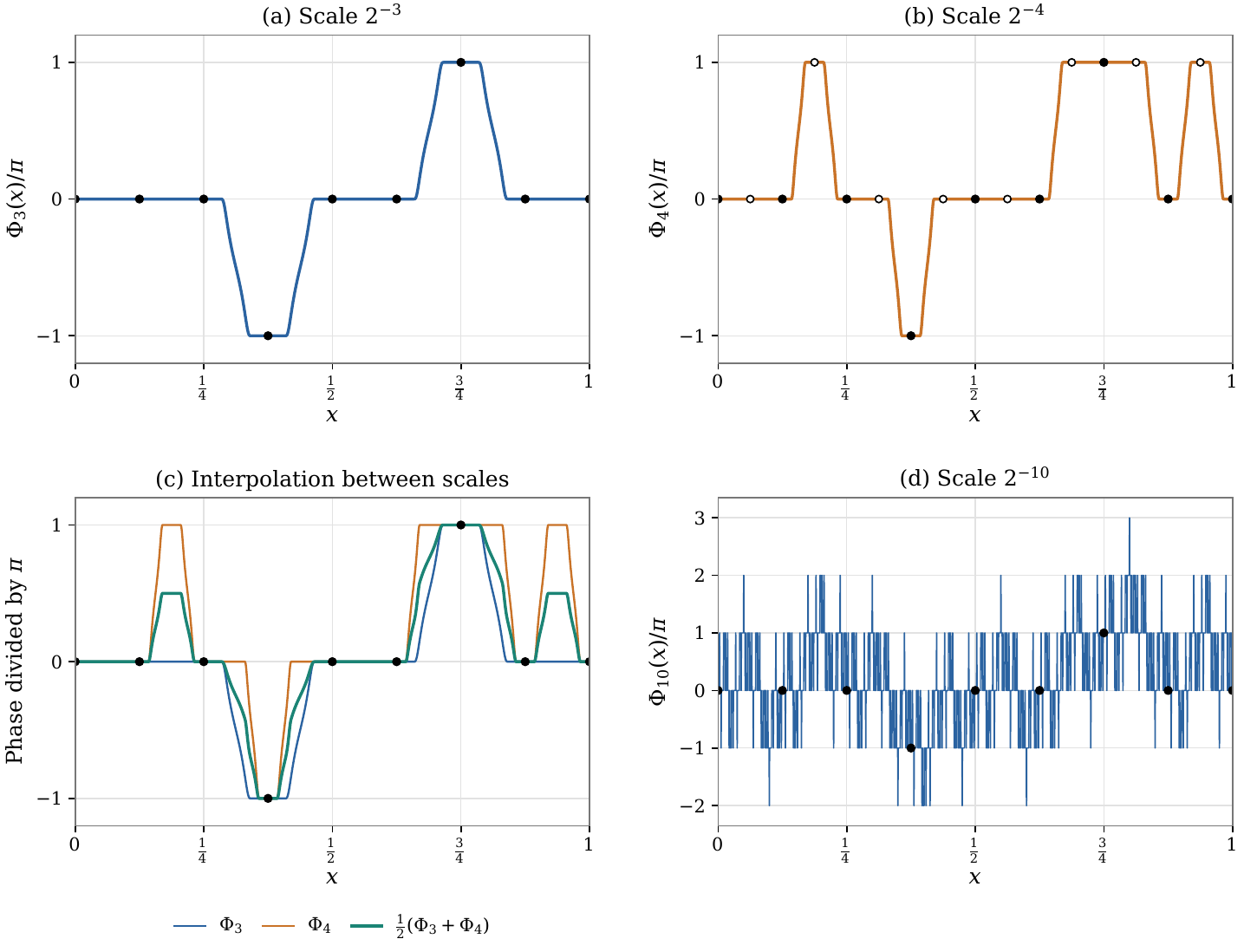}
\caption{Adjacent dyadic phase lifts and their midpoint interpolation.
Filled dots in all four panels mark the common values at $x=\ell/8$;
the hollow circles in the upper-right panel mark the additional nodes
$x=\ell/16$, $\ell$ odd.
The lower-right panel shows $\Phi_{10}$ on the full period $[0,1]$;
its values range from $-2\pi$ to $3\pi$.}
\label{rs:fig:phases}
\end{figure}

\subsection{The smooth periodic phase and its cell profiles}

Recall that $S$ is the exceptional set consisting of those points
$x\in\R^n$ for which at least $m$ coordinates are integers.  The function $\rho(x)$ is a smooth positive measure of
how far $x$ lies from $S$: $\rho(x)\asymp\dist(x,S)$ for $x\notin S$,
and $\rho(x)$ tends to zero as $x$ approaches $S$.
We use it to choose the resolution of the phase.  At $x$, choose the
integer $j$ with $2^{-j-1}<\rho(x)\le2^{-j}$, so the grid spacing
$2^{-j}$ lies between $\rho(x)$ and $2\rho(x)$.  As this distance scale
decreases through successive dyadic ranges, we interpolate smoothly
from $\Phi_j$ to $\Phi_{j+1}$.
Off $S$, put
\begin{equation}\label{rs:eq:global-phase}
 \begin{split}
 \tau(x)&=-\log_2\rho(x),\qquad j(x)=\lfloor\tau(x)\rfloor,
 \qquad a(x)=\phi(\tau(x)-j(x)),\\
 U(x)&=\exp\left(i\sum_{i=1}^n
 \bigl[(1-a(x))\Phi_{j(x)}(x_i)
          +a(x)\Phi_{j(x)+1}(x_i)\bigr]\right).
 \end{split}
\end{equation}
Set $U=1$ on $S$.
The bound $\rho\le1/8$ ensures $j(x)\ge3$.
Within $2^{-j-1}<\rho\le2^{-j}$, the index $j$ stays fixed and
$-\log_2\rho-j$ increases from zero towards one as $\rho$ decreases.
The weight $a$ is initially zero and equals one already near the lower
boundary, because $\phi$ is constant on its endpoint collars.
Thus the interpolated expression passes from the sum of the $\Phi_j$
to the sum of the $\Phi_{j+1}$ before the index changes.
This formula is periodic, and $|U|=1$. It is smooth off $S$. Indeed,
fix an integer $j\ge3$. For $j-1/4\le\tau(x)<j$, the active pair is
$\Phi_{j-1},\Phi_j$ and $a(x)=1$; for $j\le\tau(x)\le j+1/4$, the
active pair is $\Phi_j,\Phi_{j+1}$ and $a(x)=0$. Thus throughout this
neighborhood of the level $\tau=j$,
\[
 U(x)=\exp\!\left(i\sum_{i=1}^n\Phi_j(x_i)\right),
 \qquad |\tau(x)-j|\le\tfrac14.
\]
The two definitions therefore agree on an open neighborhood of their
joining level, rather than only at that level. Smoothness in the
coordinate variables follows from Lemma~\ref{rs:lem:lifts}.

For a two-dimensional illustration, the scale \eqref{tm:eq:rho} from
the flat construction is
\[
 \rho(x)=
 \begin{cases}
 \displaystyle\frac{|\sin(\pi x_1)\sin(\pi x_2)|}
 {4\pi\sqrt{\sin^2(\pi x_1)+\sin^2(\pi x_2)}},&m=1,\\[6pt]
 \displaystyle\frac{\sqrt{\sin^2(\pi x_1)+\sin^2(\pi x_2)}}
 {4\pi\sqrt2},&m=2,
 \end{cases}
 \qquad x\notin S.
\]
In the cell $[-1/2,1/2]^2$, the exceptional set $S$ is the union of
the coordinate axes when $m=1$, and just the origin when $m=2$.
Figure~\ref{rs:fig:global-phase} displays the resulting $U$ from
\eqref{rs:eq:global-phase}, using the cutoff \eqref{rs:eq:cutoff}.
Its modulus is identically one; the colors record its changing phase.

\begin{figure}[!tbp]
\centering
\includegraphics[width=.86\textwidth]{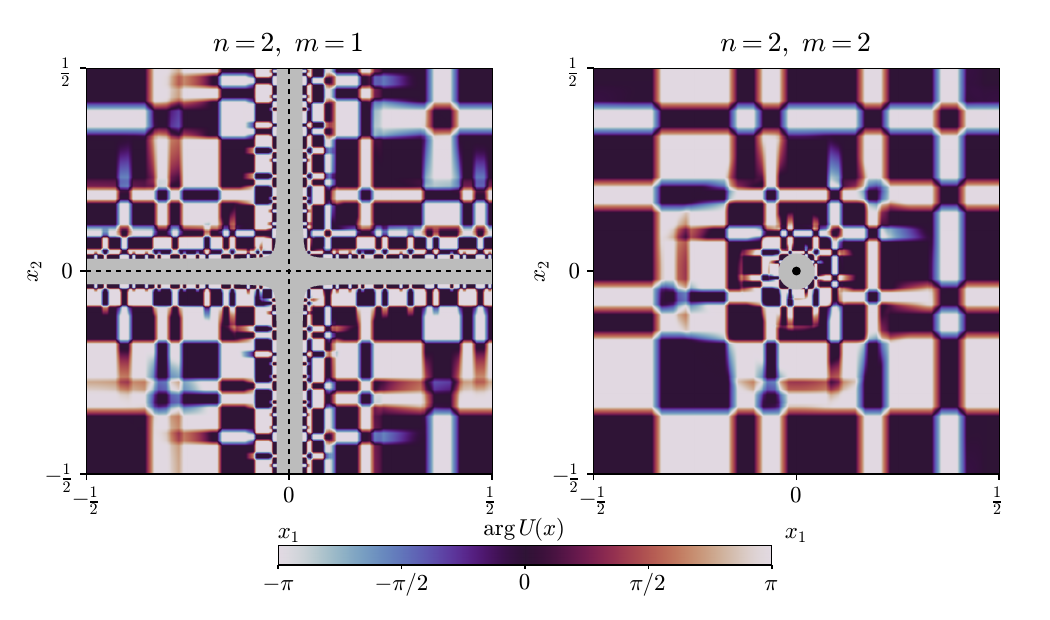}
\caption{The phase of $U$ for $n=2$, with $m=1$ (left) and $m=2$
(right). Black dashed lines or the black point mark $S$.
The cyclic color scale identifies $-\pi$ and $\pi$, which represent
the same value of $U$. The gray region $\rho<2^{-7}$ is omitted
from this finite-resolution display: progressively finer oscillations
occur on approaching $S$. No cutoff is applied to the function $U$
itself, and its assigned value $U=1$ on $S$ is not represented by the
black markers.}
\label{rs:fig:global-phase}
\end{figure}

% Exact one-dimensional adaptive phase; reproducible source: plot_scale_phase.py.
Figure~\ref{rs:fig:scale-phase} gives a one-dimensional view of the scale
selection in \eqref{rs:eq:global-phase}. It displays the real phase in the
exponent, together with $\rho$ and the index $j(x)$ of the active grid.
For $n=m=1$, this phase is
$\Theta(x)=(1-a(x))\Phi_{j(x)}(x)+a(x)\Phi_{j(x)+1}(x)$,
so that $U(x)=e^{i\Theta(x)}$.

It remains to check that interpolation preserves the cancellation of
the Rudin--Shapiro signs. On each cell, we will express its phase
factor as its left endpoint sign times one smooth coefficient, plus
its right endpoint sign times another. Their dependence on the cell
index and scale is only through their parities.

\begin{lemma}\label{rs:lem:profiles}
Fix an integer $\ell\ge3$, and partition the entire unit cube into cells
of side length $2^{-\ell+1}$. Every point $x$, reduced modulo $\Z^n$,
has a unique representation
\[
 x_i=2^{-\ell+1}(\ell_i+t_i),\qquad
 \ell_i=\lfloor2^{\ell-1}x_i\rfloor,\qquad
 t_i=2^{\ell-1}x_i-\ell_i,\qquad 1\le i\le n,
\]
with $0\le\ell_i<2^{\ell-1}$ and $0\le t_i<1$.
We now restrict to points satisfying
$2^{-\ell-1}<\rho(x)<2^{-\ell+1}$; their cells need not lie entirely
in this set. There are smooth coefficient profiles $A^{(\sigma,c)}(t,u)$ and
$B^{(\sigma,c)}(t,u)$ on $[0,1]\times[1/2,2]$, for
$\sigma,c\in\{-1,1\}$, with all derivatives bounded independently
of $\ell$, such that
\begin{equation}\label{rs:eq:tensor-profile}
 U(x)=\prod_{i=1}^n
 \left[r_{\ell_i}A^{((-1)^{\ell-1},(-1)^{\ell_i})}(t_i,2^\ell\rho(x))
       +r_{\ell_i+1}B^{((-1)^{\ell-1},(-1)^{\ell_i})}(t_i,2^\ell\rho(x))\right].
\end{equation}
\end{lemma}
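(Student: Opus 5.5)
The plan is to exploit two facts: the phase in \eqref{rs:eq:global-phase} is a sum of one-variable functions once the active scale is fixed, and every dyadic height that occurs inside a level-$(\ell-1)$ cell is an integer shift of the endpoint heights $q_{\ell_i}$ or $q_{\ell_i+1}$.

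\emph{Step 1: separating the coordinates.} Put $u=2^\ell\rho(x)\in(1/2,2)$. Then $\tau(x)=\ell-\log_2u\in(\ell-1,\ell+1)$, so only $\Phi_{\ell-1},\Phi_\ell,\Phi_{\ell+1}$ are active.
\begin{itemize}
\item For $u\in(1,2)$, we have $j=\ell-1$, and the weights on $\Phi_{\ell-1}$ and $\Phi_\ell$ are $1-\phi(1-\log_2u)$ and $\phi(1-\log_2u)$.
\item For $u\in(1/2,1]$, we have $j=\ell$, and the weights on $\Phi_\ell$ and $\Phi_{\ell+1}$ are $1-\phi(-\log_2u)$ and $\phi(-\log_2u)$.
\end{itemize}
As already checked after \eqref{rs:eq:global-phase}, both descriptions reduce to the pure weight on $\Phi_\ell$ in a neighbourhood $|\log_2u|\le 1/4$. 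Hence there are smooth weights $\alpha(u),\beta(u),\gamma(u)\ge0$ on $[1/2,2]$ with $\alpha+\beta+\gamma=1$, independent of $\ell$, such that
\[
U(x)=\prod_{i=1}^n\exp\bigl(i[\alpha(u)\Phi_{\ell-1}(x_i)+\beta(u)\Phi_\ell(x_i)+\gamma(u)\Phi_{\ell+1}(x_i)]\bigr).
\]

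\emph{Step 2: one coordinate on one coarse cell.} Fix $i$ and write $a=\ell_i$, $t=t_i$. I compute the heights of all three grids on the cell $2^{-\ell+1}(a+[0,1])$ using \eqref{rs:eq:lift-identities}. Set $\varepsilon_a=(1-(-1)^a)/2$.
\begin{itemize}
\item Level $\ell$: $q_{2a}=-q_a$, $q_{2a+1}=-q_a+\varepsilon_a$ and $q_{2a+2}=-q_{a+1}$.
\item Level $\ell+1$: $q_{4a}=q_{4a+1}=q_a$, $q_{4a+2}=q_a-\varepsilon_a$, $q_{4a+3}=q_a-\varepsilon_a+1$ and $q_{4a+4}=q_{a+1}$.
\end{itemize}
After multiplication by the sign prefactors $(-1)^{\ell}$ and $(-1)^{\ell+1}$ from \eqref{rs:eq:theta}, every node value becomes $\sigma\pi$ times $q_a$ or $q_{a+1}$, plus an integer depending only on the parity $c=(-1)^a$. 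Here $\sigma=(-1)^{\ell-1}$.

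Each $\Phi_k$ is a convex combination of adjacent node values with weights built from $\phi$ evaluated at $2t$, $4t$ modulo the subcells. Hence
\[
\alpha\Phi_{\ell-1}+\beta\Phi_\ell+\gamma\Phi_{\ell+1}
=\sigma\pi\bigl[(1-\psi)q_a+\psi q_{a+1}\bigr]+\sigma\pi E .
\]
The functions $\psi=\psi^{(c)}(t,u)\in[0,1]$ and $E=E^{(c)}(t,u)$ are smooth on $[0,1]\times[1/2,2]$ and depend only on $c$. Their smoothness in $t$ holds because $\phi$ has flat collars at the subcell endpoints.

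\emph{Step 3: extracting the endpoint signs.} Using $e^{i\pi\sigma q_a}=r_a$ and $q_{a+1}-q_a=c(1-r_ar_{a+1})/2$, the coordinate factor equals
\[
r_a\,e^{i\sigma\pi E}\Bigl[\tfrac{1+r_ar_{a+1}}2+\tfrac{1-r_ar_{a+1}}2e^{i\sigma c\pi\psi}\Bigr].
\]
Since $r_a\cdot r_ar_{a+1}=r_{a+1}$, this is exactly
\[
r_aA^{(\sigma,c)}+r_{a+1}B^{(\sigma,c)},\qquad
A^{(\sigma,c)}=e^{i\sigma\pi E}\tfrac{1+e^{i\sigma c\pi\psi}}2,\qquad
B^{(\sigma,c)}=e^{i\sigma\pi E}\tfrac{1-e^{i\sigma c\pi\psi}}2.
\]
Taking the product over $i$ gives \eqref{rs:eq:tensor-profile}.

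\emph{Bounds and main obstacle.} Only the four pairs $(\sigma,c)$ occur, and the profiles involve only the fixed functions $\phi$ and $\log_2$ on the compact range $[1/2,2]$. Therefore all derivative bounds are independent of $\ell$. The main care is in Step 2: verifying the level-$(\ell+1)$ height identities and the sign bookkeeping $(-1)^{\ell+1}=(-1)^{\ell-1}$. The other delicate point is smoothness in $u$ across $u=1$, which I would settle first via the overlap neighbourhood $|\tau-\ell|\le1/4$.
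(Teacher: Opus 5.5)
Your proposal is correct and follows essentially the paper's route. The paper also records the level $\ell-1,\ell,\ell+1$ heights relative to $q_{\ell_i}$; its profiles $L_e^{(c,v)}$ equal your $E_e+\psi_e\,c(1-v)/2$. It likewise blends the three levels in $u=2^\ell\rho$ and extracts the endpoint signs. The only cosmetic difference is that the paper parametrizes by $v=r_{\ell_i}r_{\ell_i+1}$ and sets $A=\tfrac12(G^{(\sigma,c,1)}+G^{(\sigma,c,-1)})$ and $B=\tfrac12(G^{(\sigma,c,1)}-G^{(\sigma,c,-1)})$. These are exactly your $A=e^{i\sigma\pi E}(1+e^{i\sigma c\pi\psi})/2$ and $B=e^{i\sigma\pi E}(1-e^{i\sigma c\pi\psi})/2$.
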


\begin{proof}
For the three refinement levels $e\in\{0,1,2\}$ and signs
$c,v\in\{-1,1\}$, record the following relative heights:
\begin{equation}\label{rs:eq:profiles}
\begin{aligned}
 (d_{0,a}^{(c,v)})_{a=0}^{1}
   &=\left(0,\frac{c(1-v)}2\right),\\
 (d_{1,a}^{(c,v)})_{a=0}^{2}
   &=\left(0,-\frac{1-c}{2},\frac{c(1-v)}2\right),\\
 (d_{2,a}^{(c,v)})_{a=0}^{4}
   &=\left(0,0,-\frac{1-c}{2},\frac{1+c}{2},
                      \frac{c(1-v)}2\right).
\end{aligned}
\end{equation}
For each $e\in\{0,1,2\}$ and fixed $c,v$, define the smooth
interpolation profile $L_e^{(c,v)}$ by the same rule as before:
\[
 L_e^{(c,v)}\bigl(2^{-e}(a+t)\bigr)
 =(1-\phi(t))d_{e,a}^{(c,v)}+\phi(t)d_{e,a+1}^{(c,v)},
 \qquad 0\le a<2^e,\quad 0\le t\le1.
\]
These are smooth functions on $[0,1]$, with uniformly bounded
derivatives, since there are only twelve choices of $(e,c,v)$.
They are constant near every subdivision point. In particular,
$L_e^{(c,v)}(t)=0$ for $t\le2^{-e-2}$ and
$L_e^{(c,v)}(t)=c(1-v)/2$ for $t\ge1-2^{-e-2}$, after smooth
continuation by these constants outside $[0,1]$. Hence
\[
 \supp\bigl((L_e^{(c,v)})'\bigr)
 \subseteq\bigcup_{a=0}^{2^e-1}
       2^{-e}\bigl(a+[1/4,3/4]\bigr).
\]
Thus all positive-order derivatives are supported in these transition
intervals. The profile itself is compactly supported inside $(0,1)$
when $v=1$; when $v=-1$, it equals $c$ near the right endpoint and
its constant continuation is not compactly supported.

Here is the direct verification of the lists. For a coarse cell index
$0\le k<2^{\ell-1}$, the two refinement identities in
\eqref{rs:eq:lift-identities}, applied twice, give
\[
 (q_{4k+p})_{p=0}^3
 =\left(q_k,q_k,q_k-\frac{1-(-1)^k}{2},
                   q_k+\frac{1+(-1)^k}{2}\right),
 \qquad q_{4k+4}=q_{k+1}.
\]
At refinement zero, the endpoint offset is
$q_{k+1}-q_k=(-1)^k(1-r_kr_{k+1})/2$.
At refinement one, the alternating prefactor in
\eqref{rs:eq:theta} changes the midpoint offset to
$-(1-(-1)^k)/2$. At refinement two, the prefactor returns to its
original sign, giving the last list above. Consequently
\begin{equation}\label{rs:eq:refinement}
 \Phi_{\ell-1+e}\bigl(2^{-\ell+1}(k+t)\bigr)
   =(-1)^{\ell-1}\pi\bigl[q_k+L_e^{((-1)^k,r_kr_{k+1})}(t)\bigr],
 \qquad e\in\{0,1,2\}.
\end{equation}

Let $u=2^\ell\rho(x)$. Since $\tau=\ell-\log_2u$, the interpolation
in \eqref{rs:eq:global-phase} uses levels $\ell-1,\ell$ when
$1\le u\le2$, and levels $\ell,\ell+1$ when $1/2\le u\le1$.
For $\sigma,c,v\in\{-1,1\}$, define the interpolated phase profile
\[
 G^{(\sigma,c,v)}(t,u)=\exp\!\left(i\sigma\pi
 \begin{cases}
 \begin{aligned}
 &[1-\phi(1-\log_2u)]L_0^{(c,v)}(t)\\[-2pt]
 &\qquad+\phi(1-\log_2u)L_1^{(c,v)}(t),
 \end{aligned}&1\le u\le2,\\[8pt]
 \begin{aligned}
 &[1-\phi(-\log_2u)]L_1^{(c,v)}(t)\\[-2pt]
 &\qquad+\phi(-\log_2u)L_2^{(c,v)}(t),
 \end{aligned}&\tfrac12\le u\le1.
 \end{cases}\right).
\]
Near $u=1$ both formulas are $\exp(i\sigma\pi L_1^{(c,v)}(t))$.
They therefore define one smooth function with uniformly bounded
derivatives; only finitely many sign choices $\sigma,c,v$ enter.
Every function of a sign $v$ is affine in that sign: put
\[
 A^{(\sigma,c)}=\tfrac12\bigl(G^{(\sigma,c,1)}+G^{(\sigma,c,-1)}\bigr),
 \qquad
 B^{(\sigma,c)}=\tfrac12\bigl(G^{(\sigma,c,1)}-G^{(\sigma,c,-1)}\bigr).
\]
Then $G^{(\sigma,c,v)}=A^{(\sigma,c)}+vB^{(\sigma,c)}$.
The factor $\exp(i\sigma\pi q_k)$ equals $r_k$, so
$r_kG^{(\sigma,(-1)^k,r_kr_{k+1})}
 =r_kA^{(\sigma,(-1)^k)}+r_{k+1}B^{(\sigma,(-1)^k)}$.
This expresses each interpolated cell phase as a smooth coefficient
times its left sign plus another smooth coefficient times its right
sign. Taking $\sigma=(-1)^{\ell-1}$ and multiplying the coordinate
factors proves \eqref{rs:eq:tensor-profile}.
\end{proof}
\begin{figure}[!tbp]
\centering
\includegraphics[width=.83\linewidth]{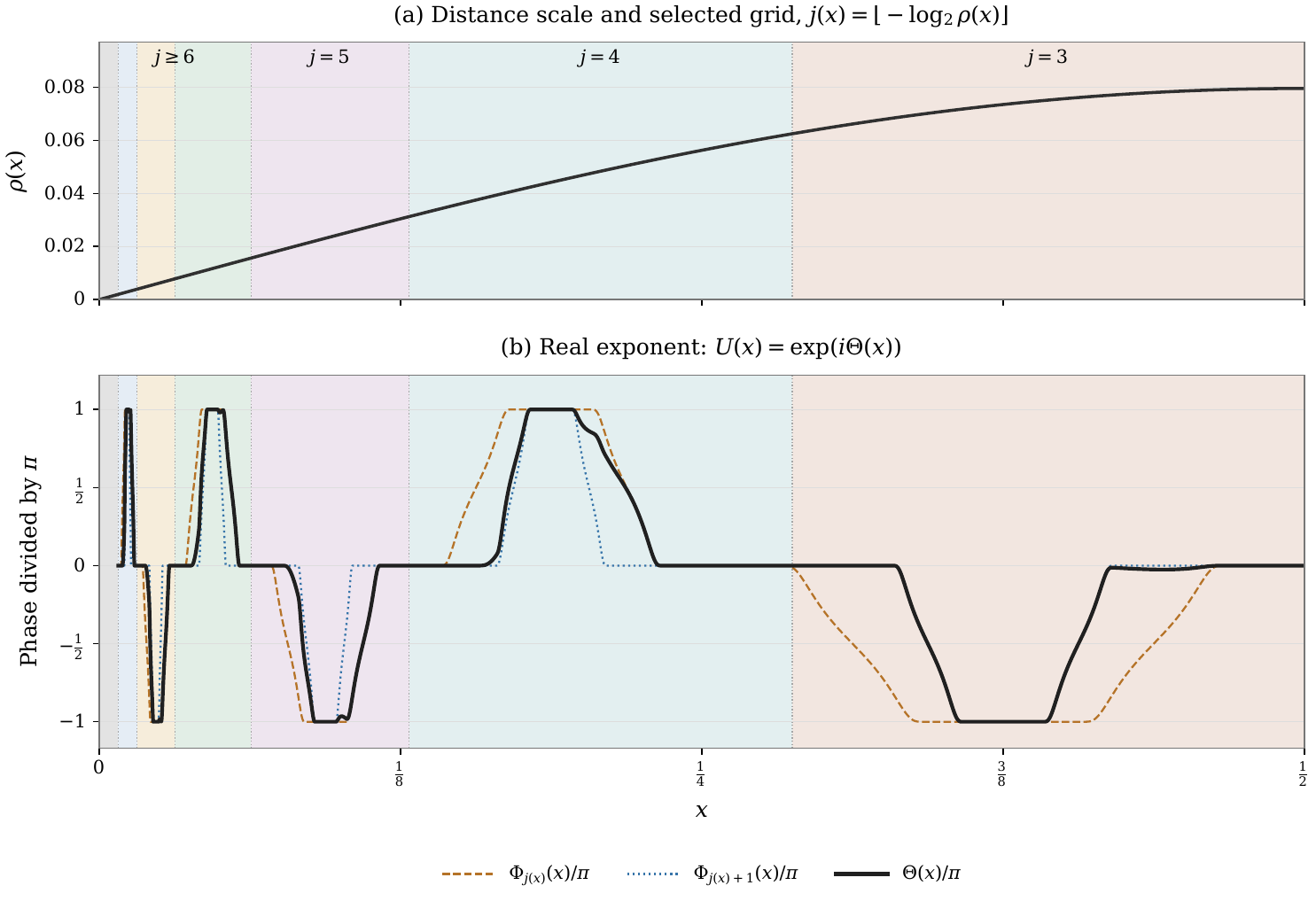}
\caption{Scale selection in \eqref{rs:eq:global-phase} for $n=m=1$ and
$\rho(x)=|\sin(\pi x)|/(4\pi)$, with linear axes. Colored bands mark
$2^{-j-1}<\rho\le2^{-j}$; the narrower bands near the origin are grouped
under $j\ge6$ in the top label. Top: $\rho$ and the selected index $j$.
Bottom: the real phase $\Theta/\pi$ (black) and the two phases being
blended (dashed and dotted). Finer oscillations with $\rho<2^{-9}$ are
omitted from the gray strip of the lower panel; $U$ itself is unchanged.}
\label{rs:fig:scale-phase}
\end{figure}
\FloatBarrier

\subsection{Weighted sums and differentiation}

To estimate a localized Fourier transform of $U$, we sum its
contributions over the dyadic cells. Lemma~\ref{rs:lem:profiles}
expresses each contribution using Rudin--Shapiro signs and smooth
coefficients. Since these coefficients vary from cell to cell, we need
the following weighted form of Lemma~\ref{rs:lem:interval}.
We write the smooth coefficient as $a(y,t)$, treating the spatial
variable $y\in\R^n$ and the relative cell position $t\in[0,1]^n$
as independent variables. Differentiating in $y$ with $t$ fixed
controls how the coefficient varies between cells at the same relative
position. We first work in the unit period $[0,1)^n$; the lemma also
explains how to add contributions from finitely many integer translates
$z+[0,1)^n$, with $z\in\Z^n$.

\begin{lemma}\label{rs:lem:weighted-box}
Fix an integer $j\ge0$ and a constant $\kappa>0$.
Let $Q=\prod_{i=1}^n Q_i$ be an axis-parallel rectangle, where each
interval $Q_i$ has length $s_i\ge\kappa2^{-j}$. These $s_i$ are side
lengths; in the application to a piece $f=f_{\ell,q}$ of $H_\ell$ from
Lemma~\ref{tm:lem:anisotropic-box-count}, they are the lengths
$s_i(f_{\ell,q})$ of its supporting rectangle $Q_{f_{\ell,q}}$.
Let the smooth amplitude $a(y,t)$ be supported in $Q$ as a function
of $y$, uniformly for $t\in[0,1]^n$. Suppose, for every multi-index
$\gamma\in\{0,1\}^n$, that
\begin{equation}\label{rs:eq:slow-bounds}
 |\partial_y^\gamma a(y,t)|
 \le C_\gamma\prod_i s_i^{-\gamma_i},
 \qquad y\in\R^n,\quad t\in[0,1]^n.
\end{equation}
Partition the unit period $[0,1)^n$ into the cells
$2^{-j}(\ell+[0,1)^n)$, where
$\ell=(\ell_1,\ldots,\ell_n)\in\{0,\ldots,2^j-1\}^n$.
A point $x$ in such a cell is written uniquely as
$x=2^{-j}(\ell+t)$, with $t\in[0,1)^n$.
For fixed vectors $\varepsilon\in\{0,1\}^n$ and $c\in\{-1,1\}^n$, define
\[
 F^{(c,\varepsilon)}(x)=
 \begin{cases}
 \displaystyle a(x,2^jx-\ell)\prod_{i=1}^n r_{\ell_i+\varepsilon_i},
      &(-1)^{\ell_i}=c_i\text{ for every }i,\\[3pt]
 0,&\text{otherwise},
 \end{cases}
 \qquad x\in[0,1)^n.
\]
Thus $F^{(c,\varepsilon)}$ vanishes on every cell whose index
fails any one of the prescribed parity conditions. Its Fourier integral
over $[0,1)^n$ satisfies
\begin{equation}\label{rs:eq:weighted-box}
 \left|\int_{[0,1)^n} F^{(c,\varepsilon)}(x)e^{-2\pi i\xi\cdot x}\,dx\right|
 \le C2^{-jn/2}\Bigl(\prod_i s_i\Bigr)^{1/2},
 \qquad \xi\in\R^n.
\end{equation}
The constant depends only on $n$, $\kappa$, and the constants in
\eqref{rs:eq:slow-bounds}, and is uniform in $c$ and $\varepsilon$.

For a finite sum of functions of this form, add their respective
right-hand sides in \eqref{rs:eq:weighted-box}. The estimate also holds
on an integer translate $z+[0,1)^n$, using the local cell coordinates
$x=z+2^{-j}(\ell+t)$. For a finite collection of these translated
periods, add the bound for each period. If the number of terms and
periods is fixed, and the side lengths and derivative bounds are common
to all terms, multiply $C$ by the number of terms and the number of
periods. The resulting constant remains independent of $j$ and $\xi$.
\end{lemma}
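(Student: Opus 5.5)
The plan is a discrete summation by parts in each coordinate separately, which combines the Rudin--Shapiro interval bound of \Cref{rs:lem:interval} with the slow variation \eqref{rs:eq:slow-bounds} of the amplitude. First rescale each cell to $[0,1)^n$ by $x=2^{-j}(\ell+t)$. This writes the integral as
\[
 2^{-jn}\sum_{\ell}\Bigl(\prod_i r_{\ell_i+\varepsilon_i}\mathbf 1_{(-1)^{\ell_i}=c_i}\,e^{-2\pi i\xi_i2^{-j}\ell_i}\Bigr)\,b_\ell,
\]
where
\[
 b_\ell=\int_{[0,1)^n}a\bigl(2^{-j}(\ell+t),t\bigr)e^{-2\pi i2^{-j}\xi\cdot t}\,dt.
\]
The sign-and-character weight factors as a tensor product over the coordinates $i$. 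By \Cref{rs:lem:interval}, its one-dimensional partial sums over any interval $I_i$ are $O(|I_i|^{1/2})$, uniformly in the frequency and in $c,\varepsilon$.

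Next, use the support of $a$ in $Q$. The coefficient $b_\ell$ vanishes unless $2^{-j}\ell$ lies within $2^{-j}$ of $Q$, so in coordinate $i$ the index runs over an interval $J_i$ with $|J_i|\lesssim 2^js_i+1\lesssim 2^js_i$; here I use $s_i\ge\kappa2^{-j}$. Differentiating $b_\ell$ in $y$ with $t$ frozen, \eqref{rs:eq:slow-bounds} with $\gamma\in\{0,1\}^n$ controls the mixed finite differences $\Delta^\gamma b_\ell$ by $C\prod_i(2^{-j}s_i^{-1})^{\gamma_i}$, using the mean-value theorem in each differenced coordinate. This holds because the $t$-dependence and the oscillatory factor in $t$ are identical for neighbouring $\ell$. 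The precise identity I need is that $b_{\ell+e_i}-b_\ell$ equals the $t$-integral of $a(y+2^{-j}e_i,t)-a(y,t)$ at the same $t$, which is exactly why the amplitude was written with independent variables $y$ and $t$.

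Then apply the $n$-fold Abel summation: summation by parts in each coordinate over the box $\prod_iJ_i$. The result is a finite sum over subsets $E\subset\{1,\dots,n\}$ of terms of the form
\[
 \sum_{\ell\in\prod J_i}\bigl(\text{partial tensor sums}\bigr)\cdot\Delta^{E}b_\ell .
\]
The coordinates in $E$ are summed over $J_i$ against differences, and the coordinates not in $E$ appear as boundary terms. Each coordinate $i\in E$ contributes
\[
 |J_i|\cdot|J_i|^{1/2}\cdot 2^{-j}s_i^{-1}\lesssim (2^js_i)^{1/2}.
\]
Each boundary coordinate contributes $|J_i|^{1/2}\max|b|\lesssim(2^js_i)^{1/2}$. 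So every term is $\lesssim\prod_i(2^js_i)^{1/2}$. Multiplying by the prefactor $2^{-jn}$ gives
\[
 2^{-jn}\prod_i(2^js_i)^{1/2}=2^{-jn/2}\Bigl(\prod_is_i\Bigr)^{1/2},
\]
which is \eqref{rs:eq:weighted-box}.

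The main obstacle is bookkeeping the multidimensional summation by parts so that every term gets its $\sqrt{|J_i|}$ gain in every coordinate at once. The route I would take is to do it one coordinate at a time, treating the remaining coordinates as parameters. This works because the interval bound is uniform in the other indices and the weights factor. The parity restriction and the shift $\varepsilon$ are already absorbed into the second half of \Cref{rs:lem:interval}. The final statements about finite sums and integer translates follow from the triangle inequality, the translation $x\mapsto z+x$ changing only a unimodular factor, and the fact that the constants depend only on $n$, $\kappa$ and the $C_\gamma$.
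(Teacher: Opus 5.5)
Your proposal is correct and follows essentially the paper's argument. You rescale each cell and bound the parity-restricted, shifted one-dimensional Rudin--Shapiro partial sums by $C|J_i|^{1/2}$ via \Cref{rs:lem:interval}. You then sum by parts coordinate by coordinate, using the mixed bounds \eqref{rs:eq:slow-bounds} for $\gamma\in\{0,1\}^n$ and $|J_i|\lesssim 2^js_i$. The only difference is cosmetic: you integrate in $t$ first and work with mixed finite differences of $b_\ell$, whereas the paper fixes $t$, writes the differences as integrals of $\partial_{v_E}w$, and integrates in $t$ at the end; by linearity these give the same estimate.
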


\begin{proof}
Project the support rectangle $Q$ onto coordinate $i$, obtaining the
interval $Q_i$. Let $I_i$ consist of those integers
$0\le\ell_i<2^j$ for which the one-dimensional cell
$2^{-j}(\ell_i+[0,1))$ meets $Q_i$. Since $Q_i$ is an interval, these
indices are consecutive. Thus all cells contributing to the integral
have indices in $I_1\times\cdots\times I_n$.
If any $I_i$ is empty, the integral is zero. Otherwise, their
cardinalities satisfy $|I_i|\le2^js_i+2\le C_\kappa2^js_i$,
because $s_i\ge\kappa2^{-j}$.
On a cell with index $\ell$, the relative coordinate is $t=2^jx-\ell$.
Writing $x=2^{-j}(\ell+t)$ gives $dx=2^{-jn}\,dt$, and hence
\[
\begin{aligned}
 &\int_{[0,1)^n} F^{(c,\varepsilon)}(x)e^{-2\pi i\xi\cdot x}\,dx\\
 &=\sum_{\substack{\ell\in I_1\times\cdots\times I_n\\
                   (-1)^{\ell_i}=c_i\ \text{for every }i}}
   \Bigl(\prod_i r_{\ell_i+\varepsilon_i}\Bigr)
   \int_{2^{-j}(\ell+[0,1)^n)}
       a(x,2^jx-\ell)e^{-2\pi i\xi\cdot x}\,dx\\
 &=2^{-jn}\sum_{\substack{\ell\in I_1\times\cdots\times I_n\\
                   (-1)^{\ell_i}=c_i\ \text{for every }i}}
   \Bigl(\prod_i r_{\ell_i+\varepsilon_i}\Bigr)
   \int_{[0,1)^n}a\bigl(2^{-j}(\ell+t),t\bigr)
      e^{-2\pi i2^{-j}\xi\cdot(\ell+t)}\,dt\\
 &=2^{-jn}\int_{[0,1)^n}e^{-2\pi i2^{-j}\xi\cdot t}
 \sum_{\substack{\ell\in I_1\times\cdots\times I_n\\
                   (-1)^{\ell_i}=c_i\ \text{for every }i}}
 a\bigl(2^{-j}(\ell+t),t\bigr)
 \prod_{i=1}^n\left[r_{\ell_i+\varepsilon_i}
                  e^{-2\pi i2^{-j}\xi_i\ell_i}\right]\,dt.
\end{aligned}
\]
Write $I_i=[a_i,b_i]\cap\Z$. For the $i$th bracket, put
$u_k=r_{k+\varepsilon_i}e^{-2\pi i2^{-j}\xi_i k}$ and
$p_v=\sum_{a_i\le k\le v,\,(-1)^k=c_i}u_k$, for $a_i\le v\le b_i$.
The factor $(1+c_i(-1)^k)/2$ equals one when $(-1)^k=c_i$ and zero
otherwise. Thus the parity restriction gives the following two ordinary
Fourier sums:
\[
\begin{aligned}
 p_v
 &=\frac12\sum_{k=a_i}^v
       (1+c_i(-1)^k)r_{k+\varepsilon_i}e^{-2\pi i2^{-j}\xi_i k}\\
 &=\frac12\sum_{k=a_i}^v
       r_{k+\varepsilon_i}e^{-2\pi i2^{-j}\xi_i k}
   +\frac{c_i}{2}\sum_{k=a_i}^v
       r_{k+\varepsilon_i}e^{-2\pi i(2^{-j}\xi_i-1/2)k},\\
 |p_v|
 &\le\frac C2\sqrt{v-a_i+1}+\frac C2\sqrt{v-a_i+1}
 \le C\sqrt{|I_i|}.
\end{aligned}
\]
Here Lemma~\ref{rs:lem:interval} applies to both sums: replacing
$k$ by $k+\varepsilon_i$ merely shifts the interval and introduces
a factor of modulus one. The bound is uniform in $\xi_i$ and $v$.
Fix $t\in[0,1)^n$ and define
$w(v)=a(2^{-j}(v+t),t)$, for $v\in\R^n$.
To sum in coordinate $i$, fix the other coordinates and set
$\omega_i(u)=w(v_1,\ldots,v_{i-1},u,v_{i+1},\ldots,v_n)$.
With $p_{a_i-1}=0$, the difference $p_k-p_{k-1}$ equals $u_k$
when $(-1)^k=c_i$ and is zero otherwise. Thus summation by parts gives
\[
\begin{aligned}
 \sum_{\substack{a_i\le k\le b_i\\(-1)^k=c_i}}u_k\omega_i(k)
 &=\sum_{k=a_i}^{b_i}(p_k-p_{k-1})\omega_i(k)\\
 &=p_{b_i}\omega_i(b_i)
   +\sum_{k=a_i}^{b_i-1}p_k\bigl(\omega_i(k)-\omega_i(k+1)\bigr)\\
 &=p_{b_i}\omega_i(b_i)
   -\sum_{k=a_i}^{b_i-1}p_k\int_k^{k+1}\omega_i'(u)\,du.
\end{aligned}
\]
The modulus is therefore at most
$\sup_{a_i\le k\le b_i}|p_k|
 \bigl(|\omega_i(b_i)|+\int_{a_i}^{b_i}|\omega_i'(u)|\,du\bigr)$.

For $E\subset\{1,\ldots,n\}$, we use
\[
 \partial_{v_E}w=\left(\prod_{i\in E}\frac{\partial}{\partial v_i}\right)w,
 \qquad dv_E=\prod_{i\in E}dv_i.
\]
Thus each coordinate indexed by $E$ is differentiated once.
The notation $w(v_E,b_{E^c})$ means that coordinates outside $E$ are
fixed at their right endpoints $b_i$. For example, when $n=3$ and
$E=\{1,3\}$, the corresponding derivative is
$\partial_{v_1}\partial_{v_3}w(v_1,b_2,v_3)$ and the integration
element is $dv_1\,dv_3$.
Apply the one-dimensional identity successively in all $n$ coordinates,
keeping $t$ fixed, to obtain
\[
\begin{aligned}
 &\Big|
 \sum_{\substack{\ell\in I_1\times\cdots\times I_n\\
                  (-1)^{\ell_i}=c_i\ \text{for every }i}}
 w(\ell)\prod_{i=1}^n
 \left[r_{\ell_i+\varepsilon_i}e^{-2\pi i2^{-j}\xi_i\ell_i}\right]
 \Big|\\
 &\qquad\le C\prod_i\sqrt{|I_i|}
 \sum_{E\subset\{1,\ldots,n\}}
 \int_{\prod_{i\in E}[a_i,b_i]}
 \Big|\partial_{v_E}w(v_E,b_{E^c})\Big|\,dv_E.
\end{aligned}
\]
For $E=\varnothing$ there is no differentiation or integration; the
summand is $|w(b)|$. By the chain rule, each derivative in $v_i$
produces a factor $2^{-j}$. Estimate~\eqref{rs:eq:slow-bounds} therefore
bounds each integral by
$C\prod_{i\in E}(|I_i|2^{-j}/s_i)\le C'$.
There are $2^n$ subsets $E$, so this sum is uniformly bounded.
Integration in $t$ and $|I_i|\le C_\kappa2^js_i$ now give
\[
\begin{aligned}
 \left|\int_{[0,1)^n}F^{(c,\varepsilon)}(x)e^{-2\pi i\xi\cdot x}\,dx\right|
 &\le C2^{-jn}\int_{[0,1)^n}\prod_i\sqrt{|I_i|}\,dt\\
 &\le C'2^{-jn}\prod_i(2^js_i)^{1/2}
 =C'2^{-jn/2}\Bigl(\prod_i s_i\Bigr)^{1/2}.
\end{aligned}
\]

For a translated period $z+[0,1)^n$, change variables $x=z+y$.
The amplitude becomes $a(z+y,t)$, with the same side lengths and
derivative bounds, and the exponential acquires the constant factor
$e^{-2\pi i\xi\cdot z}$ of modulus one. The local indices again range
from $0$ to $2^j-1$; a shifted sign in the last cell may use $r_{2^j}$.
If a support rectangle crosses a period boundary, split the integral
at that boundary and apply this argument on each period. The triangle
inequality proves the assertions about finite sums and translated
periods. The compact chart supports used below lie in a fixed bounded
region, so the number of unit periods is bounded independently of $j$.
\end{proof}
\begin{lemma}\label{rs:lem:oscillatory-box}
Let $\ell\ge3$ be an integer, and let $f$ be a smooth function
compactly supported where $2^{-\ell-1}<\rho<2^{-\ell+1}$, in an
axis-parallel rectangle with side lengths $s_i\in[\kappa2^{-\ell},C]$,
for fixed constants $\kappa,C>0$.
Assume that, for every multi-index $\gamma$,
\begin{equation}\label{rs:eq:box-input-derivatives}
 \begin{aligned}
 |\partial^\gamma f(x)|
   &\le C_\gamma\prod_{i=1}^n s_i^{-\gamma_i},\\
 |\partial^\gamma\rho(x)|
   &\le C_\gamma2^{-\ell}\prod_{i=1}^n s_i^{-\gamma_i},
 \end{aligned}
 \qquad x\in\supp f.
\end{equation}
These are the bounds supplied by
\eqref{tm:eq:scaled-box-derivatives} and
Lemma~\ref{rs:lem:shell-rectangles} for each flat piece $f_{\ell,q}$.
Then, for every integer $K\ge0$,
\begin{equation}\label{rs:eq:oscillatory-box}
 |\widehat{Uf}(\xi)|
 \le C_K2^{-\ell n/2}\Bigl(\prod_i s_i\Bigr)^{1/2}
                     (1+2^{-\ell}|\xi|)^{-K},
 \qquad \xi\in\R^n.
\end{equation}
Once $n$, $\kappa$, $C$, and the constants $C_\gamma$ in
\eqref{rs:eq:box-input-derivatives} are fixed, $C_K$ is independent of
$\ell$, the location and individual side lengths of the rectangle,
and the choice of $f$ satisfying these bounds. In particular, the
same $C_K$ applies to every piece $f_{\ell,q}$.
\end{lemma}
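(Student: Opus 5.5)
The plan is to reduce \eqref{rs:eq:oscillatory-box} to \Cref{rs:lem:weighted-box}, applied at grid level $j=\ell-1$, after rewriting $Uf$ with the cell profiles of \Cref{rs:lem:profiles}.

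\emph{Step 1: cell representation.} On $\supp f$ we have $2^{-\ell-1}<\rho<2^{-\ell+1}$, so \eqref{rs:eq:tensor-profile} holds there. Expanding the product over $i$ gives $2^n$ terms, indexed by $\varepsilon\in\{0,1\}^n$. Each term is $\prod_i r_{\ell_i+\varepsilon_i}$ times a product of profiles $A$ or $B$. These profiles depend on the cell index only through the parities $(-1)^{\ell_i}$, which enter as the superscript $c_i$. Splitting further by the parity vector $c\in\{-1,1\}^n$, I write
\[
 Uf=\sum_{\varepsilon,c}F^{(c,\varepsilon)},\qquad
 a_{c,\varepsilon}(y,t)=f(y)\prod_i P^{(\sigma,c_i)}_{\varepsilon_i}\bigl(t_i,2^\ell\rho(y)\bigr),
\]
where $P_0=A$, $P_1=B$, and $\sigma=(-1)^{\ell-1}$. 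This is exactly the form in \Cref{rs:lem:weighted-box}, with $y$ and $t$ treated as independent variables and $t=2^{j}y-\ell$ substituted afterwards. The amplitude is supported in the rectangle of $f$. By \eqref{rs:eq:box-input-derivatives}, $|\partial_y^\gamma(2^\ell\rho)|\le C\prod_i s_i^{-\gamma_i}$, and the profiles have bounded derivatives on $[0,1]\times[1/2,2]$. The chain and product rules then give \eqref{rs:eq:slow-bounds}, and in fact bounds for all orders of $y$-derivatives. Since $s_i\ge\kappa2^{-\ell}$, the case $K=0$ follows directly from \Cref{rs:lem:weighted-box}: it gives $C2^{-\ell n/2}(\prod s_i)^{1/2}$, summed over finitely many terms and periods.

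\emph{Step 2: the decay factor.} For $|\xi|\le2^\ell$ the factor $(1+2^{-\ell}|\xi|)^{-K}$ is harmless. For $|\xi|>2^\ell$, pick a coordinate $i$ with $|\xi_i|\ge|\xi|/\sqrt n$ and integrate by parts $K$ times in $x_i$. This uses the fact that $U$ is smooth on $\supp f$, since it avoids $S$, and that $Uf$ is compactly supported. Each integration produces $(2\pi i\xi_i)^{-1}\partial_i$ acting on $Uf$. What I need is that $\partial_i^K(Uf)$ is again a finite sum of functions of the same cell form. The amplitudes should be enlarged by at most $2^{\ell K}$, and they should still satisfy \eqref{rs:eq:slow-bounds} uniformly. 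Given that, Step 1 applied to $\partial_i^K(Uf)$ yields the factor $(2^\ell/|\xi_i|)^K\lesssim(2^{-\ell}|\xi|)^{-K}$.

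\emph{Step 3: the main obstacle.} The hard part is justifying this differentiation claim. Differentiating $U(x)$ through \eqref{rs:eq:tensor-profile} in $x_i$ hits two things.
\begin{itemize}
\item The relative coordinate $t_i=2^{\ell-1}x_i-\ell_i$. This gives a factor $2^{\ell-1}$ times $\partial_tA$ or $\partial_tB$. These are again smooth profiles of the same type, with bounded derivatives.
\item The argument $2^\ell\rho(x)$. This gives $\partial_i(2^\ell\rho)\,\partial_uA$ or $\partial_uB$, of size $s_i^{-1}\le\kappa^{-1}2^\ell$.
\end{itemize}
The delicate point is that $t_i$ jumps between cells. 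Differentiating the cellwise formula is legitimate only because $U$ is globally smooth off $S$, so the piecewise derivative agrees with the true derivative. I would verify this by noting that the profiles $L_e$ are constant near subdivision points, so the differentiated profiles vanish near cell boundaries. The differentiated expression therefore again has the signed two-term form: the $r_{\ell_i}$ and $r_{\ell_i+1}$ structure is preserved, with new bounded coefficient profiles.

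Iterating $K$ times, each term has amplitude of size at most $C2^{\ell K}$. Its $y$-derivatives are bounded by $2^{\ell K}\prod s_i^{-\gamma_i}$, since $f$ and $2^\ell\rho$ satisfy the scaled bounds. \Cref{rs:lem:weighted-box} then gives the bound $C2^{\ell K}2^{-\ell n/2}(\prod s_i)^{1/2}$ for each term. Dividing by $|\xi_i|^K$ completes \eqref{rs:eq:oscillatory-box}. All constants depend only on finitely many profiles and on the $C_\gamma$ of \eqref{rs:eq:box-input-derivatives}, which gives the uniformity in $\ell$ and $q$.
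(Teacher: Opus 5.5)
Your proposal is correct and follows essentially the same route as the paper: expand $Uf$ through the cell profiles of \Cref{rs:lem:profiles} into the $4^n$ parity and sign pieces, apply \Cref{rs:lem:weighted-box} at level $j=\ell-1$ (with lower constant $\kappa/2$) to get $K=0$, and obtain the decay by integrating by parts on the globally smooth, compactly supported $Uf$. The chain rule $\partial_{x_i}=\partial_{y_i}+2^{\ell-1}\partial_{t_i}$, valid because $t(x)$ is affine on each cell, produces new amplitudes of the same form. The only differences are cosmetic: you carry the factor $2^{\ell K}$ in the amplitude bounds, relying on the linearity of the bound in \Cref{rs:lem:weighted-box}, where the paper normalizes by $2^{(-\ell+1)|\alpha|}$; and your remark that the profiles are locally constant near cell faces is true but unnecessary, since almost-everywhere agreement of the cellwise derivative with the classical one suffices and \Cref{rs:lem:weighted-box} tolerates jumps across faces.
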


\begin{proof}
Use cells of side length $2^{-\ell+1}$, as in
Lemma~\ref{rs:lem:profiles}. On a cell with parity vector
$c_i=(-1)^{\ell_i}$, expanding \eqref{rs:eq:tensor-profile} gives
\[
 f(x)U(x)=\sum_{\varepsilon\in\{0,1\}^n}
 f(x)b^{(c,\varepsilon)}(t,2^\ell\rho(x))
                 \prod_{i=1}^n r_{\ell_i+\varepsilon_i},
\]
where the choice $\varepsilon_i=0$ selects the $A$ term and
$\varepsilon_i=1$ selects the $B$ term:
\[
 b^{(c,\varepsilon)}(t,u)=
 \prod_{\substack{1\le i\le n\\\varepsilon_i=0}}
       A^{((-1)^{\ell-1},c_i)}(t_i,u)
 \prod_{\substack{1\le i\le n\\\varepsilon_i=1}}
       B^{((-1)^{\ell-1},c_i)}(t_i,u).
\]
An empty product equals one. Grouping the cells by their parity vector
therefore expresses $Uf$ on each unit period as the $4^n$-term sum
$\sum_{c\in\{-1,1\}^n}\sum_{\varepsilon\in\{0,1\}^n}
F^{(c,\varepsilon)}$ of Lemma~\ref{rs:lem:weighted-box}, with
$j=\ell-1$. For each fixed pair $(c,\varepsilon)$, write
$b=b^{(c,\varepsilon)}$ and take the amplitude
\begin{equation}\label{rs:eq:two-variables}
 a(y,t)=f(y)b(t,2^\ell\rho(y)).
\end{equation}
The derivatives of $b$ of every order on
$[0,1]^n\times[1/2,2]$ are bounded uniformly in $c$, $\varepsilon$,
and the scale sign $(-1)^{\ell-1}$.

In \eqref{rs:eq:two-variables}, $y$ and $t$ are independent variables:
when differentiating in $y$, the relative position $t$ within a cell
is held fixed. Each derivative of $2^\ell\rho(y)$ is bounded by the
corresponding product of inverse side lengths, because the factor
$2^\ell$ cancels $2^{-\ell}$ in
\eqref{rs:eq:box-input-derivatives}. Repeated product and chain rules
therefore give
\begin{equation}\label{rs:eq:all-slow-fast}
 |\partial_y^\gamma\partial_t^\nu a(y,t)|
 \le C_{\gamma,\nu}\prod_i s_i^{-\gamma_i}.
\end{equation}
For example, a derivative in $y_i$ is
$\partial_{y_i}a=(\partial_i f)b+
 f(\partial_u b)2^\ell\partial_i\rho$, so both terms are bounded
by $Cs_i^{-1}$. The same accounting applies to higher derivatives.
The amplitude is extended by zero outside the region
$1/2<2^\ell\rho<2$. This extension is smooth, since $f$ has compact
support strictly inside that region, and it remains supported in
the same rectangle for every $t$.

Apply Lemma~\ref{rs:lem:weighted-box} with $j=\ell-1$ and lower
side-length constant $\kappa/2$. A rectangle with $s_i\le C$
meets at most $(\lceil C\rceil+2)^n$ integer translates of
$[0,1)^n$, irrespective of its location. Thus adding the estimates
over these periods and over the finitely many choices of
$c,\varepsilon$ gives
\[
 |\widehat{Uf}(\xi)|
 \le C2^{(-\ell+1)n/2}\Bigl(\prod_i s_i\Bigr)^{1/2}
 \le C'2^{-\ell n/2}\Bigl(\prod_i s_i\Bigr)^{1/2}.
\]
This proves \eqref{rs:eq:oscillatory-box} for $K=0$.

To obtain decay in $\xi$, we need the same cancellation estimate for
derivatives of $Uf$. Inside a cell in a translated period
$z+[0,1)^n$, set
$t_i(x)=2^{\ell-1}(x_i-z_i)-\ell_i$. For the amplitude
$a(y,t)$ in \eqref{rs:eq:two-variables}, the chain rule gives
\[
 \partial_{x_i}\bigl[a(x,t(x))\bigr]
 = (\partial_{y_i}a)(x,t(x))
   +2^{\ell-1}(\partial_{t_i}a)(x,t(x)).
\]
The cell coordinates $t_i(x)$ are affine, so iterating this identity
introduces no higher derivatives of $t_i$. Consequently,
\[
 2^{(-\ell+1)|\alpha|}\partial_x^\alpha\bigl[a(x,t(x))\bigr]
 =\sum_{\nu\le\alpha}\binom{\alpha}{\nu}
  2^{(-\ell+1)(|\alpha|-|\nu|)}
  (\partial_y^{\alpha-\nu}\partial_t^\nu a)(x,t(x)).
\]
The signs $\prod_i r_{\ell_i+\varepsilon_i}$ are constant within
each cell. Thus multiplying this identity by those signs and summing
over the fixed sign choices expresses
$2^{(-\ell+1)|\alpha|}\partial_x^\alpha(Uf)$ in the same form as
before, with each summand on the right providing a new amplitude.
Every such amplitude remains supported in the same rectangle, since
differentiation does not enlarge the support of $a$ in $y$.
For fixed $t$, its derivative of order $\gamma$ in $y$ satisfies
\[
\begin{aligned}
 &\Big|\partial_y^\gamma\Big[
 \binom{\alpha}{\nu}2^{(-\ell+1)(|\alpha|-|\nu|)}
 \partial_y^{\alpha-\nu}\partial_t^\nu a(y,t)\Big]\Big|\\
 &\qquad\le C_{\alpha,\gamma}\prod_i s_i^{-\gamma_i}
       \prod_i(2^{-\ell+1}/s_i)^{\alpha_i-\nu_i}
 \le C'_{\alpha,\gamma}\prod_i s_i^{-\gamma_i},
\end{aligned}
\]
by \eqref{rs:eq:all-slow-fast} and $2^{-\ell+1}/s_i\le2/\kappa$.
There are only $\prod_i(\alpha_i+1)$ choices of $\nu\le\alpha$,
independently of $\ell$. Applying
Lemma~\ref{rs:lem:weighted-box} to these amplitudes and summing over
$\nu$, the sign choices, and the bounded number of periods therefore gives
\[
 \left|\bigl(2^{(-\ell+1)|\alpha|}
                    \partial_x^\alpha(Uf)\bigr)^\wedge(\xi)\right|
 \le C_\alpha2^{-\ell n/2}\Bigl(\prod_i s_i\Bigr)^{1/2}.
\]

Finally, $Uf$ is globally smooth and compactly supported: $f$
vanishes near $S$, and $U$ is smooth off $S$. Integration by parts
is therefore performed over $\R^n$, with no boundary terms, and gives
$(\partial_{x_i}^K(Uf))^\wedge(\xi)
 =(2\pi i\xi_i)^K\widehat{Uf}(\xi)$.
The differentiated cell-interior formulas agree almost everywhere with
this classical derivative. The individual parity pieces may jump at cell
faces; they are not differentiated distributionally, and integration by
parts is applied only to the global smooth function $Uf$.
If $2^{-\ell}|\xi|>1$, choose a coordinate
$i$ with $|\xi_i|\ge|\xi|/\sqrt n$. The preceding derivative estimate
with $\alpha=Ke_i$ gives
\[
 \begin{aligned}
 |\widehat{Uf}(\xi)|
 &= (2\pi 2^{-\ell+1}|\xi_i|)^{-K}
       \left|\bigl(2^{(-\ell+1)K}
                    \partial_{x_i}^K(Uf)\bigr)^\wedge(\xi)\right|\\
 &\le C_K(2^{-\ell}|\xi|)^{-K}
                2^{-\ell n/2}\Bigl(\prod_i s_i\Bigr)^{1/2}.
 \end{aligned}
\]
For $2^{-\ell}|\xi|\le1$, the estimate with $K=0$ gives the same
bound without the factor $(2^{-\ell}|\xi|)^{-K}$. Combining the two
ranges yields, for $\xi\ne0$,
\[
\begin{aligned}
 |\widehat{Uf}(\xi)|
 &\le C_K2^{-\ell n/2}\Bigl(\prod_i s_i\Bigr)^{1/2}
       \min\{1,(2^{-\ell}|\xi|)^{-K}\}\\
 &\le 2^KC_K2^{-\ell n/2}\Bigl(\prod_i s_i\Bigr)^{1/2}
       (1+2^{-\ell}|\xi|)^{-K}.
\end{aligned}
\]
At $\xi=0$ the estimate with $K=0$ gives the asserted bound directly.
This proves \eqref{rs:eq:oscillatory-box}.
\end{proof}

\subsection{The endpoint estimate}

The following statement is purely analytic: the amplitude need not
arise from a Zak transform or a Gabor window.

\begin{proposition}\label{prop:deterministic-phase}
Let $1\le m\le n$. Let $\rho:\R^n\to[0,1/8]$ be continuous and
$\Z^n$-periodic, with a measure-zero zero set $S$, and smooth and
positive on $\R^n\setminus S$. Let $H$ be bounded and compactly
supported, and smooth off $S$. Choose a smooth dyadic partition and write
$H=H_{\rm out}+\sum_{\ell\ge\ell_0}H_\ell$ almost everywhere, where
$H_\ell=\eta(2^\ell\rho)H$, $\eta\in C_c^\infty((1/2,2))$,
$\ell_0\ge3$, and $H_{\rm out}$ vanishes near $S$.

Suppose every shell has a finite decomposition
$H_\ell=\sum_{q=1}^{J_\ell}f_{\ell,q}$ into smooth functions compactly
supported in a fixed bounded region, where
$2^{-\ell-1}<\rho<2^{-\ell+1}$, in axis-parallel rectangles with side
lengths $s_i(f_{\ell,q})\in[\kappa2^{-\ell},C]$. Assume uniformly the mixed derivative bounds
\eqref{tm:eq:scaled-box-derivatives} for every $f_{\ell,q}$ and
\eqref{rs:eq:scaled-rho-derivatives} for $2^\ell\rho$, and the weighted
rectangle sum \eqref{rs:eq:weighted-count}. Then the single periodic
unimodular phase \eqref{rs:eq:global-phase}, constructed from this
$\rho$, satisfies
\begin{equation}\label{rs:eq:main-estimate}
 |\widehat{UH}(\xi)|\le C_H\langle\xi\rangle^{-(n+m)/2},
 \qquad \xi\in\R^n.
\end{equation}
Thus every corresponding periodization belongs to
$\Fell{\infty}{(n+m)/2}$ and to $\Fell{p}{s}$ for
$1\le p<\infty$ and $s<(n+m)/2-n/p$.
\end{proposition}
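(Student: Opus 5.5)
The plan is to sum the shell estimates of \Cref{rs:lem:oscillatory-box} over $\ell$, in the same way as the flat proof of \Cref{tm:prop:flat-all-p}, but with the improved per-shell amplitude. Write $UH=UH_{\rm out}+\sum_{\ell\ge\ell_0}UH_\ell$. The outer term first. $H_{\rm out}$ vanishes on a neighborhood of $S$, and there $\rho$ is bounded below, so only finitely many dyadic levels $j(x)$ are active in \eqref{rs:eq:global-phase}. Hence $U$ is smooth with bounded derivatives on $\supp H_{\rm out}$. $H_{\rm out}$ itself is smooth and compactly supported, since it equals $H$ times a cutoff vanishing near $S$. Therefore $\widehat{UH_{\rm out}}$ decays rapidly.

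\textbf{Per-shell bound.} For each $\ell$ and each piece $f_{\ell,q}$, the hypotheses are exactly those of \Cref{rs:lem:oscillatory-box}. The support and side-length conditions are assumed, the derivative bounds are \eqref{tm:eq:scaled-box-derivatives}, and the bounds for $\rho$ follow from \eqref{rs:eq:scaled-rho-derivatives} after dividing by $2^\ell$. The lemma then gives
\[
 |\widehat{Uf_{\ell,q}}(\xi)|\le C_K2^{-\ell n/2}\Bigl(\prod_i s_i(f_{\ell,q})\Bigr)^{1/2}(1+2^{-\ell}|\xi|)^{-K},
\]
with $C_K$ uniform in $\ell$ and $q$. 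Summing over $q$ and using the weighted count \eqref{rs:eq:weighted-count}, I expect
\[
 |\widehat{UH_\ell}(\xi)|\le C_K2^{-\ell(n+m)/2}(1+2^{-\ell}|\xi|)^{-K}.
\]
This is the phase-shaped analogue of \eqref{tm:eq:flat-shell-pointwise}: the factor $2^{-\ell m}$ there is replaced by $2^{-\ell(n+m)/2}$.

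\textbf{Dyadic summation.} Fix $K>(n+m)/2$ and split the sum over $\ell$ at $2^\ell\asymp\langle\xi\rangle$.
\begin{itemize}
\item For $2^\ell\ge\langle\xi\rangle$, drop the decay factor. The geometric series $\sum 2^{-\ell(n+m)/2}$ is dominated by its first term, which is $\lesssim\langle\xi\rangle^{-(n+m)/2}$.
\item For $2^\ell<\langle\xi\rangle$, use $(1+2^{-\ell}|\xi|)^{-K}\lesssim 2^{\ell K}\langle\xi\rangle^{-K}$. The series $\sum 2^{\ell(K-(n+m)/2)}$ is dominated by its last term, giving $\langle\xi\rangle^{-K}\langle\xi\rangle^{K-(n+m)/2}=\langle\xi\rangle^{-(n+m)/2}$.
\end{itemize}
Together with the outer term this proves \eqref{rs:eq:main-estimate} for all real $\xi$.

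\textbf{Convergence of the series.} The decomposition holds almost everywhere. The series converges in $L^1$ because $\|H_\ell\|_1\lesssim|\supp H_\ell|\lesssim 2^{-\ell m}$: the support bound comes from \eqref{rs:eq:weighted-count} and $\prod_i s_i\le C$. So the Fourier transform may be taken termwise.

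\textbf{Passage to Fourier--Lebesgue spaces.} The Fourier coefficients of the periodization are the values of $\widehat{UH}$ at integer frequencies, so they inherit the same bound. This gives $\Fell{\infty}{(n+m)/2}$ membership. For finite $p$,
\[
 \sum_k\langle k\rangle^{sp-(n+m)p/2}<\infty
 \quad\Longleftrightarrow\quad
 \Bigl(\frac{n+m}{2}-s\Bigr)p>n
 \quad\Longleftrightarrow\quad
 s<\frac{n+m}{2}-\frac np.
\]

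The main obstacle is the per-shell bound, and specifically the uniformity of the constants in \Cref{rs:lem:oscillatory-box} across all pieces and all $\ell$; that lemma already supplies it. The remaining point to check is that \eqref{rs:eq:weighted-count} bounds the sum of square-root volumes with exactly the power $2^{-\ell m/2}$. Multiplied by the factor $2^{-\ell n/2}$ from the lemma, this gives the exponent $(n+m)/2$.
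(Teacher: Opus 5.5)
Your proposal is correct and follows the paper's proof essentially step for step. You apply Lemma~\ref{rs:lem:oscillatory-box} to each piece, sum with the weighted count \eqref{rs:eq:weighted-count} to get the shell bound $C_K2^{-\ell(n+m)/2}(1+2^{-\ell}|\xi|)^{-K}$, justify termwise summation by $L^1$ convergence, and split the dyadic sum at $2^\ell\asymp\langle\xi\rangle$; the paper instead treats $|\xi|\le1$ separately via $\|UH\|_1=\|H\|_1$, which your $\langle\xi\rangle$-splitting absorbs.

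One small correction: \eqref{rs:eq:weighted-count} together with $\prod_i s_i\le C$ only yields $\|H_\ell\|_1\lesssim2^{-\ell m/2}$. The stated $2^{-\ell m}$ requires $\sum_q a_q^2\le(\sum_q a_q)^2$ with $a_q=(\prod_i s_i(f_{\ell,q}))^{1/2}$, as the paper uses. Either bound is summable, so the $L^1$ convergence argument is unaffected.
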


\begin{proof}
Use the assumed decomposition $H_\ell=\sum_{q=1}^{J_\ell}f_{\ell,q}$.
The construction of $U$ and its profiles uses only the stated
smoothness, positivity, and periodicity of $\rho$.
Lemma~\ref{rs:lem:oscillatory-box} therefore applies to each piece.
Summing \eqref{rs:eq:oscillatory-box} with the weighted count
\eqref{rs:eq:weighted-count}, rather than with the number of rectangles,
gives
\begin{equation}\label{rs:eq:shell-bound}
 |\widehat{UH_\ell}(\xi)|
 \le C_K2^{-\ell(n+m)/2}(1+2^{-\ell}|\xi|)^{-K}.
\end{equation}
The order-zero case of \eqref{tm:eq:scaled-box-derivatives} and the
weighted rectangle bound also give
\[
 \|H_\ell\|_1\le C\sum_{q=1}^{J_\ell}\prod_i s_i(f_{\ell,q})
 \le C\left(\sum_{q=1}^{J_\ell}
                 \Bigl(\prod_i s_i(f_{\ell,q})\Bigr)^{1/2}\right)^2
 \le C'2^{-\ell m}.
\]
Thus the shell series converges in $L^1$. Since $|U|=1$, its Fourier
transform may be summed term by term. For $|\xi|\ge1$, choose
$K>(n+m)/2$ and split the dyadic scales at $2^{-\ell}=|\xi|^{-1}$:
\begin{align*}
 \sum_{\ell\ge\ell_0}|\widehat{UH_\ell}(\xi)|
 &\le C\sum_{\substack{\ell\ge\ell_0\\2^{-\ell}\le|\xi|^{-1}}}
          2^{-\ell(n+m)/2}
   +C|\xi|^{-K}
      \sum_{\substack{\ell\ge\ell_0\\2^{-\ell}|\xi|>1}}
          2^{\ell(K-(n+m)/2)}\\
 &\le C|\xi|^{-(n+m)/2}
       +C|\xi|^{-K}|\xi|^{K-(n+m)/2}
 \le C'|\xi|^{-(n+m)/2}.
\end{align*}
The first geometric sum is dominated by its largest scale and the
second by its smallest scale. The product $UH_{\rm out}$ is smooth and
compactly supported, so its Fourier transform decays faster than any
power. For the remaining frequencies $|\xi|\le1$, the triangle
inequality in the Fourier integral and $|U|=1$ give
\[
 |\widehat{UH}(\xi)|\le\|UH\|_1=\|H\|_1
 \le 2^{(n+m)/4}\|H\|_1\langle\xi\rangle^{-(n+m)/2},
 \qquad |\xi|\le1.
\]
Together these estimates prove \eqref{rs:eq:main-estimate}.

Sampling at integer frequencies gives the $\Fell{\infty}{(n+m)/2}$
bound. For finite $p$, the weighted $p$th powers are bounded by
$C\sum_{k\in\Z^n}\langle k\rangle^{-p((n+m)/2-s)}$, which converges
when $p((n+m)/2-s)>n$.  Finally, a fixed linear change from normalized
coordinates $x$ to physical coordinates $w=Lx$ replaces $\xi$ by
$L^T\xi$ and multiplies the transform by $|\det L|$.
Since $\langle L^T\xi\rangle\asymp\langle\xi\rangle$, the same
estimate holds after any fixed invertible linear change of coordinates.
\end{proof}

For the flat construction, take $\rho$ from \eqref{tm:eq:rho} and any
localized scalar entry or cross-Zak function $H=\chi a$ described at the
start of this section. Lemma~\ref{tm:lem:anisotropic} gives the mixed
derivative bounds, and Lemmas~\ref{tm:lem:anisotropic-box-count}
and~\ref{rs:lem:shell-rectangles} give precisely the required rectangular
decomposition and weighted sum. The proposition therefore proves
\eqref{rs:eq:goal} for every such $H$, with the same phase $U$.
The conclusion remains valid in physical coordinates and at all real
frequencies sampled by the compact cross-Zak lifts.

\section{Parsevality and the full \texorpdfstring{high-$p$}{high-p} existence range}
\label{sec:lower-completion}

Assume $0\le N-R\le d-1$ and retain
$n=2d$ and $m=2(N-R+1)$.
We apply \Cref{prop:deterministic-phase} to compact lifts of the
cross-Zak products of $g_{\rm flat}$, then use the scalar--matrix Zak
correspondence to obtain a Parseval window with the resulting Fourier
decay.

Choose a finite smooth partition of unity $\chi_\nu$ on $X_D$,
with each cutoff compactly supported in a coordinate neighborhood on
which reduction modulo $\Gamma_D$ is injective. Since
$[\Gamma_D:\Z^n]=NR$, this neighborhood has $NR$ disjoint translated
copies modulo $\Z^n$, one for each coset of $\Z^n$ in $\Gamma_D$.
For every $\psi_\alpha$ from \Cref{lem:finite-Zak-family}, every
neighborhood $\nu$, and every such translated copy $\tau$, choose the
compact Euclidean lift
\[
 \widetilde H_{\alpha\nu\tau}(w)
 =\chi_\nu(\pi_Dw)Zg_{\rm flat}(w)\overline{Z\psi_\alpha(w)},
 \qquad \pi_D:\R^n\to X_D,
\]
extended by zero outside the selected translated neighborhood. Compact
support of $\chi_\nu$ inside the neighborhood makes this zero extension
smooth away from $S_D$. All $NR$ copies are included. Consequently their ordinary
periodizations sum to the original cross-Zak product:
\[
 \sum_{\nu,\tau}\sum_{z\in\Z^n}
       \widetilde H_{\alpha\nu\tau}(w+z)
 =Zg_{\rm flat}(w)\overline{Z\psi_\alpha(w)}.
\]
The sum over $z$ is locally finite.

In the normalized coordinates $x=L^{-1}w$ of the cubical construction,
\Cref{tm:lem:anisotropic} gives the mixed derivative bounds, while
Lemmas~\ref{tm:lem:anisotropic-box-count} and~\ref{rs:lem:shell-rectangles}
give the rectangular decomposition and weighted sum required by
\Cref{prop:deterministic-phase}. The cutoffs and Schwartz Zak factors
are fixed smooth multipliers. Leibniz' rule preserves the derivative
bounds, with new constants; using the same rectangle partition also
preserves the weighted sum for every member of this finite family.
Let $U_0(x)$ be the phase supplied by that proposition, and set
$U(w)=U_0(L^{-1}w)$.  Since $U_0$ is $\Z^n$-periodic and
$\Gamma_D\subset L\Z^n$, we have
$U(w+\gamma)=U_0(L^{-1}w+L^{-1}\gamma)=U(w)$ for
$\gamma\in\Gamma_D$.
Changing variables $w=Lx$ gives
\[
 \int_{\R^n}U(w)\widetilde H_{\alpha\nu\tau}(w)
                 e^{-2\pi i k\cdot w}\,dw
 =|\det L|\int_{\R^n}U_0(x)\widetilde H_{\alpha\nu\tau}(Lx)
                 e^{-2\pi i (L^Tk)\cdot x}\,dx.
\]
Since $\langle L^Tk\rangle\asymp\langle k\rangle$, the proposition gives
\begin{equation}\label{eq:compact-phase-decay}
 \left|\int_{\R^n}U(w)\widetilde H_{\alpha\nu\tau}(w)
                   e^{-2\pi i k\cdot w}\,dw\right|
 \le C\langle k\rangle^{-(n+m)/2},\qquad k\in\R^n.
\end{equation}
The constant may be chosen uniformly over the finite lift family.

Define
\begin{equation}\label{eq:M-deterministic}
 M_{\rm phase}(w)=U(w)M_{\rm flat}(w),\qquad w\notin S_D,
\end{equation}
and set it equal to zero on $S_D$.  This is multiplication by a scalar,
or equivalently by $U(w)I_R$ on the left; the same phase multiplies
every entry.  The $\Gamma_D$-periodicity of $U$ preserves all Zak
identifications.  Unimodularity gives
\[
 M_{\rm phase}(w)M_{\rm phase}(w)^*
 =|U(w)|^2M_{\rm flat}(w)M_{\rm flat}(w)^*=RI_R,
 \qquad\text{for almost every }w.
\]
The exceptional set $S_D$ has measure zero, so its assigned value is
irrelevant to this identity.  By
\Cref{prop:fine-box-reassembly,prop:zz-criterion-full}, the bounded
compatible field therefore defines a Parseval window
$g_{\rm phase}\in L^2(\R^d)$.

To identify its scalar Zak transform, put $F=Zg_{\rm flat}$ and
$\tau_{s,v}=(-Dv,\mathsf B^{-1}s)\in\Gamma_D$.  The matrix definition
\eqref{eq:fine-zak-matrix} and periodicity give
\[
 (\mathcal M_{UF}(w))_{s,v}
 =U(w+\tau_{s,v})F(w+\tau_{s,v})
 =U(w)(\mathcal M_F(w))_{s,v}
 =(M_{\rm phase}(w))_{s,v}.
\]
Uniqueness in \Cref{prop:fine-box-reassembly} now gives
$Zg_{\rm phase}=UZg_{\rm flat}$ almost everywhere.  Since
$\Z^n\subset\Gamma_D$, the phase is also ordinary periodic.
Multiplying the preceding periodization identity by $U$ and integrating
over one ordinary period therefore yields
\begin{equation}\label{eq:phase-sheet-reassembly}
 \widehat{G_{\psi_\alpha,g_{\rm phase}}}(k)
 =\sum_{\nu,\tau}\int_{\R^n}
 U(w)\widetilde H_{\alpha\nu\tau}(w)e^{-2\pi i k\cdot w}\,dw,
 \qquad k\in\Z^n.
\end{equation}
Here integer translates do not change the exponential, and summing the
translated integration domains gives the compact Euclidean integral.
The sum over $\nu,\tau$ is finite, and the same phase controls every
summand.  Estimate~\eqref{eq:compact-phase-decay} consequently implies,
for each required ordinary cross-Zak product
$H=G_{\psi_\alpha,g_{\rm phase}}$,
\begin{equation}\label{eq:high-pointwise}
 |\widehat H(k)|\le C\langle k\rangle^{-(n+m)/2},\qquad k\in\Z^n.
\end{equation}

If $1\le p<\infty$ and $s<(n+m)/2-n/p$, then
\[
\begin{aligned}
 \sum_{k\in\Z^n}\langle k\rangle^{sp}|\widehat H(k)|^p
 &\le C\sum_{k\in\Z^n}\langle k\rangle^{-p((n+m)/2-s)}\\
 &\le C\sum_{j\ge0}2^{jn}2^{-jp((n+m)/2-s)}
 =C\sum_{j\ge0}2^{-j\{p((n+m)/2-s)-n\}}<\infty.
\end{aligned}
\]
The shell $2^j\le\langle k\rangle<2^{j+1}$ contains at most
$C2^{jn}$ lattice points, and $p((n+m)/2-s)-n>0$.
At the infinity endpoint no summation is needed:
\[
 \sup_{k\in\Z^n}\langle k\rangle^{(n+m)/2}|\widehat H(k)|\le C.
\]
The cross-Zak characterization \Cref{prop:cross-Zak-characterization}
gives
\[
 g_{\rm phase}\in M_s^p,\qquad
 1\le p<\infty,\quad s<d+N-R+1-2d/p,
 \qquad
 g_{\rm phase}\in M_{d+N-R+1}^\infty,
\]
because $(n+m)/2=d+N-R+1$ and $n=2d$.
The phase was fixed before $p$ and $s$ were chosen, so the same window
has all these properties.  When $m=n$, \Cref{tm:cor:critical-flat}
already gives the required conclusion with the single flat window.

Metaplectic transfer and the assembly with the flat and smooth branches
are given in \Cref{sec:lattice-theory,sec:assembly}.

\part{Symplectic transfer and global classification}\label{part:global-classification}

\section{Symplectic and metaplectic extension machinery}
\label{sec:lattice-theory}

We now prepare the passage from the rational diagonal model to arbitrary
symplectically rational phase space lattices.  The required tools are
symplectic equivalence, the arithmetic normal form, and metaplectic
covariance.  For background see \cite[Section~9.4]{Grochenig2001} and
\cite[Chapter~4]{Folland1989}.

Let
\[
 J=\begin{pmatrix}0&I_d\\-I_d&0\end{pmatrix},
\]
so that $\sigma(z,w)=z^TJw$.  The symplectic group is
\[
 \Sp(2d,\R)
 =\{S\in GL_{2d}(\R):S^TJS=J\}.
\]
Thus $S$ is symplectic exactly when
$\sigma(Sz,Sw)=\sigma(z,w)$ for all $z,w\in\R^{2d}$.  Two phase-space
lattices $\Lambda$ and $\Lambda'$ are \emph{symplectically equivalent} if
$\Lambda'=S\Lambda$ for some $S\in\Sp(2d,\R)$.  In concrete terms, the
phase-space variable $z=(x,\omega)$ is replaced by
$Sz=(x',\omega')$, and every lattice point $\lambda$ is replaced by
$S\lambda$.  Symplectic equivalence preserves covolume and satisfies
\[
                         (S\Lambda)^\circ=S\Lambda^\circ.
\]

Write a real $2d\times2d$ matrix in blocks as
\[
 S=\begin{pmatrix}A&B\\ C&D\end{pmatrix},
 \qquad A,B,C,D\in M_d(\R).
\]
The block criterion for symplectic matrices
\cite[Lemma~9.4.1(c)]{Grochenig2001} states that $S\in\Sp(2d,\R)$ if and
only if
\begin{equation*}
 A^TC=C^TA,
 \qquad B^TD=D^TB,
 \qquad A^TD-C^TB=I_d.
\end{equation*}
Equivalently,
\begin{equation*}
 AB^T=BA^T,
 \qquad CD^T=DC^T,
 \qquad AD^T-BC^T=I_d.
\end{equation*}
For example, if $A$ is invertible, these identities imply that
$CA^{-1}$ and $A^{-1}B$ are symmetric.  These block relations will also be used below in the metaplectic covariance arguments.

The group $\Sp(2d,\R)$ is generated by matrices of the following three
forms:
\begin{align*}
 S_A&=\begin{pmatrix}A&0\\0&A^{-T}\end{pmatrix},
       &&A\in GL_d(\R),\\
 V_B&=\begin{pmatrix}I_d&0\\B&I_d\end{pmatrix},
       &&B\in M_d(\R),\quad B=B^T,\\
 J&=\begin{pmatrix}0&I_d\\-I_d&0\end{pmatrix}.
\end{align*}
The matrix $S_A$ is called the \emph{symplectic dilation associated with
$A$}; it acts by
\[
                         (x,\omega)\longmapsto(Ax,A^{-T}\omega).
\]

The metaplectic group is a double cover of $\Sp(2d,\R)$ by unitary
operators on $L^2(\R^d)$.  In fact, for each $S\in\Sp(2d,\R)$ there exists a unitary  \emph{metaplectic lift} $\mu(S)$.  Up to the sign ambiguity arising from the double cover, we have $\mu(S_1S_2)=\mu(S_1)\mu(S_2)$. With the time--frequency convention used here, a
metaplectic lift satisfies
\[
 \mu(S)\pi(z)\mu(S)^{-1}=c(S,z)\pi(Sz),
 \qquad |c(S,z)|=1.
\]
Up to constants of modulus one, convenient lifts of the three generators
are
\[
 (\mu(S_A)f)(t)=|\det A|^{-1/2}f(A^{-1}t),
 \qquad
 (\mu(V_B)f)(t)=e^{\pi i t^TBt}f(t),
\]
and, with the Fourier convention used in this paper,
\[
 (\mu(J)f)(\omega)
 =\int_{\R^d}f(t)e^{-2\pi i\ip{t}{\omega}}\,dt=\widehat f(\omega).
\]
This implies that $\mu(S)$ is unitary on $L^2(\R^d)$, maps
$\Sclass(\R^d)$ and $\Szero(\R^d)$ onto themselves, and preserves frame
bounds when the lattice and all windows are transformed together.  In
particular,
\[
 \G(g,\Lambda)\text{ is Parseval}
 \quad\Longleftrightarrow\quad
 \G(\mu(S)g,S\Lambda)\text{ is Parseval}.
\]
These standard facts may be found in \cite{Folland1989},
\cite[Section~9.4]{Grochenig2001}, and \cite{GjertsenLuef2024}.

We use the symplectic rationality and index conventions from the
introduction.  A real matrix is called \emph{entrywise rational} if all of
its entries lie in $\Q$.  Two full-rank Euclidean lattices are
\emph{commensurable} if their intersection has finite index in each of them.

\begin{lemma}
\label{lem:symplectic-rationality-characterization}
Let $\Lambda=M\Z^{2d}$ and put
$\Theta_\Lambda=M^TJM$.  Then $\Lambda$ is symplectically rational if and
only if $\Theta_\Lambda$ is entrywise rational.  In particular, let
\[
 \Lambda=\Gamma\times\Phi,
 \qquad
 \Gamma=A_0\Z^d,
 \qquad
 \Phi=B_0\Z^d,
\]
with $A_0,B_0\in GL_d(\R)$.  The following conditions are equivalent:
\begin{enumerate}[label=\textup{(\roman*)}]
\item $\Gamma\times\Phi$ is symplectically rational;
\item the pairing matrix $A_0^TB_0$ is entrywise rational;
\item the Euclidean lattices $\Gamma$ and $\Phi^*$ are commensurable.
\end{enumerate}
\end{lemma}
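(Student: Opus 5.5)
The plan is to translate symplectic rationality into a statement about the integer-valued bilinear form on $\Z^{2d}$ given by $\Theta_\Lambda$. For $\lambda=Ma$ and $\mu=Mb$ with $a,b\in\Z^{2d}$ we have $\sigma(\mu,\lambda)=b^T\Theta_\Lambda a$, since $\sigma(z,w)=z^TJw$. Hence, in coordinates,
\[
 \Lambda_{\rm int}=M\{b\in\Z^{2d}:\Theta_\Lambda^Tb\in\Z^{2d}\}=M\{b\in\Z^{2d}:\Theta_\Lambda b\in\Z^{2d}\},
\]
using that $\Theta_\Lambda$ is antisymmetric.

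\emph{Rational implies finite index.} Suppose $\Theta_\Lambda$ is entrywise rational, and let $q\in\N$ be a common denominator of its entries. Then $q\Z^{2d}$ is contained in the coordinate set above. Therefore $[\Lambda:\Lambda_{\rm int}]\le[\Z^{2d}:q\Z^{2d}]=q^{2d}<\infty$.

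\emph{Finite index implies rational.} Suppose $[\Lambda:\Lambda_{\rm int}]=k<\infty$. Then $k\Z^{2d}$ lies in the subgroup $\{b:\Theta_\Lambda b\in\Z^{2d}\}$, since $kb$ lies in a finite-index subgroup of index dividing $k$ by Lagrange. Applying this to $b=e_j$ shows that $k\Theta_\Lambda e_j\in\Z^{2d}$ for every $j$, so $\Theta_\Lambda\in k^{-1}\Z^{2d\times 2d}$ is entrywise rational. This proves the first claim.

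\emph{The separable case.} Take $M=\diag(A_0,B_0)$. A block computation gives
\[
 \Theta_\Lambda=\begin{pmatrix}0&A_0^TB_0\\-B_0^TA_0&0\end{pmatrix}.
\]
So $\Theta_\Lambda$ is rational exactly when $A_0^TB_0$ is rational, which gives (i)$\Leftrightarrow$(ii).

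For (ii)$\Leftrightarrow$(iii), write $\Phi^*=B_0^{-T}\Z^d$ and set $P:=B_0^TA_0=(A_0^TB_0)^T$. Then
\[
 \Gamma=B_0^{-T}P\,\Z^d,
\]
so after applying the linear map $B_0^{T}$ the question becomes whether $P\Z^d$ and $\Z^d$ are commensurable.

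If $P$ is rational, clear denominators with $q$: then $qP\Z^d\subset\Z^d$. Consequently $P\Z^d\cap\Z^d\supset qP\Z^d$, which has finite index in $P\Z^d$. It also contains $\Z^d\cap P\,\Z^d$, which has finite index in $\Z^d$ because $P\Z^d$ is a full-rank lattice containing a finite-index sublattice of $\Z^d$ (namely $q'\Z^d$, where $q'$ is a denominator for $P^{-1}$, which is rational since $P$ is).

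Conversely, if the two lattices are commensurable, then $kP\Z^d\subset\Z^d$ for $k$ equal to the index of the intersection in $P\Z^d$. Hence $kPe_j\in\Z^d$ for each $j$, so $P$, and therefore $A_0^TB_0$, is rational.

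All steps are routine. The only point needing care is the index argument, that $kx\in H$ whenever $[G:H]=k$; this holds because $G/H$ is a group of order $k$.
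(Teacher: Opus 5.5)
Your proof is correct and follows essentially the same route as the paper. You describe $\Lambda_{\rm int}$ through $\Theta_\Lambda$ and clear denominators for one direction, use Lagrange on basis vectors for the converse, and compute $M^TJM$ in blocks for $M=\operatorname{diag}(A_0,B_0)$. A linear change of coordinates then reduces (ii)$\Leftrightarrow$(iii) to commensurability of $P\Z^d$ with $\Z^d$. The differences are minor: you normalize with $B_0^T$ rather than $A_0^{-1}$, and you prove the criterion ``$C\Z^d$ is commensurable with $\Z^d$ iff $C\in GL_d(\Q)$'' where the paper only quotes it.
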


\begin{proof}
Suppose first that $\Theta_\Lambda$ is entrywise rational.  Choose
$q\in\N$ such that $q\Theta_\Lambda$ is integral.  If
$\lambda=Mk$ and $\mu=M\ell$, with $k,\ell\in\Z^{2d}$, then
\[
 \sigma(q\lambda,\mu)=qk^T\Theta_\Lambda\ell\in\Z.
\]
Thus $q\Lambda\subset\Lambda^\circ$ and, since clearly $q\Lambda\subset \Lambda$, we have $q\Lambda\subset\Lambda_{\rm int}$. Hence
$$q^{2d}=[\Lambda:q\Lambda]\geq [\Lambda:\Lambda_{\rm int}].$$

Conversely, suppose $[\Lambda:\Lambda_{\rm int}]=q<\infty$, then by Lagrange's theorem $q\Lambda\subset\Lambda_{\rm int}\subseteq \Lambda^\circ$.  Applying this to
the basis vectors $Me_j$ gives
\[
 q(\Theta_\Lambda)_{jk}
 =\sigma(qMe_j,Me_k)\in\Z,
 \qquad 1\le j,k\le2d.
\]
Hence $\Theta_\Lambda$ is entrywise rational.

For the separable lattice $\Gamma\times\Phi$, the symplectic Gram matrix of
the basis $\operatorname{diag}(A_0,B_0)$ is
\[
\begin{pmatrix}
 A_0^T&0\\
 0&B_0^T
 \end{pmatrix}\begin{pmatrix}
 0&I_d\\
 -I_d&0
 \end{pmatrix}\begin{pmatrix}
 A_0&0\\
 0&B_0
 \end{pmatrix}=
 \begin{pmatrix}
 0&A_0^TB_0\\
 -B_0^TA_0&0
 \end{pmatrix}.
\]
This proves the equivalence of (i) and (ii).  Under the common linear change
of Euclidean coordinates $A_0^{-1}$, the pair $(\Gamma,\Phi^*)$ becomes
\[
 \bigl(A_0^{-1}A_0\Z^d,A_0^{-1}B_0^{-T}\Z^d\bigr) = \bigl(\Z^d,(A_0^TB_0)^{-T}\Z^d\bigr).
\]
A lattice $C\Z^d$ is commensurable with $\Z^d$ exactly when
$C\in GL_d(\Q)$.  This proves the equivalence with (iii).
\end{proof}

We use the ordinary Smith normal form and its alternating, or skew,
analogue in the proof of the normal-form theorem.

\begin{unnumberedtheorem}[Smith--Newman~\protect\cite{Newman1972}]
Let $K\in M_d(\Z)$ have nonzero determinant.  There exist
$U,V\in GL_d(\Z)$ and positive integers
$s_1,\ldots,s_d$, with $s_j$ dividing $s_{j+1}$ for
$j=1,\ldots,d-1$, such that
\[
 U^T K V=\operatorname{diag}(s_1,\ldots,s_d).
\]
Let $\Theta\in M_{2d}(\Z)$ be nonsingular and alternating, that is $\Theta^T=-\Theta$.  There exist
$W\in GL_{2d}(\Z)$ and positive integers
$h_1,\ldots,h_d$, with $h_j$ dividing $h_{j+1}$ for
$j=1,\ldots,d-1$, such that
\[
 W^T\Theta W
 =\begin{pmatrix}0&H\\-H&0\end{pmatrix},
 \qquad H=\operatorname{diag}(h_1,\ldots,h_d).
\]
See \cite[Chapters~II and~IV]{Newman1972}; the alternating statement is the
skew Smith normal form.
\end{unnumberedtheorem}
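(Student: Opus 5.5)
The plan for both parts is induction on the size, using only unimodular elementary operations over $\Z$: swapping two rows or columns, changing a sign, and adding an integer multiple of one row or column to another. Each such operation is multiplication by an element of $GL_d(\Z)$, so any finite sequence of them yields the transforming matrices. The single tool throughout is Euclidean division on entries, exploiting that $\Z$ is a Euclidean domain.

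\emph{Ordinary Smith form.} Among all matrices $U^TKV$ with $U,V\in GL_d(\Z)$, pick one whose nonzero entry of smallest absolute value is as small as possible, and move that entry to position $(1,1)$; call it $a$. If some entry of the first row or column is not divisible by $a$, one Euclidean division step produces a smaller nonzero remainder, contradicting minimality. So we can clear the first row and column. If some remaining entry $b_{ij}$ is not divisible by $a$, add row $i$ to row $1$ and repeat the same argument; this again yields a smaller remainder, a contradiction. Hence $a$ divides every entry of the lower-right block $K'$. Applying induction to $K'$ gives $s_2\mid\cdots\mid s_d$, and every entry of the diagonalised block is still a multiple of $a=s_1$. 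This is because unimodular operations preserve the ideal generated by the entries, so $s_1\mid s_2$. Finally, $\det K\ne0$ forces all $s_j\ne0$, and sign changes make them positive.

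\emph{Alternating form.} Now the operations must be congruences $\Theta\mapsto W^T\Theta W$, i.e.\ simultaneous row and column operations. Choose, over the congruence orbit, a matrix whose smallest-modulus nonzero entry $h$ is minimal. Since the diagonal of an alternating matrix is zero, this entry is off-diagonal. By a simultaneous permutation place it at $(1,2)$, so that the $(2,1)$ entry is $-h$. Adding $c$ times basis vector $2$ to basis vector $k$ subtracts multiples of $h$ from $\theta_{1k}$ while preserving the alternating structure. The minimality argument from the ordinary case then shows that $h$ divides every $\theta_{1k}$ and $\theta_{2k}$, so both can be cleared to zero for $k\ge3$. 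The same row-adding trick shows that $h$ divides every entry of the complementary $(2d-2)\times(2d-2)$ alternating block, which is still nonsingular.

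Induction gives the block $\bigoplus_j\begin{psmallmatrix}0&h_j\\-h_j&0\end{psmallmatrix}$ with $h_1\mid h_2\mid\cdots$. A final permutation of basis vectors, ordering the odd-indexed ones first and the even-indexed ones second, converts this into $\begin{psmallmatrix}0&H\\-H&0\end{psmallmatrix}$. Swapping the two vectors within a pair if necessary makes each $h_j>0$.

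The main obstacle is the alternating divisibility step. One must check that every move is a genuine congruence and that adding a row of the complementary block to row $1$, done simultaneously on columns, creates an entry in the first row equal to $b_{ij}$ plus a multiple of $h$, without reintroducing a diagonal entry. I would verify this by an explicit $4\times4$ computation and then note that the general case only involves four indices at a time.
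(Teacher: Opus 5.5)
Your proof is correct. The paper gives no argument for this statement: it is quoted from Newman's monograph and used only as a black box in the proof of \Cref{thm:diagonal-symplectic-normal-form}. You instead give the standard self-contained existence proof, which runs as follows:
\begin{enumerate}
\item Minimize the smallest nonzero entry modulus over the equivalence orbit (respectively the congruence orbit).
\item Use one Euclidean division step to show that this entry divides the first row and column (the first two rows and columns in the alternating case), and clear them.
\item Use the row-adding trick to push divisibility into the complementary block, then induct.
\end{enumerate}

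Compared with citing the general theory, your argument gains self-containment and uses only that $\Z$ is Euclidean. It proves existence only, whereas the general theory also gives uniqueness of the invariant factors. The paper never needs uniqueness. The proof of \Cref{thm:diagonal-symplectic-normal-form} uses only existence and positivity of the $s_j$ and $h_j$, not even the divisibility chain.

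The step you flag as the main obstacle is automatic, so no $4\times4$ case check is needed:
\begin{enumerate}
\item Replacing $e_1$ by $e_1+e_i$ with $i\ge3$ is a congruence by an elementary unimodular matrix.
\item Every congruence preserves $\Theta^T=-\Theta$, hence the zero diagonal.
\item The $(1,2)$ entry stays $h$ because $\theta_{i2}=0$ after clearing.
\item The new $(1,j)$ entry is exactly $\theta_{ij}$ for $j\ge3$. Your first-row divisibility argument then applies to it, because the modified matrix still lies in the orbit and still contains $h$.
\end{enumerate}
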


\begin{theorem}
\label{thm:diagonal-symplectic-normal-form}
Every symplectically rational full-rank phase-space lattice
$\Lambda\subset\R^{2d}$ is symplectically equivalent to
\begin{equation}\label{eq:normal-form-lattice}
 \Lambda_D=D\Z^d\times\Z^d,
 \qquad
 D=\operatorname{diag}\left(\frac{b_1}{a_1},\ldots,
                             \frac{b_d}{a_d}\right),
 \qquad\text{with }(a_i,b_i)=1,
\end{equation}
where all $a_i,b_i$ are positive integers.  Thus there exists
$S\in\Sp(2d,\R)$ such that $\Lambda=S\Lambda_D$.

If
\[
 N=\prod_{i=1}^d a_i,
 \qquad
 R=\prod_{i=1}^d b_i,
\]
then
\begin{equation}\label{eq:normal-form-invariants}
 \nu(\Lambda)=\nu(\Lambda_D)=N,
 \qquad
 \nu(\Lambda^\circ)=\nu(\Lambda_D^\circ)=R,
 \qquad
 \covol(\Lambda)=\covol(\Lambda_D)=\frac{R}{N}.
\end{equation}
Consequently, the symplectically rational conditions $\SG$ and $\mSG_q$
become, respectively,
\[
                             N\ge R+d,
 \qquad
                             qN\ge R+d.
\]

If $\Lambda=\Gamma\times\Phi$ is separable, the normalizing symplectic
matrix may be chosen to be a symplectic dilation.  If $\Gamma+\Phi^*$ is a
lattice, then
\[
 \nu(\Gamma\times\Phi)=[\Gamma+\Phi^*:\Phi^*],
 \qquad
 \nu((\Gamma\times\Phi)^\circ)=[\Gamma+\Phi^*:\Gamma].
\]
\end{theorem}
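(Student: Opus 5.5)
The plan is to reduce everything to the integral alternating Gram matrix and apply the skew Smith normal form. Write $\Lambda=M\Z^{2d}$ and $\Theta_\Lambda=M^TJM$. By \Cref{lem:symplectic-rationality-characterization}, $\Theta_\Lambda$ is rational, alternating and nonsingular. Choose $q\in\N$ with $q\Theta_\Lambda$ integral. The skew Smith normal form supplies $W\in GL_{2d}(\Z)$ with $W^T(q\Theta_\Lambda)W=\begin{pmatrix}0&H\\-H&0\end{pmatrix}$. Replace the basis $M$ by $MW$, which changes neither the lattice nor anything else.

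Dividing by $q$ gives $(MW)^TJ(MW)=\begin{pmatrix}0&D\\-D&0\end{pmatrix}$ with $D=H/q$ diagonal with positive rational entries. Reduce these to lowest terms, $D=\diag(b_i/a_i)$ with $(a_i,b_i)=1$. The model basis $M_D=\diag(D,I_d)$ has the same symplectic Gram matrix, $M_D^TJM_D=\begin{pmatrix}0&D\\-D&0\end{pmatrix}$. Hence $S:=MWM_D^{-1}$ satisfies $S^TJS=J$, so $S\in\Sp(2d,\R)$ and $S\Lambda_D=\Lambda$.

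The invariants then follow by transport. Since $S$ is symplectic, $(S\Lambda)^\circ=S\Lambda^\circ$, so $S$ carries $\Lambda_D\cap\Lambda_D^\circ$ onto $\Lambda_{\rm int}$ and preserves indices and covolume. The diagonal calculation of \Cref{sec:zak-method} gives $\nu(\Lambda_D)=N$ and $\nu(\Lambda_D^\circ)=R$. Together with $\covol(\Lambda_D)=\det D=R/N$, this proves \eqref{eq:normal-form-invariants}, and it also gives the perfect-square index. The rewriting of $\SG$ and $\mSG_q$ is then immediate.

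For the separable case $\Lambda=A_0\Z^d\times B_0\Z^d$, use the ordinary Smith form rather than the skew one. By \Cref{lem:symplectic-rationality-characterization} the pairing matrix $K=A_0^TB_0$ is rational. Clear denominators with $q$ and write $U^T(qK)V=\diag(s_i)$ with $U,V\in GL_d(\Z)$. Replace $A_0$ by $A_0U$ and $B_0$ by $B_0V$; this gives new bases of the same lattices with $A_0^TB_0=\diag(s_i)/q=:D$. Put $A=A_0D^{-1}$. Then $A^{-T}=B_0D^{-1}$, because $A^TB_0D^{-1}=D^{-1}A_0^TB_0D^{-1}=D^{-1}$ up to the diagonal identity, and $D$ is diagonal hence symmetric. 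Consequently $S_A(D\Z^d\times\Z^d)=A_0\Z^d\times A^{-T}\Z^d$. This equals $\Gamma\times B_0D^{-1}\Z^d$, which is not $\Phi$ as written, so the normalization has to be redone. Choose instead $A=A_0$ and compare the two sides. We have $S_{A_0}^{-1}\Lambda=\Z^d\times A_0^TB_0\Z^d=\Z^d\times D\Z^d$. Composing with the symplectic dilation by $D$ sends $(x,\omega)\mapsto(Dx,D^{-1}\omega)$ and yields $D\Z^d\times\Z^d$. So a product of two dilations, itself a dilation, normalizes the lattice.

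For the index formulas, note $\Gamma\times\Phi\cap(\Gamma\times\Phi)^\circ=(\Gamma\cap\Phi^*)\times(\Phi\cap\Gamma^*)$. This gives $\nu^2=[\Gamma:\Gamma\cap\Phi^*][\Phi:\Phi\cap\Gamma^*]$. Euclidean duality gives $[\Phi:\Phi\cap\Gamma^*]=[\Gamma+\Phi^*:\Gamma]$ after dualizing, but I expect this factor to need a careful recheck. The second isomorphism theorem gives $[\Gamma:\Gamma\cap\Phi^*]=[\Gamma+\Phi^*:\Phi^*]$. Equating the two factors, which the diagonal model confirms both equal $N$, gives $\nu=[\Gamma+\Phi^*:\Phi^*]$. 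The same argument with $\Lambda^\circ=\Phi^*\times\Gamma^*$ gives the formula for $\nu(\Lambda^\circ)$.

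The main obstacle I expect is the bookkeeping of the dual indices and confirming that the two factors coincide. I would check this directly in the diagonal model and then transport it by the dilation.
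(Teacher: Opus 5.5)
Your treatment of the general symplectically rational case matches the paper's proof: skew Smith form for $q\Theta_\Lambda$, then $S=MWM_D^{-1}$ with $M_D=\operatorname{diag}(D,I_d)$. The transport of $\nu(\Lambda)$, $\nu(\Lambda^\circ)$ and $\operatorname{covol}$ by $S$ also matches. So does your final separable normalization $S_D S_{A_0U}^{-1}$, which is the paper's composition of the dilations for $A_0^{-1}$, $U^{-1}$ and $D_0$. Incidentally, your abandoned first attempt also works. With $A=A_0D^{-1}$ one has $A^TB_0=D^{-1}A_0^TB_0=I$, so $A^{-T}=B_0$ (not $B_0D^{-1}$), and hence $S_A(D\mathbb Z^d\times\mathbb Z^d)=\Gamma\times\Phi$.

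The genuine error is in the separable index formula. The identity $[\Phi:\Phi\cap\Gamma^*]=[\Gamma+\Phi^*:\Gamma]$ is false. In the diagonal model ($\Gamma=D\mathbb Z^d$, $\Phi=\mathbb Z^d$) one has $\Phi\cap\Gamma^*=\mathsf A\mathbb Z^d$, so the left side is $N$. On the other hand $\Gamma+\Phi^*=\mathsf A^{-1}\mathbb Z^d$, so the right side is $R$. Your next sentence, that both factors equal $N$, therefore contradicts the identity you just wrote. Moreover, checking that the two factors agree on one representative proves nothing for general $\Gamma,\Phi$ unless you also show both are invariant under the normalizing dilation.

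The correct duality step is short. Dualizing the finite-index inclusion $\Phi^*\subset\Gamma+\Phi^*$ gives $(\Gamma+\Phi^*)^*=\Gamma^*\cap\Phi\subset\Phi$ with the same index. Hence $[\Phi:\Phi\cap\Gamma^*]=[\Gamma+\Phi^*:\Phi^*]$. Combined with your second-isomorphism computation $[\Gamma:\Gamma\cap\Phi^*]=[\Gamma+\Phi^*:\Phi^*]$, this yields $\nu(\Gamma\times\Phi)^2=[\Gamma+\Phi^*:\Phi^*]^2$ outright, with no need to equate factors. Applying the same computation to $\Lambda^\circ=\Phi^*\times\Gamma^*$, whose ``$\Phi^*$'' is $\Gamma$, gives $\nu(\Lambda^\circ)=[\Gamma+\Phi^*:\Gamma]$.

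Once corrected, your route to the index formulas differs from the paper's and is more self-contained. It uses only the second isomorphism theorem and lattice duality, and it does not depend on the normal form at all. The paper instead observes that a symplectic dilation $S_A$ sends $\Gamma\mapsto A\Gamma$ and $\Phi^*\mapsto A\Phi^*$, since $(A^{-T}\Phi)^*=A\Phi^*$. Both indices in $\Gamma+\Phi^*$ are therefore unchanged, and they can be read off as $N$ and $R$ on $D\mathbb Z^d\times\mathbb Z^d$. That invariance observation is exactly what your fallback plan (``check in the diagonal model and transport by the dilation'') would need to state explicitly.
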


\begin{proof}
We begin with the separable case.  Write
\[
 \Gamma=A_0\Z^d,
 \qquad
 \Phi=B_0\Z^d,
 \qquad
 C_{\Gamma,\Phi}=A_0^TB_0.
\]
By \Cref{lem:symplectic-rationality-characterization}, $C_{\Gamma,\Phi}$ is entrywise rational.  Choose
$m\in\N$ so that $mC_{\Gamma,\Phi}$ is integral.  The first part of the preceding Smith--Newman theorem gives
$U,V\in GL_d(\Z)$ and positive integers
$s_1,\ldots,s_d$, with $s_j$ dividing $s_{j+1}$ for
$j=1,\ldots,d-1$, such that
\[
 U^T(mC_{\Gamma,\Phi})V
 =\operatorname{diag}(s_1,\ldots,s_d).
\]
Put
\[
 D_0=\operatorname{diag}(s_1/m,\ldots,s_d/m).
\]
The symplectic dilation associated with $A_0^{-1}$ sends
$\Gamma\times\Phi$ to
$\Z^d\times C_{\Gamma,\Phi}\Z^d$.  The symplectic dilation associated with
$U^{-1}$ then sends this lattice to
\[
 \Z^d\times U^TC_{\Gamma,\Phi}\Z^d
 =\Z^d\times D_0\Z^d,
\]
because $V\Z^d=\Z^d$.  Finally, the symplectic dilation associated with
$D_0$ sends this lattice to
\[
                         D_0\Z^d\times\Z^d.
\]
Writing each diagonal entry of $D_0$ in lowest terms as $b_i/a_i$ gives
\eqref{eq:normal-form-lattice}.  The product of the three symplectic
dilations is again a symplectic dilation.  Thus no chirp or Fourier
transform is needed in the separable reduction.

We now treat an arbitrary symplectically rational phase-space lattice.
Choose a basis matrix $M\in GL_{2d}(\R)$ with
$\Lambda=M\Z^{2d}$ and set $\Theta_\Lambda=M^TJM$.  By \Cref{lem:symplectic-rationality-characterization}, $\Theta_\Lambda$ is entrywise rational.  Choose $q\in\N$ such that
$q\Theta_\Lambda$ is integral.  The second part of the preceding Smith--Newman theorem, applied to the invertible and alternating matrix 
$q\Theta_\Lambda$, gives
$U\in GL_{2d}(\Z)$ and positive integers
$h_1,\ldots,h_d$, with $h_j$ dividing $h_{j+1}$ for
$j=1,\ldots,d-1$, such that
\[
 U^T\Theta_\Lambda U
 =\begin{pmatrix}0&D_0\\-D_0&0\end{pmatrix},
 \qquad
 D_0=\operatorname{diag}\left(\frac{h_1}{q},\ldots,
                               \frac{h_d}{q}\right).
\]
Let
\[
 C_0=\begin{pmatrix}D_0&0\\0&I_d\end{pmatrix},
 \qquad
 S=MUC_0^{-1}.
\]
Since
\[
 C_0^TJC_0
 =\begin{pmatrix}0&D_0\\-D_0&0\end{pmatrix}
 =U^TM^TJMU,
\]
we obtain
\[
 S^TJS=C_0^{-T}U^TM^TJMU C_0^{-1}=J.
\]
Thus $S$ is symplectic and
\[
 \Lambda=M\Z^{2d}
 =MU\Z^{2d}
 =SC_0\Z^{2d}
 =S(D_0\Z^d\times\Z^d).
\]
Reducing the diagonal entries of $D_0$ gives the coprime pairs
$(a_i,b_i)$ in \eqref{eq:normal-form-lattice}.

For the diagonal lattice, the three formulas in
\eqref{eq:normal-form-invariants} follow directly from the diagonal data
\eqref{eq:diagonal-model}--\eqref{eq:NR}.  If $\Lambda=S\Lambda_D$, then
$(S\Lambda_D)^\circ=S\Lambda_D^\circ$,
$S(\Lambda_D\cap\Lambda_D^\circ)
 =\Lambda\cap\Lambda^\circ$, and $\det S=1$.  Therefore the two indices and
the covolume are unchanged, proving all three equalities in
\eqref{eq:normal-form-invariants}.

In the separable case, the common linear changes above carry both $\Gamma$
and $\Phi^*$ by the same invertible matrix, so the indices in their sum are
unchanged.  On the diagonal representative the two indices are $N$ and $R$,
which gives
\[
 \nu(\Gamma\times\Phi)=[\Gamma+\Phi^*:\Phi^*],
 \qquad
 \nu((\Gamma\times\Phi)^\circ)=[\Gamma+\Phi^*:\Gamma].
\]
\end{proof}

\begin{corollary}
\label{cor:square-index}
If $\Lambda\subset\mathbb R^{2d}$ is symplectically rational, then
$[\Lambda:\Lambda_{\rm int}]$ is the square of an integer.
\end{corollary}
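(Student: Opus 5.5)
The plan is to read this off directly from \Cref{thm:diagonal-symplectic-normal-form} together with the diagonal index computation in \Cref{sec:zak-method}. Given a symplectically rational $\Lambda$, the theorem supplies $S\in\Sp(2d,\R)$ and a diagonal rational representative $\Lambda_D=D\Z^d\times\Z^d$, $D=\diag(b_1/a_1,\ldots,b_d/a_d)$ with $(a_i,b_i)=1$, such that $\Lambda=S\Lambda_D$. The first step is to check that the integral subgroup is transported by $S$:
\[
 \Lambda_{\rm int}=\Lambda\cap\Lambda^\circ
 =S\Lambda_D\cap S\Lambda_D^\circ=S(\Lambda_D\cap\Lambda_D^\circ).
\]
This uses $(S\Lambda_D)^\circ=S\Lambda_D^\circ$, which holds because $S$ preserves $\sigma$. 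Since $S$ is a group isomorphism $\Lambda_D\to\Lambda$ carrying the subgroup $\Lambda_D\cap\Lambda_D^\circ$ onto $\Lambda_{\rm int}$, the indices agree:
\[
 [\Lambda:\Lambda_{\rm int}]=[\Lambda_D:\Lambda_D\cap\Lambda_D^\circ].
\]

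The second step is to compute the diagonal index, recalling the computation from \Cref{sec:zak-method}. One has $\Lambda_D^\circ=\Z^d\times D^{-1}\Z^d$. Coordinatewise coprimality gives $D\Z^d\cap\Z^d=\mathsf B\Z^d$ and $\Z^d\cap D^{-1}\Z^d=\mathsf A\Z^d$. Hence
\[
 \Lambda_D\cap\Lambda_D^\circ=\mathsf B\Z^d\times\mathsf A\Z^d,
\]
and the index factors as
\[
 [D\Z^d:\mathsf B\Z^d]\,[\Z^d:\mathsf A\Z^d]=N\cdot N=N^2,
 \qquad N=\prod_i a_i.
\]
The only point needing care is the first factor. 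In each coordinate, $(b_i/a_i)\Z\cap\Z=b_i\Z$ by coprimality, and this has index $a_i$ in $(b_i/a_i)\Z$.

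So $[\Lambda:\Lambda_{\rm int}]=N^2$ with $N\in\N$. No step is a real obstacle; the one thing to verify carefully is the transport of the adjoint lattice under a symplectic map. Writing the defining condition $\sigma(z,S\lambda)\in\Z$ for all $\lambda\in\Lambda_D$ as $\sigma(S^{-1}z,\lambda)\in\Z$ for all $\lambda\in\Lambda_D$ shows $z\in S\Lambda_D^\circ$, which gives the transport identity used above.
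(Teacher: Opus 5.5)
Your proposal is correct and follows the paper's own proof essentially step for step: reduce to $\Lambda_D$ via \Cref{thm:diagonal-symplectic-normal-form}, compute $\Lambda_D\cap\Lambda_D^\circ=\mathsf B\Z^d\times\mathsf A\Z^d$ with index $N\cdot N=N^2$, and transport this through $S$ using $(S\Lambda_D)^\circ=S\Lambda_D^\circ$. Your explicit verifications of that transport identity and of $(b_i/a_i)\Z\cap\Z=b_i\Z$ fill in details that the paper states without proof.
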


\begin{proof}
By Theorem~\ref{thm:diagonal-symplectic-normal-form}, there are
$S\in\operatorname{Sp}(2d,\mathbb R)$ and
\[
 D=\mathsf B\mathsf A^{-1},
 \qquad
 \mathsf A=\operatorname{diag}(a_1,\ldots,a_d),
 \qquad
 \mathsf B=\operatorname{diag}(b_1,\ldots,b_d),
\]
with $(a_i,b_i)=1$, such that
$\Lambda=S\Lambda_D$ for
$\Lambda_D=D\mathbb Z^d\times\mathbb Z^d$.  Its adjoint lattice is
\[
 \Lambda_D^\circ=\mathbb Z^d\times D^{-1}\mathbb Z^d,
\]
and coprimality gives
\[
 \Lambda_D\cap\Lambda_D^\circ
 =\mathsf B\mathbb Z^d\times\mathsf A\mathbb Z^d.
\]
Therefore, with $N=\det\mathsf A=\prod_i a_i$,
\[
 [\Lambda_D:\Lambda_D\cap\Lambda_D^\circ]
 =[D\mathbb Z^d:\mathsf B\mathbb Z^d]
  [\mathbb Z^d:\mathsf A\mathbb Z^d]
 =N\cdot N=N^2.
\]
Since symplectic maps carry adjoint lattices to adjoint lattices,
$S(\Lambda_D\cap\Lambda_D^\circ)=\Lambda\cap\Lambda^\circ$, and hence
\[
 [\Lambda:\Lambda_{\rm int}]
 =[\Lambda_D:\Lambda_D\cap\Lambda_D^\circ]
 =N^2.
\]
\end{proof}

\begin{proof}[Proof of Proposition~\ref{prop:TeP-SG-relation}]
By the separable part of
Theorem~\ref{thm:diagonal-symplectic-normal-form},
$\Gamma\times\Phi$ is symplectically rational exactly when
$\Gamma+\Phi^*$ is a lattice.  If $\Gamma+\Phi^*$ is not discrete, then $\Gamma\times\Phi$ is
symplectically irrational. In this case condition~\eqref{eq:SG} is
\[
 \covol(\Gamma\times\Phi)<1
 \quad\Longleftrightarrow\quad
 \covol(\Gamma)<\covol(\Phi^*).
\]
This is the non-discrete branch of $\TeP$.  If $\Gamma+\Phi^*$ is a lattice,
the index formulas in the theorem show that the symplectically rational
branch of $\SG$ is precisely
\[
                         [\Gamma+\Phi^*:\Phi^*]\ge[\Gamma+\Phi^*:\Gamma]+d,
\]
which is the commensurable branch of $\TeP$.
\end{proof}

\begin{proposition}
\label{prop:metaplectic-Msp}
Let $S\in\Sp(2d,\mathbb R)$ and let $\mu(S)$ be a metaplectic lift.  Then,
for every $1\le p\le\infty$ and $s\in\mathbb R$,
\begin{equation}
 \mu(S):M_s^p(\mathbb R^d)\longrightarrow M_s^p(\mathbb R^d)
 \label{eq:metaplectic-Msp}
\end{equation}
is an isomorphism.  Moreover,
$\mu(S)\pi(z)\mu(S)^{-1}=c_S(z)\pi(Sz)$ with $|c_S(z)|=1$, so frame bounds
are preserved when the lattice and window are transformed together.
\end{proposition}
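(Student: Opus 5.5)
The plan is to reduce everything to the three generators $S_A$, $V_B$, $J$ of $\Sp(2d,\R)$. Since $\mu(S_1S_2)=\pm\mu(S_1)\mu(S_2)$ and a composition of isomorphisms is an isomorphism, it suffices to prove \eqref{eq:metaplectic-Msp} for each generator. The covariance relation $\mu(S)\pi(z)\mu(S)^{-1}=c_S(z)\pi(Sz)$ has already been recorded above, and it gives the frame statement directly. Indeed, for $f\in L^2$,
\[
 \sum_{\lambda\in\Lambda}|\langle f,\pi(S\lambda)\mu(S)g\rangle|^2
 =\sum_{\lambda\in\Lambda}|\langle\mu(S)^{-1}f,\pi(\lambda)g\rangle|^2 .
\]
Because $\mu(S)$ is unitary, the frame bounds for $\G(\mu(S)g,S\Lambda)$ and $\G(g,\Lambda)$ therefore coincide.

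For the modulation-space isomorphism, I would compare short-time Fourier transforms. Fix the Gaussian window $\phi$. Using unitarity and covariance,
\[
 |V_{\phi}(\mu(S)f)(Sz)|=|\langle\mu(S)f,\pi(Sz)\phi\rangle|
 =|\langle f,\pi(z)\mu(S)^{-1}\phi\rangle|=|V_{\psi}f(z)|,
\]
where $\psi=\mu(S)^{-1}\phi$. For the three generators, $\psi$ is a dilated Gaussian, a chirped Gaussian, or $\widehat\phi^{-1}$. In each case $\psi$ is a nonzero Schwartz function, since $\mu(S)$ preserves $\Sclass$.

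Next I compute the weighted norm by changing variables $w=Sz$, which has unit Jacobian:
\[
 \|\mu(S)f\|_{M^p_s}=\|\langle Sz\rangle^s V_\psi f(z)\|_{L^p(dz)}.
\]
Since $S$ is invertible, $\langle Sz\rangle\asymp\langle z\rangle$ with constants depending only on $\|S\|$ and $\|S^{-1}\|$. This gives $\|\mu(S)f\|_{M^p_s}\asymp\|f\|_{M^p_s}$, with the norm now computed using the window $\psi$.

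The modulation-space norm is independent of the nonzero Schwartz window \cite[Proposition~11.3.2(c)]{Grochenig2001}, so the $\psi$-norm is equivalent to the original $\phi$-norm. The inverse $\mu(S)^{-1}=\pm\mu(S^{-1})$ is handled by the same argument, which gives bijectivity.

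The one delicate point is that the paper defines $M^p_s$ only for $L^2$ functions. Since $\mu(S)$ maps $L^2$ onto $L^2$, this causes no difficulty: the computation above is carried out entirely for $L^2$ functions. The main obstacle is simply recording the window-independence constants. For $s<0$ this uses the moderateness $v_s(z+y)\le C_sv_s(z)v_{|s|}(y)$ and the fact that $V_\phi\psi$ decays rapidly, and these facts are standard.
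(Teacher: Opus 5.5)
Your argument is correct, but it takes a different route from the paper for the modulation-space part. The paper does not prove the isomorphism directly. It cites F\"uhr--Shafkulovska (their Theorems~3.2 and~4.6), observing only that their hypothesis $m\asymp m\circ S^{-1}$ holds for $m(z)=\langle z\rangle^s$. The frame-bound statement is handled exactly as you do, via covariance and unitarity.

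You instead prove the isomorphism from first principles, using four ingredients:
\begin{itemize}
\item the covariance identity $|V_\phi(\mu(S)f)(Sz)|=|V_{\mu(S)^{-1}\phi}f(z)|$;
\item the unimodular change of variables $w=Sz$;
\item the comparability $\langle Sz\rangle\asymp\langle z\rangle$;
\item independence of the norm from the nonzero Schwartz window (Gr\"ochenig, Proposition~11.3.2(c), which the paper already quotes).
\end{itemize}
Your two-sided equivalence $\|\mu(S)f\|_{M^p_s}\asymp\|f\|_{M^p_s}$ holds for every $f\in L^2$, with infinite values allowed. Since $\mu(S)$ is a bijection of $L^2$, surjectivity onto $L^2\cap M^p_s$ follows immediately, so the separate appeal to $\mu(S^{-1})$ is not even needed.

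Your route buys self-containedness using only inputs already present in the paper. The citation buys generality: the F\"uhr--Shafkulovska theorem covers mixed-norm spaces $M^{p,q}_m$. There a symplectic change of variables such as $z\mapsto Jz$ does not preserve the $L^{p,q}$ structure, so your argument breaks down and boundedness can genuinely fail. Only the diagonal case $p=q$ is used in this paper, and there your elementary argument suffices.

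Two small remarks. First, the reduction to the generators $S_A,V_B,J$ is superfluous. Your STFT computation works verbatim for arbitrary $S$ once you know $\mu(S)^{-1}\phi\in\Sclass\setminus\{0\}$, which the paper states. Second, for $J$ the new window is $\mathcal F^{-1}\phi=\phi$, so your notation $\widehat\phi^{-1}$ should be read that way.
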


The modulation-space assertion is the diagonal-exponent, polynomial-weight
specialization of F\"uhr--Shafkulovska
\cite[Theorems~3.2 and~4.6]{FuehrShafkulovska2024}.  Their weighted result
applies when the weight satisfies $m\asymp m\circ S^{-1}$.  Here
$M_s^p=M_{\langle\cdot\rangle^s}^{p,p}$ and
$\langle S^{-1}z\rangle^s\asymp\langle z\rangle^s$ for every fixed
invertible symplectic matrix $S$.  The preservation of
frame bounds follows directly from the displayed metaplectic covariance and
the unitarity of $\mu(S)$ on $L^2$.

\section{Assembly of the rational modulation-space regions}
\label{sec:assembly}

\begin{proof}[Proof of Theorem~\ref{thm:symplectic-index-balian-low}]
The density theorem excludes $\covol(\Lambda)>1$, equivalently
$\Delta(\Lambda)<0$.  For a symplectically rational lattice with
$0\le\Delta(\Lambda)\le d-1$, use
\Cref{thm:diagonal-symplectic-normal-form,prop:metaplectic-Msp} to reduce
to the diagonal model, where $q=\Delta(\Lambda)+1=N-R+1$.
For $1\le p\le2$ and $2<p<\infty$, respectively,
\Cref{thm:p-le-2-endpoint,thm:p-gt-2-endpoint} exclude the endpoints and
all larger weights in
\eqref{eq:symplectic-index-gap-necessary-low-p}--\eqref{eq:symplectic-index-gap-necessary-high-p}.
For $p=\infty$ and $s>d+q$, \Cref{prop:modulation-embeddings} gives
\[
 M_s^\infty(\R^d)\hookrightarrow M_q^2(\R^d),
\]
contradicting the $p=2$ endpoint obstruction.

Conversely, \Cref{tm:prop:flat-all-p} gives the full strict low-$p$
range, and, when $q<d$, \Cref{sec:lower-completion} gives the full strict
finite-$p$ high-$p$ range and the included infinity endpoint. When $q=d$, \Cref{tm:cor:critical-flat} gives the full
finite-$p$ range and also attains
$M_{2d}^\infty$.  When $\Delta\ge d$, \Cref{sec:smooth-branch}
gives a Schwartz Parseval window.  These constructions and
\Cref{prop:metaplectic-Msp} prove sufficiency and the Parseval assertion
in every listed case.
\end{proof}

\begin{proof}[Proof of \Cref{thm:modulation-classification}]
Reduce to the diagonal model by
\Cref{thm:diagonal-symplectic-normal-form} and write
$q=\Delta(\Lambda)+1=N-R+1$.
For $q<d$, the one flat window in \Cref{tm:prop:flat-all-p} belongs
simultaneously to every finite-$p$ range $s<2q(1-1/p)$ and to
$M_{2q}^\infty$.  The one phase window in
\Cref{sec:lower-completion} realizes the stated phase range for every
$1\le p<\infty$ and the infinity endpoint simultaneously.  This proves part~\textup{(i)} with its two-window
quantifiers.  For $q=d$, \Cref{tm:cor:critical-flat} proves
part~\textup{(ii)}, including the infinity endpoint.  The smooth
construction in \Cref{sec:smooth-branch} proves part~\textup{(iii)}.
Finally, \Cref{prop:metaplectic-Msp} transfers each constructed window,
preserving Parsevality and modulation-space membership.
\end{proof}

\section{Critical density}
\label{sec:full-multivariate-balian-low}\label{sec:critical-density}
For symplectically rational lattices, critical density means
$\covol(\Lambda)=1$, equivalently $\Delta(\Lambda)=0$.
Theorem~\ref{thm:symplectic-index-balian-low} with $\Delta=0$ gives the
sharp modulation-space ranges, including the infinity endpoint and
Parseval existence.  The simultaneous-window refinements are given by
\Cref{thm:modulation-classification}.

\section{Lattices outside the symplectically rational class}
\label{sec:nonrational-balian-low}

This section records the corresponding all-lattice statements when no finite
rational Zak matrix is available.

A lattice is symplectically irrational precisely when its symplectic Gram
matrix contains an irrational entry.  From the finite-dimensional Zak point
of view this case is less rigid: there is no finite rational Zak matrix and
no arithmetic rank gap.  It is also the case for which the existence of
Schwartz windows below critical density was known before the rational
problem was settled.  Jakobsen--Luef
\cite[Theorem~5.4]{JakobsenLuef2020} proved the corresponding tight
$S_0$-window existence statement, while Enstad--Thiel--Vilalta
\cite[Theorem~C]{EnstadThielVilalta2025} obtained the Schwartz-window
conclusion used below.

\subsection{External results used in the all-lattice arguments}

For local checkability, we record here the exact specializations of three
published results used in this section and in the subsequent multiwindow
argument.  The normalizations are those of the present paper.

The following is the specialization of
Lee--Philipp--Voigtlaender~\cite[Theorem~1.1]{LeePhilippVoigtlaender2023}
used below.
\begin{proposition}
\label{prop:external-canonical-dual-regularity}
Let $\Lambda\subset\mathbb R^{2d}$ be a lattice and let
$\mathcal G(g,\Lambda)$ be a Gabor frame with frame operator $S$.  If
$g\in\mathbb H^1(\mathbb R^d)=M_1^2(\mathbb R^d)$, equivalently if $g$ has
finite second moments in both time and frequency, then the canonical dual
$S^{-1}g$ and the canonical Parseval window $S^{-1/2}g$ also belong to
$M_1^2(\mathbb R^d)$.
\end{proposition}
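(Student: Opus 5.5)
The plan is to obtain the proposition as a direct specialization of Lee--Philipp--Voigtlaender~\cite[Theorem~1.1]{LeePhilippVoigtlaender2023}. The work is to match normalizations and to make sure both windows in the statement are covered; no new analysis is intended.

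\emph{Step 1: identify the spaces.} Recall the standard identification of $\mathbb H^1(\mathbb R^d)$, the space of $g\in L^2$ with $\|xg\|_2+\|\xi\widehat g\|_2<\infty$, with $M_1^2(\mathbb R^d)$. By Plancherel for the short-time Fourier transform with Gaussian window,
\[
\|\langle z\rangle V_\phi g\|_2^2\asymp \|g\|_2^2+\|\,|x|g\|_2^2+\|\,|\xi|\widehat g\|_2^2 .
\]
This follows by expanding $|z|^2=|x|^2+|\omega|^2$ and using the covariance of $V_\phi$ under multiplication by $x_j$ and under differentiation. It also shows that ``finite second moments in time and frequency'' is the same condition, so the hypotheses and conclusions of \cite{LeePhilippVoigtlaender2023} translate verbatim into $M_1^2$.

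\emph{Step 2: match the time--frequency conventions.} Check that the conventions there, namely arbitrary full-rank lattices $\Lambda\subset\mathbb R^{2d}$ and $\pi(x,\xi)=M_\xi T_x$ or $T_xM_\xi$, agree with ours. The two orderings differ only by unimodular phases, and those phases change neither the frame operator $S$ nor the frame property. Hence $S$, $S^{-1}g$ and $S^{-1/2}g$ are the same objects in both normalizations.

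\emph{Step 3: the two windows.} Read off from \cite[Theorem~1.1]{LeePhilippVoigtlaender2023} that $S^{-1}g\in\mathbb H^1$. I expect that theorem to state the canonical tight window $S^{-1/2}g$ as well, in which case Step~3 ends here. If only the dual is stated, the fallback is the integral representation
\[
S^{-1/2}=\frac1\pi\int_0^\infty t^{-1/2}(S+tI)^{-1}\,dt .
\]
One would then show that each resolvent $(S+tI)^{-1}$ maps $g$ into $\mathbb H^1$ with norms integrable against $t^{-1/2}$. This is the only potential obstacle. It requires tracking the constants in the Lee--Philipp--Voigtlaender argument uniformly in $t$; note that $S+tI$ is the frame operator of the enlarged system $\mathcal G(g,\Lambda)$ together with a tight frame of bound $t$, which is not itself a Gabor system, so this route is not immediate. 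For that reason I would rely on the cited statement and treat this step as a verification of what is proved there rather than as a new argument.
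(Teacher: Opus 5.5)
Your proposal is correct and takes the same approach as the paper: the paper gives no argument beyond recording this proposition as a specialization of Lee--Philipp--Voigtlaender~\cite[Theorem~1.1]{LeePhilippVoigtlaender2023}, with $\mathbb H^1=M_1^2$ identified via \cite[Proposition~11.3.1 and Theorem~11.3.6]{Grochenig2001}. The paper takes that cited theorem to cover the canonical tight window $S^{-1/2}g$ together with $S^{-1}g$, so your resolvent-integral fallback in Step~3 is not needed.
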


The weak arbitrary-lattice Balian--Low input is the following
specialization of Gr\"ochenig--Han--Heil--Kutyniok
\cite[Theorem~8]{GrochenigHanHeilKutyniok2002}.
\begin{proposition}
\label{prop:external-weak-arbitrary-lattice-BL}
Let $\Lambda\subset\mathbb R^{2d}$ have covolume one, and suppose that
$\mathcal G(g,\Lambda)$ is a Gabor Riesz basis.  Let $h$ be its unique
biorthogonal Gabor dual, so that $\mathcal G(h,\Lambda)$ is biorthogonal to
$\mathcal G(g,\Lambda)$.  Then $g$ and $h$ cannot both have finite second
moments in time and frequency.  More explicitly, it is impossible that each
of the four integrals
\[
 \int |x|^2|g(x)|^2\,dx,\quad
 \int |\xi|^2|\widehat g(\xi)|^2\,d\xi,\quad
 \int |x|^2|h(x)|^2\,dx,\quad
 \int |\xi|^2|\widehat h(\xi)|^2\,d\xi
\]
is finite, where the Fourier-transform normalization is the one fixed in
\Cref{subsec:fourier-vmo-background}.  In particular, $g$ and $h$ cannot
both belong to $M_1^2(\mathbb R^d)$.
\end{proposition}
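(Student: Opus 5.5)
This statement is recorded as a specialization of Gr\"ochenig--Han--Heil--Kutyniok \cite[Theorem~8]{GrochenigHanHeilKutyniok2002}. The plan is therefore to check that their hypotheses and conclusion translate into the present normalizations, not to reprove the theorem. Their Theorem~8 says the following. Let $\mathcal G(g,\Lambda)$ be an exact Gabor frame, that is a Riesz basis, over an arbitrary lattice $\Lambda\subset\R^{2d}$. Then $g$ and its dual $h$ cannot satisfy simultaneously the weak uncertainty conditions built from $\|X_jg\|_2$, $\|X_jh\|_2$, $\|\widehat{X_jg}\|_2$ and $\|\widehat{X_jh}\|_2$, for the coordinate operators $X_jf(x)=x_jf(x)$. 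This translation involves three steps.

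First, the lattice hypothesis. A Gabor Riesz basis forces $\covol(\Lambda)=1$ by the density theorem. Conversely, at covolume one every frame is a Riesz basis, so the hypothesis as stated is the one in the cited theorem. The biorthogonal dual of a Riesz basis $\mathcal G(g,\Lambda)$ is the canonical dual $S^{-1}g$. It again has Gabor structure, since $S$ commutes with $\pi(\lambda)$ up to the unimodular phases that cancel in $S$. This identifies $h$.

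Second, normalizations. The cited paper may use a Fourier convention with $2\pi$ placed differently and may write time--frequency shifts as $T_xM_\xi$ rather than $M_\xi T_x$. Changing the order only multiplies each atom by a unimodular constant, so the Riesz-basis property and the dual are unaffected. Rescaling the Fourier variable multiplies $\int|\xi|^2|\widehat g|^2$ by a positive constant, so finiteness is unaffected. If their theorem is stated for only one coordinate pair, the vector form follows: finiteness of $\int|x|^2|g|^2$ is equivalent to finiteness of every $\int x_j^2|g|^2$. Hence the conjunction of the four integrals being finite implies their coordinatewise hypothesis, and the claimed impossibility follows.

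Third, the final sentence. Use the standard identification $M_1^2=L^2_1\cap\mathcal FL^2_1$, where $L^2_1$ denotes weighted $L^2$ with weight $\langle x\rangle$. For $f\in L^2$ this is exactly finiteness of $\int|x|^2|f|^2$ and of $\int|\xi|^2|\widehat f|^2$ \cite[Chapter~11]{Grochenig2001}. So $g,h\in M_1^2$ would make all four integrals finite, contradicting what was just shown.

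The only real obstacle is bookkeeping. One must confirm that Theorem~8 of the cited paper is formulated for general (non-separable) full-rank lattices in $\R^{2d}$, with the lattice acting through $\pi(\lambda)$, and not only for $\Z^{2d}$ or its symplectic images. If it were stated only in the symplectic-image form, I would instead pass to that setting by a metaplectic lift. Such a lift preserves Riesz bases and biorthogonality, and by \Cref{prop:metaplectic-Msp} it also preserves $M_1^2$.
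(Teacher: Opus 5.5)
Your proposal is correct and matches the paper. The paper gives no argument beyond recording the statement as the specialization of the arbitrary-lattice weak Balian--Low theorem \cite[Theorem~8]{GrochenigHanHeilKutyniok2002}, and your bookkeeping is exactly what that specialization requires: identifying $h$ with the canonical dual, reordering atoms up to unimodular constants, rescaling the Fourier variable, passing to coordinatewise moments, and using $M_1^2=L^2_1\cap\mathcal FL^2_1$.

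One caution: your contingency plan would not work. A metaplectic lift cannot carry a non-symplectic lattice onto a symplectic one, because for $S\in\Sp(2d,\R)$ the symplectic Gram matrix $(SM)^TJ(SM)=M^TJM$ is unchanged. The paper needs this proposition precisely for symplectically irrational covolume-one lattices, so the argument genuinely rests on Theorem~8 being the all-lattice weak form, which it is.
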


We also use the following existence results of Enstad--Thiel--Vilalta
\cite[Theorems~C and~D]{EnstadThielVilalta2025}.
\begin{proposition}
\label{prop:external-Enstad-existence}
For a full-rank lattice $\Lambda\subset\mathbb R^{2d}$, put
$\nu(\Lambda)=[\Lambda:\Lambda_{\rm int}]^{1/2}$ when this index is finite
and $\nu(\Lambda)=\infty$ otherwise, with $(d-1)/\infty=0$.
\begin{enumerate}[label=\textup{(\roman*)},leftmargin=2.4em]
\item If
\[
 \covol(\Lambda)<1-\frac{d-1}{\nu(\Lambda)},
\]
then there is $g\in\mathcal S(\mathbb R^d)$ such that
$\mathcal G(g,\Lambda)$ is a Gabor frame.
\item With
\[
 q_0=\left\lfloor
      \covol(\Lambda)+\frac{d-1}{\nu(\Lambda)}
     \right\rfloor+1,
\]
there are $q_0$ Schwartz functions whose joint lattice Gabor system is a
multiwindow frame.
\end{enumerate}
These conclusions are existence statements; minimality of $q_0$ in the
situations considered here follows only after combining part~\textup{(ii)}
with the necessity result proved in this paper.
\end{proposition}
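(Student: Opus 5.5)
The statement is a specialization of Enstad--Thiel--Vilalta \cite[Theorems~C and~D]{EnstadThielVilalta2025}, so the proof is mainly a matter of matching conventions. I would check three things in order. First, their integral subgroup $\Lambda\cap\Lambda^\circ$ must be built from the same symplectic form $\sigma$ as ours. Our $\sigma$ differs from other common conventions only by a sign, and a sign does not change $\Lambda^\circ$, so $\nu(\Lambda)$ agrees with their invariant. Second, their covolume must be the Lebesgue covolume in $\R^{2d}$. Third, their time--frequency shifts must differ from $\pi(\lambda)=M_\xi T_x$ only by unimodular factors or by a fixed linear change such as $x\mapsto -x$. Such changes preserve frame bounds and map $\Sclass$ onto itself. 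In the irrational case the convention $(d-1)/\infty=0$ reduces (i) to $\covol(\Lambda)<1$ and (ii) to $q_0=\lfloor\covol(\Lambda)\rfloor+1$. In that case I would simply quote their theorems, since nothing earlier in this paper produces irrational windows.

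For symplectically rational $\Lambda$ I would also give an independent proof from the constructions above, as a consistency check. By \Cref{thm:diagonal-symplectic-normal-form} we have $\Lambda=S\Lambda_D$, $\nu(\Lambda)=N$ and $\covol(\Lambda)=R/N$. Condition (i) then reads $R/N<1-(d-1)/N$, that is $R<N-d+1$, which for integers is $N-R\ge d$. This is exactly the smooth branch, so \Cref{sec:smooth-branch} gives a Schwartz Parseval window on $\Lambda_D$. \Cref{prop:metaplectic-Msp} transfers it to $\Lambda$, and $\mu(S)$ preserves $\Sclass$.

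For (ii) in the rational case, the formula becomes $q_0=\lfloor (R+d-1)/N\rfloor+1$. This is the least integer $q$ with $qN\ge R+d$. I would run the multiwindow version of the Zibulski--Zeevi fibre criterion of \Cref{prop:zz-criterion-full}, which stacks the $q$ matrices $\mathcal Z_Dg_j$ into one $R\times qN$ matrix. I would then repeat the radial good-center induction of \Cref{prop:radial-boundary-fields}(i) with $N$ replaced by $qN$. This uses \Cref{lem:radial-good-center} with codimension $2(qN-R+1)>2d$, followed by \Cref{lem:Zak-smoothing} and row normalization. Each block of $N$ columns must be Zak-compatible on its own. This holds because the fine action of \Cref{prop:entrywise-fine-transports} acts blockwise, by the same left factor $L_{k,\ell}$ together with block-diagonal right factors. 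Reading off the $(0,0)$ entry of each block and applying \Cref{prop:fine-box-reassembly} then yields $q_0$ Schwartz windows.

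I expect the main obstacle to be the convention check in the irrational case, not any analysis. If the normalizations differ by more than a metaplectic change, I would transfer the conclusion by conjugating with $\mu(S)$ for a suitable $S\in\Sp(2d,\R)$. That step still needs to preserve $\Sclass$, covolume and $\nu$, and each of these is guaranteed by the material in \Cref{sec:lattice-theory}.
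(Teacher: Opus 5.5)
Your proposal is correct. For symplectically irrational lattices it matches the paper exactly. The paper gives no proof of this proposition beyond recording it as a specialization of \cite[Theorems~C and~D]{EnstadThielVilalta2025} in the present normalizations. So your convention check (sign of $\sigma$, Lebesgue covolume, $\pi(\lambda)$ up to unimodular factors or a metaplectic conjugation) followed by quotation is the whole argument there.

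In the rational case you take a genuinely different, constructive route.

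For (i), your route agrees with what the paper does elsewhere. The condition $R/N<1-(d-1)/N$ is $N-R\ge d$, and \Cref{sec:smooth-branch} with \Cref{prop:metaplectic-Msp} gives a Schwartz Parseval window.

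For (ii), you go beyond the paper. Since $\lfloor (R+d-1)/N\rfloor+1=\lceil (R+d)/N\rceil$, the concatenated $R\times q_0N$ matrix has first rank-loss codimension $2(q_0N-R+1)\ge 2d+2$. The fine action on it is $X\mapsto L_{k,\ell}X\,\operatorname{diag}(R_k(\eta),\ldots,R_k(\eta))$. Hence the gluing, radial good-center, smoothing (\Cref{lem:Zak-smoothing}) and row-normalization steps all go through verbatim, and each $R\times N$ block stays Zak-compatible. The $(0,0)$ entries of the blocks then give $q_0$ Schwartz windows forming a Parseval multiwindow frame. Remember to apply $\mu(S)$ to every window, as you did in (i).

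The paper never builds these multiwindow fields. For the sufficiency half of \Cref{thm:main-multiwindow}, it relies on part~(ii) of this cited proposition even in the rational branch. Your construction therefore gives an explicit, Parseval, operator-algebra-free proof of that rational sufficiency. It does not remove the proposition's dependence on \cite{EnstadThielVilalta2025}, because the irrational case still rests entirely on the citation. As a proof of the statement, your argument and the paper's therefore have the same logical basis.
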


\begin{proof}[Proof of the symplectically irrational case of the amalgam Balian--Low Theorem~\ref{thm:main-gabor}]
For a symplectically irrational lattice $\Lambda$, condition~\textup{(c)}
is $\covol(\Lambda)<1$.
The implication \textup{(a)}$\Rightarrow$\textup{(b)} follows from
$\Sclass(\mathbb R^d)\subset\Szero(\mathbb R^d)$.  If
\textup{(c)} holds, \Cref{prop:external-Enstad-existence}\textup{(i)} gives
a Schwartz Gabor-frame window, and Jakobsen--Luef
\cite[Theorem~5.4]{JakobsenLuef2020} give the corresponding $S_0$
existence result.

It remains to prove \textup{(b)}$\Rightarrow$\textup{(c)}.  The density
theorem first gives $\covol(\Lambda)\le1$.  If equality held, write
$\Lambda=L_0\mathbb Z^{2d}$, so $|\det L_0|=1$.  Since
$\Szero(\mathbb R^d)=M^1(\mathbb R^d)$, Gerhold--Lamando--Luef
\cite[Lemma~3.11 and Corollary~3.30]{GerholdLamandoLuef2026} apply directly
and exclude a Gabor frame generated by such a window at critical density.
Thus equality is impossible and $\covol(\Lambda)<1$.

Under condition~\textup{(c)}, choose the Schwartz frame furnished by
Enstad--Thiel--Vilalta and apply
\Cref{prop:Schwartz-Parsevalization}.  The resulting Parseval window is still
Schwartz and therefore also belongs to the Feichtinger algebra, which proves
the Parseval assertion in both \textup{(a)} and \textup{(b)}.
\end{proof}

\begin{proof}[Proof of the symplectically irrational case of the classical Balian--Low Theorem~\ref{thm:full-multivariate-balian-low}]
Let $\Lambda$ be symplectically irrational.  If $\covol(\Lambda)<1$,
the preceding proof supplies a Schwartz window.  Conversely, the density
theorem gives
$\covol(\Lambda)\le1$.  Suppose $\covol(\Lambda)=1$.  At exact density, the lattice density theorem
\cite[Theorem~10(c), p.~136]{Heil2007History} implies that a Gabor frame
is a Riesz basis.  Its
canonical dual is then the unique biorthogonal Gabor generator.  By
\Cref{prop:external-canonical-dual-regularity}, the assumed membership
$g\in M_1^2$ implies that this canonical dual also belongs to $M_1^2$.
This contradicts \Cref{prop:external-weak-arbitrary-lattice-BL}.
Thus equality is impossible.
\end{proof}

\section{Completion of the all-lattice one-window theorems}
\label{sec:completion-all-lattices}

\begin{proof}[Proof of the amalgam Balian--Low Theorem~\ref{thm:main-gabor}]
For a symplectically rational lattice, reduce it to its diagonal rational
representative by \Cref{thm:diagonal-symplectic-normal-form}.
If $\Delta(\Lambda)<0$, then $\covol(\Lambda)=R/N>1$, so the density
theorem excludes any Gabor-frame window. For $\Delta(\Lambda)\ge0$,
\Cref{thm:p-le-2-endpoint} at $(p,s)=(1,0)$ and the smooth construction in
\Cref{sec:smooth-branch} show that an $S_0$ frame window exists exactly when
$\Delta(\Lambda)\ge d$.  The same construction gives a Schwartz Parseval
window, and \Cref{prop:metaplectic-Msp} transfers it back to the original
lattice.  For symplectically irrational lattices,
the proof in \Cref{sec:nonrational-balian-low} gives precisely the strict-density criterion
in \eqref{eq:SG}. Since $\Sclass\subset\Szero$, the statements are equivalent
and the constructions give Parseval windows.
\end{proof}

\begin{proof}[Proof of the classical Balian--Low Theorem~\ref{thm:full-multivariate-balian-low}]
For a symplectically rational lattice, \Cref{thm:modulation-classification} and Theorem~\ref{thm:symplectic-index-balian-low}, evaluated at $(p,s)=(2,1)$, show that
a window in $M_1^2$ exists exactly when
$\Delta(\Lambda)>0$, equivalently $\covol(\Lambda)<1$.  The $M_1^2$ norm is equivalent to the $L^2$ norms of
$g$, the moments $x_jg$, and the weak derivatives $\partial_jg$; see
\cite[Proposition~11.3.1 and Theorem~11.3.6]{Grochenig2001}.  The
symplectically irrational case was proved in \Cref{sec:nonrational-balian-low}.
\end{proof}

\section{The multiple-window extension}
\label{subsec:multiwindow-adjustment}\label{sec:multiwindow-extension}

\begin{proof}[Proof of Theorem~\ref{thm:main-multiwindow}]
In the rational branch, it suffices to prove necessity under
condition~\textup{(b)}, since
$\Sclass(\mathbb R^d)\subset\Szero(\mathbb R^d)$.  Pass to the diagonal
representative and concatenate the $q$ rational Zak matrices horizontally.
The multiwindow analogue of the fibre criterion in
\Cref{prop:zz-criterion-full}, obtained by concatenating the matrix blocks,
turns the lower frame bound into a uniform positive lower singular-value
bound for the resulting $R\times qN$ matrix, which forces $qN\ge R$.
Since each
$g_j\in\Szero(\mathbb R^d)$ has a continuous and bounded Zak transform
\cite{JanssenS0}, \Cref{prop:ordinary-zak-reduction} may be applied with all
$d$ coordinate pairs to every block.  The same smooth unitary row gauge acts
on all blocks; hence it acts by left multiplication on their horizontal
concatenation and preserves its singular values.  The resulting
$R\times qN$ matrix is essentially bounded, ordinary Zak-quasiperiodic
entrywise, locally VMO, and has the same positive lower singular-value bound.
These are exactly the hypotheses of
\Cref{thm:rectangular-common-zero-gap}, with its column number $N$ replaced
by $qN$.  Therefore that theorem gives
\[
 qN\ge R+d.
\]
This directly implies strict density:
\[
 \covol(\Lambda)=\frac RN\le q-\frac dN<q.
\]

For a symplectically irrational lattice, the multiwindow density theorem
gives only the weak inequality $\covol(\Lambda)\le q$.  Suppose equality
held and write $\Lambda=L_0\mathbb Z^{2d}$, so $|\det L_0|=q$.  Since every
$g_j\in S_0(\mathbb R^d)=M^1(\mathbb R^d)$ lies in the regularity class
covered by Gerhold--Lamando--Luef, their
\cite[Lemma~3.11 and Corollary~3.39]{GerholdLamandoLuef2026} exclude a
Gabor frame generated by $q$ such windows.  Therefore equality is impossible
and $\covol(\Lambda)<q$.

Conversely, \Cref{prop:external-Enstad-existence}\textup{(ii)} supplies a
Schwartz multiwindow Gabor frame with
\[
 \begin{aligned}
 q_0
 &=\left\lfloor\frac{R+d-1}{N}\right\rfloor+1
   =\left\lceil\frac{R+d}{N}\right\rceil\\
 &=\left\lceil
       \frac{\nu(\Lambda^\circ)+d}{\nu(\Lambda)}
      \right\rceil
 \end{aligned}
\]
windows in the symplectically rational branch and with
$q_0=\lfloor\covol(\Lambda)\rfloor+1$ windows in the symplectically
irrational branch. Condition~\eqref{eq:mSG} gives $qN\ge R+d$ in the
rational case and $\covol(\Lambda)<q$ in the irrational case; since $q$ is
an integer, either inequality implies $q_0\le q$.
The necessity established above shows that these values are minimal.
If $q_0<q$, choose one nonzero window $h$, put
$m=q-q_0+1$, and replace $h$ by $m$ copies $m^{-1/2}h$.  Their
frame-operator contribution is
\begin{equation*}
 \sum_{j=1}^m S_{m^{-1/2}h}
 =m\,m^{-1}S_h=S_h.
\end{equation*}
Thus the total frame operator and bounds are unchanged and the family has
exactly $q$ windows.  Proposition~\ref{prop:Schwartz-Parsevalization} then yields a Parseval family without leaving the Schwartz class.
\end{proof}

\part{Back matter}\label{part:back-matter}

\section*{Acknowledgements}
The authors made extensive use of ChatGPT as a research and writing
assistant in developing proof strategies, exploring constructions, checking
mathematical arguments, searching the literature, and drafting and revising
the manuscript. The authors evaluated and verified the resulting material
and take full responsibility for the paper's mathematical content and
attribution of prior work.

The authors thank John Benedetto, Hans Feichtinger, Karlheinz Gr\"ochenig,
Mihail N. Kolountzakis, Dominik St\"oger, David Walnut, and Yang Wang for
helpful discussions.

A. Caragea and G. Pfander are supported by the German Research Foundation
(DFG) Grant CA 3683/1-1.

\section*{Appendix: A first-principles Pfaffian proof of the common-zero theorem}
\setcounter{section}{0}
\renewcommand{\thesection}{A\arabic{section}}
\renewcommand{\theHsection}{A\arabic{section}}

The following sections give a self-contained proof, based on first principles, of
both assertions in Theorem~\ref{thm:ddlt-common-zero}.  We first construct
smooth nonvanishing Zak-quasiperiodic maps when the number of components is
larger than the dimension.  We then prove the common-zero implication
without vector bundles, Chern classes, differential forms, Sard's theorem,
regular-value theory, tangent spaces, orientations, or zero-set manifolds.
The second argument is entirely coordinate based.  Its ingredients are a
covariant smoothing operator, ordinary partial derivatives, a real
skew-symmetric matrix, the Pfaffian written as an explicit finite sum,
finite-dimensional linear algebra, and repeated applications of the
fundamental theorem of calculus on a cube.

\section{A smooth nonvanishing map when the number of components is large}
\label{app:pfaffian-common-zero}
Put
\[
 \rho(t)=\begin{cases}0,&t\le0,\\ e^{-1/t},&t>0,\end{cases}
 \qquad
 \chi(t)=\frac{\rho(t)}{\rho(t)+\rho(1-t)}.
\]
Then $\chi\in C^\infty(\R)$, $\chi=0$ on $(-\infty,0]$, $\chi=1$ on
$[1,\infty)$, every derivative of $\chi$ vanishes at $0$ and $1$, and
$\chi(1-t)=1-\chi(t)$.  Moreover, on $(0,1)$,
\[
 \frac{\chi(t)}{1-\chi(t)}
 =\exp\left(\frac1{1-t}-\frac1t\right).
\]
The exponent has derivative $(1-t)^{-2}+t^{-2}>0$ and vanishes only at
$t=1/2$; hence $\chi(t)=1/2$ exactly when $t=1/2$.  On $0\le t\le1$ set
$h_*(t,v)=1-\chi(t)+\chi(t)e^{2\pi iv}$.  If $u=n+t$ with $n\in\Z$ and
$0\le t<1$, define
\[
 h(u,v)=e^{2\pi inv}\bigl(1-\chi(t)+\chi(t)e^{2\pi iv}\bigr).
\]
Flatness at the endpoints makes this quasiperiodic extension smooth.  A
zero requires the two summands to have equal modulus and opposite phase, so
the preceding uniqueness of $\chi(t)=1/2$ shows that its only zero modulo
$\Z^2$ is $(1/2,1/2)$.  For $\beta\in\R$, put
$h_\beta(u,v)=e^{2\pi i\beta u}h(u,v-\beta)$; its zero is
$(1/2,1/2+\beta)$.  Taking $\beta_r=r/(q+1)$ and
\[
 s_r(u,v)=\prod_{j=1}^q h_{\beta_r}(u_j,v_j),\qquad r=0,\ldots,q,
\]
gives $q+1$ smooth Zak-quasiperiodic components without a common zero,
since each coordinate can annihilate at most one component.

For the corresponding continuous example one may use the identity cutoff
$\chi(t)=t$, which gives the transparent formula
$h_*(t,v)=1-t+te^{2\pi iv}$ and the same zero.  Its quasiperiodic extension
has a derivative jump at the integers, however, so the identity cannot be
used for the smooth assertion of the theorem; the flat cutoff above is used
throughout the smooth construction.

The construction gives $q+1$ smooth components without a common zero; adding
zero components proves the assertion for every $M>q$.

\section{The common-zero implication}

A continuous function
\[
 s:\R^d\times\R^d\longrightarrow\C
\]
is called \emph{Zak-quasiperiodic} if
\begin{equation*}
 s(u+n,v+m)=e^{2\pi i n\cdot v}s(u,v),
 \qquad u,v\in\R^d,\quad n,m\in\Z^d.
\end{equation*}
A vector-valued function
$S=(s_1,\ldots,s_N):\R^{2d}\to\C^N$ is Zak-quasiperiodic when
\begin{equation*}
 S(u+n,v+m)=e^{2\pi i n\cdot v}S(u,v).
\end{equation*}

\begin{theorem}\label{pf-thm:common-zero}
Let $1\le N\le d$, and let
$s_1,\ldots,s_N:\R^d\times\R^d\to\C$ be continuous
Zak-quasiperiodic functions.  Then the functions have a common zero.
Equivalently, the map
\[
 S=(s_1,\ldots,s_N):\R^{2d}\longrightarrow\C^N
\]
cannot be everywhere nonzero.
\end{theorem}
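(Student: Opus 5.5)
The plan is to argue by contradiction and to turn the topological obstruction into an identity between two evaluations of one explicit integral: a Pfaffian of a real skew-symmetric $2d\times2d$ matrix built from first derivatives.

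\emph{Reductions.} Assume $S$ is continuous, Zak-quasiperiodic and nowhere zero. Appending $d-N$ zero components keeps $S$ nowhere zero and Zak-quasiperiodic, so we may take $N=d$. The norm $|S|$ is $\Z^{2d}$-periodic and continuous, so $|S|\ge c>0$. Next apply the ordinary-Zak-compatible smoothing \eqref{eq:ordinary-zak-compatible-smoothing}. By \Cref{lem:ordinary-zak-smoothing-rule} and the argument of \Cref{prop:ordinary-zak-lower-bound-survives}, which is trivial for continuous $S$ by uniform continuity, this yields a smooth Zak-quasiperiodic $S_\varepsilon$ with $|S_\varepsilon|\ge c/2$. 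Normalize to
\[
 \varphi=\frac{S_\varepsilon}{|S_\varepsilon|}:\R^{2d}\to\C^d,\qquad |\varphi|=1 .
\]
Then $\varphi$ is smooth and satisfies the same quasiperiodicity $\varphi(u+n,v+m)=e^{2\pi i n\cdot v}\varphi(u,v)$.

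\emph{The two objects.} Write the coordinates as $z=(u,v)\in\R^{2d}$. Define the real coefficients
\[
 A_a=\operatorname{Im}\langle \partial_a\varphi,\varphi\rangle
\]
and the real skew matrix
\[
 F_{ab}=\partial_aA_b-\partial_bA_a=2\operatorname{Im}\langle\partial_a\varphi,\partial_b\varphi\rangle ,
\]
where the second equality is a direct computation. Differentiating $|\varphi|^2=1$ gives $\operatorname{Re}\langle\partial_a\varphi,\varphi\rangle=0$.

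\emph{Pointwise vanishing.} We first show $\operatorname{Pf}F\equiv0$. Let $J$ be the real $2d\times2d$ matrix whose columns are the vectors $\partial_a\varphi$, viewed in $\R^{2d}\cong\C^d$. Let $\Omega$ be the real matrix of the form $(x,y)\mapsto2\operatorname{Im}\langle x,y\rangle$. Then $F=J^T\Omega J$. All columns of $J$ lie in the real hyperplane $\{x:\operatorname{Re}\langle x,\varphi\rangle=0\}$, so $\operatorname{rank}J\le 2d-1$. Hence $\det F=0$, and therefore $\operatorname{Pf}F=0$ everywhere. Consequently
\[
 \int_{[0,1]^{2d}}\operatorname{Pf}F\,dz=0 .
\]

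\emph{Boundary evaluation.} We then compute the same integral in a second way. Expand $\operatorname{Pf}F$ as the explicit finite sum over perfect matchings. Each term is a product of $d$ factors $\partial_aA_b-\partial_bA_a$. Using the symmetry of the sum, rewrite it as a divergence $\sum_a\partial_a(\cdots)$, the analogue of the identity $(dA)^d=d(A\,(dA)^{d-1})$, written purely in coordinates. Then apply the fundamental theorem of calculus coordinate by coordinate on the cube.

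The quasiperiodicity of $\varphi$ gives $A(u+n,v+m)=A(u,v)+2\pi\, n\cdot dv$, that is, $A_{v_j}$ shifts by $2\pi n_j$ and the other components of $A$ are unchanged, while $F$ is periodic. Opposite faces therefore cancel except for these shift terms. Iterating over the $d$ coordinate pairs $(u_j,v_j)$, which is an induction lowering the Pfaffian degree by one pair at each step, the integral should evaluate to
\[
 \pm\, d!\,(2\pi)^d\neq0 .
\]
This contradicts the vanishing obtained from the pointwise bound.

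\emph{Main obstacle.} I expect the boundary evaluation to be the hardest step. Its difficulty is twofold. First, one must organize the Pfaffian sum so that the divergence structure is visible without differential forms. Second, one must track exactly which face terms survive, and with which combinatorial multiplicity, so that the constant comes out nonzero. I would first carry out the computation for $d=1$, where $\int_{[0,1]^2} F_{uv}$ reduces to the jump of $A_v$ across $u=0,1$, which equals $2\pi$. I would then set up the induction on $d$ by fixing the last pair $(u_d,v_d)$ and expanding the Pfaffian along the row indexed by $u_d$.
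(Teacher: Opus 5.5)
Your outline follows the paper's proof: covariant Zak smoothing, normalization, real coefficients $A_a$ (these are $\pm2\pi$ times the paper's $a_r$) with skew derivative matrix $F$, pointwise vanishing of $\Pf F$ from a rank bound, and evaluation of $\int_{[0,1]^{2d}}\Pf F$ by a coordinate divergence identity, the fundamental theorem of calculus, and induction on coordinate pairs. Two local variations are fine. First, padding with $d-N$ zero components replaces the paper's freezing of the last $d-N$ coordinate pairs. Second, your rank bound says the columns $\partial_a\varphi$ lie in the real hyperplane $\{x:\operatorname{Re}\langle x,\varphi\rangle=0\}$, so $\rank F\le 2d-1$. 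This is more direct than the paper's factorization of $F$ through the complex complement of $\varphi$, and it suffices because only $\det F=0$ is needed.

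What is missing is the boundary evaluation itself, which is where the proof actually lives, and the value you predict for it is wrong. You need three things.
(i) An explicit primitive, with coordinates ordered $u_1,v_1,\ldots,u_d,v_d$:
\[
 V_r=\frac{1}{2^{d-1}d!}\sum_{i_2,\ldots,i_{2d}}\varepsilon_{r i_2\ldots i_{2d}}\,A_{i_2}F_{i_3i_4}\cdots F_{i_{2d-1}i_{2d}}.
\]
In $\sum_r\partial_rV_r$, the terms with the derivative on $A_{i_2}$ antisymmetrize to exactly $\Pf F$. The terms with the derivative on an $F$-factor cancel after averaging over cyclic relabelings of $(r,i_{2p-1},i_{2p})$, by $\partial_rF_{ab}+\partial_aF_{br}+\partial_bF_{ra}=0$.
(ii) The face jumps. $F$ is periodic, and only $A_{v_j}$ jumps, only across the $u_j$-faces. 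So $V_{v_j}$ has no jump and $V_{u_j}\big|_{u_j=1}-V_{u_j}\big|_{u_j=0}=\pm\frac{2\pi}{d}\Pf F^{(j)}$, where $F^{(j)}$ deletes rows and columns $u_j,v_j$. Moving this pair to the front of the alternating symbol is an even permutation, so no further sign enters.
(iii) An induction statement that survives restriction to the slice $u_j=0$, $v_j$ fixed. That slice of $\varphi$ is a unit map of $d-1$ coordinate pairs into $\C^d$, so your setup with target dimension equal to the number of pairs no longer applies. The inductive statement must be the integral identity alone, decoupled from the rank argument. The paper does this by stating it for arbitrary real coefficient functions with the prescribed increments.

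With the symmetric primitive, no row expansion along $u_d$ is needed: every pair of $u_j$-faces contributes $\frac1d$ of the total. The result is $\int_{[0,1]^{2d}}\Pf F=\pm(2\pi)^d$, not $\pm d!\,(2\pi)^d$. The factor $d!$ is the ratio between $\int(dA)^d$ and $\int\Pf F$, and the perfect-matching normalization of the Pfaffian already absorbs it. Nonvanishing, and hence the contradiction, is unaffected, but the discrepancy shows the decisive computation has not yet been carried out.
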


The proof has four steps.
\begin{enumerate}[label=(\arabic*)]
\item A continuous nowhere-zero quasiperiodic map can be smoothed without
      changing its quasiperiodicity or creating a zero.
\item After normalizing a smooth map to have length one, we form scalar
      coefficients $a_r$ from its first derivatives and a skew-symmetric
      matrix $F=(\partial_r a_s-\partial_s a_r)$.
\item A pointwise rank calculation forces $\Pf F=0$ everywhere.
\item The boundary increments imposed by Zak quasiperiodicity force
      $\int_{[0,1]^{2N}}\Pf F=1$.
\end{enumerate}
The last two conclusions contradict one another.

It suffices first to prove the result when $d=N$.  Indeed, if $d>N$,
we fix the final $d-N$ coordinate pairs and apply the $d=N$ case to the
remaining variables.  We therefore work from Section~\ref{pf-sec:normalize}
onward with $N$ coordinate pairs.

\section{Covariant smoothing of continuous quasiperiodic maps}

We record the smoothing step because the Pfaffian calculation uses ordinary
partial derivatives.

Choose a nonnegative function
$\rho\in C_c^\infty(\R^{2N})$ with
\[
 \int_{\R^{2N}}\rho(x,y)\,dx\,dy=1,
\]
and put
\[
 \rho_\varepsilon(x,y)=\varepsilon^{-2N}
 \rho(x/\varepsilon,y/\varepsilon).
\]
For a continuous Zak-quasiperiodic map
$S:\R^{2N}\to\C^N$, define
\begin{equation}\label{pf-eq:smoothing}
 (\mathcal M_\varepsilon S)(u,v)
 =\int_{\R^{2N}}
 \rho_\varepsilon(x,y)\,
 e^{2\pi i u\cdot y}
 S(u-x,v-y)\,dx\,dy.
\end{equation}

\begin{lemma}\label{pf-lem:smoothing}
The function $\mathcal M_\varepsilon S$ is smooth and satisfies the same
Zak boundary rule as $S$.  Moreover,
\[
 \mathcal M_\varepsilon S\longrightarrow S
\]
uniformly on $[0,1]^{2N}$ as $\varepsilon\to0$.  Consequently, if $S$ is
nowhere zero, then $\mathcal M_\varepsilon S$ is nowhere zero for all
sufficiently small $\varepsilon>0$.
\end{lemma}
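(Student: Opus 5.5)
The proof has three parts, in this order: smoothness, preservation of the Zak rule, and uniform convergence. The last part carries the nonvanishing conclusion.

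\emph{Smoothness.} Substitute $u'=u-x$, $v'=v-y$ in \eqref{pf-eq:smoothing}, as in the proof of \Cref{lem:ordinary-zak-smoothing-rule}. This gives
\[
 (\mathcal M_\varepsilon S)(u,v)=\int\rho_\varepsilon(u-u',v-v')\,e^{2\pi i u\cdot(v-v')}S(u',v')\,du'\,dv'.
\]
Every $(u,v)$-derivative now falls on the smooth compactly supported kernel or on the smooth phase. Since $S$ is continuous, it is bounded on the compact region of integration, locally uniformly in $(u,v)$. Differentiation under the integral is therefore justified to every order.

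\emph{Zak rule.} I would repeat the computation of \Cref{lem:ordinary-zak-smoothing-rule} verbatim. Replacing $(u,v)$ by $(u+n,v+m)$ produces the phase $e^{2\pi i(u+n)\cdot y}$. The Zak rule for $S$ gives $S(u+n-x,v+m-y)=e^{2\pi i n\cdot(v-y)}S(u-x,v-y)$. The factors $e^{2\pi i n\cdot y}$ and $e^{-2\pi i n\cdot y}$ cancel, leaving $e^{2\pi i n\cdot v}(\mathcal M_\varepsilon S)(u,v)$. No support assumption on $\rho$ is needed for this step.

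\emph{Uniform convergence.} Fix the compact buffer $K=[-1,2]^{2N}$ and take $\varepsilon<1$ small enough that $\supp\rho_\varepsilon\subset B(0,1)$; this forces $\supp\rho\subset B(0,1)$, an assumption I add. Since $\int\rho_\varepsilon=1$, for $(u,v)\in[0,1]^{2N}$ we can write
\[
 (\mathcal M_\varepsilon S)(u,v)-S(u,v)=\int\rho_\varepsilon(x,y)\bigl[(e^{2\pi i u\cdot y}-1)S(u-x,v-y)+S(u-x,v-y)-S(u,v)\bigr]dx\,dy.
\]
On the support, $|y|\le C\varepsilon$ and $|u|\le\sqrt N$, so $|e^{2\pi iu\cdot y}-1|\le C\varepsilon$, and the first term is at most $C\varepsilon\|S\|_{L^\infty(K)}$. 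The second term is at most the modulus of continuity of $S$ on $K$ at scale $C\varepsilon$, which tends to zero by uniform continuity on the compact set $K$. Both bounds are uniform in $(u,v)\in[0,1]^{2N}$.

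\emph{Nonvanishing.} If $S$ is nowhere zero, then $c=\min_{[0,1]^{2N}}|S|>0$ by compactness and continuity. For small $\varepsilon$, the uniform convergence gives $|\mathcal M_\varepsilon S|\ge c/2$ on the cube. Since $|\mathcal M_\varepsilon S|$ is $\Z^{2N}$-periodic by the Zak rule already proved, this lower bound holds on all of $\R^{2N}$. The only point needing care is that the approximation is genuinely uniform: continuity of $S$ alone would give only pointwise convergence. This is exactly why the estimates are taken on the compact buffer $K$ and the phase factor is controlled using the boundedness of $u$ on the fundamental cube.
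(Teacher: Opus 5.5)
Your proof is correct and follows essentially the same route as the paper. Both arguments use the change of variables for smoothness, the direct phase-cancellation computation for the Zak rule, an approximate-identity estimate on an enlarged cube via boundedness and uniform continuity of $S$ (you merely separate the phase error $e^{2\pi i u\cdot y}-1$ from the continuity error), and periodicity of the norm for the nonvanishing conclusion. One small point: no extra assumption on $\supp\rho$ is needed, since $\rho\in C_c^\infty$ already gives $\supp\rho_\varepsilon\subset B(0,1)$ for all sufficiently small $\varepsilon$.
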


\begin{proof}
Let $n,m\in\Z^N$.  Using quasiperiodicity of $S$,
\begin{align*}
 &(\mathcal M_\varepsilon S)(u+n,v+m)\\
 &\quad=\int \rho_\varepsilon(x,y)
 e^{2\pi i(u+n)\cdot y}
 S(u+n-x,v+m-y)\,dx\,dy\\
 &\quad=\int \rho_\varepsilon(x,y)
 e^{2\pi i u\cdot y}e^{2\pi i n\cdot y}
 e^{2\pi i n\cdot(v-y)}S(u-x,v-y)\,dx\,dy\\
 &\quad=e^{2\pi i n\cdot v}
 (\mathcal M_\varepsilon S)(u,v).
\end{align*}
Thus the boundary rule is preserved exactly.

To see smoothness without differentiating the original continuous function,
make the change of variables
\[
 p=u-x,\qquad q=v-y.
\]
Then
\begin{equation*}
 (\mathcal M_\varepsilon S)(u,v)
 =\int_{\R^{2N}}
 \rho_\varepsilon(u-p,v-q)
 e^{2\pi i u\cdot(v-q)}S(p,q)\,dp\,dq.
\end{equation*}
For $(u,v)$ in a compact set, the factor
$\rho_\varepsilon(u-p,v-q)$ restricts $(p,q)$ to another compact set.
Every derivative in $u$ and $v$ falls on the smooth kernel
\[
 \rho_\varepsilon(u-p,v-q)e^{2\pi i u\cdot(v-q)},
\]
so differentiation under the integral is legitimate to every order.

For uniform convergence, rewrite \eqref{pf-eq:smoothing} as
\begin{align*}
 (\mathcal M_\varepsilon S)(u,v)-S(u,v)
  =\int \rho_\varepsilon(x,y)
  \bigl(e^{2\pi i u\cdot y}S(u-x,v-y)-S(u,v)\bigr)\,dx\,dy.
\end{align*}
On a slightly enlarged compact fundamental cube, $S$ is uniformly
continuous and bounded.  The expression in parentheses tends uniformly to
zero as $(x,y)\to(0,0)$.  The usual approximate-identity estimate therefore
gives uniform convergence on $[0,1]^{2N}$.

Finally, $\norm{S(u,v)}_2$ is $\Z^{2N}$-periodic.  If $S$ has no zero, then
\[
 \delta=\min_{[0,1]^{2N}}\norm{S(u,v)}_2>0.
\]
For sufficiently small $\varepsilon$ the uniform approximation error is
less than $\delta/2$, and then
$\norm{\mathcal M_\varepsilon S}_2\ge\delta/2$ everywhere.
\end{proof}

\section{Normalization and the derivative matrix}\label{pf-sec:normalize}

Assume for contradiction that a smooth Zak-quasiperiodic map
\[
 S:\R^N\times\R^N\longrightarrow\C^N
\]
is nowhere zero.  Normalize it by
\begin{equation}\label{pf-eq:normalize}
 U(x,y)=\frac{S(x,y)}{\norm{S(x,y)}_2}.
\end{equation}
Then
\begin{equation}\label{pf-eq:unit}
 U(x,y)^*U(x,y)=1
\end{equation}
and
\begin{equation*}
 U(x+n,y+m)=e^{2\pi i n\cdot y}U(x,y).
\end{equation*}

We order the real coordinates as
\begin{equation}\label{pf-eq:coordinate-order}
 t_1=x_1,\quad t_2=y_1,\quad t_3=x_2,\quad t_4=y_2,
 \quad\ldots,\quad t_{2N-1}=x_N,\quad t_{2N}=y_N,
\end{equation}
and write $\partial_r=\partial/\partial t_r$.

For $r=1,\ldots,2N$, define
\begin{equation}\label{pf-eq:a-r}
 a_r(x,y)=\frac{1}{2\pi i}\,U(x,y)^*\partial_rU(x,y).
\end{equation}
These coefficients are real-valued.  Indeed, differentiating
\eqref{pf-eq:unit} gives
\[
 (\partial_rU)^*U+U^*\partial_rU=0,
\]
so
\[
 \overline{U^*\partial_rU}
 = (\partial_rU)^*U
 =-U^*\partial_rU.
\]
Thus $U^*\partial_rU$ is purely imaginary.

For clarity, write
\[
 a_{x_k}=\frac{1}{2\pi i}U^*\partial_{x_k}U,
 \qquad
 a_{y_k}=\frac{1}{2\pi i}U^*\partial_{y_k}U.
\]

\subsubsection*{Boundary relations for the coefficients}
From
\[
 U(x+e_j,y)=e^{2\pi i y_j}U(x,y)
\]
we obtain
\begin{equation}\label{pf-eq:ax-boundary}
 a_{x_k}(x+e_j,y)=a_{x_k}(x,y).
\end{equation}
For the $y_k$ derivative,
\begin{align*}
 \partial_{y_k}U(x+e_j,y)
 &=\partial_{y_k}\bigl(e^{2\pi i y_j}U(x,y)\bigr)\\
 &=e^{2\pi i y_j}
 \bigl(2\pi i\delta_{jk}U(x,y)+\partial_{y_k}U(x,y)\bigr).
\end{align*}
Multiplying on the left by $U(x+e_j,y)^*=e^{-2\pi i y_j}U(x,y)^*$ gives
\begin{equation}\label{pf-eq:ay-boundary}
 a_{y_k}(x+e_j,y)=a_{y_k}(x,y)+\delta_{jk}.
\end{equation}
Since $U$ is periodic in every $y_j$ variable,
\begin{equation}\label{pf-eq:y-periodicity-a}
 a_r(x,y+e_j)=a_r(x,y).
\end{equation}

Define the real skew-symmetric matrix
\begin{equation}\label{pf-eq:F-def}
 F=(F_{rs})_{r,s=1}^{2N},
 \qquad
 F_{rs}=\partial_ra_s-\partial_sa_r.
\end{equation}
The additive constants in \eqref{pf-eq:ay-boundary} disappear after
differentiation, so every entry $F_{rs}$ is $\Z^{2N}$-periodic.

\section{The Pfaffian and pointwise degeneracy}

For a real skew-symmetric $2N\times2N$ matrix $B=(B_{rs})$, define
\begin{equation*}
 \Pf B
 =\frac{1}{2^NN!}
 \sum_{i_1,\ldots,i_{2N}=1}^{2N}
 \varepsilon_{i_1\ldots i_{2N}}
 B_{i_1i_2}B_{i_3i_4}\cdots B_{i_{2N-1}i_{2N}},
\end{equation*}
where $\varepsilon_{i_1\ldots i_{2N}}$ is the alternating symbol.
We use the elementary algebraic identity
\begin{equation}\label{pf-eq:det-pf}
 \det B=(\Pf B)^2.
\end{equation}
It follows directly by expanding both sides as alternating polynomials in the
entries of $B$; it is also proved by induction using the expansion of the
Pfaffian along its first row.

\begin{proposition}\label{pf-prop:pointwise-zero}
For the matrix $F$ in \eqref{pf-eq:F-def},
\[
 \Pf F(x,y)=0
\]
at every point $(x,y)$.
\end{proposition}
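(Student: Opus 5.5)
The plan is to show that $F$ has rank at most $2N-2$ at every point, so $\det F=0$ and hence $\Pf F=0$ by \eqref{pf-eq:det-pf}. The rank bound will come from writing $F$ as a Gram-type matrix of the derivatives of $U$, projected onto the orthogonal complement of $U$.

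First compute $F_{rs}$ directly from \eqref{pf-eq:a-r}:
\[
 2\pi i\,F_{rs}=\partial_r(U^*\partial_sU)-\partial_s(U^*\partial_rU)
 =(\partial_rU)^*\partial_sU-(\partial_sU)^*\partial_rU.
\]
The second-derivative terms $U^*\partial_r\partial_sU$ cancel by symmetry of mixed partials, since $U$ is smooth.

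Next decompose $\partial_rU=2\pi i\,a_rU+W_r$, where $W_r=(I-UU^*)\partial_rU$ lies in the orthogonal complement $U^\perp\subset\C^N$. Substituting, the cross terms vanish because $U^*W_s=0$. The terms $|2\pi a_r|^2\cdot$-type products give $(2\pi)^2a_ra_s-(2\pi)^2a_sa_r=0$, using that the $a_r$ are real. What remains is
\[
 2\pi i\,F_{rs}=W_r^*W_s-W_s^*W_r=2i\,\operatorname{Im}(W_r^*W_s).
\]

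Now realify $U^\perp$. It is a complex space of dimension $N-1$, hence a real space of dimension $2N-2$. Let $w_r\in\R^{2N-2}$ be the real coordinate vector of $W_r$ in an orthonormal real basis of $U^\perp$ obtained from a complex orthonormal basis. Then $\operatorname{Im}(W_r^*W_s)=w_r^TJ_0w_s$ for a fixed real $(2N-2)\times(2N-2)$ matrix $J_0$. Consequently $F=\pi^{-1}\,WJ_0W^T$, where $W$ is the $2N\times(2N-2)$ matrix with rows $w_r^T$.

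This factorization gives $\operatorname{rank}F\le2N-2<2N$. Therefore $\det F=0$, and by \eqref{pf-eq:det-pf} we get $\Pf F=0$ at every point. In the case $N=1$, $U^\perp=\{0\}$, so $W_r=0$ and $F\equiv0$ directly.

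The main point needing care is the bookkeeping in the decomposition, especially confirming that the $a_ra_s$ terms cancel using reality of $a_r$. The rest is finite-dimensional linear algebra.
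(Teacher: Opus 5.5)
Your proof is correct and follows the paper's argument essentially step for step. You cancel the mixed second derivatives, split $\partial_rU$ along $U$ and $U^\perp$ (the scalar parts cancel because the $a_r$ are real), and factor $F=\pi^{-1}WJ_0W^T$ through the real $(2N-2)$-dimensional space $U^\perp$. The conclusion then follows, exactly as in the paper, from $\operatorname{rank}F\le 2N-2$ and $\det F=(\Pf F)^2$.
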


\begin{proof}
Differentiate \eqref{pf-eq:a-r}.  The product rule gives
\[
 \partial_r(U^*\partial_sU)
 = (\partial_rU)^*\partial_sU+U^*\partial_r\partial_sU,
\]
and the same formula with $r$ and $s$ interchanged.  Since $U$ is smooth,
$\partial_r\partial_sU=\partial_s\partial_rU$, and the second-derivative
terms cancel.  Hence
\begin{align}
 F_{rs}
 &=\frac{1}{2\pi i}
 \left[
 \partial_r(U^*\partial_sU)-\partial_s(U^*\partial_rU)
 \right]\notag\\
 &=\frac{1}{2\pi i}
 \left[
 (\partial_rU)^*\partial_sU-(\partial_sU)^*\partial_rU
 \right].
 \label{pf-eq:F-derivative}
\end{align}

Fix a point $(x,y)$ for the remainder of the argument and abbreviate
$U=U(x,y)$.  Put
\begin{equation*}
 Q=\Id_N-UU^*.
\end{equation*}
We verify explicitly that $Q$ is the orthogonal projection onto the complex
orthogonal complement of $U$.  Since $U^*U=1$,
\[
 Q^*=\Id_N-(UU^*)^*=Q
\]
and
\[
 Q^2=(\Id_N-UU^*)^2
 =\Id_N-2UU^*+U(U^*U)U^*
 =\Id_N-UU^*=Q.
\]
Moreover,
\[
 QU=U-U(U^*U)=0,
 \qquad
 U^*Q=U^*-(U^*U)U^*=0.
\]
Thus $\operatorname{ran}Q\subseteq U^\perp$.  Conversely, if $z\in U^\perp$,
then $U^*z=0$ and therefore $Qz=z$.  It follows that
\[
 \operatorname{ran}Q=U^\perp
 =\{z\in\C^N:U^*z=0\}.
\]
Because $U\ne0$, this space has complex dimension $N-1$ and real dimension
$2N-2$.

For each $r=1,\ldots,2N$, set
\begin{equation*}
 w_r=Q\partial_rU,
 \qquad
 \alpha_r=U^*\partial_rU.
\end{equation*}
The vector $w_r$ belongs to $U^\perp$, because $U^*Q=0$.  The identity
$\Id_N=UU^*+Q$ gives the orthogonal decomposition
\[
 \partial_rU
 =(UU^*+Q)\partial_rU
 =U(U^*\partial_rU)+Q\partial_rU
 =U\alpha_r+w_r.
\]
Differentiating $U^*U=1$ in the $t_r$ direction gives
\[
 (\partial_rU)^*U+U^*\partial_rU=0,
\]
so $\overline{\alpha_r}=-\alpha_r$.  Thus there is a real number
$\beta_r$ with $\alpha_r=i\beta_r$.  In particular,
\[
 \overline{\alpha_r}\alpha_s
 =(-i\beta_r)(i\beta_s)=\beta_r\beta_s
 =\overline{\alpha_s}\alpha_r.
\]
Also $U^*w_s=0$ and $w_r^*U=0$.  Consequently,
\begin{align*}
 (\partial_rU)^*\partial_sU
 &=(\overline{\alpha_r}U^*+w_r^*)
   (U\alpha_s+w_s)
   =\overline{\alpha_r}\alpha_s+w_r^*w_s,\\
 (\partial_sU)^*\partial_rU
 &=\overline{\alpha_s}\alpha_r+w_s^*w_r.
\end{align*}
The scalar terms are equal and cancel in \eqref{pf-eq:F-derivative}.  We
therefore obtain
\begin{equation}\label{pf-eq:F-w}
 F_{rs}
 =\frac{1}{2\pi i}\bigl(w_r^*w_s-w_s^*w_r\bigr)
 =\frac{1}{\pi}\operatorname{Im}(w_r^*w_s).
\end{equation}
The second equality follows from
$w_s^*w_r=\overline{w_r^*w_s}$.

We now turn \eqref{pf-eq:F-w} into an explicit real matrix factorization.
Choose a complex orthonormal basis
$e_1,\ldots,e_{N-1}$ of $U^\perp$.  Write
\[
 w_r=\sum_{\ell=1}^{N-1}(p_{\ell r}+iq_{\ell r})e_\ell,
 \qquad p_{\ell r},q_{\ell r}\in\R.
\]
Let $W$ be the real $(2N-2)\times2N$ matrix whose $r$th column is
\[
 W_{\cdot r}
 =(p_{1r},q_{1r},p_{2r},q_{2r},\ldots,
   p_{N-1,r},q_{N-1,r})^T,
\]
and let
\[
 \Omega=\frac1\pi\operatorname{diag}
 \left(
 \begin{pmatrix}0&1\\-1&0\end{pmatrix},\ldots,
 \begin{pmatrix}0&1\\-1&0\end{pmatrix}
 \right)
 \in M_{2N-2}(\R).
\]
A direct calculation gives
\[
 (W_{\cdot r})^T\Omega W_{\cdot s}
 =\frac1\pi\sum_{\ell=1}^{N-1}
   (p_{\ell r}q_{\ell s}-q_{\ell r}p_{\ell s})
 =\frac1\pi\operatorname{Im}(w_r^*w_s)
 =F_{rs}.
\]
Thus, entry by entry,
\begin{equation*}
 F=W^T\Omega W.
\end{equation*}
When $N=1$, the space $U^\perp$ is zero-dimensional, $W$ has no rows, and
this formula simply says $F=0$.

Finally,
\[
 \rank F=\rank(W^T\Omega W)
 \le \rank W\le2N-2<2N.
\]
Hence the $2N\times2N$ matrix $F$ is singular, and so $\det F=0$.  The
identity \eqref{pf-eq:det-pf} now yields $(\Pf F)^2=0$, and therefore
$\Pf F=0$ at the chosen point.  Since the point was arbitrary, the
conclusion holds everywhere.
\end{proof}

\section{The boundary-Pfaffian identity and the common-zero contradiction}
\label{app:boundary-pfaffian}

We now isolate the boundary identity and complete the contradiction argument.

\subsection{An abstract boundary-Pfaffian identity}

The next lemma is independent of the map $U$.  It uses only smooth real
functions with the boundary increments found in
\eqref{pf-eq:ax-boundary}--\eqref{pf-eq:y-periodicity-a}.  We keep the
coordinate order
\[
 x_1,y_1,x_2,y_2,\ldots,x_N,y_N.
\]
Thus, inside an alternating symbol, the notation $x_j$ means the index
$2j-1$ and the notation $y_j$ means the index $2j$.

\begin{lemma}\label{pf-lem:boundary-pfaffian}
For $j=1,\ldots,N$, let $a_{x_j},a_{y_j}$ be smooth real functions on
$\R^{2N}$ satisfying, for every $j,k=1,\ldots,N$,
\begin{align}
 a_{x_k}(x+e_j,y)&=a_{x_k}(x,y),\notag\\
 a_{y_k}(x+e_j,y)&=a_{y_k}(x,y)+\delta_{jk},
 \label{pf-eq:abstract-bc-2}\\
 a_r(x,y+e_j)&=a_r(x,y).
 \label{pf-eq:abstract-bc-3}
\end{align}
Order the coordinates as in \eqref{pf-eq:coordinate-order}, and define
$F_{rs}=\partial_ra_s-\partial_sa_r$.  Then
\begin{equation*}
 \int_{[0,1]^{2N}}\Pf F\,dt_1\cdots dt_{2N}=1.
\end{equation*}
\end{lemma}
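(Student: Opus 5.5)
The plan is to realize $\Pf F$ as an exact divergence and to evaluate the resulting boundary terms with the increments \eqref{pf-eq:abstract-bc-2}--\eqref{pf-eq:abstract-bc-3}. Concretely, $F$ is the coordinate matrix of the exterior derivative of the one-form $\sum_r a_r\,dt_r$, and $\Pf F$ is, up to the factor $N!$, the density of its $N$-th wedge power. I will avoid forms, however, and work directly with the finite sum defining the Pfaffian. Expanding each factor $F_{i_{2k-1}i_{2k}}=\partial_{i_{2k-1}}a_{i_{2k}}-\partial_{i_{2k}}a_{i_{2k-1}}$ and using the antisymmetry of $\varepsilon$ to merge the two terms, I get
\[
 \Pf F=\frac{1}{N!}\sum_{i_1,\ldots,i_{2N}}\varepsilon_{i_1\ldots i_{2N}}
 \prod_{k=1}^N\partial_{i_{2k-1}}a_{i_{2k}}.
\]
Because second derivatives commute, the terms in which a derivative falls on a factor already differentiated cancel under $\varepsilon$. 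This yields the identity
\[
 \Pf F=\sum_{r}\partial_r G_r,\qquad
 G_{i_1}=\frac{1}{N!}\sum_{i_2,\ldots,i_{2N}}\varepsilon_{i_1\ldots i_{2N}}\,
 a_{i_2}\prod_{k=2}^N\partial_{i_{2k-1}}a_{i_{2k}},
\]
which I will verify by direct expansion.

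Next I integrate over the cube using the fundamental theorem of calculus in each variable, which gives
\[
 \int\Pf F=\sum_r\int\bigl(G_r|_{t_r=1}-G_r|_{t_r=0}\bigr).
\]
Every derivative $\partial_s a_k$ is $\Z^{2N}$-periodic, since the additive constants disappear under differentiation. Hence the only nonperiodic factor in $G_r$ is the undifferentiated $a_{i_2}$, and it jumps only in an $x_j$-direction, and only when $i_2=y_j$. The $y$-faces therefore contribute nothing. On the face $x_j=0,1$ the jump of $G_{x_j}$ is
\[
 \frac1{N!}\sum\varepsilon_{x_j y_j i_3\ldots}\prod_{k\ge2}\partial_{i_{2k-1}}a_{i_{2k}},
\]
where the product involves only the remaining $2N-2$ coordinates. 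By the same expansion, this equals $\frac{(N-1)!}{N!}\cdot N$ divided appropriately. Tracking the counting carefully, I expect it to be exactly the lower-dimensional Pfaffian $\Pf F^{(\hat j)}$ of the $(2N-2)\times(2N-2)$ block, up to the sign $\varepsilon_{x_jy_j\cdots}$. That sign is $+1$ because the pair $(x_j,y_j)$ is adjacent in the chosen ordering, and the multiplicities of the $N$ choices of $j$ combine into a factor $1$.

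The result should be a recursion of the form $I_N=\frac1N\sum_{j}I_{N-1}^{(\hat j)}$. Here $I_{N-1}^{(\hat j)}$ is the integral over the remaining $2N-2$ coordinates of the Pfaffian built from the restricted functions, with $(x_j,y_j)$ frozen and then integrated trivially. The restricted functions still satisfy the hypotheses \eqref{pf-eq:abstract-bc-2}--\eqref{pf-eq:abstract-bc-3}, so induction applies with base case $N=1$. There $\Pf F=\partial_{x}a_{y}-\partial_y a_x$, and
\[
 \int_{[0,1]^2}\partial_x a_y\,dx\,dy=\int_0^1 1\,dy=1,
\]
while $\int_{[0,1]^2}\partial_y a_x=0$ by periodicity in $y$. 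This gives $I_N=1$ for all $N$.

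The main obstacle is the combinatorial bookkeeping in the last step. The first part is the exact constant: the factor $1/N!$ against the $N$ positions of the undifferentiated factor and the $2$ orderings within each pair. The second is the verification that the face jump is precisely the restricted Pfaffian, with no leftover terms in which $a_{y_j}$ appears differentiated in the $x_j$ slot. I would first check the case $N=2$ by hand, which gives terms of the form $\int(\partial_{x_2}a_{y_2}-\partial_{y_2}a_{x_2})$ on each $x_1$-face. I would then write the general cancellation using the antisymmetry of $\varepsilon$ under permutations of pairs.
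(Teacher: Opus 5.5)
Your proposal is correct and is essentially the paper's proof. Your $G_r$ coincides with the paper's $V_r$ once each factor $F_{i_{2k-1}i_{2k}}$ is expanded: the $2^{N-1}$ identical terms cancel the paper's extra factor $2^{-(N-1)}$. From there the paper proceeds exactly as you do. The $y$-faces cancel by periodicity, the $x_j$-face jump is identified with a lower Pfaffian, and induction runs from the $N=1$ winding computation. The only real difference is in the divergence identity. The paper proves it on the $F$-form of $V_r$ using the cyclic identity $\partial_rF_{ab}+\partial_aF_{br}+\partial_bF_{ra}=0$, whereas your expansion into products $\partial_{i_{2k-1}}a_{i_{2k}}$ needs only the symmetry of mixed partials.

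To tidy your bookkeeping: the face jump is not exactly $\Pf F^{(j)}$. The same expansion gives $\Pf F^{(j)}=\frac{1}{(N-1)!}\sum\varepsilon^{(j)}\prod\partial a$, so the jump is $\frac{(N-1)!}{N!}\Pf F^{(j)}=\frac1N\Pf F^{(j)}$. The sum over the $N$ values of $j$ then supplies the factor $1$ in your (correct) recursion $I_N=\frac1N\sum_j I_{N-1}^{(\hat j)}$.
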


Before proving the lemma, note that every entry of $F$ is periodic in all
$2N$ variables.  Indeed, across an $x_j$-face each coefficient $a_s$
changes by a constant, namely either $0$ or $1$.  Differentiating the
boundary relation therefore gives
\[
 \partial_ra_s(x+e_j,y)=\partial_ra_s(x,y).
\]
Across a $y_j$-face the coefficients themselves are periodic.  Hence
$F_{rs}$ has identical values on every pair of opposite faces of the unit
cube.

The proof of the lemma is a repeated boundary calculation.  We first derive
an explicit divergence identity.
For $r=1,\ldots,2N$, define
\begin{align}
 V_r
 &=\frac{1}{2^{N-1}N!}
 \sum_{i_2,\ldots,i_{2N}=1}^{2N}
 \varepsilon_{r i_2\ldots i_{2N}}\,
 a_{i_2}
 F_{i_3i_4}F_{i_5i_6}\cdots F_{i_{2N-1}i_{2N}}.
 \label{pf-eq:V-r}
\end{align}
Here and below a term is automatically zero when two indices coincide,
because the alternating symbol then vanishes.

\begin{proposition}\label{pf-prop:divergence}
The functions in \eqref{pf-eq:V-r} satisfy
\begin{equation}\label{pf-eq:divergence}
 \sum_{r=1}^{2N}\partial_rV_r=\Pf F.
\end{equation}
\end{proposition}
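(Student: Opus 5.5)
The plan is to compute $\partial_rV_r$ directly from the definition \eqref{pf-eq:V-r} by the product rule. The argument has three ingredients: a closedness (Bianchi-type) identity for $F$, the antisymmetry of the alternating symbol, and the standard rewriting of the Pfaffian in terms of the symbol.

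First I would record the cyclic identity
\[
 \partial_iF_{jk}+\partial_jF_{ki}+\partial_kF_{ij}=0,
\]
which holds because $F_{jk}=\partial_ja_k-\partial_ka_j$ and mixed partials of the smooth $a$'s commute. Contracted with the alternating symbol, it gives
\[
 \sum\varepsilon_{\ldots i\ldots jk\ldots}\,\partial_iF_{jk}=0
\]
whenever $i,j,k$ occupy three slots of $\varepsilon$. Indeed, $\partial_iF_{jk}=\partial_i\partial_ja_k-\partial_i\partial_ka_j$, and each second-derivative term is symmetric in $(i,j)$ or $(i,k)$. So it is annihilated directly by the antisymmetric symbol, without even needing the cyclic form.

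Next I would apply $\partial_r$ to the summand $a_{i_2}F_{i_3i_4}\cdots F_{i_{2N-1}i_{2N}}$ and sum over $r$ together with the symbol $\varepsilon_{ri_2\ldots i_{2N}}$. Two kinds of terms arise.

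\emph{Derivative on a factor $F_{i_{2p-1}i_{2p}}$.} The term $\varepsilon_{r\ldots}\,\partial_rF_{i_{2p-1}i_{2p}}$ vanishes by the previous observation. Expanding $F=\partial a-\partial a$ produces $\partial_r\partial_{i_{2p-1}}a_{i_{2p}}$, which is symmetric in $r$ and $i_{2p-1}$ and is therefore killed by $\varepsilon$.

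\emph{Derivative on $a_{i_2}$.} This gives $\varepsilon_{ri_2\ldots}\,\partial_ra_{i_2}$. Antisymmetrizing in $(r,i_2)$ replaces it by $\tfrac12\varepsilon_{ri_2\ldots}F_{ri_2}$.

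Hence
\[
 \sum_r\partial_rV_r=\frac{1}{2^{N-1}N!}\cdot\frac12\sum\varepsilon_{i_1\ldots i_{2N}}F_{i_1i_2}\cdots F_{i_{2N-1}i_{2N}},
\]
which is exactly the Pfaffian formula with prefactor $1/(2^NN!)$.

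The main bookkeeping point, and the only real obstacle, is the constant: the factor $\tfrac12$ from antisymmetrizing $\partial_ra_{i_2}$ has to match the normalization $2^{N-1}$ versus $2^N$. I will check this carefully, including the edge case $N=1$, where $V_r=\tfrac12\sum_{i}\varepsilon_{ri}a_i$ and
\[
 \sum_r\partial_rV_r=\tfrac12(\partial_1a_2-\partial_2a_1)=\tfrac12F_{12}.
\]
Here a factor of $2$ must be reconciled with $\Pf F=F_{12}$. If this check fails, I would adjust the prefactor in the computation rather than alter the structure of the argument. The structure itself, that only the $a$-derivative survives and produces $F$, is robust.
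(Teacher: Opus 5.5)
Your argument is correct and is essentially the paper's proof. The product rule splits $\sum_r\partial_rV_r$ into two kinds of terms. When the derivative hits $a_{i_2}$, antisymmetrizing in $(i_1,i_2)$ gives $\tfrac12\varepsilon F_{i_1i_2}\cdots$, and the prefactor becomes $\frac{1}{2^{N-1}N!}\cdot\frac12=\frac{1}{2^NN!}$, exactly that of $\Pf F$. When the derivative hits some $F_{i_{2p-1}i_{2p}}$, the term vanishes.

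The only difference from the paper is how you kill those $F$-derivative terms. The paper averages over the cyclic relabeling $(i_1,i_{2p-1},i_{2p})\mapsto(i_{2p-1},i_{2p},i_1)$ and then invokes $\partial_rF_{ab}+\partial_aF_{br}+\partial_bF_{ra}=0$. You instead expand $\partial_{i_1}F_{i_{2p-1}i_{2p}}=\partial_{i_1}\partial_{i_{2p-1}}a_{i_{2p}}-\partial_{i_1}\partial_{i_{2p}}a_{i_{2p-1}}$. Each piece is symmetric in two indices that occupy distinct slots of $\varepsilon$, and the remaining factors do not involve those indices. This reaches the same cancellation slightly more directly, with no need to check the sign of a three-cycle.

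One correction: your $N=1$ sanity check is miscomputed. At $N=1$ the prefactor $1/(2^{N-1}N!)$ equals $1$, not $\tfrac12$. So $V_1=a_2$, $V_2=-a_1$, and $\sum_r\partial_rV_r=\partial_1a_2-\partial_2a_1=F_{12}=\Pf F$. There is no factor of $2$ to reconcile, and you should not adjust any prefactor. Your general computation already gives the right constant; at $N=1$ it reads $\tfrac12(F_{12}-F_{21})=F_{12}$.
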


\begin{proof}
Replace the outer index $r$ by $i_1$ and apply the product rule.  There are
two kinds of terms: the derivative may fall on $a_{i_2}$, or it may fall
on one of the $N-1$ factors
$F_{i_3i_4},\ldots,F_{i_{2N-1}i_{2N}}$.  Thus
\[
 \sum_{r=1}^{2N}\partial_rV_r=A+\sum_{p=2}^N B_p,
\]
where
\begin{align}
 A
 &=\frac{1}{2^{N-1}N!}
 \sum_{i_1,\ldots,i_{2N}}
 \varepsilon_{i_1i_2\ldots i_{2N}}
 (\partial_{i_1}a_{i_2})
 F_{i_3i_4}\cdots F_{i_{2N-1}i_{2N}},
 \label{pf-eq:derivative-a}
\end{align}
and, for $p=2,\ldots,N$,
\begin{align*}
 B_p
 &=\frac{1}{2^{N-1}N!}
 \sum_{i_1,\ldots,i_{2N}}
 \varepsilon_{i_1i_2\ldots i_{2N}}a_{i_2}
 (\partial_{i_1}F_{i_{2p-1}i_{2p}})\\
 &\hspace{28mm}\times
 \prod_{\substack{q=2\\q\ne p}}^N
 F_{i_{2q-1}i_{2q}}.
\end{align*}

We first compute $A$.  Let
\[
 \Sigma=
 \sum_{i_1,\ldots,i_{2N}}
 \varepsilon_{i_1i_2\ldots i_{2N}}
 (\partial_{i_1}a_{i_2})
 F_{i_3i_4}\cdots F_{i_{2N-1}i_{2N}}.
\]
Interchange the two dummy indices $i_1$ and $i_2$.  The alternating symbol
changes sign, while the remaining factors are unchanged.  Hence the same
sum also equals
\[
 -\sum_{i_1,\ldots,i_{2N}}
 \varepsilon_{i_1i_2\ldots i_{2N}}
 (\partial_{i_2}a_{i_1})
 F_{i_3i_4}\cdots F_{i_{2N-1}i_{2N}}.
\]
Adding these two representations of $\Sigma$ gives
\begin{align*}
 2\Sigma
 &=\sum_{i_1,\ldots,i_{2N}}
 \varepsilon_{i_1i_2\ldots i_{2N}}
 (\partial_{i_1}a_{i_2}-\partial_{i_2}a_{i_1})
 F_{i_3i_4}\cdots F_{i_{2N-1}i_{2N}}\\
 &=\sum_{i_1,\ldots,i_{2N}}
 \varepsilon_{i_1i_2\ldots i_{2N}}
 F_{i_1i_2}F_{i_3i_4}\cdots F_{i_{2N-1}i_{2N}}.
\end{align*}
Therefore the coefficient in \eqref{pf-eq:derivative-a} gives
\[
 A=\frac{1}{2^NN!}
 \sum_{i_1,\ldots,i_{2N}}
 \varepsilon_{i_1\ldots i_{2N}}
 F_{i_1i_2}\cdots F_{i_{2N-1}i_{2N}}
 =\Pf F.
\]

It remains to prove $B_p=0$ for each $p$.  The elementary identity used for
this cancellation is
\begin{equation}\label{pf-eq:bianchi}
 \partial_rF_{ab}+\partial_aF_{br}+\partial_bF_{ra}=0.
\end{equation}
To verify it from first principles, substitute
$F_{ab}=\partial_aa_b-\partial_ba_a$:
\begin{align*}
 &\partial_rF_{ab}+\partial_aF_{br}+\partial_bF_{ra}\\
 &\quad=
 \partial_r\partial_aa_b-\partial_r\partial_ba_a
 +\partial_a\partial_ba_r-\partial_a\partial_ra_b
 +\partial_b\partial_ra_a-\partial_b\partial_aa_r=0.
\end{align*}
The first and fourth terms cancel, the second and fifth terms cancel, and
the third and sixth terms cancel, because mixed partial derivatives
commute.

Fix $p$ and abbreviate
$a=i_{2p-1}$ and $b=i_{2p}$.  In the sum defining $B_p$, cyclically rename
the three dummy indices
\[
 (i_1,a,b)\longmapsto(a,b,i_1).
\]
A three-cycle is an even permutation, so the alternating symbol is
unchanged.  The value of the sum is therefore unchanged if
$\partial_{i_1}F_{ab}$ is replaced by $\partial_aF_{bi_1}$, and it is also
unchanged if it is replaced by $\partial_bF_{i_1a}$.  Averaging these three
equal representations yields
\begin{align*}
 B_p
 &=\frac{1}{3\cdot2^{N-1}N!}
 \sum_{i_1,\ldots,i_{2N}}
 \varepsilon_{i_1i_2\ldots i_{2N}}a_{i_2}\\
 &\quad\times
 \bigl(
 \partial_{i_1}F_{ab}+\partial_aF_{bi_1}
 +\partial_bF_{i_1a}
 \bigr)
 \prod_{\substack{q=2\\q\ne p}}^N
 F_{i_{2q-1}i_{2q}}=0
\end{align*}
by \eqref{pf-eq:bianchi}.  Thus every $B_p$ vanishes, while $A=\Pf F$,
which proves \eqref{pf-eq:divergence}.
\end{proof}

We now verify the boundary identity by expanding the divergence formula.

\begin{proof}[Proof of Lemma~\ref{pf-lem:boundary-pfaffian}]
We argue by induction on $N$.  Throughout, an integral over a face means
integration with respect to all coordinates that remain free on that face.

For $N=1$, the coordinate order is $(x_1,y_1)$ and
\[
 \Pf F=F_{x_1y_1}
 =\partial_{x_1}a_{y_1}-\partial_{y_1}a_{x_1}.
\]
Applying the one-variable fundamental theorem of calculus first in $x_1$
and then in $y_1$ gives
\begin{align*}
 \int_0^1\int_0^1\Pf F\,dx_1\,dy_1
 &=\int_0^1
 \bigl(a_{y_1}(1,y_1)-a_{y_1}(0,y_1)\bigr)\,dy_1\\
 &\quad-
 \int_0^1
 \bigl(a_{x_1}(x_1,1)-a_{x_1}(x_1,0)\bigr)\,dx_1.
\end{align*}
The first bracket equals $1$ by
\eqref{pf-eq:abstract-bc-2}, and the second bracket equals $0$ by
\eqref{pf-eq:abstract-bc-3}.  Hence the integral equals $1$.

Assume now that the assertion has been proved for $N-1$ coordinate pairs.
Integrate \eqref{pf-eq:divergence} over the unit cube.  For a coordinate
$t_r$, write $dt_{\widehat r}$ for integration in all variables except
$t_r$.  The fundamental theorem of calculus gives
\begin{equation}\label{pf-eq:boundary-sum}
 \int_{[0,1]^{2N}}\Pf F
 =\sum_{r=1}^{2N}
 \int_{[0,1]^{2N-1}}
 \left(V_r\big|_{t_r=1}-V_r\big|_{t_r=0}\right)
 \,dt_{\widehat r}.
\end{equation}

We first examine a pair of $y_j$-faces.  By
\eqref{pf-eq:abstract-bc-3}, every coefficient $a_s$ has the same value at
$y_j=0$ and $y_j=1$.  As observed before the divergence calculation, every
entry of $F$ is also periodic.  Since $V_{y_j}$ is a polynomial expression
in the $a_s$ and $F_{rs}$, it follows pointwise that
\[
 V_{y_j}\big|_{y_j=1}=V_{y_j}\big|_{y_j=0}.
\]
Thus every pair of $y_j$-faces contributes zero to
\eqref{pf-eq:boundary-sum}.

Now fix $j$ and consider the two $x_j$-faces.  In the notation of
\eqref{pf-eq:V-r}, take $r=x_j$.  All $F$-factors agree on the two faces.
For the remaining factor, the boundary relations say
\[
 a_{i_2}(x+e_j,y)-a_{i_2}(x,y)
 =\begin{cases}
 1,&i_2=y_j,\\
 0,&i_2\ne y_j.
 \end{cases}
\]
Therefore only the terms with $i_2=y_j$ survive in the difference, and
\begin{align}
 &V_{x_j}\big|_{x_j=1}-V_{x_j}\big|_{x_j=0}\notag\\
 &\quad=\frac{1}{2^{N-1}N!}
 \sum_{i_3,\ldots,i_{2N}}
 \varepsilon_{x_jy_ji_3\ldots i_{2N}}
 F_{i_3i_4}\cdots F_{i_{2N-1}i_{2N}}.
 \label{pf-eq:x-face-jump}
\end{align}
A nonzero term in this sum must use each of the remaining $2N-2$ indices
exactly once.

Let
\[
 I_j=\{x_1,y_1,\ldots,x_N,y_N\}\setminus\{x_j,y_j\},
\]
and let $F^{(j)}$ be the submatrix of $F$ with rows and columns indexed by
$I_j$, evaluated on the face $x_j=0$.  The same submatrix is obtained on
$x_j=1$ because $F$ is periodic.  To compare alternating symbols, move the
ordered pair $(x_j,y_j)$ from its positions $(2j-1,2j)$ to the first two
positions.  Moving $x_j$ requires $2j-2$ transpositions, and after that
moving $y_j$ requires another $2j-2$ transpositions.  The total number
$4j-4$ is even, so no sign is introduced.  Hence the alternating symbol in
\eqref{pf-eq:x-face-jump} is exactly the alternating symbol associated with
the inherited order on $I_j$.

The Pfaffian of the $(2N-2)\times(2N-2)$ matrix $F^{(j)}$ is
\[
 \Pf F^{(j)}
 =\frac{1}{2^{N-1}(N-1)!}
 \sum_{i_3,\ldots,i_{2N}\in I_j}
 \varepsilon^{(j)}_{i_3\ldots i_{2N}}
 F_{i_3i_4}\cdots F_{i_{2N-1}i_{2N}},
\]
where $\varepsilon^{(j)}$ is the alternating symbol for the inherited
coordinate order.  Since $N!=N(N-1)!$, comparison with
\eqref{pf-eq:x-face-jump} gives
\begin{equation}\label{pf-eq:x-face-pf-minor}
 V_{x_j}\big|_{x_j=1}-V_{x_j}\big|_{x_j=0}
 =\frac1N\Pf F^{(j)}.
\end{equation}

It remains to integrate this minor.  Fix $y_j\in[0,1]$ and set $x_j=0$.
For every $k\ne j$, restrict the functions $a_{x_k}$ and $a_{y_k}$ to this
slice and regard them as functions of the remaining $2N-2$ variables.
For shifts in a remaining $x_\ell$ variable, $\ell\ne j$, they satisfy
\[
 a_{x_k}(x+e_\ell,y)=a_{x_k}(x,y),
 \qquad
 a_{y_k}(x+e_\ell,y)=a_{y_k}(x,y)+\delta_{\ell k},
\]
and they are periodic in every remaining $y_\ell$ variable.  Thus these
restricted functions satisfy exactly the hypotheses of the lemma with
$N-1$ coordinate pairs.  Their skew derivative matrix, taken only with
respect to the remaining variables, is precisely $F^{(j)}$.  The induction
hypothesis therefore gives
\begin{equation*}
 \int_{[0,1]^{2N-2}}\Pf F^{(j)}=1
\end{equation*}
for every fixed value of $y_j$.

Using \eqref{pf-eq:x-face-pf-minor}, the total contribution of the two
$x_j$-faces to \eqref{pf-eq:boundary-sum} is therefore
\[
 \frac1N\int_0^1
 \left(\int_{[0,1]^{2N-2}}\Pf F^{(j)}\right)dy_j
 =\frac1N\int_0^1 1\,dy_j
 =\frac1N.
\]
There are $N$ choices of $j$.  All $y_j$-face contributions vanish, and
each pair of $x_j$-faces contributes $1/N$.  Hence
\[
 \int_{[0,1]^{2N}}\Pf F
 =\sum_{j=1}^N\frac1N=1,
\]
which completes the induction.

For example, when $N=2$ and the coordinates are
$(x_1,y_1,x_2,y_2)$,
\[
 \Pf F
 =F_{x_1y_1}F_{x_2y_2}
  -F_{x_1x_2}F_{y_1y_2}
  +F_{x_1y_2}F_{y_1x_2}.
\]
The argument above says that the $y_1$- and $y_2$-faces cancel, while the
$x_1$-faces and the $x_2$-faces each contribute $1/2$.
\end{proof}

\subsection{The contradiction and the continuous theorem}

The remaining step rules out a smooth nowhere-zero quasiperiodic map and then returns
to the original continuous function by approximation.

\begin{proposition}\label{pf-prop:smooth-no-nonzero}
There is no smooth nowhere-zero Zak-quasiperiodic map
\[
 S:\R^N\times\R^N\longrightarrow\C^N.
\]
\end{proposition}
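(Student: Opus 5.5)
The plan is to argue by contradiction, combining the two facts already established in this appendix: the pointwise vanishing of the Pfaffian and the boundary-Pfaffian identity. Suppose $S:\R^N\times\R^N\to\C^N$ is smooth, Zak-quasiperiodic, and nowhere zero.

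\emph{Step 1: normalization.} Since $S$ never vanishes and is smooth, $\norm{S}_2$ is a smooth positive function. Consequently $U=S/\norm{S}_2$ from \eqref{pf-eq:normalize} is smooth and satisfies $U^*U=1$. It inherits the quasiperiodicity rule, because the Zak multiplier is unimodular and $\norm{S}_2$ is $\Z^{2N}$-periodic.

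\emph{Step 2: the coefficients.} Form the coefficients $a_r$ of \eqref{pf-eq:a-r}. As shown in Section~\ref{pf-sec:normalize}, they are smooth and real-valued, and they satisfy the boundary relations \eqref{pf-eq:ax-boundary}, \eqref{pf-eq:ay-boundary} and \eqref{pf-eq:y-periodicity-a}. These are exactly the hypotheses of Lemma~\ref{pf-lem:boundary-pfaffian}. That lemma therefore gives
\[
 \int_{[0,1]^{2N}}\Pf F=1,
 \qquad F=(\partial_ra_s-\partial_sa_r)_{r,s}.
\]

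\emph{Step 3: the contradiction.} The matrix $F$ built from these $a_r$ is the one treated in Proposition~\ref{pf-prop:pointwise-zero}. Hence $\Pf F\equiv0$, and its integral over the cube is $0$. This contradicts the value $1$ obtained in Step~2.

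Smoothness is not an obstacle here, since it is the hypothesis of the statement. The one point to check carefully is that the $a_r$ obey the abstract boundary hypotheses with the correct sign and the correct coordinate order \eqref{pf-eq:coordinate-order}. In particular, the increment $+\delta_{jk}$ in \eqref{pf-eq:ay-boundary} must come from the phase $e^{2\pi i y_j}$ under the normalization by $2\pi i$. This was verified earlier, so I expect no real difficulty.

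Afterwards, Theorem~\ref{pf-thm:common-zero} follows by two reductions. First, Lemma~\ref{pf-lem:smoothing} smooths a continuous nowhere-zero map into a smooth one, which the argument above rules out. Second, for $d>N$, the extra coordinate pairs are frozen and the case $d=N$ is applied.
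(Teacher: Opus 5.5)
Your proposal is correct and is essentially the paper's own proof. You normalize $S$ to $U$, form the real coefficients $a_r$ and the skew matrix $F$, and then obtain $\int_{[0,1]^{2N}}\Pf F=1$ from the boundary-Pfaffian lemma and $\Pf F\equiv0$ from the pointwise rank degeneracy, which gives the same contradiction the paper uses.
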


\begin{proof}
Assume that such an $S$ exists.  Normalize it to $U$ as in
\eqref{pf-eq:normalize}, and define the coefficients $a_r$ and matrix $F$ as in
\eqref{pf-eq:a-r} and \eqref{pf-eq:F-def}.  Proposition~\ref{pf-prop:pointwise-zero}
gives
\[
 \Pf F(x,y)=0
\]
for every $(x,y)$.  Therefore
\[
 \int_{[0,1]^{2N}}\Pf F=0.
\]
On the other hand, the boundary relations
\eqref{pf-eq:ax-boundary}--\eqref{pf-eq:y-periodicity-a} satisfy the hypotheses of
Lemma~\ref{pf-lem:boundary-pfaffian}, which gives
\[
 \int_{[0,1]^{2N}}\Pf F=1.
\]
This is impossible.
\end{proof}

\begin{proof}[Proof of Theorem~\ref{pf-thm:common-zero}]
First suppose $d=N$ and assume that the continuous map
$S=(s_1,\ldots,s_N)$ is nowhere zero.  Lemma~\ref{pf-lem:smoothing} produces,
for sufficiently small $\varepsilon>0$, a smooth nowhere-zero
Zak-quasiperiodic map $\mathcal M_\varepsilon S$.  This contradicts
Proposition~\ref{pf-prop:smooth-no-nonzero}.

Now let $d>N$.  Fix the final $d-N$ coordinate pairs, for example at zero:
\[
 u_{N+1}=\cdots=u_d=0,
 \qquad
 v_{N+1}=\cdots=v_d=0.
\]
The restricted functions retain the Zak boundary rule in the first $N$
coordinate pairs.  The already proved case $d=N$ gives a common zero of the
restricted functions, and therefore a common zero of the original
functions.
\end{proof}

\subsection{The first two dimensions}

The formulas become especially transparent for small $N$.

For $N=1$,
\[
 F=
 \begin{pmatrix}
 0&F_{x_1y_1}\\
 -F_{x_1y_1}&0
 \end{pmatrix},
 \qquad
 \Pf F=F_{x_1y_1}.
\]
The boundary calculation is simply
\[
 \int_{[0,1]^2}
 (\partial_{x_1}a_{y_1}-\partial_{y_1}a_{x_1})=1,
\]
which is the ordinary winding-number increment.

For $N=2$, in the coordinate order
$(x_1,y_1,x_2,y_2)$,
\begin{align*}
 \Pf F
 &=F_{x_1y_1}F_{x_2y_2}
   -F_{x_1x_2}F_{y_1y_2}
   +F_{x_1y_2}F_{y_1x_2}.
\end{align*}
The pointwise rank estimate gives $\rank F\le2$, so this expression is zero
at every point.  The boundary calculation nevertheless gives
\[
 \int_{[0,1]^4}\Pf F=1.
\]
Thus the higher-dimensional contradiction is an iterated version of the
one-dimensional winding computation.

\subsection{Concluding comment}

The proof is entirely coordinate based.  Nonvanishing allows us to
normalize the tuple.  The normalized tuple produces a skew derivative
matrix $F$.  Its columns are generated by vectors in a real space of
dimension $2N-2$, so its top Pfaffian vanishes pointwise.  The Zak boundary
phase, however, contributes one unit of winding in each coordinate pair,
and the explicit divergence calculation packages those increments into
\[
 \int_{[0,1]^{2N}}\Pf F=1.
\]
The contradiction proves the common-zero theorem from ordinary calculus
and finite-dimensional linear algebra.

\sloppy
% Compact reference type leaves room for the shared affiliation and emails.
\renewcommand{\bibfont}{\small}

\end{document}